\newcommand{\resp}{{\sfcode`\.1000 resp.}}
\newcommand{\ie}{{\sfcode`\.1000 i.e.}}
\newcommand{\eg}{{\sfcode`\.1000 e.g.}}
\newcommand{\cf}{{\sfcode`\.1000 cf.}}
\DeclareFontFamily{U}{min}{}
\DeclareFontShape{U}{min}{m}{n}{<-> udmj30}{}
\newcommand\yo{\!\text{\usefont{U}{min}{m}{n}\symbol{'210}}\!}
\newcommand\pt{\mathrm{pt}}
\DeclareMathOperator\SH{\mathbf{SH}}
\DeclareMathOperator\Fun{Fun}
\DeclareMathOperator\Hom{Hom}
\DeclareMathOperator\Op{Op}
\DeclareMathOperator\cofib{cofib}
\DeclareMathOperator\Spec{Spec}
\DeclareMathOperator\GL{GL}
\DeclareMathOperator\Psh{Psh}
\DeclareMathOperator\Shv{Shv}
\DeclareMathOperator\aff{\mathbb{A}}
\DeclareMathOperator\proj{\mathbb{P}}
\newcommand\LAd{\mathrm{LAd}}
\newcommand\RAd{\mathrm{RAd}}
\newtheorem{thm}{Theorem}[subsection]
\newtheorem*{pseudothm}{Pseudo-Theorem}
\newtheorem{prp}[thm]{Proposition}
\newtheorem{lem}[thm]{Lemma}
\newtheorem{cor}[thm]{Corollary}
\theoremstyle{remark}
\newtheorem{rmk}[thm]{Remark}
\theoremstyle{definition}
\newtheorem{defn}[thm]{Definition}
\newtheorem{nota}[thm]{Notation}
\newtheorem{exa}[thm]{Example}
\newtheorem{cnstr}[thm]{Construction}
\newtheorem{setng}[thm]{Setting}
\newtheorem{lang}[thm]{Language}
\newcommand\id{\mathrm{id}}
\newcommand\comps{\mathbb{C}}
\newcommand\reals{\mathbb{R}}
\newcommand\ints{\mathbb{Z}}
\newcommand\fin{\mathrm{fin}} 
\newcommand\lred{\mathrm{lred}}
\newcommand\nice{\mathrm{nice}}
\newcommand\open{\mathrm{open}}
\newcommand\Nis{\mathrm{Nis}}
\newcommand\NisC{\mathrm{Nis^\an}}
\DeclareMathOperator\B{\mathbf{B}\!}
\newcommand\an{\mathrm{an}}
\newcommand\hol{\mathrm{hol}}
\newcommand\alg{\mathrm{alg}}
\newcommand\diff{\mathrm{diff}}
\newcommand\htpy{\mathrm{htpy}}
\newcommand\shf{\mathrm{shf}}
\newcommand\op{\mathrm{op}}
\newcommand\univ{\mathrm{univ}}
\newcommand\slice{\mathrm{slice}}
\DeclareMathOperator\QC{QCoh}
\DeclareMathOperator\Coh{Coh}
\DeclareMathOperator\PF{PF}
\newcommand\Mfld{\mathrm{Mfld}}
\newcommand\AlgStk{\mathrm{AlgStk}}
\newcommand\An{\mathrm{An}}
\newcommand\Fin{\mathbf{Fin}}
\newcommand\Ab{\mathbf{Ab}}
\newcommand\spaces{\mathcal{S}}
\newcommand\Cat{\mathbf{Cat}}
\newcommand\PrL{{\mathbf{Pr}^{\mathbf{L}}}}
\newcommand\PrR{{\mathbf{Pr}^{\mathbf{R}}}}
\newcommand\Mod{\mathrm{Mod}}
\newcommand\Alg{\mathrm{Alg}}
\newcommand\CAlg{\mathrm{CAlg}}
\newcommand\Mon{\mathrm{Mon}}
\DeclareMathOperator\Mul{Mul}
\DeclareMathOperator\Pic{Pic}
\DeclareMathOperator\Sp{Sp}
\DeclarePairedDelimiter\realise\lvert\rvert
\DeclarePairedDelimiter\cls\lbrack\rbrack
\newcommand\qadm{\mathrm{qadm}}
\newcommand\stackslink[1]{\href{https://stacks.math.columbia.edu/tag/#1}{#1}}
\newcommand\stackscite[1]{\cite[Tag \stackslink{#1}]{stacks-project}}
\newcommand\kerodonlink[1]{\href{https://kerodon.net/tag/000E}{#1}}
\newcommand\kerodoncite[1]{\cite[Tag \kerodonlink{#1}]{kerodon}}
\title[Universal Properties and Constructions of Pullback Formalisms]{Universal Properties and Constructions of Pullback Formalisms in Terms of Invariance and Stability}
\author{Roy Magen}
\address{Institute of Mathematics and Informatics, Bulgarian Academy of Sciences \\ Bulgaria, Sofia 1113, Acad. G. Bonchev Str., Bl. 8}
\thanks{Supported by the Simons Foundation, grant SFI-MPS-T-Institutes-00007697, and the Ministry of Education and Science of the Republic of Bulgaria, grant DO1-239/10.12.2024}
\begin{document}

\begin{abstract}
	In this article, we introduce fundamental notions and results about pullback formalisms, building on \cite{UnivFF}. Our main application is producing a pullback formalism $\SH^\hol$ that encodes a version of motivic homotopy theory for complex analytic stacks, and establishing some of its properties.

	The notions introduced in this article will be used in later articles \cite{6FF} and \cite{Gluing} in which we also establish more properties of $\SH^\hol$, notably the gluing property of Morel and Voevodsky, the structure of a 6-functor formalism, and a realization map from the motivic homotopy theory of algebraic stacks of \cite{SixAlgSt} that is compatible with Grothendieck's six operations, generalizing Ayoub's results on Betti realization for schemes.
\end{abstract}
\maketitle

\tableofcontents


\section*{Introduction}

\subsection*{Invariants of Geometric Objects}

In his unpublished note \cite{voe-view} from 2000, Voevodsky explains how invariants of algebraic varieties form a hierarchy: there are element-level, set-level, and category-level invariants. As examples of element-level invariants, he mentions the \emph{number} of rational points of a variety or its Betti numbers or Euler characteristic. Examples of set-level invariants are the \emph{set} of rational points or the cohomology \emph{groups} of that variety. Finally, an example of a category-level invariant is the \emph{category} of coherent sheaves on a variety. It is often the case that understanding invariants of a higher level is useful for proving theorems about invariants on a lower level: the Weil conjectures about element-level invariants follow from the existence of a certain cohomology theory -- a set-level invariant. Similarly, the properties of $\ell$-adic cohomology used to prove the Weil conjectures follow from the behavior of the category-level invariant of derived categories of constructible sheaves.

This perspective makes sense in settings other than that of algebraic varieties. In topology, Betti numbers and Euler characteristics are element-level invariants that can be computed using cohomology groups which are set-level invariants. Various properties of cohomology theories, such as Atiyah duality and long exact sequences, can be seen as coming from a study of the category-level invariant given by the category of spectra. Equivariant homotopy theory gives us another example of the use of category-level invariants, in which the relationships between various categories of genuine equivariant spectra lead to important properties of equivariant cohomology theories, such as the Wirthm\"{u}ller isomorphism.

The procedure for passing from category-level invariants to set-level invariants is usually (if not always) achieved by considering mapping objects between objects of the categories. For example, sheaf cohomology theories are generally computed as (derived) hom-groups between sheaves, and given a spectrum $E$, to compute the $E$-cohomology of a topological space $X$, we consider the mapping spectrum of maps from the suspension spectrum of $X$ to $E$. 

\subsubsection*{Morphisms of category-level invariants and Betti realization}
The study of morphisms of category-level invariants allows allows us to compare different types of set-level invariants.

Motivic homotopy theory was developed, at least partially, as an algebro-geometric analog of the usual homotopy theory of CW-complexes. As mentioned in Joseph Ayoub's ICM address \cite{AyoubICM}, one of the goals of motivic homotopy is to provide a bridge between so-called ``transcendental'' invariants of an algebraic variety, such as its Betti cohomology, and its algebro-geometric invariants, such as its Chow groups and $K$-theory. Indeed, there is a Betti realization functor that can be seen as a morphism from the category-level invariant of motivic spectra, to the category-level invariant of spectra. Ayoub showed in \cite{AyoubBetti} that this morphism is actually compatible with the additional structure that these category-level invariants satisfy, in particular, the structure of Grothendieck's six operations. This has proven to be a crucial tool not only for using topology to study algebraic geometry, but the reverse as well: Voevodsky used Betti realization in his proof of the Bloch-Kato conjecture in $K$-theory to compute the motivic Steenrod algebra in terms of the topological one, and in \cite{C2equiv}, Behrens and Shah show how to use Betti realization to compute $C_2$-equivariant homotopy groups in terms of motivic homotopy groups over $\reals$.

\subsubsection*{6-functor formalisms and pullback formalisms}

Grothendieck's six operations, first considered in the context of \'{e}tale cohomology, have emerged as a recurring and exceedingly useful type of behavior for category-level invariants. Roughly speaking, given a category $\mathcal C$ of geometric objects, and an assignment of a category $D(X)$ for each object $X$ of $\mathcal C$, we say that the category-level invariant $D$ is a 6-functor formalism if the categories $D$ are closed monoidal (so they are equipped with a tensor product and internal hom functor), and for each map $f : X \to Y$, there is an adjunction $f^* \dashv f_*$ between $D(Y), D(X)$, and for certain $f$, there is another adjunction $f_! \dashv f^!$ between $D(X), D(Y)$. These operations are then required to satisfy various relations. It turns out that the properties of 6-functor formalisms lead to familiar results in many contexts, in particular we have base change formulae, projection formulae, and duality theorems. Many important duality theorems such as Poincar\'{e} duality, Serre duality, and Wirthm\"{u}ller isomorphisms can be seen as properties of 6-functor formalisms.

In his thesis \cite{Mann6FF}, which constructs a 6-functor formalism for $p$-torsion \'{e}tale sheaves in rigid-analytic geometry, Lucas Mann builds on ideas of \cite{LZ6FF} to introduce a formal definition of 6-functor formalism that makes sense in a general context (see \cite[Definition A.5.7]{Mann6FF}). 

Mann proves some powerful results about establishing the structure of a 6-functor formalism on categorical invariants, namely \cite[Proposition A.5.10]{Mann6FF}, as well as results about extending 6-functor formalisms, such as \cite[Lemma A.5.11, Proposition A.5.16]{Mann6FF}, and later with Heyer in \cite[\S3.4]{HM6FF}. These results are useful for establishing the structure of a 6-functor formalism on a given category-level invariant, but they do not give us tools to construct the category-level invariants themselves, and furthermore, the author is not aware of any general results about the compatibility of morphisms of category-level invariants with the six operations, although we are often interested in establishing such compatibilities, such as in the case of Betti realization where this was established in \cite{AyoubBetti}.

In this article, we will take a different approach to studying category-level invariants, in which the basic data with which we equip our categories tells us which maps behave like ``smooth'' maps. Pullback formalisms are category-level invariants in which these smooth maps have the appropriate cohomological properties. This type of structure on category-level invariants was introduced in \cite{UnivFF}, and variants of it have been studied in \cite{SixAlgSt} and \cite{TwAmb}. In fact, this structure was already foreshadowed in earlier works on motivic homotopy theory, such as \cite{A1htpysch}, \cite{voe-crit}, and \cite{Ayoub6I, Ayoub6II}. Indeed, motivic homotopy theory naturally admits the structure of a pullback formalism, and the latter two works show that it also admits the structure of a 6-functor formalism by giving general criteria for when a pullback formalism on schemes admits the structure of a 6-functor formalism.

One way to compare Mann's framework with the framework of pullback formalisms is by the approach to Poincar\'{e} duality. In Mann's framework, we are given a category $\mathcal C$, along with a certain class of morphisms $E$ in $\mathcal C$ for which the exceptional operations should be defined. One notes (\cite[Remark 3.2.4]{HM6FF}) that this is not enough structure to express any form of Poincar\'{e} duality, for which it would be necessary to define a notion of ``smooth map''. Nevertheless, for any 6-functor formalism, it is possible to define a notion of cohomological smoothness for maps such that the cohomologically smooth maps satisfy Poincar\'{e} duality.

Pullback formalisms, on the other hand, are defined in terms of a prescribed notion of ``smooth maps''. Although pullback formalisms do not provide the existence and properties of the exceptional operations, we may still study which smooth maps have Poincar\'{e} duality, as is done in \cite{6FF}.

\subsubsection*{The merits of pullback formalisms}

One of the reasons one may be led to study pullback formalisms is because constructions of motivic homotopy theory tend to naturally produce this type of category-level invariant.

Indeed, in this article we will provide general tools for constructing pullback formalisms with certain universal properties. This means that we are not only able to establish that certain category-level invariants admit the structure of a pullback formalism, but we are also able to produce the underlying category-level invariant, as well as morphisms from it \emph{that are compatible with the structure of pullback formalisms}.

In \Cref{S:alg,S:diff,S:hol} we will use our results to study algebraic, differentiable, and complex-analytic versions of both stable and unstable motivic homotopy theory by constructing the corresponding category-level invariants as pullback formalisms, and characterizing them by universal properties. Our general results about pullback formalisms allow us to then prove various other results about these category-level invariants.

Some tools of this type have already been established in \cite{UnivFF}. The results established in the present article expand on those results and are of a sufficient generality to be suitable for the study of pullback formalisms on various types of stacks. This stacky context presents new challenges that are not present in the case of trivial stabilizer groups, as we discuss in \Cref{rmk:stacky challenges}.

In \cite{6FF}, we will concern ourselves with the passage from pullback formalisms to 6-functor formalisms. Indeed, the criteria for producing 6-functor formalisms considered in \cite{voe-crit}, \cite{Ayoub6I, Ayoub6II}, and later in \cite{SixAlgSp, SixAlgSt} are specific to the case of algebraic geometry, and using our general study of pullback formalisms initiated in this article, we will be able to establish more general criteria not only for equipping pullback formalisms with the structure of a 6-functor formalism, but also for establishing that morphisms of pullback formalisms (such as those arising from the universal properties established in this article) are actually compatible with the induced structure of 6-functor formalisms.

In conjunction with the universal properties studied in the present article, this will allow us to produce stacky and equivariant versions of Ayoub's results from \cite{AyoubBetti}. These results can be seen as providing morphisms of 6-functor formalisms, and go from the usual equivariant motivic homotopy categories to the equivariant \emph{complex analytic} motivic homotopy categories constructed in \Cref{S:hol}.

Although we do not establish in this article that our version of stable motivic homotopy theory for complex analytic stacks is a 6-functor formalism, there is a lot that can be said already:

\begin{pseudothm}
	Let $\mathcal C$ be an appropriate category of reduced complex analytic stacks with finite stabilizers. Then there is a presheaf of categories $\SH^\hol$ on $\mathcal C$ such that
	\begin{enumerate}

		\item $\SH^\hol$ has \'{e}tale excision, and descent for open covers.

		\item For every $S \in \mathcal C$, $\SH^\hol(S)$ is a symmetric monoidal presentable $\infty$-category. In particular, it has all colimits, $\otimes$ preserves colimits in each parameter, and there is a good notion of hom-objects $\underline\Hom_S : \SH^\hol(S)^\op \times \SH^\hol(S) \to \SH^\hol(S)$.

		\item For every map $f : X \to Y$ in $\mathcal C$, the functor $f^* : \SH^\hol(Y) \to \SH^\hol(X)$ is a symmetric monoidal and preserves colimits, and if $f$ is a representable submersion, then it admits a left adjoint $f_\sharp$ that satisfies the \emph{projection formula}: for any $M \in \SH^\hol(X)$ and $N \in \SH^\hol(Y)$, the canonical map
			\[
				f_\sharp(M \otimes f^* N) \to f_\sharp M \otimes N
			\]
			is an equivalence. Equivalently, for any $N, N' \in \SH^\hol(Y)$, the map
			\[
				f^* \underline\Hom_Y(N,N') \to \underline\Hom_X(f^* N, f^* N')
			\]
			is an equivalence.

		\item $\SH^\hol$ has base change for representable submersions: if $f : X \to Y$ is a representable submersion in $\mathcal C$, then for any Cartesian square
			\[
				\begin{tikzcd}
					X' \ar[d,"p"'] \ar[r,"f'"] & Y' \ar[d,"q"] \\
					X \ar[r,"f"'] & Y
				\end{tikzcd}
			,\]
			the natural map
			\[
				f'_\sharp p^* \to q^* f_\sharp
			\]
			is an equivalence.

		\item $\SH^\hol$ is stable: for any $S \in \mathcal C$, and vector bundle $V$ on $S$, it makes sense to consider the Thom space of $V$ as an object of $\SH^\hol(S)$, and it is $\otimes$-invertible.

		\item Cohomology theories encoded by $\SH^\hol$ are homotopy invariant: for any $S \in \mathcal C$, we can use the objects $E$ of $\SH^\hol(S)$ to encode cohomology theories on $\mathcal C_{/S}$: indeed, we may define the $n$th $E$-cohomology group $E^n(X)$ of $X \to S$ to be the $n$th homotopy group of the mapping space $\SH^\hol(X)(1, p^* E)$, where $p$ is the map $X \to S$, and $1$ is the monoidal unit of $\SH^\hol(X)$. These cohomology theories are homotopy invariant: $E^n(X) \to E^n(X \times \comps)$ is an isomorphism.

		\item Cohomology theories encoded by $\SH^\hol$ are excisive for \'{e}tale neighbourhoods: given a representable embedding $Z \to S$, and $E \in \SH^\hol(S)$, there is a good notion of $E$-cohomology groups with support in $Z$ on objects of $\mathcal C_{/S}$, which we denote by $E^\bullet_Z$. For any $X \in \mathcal C_{/S}$, this fits into a long exact sequence
			\[
				\cdots \to E^n_Z(X) \to E^n(X) \to E^n(X \setminus (X \times_S Z)) \to E^{n - 1}_Z(X) \to \cdots
			,\]
			and $E^n_Z$ is excisive in the following sense: if $\tilde S \to S$ is a representable \'{e}tale neighbourhood of $Z$, then for any $X \in \mathcal C_{/S}$, the map
			\[
				E^n_Z(X) \to E^n_Z(\tilde S \times_S X)
			\]
			is an isomorphism.

	\end{enumerate}
	In fact, $\SH^\hol$ is \textbf{initial} in the category of presheaves satisfying these properties and whose morphisms commute with the $(-)_\sharp$ functors, and are objectwise colimit-preserving and symmetric monoidal.

	Finally, if $G$ is a finite group such that $\B G \in \mathcal C$, then the $\infty$-category $\SH^\hol(\B G)$ is given by formally adjoining $\wedge$-inverses of $G$-representation spheres to the $\infty$-category of pointed homotopy invariant sheaves of spaces on the site of complex manifolds with $G$-action.
\end{pseudothm}

Furthermore, we can also show that the restriction of $\SH^\hol$ to a suitable category of reduced complex algebraic stacks admits a morphism from the presheaf $\SH$ constructed in \cite{SixAlgSt}, and this morphism commutes with the $(-)_\sharp$ functors, and is objectwise colimit-preserving and symmetric monoidal. Similarly, the restriction of the presheaf $\mathrm{SH}$ constructed in \cite{TwAmb} to a suitable subcategory of $\mathcal C$ also admits a morphism from $\SH^\hol$ satisfying the same properties. Although we can prove this directly using the properties of $\SH^\hol$ given above, we will prefer to postpone the construction of these realization functors until after we establish some results related to the so-called ``gluing property'' defined in \cite{6FF}, and further explored in \cite{Gluing}.

Indeed, the main results we would like to prove about $\SH^\hol$ will be shown in \cite{6FF} and \cite{Gluing}. One of the results we will prove about $\SH^\hol$ in \cite{Gluing} is the gluing property introduced in \cite[Theorem 3.2.21]{A1htpysch}, which implies the so-called ``localization theorem'' that says that if $Z \to S$ is a representable embedding in $\mathcal C$, then there is a natural semi-orthogonal decomposition of $\SH^\hol(S)$ in terms of $\SH^\hol(Z)$ and $\SH^\hol(S \setminus Z)$. We will use this property to show that $\SH^\hol$ has the structure of a 6-functor formalism, that it has Poincar\'e duality, and that the map from the presheaf $\SH$ of \cite{SixAlgSt} respects Grothendieck's six operations, generalizing Ayoub's result in \cite{AyoubBetti}. 

\subsection*{Outline} \label{S:outline}

We record the most important results and definitions of this article here
\begin{description}

	\item[Definitions] \Cref{defn:pullback context}, \Cref{defn:PF}, \Cref{defn:pseudotop}, \Cref{defn:local qadm}, \Cref{defn:D-cover}.

	\item[General results] \Cref{thm:Huniv is univ}, \Cref{thm:PF descent}, \Cref{thm:D-topology}, \Cref{cor:dense slice}, \Cref{cor:describe invariance localization of Huniv}, \Cref{thm:ptd PF sheafy stabilization}.

	\item[Applications] \Cref{prp:univ prop of Halg}, \Cref{thm:SHalg}, \Cref{prp:univ prop of Hdiff}, \Cref{thm:SHdiff}, \Cref{prp:univ prop of Hhol}, \Cref{thm:SHhol}.

\end{description}

Now we give an outline of the article:
\begin{description}

	\item[Basic notions]
		This article begins in \Cref{S:pb} with the basic notions and results about pullback formalisms. In particular, we give the definition of a \emph{pullback context} in \Cref{defn:pullback context}, which is a category $\mathcal C$ equipped with a notion of ``quasi-admissible'' maps, and is the structure that allows us to define pullback formalisms on $\mathcal C$. Pullback formalisms on $\mathcal C$ are then defined in \Cref{defn:PF} to be the presheaves $\mathcal C^\op \to \CAlg(\PrL)$ that have base change and projection formula properties for quasi-admissible maps, and the morphisms between them are transformations that have some compatibility with the quasi-admissible maps.
		The last important notion in \Cref{S:pb} is that of the universal pullback formalism, $H^\univ$, which we consider in \Cref{S:univ PF}. Intuitively, $H^\univ$ is the presheaf on $\mathcal C$ given by sending $S \in \mathcal C$ to the category $\Psh(\mathcal C_S)$, where $\mathcal C_S \subseteq \mathcal C_{/S}$ is the full subcategory consisting of quasi-admissible maps to $S$. \Cref{thm:Huniv is univ} then shows that $H^\univ$ is an initial object in the category of pullback formalisms on $\mathcal C$.\footnote{It is worth mentioning that the hard work in the proof of \Cref{thm:Huniv is univ} is done in \cite[Example 2.6.2.8]{internal-cats}, or in \cite[Theorem 3.26]{UnivFF} when $\mathcal C$ is a $1$-category.}

	\item[Notions of generalized descent]
		In \Cref{S:generalized descent}, we will study notions of generalized descent. In particular, we define the notion of pseudotopology in \Cref{S:pseudotopology}, which is a simple and flexible generalization of a Grothendieck topology that also allows us to encode conditions such as homotopy invariance, or preservation of colimits. It turns out that pseudotopologies give precisely the correct level of generality for encoding generalized invariance properties of cohomology theories -- see \Cref{rmk:pseudotops are good}.

		The definition of a pseudotopology on a small category $\mathcal C$ is very simple: it is a collection of maps in $\Psh(\mathcal C)$ whose codomains are representable, and that are stable under base change along maps between representable presheaves (in the case of Grothendieck topologies, we ask that the maps are monomorphisms, and impose further closure conditions on them).

		Although pseudotopologies are not as well-behaved as Grothendieck topologies, they are not so pathological -- \Cref{prp:pseudotop LCL} shows that although the version of sheafification that exists for pseudotopologies is not left exact, it is an accessible locally Cartesian localization that preserves finite products.


		
		In \Cref{S:acyclic}, we consider a weaker notion of (generalized) descent for presheaves of categories, which can roughly be seen as replacing the condition that certain maps are equivalences by the condition that they are fully faithful. Seen as a condition on systems of coefficients, this is shown to correspond to the condition that the associated cohomology theories have (generalized) descent.

		Finally, in \Cref{S:qadm covers}, we study descent properties of pullback formalisms. The main results of this \namecref{S:qadm covers} are \Cref{thm:PF descent} and \Cref{thm:D-topology} which give powerful tools for checking descent for pullback formalisms. Given a pullback formalism $D$, the latter result says that if $\{X_i \to S\}_i$ is a family of quasi-admissible maps, then $D$ has descent along this family if and only if the cohomology theories it defines do, and this is equivalent to the seemingly much weaker condition that the functors $\{D(S) \to D(X_i)\}_i$ are jointly conservative. Furthermore, any other pullback formalism admitting a map from $D$ also descends along this family.


	\item[Localizing pullback formalisms]
		\Cref{S:invertibility} is broadly concerned with constructions that freely invert maps in the categories $D(S)$ for $D$ a pullback formalism on $\mathcal C$, and $S \in \mathcal C$. The results of \Cref{S:localizing pf} are very general results to do with localizing along such maps, and can mostly be ignored in favour of the results they are used to prove in \Cref{S:imposing invariance}.

		One of the main results of this section is \Cref{cor:describe invariance localization of Huniv}, which describes the universal pullback formalism whose associated cohomology theories have the invariance properties specified by a prescribed pseudotopology $\tau$. Indeed, this pullback formalism, denoted $H^\tau$, is given by sending $S \in \mathcal C$ to the category $\Psh^\tau(\mathcal C_S)$ of $\tau$-local presheaves, \ie, presheaves that are invariant with respect to $\tau$. When $\tau$ is a Grothendieck topology, these are $\tau$-sheaves, and when $\tau$ expresses a homotopy invariance condition, these are homotopy invariant sheaves. In fact, \Cref{cor:describe invariance localization of Huniv} is proven by noting that $H^\tau$ is obtained by freely imposing $\tau$-invariance on $H^\univ$, and using \Cref{prp:invariance localization}, which describes how to do this for general pullback formalisms.

	\item[Stabilization]
		\Cref{S:stabilize PF} is concerned with the process of ``stabilizing'' pullback formalisms, analogously to the way that the category of spectra is obtaining by stabilizing the category of spaces. This process is decomposed into two operations: producing pullback formalisms of pointed objects, which is considered in \Cref{S:pointing pf}, and formally adjoining $\otimes$-inverses, which is considered in \Cref{S:tensor inversion}. In particular, if for each $S \in \mathcal C$ we are given a collection of objects $A_S$ of $\mathcal C$, we are interested in describing a pullback formalism $D[A^{\otimes -1}]$ obtained from $D$ by freely adjoining $\otimes$-inverses of these objects. Indeed, using \cite[\S2]{robalo}, it is possible to construct $D[A^{\otimes -1}]$ as a presheaf $\mathcal C^\op \to \CAlg(\PrL)$, but it is not clear that $D[A^{\otimes -1}]$ is a pullback formalism. If we assume some good interaction of the collections $\{A_S\}_{S \in \mathcal C}$ with the quasi-admissible maps in $\mathcal C$, then it is possible to show (using \Cref{prp:PF stabilization}) that $D \to D[A^{\otimes -1}]$ is a morphism of pullback formalisms.

		In many cases, this result is sufficient. In particular, it is strong enough to produce the stable motivic homotopy theory of schemes. However, as discussed in more detail in \Cref{rmk:stacky challenges}, it is often the case when $\mathcal C$ is a category of stacks, that we can only check the condition on quasi-admissible maps locally, and a more complicated procedure is required. In particular, the pullback formalism $D[A^{\otimes -1}]$ can no longer be constructed simply by freely adjoining $\otimes$-inverses of objects in $A_S$ to $D[A^{\otimes -1}]$, but we can still give an effective description of it, as shown in \Cref{thm:PF sheafy stabilization}.

		When viewing the category of spectra as a stabilization of the category of spaces, what we mean is that we can construct the category of spectra by formally adjoining $\wedge$-inverses to spheres in the category of pointed spaces. Indeed, often when applying the procedure above of formally adjoining $\otimes$-inverses, we will first want to consider pointed versions of our pullback formalisms, which are provided by \Cref{prp:pointing}. \Cref{thm:ptd PF sheafy stabilization} is a convenient result that combines this with our general stabilization result \Cref{thm:PF sheafy stabilization} mentioned earlier, and is the result we will end up using most regularly in our applications.

	\item[``Motivic homotopy theories'' on various types of stacks]
		\Cref{S:apps} consists of applications of our general results. In it, we produce examples of pullback formalisms ``in the style of motivic homotopy theory'' in the context of algebraic, differentiable, and complex analytic stacks. The algebraic and differentiable contexts recover the constructions of \cite{SixAlgSt} and \cite[\S II]{TwAmb} respectively, and the complex analytic case is new. Note that each case includes unstable and stable versions of the corresponding motivic homotopy theory. As mentioned in the beginning, possibly the most interesting new result of \Cref{S:apps} is \Cref{thm:SHhol}, which establishes the stable motivic homotopy theory of complex analytic stacks.


\end{description}

\subsection*{Acknowledgements}

The author would like to thank Andrew Blumberg and Johan de Jong for their support as PhD advisors, and Elden Elmanto for his encouragement and advice. The author would also like to thank Martin Gallauer for some technical guidance regarding \Cref{thm:Huniv is univ}.

\subsection*{Notations and Conventions}

Throughout this article, we will make heavy use of the machinery of $\infty$-categories as developed in \cite{htt} and \cite{ha}. Therefore, all of our language will be implicitly $\infty$-categorical:
\begin{enumerate}

	\item We say ``category'' to mean ``$\infty$-category''. Note that then functors, adjoints, and (co)limits must all be understood in the context of $\infty$-categories.

	\item Following \cite[Remark 3.0.0.5]{htt}, we will write $\Cat$ to denote the category of small categories, and $\widehat{\Cat}$ to denote the category of all categories.

	\item We write $\spaces$ for the category of small spaces/$\infty$-groupoids/anima (see \cite[\S1.2.16]{htt}), and $\Sp$ for the category of spectra (see \cite[\S1.4.3]{ha}). Write $\widehat{\spaces}$ for the category of all spaces (not necessarily small).

	\item Unless otherwise specified, presheaves and sheaves are always implicitly assumed to take values in $\spaces$. Given a category $\mathcal C$, we write $\Psh(\mathcal C)$ to denote that category of presheaves on $\mathcal C$, and if $\mathcal C$ is equipped with a Grothendieck topology that is understood from context, we write $\Shv(\mathcal C)$ to denote the category of sheaves on $\mathcal C$.

	\item Given a category $\mathcal C$, we write $\mathcal C(-,-)$ for the hom functor $\mathcal C^\op \times \mathcal C \to \widehat{\spaces}$. $\mathcal C$ is locally small if this functor takes values in $\spaces$.

	\item The very large categories $\PrL, \PrR$ of presentable categories are defined in \cite[Definition 5.5.3.1]{htt}. These are the categories of presentable categories and left adjoint functors or right adjoint functors respectively. Note that $\PrL$ is equipped with the structure of a symmetric monoidal category as in \cite[Proposition 4.8.1.15]{ha}.

	\item For any symmetric monoidal category $\mathcal C$, we write $\CAlg(\mathcal C)$ for the category of commutative algebra objects in $\mathcal C$ -- see \cite[Definition 2.1.3.1]{ha}. In particular, $\CAlg(\Cat)$ is the category of symmetric monoidal categories, and $\CAlg(\PrL)$ is the category of symmetric monoidal presentable categories where the monoidal product preserves small colimits in each variable.



\end{enumerate}

We will also use the following notations and conventions:
\begin{enumerate}

	\item The abbreviation ``qcqs'' will be used to mean ``quasi-compact and quasi-separated'' in any context where these adjectives make sense.

	\item Whenever we say "limit-preserving" or "colimit-preserving", we are referring only to \emph{small} limits and colimits.

	\item If $f : X \to Y$ and $g : X' \to Y$ are maps in a category $\mathcal C$, and the fibred product $X \times_Y X'$ exists, we will sometimes write $f^{-1}(g) : X \times_Y X' \to X$ for the base change of $g$ along $f$ in $\mathcal C$.

	\item Given some implicitly specified ambient category, we will write $\pt$ for a terminal object of that category.

	\item Given a category $\mathcal C$, we will write $\yo : \mathcal C \to \Psh(\mathcal C)$ for the Yoneda embedding of $\mathcal C$. We generally do not include $\mathcal C$ in the notation as it is often clear from context which category's Yoneda embedding we are considering.

	\item Following \cite[\S1.2.8]{htt}, we will use the symbol $\star$ to denote the join of simplicial sets.

	\item Following \cite[Notation 1.2.8.4]{htt}, for any simplicial set $K$, we write $K^\triangleleft$ and $K^\triangleright$ for the simplicial sets obtained by adjoining an initial or terminal cone point respectively to $K$. We will also write $-\infty, \infty$ to denote the cone points of $K^\triangleleft, K^\triangleright$ respectively, so we can write $K^\triangleleft = \{-\infty\} \star K$ and $K^\triangleright = K \star \{\infty\}$.

\end{enumerate}

\section{Pullback Contexts and Formalisms} \label{S:pb}

\subsection{General Motivation: Cohomology Theories with Local Coefficients} \label{S:mot coeff}

We are interested in making a general study of ``coefficients for cohomology theories'' in the context of some general category of geometric objects $\mathcal C$. In particular, this should apply equally well for CW complexes, algebraic stacks, complex analytic stacks, and differentiable stacks. For example, quasi-coherent sheaves give us a notion of coefficients for cohomology theories on schemes, and sheaves of spectra give us coefficients for cohomology theories on topological spaces.

Before making any reference to geometric data, we note that given a category $\mathcal C$, a \emph{system of coefficients for cohomology theories} on $\mathcal C$ should give us the following data:
\begin{enumerate}

	\item For each $S \in \mathcal C$, we should have some symmetric monoidal presentable category $D(S)$ of ``local coefficients'' on $S$.

	\item For any $S \in \mathcal C$, and $M \in D(S)$, we should have a cohomology object $D(S; M)$ of $S$ with coefficients in $M$. Initially we will only consider the structure of a space (homotopy type) on $D(S; M)$, which gives us ``cohomology sets'' in nonnegative degrees, and which are groups in degree $\geq 1$, and abelian groups in degree $\geq 2$, but eventually we can consider enhancements so that $D(S; M)$ is a spectrum, allowing us to define abelian cohomology groups $D^n(S;M)$ in \emph{all} degrees $n$.

	\item The association $S \mapsto D(S)$ should be contravariantly functorial in $\mathcal C$, so if $f : X \to Y$ is a map in $\mathcal C$, we should have a symmetric monoidal colimit-preserving functor $f^* : D(Y) \to D(X)$.

\end{enumerate}

\begin{exa}
	Here we consider some examples of coefficient systems:
	\begin{enumerate}

		\item Let $\mathcal C$ be the category of schemes. Given a scheme $S$, we can think of $\mathcal O_S$-modules as coefficients for cohomology theories on $S$-schemes. Indeed, we can let $D$ be the contravariant functor sending $S$ to the derived category of $\mathcal O_S$-modules. Given a scheme $X$ over $S$, and a $\mathcal O_S$-module $\mathcal M$, we define the cohomology groups $D^n(X; \mathcal M)$ to be $H^n(X, p^* \mathcal M)$, where $p : X \to S$ is the structure map. When $\mathcal M = \mathcal O_S$, this is simply $H^n(X, \mathcal O_X)$.

		\item Let $\mathcal C$ be the category of topological spaces. Local systems are often used as local coefficients for cohomology theories on topological spaces, so for a topological space $S$, we can set $D(S)$ to be the category of local systems on $S$, which can be thought of as $\Fun(S, \Ab)$, where $\Ab$ is the category of abelian groups. In this case, given a local system $L : S \to \Ab$ in $D(S)$, and a map $p : X \to S$, we define $D^n(X;L)$ to be $H^n(X;L \circ p)$, where $L \circ p : X \to \Ab$ is the pullback of the local system $L$ to a local system on $X$. Note that if $S$ is a point, then local systems on $L$ are simply abelian groups, and $D^n(X;L)$ is simply the $n$th singular cohomology group of $X$ with coefficients in the abelian group $L$.

			In fact, the definition $D(S) = \Fun(S, \Ab)$ is not so convenient for our $\infty$-categorical setting. It makes more sense to set $D(S) = \Fun(S, \Sp)$, where $\Sp$ is the category of spectra. In this case, the case that $S$ is a point also includes any cohomology theory represented by a spectrum, such as $K$-theory and complex cobordism.

			Furthermore, the cohomology groups $D^n(X;L) = H^n(X;L \circ p)$ are given as
			\[
				D^n(X;L) = \pi_n \varprojlim_X (L \circ p)
			.\]


	\end{enumerate}
	
\end{exa}

Since the assignment $S \mapsto D(S)$ is contravariantly functorial, we have that for any $M \in D(S)$, and $p : X \to S$, we can define a cohomology space $D(X; M)$ as $D(X; p^* M)$.

Given $S \in \mathcal C$, since $D(S)$ is presentable, and $- \otimes -$ preserves colimits in each variable, we have a functor $\underline\Hom_S(-,-) : D(S)^\op \times D(S) \to D(S)$ that preserves limits in each variable, and such that for any $M \in D(S)$, the functor $- \otimes M$ is left adjoint to $\underline\Hom_S(M,-)$. Note that given a map $f : X \to Y$ in $\mathcal C$, for any $M,N \in D(Y)$, there is a natural transformation
\[
	f^* \underline\Hom_Y(M,N) \to \underline\Hom_X(f^* M, f^* N)
.\]

Equipped with this structure, we have that the following two conditions are equivalent:
\begin{enumerate}

	\item For any $M,N \in D(S)$, there is an equivalence $D(S; \underline\Hom_S(M,N)) \simeq D(S)(M,N)$, and this is natural in $M,N$.

	\item The functor $D(S; -) : D(S) \to \spaces$ is corepresented by the monoidal unit of $D(S)$.

\end{enumerate}

Thus, we see that under very modest assumptions, the data of a ``system of coefficients for cohomology theories'' on $\mathcal C$ as above corresponds exactly to the structure of a functor $D : \mathcal C^\op \to \CAlg(\PrL)$: if we write $1$ for the monoidal unit of any monoidal category, for $S \in \mathcal C$, and $M \in D(S)$, we can define
\[
	D(S; M) \coloneqq D(S)(1, M)
.\]

More generally, we can define the following notation:
\begin{nota} \label{nota:cohomology}
	If $\mathcal C$ is a category, and $D : \mathcal C^\op \to \CAlg(\PrL)$ is a functor, for any map $p : X \to S$ in $\mathcal C$, and $M \in D(S)$, we write
	\[
		D(X; M) \coloneqq D(S)(1, p^* M)
	,\]
	which we think of as the ``cohomology of $X$ with coefficients in $M$''.
\end{nota}

Note that in particular, if $D$ takes values in stable categories, this allows us to enhance the cohomology spaces $D(X; M)$ with the structure of spectra, allowing us to obtain a familiar notion of cohomology groups
\[
	H^n_D(X;M) \coloneqq \pi_n D(X; M)
.\]


We also have a good notion of morphisms of coefficient systems: if $\phi : D \to D'$ is a transformation of presheaves $\mathcal C^\op \to \CAlg(\PrL)$, then for any map $p : X \to S$ in $\mathcal C$, and $M \in D(S)$, we get a map
\[
	D(X;M) \to D'(X; \phi(M))
\]
as a map of mapping spaces
\[
	D(X)(1, p^* M) \to D'(X)(\phi(1), \phi(p^* M)) \simeq D'(X)(1, p^* \phi(M))
.\]
If $D,D'$ take values in stable categories, the transformation $\phi$ is automatically objectwise exact, so we can enhance this to a morphism on mapping spectra, and in particular, we get maps 
\[
	H^n_D(X;M) \to H^n_{D'}(X;\phi(M))
.\]

This allows us to compare cohomology theories associated to different types of coefficients.

\begin{exa}
	Suppose $\mathcal C$ is the category of finite type $\comps$-schemes. We can let $D$ be the presheaf that sends a scheme $X$ to the derived category of coherent modules $D_{\Coh}(X)$ on $X$, and let $D'$ be the presheaf that sends a scheme $X$ to the derived category of coherent sheaves on its analytification $X^\an$. As seen in \cite[XII.1.3]{SGA1}, analytification induces a transformation $\phi : D \to D'$, and for any coherent sheaf $M$ on $X$, the transformation
	\[
		H^n_D(X;M) \to H^n_{D'}(X;\phi(M)) \text{ is }
		H^n(X;M) \to H^n(X^\an, M^\an)
	,\]
	allowing us to compare coherent cohomology on $X$ with coherent cohomology on its analytification.
\end{exa}

\subsection{Basic Notions}

We will be interested in coefficient systems that have a ``projection formula'' and ``base change'' for certain maps. In particular, if $D$ is a coefficient system on a category $\mathcal C$, we will want that for certain maps $f : X \to Y$ in $\mathcal C$, the pullback functor $f^* : D(Y) \to D(X)$ has a left adjoint $f_\sharp : D(X) \to D(Y)$ that satisfies
\begin{description}
	
	\item[Projection formula] For any $M \in D(X)$ and $N \in D(Y)$, we have an equivalence
		\[
			f_\sharp(M \otimes f^* N) \to f_\sharp M \otimes N
		.\]

	\item[Base change] If $b : Y' \to Y$ is any map, then there is a Cartesian square
		\[
			\begin{tikzcd}
				X' \ar[d, "a"'] \ar[r, "f'"] & Y' \ar[d, "b"] \\
				X \ar[r, "f"'] & Y
			\end{tikzcd}
		\]
		and an equivalence
		\[
			f'_\sharp a^* \to b^* f_\sharp
		.\]

\end{description}


The geometric data specifying which maps $f$ should have these properties is called a quasi-admissibility structure.

\subsubsection{Pullback contexts and quasi-admissibility structures}

\begin{defn} \label{defn:pullback context}
	A \emph{quasi-admissibility structure} for a category $\mathcal C$ is a collection $Q$ of morphisms satisfying certain conditions. Before stating the conditions we will fix the following notation: for an object $S \in \mathcal C$, the category $\mathcal C_S$ is the full subcategory of $\mathcal C_{/S}$ consisting of maps to $S$ that are in $Q$. Now, $Q$ is said to be a quasi-admissibility structure if and only if
	\begin{enumerate}

		\item Every equivalence is in $Q$.

		\item If $\sigma : S' \to S$ is in $Q$, then for any $f : X \to S$ in $\mathcal C$, there is a base change $f^{-1}(\sigma) : X' \to S'$ of $\sigma$ along $f$, and $f^{-1}(\sigma)$ is in $Q$.

		\item $Q$ is closed under composites. In particular, if $f : X \to Y$ is in $Q$, the functor $f^{-1} : \mathcal C_Y \to \mathcal C_X$ has a left adjoint $\mathcal C_X \to \mathcal C_Y$.

	\end{enumerate}
	A \emph{pullback context} is a category $\mathcal C$ equipped with a quasi-admissibility structure $Q$. We will often simply say that $\mathcal C$ is a pullback context, and call the elements of $Q$ \emph{quasi-admissible morphisms or maps}.

	We will make reference both to small and presentable pullback contexts.

	Say a pullback context $\mathcal C$ is \emph{quasi-small} if for all objects $S \in \mathcal C$, the category $\mathcal C_S$ is small.
\end{defn}

\begin{rmk}
	The terminology of quasi-admissible maps is to be thought of in analogy with the \emph{admissible} maps of \cite[Definition 1.2.1]{dag5} and \cite[Definition 20.2.1.1]{SAG}. In our case, we do not require that quasi-admissible maps are closed under retracts, or that $f : X \to Y$ is quasi-admissible if $g : Y \to Z$ and $g \circ f$ are quasi-admissible. Note that if $g$ also has quasi-admissible diagonal, then a general argument shows that $f$ must be quasi-admissible in this case.

	The collection of smooth morphisms on schemes is the prototypical example of a quasi-admissibility structure, and the collection of \'{e}tale morphisms of schemes is a prototypical example of an admissibility structure. Indeed, a smooth morphism is \'{e}tale if and only if its diagonal is also smooth (see \cite[Tags \stackslink{02GE} and \stackslink{02GK}]{stacks-project}).

	In \cite{Gluing}, we will further explore the notion of admissibility structures.
\end{rmk}

\begin{rmk} \label{rmk:gen qadm}
	If $Q$ is a collection of maps in $\mathcal C$ that is stable under base change, then $\mathcal C$ admits a quasi-admissibility structure in which the quasi-admissible maps are composites of equivalences and maps in $Q$.
	\begin{proof}
		It is clear that this collection is closed under composition and contains all equivalences. Furthermore, it is stable under base change since equivalences and maps in $Q$ are stable under base change.
	\end{proof}
\end{rmk}

\begin{nota}
	Given a functor $F : \mathcal C' \to \mathcal C$, and a presheaf $D : \mathcal C^\op \to \widehat{\Cat}$, we will often denote the presheaf $D \circ F^\op$ on $\mathcal C'$ by $F^* D$.
\end{nota}

\begin{defn}
	A functor between pullback contexts is a morphism of pullback contexts if it preserves quasi-admissible morphisms, and base changes along quasi-admissible morphisms.

	Say a morphism of pullback contexts $F : \mathcal C \to \mathcal D$ is anodyne if for any $S \in \mathcal C$, the induced map
	\[
		\mathcal C_S \to \mathcal D_{F(S)}
	\]
	is an equivalence.
\end{defn}

\begin{rmk}
	Let $\mathcal D$ be a pullback context. A subcategory $\mathcal C$ has at most one quasi-admissibility structure making the inclusion a morphism of pullback contexts. This is the quasi-admissibility structure in which a map is quasi-admissible in $\mathcal C$ if and only if it is quasi-admissible in $\mathcal D$. This is indeed a quasi-admissibility structure if and only the inclusion induces an equivalence of $\mathcal C$ with a replete subcategory of $\mathcal D$, and if $\mathcal C \to \mathcal D$ preserves quasi-admissible base changes. 

	In this case, we will say that $\mathcal C$ is a \emph{pullback subcontext} of $\mathcal D$.
\end{rmk}

\begin{rmk} \label{rmk:full anodyne subctx}
	If $\mathcal D$ is a pullback context, a full subcategory $\mathcal C$ is an anodyne pullback subcontext if and only if for any object of $X \in \mathcal D$, if $X$ admits a quasi-admissible map to an object of $\mathcal C$, then $X \in \mathcal C$.
\end{rmk}

\begin{rmk}
	Quasi-admissibility structures are closed under arbitrary intersections.
\end{rmk}

We end with a remark explaining how to handle size issues in our results.
\begin{rmk}[Handling size issues]
	For technical reasons, some of our results will require that the pullback contexts we consider are \emph{small}. In practice, almost every pullback context $\mathcal C$ we consider will at least be quasi-small and locally small, which means that for any small collection of objects in $\mathcal C$, there is a small full anodyne pullback subcontext of $\mathcal C$ containing them.

	Since many of our constructions behave well with respect to taking full anodyne pullback subcontexts, this will not pose a problem for us. See \Cref{rmk:abuse notation for univ PF} and \Cref{rmk:stabilization compatible with restriction}.
\end{rmk}

\subsubsection{Pullback formalisms}

\begin{defn} \label{defn:PF}
	Let $\mathcal C$ be a pullback context.
	\begin{itemize}

		\item Say $D : \mathcal C^\op \to \widehat{\Cat}$ \emph{respects quasi-admissibility} if it sends every quasi-admissible map to a right adjoint functor. In this case, for any quasi-admissible map $X \to S$ in $\mathcal C$, we will define the \emph{fundamental class} of $X$ over $S$ to be the object
			\[
				\cls{X} = \cls{X;S} = \cls{X;S}_D \coloneqq p_\sharp(1) \in D(S)
			,\]
			where $p : X \to S$ is the structure map, and $1$ is the monoidal unit of $D(X)$.

		\item Say $D : \mathcal C^\op \to \CAlg(\widehat{\Cat})$ \emph{satisfies the projection formula for quasi-admissible maps} if for every quasi-admissible morphism $f$ in $\mathcal C$, $D(f)$ has a linear left adjoint (see Definition \ref{defn:proj form}).

		\item Say $D : \mathcal C^\op \to \widehat{\Cat}$ that respects quasi-admissibility has \emph{quasi-admissible base change} if $D$ has left base change for quasi-admissible maps (Definition \ref{defn:base change}).

		\item Say $D : \mathcal C^\op \to \CAlg(\widehat{\Cat})$ that satisfies the projection formula for quasi-admissible maps is \emph{geometrically generated} at $S \in \mathcal C$ if the category $D(S)$ is generated under colimits by objects $\cls{X}$ for $X \in \mathcal C_S$. If this holds for all $S \in \mathcal C$, say $D$ is geometrically generated.

	\end{itemize}

	Say a morphism $D \to D'$ in $\Psh_{\widehat{\Cat}}(\mathcal C)$ \emph{respects quasi-admissibility} if it is left adjointable at quasi-admissible maps in the sense of \Cref{defn:compat}, that is, for any quasi-admissible map $X \to Y$, the square
	\[
		\begin{tikzcd}
			D(Y) \ar[d] \ar[r] & D(X) \ar[d] \\
			D'(Y) \ar[r] & D'(X)
		\end{tikzcd}
	\]
	is left-adjointable. Define the category $\PF(\mathcal C)$ of pullback formalisms to be the subcategory of $\Psh_{\CAlg(\PrL)}(\mathcal C)$ of presheaves with quasi-admissible base change that have the projection formula for quasi-admissible maps, and where morphisms must respect quasi-admissibility.
\end{defn}

\begin{rmk}
	Let $\{F_i : \mathcal C_i \to \mathcal C\}_i$ be a family of morphisms of pullback contexts such that for any map $f$ in $\mathcal C$, there is an index $i$ and map $\tilde f$ in $\mathcal C_i$ such that $F_i(\tilde f) = f$, and if $f$ is quasi-admissible then there is an $i$ so that $\tilde f$ can be chosen to be quasi-admissible. Then for any $D : \mathcal C^\op \to \CAlg(\PrL)$, $D$ is a pullback formalism if and only if $D \circ F_i$ is a pullback formalism for all $i$, and any transformation $D \to D'$ is respects quasi-admissibility if and only if $(D \to D') \circ F_i$ is respects quasi-admissibility for all $i$.
	\begin{proof}
		The ``only if'' direction is clear from the definition of morphisms of pullback contexts. For the converse, note that the hypotheses guarantee that any Cartesian square in $\mathcal C$ that has parallel quasi-admissible sides is the image of such a square under $F_i$ for some $i$.
	\end{proof}
\end{rmk}

\begin{exa} \label{exa:slice}
	If $\mathcal C$ is any category that admits all pullbacks, then for any map $X \to Y$ in $\mathcal C$, the slice projection along $X \to Y$ has a right adjoint $\mathcal C_{/Y} \to \mathcal C_{/X}$ given by base change along $X \to Y$. In fact, the slice projection is a linear left adjoint of this functor, where the monoidal structures are Cartesian, and for any Cartesian square
	\[
		\begin{tikzcd}
			X' \ar[d] \ar[r] & Y' \ar[d] \\
			X \ar[r] & Y
		\end{tikzcd}
	\]
	in $\mathcal C$, the resulting square of base change functors
	\[
		\begin{tikzcd}
			\mathcal C_{/Y} \ar[d] \ar[r] & \mathcal C_{/X} \ar[d] \\
			\mathcal C_{/Y'} \ar[r] & \mathcal C_{/X'}
		\end{tikzcd}
	\]
	is left adjointable.

	In particular, if $\mathcal C$ is a presentable category with universal colimits, so that the base change functors $\mathcal C_{/Y} \to \mathcal C_{/X}$ are morphisms in $\CAlg(\PrL)$, then this defines a pullback formalism on $\mathcal C$ with respect to the quasi-admissibility structure of all maps.

	More generally, if $\mathcal C$ is a pullback context, then following \cite[Notation 6.1.3.4]{htt} and \kerodoncite{05S7}, we have that $\mathcal O_{\mathcal C}^\qadm \subseteq \Fun(\Delta^1, \mathcal C) \to \mathcal C$ is a Cartesian fibration, where the projection to $\mathcal C$ is given by evaluation at $1 \in \Delta^1$, and $\mathcal O_{\mathcal C}^\qadm$ is the full subcategory of quasi-admissible maps in $\mathcal C$. This classifies a functor $\mathcal C^\op \to \widehat{\Cat}$ that sends a map $f : X \to Y$ in $\mathcal C$ to the base change functor $f^{-1} : \mathcal C_Y \to \mathcal C_X$, so if $f$ is quasi-admissible, this has a linear left adjoint $\mathcal C_X \to \mathcal C_Y$ given by postcomposition by $f$. This defines a presheaf $\mathcal C^\op \to \widehat{\Cat}$ that has quasi-admissible base change, and \cite[Corollary 2.4.1.9]{ha} shows that this lifts to a presheaf $H^\slice : \mathcal C^\op \to \CAlg(\widehat{\Cat})$ that has the projection formula for quasi-admissible maps with respect to Cartesian monoidal structures. This recovers the ``geometric pullback formalism'' of \cite[\S3.1]{UnivFF}.
\end{exa}

\begin{nota}
	Given a pullback formalism $D$,
	\begin{enumerate}

		\item if $D$ is clear from context, we will write $f^*$ for $D(f)$, and $f_*$ for its right adjoint. If $f$ is quasi-admissible, we will write $f_\sharp$ for the left adjoint.

		\item We will always write $1$ for the monoidal unit of $D(S)$ for any $S \in \mathcal C$.


		\item Given a section $s : S \to X$ in $\mathcal C_S$, write $\cls{s;S} = \cls{s;S}_D$ for the map $1 \to \cls{X}$ given by $s^*(1 \to p^* p_\sharp(1))$, where $p : X \to S$ is the quasi-admissible structure map.

		\item Given a map $f : X \to Y$ in $\mathcal C_S$, if $p : X \to S$, $q : Y \to S$ are the quasi-admissible structure maps, write $\cls{f;S}_D = \cls{f;S}$ for the map $\cls{X;S}_D \to \cls{Y;S}_D$ adjunct to
			\[
				1 \xrightarrow{\cls{(\id, f); X}_D} \cls{X \times_S Y;X}_D \simeq p^* \cls{Y;S}_D
			.\]

	\end{enumerate}
\end{nota}

\begin{rmk}
	Let $D$ be a pullback formalism on a pullback context $\mathcal C$. Then for any $f : X \to S$ and quasi-admissible $\sigma : S' \to S$,
	\begin{enumerate}

		\item $f^* \cls{S'} \simeq \cls{f^{-1}(S')}$

		\item The counit $\sigma_\sharp \sigma^* \to \id$ is equivalent to $\cls{\sigma;S} \otimes -$
			
		\item If $f$ is quasi-admissible, then $\cls{X \times_S S'; S} \simeq \cls{X} \otimes \cls{S'}$

	\end{enumerate}
\end{rmk}

\subsection{Propagating pullback formalisms along morphisms}


\Cref{prp:map from PF is to PF} and \Cref{prp:sub PF is PF} will be useful for establishing that the sources and targets of certain transformations are pullback formalisms.

\begin{prp} \label{prp:map from PF is to PF}
	Let $\mathcal C$ be a pullback context, and let $D^\natural \to D$ be a map in $\Psh_{\CAlg(\PrL)}(\mathcal C)$ that respects quasi-admissibility. If the image of $D^\natural(S) \to D(S)$ generates $D(S)$ under small colimits for all $S \in \mathcal C$, and $D^\natural$ is a pullback formalism, then $D$ is a pullback formalism.

	Furthermore, for any pullback formalism $E$ on $\mathcal C$, the square
	\[
		\begin{tikzcd}
			\PF(\mathcal C)(D, E) \ar[d] \ar[r] & \PF(\mathcal C)(D^\natural, E) \ar[d] \\
			\Psh_{\CAlg(\PrL)}(\mathcal C)(D, E) \ar[r] & \Psh_{\CAlg(\PrL)}(\mathcal C)(D^\natural, E)
		\end{tikzcd}
	\]
	is Cartesian.
	\begin{proof}
		This follows easily from \Cref{lem:vert surj adj}. In detail, let $f : X \to Y$ be a quasi-admissible map in $\mathcal C$. Then the square
		\[
			\begin{tikzcd}
				D^\natural(Y) \ar[d] \ar[r] & D^\natural(X) \ar[d] \\
				D(Y) \ar[r] & D(X)
			\end{tikzcd}
		\]
		is left adjointable. To see that $D^\natural$ is a pullback formalism, we must show the following:
		\begin{enumerate}

			\item The square
				\[
					\begin{tikzcd}
						D(Y) \times D(Y) \ar[d, "\otimes"'] \ar[r] & D(X) \times D(Y) \ar[d, "- \otimes f^*"] \\
						D(Y) \ar[r] & D(X)
					\end{tikzcd}
				\]
				is left adjointable. Indeed, if we can show that the outer square in
				\[
					\begin{tikzcd}
						D^\natural(Y) \times D^\natural(Y) \ar[d] \ar[r] & D^\natural(X) \times D^\natural(Y) \ar[d] \\
						D(Y) \times D(Y) \ar[d, "\otimes"'] \ar[r] & D(X) \times D(Y) \ar[d, "- \otimes f^*"] \\
						D(Y) \ar[r] & D(X)
					\end{tikzcd}
				\]
				is left adjointable, then this follows from \Cref{lem:vert surj adj}, since the outer square is left adjointable by \cite[Lemma F.6 (3)]{TwAmb} because it is equivalent to the outer square in the diagram
				\[
					\begin{tikzcd}
						D^\natural(Y) \times D^\natural(Y) \ar[d, "\otimes"'] \ar[r] & D^\natural(X) \times D^\natural(Y) \ar[d, "- \otimes f^*"] \\
						D^\natural(Y) \ar[d] \ar[r] & D^\natural(X) \ar[d] \\
						D(Y) \ar[r] & D(X)
					\end{tikzcd}
				.\]

			\item If $Y' \to Y$ is any map, then
				\[
					\begin{tikzcd}
						D(Y) \ar[d] \ar[r] & D(X) \ar[d] \\
						D(Y') \ar[r] & D(X \times_Y Y')
					\end{tikzcd}
				\]
				is left adjointable. Indeed, if we can show that the outer square in
				\[
					\begin{tikzcd}
						D^\natural(Y) \ar[d] \ar[r] & D^\natural(X) \ar[d] \\
						D(Y) \ar[d] \ar[r] & D(X) \ar[d] \\
						D(Y') \ar[r] & D(X \times_Y Y')
					\end{tikzcd}
				\]
				is left adjointable, then this follows from \Cref{lem:vert surj adj}, since the outer square is left adjointable by \cite[Lemma F.6 (3)]{TwAmb} because it is equivalent to the outer square in the diagram
				\[
					\begin{tikzcd}
						D^\natural(Y) \ar[d] \ar[r] & D^\natural(X) \ar[d] \\
						D^\natural(Y') \ar[d] \ar[r] & D^\natural(X \times_Y Y') \ar[d] \\
						D(Y') \ar[r] & D(X \times_Y Y')
					\end{tikzcd}
				.\]

		\end{enumerate}

		Finally, we will show that if $E$ is a pullback formalism on $\mathcal C$, then
		\[
			\begin{tikzcd}
				\PF(\mathcal C)(D, E) \ar[d] \ar[r] & \PF(\mathcal C)(D^\natural, E) \ar[d] \\
				\Psh_{\CAlg(\PrL)}(\mathcal C)(D, E) \ar[r] & \Psh_{\CAlg(\PrL)}(\mathcal C)(D^\natural, E)
			\end{tikzcd}
		\]
		is Cartesian. We must show that if $D \to E$ is a transformation such that $D^\natural \to D \to E$ respects quasi-admissibility, then $D \to E$ respects quasi-admissibility, so if $X \to Y$ is quasi-admissible, we must show that the bottom square in
		\[
			\begin{tikzcd}
				D^\natural(Y) \ar[d] \ar[r] & D^\natural(X) \ar[d] \\
				D(Y) \ar[d] \ar[r] & D(X) \ar[d] \\
				E(Y) \ar[r] & E(X)
			\end{tikzcd}
		\]
		is left adjointable, given that the top square and outer rectangle are left adjointable, but this follows from \Cref{lem:vert surj adj}.
	\end{proof}
\end{prp}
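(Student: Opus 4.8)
The plan is to verify the two defining closure properties of a pullback formalism for $D$ — the projection formula and base change for quasi-admissible maps — by the pasting calculus for left-adjointable squares, with \Cref{lem:vert surj adj} as the workhorse. Since $D$ is handed to us already as an object of $\Psh_{\CAlg(\PrL)}(\mathcal C)$, the first observation is that $D$ automatically respects quasi-admissibility: for quasi-admissible $f$, the hypothesis that $D^\natural\to D$ respects quasi-admissibility says that the restriction square $[\,D^\natural(f)\,;\,D(f)\,]$ is left-adjointable, which in particular records that $D(f)$ admits a left adjoint $f_\sharp$. It then remains to check left-adjointability of (i) the base-change squares $[\,D(Y)\xrightarrow{f^*}D(X)\,;\,D(Y')\xrightarrow{(f')^*}D(X\times_YY')\,]$ (vertical legs $g^*$ and $p^*$) for quasi-admissible $f:X\to Y$ and arbitrary $g:Y'\to Y$, with base change $f':X\times_YY'\to Y'$, $p:X\times_YY'\to X$; and (ii) the projection-formula squares $[\,D(Y)^{\times 2}\to D(X)\times D(Y)\,;\,D(Y)\xrightarrow{f^*}D(X)\,]$ (cf.\ \Cref{defn:base change} and \Cref{defn:proj form}). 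With these and $D\in\Psh_{\CAlg(\PrL)}(\mathcal C)$, $D$ is a pullback formalism by definition.

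For (i) I would stack the restriction square $[\,D^\natural(f)\,;\,D(f)\,]$ above the desired $D$-base-change square. The top square is left-adjointable because $D^\natural\to D$ respects quasi-admissibility; the resulting outer rectangle has vertical legs $D^\natural(Y)\to D(Y)\xrightarrow{g^*}D(Y')$ and $D^\natural(X)\to D(X)\xrightarrow{p^*}D(X\times_YY')$, and by naturality of $D^\natural\to D$ this is the \emph{same} commuting rectangle as the paste of the $D^\natural$-base-change square for $f$ along $g$ (left-adjointable because $D^\natural$ is a pullback formalism) above the restriction square $[\,D^\natural(f')\,;\,D(f')\,]$ (left-adjointable because $D^\natural\to D$ respects quasi-admissibility); hence the outer rectangle is left-adjointable by the pasting properties of left-adjointable squares, \cite[Lemma F.6(3)]{TwAmb}. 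Now \Cref{lem:vert surj adj} applies — the vertical legs $D^\natural(Y)\to D(Y)$ and $D^\natural(X)\to D(X)$ are colimit-preserving (they live in $\PrL$) with colimit-generating image by hypothesis, all horizontal maps admit left adjoints, and the top square and outer rectangle are left-adjointable — so the bottom square, base change for $f$ in $D$, is left-adjointable. Part (ii) runs the identical argument with the base-change squares replaced by the $\otimes$-squares; the only extra input is that the functors $D^\natural(S)\to D(S)$ are symmetric monoidal (being maps in $\CAlg(\PrL)$), so that the outer rectangle of the relevant paste can be re-identified, again via \cite[Lemma F.6(3)]{TwAmb}, with the paste of the $D^\natural$-projection-formula square over the restriction square $[\,D^\natural(f)\,;\,D(f)\,]$, and then \Cref{lem:vert surj adj} finishes.

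For the final assertion, $\PF(\mathcal C)$ is a non-full subcategory of $\Psh_{\CAlg(\PrL)}(\mathcal C)$, so $\PF(\mathcal C)(D,E)$ and $\PF(\mathcal C)(D^\natural,E)$ are unions of connected components of the corresponding mapping spaces in $\Psh_{\CAlg(\PrL)}(\mathcal C)$; the square is therefore Cartesian precisely when a $\Psh_{\CAlg(\PrL)}(\mathcal C)$-morphism $\phi:D\to E$ respects quasi-admissibility if and only if the precomposite $D^\natural\to D\xrightarrow{\phi}E$ does. The forward direction is closure of quasi-admissibility-respecting transformations under composition (vertical pasting of left-adjointable squares, using that $D^\natural\to D$ respects quasi-admissibility). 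For the converse, fix quasi-admissible $f:X\to Y$ and stack $[\,D^\natural(f)\,;\,D(f)\,]$ above $[\,D(f)\,;\,E(f)\,]$: the top square is left-adjointable, the outer rectangle $[\,D^\natural(f)\,;\,E(f)\,]$ is left-adjointable because $D^\natural\to E$ respects quasi-admissibility, and \Cref{lem:vert surj adj} — now with the colimit-generating legs $D^\natural(S)\to D(S)$ — forces $[\,D(f)\,;\,E(f)\,]$ to be left-adjointable, i.e.\ $\phi$ respects quasi-admissibility. The main obstacle throughout is purely organizational: arranging each vertical paste so that its outer rectangle is visibly one of the known left-adjointable squares (of $D^\natural$, or a restriction square), which rests on the naturality and symmetric monoidality of $D^\natural\to D$; once the diagrams are set up correctly, \Cref{lem:vert surj adj} does the real work, absorbing the hypothesis that $D^\natural(S)\to D(S)$ is colimit-generating.
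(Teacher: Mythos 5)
Your proposal is correct and follows essentially the same route as the paper's proof: in each case you stack the restriction square $[\,D^\natural(f)\,;\,D(f)\,]$ over the square to be verified, re-identify the outer rectangle (via naturality/monoidality of $D^\natural\to D$) with a paste of known left-adjointable squares using \cite[Lemma F.6(3)]{TwAmb}, and let \Cref{lem:vert surj adj} absorb the colimit-generation hypothesis; the Cartesianness argument is likewise identical. Your explicit remark that $\PF(\mathcal C)$ being a non-full subcategory reduces Cartesianness to the two-way equivalence of "respects quasi-admissibility" is a nice clarification of a point the paper leaves implicit.
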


\begin{prp} \label{prp:sub PF is PF}
	Let $\mathcal C$ be a pullback context, and suppose $D^\natural : \mathcal C^\op \to \widehat{\Cat}$ respects quasi-admissibility. Let $\phi : D \to D^\natural$ be a map in $\Psh_{\widehat{\Cat}}(\mathcal C)$ such that
	\begin{enumerate}

		\item $D(S) \to D^\natural(S)$ is fully faithful for all $S \in \mathcal C$,
			
		\item for any quasi-admissible map $f : X \to Y$ in $\mathcal C$, the functor $f_\sharp : D^\natural(X) \to D^\natural(Y)$ sends $D(X) \subseteq D^\natural(X)$ to $D(Y) \subseteq D^\natural(Y)$,

	\end{enumerate}
	Then $D \to D^\natural$ respects quasi-admissibility, and if $D^\natural$ is a pullback formalism, and for every $S \in \mathcal C$, the essential image of $D(S) \to D^\natural(S)$ is closed under tensor products, then $\phi$ lifts to a morphism of pullback formalisms. In fact, if $D^\natural$ has quasi-admissible base change, then so does $D$, and if $D^\natural$ has the projection formula for quasi-admissible maps, then so does $D$.
	\begin{proof}
		To show that $D \to D^\natural$ respects quasi-admissibility, we must show that for any quasi-admissible $X \to Y$, the square
		\[
			\begin{tikzcd}
				D(Y) \ar[d] \ar[r] & D(X) \ar[d] \\
				D^\natural(Y) \ar[r] & D^\natural(X)
			\end{tikzcd}
		\]
		is left adjointable. Since the vertical arrows are colimit-preserving functors between presentable categories, they admit right adjoints, so this square is left adjointable if and only if its transpose is right adjointable, but the vertical arrows are fully faithful, so the dual of \Cref{lem:compatible localizations} says that this holds if and only if $f^* : D^\natural(Y) \to D^\natural(X)$ sends ``$D(Y)$-coequivalences'' to ``$D(X)$-coequivalences'', or equivalently, if and only if $f_\sharp : D^\natural(X) \to D^\natural(Y)''$ sends $D(X)$ to $D(Y)$, which holds since $f$ is quasi-admissible.

		Finally, since all $-^*$, $-_\sharp$, and $\otimes$ functors of $D$ are restrictions of the same ones for $D^\natural$, the base change and projection formula properties of $D$ follow from the same properties for $D^\natural$.
	\end{proof}
\end{prp}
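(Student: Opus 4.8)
The plan is to establish the assertions in turn: that $\phi$ respects quasi-admissibility; that, granting the extra hypotheses, $D$ is a pullback formalism, so that $\phi$ lifts to a morphism in $\PF(\mathcal C)$; and then the two conditional clauses, which come for free from the same bookkeeping.

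\emph{Step 1 (that $\phi$ respects quasi-admissibility).} Fix a quasi-admissible $f : X \to Y$ in $\mathcal C$; I must show the square with horizontal arrows $f^*$ and vertical arrows the inclusions $\phi_Y,\phi_X$,
\[
	\begin{tikzcd}
		D(Y) \ar[d,"\phi_Y"'] \ar[r,"f^*"] & D(X) \ar[d,"\phi_X"] \\
		D^\natural(Y) \ar[r,"f^*"'] & D^\natural(X)
	\end{tikzcd}
,\]
is left-adjointable. Each $\phi_S$ is a colimit-preserving fully faithful functor between presentable categories, hence has a right adjoint and exhibits $D(S)$ as a coreflective subcategory of $D^\natural(S)$; the square is therefore left-adjointable if and only if its transpose is right-adjointable, and, the verticals being fully faithful, the dual of \Cref{lem:compatible localizations} identifies this with the condition that $f^* : D^\natural(Y) \to D^\natural(X)$ carries $D(Y)$-coequivalences to $D(X)$-coequivalences, equivalently that $f_\sharp : D^\natural(X) \to D^\natural(Y)$ carries $D(X)$ into $D(Y)$ --- which is exactly hypothesis (2). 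Concretely: putting $f_\sharp^{D} \coloneqq f_\sharp^{D^\natural}|_{D(X)}$, which lands in $D(Y)$ by (2), the chain $D(Y)(f_\sharp^{D}M,N) \simeq D^\natural(Y)(f_\sharp^{D^\natural}M,N) \simeq D^\natural(X)(M,f^*N) \simeq D(X)(M,f^*N)$ --- valid because the $\phi_S$ are fully faithful and $\phi$ is a transformation of presheaves --- exhibits $f_\sharp^{D}$ as left adjoint to $f^* : D(Y) \to D(X)$, and the Beck--Chevalley comparison is an equivalence tautologically, since $f_\sharp^{D}$ is a literal restriction of $f_\sharp^{D^\natural}$.

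\emph{Step 2 (that $D$ is a pullback formalism).} First I promote $D$ to a presheaf valued in $\CAlg(\PrL)$: closure of the essential image of each $\phi_S$ under tensor products (including the unit) endows $D(S)$ with the restricted symmetric monoidal structure, with $\phi_S$ symmetric monoidal, and since $\phi$ is a transformation of presheaves, $f^*$ carries $D(Y)$ into $D(X)$, so $f^* : D(Y) \to D(X)$ is the restriction of a symmetric monoidal, colimit-preserving functor of presentable categories and is therefore itself symmetric monoidal and colimit-preserving (a colimit-preserving fully faithful functor identifies colimits in $D(S)$ with those in $D^\natural(S)$, and $\phi_X$ is conservative). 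Hence $D : \mathcal C^\op \to \CAlg(\PrL)$. For the base-change and projection-formula axioms: every functor occurring in them --- the $f^*$ (transformation of presheaves), the $f_\sharp$ (hypothesis (2)), the tensor products (closure hypothesis), and hence the associated counits, units and comparison maps --- is the restriction along the conservative fully faithful $\phi_S$ of the corresponding datum for $D^\natural$; so every comparison map for $D$ becomes, upon applying $\phi_S$, the corresponding comparison map for $D^\natural$, an equivalence because $D^\natural$ is a pullback formalism, whence it is an equivalence in $D$ by conservativity of $\phi_S$. With Step 1 this yields $D \in \PF(\mathcal C)$ and the asserted lift of $\phi$.

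\emph{Step 3 (the conditional clauses, and the main obstacle).} Step 2 uses the pullback-formalism structure of $D^\natural$ only one axiom at a time, so if $D^\natural$ merely has quasi-admissible base change (resp.\ the projection formula for quasi-admissible maps), the restricted Beck--Chevalley (resp.\ projection-formula) map for $D$ is still an equivalence by the same argument --- this gives the last two assertions. The proposition is largely formal; the only delicate point is Step 1, the identification of left-adjointability of the $\phi$-square with hypothesis (2). There one must respect the variance --- the vertical inclusions are \emph{coreflective}, not reflective, so it is the \emph{dual} of \Cref{lem:compatible localizations} that applies, and the passage between ``$f^*$ preserves $D(Y)$-coequivalences'' and ``$f_\sharp$ preserves the subcategories'' is via adjunction --- and one must keep track that every functor in sight genuinely restricts to $D$, which is precisely what the hypotheses secure: full faithfulness lets us check equivalences after restriction, $\phi$ being a transformation of presheaves restricts the $-^*$, hypothesis (2) restricts the $-_\sharp$, and closure under tensor products restricts $\otimes$.
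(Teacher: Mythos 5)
Your proposal is correct and follows essentially the same route as the paper's proof: Step 1 reproduces the paper's argument verbatim (transpose to a right-adjointability question and apply the dual of \Cref{lem:compatible localizations}, identifying the condition with hypothesis (2)), and Steps 2--3 spell out the paper's closing observation that all the $-^*$, $-_\sharp$, and $\otimes$ functors of $D$ are restrictions of those of $D^\natural$, so the axioms transfer by full faithfulness and conservativity. The extra detail you supply (the explicit hom-chain exhibiting the restricted left adjoint, and the conservativity argument for the comparison maps) is a faithful elaboration rather than a different method.
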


\subsection{The Universal Pullback Formalism} \label{S:univ PF}

In this \lcnamecref{S:univ PF}, we will construct a pullback formalism, $H^\univ$, which exists for all quasi-small pullback contexts. Often, when defining new pullback formalisms, we begin with $H^\univ$ and then modify it using results from \Cref{S:invertibility} and \Cref{S:stabilize PF}.

Intuitively, this should be the ``universal'' pullback formalism in the sense that it is ``free'' in the category of pullback formalisms. Roughly speaking, given a pullback context $\mathcal C$ and an object $S \in \mathcal C$, $H^\univ(S)$ is determined by the fact that it is a presentable category that receives functors $p_\sharp : H^\univ(X) \to H^\univ(S)$ for any quasi-admissible $p : X \to S$. Since $H^\univ(X)$ is symmetric monoidal, we know that $H^\univ(S)$ must contain $p_\sharp(1)$ for all quasi-admissible $p : X \to S$. Indeed, we will see in \Cref{cnstr:Huniv} that $H^\univ(S) \simeq \Psh(\mathcal C_S)$.

Another perspective on $H^\univ$ is that for any pullback formalism $D$ on a pullback context $\mathcal C$, the objects of $D(S)$ should represent ``cohomology theories'' on $\mathcal C_S$, and these should have some compatibilities with the functorial structure of $D$. In particular, we should have functors $D(S) \to \Psh(\mathcal C_S)$ that are right adjoint to the colimit-preserving functors $\Psh(\mathcal C_S) \to D(S)$ induced by functors $\mathcal C_S \to D(S)$ that send $X \in \mathcal C_S$ to $\cls{X;S}_D$. Indeed, by \cite[Example 2.6.2.8]{internal-cats}, $H^\univ$ is always the initial pullback formalism for any quasi-small pullback context  -- see \Cref{thm:Huniv is univ}.


We will see that in \Cref{rmk:describe Huniv} that $H^\univ$ can be constructed by postcomposing the presheaf $H^\slice$ of \Cref{exa:slice} with the functor $\Psh : \CAlg(\Cat) \to \CAlg(\PrL)$, but we will first construct it as a subformalism of another presheaf $\bar H^\univ$ that will also be useful in various situations.


\begin{cnstr} \label{cnstr:Huniv}
	If $\mathcal C$ is any category, we can consider the quasi-admissibility structure on $\mathcal C$ in which the quasi-admissible maps are those maps that admit all base changes, and we define $\bar H^\univ_{\mathcal C}$ to be the resulting presheaf $H^\slice : \mathcal C^\op \to \CAlg(\widehat{\Cat})$ considered in \Cref{exa:slice}. We will often write simply $\bar H^\univ$ for $\bar H^\univ_{\mathcal C}$ when $\mathcal C$ is clear from context.

	If $\mathcal C$ is a presentable category with universal colimits, then $\bar H^\univ$ takes values in $\CAlg(\PrL)$, so it is a pullback formalism.

	If $\mathcal C$ is a pullback context, define a collection $Q$ of maps in $\Psh_{\widehat{\spaces}}(\mathcal C)$ consisting of those transformations $F \to G$ such that for any $S \in \mathcal C$, and map $\yo(S) \to G$, the base change $F \times_G \yo(S) \to \yo(S)$ is of the form $\varinjlim \yo X \to \yo(S)$ for some small diagram $X : K \to \mathcal C_S$. By construction, $Q$ is stable under base change, so following \kerodoncite{05S7} and \cite[Notation 6.1.3.4]{htt}, this defines a full subpresheaf of $\bar H^\univ_{\Psh_{\widehat{\spaces}(\mathcal C)}}$, and we write $H^\univ_{\mathcal C}$ for the restriction of this presheaf to $\mathcal C \subseteq \Psh_{\widehat{\spaces}}(\mathcal C)$. We will often simply write $H^\univ$ for $H^\univ_{\mathcal C}$ when $\mathcal C$ is clear from context.

	Note that maps in $Q$ are stable under postcomposition by quasi-admissible maps in $\mathcal C$, so by \Cref{prp:sub PF is PF}, we have that $H^\univ \to \bar H^\univ|_{\mathcal C}$ respects quasi-admissibility, that $H^\univ$ has quasi-admissible base change and the projection formula for quasi-admissible maps, and that for any $S \in \mathcal C$, $H^\univ_{\mathcal C}(S) \subseteq \bar H^\univ_{\Psh_{\widehat{\spaces}}}(\yo(S)) \simeq \Psh_{\widehat{\spaces}}(\mathcal C)_{/\yo(S)}$ is the full subcategory generated under small colimits by $\mathcal C_S \subseteq \Psh_{\widehat{\spaces}}(\mathcal C_{/S}) \simeq \Psh_{\widehat{\spaces}}(\mathcal C)_{/\yo(S)}$.

	In particular, if $\mathcal C$ is a quasi-small pullback context, then $H^\univ(S) \simeq \Psh(\mathcal C_S)$, $H^\univ$ takes values in $\CAlg(\PrL)$, and $H^\univ$ is a pullback formalism.

\end{cnstr}

\begin{rmk} \label{rmk:describe Huniv}
	Let $f : X \to Y$ be a map in a quasi-small pullback context $\mathcal C$. Then $f^* : H^\univ(Y) \to H^\univ(X)$ is equivalent to the colimit-preserving functor $\Psh(\mathcal C_Y) \to \Psh(\mathcal C_X)$ induced by the base change functor $f^{-1} : \mathcal C_Y \to \mathcal C_X$. If $f$ is quasi-admissible, this is also equivalent to precomposition by $\mathcal C_X \to \mathcal C_Y$, and the functor $f_\sharp : H^\univ(X) \to H^\univ(Y)$ is the colimit-preserving functor $\Psh(\mathcal C_X) \to \Psh(\mathcal C_Y)$ induced by $\mathcal C_X \to \mathcal C_Y$.
	\begin{proof}
		Since $\yo : \mathcal C \to \Psh_{\widehat{\spaces}}(\mathcal C)$ preserves base changes, the functor $\yo(f)^* : \bar H^\univ(\yo(Y)) \to \bar H^\univ(\yo(X))$ restricts to the base change functor $f^{-1} : \mathcal C_Y \to \mathcal C_X$. Thus, the first statement follows from the fact that $f^* : H^\univ(Y) \to H^\univ(X)$ is a colimit-preserving functor $\Psh(\mathcal C_Y) \to \Psh(\mathcal C_X)$ that restricts to $f^{-1} : \mathcal C_Y \to \mathcal C_X$.

		Similarly, when $f$ is quasi-admissible, the same argument shows that $f_\sharp : H^\univ(X) \to H^\univ(Y)$ is the colimit-preserving functor $\Psh(\mathcal C_X) \to \Psh(\mathcal C_Y)$ induced by $\mathcal C_X \to \mathcal C_Y$, and \cite[Proposition 5.2.6.3]{htt} shows that $f^*$ is the functor $\Psh(\mathcal C_X) \to \Psh(\mathcal C_Y)$ given by precomposing by $\mathcal C_X \to \mathcal C_Y$.
	\end{proof}
\end{rmk}

We record the following consequence of the result given in \cite[Example 2.6.2.8]{internal-cats} that generalizes \cite[Theorem 3.26]{UnivFF}.
\begin{thm} \label{thm:Huniv is univ}
	For any pullback context $\mathcal C$, the presheaf $H^\slice$ of \Cref{exa:slice} is initial in the subcategory of $\Fun(\mathcal C^\op, \CAlg(\widehat{\Cat}))$ where the morphisms are transformations that respect quasi-admissibility, and the objects are those presheaves that have quasi-admissible base change, and the projection formula for quasi-admissible maps.

	Furthermore, if $\mathcal C$ is a quasi-small pullback context, then $H^\univ$ is the initial pullback formalism on $\mathcal C$.
	\begin{proof}
		The first statement follows immediately from \cite[Example 2.6.2.8]{internal-cats}. For the second statement, note that \Cref{rmk:describe Huniv} allows us to identify $H^\univ$ with the composite of $H^\slice$ with the functor $\Psh : \CAlg(\Cat) \to \CAlg(PrL)$, so the result follows from \cite[Remark 4.8.1.9]{ha} after we observe that for any simplicial set $K$, $\Psh$ also induces a left adjoint of the inclusion $\Fun^\LAd(K, \widehat{\Cat}) \to \Fun^\LAd(K, \widehat{\Cat}(\text{all simplicial sets}))$ (recall \cite[Definition 4.8.1.1]{ha}), which restricts to a functor $\Fun^\LAd(K, \Cat) \to \Fun^\LAd(K, \PrL)$.
	\end{proof}
\end{thm}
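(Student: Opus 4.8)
The plan is to prove the two assertions in turn, deducing the second from the first; the substance of the first is imported from \cite[Example 2.6.2.8]{internal-cats}. For the first assertion, I would recognize the subcategory of $\Fun(\mathcal C^\op, \CAlg(\widehat{\Cat}))$ in the statement --- presheaves with quasi-admissible base change and the projection formula for quasi-admissible maps, with morphisms respecting quasi-admissibility; call it $\mathcal P$ --- as literally an instance of the situation treated in \cite[Example 2.6.2.8]{internal-cats}, in which $H^\slice$ is the tautological initial object. One recalls from \Cref{exa:slice} that $H^\slice$ is the presheaf classified by the Cartesian fibration $\mathcal O_{\mathcal C}^\qadm \subseteq \Fun(\Delta^1, \mathcal C) \to \mathcal C$ given by evaluation at $1 \in \Delta^1$, promoted to a $\CAlg(\widehat{\Cat})$-valued presheaf via the Cartesian monoidal structures using \cite[Corollary 2.4.1.9]{ha}. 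The remaining work is bookkeeping: verifying that the axioms on the class $Q$ of \Cref{defn:pullback context} are exactly the closure hypotheses required there, that the notions ``projection formula for quasi-admissible maps'' and ``quasi-admissible base change'' of \Cref{defn:PF} coincide with the linearity and left-adjointability conditions used there, and that ``respects quasi-admissibility'' is the prescribed notion of $1$-morphism. With these identifications, the cited result gives that $H^\slice$ is initial in $\mathcal P$.

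For the second assertion, I would first observe that quasi-smallness is precisely what allows $H^\univ$ to be defined: it makes each $\mathcal C_S = H^\slice(S)$ small, hence $\Psh(\mathcal C_S)$ presentable, so that $H^\univ$ takes values in $\CAlg(\PrL)$. By \Cref{rmk:describe Huniv}, $H^\univ$ is identified objectwise with $\Psh \circ H^\slice$: each $H^\univ(f)$ is the colimit-preserving extension of the $\mathcal C$-level base change, and for quasi-admissible $f$ the left adjoint $f_\sharp$ is the colimit-preserving extension of the postcomposition functor $\mathcal C_X \to \mathcal C_Y$. The crucial input is that $\Psh$ is the value of a left adjoint even at the level of diagrams all of whose edges are left adjoints: using \cite[Remark 4.8.1.9]{ha} together with the observation that for every simplicial set $K$ the functor $\Psh$ induces a left adjoint to the inclusion $\Fun^\LAd(K, \widehat{\Cat}) \hookrightarrow \Fun^\LAd(K, \widehat{\Cat}(\text{all simplicial sets}))$ which restricts to a functor $\Fun^\LAd(K, \Cat) \to \Fun^\LAd(K, \PrL)$, and applying this with $K = \Delta^1$ (to handle left-adjointability of squares) and with $K = \mathcal C^\op$ (to handle the whole diagram), while keeping track of the symmetric monoidal structure via \cite[Corollary 2.4.1.9]{ha}, one obtains that postcomposition with $\Psh$ carries the $\CAlg(\Cat)$-valued objects of $\mathcal P$ into $\PF(\mathcal C)$ compatibly with mapping spaces. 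Concretely, for any pullback formalism $E$, writing $UE$ for $E$ regarded in $\CAlg(\widehat{\Cat})$ --- still an object of $\mathcal P$, since quasi-admissible base change and the projection formula make no reference to presentability --- the adjunction gives $\PF(\mathcal C)(H^\univ, E) \simeq \PF(\mathcal C)(\Psh \circ H^\slice, E) \simeq \mathcal P(H^\slice, UE)$, which is contractible by the first assertion. Hence $H^\univ$ is the initial pullback formalism, recovering and generalizing \cite[Theorem 3.26]{UnivFF}.

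I expect the main obstacle to be this last reduction: upgrading the soft statement that $\Psh$ is a left adjoint on $\Fun^\LAd$-categories to the precise claim that postcomposition with $\Psh$ identifies the subcategory of $\Fun(\mathcal C^\op, \CAlg(\Cat))$ cut out by quasi-admissible base change, the projection formula, and quasi-admissibility-respecting morphisms --- all phrased through left adjoints and left-adjointable squares --- with $\PF(\mathcal C)$, and does so bijectively on mapping spaces. This rests on the fact that $\Psh$ carries an adjunction $L \dashv R$ in $\Cat$ to the adjunction $\Psh(L) \dashv \Psh(R)$ in $\PrL$ compatibly with the formation of mates, so that left-adjointable squares are both preserved and reflected; granting that, the rest is formal, and all the genuinely hard work already sits in \cite[Example 2.6.2.8]{internal-cats}.
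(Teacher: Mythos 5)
Your proposal is correct and follows essentially the same route as the paper: the first statement is imported wholesale from \cite[Example 2.6.2.8]{internal-cats}, and the second is deduced by identifying $H^\univ$ with $\Psh \circ H^\slice$ via \Cref{rmk:describe Huniv} and invoking \cite[Remark 4.8.1.9]{ha} together with the observation that $\Psh$ restricts to a left adjoint on $\Fun^\LAd(K,-)$-categories. Your additional unpacking of how the mate/adjointability compatibility makes the mapping-space reduction $\PF(\mathcal C)(H^\univ, E) \simeq \mathcal P(H^\slice, UE)$ work is exactly the bookkeeping the paper leaves implicit.
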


\begin{rmk}
	For any morphism $F : \mathcal C \to \mathcal D$ of pullback contexts, there is an induced map
	\[
		H^\univ_{\mathcal C} \to H^\univ_{\mathcal D} \circ F
	.\]
	If $F$ is anodyne, then this morphism is an equivalence.
\end{rmk}

\begin{rmk} \label{rmk:geometric stuff in PF}
	Let $\phi : H^\univ \to D$ be a morphism of pullback formalisms. For any $S \in \mathcal C$, we can say the following about $\phi : H^\univ(S) \to D(S)$:
	\begin{itemize}

		\item $\phi : H^\univ(S) \to D(S)$ sends the point presheaf on $\mathcal C_S$ to the monoidal unit, and sends products to tensor products.

		\item For any quasi-admissible $p : X \to S$, we have that $\phi : H^\univ(S) \to D(S)$ sends $\mathcal C_S(-, p) \in \Psh(\mathcal C_S) = H^\univ(S)$ to $\cls{X}$, since $\mathcal C_S(-,X)$ is $p_\sharp \mathcal C_X(-, X) = p_\sharp(\pt)$. Indeed, $\phi$ sends the map $\mathcal C_S(-,X) \to \pt$ to the map $p_\sharp(1) \to 1$.

		\item For $X,Y \in \mathcal C_S$, $\phi$ sends $\mathcal C_S(-, X \times_S Y)$ to $\cls{X} \otimes \cls{Y}$.


		\item For $f : X \to Y$ in $\mathcal C_S$, $\phi : H^\univ(S) \to D(S)$ sends $\mathcal C_S(-,f)$ to $\cls{f;S} : \cls{X} \to \cls{Y}$ in $D(S)$.


	\end{itemize}
\end{rmk}

\section{Generalized Descent and Invariance Conditions} \label{S:generalized descent}

In this section, we will be interested in studying various generalized notions of descent. In \Cref{S:pseudotopology}, we define the notion of \emph{pseudotopology}, which generalizes the notion of a Grothendieck topology, and can be seen as encoding the condition that certain diagrams behave like colimiting diagrams. We will use the very general notion of descent encoded by pseudotopologies to study a notion of ``locally quasi-admissible maps'' in \Cref{S:local qadm}. In this article, the main application of this notion will be for describing when certain maps between \emph{presheaves} behave like quasi-admissible maps. Indeed, we will use this notion to define \emph{quasi-admissible pseudotopologies} in \Cref{S:imposing invariance}, which will be used in that section to describe how to freely enforce invariance properties that we will discuss in \Cref{S:acyclic}. Indeed, \Cref{S:acyclic} discusses a weaker notion of (generalized) descent for systems of local coefficients which corresponds to (generalized) descent conditions of the associated cohomology theories.

Finally, in \Cref{S:qadm covers}, we consider ordinary descent conditions in the setting of pullback formalisms, and we will see in \Cref{thm:PF descent} and \Cref{thm:D-topology} that it is very easy to show descent for pullback formalisms along quasi-admissible families.

\subsection{Pseudotopologies} \label{S:pseudotopology}


In this \namecref{S:pseudotopology}, we will study pseudotopologies, which give a generalization of the structure of a Grothendieck topology that allows us to work with various ``generalized descent properties'' that cannot be expressed in terms of Grothendieck topologies. When we study acyclicity properties in the setting of pullback formalisms, we will also see that pseudotopologies give the correct level of generality with which to study invariance properties of cohomology theories -- see \Cref{rmk:pseudotops are good}, and \Cref{lem:char pseudotops}.

This greater generality comes at a cost. Indeed, the corresponding notion of ``sheafification'' may not preserve finite limits, but \Cref{prp:pseudotop LCL} shows that it is still quite well-behaved.

In \Cref{S:imposing invariance}, we will see how to use pseudotopologies to freely impose invariance conditions on the cohomology theories coming from pullback formalisms.

Let us now consider the various generalized descent properties we may want to consider:
\begin{description}

	\item[Homotopy invariance]
		Homotopy invariance properties can be seen as descent along projections $X \times I \to X$, where $I$ is some object we think of as an interval. For example, the fact that singular cohomology of topological spaces is homotopy invariant can be expressed by stating that for any topological space $X$, the map
		\[
			H^\bullet(X;\ints) \to H^\bullet(X \times [0,1]; \ints)
		\]
		is an isomorphism. This is also the sort of homotopy invariance statement we like to use when talking about $\aff^1$-homotopy: a cohomology theory $H^\bullet$ on varieties is said to be $\aff^1$-invariant if for any variety $X$, the map
		\[
			H^\bullet(X) \to H^\bullet(\aff^1_X)
		\]
		is an isomorphism.

	\item[Excision and \v{C}ech descent]
		If $F$ is a presheaf on a category $\mathcal C$, then excision properties of $F$ are often expressed by saying that $F$ sends certain squares in $\mathcal C$ to Cartesian squares. By viewing a square either as a cone over the cospan diagram $K \coloneqq (\bullet \to \bullet \gets \bullet)$ or a cone under the span diagram $K^\op = (\bullet \gets \bullet \to \bullet)$ this can be seen as saying that $F$ sends certain $K^\triangleright$-indexed diagrams to limiting $(K^\op)^\triangleleft = (K^\triangleright)^\op$-indexed diagrams.

		By considering more general simplicial sets $K$, we can recover various other examples. For example, the previous case, where we consider the condition that $F$ sends certain maps to equivalences, corresponds to the case that $K = \pt$. The example of \v{C}ech descent can be seen as the case where $K = \Delta$. 

	\item[Descent along sieves]
		A sieve on an object $X$ of a category $\mathcal C$ can be seen as a monomorphism $\mathcal U \to \yo(X)$ in $\Psh(\mathcal C)$. If the sieve is generated by some family $\{X_i \to X\}_i$ of maps to $X$, then $\mathcal U \to \yo(X)$ is often seen as a map from a colimit
		\[
			\varinjlim_{n \in \Delta} \check{C}_n(\coprod_i \yo(X_i) \to \yo(X)) \to \yo(X)
		,\]
		where we write $\check{C}_\bullet(f)$ to denote the \v{C}ech nerve of a map $f$. Thus, we can view descent along $\mathcal U \to \yo(X)$ in terms of the previous example for $K = \Delta$, as we mentioned earlier. On the other hand, the sieve can also be seen as a full subcategory of $\mathcal C_{/X}$, in which case $\mathcal U \to \yo(X)$ can be seen as the map from the colimit
		\[
			\varinjlim_{\substack{X' \to X \\ \text{in $\mathcal U$}}} \yo(X') \to \yo(X)
		,\]
		and we can see that descent along the sieve can be seen as the special case of the previous example where $K$ is the full subcategory of $\mathcal C_{/X}$ corresponding to $\mathcal U \to \yo(X)$.

		Descent along this sieve does not depend on how we choose to present $\mathcal U \to \yo(X)$ as a map from a colimit, and indeed, there is a ``coordinate-free''' way to express this: we can view the presheaf $F$ on $\mathcal C$ as a limit-preserving presheaf on $\Psh(\mathcal C)$, and the condition that $F$ has descent along the sieve $\mathcal U \to \yo(X)$ is equivalent to the condition that it sends this map $\mathcal U \to \yo(X)$ to an equivalence.

	\item[Sending colimits to limits] We can view the condition that a presheaf $F$ on $\mathcal C$ preserves certain limits as a generalized descent property. Indeed, if $X : K \to \mathcal C$ is a diagram that admits a colimit, then the condition that $F$ sends this colimit to a limit can be seen as saying that if $\bar X : K^\triangleright \to \mathcal C$ is a colimiting extension of $X$, then we can consider the condition that $F \circ \bar X^\op$ is a limiting $(K^\op)^\triangleleft = (K^\triangleright)^\op$-indexed diagram. This can also be expressed in terms of our ``coordinate-free'' formulation by asking that if we view $F$ as a limit-preserving presheaf on $\mathcal C$, then it sends the map
		\[
			\varinjlim \yo X \to \yo \varinjlim X
		\]
		to an equivalence.

\end{description}

Thus, we are lead to consider the notion of descent along arbitrary maps in $\Psh(\mathcal C)$ whose codomain is a representable presheaf.

\begin{defn} \label{defn:pseudotop}
	Given a category $\mathcal C$, we make the following definitions:

	\begin{itemize}

		\item A \emph{pseudosieve} on $X \in \mathcal C$ is a map in $\Psh_{\widehat{\spaces}}(\mathcal C)$ to $\yo(X)$, where $\widehat{\spaces}$ is the category of all spaces (not necessarily small).\footnote{If $\mathcal C$ is not locally small, then $\yo(X)$ might not take values in small spaces.}

		\item A \emph{pseudotopology} $\tau$ on $\mathcal C$, consists of, for each $X \in \mathcal C$, the specification of a collection of pseudosieves on $X \in \mathcal C$, called ($\tau$-)acyclic pseudosieves, such that for any acyclic pseudosieve $\tilde Y \to \yo(Y)$, and map $X \to Y$, the base change of $g$ along $\yo(X \to Y)$ is an acyclic pseudosieve on $X$. We may also view $\tau$ simply as the collection of maps in $\Psh(\mathcal C)$ that are $\tau$-acyclic pseudosieves, so we sometimes say that a map $P \to \yo(X)$ is \emph{in $\tau$} to mean that it is a $\tau$-acyclic pseudosieve.

		\item A pseudotopology $\tau$ on $\mathcal C$ is \emph{small} if for every $X \in \mathcal C$, the collection of $\tau$-acyclic pseudosieves on $S$ is small, and every $\tau$-acyclic pseudosieve on $X$ is a map $P \to \yo(X)$ where $P$ is small colimit of representable presheaves.

		\item Given a pseudotopology $\tau$ on $\mathcal C$, say that a diagram $X : K^\triangleright \to \mathcal C$ is $\tau$-acyclic if the map
			\[
				\varinjlim_{p \in K} \yo(X(p)) \to \yo(X(\infty))
			\]
			is a $\tau$-acyclic pseudosieve.
		
		\item A \emph{saturated} pseudotopology is a pseudotopology in which every equivalence to a representable presheaf is an acyclic pseudosieve on $X$, and for any acyclic diagram $X : K^\triangleright \to \mathcal C$, if $p$ is any map to $\yo(X(\infty))$ such that for all $a \in K$, the base change of $p$ along $\yo(X(a \to \infty))$ is an acyclic pseudosieve, then $p$ is an acyclic pseudosieve.


		\item Given a pseudotopology $\tau$ on $\mathcal C$, we say an object of $\Psh_{\widehat{\spaces}}(\mathcal C)$ is \emph{$\tau$-local} if it is local with respect to the $\tau$-acyclic pseudosieves. More generally, for any category $\mathcal V$, say a presheaf $F : \mathcal C^\op \to \mathcal V$ is $\tau$-local if for any $\tau$-acyclic diagram $X : K^\triangleright \to \mathcal C$, $F \circ X^\op$ is limiting. We define $\Psh^\tau_{\mathcal V}(\mathcal C) \subseteq \Psh_{\mathcal V}(\mathcal C)$ to be the full subcategory of $\tau$-local presheaves.

		\item Given a pseudotopology $\tau$ on $\mathcal C$, we define the collection of \emph{$\tau$-local equivalences} in $\Psh(\mathcal C)$ to be the maps $P \to Q$ in $\Psh_{\widehat{\spaces}}(\mathcal C)$ such that for any $\tau$-local presheaf $R$ on $\mathcal C$, the map
			\[
				\Psh_{\widehat{\spaces}}(\mathcal C)(P \to Q, R)
			\]
			is an equivalence.

			When $\mathcal C$ is locally small and $\tau$ is small, the collection of $\tau$-local equivalences in $\Psh(\mathcal C)$ is the strongly saturated class generated by the $\tau$-acyclic pseudosieves. Also see \Cref{defn:local equivalences for pseudotop}.


	\end{itemize}
\end{defn}

\begin{rmk}[Pseudotopologies are general enough] \label{rmk:pseudotops are good}
	It may seem that by requiring acyclic pseudosieves to be stable under base change, we only allow ourselves to encode invariance conditions that are stable under base change, such as descent, but this is not the case.

	For example, although homotopy equivalences of topological spaces are not stable under base change, they are ``generated'' by maps of the form $X \times [0,1] \to X$, which are stable under base change. Indeed, if a presheaf $H^\bullet$ on topological spaces sends such maps to equivalences, then it also sends the sections $X \times \{t\} \to X \times [0,1]$ to equivalences, so it sends homotopic maps to equivalent maps, which implies that $H^\bullet$ sends homotopy equivalences to equivalences.

	In particular, the invariance conditions encoded by pseudotopologies can encode both descent conditions, and homotopy invariance conditions.

	In fact, \Cref{lem:qadm acyclic} shows that ``quasi-admissible invariance properties'' of cohomology theories coming from pullback formalisms are stable under base change are stable under base change, so that these properties are perfectly encoded by quasi-admissible pseudotopologies (see \Cref{defn:qadm pseudotop}).
\end{rmk}

\begin{rmk}
	Let $X$ be an object of a category $\mathcal C$, and let $f,g : P \to \yo(X)$ be homotopic maps in $\Psh(\mathcal C)$. It follows that $f$ and $g$ are base changes of each other along $\id_{\yo(X)}$, so for any pseudotopology $\tau$ on $\mathcal C$, $f$ is $\tau$-acyclic if and only if $g$ is $\tau$-acyclic.
\end{rmk}

We note the following familiar examples of pseudotopologies:
\begin{exa}
	Let $\mathcal C$ be a category.
	\begin{enumerate}

		\item Any Grothendieck topology on $\mathcal C$ can be seen as a saturated pseudotopology by setting the acyclic pseudosieves to be the covering sieves.

		\item If $\mathcal A$ is a collection of maps in $\mathcal C$ that is stable under base change, then we can define a pseudotopology on $\mathcal C$ where the acyclic pseudosieves are maps of the form $\yo(f)$ for $f \in \mathcal A$.

		\item Any union of pseudotopologies on $\mathcal C$ is a pseudotopology.

	\end{enumerate}
	
\end{exa}

\begin{exa} \label{exa:colim pseudotop}
	Let $\mathcal C$ be a category with (small) universal colimits. Then we can define a pseudotopology on $\mathcal C$ as follows: the acyclic pseudosieves are those maps of the form $\varinjlim \yo X \to \yo \varinjlim X$, where $X : K \to \mathcal C$ is a small diagram. Note that in this case, the acyclic diagrams are precisely the small colimiting diagrams, so the local presheaves for this pseudotopology are the limit-preserving ones.

	More generally, if $\mathcal C$ does not necessarily have universal colimits, following \cite[Lemma 6.1.3.3(5)]{htt}, following \kerodoncite{05SE}, we say that a diagram $X : K \to \mathcal C$ admits a universal colimit if there is an extension $\bar X : K^\triangleright \to \mathcal C$ such that for any Cartesian transformation $\bar X' \to \bar X$, $\bar X'$ is a colimiting diagram. We then define the pseudotopology on $\mathcal C$ by setting the acyclic pseudosieves to be the maps of the form $\varinjlim \yo X \to \yo \varinjlim X$, for small diagrams $X : K \to \mathcal C$ that admit universal colimits.
\end{exa}

\begin{defn} \label{defn:pseudotop acyclic map}
	Given a pseudotopology $\tau$ on a category $\mathcal C$, a map $P \to Q$ in $\Psh_{\widehat{\spaces}}(\mathcal C)$ is said to be $\tau$-acyclic if for any $\yo(Y) \to Q$, the map $P \times_Q \yo(Y) \to \yo(Y)$ is a $\tau$-acyclic pseudosieve.
\end{defn}

\begin{rmk} \label{rmk:pseudotop acyclic map is local equiv}
	In the setting of \Cref{defn:pseudotop acyclic map}, every $\tau$-acyclic map in $\Psh(\mathcal C)$ is a small colimit of $\tau$-acyclic pseudosieves, so it is a $\tau$-local equivalence.
\end{rmk}

The following result describes what sorts of classes of morphisms may appear as collections of acyclic maps for a pseudotopology.
\begin{lem} \label{lem:char pseudotops}
	Let $\mathcal C$ be a small category. Then there is a function from the collection of pseudotopologies on $\mathcal C$ to the collection of classes of morphisms in $\Psh(\mathcal C)$ that are stable under base change, which sends a pseudotopology $\tau$ to the corresponding class of $\tau$-acyclic maps.

	This function is injective, and its image is the collection of local classes of morphisms in $\Psh(\mathcal C)$ (in the sense of \cite[Definition 6.1.3.8]{htt}).

	\begin{proof}
		For any pseudotopology $\tau$, \Cref{defn:pseudotop acyclic map} of $\tau$-acyclic maps immediately implies that the $\tau$-acyclic maps are stable under base change, so any pseudotopology is sent to a class of morphisms that is stable under base changes.

		Note that the function sending a class of morphisms $\mathcal W$ in $\Psh(\mathcal C)$ to the collection of morphisms in $\mathcal W$ whose codomain is representable defines a retraction of the function we are considering, and the corresponding idempotent map on the class of collections of morphisms in $\Psh(\mathcal C)$ fixes precisely those $\mathcal W$ such that $P \to Q$ is in $\mathcal W$ if and only if $P \times_Q \yo(Y) \to \yo(Y)$ is in $\mathcal W$ for all maps $\yo(Y) \to Q$. Thus, Lemma \ref{lem:locality in psh} shows that the image of this function consists precisely of the local classes of morphisms in $\Psh(\mathcal C)$.

	\end{proof}
\end{lem}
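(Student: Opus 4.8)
The plan is to exhibit the map of the statement as one half of a retraction and then identify its fixed points. Write $\Phi(\tau)$ for the class of $\tau$-acyclic maps of \Cref{defn:pseudotop acyclic map} attached to a pseudotopology $\tau$, and write $r(\mathcal W)$ for the sub-collection of a class $\mathcal W$ consisting of those morphisms with representable codomain; note that if $\mathcal W$ is stable under base change then $r(\mathcal W)$ is automatically a pseudotopology. I would first check that $\Phi$ is well defined, \ie\ that $\Phi(\tau)$ is stable under base change: this is a one-line consequence of the pasting law, since for $P \to Q$ in $\Phi(\tau)$, an arbitrary $Q' \to Q$, and any $\yo(Y) \to Q'$, the pullback $(P\times_Q Q')\times_{Q'}\yo(Y)\to\yo(Y)$ agrees with $P\times_Q\yo(Y)\to\yo(Y)$, which is a $\tau$-acyclic pseudosieve.

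For injectivity, the crucial observation is that, since $\yo$ is fully faithful, every map $\yo(Y)\to\yo(X)$ in $\Psh(\mathcal C)$ is of the form $\yo(g)$ for a unique $g\colon Y\to X$ in $\mathcal C$. It follows that a pseudosieve $P\to\yo(X)$ is a $\tau$-acyclic pseudosieve if and only if it is a $\tau$-acyclic \emph{map}: the ``only if'' direction applies the base-change clause of \Cref{defn:pseudotop} to the maps $\yo(g)$, and the ``if'' direction specializes to $g=\id_X$. Hence $r\bigl(\Phi(\tau)\bigr)=\tau$, so $r$ is a left inverse of $\Phi$ and $\Phi$ is injective.

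It then remains to identify the image of $\Phi$. The composite $\Phi\circ r$ is an idempotent operation on the collection of base-change-stable classes of morphisms of $\Psh(\mathcal C)$, and by the previous paragraph its fixed points --- equivalently the image of $\Phi$ --- are exactly those classes $\mathcal W$ for which $P\to Q$ lies in $\mathcal W$ precisely when $P\times_Q\yo(Y)\to\yo(Y)$ lies in $\mathcal W$ for every $\yo(Y)\to Q$. I would finish by invoking \Cref{lem:locality in psh}, which identifies this ``checkable on pullbacks to representables'' property with being a local class in the sense of \cite[Definition 6.1.3.8]{htt}.

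The formal bookkeeping above is routine; the one genuine input is \Cref{lem:locality in psh}, and that is where I expect the work to lie. There one must match the paper's formulation of locality against Lurie's, using that the family $\{\yo(Y)\to Q\}$, ranging over all $Y\in\mathcal C$ and all maps to $Q$, is a canonical cover of any presheaf $Q$ and that colimits in $\Psh(\mathcal C)$ are universal, so that membership in a local class can indeed be tested after base change to representables. Getting the quantifiers and the colimit/base-change interaction exactly right is the main obstacle; everything else is an exercise with the Yoneda lemma and the pasting law for pullbacks.
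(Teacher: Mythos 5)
Your proof is correct and follows the same route as the paper: exhibit restriction-to-representable-codomain as a retraction of $\Phi$, identify the image with the fixed points of the resulting idempotent, and invoke \Cref{lem:locality in psh} to match those fixed points with Lurie's local classes. The only difference is that you spell out the verification $r(\Phi(\tau))=\tau$ via full faithfulness of $\yo$, which the paper leaves implicit.
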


If $\tau$ is a pseudotopology on a category $\mathcal C$, we can think of the inclusion $\Psh^\tau(\mathcal C) \to \Psh(\mathcal C)$ as a generalization of the inclusion of sheaves into presheaves for a Grothendieck topology. Unlike the case of Grothendieck topologies, when this inclusion admits a left adjoint, it will often not preserve finite limits, but the following result shows that it is still reasonably well-behaved. In particular, a map preserves finite limits if and only if it preserves finite products and base changes, and now we see that the localization $\Psh(\mathcal C) \to \Psh^\tau(\mathcal C)$ preserves finite products and base changes along maps between $\tau$-local presheaves. \cf{} \cite[Proposition 3.4]{sixopsequiv}.
\begin{prp} \label{prp:pseudotop LCL}
	If $\tau$ is a small pseudotopology on a small category $\mathcal C$, then $\Psh^\tau(\mathcal C)$ is a presentable category with universal colimits, and the inclusion $\Psh^\tau(\mathcal C) \subseteq \Psh(\mathcal C)$ has an accessible left adjoint which is a locally Cartesian localization\footnote{See \Cref{S:LCL}.} that preserves finite products.
	\begin{proof}
		Since $\mathcal C$ is small, and $\tau$ is small, it follows that the collection of $\tau$-acyclic pseudosieves is a small collection of maps in $\Psh(\mathcal C)$.

		Note that base change along any map in $\Psh(\mathcal C)$ sends $\tau$-acyclic pseudosieves to $\tau$-acyclic maps, which are $\tau$-local equivalences by \Cref{rmk:pseudotop acyclic map is local equiv}. Thus, we may conclude by \Cref{prp:crit for LCL}.
	\end{proof}
\end{prp}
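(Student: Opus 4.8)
The plan is to recognize $\Psh^\tau(\mathcal C)$ as an accessible Bousfield localization of the presheaf category $\Psh(\mathcal C)$ and then feed the situation into the general theory of locally Cartesian localizations developed in \Cref{S:LCL}. Since $\mathcal C$ is small and $\tau$ is small, the $\tau$-acyclic pseudosieves form a small set $S$ of morphisms of $\Psh(\mathcal C)$: each has source a small colimit of representables (so it genuinely lands in $\Psh(\mathcal C)$), and there is only a small collection of them. An object of $\Psh(\mathcal C)$ is $\tau$-local precisely when it is $S$-local, so $\Psh^\tau(\mathcal C) = S^{-1}\Psh(\mathcal C)$, and by the standard theory of localizations of presentable categories (\cite[\S5.5.4]{htt}) this is a presentable category whose inclusion admits an accessible left adjoint $L$. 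It then remains to see that $L$ is a locally Cartesian localization that preserves finite products and that $\Psh^\tau(\mathcal C)$ has universal colimits; for this I would invoke the criterion of \Cref{S:LCL} (\Cref{prp:crit for LCL}), whose hypothesis is that base change along an arbitrary morphism of $\Psh(\mathcal C)$ carries each generating morphism in $S$ to a $\tau$-local equivalence.

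Verifying that hypothesis is the concrete part of the argument. By the base-change stability built into the definition of a pseudotopology, every $\tau$-acyclic pseudosieve $P \to \yo(X)$ is in fact a $\tau$-acyclic map in the sense of \Cref{defn:pseudotop acyclic map}; by \Cref{lem:char pseudotops} the class of $\tau$-acyclic maps is stable under base change in $\Psh(\mathcal C)$; and by \Cref{rmk:pseudotop acyclic map is local equiv} every $\tau$-acyclic map is a small colimit of $\tau$-acyclic pseudosieves, hence a $\tau$-local equivalence. Chaining these, a base change of any morphism of $S$ along any morphism of $\Psh(\mathcal C)$ is a $\tau$-acyclic map, hence a $\tau$-local equivalence, which is exactly what the criterion demands. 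Universality of colimits in $\Psh^\tau(\mathcal C)$ and the locally Cartesian property of $L$ then follow from the criterion, the point being that pullback-stability of the generating equivalences propagates, using universality of colimits in the ambient topos $\Psh(\mathcal C)$, to pullback-stability of the entire strongly saturated class of $\tau$-local equivalences.

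For preservation of finite products I would argue directly: the terminal presheaf is automatically $\tau$-local (it is local with respect to \emph{every} morphism of $\Psh(\mathcal C)$), and $\tau$-local objects are closed under products since they are cut out by a limit (orthogonality) condition, so it suffices to check that the natural map $A \times B \to LA \times LB$ is a $\tau$-local equivalence — it then exhibits $LA \times LB$ as $L(A \times B)$. This map factors as $A \times B \to A \times LB \to LA \times LB$, where the first arrow is the base change of the unit $B \to LB$ along the projection $A \times LB \to LB$ and the second is the base change of the unit $A \to LA$ along $LA \times LB \to LA$; both are $\tau$-local equivalences by the pullback-stability just established. The main obstacle is not any single one of these steps but the general criterion \Cref{prp:crit for LCL} they rest on — the passage from pullback-stability of a generating set to pullback-stability of the full class of local equivalences, together with the resulting control on colimits in the localization — which is precisely why that material is isolated in its own section; granting it, the present proposition reduces to the pseudotopology bookkeeping above.
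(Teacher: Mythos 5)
Your proposal is correct and follows essentially the same route as the paper: observe that smallness of $\mathcal C$ and $\tau$ makes the $\tau$-acyclic pseudosieves a small set, check that their base changes along arbitrary maps of $\Psh(\mathcal C)$ are $\tau$-acyclic maps and hence $\tau$-local equivalences (\Cref{rmk:pseudotop acyclic map is local equiv}), and conclude by \Cref{prp:crit for LCL}. Your supplementary direct argument for finite products is redundant (the product criterion is already part of \Cref{prp:crit for LCL}) and its first factor $A \times B \to A \times LB$ is not justified by the stability you established — it is a base change of the unit $B \to LB$, which is not a generating morphism, along a map whose source need not be local — but this is harmlessly repaired by the saturation argument of \cite[Remark 5.5.4.10]{htt}, exactly as in the proof of the cited criterion.
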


Before considering pseudotopologies on pullback contexts, we will make the following definitions of locality for classes of morphisms:
\begin{defn}
	Let $\tau$ be a pseudotopology on a category $\mathcal C$, and let $Q$ be a collection of maps in $\mathcal C$.
	\begin{enumerate}

		\item A map is a \emph{transfinite $\tau$-composite of maps in $Q$} if it is of the form $F(\emptyset) \to F(\lambda)$ for some ordinal $\lambda$ and diagram $F : \lambda + 1 \to \tilde{\mathcal C}$ such that for every limit ordinal $\lambda' \leq \lambda$, $F$ restricts to a $\tau$-acyclic diagram on $\lambda' + 1 \cong (\lambda')^\triangleright$, and for every $\alpha \in \lambda$, the map $F(\alpha) \to F(\alpha + 1)$ is in $Q$. Say $Q$ is \emph{closed under transfinite $\tau$-composites} if every such map is in $Q$. Say a collection $\tilde Q \supseteq Q$ is \emph{generated by $Q$ under transfinite $\tau$-composites} if every collection containing $Q$ that is closed under transfinite $\tau$-composites also contains $\tilde Q$.

		\item A map is \emph{(strictly) $\tau$-locally on the target in $Q$} if it is of the form $f(\infty) : X(\infty) \to Y(\infty)$ for some (Cartesian) transformation of small $\tau$-acyclic diagrams $f : X \to Y : K^\triangleright \to \mathcal C$ such that for all $a \in K$, $f(a) \in Q$. Say $Q$ is \emph{(strictly) $\tau$-local on the target} if every such map is in $Q$. Say a collection $\tilde Q \supseteq Q$ is \emph{generated by $Q$ under (strict) $\tau$-locality on the target} if every collection containing $Q$ that is (strictly) $\tau$-local on the target also contains $\tilde Q$.

		\item A map is \emph{$\tau$-locally on the source in $Q$} if it is of the form $X(0) \to X(1)$ for some small diagram $X : K \star \Delta^1$ such that $X|_{K \star \{0\}}$ is $\tau$-acyclic, and $X|_{K \star \{1\}}$ sends all edges to $Q$. Say $Q$ is \emph{$\tau$-local on the source} if every such map is in $Q$. Say a collection $\tilde Q \supseteq Q$ is \emph{generated by $Q$ under $\tau$-locality on the source} if every collection containing $Q$ that is $\tau$-local on the source also contains $\tilde Q$.

	\end{enumerate}
	We may drop $\tau$ from the notation when it is the pseudotopology of universal colimits from \Cref{exa:colim pseudotop}.
\end{defn}

\subsubsection{Weakly quasi-admissible pseudotopologies}

\begin{defn} \label{defn:weak yo-qadm}
	Given a pullback context $\mathcal C$, a map $F \to G$ in $\Psh_{\widehat{\spaces}}(\mathcal C)$ is \emph{weakly $\yo$-quasi-admissible} if for every $S \in \mathcal C$, and map $\yo(S) \to G$, the base change $F \times_G \yo(S) \to \yo(S)$ is equivalent to a map of the form
	\[
		\varinjlim \yo X \to \yo(S)
	,\]
	where $X : K \to \mathcal C_S$ is some small diagram.
\end{defn}

\begin{lem} \label{lem:weakly yo-qadm morphs}
	If $F : \mathcal C \to \mathcal D$ is a morphism of locally small pullback contexts, then the evident colimit-preserving extension $F_! : \Psh(\mathcal C) \to \Psh(\mathcal D)$ of $F$ preserves weakly $\yo$-quasi-admissible maps, and base changes along weakly $\yo$-quasi-admissible maps to representable presheaves.

	If $F$ is anodyne, we also have that for any $P \in \Psh(\mathcal C)$, the induced functor $\Psh(\mathcal C)_{/P} \to \Psh(\mathcal C)_{/F_! P}$ induces an equivalence on full subcategories of weakly $\yo$-quasi-admissible maps.
	\begin{proof}
		Note that by \Cref{lem:locality in psh}, the collection of weakly $\yo$-quasi-admissible maps is a local class in the sense of \cite[Definition 6.1.3.8]{htt}, so by \cite[Lemma 6.1.3.7(1)]{htt}, it suffices to show all of the statements for weakly $\yo$-quasi-admissible maps to representable presheaves.

		The fact that $F_!$ preserves weakly $\yo$-quasi-admissible maps to representable presheaves follows from the fact that $F$ preserves quasi-admissible maps, and $F_!$ preserves small colimits.

		For the statement about base changes, first note that by universality of colimits and the fact that $F_!$ preserves small colimits, it suffices to show that $F_!$ preserves base changes along $\yo$-quasi-admissible maps of maps between representable presheaves. Using the description of $\yo$-quasi-admissible maps to representable presheaves, this follows from universality of colimits, the fact that $F_!$ preserves small colimits, and the fact that $F$ preserves quasi-admissible base changes.

		Finally, note that for any $S \in \mathcal C$, the restriction of $\Psh(\mathcal C)_{/\yo(S)} \to \Psh(\mathcal D)_{/\yo F(S)}$ to full subcategories of weakly $\yo$-quasi-admissible maps is the evident colimit-preserving functor $\Psh(\mathcal C_S) \to \Psh(\mathcal D_{F(S)})$ extending $\mathcal C_S \to \mathcal D_{F(S)}$, so this is an equivalence when $F$ is anodyne.
	\end{proof}
\end{lem}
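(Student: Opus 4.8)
The plan is to reduce all three assertions to the case of weakly $\yo$-quasi-admissible maps with \emph{representable} target. The first step is to record that the collection of weakly $\yo$-quasi-admissible maps in $\Psh_{\widehat{\spaces}}(\mathcal C)$ is a \emph{local} class in the sense of \cite[Definition 6.1.3.8]{htt}: by \Cref{defn:weak yo-qadm} it is precisely the class of maps that, after base change to any representable, become of a prescribed form, and that form is stable under further base change to representables since quasi-admissible maps are (fiber products against them exist in $\mathcal C$ and stay quasi-admissible). Given this, \cite[Lemma 6.1.3.7(1)]{htt} lets one check the preservation statements — that $F_!$ carries weakly $\yo$-quasi-admissible maps to such maps, and preserves their pullbacks — only on maps with target $\yo(S)$, $S\in\mathcal C$, and, via descent for local classes, reduces the equivalence-of-slices statement to the same representable case.

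\textbf{The representable-target case.} A weakly $\yo$-quasi-admissible map to $\yo(S)$ is equivalent to one of the form $\varinjlim_{k\in K}\yo(X(k))\to\yo(S)$ for a small diagram $X:K\to\mathcal C_S$. Since $F_!$ preserves colimits and sends $\yo(T)$ to $\yo(F(T))$, it takes this to $\varinjlim_k\yo(FX(k))\to\yo(F(S))$; as $F$ preserves quasi-admissible maps, $F\circ X$ factors through $\mathcal D_{F(S)}$, so the image is again weakly $\yo$-quasi-admissible. For pullbacks, given any $H\to\yo(S)$ written as $\varinjlim_{\ell\in L}\yo(Y(\ell))$, universality of colimits in $\Psh(\mathcal C)$ together with the fact that $\yo$ preserves the (existing, as $X(k)\to S$ is quasi-admissible) fiber products $X(k)\times_S Y(\ell)$ identifies $\bigl(\varinjlim_k\yo X(k)\bigr)\times_{\yo(S)}H$ with $\varinjlim_{(k,\ell)}\yo\bigl(X(k)\times_S Y(\ell)\bigr)$. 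Applying $F_!$ and using that $F$ preserves quasi-admissible base changes turns this into $\varinjlim_{(k,\ell)}\yo\bigl(FX(k)\times_{F(S)}FY(\ell)\bigr)$, which by the identical computation in $\Psh(\mathcal D)$ equals $F_!\bigl(\varinjlim_k\yo X(k)\bigr)\times_{\yo F(S)}F_!H$; this is the claimed equivalence.

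\textbf{The anodyne statement.} Again using descent for the local class, it suffices to treat $P=\yo(S)$, where the functor in question is the restriction of $\Psh(\mathcal C)_{/\yo(S)}\to\Psh(\mathcal D)_{/\yo(F(S))}$ (induced by $F_!$, noting $F_!\yo(S)\simeq\yo(F(S))$) to weakly $\yo$-quasi-admissible maps. Here I would invoke the standard equivalence $\Psh(\mathcal C)_{/\yo(S)}\simeq\Psh(\mathcal C_{/S})$, under which the full subcategory of weakly $\yo$-quasi-admissible maps to $\yo(S)$ corresponds to the colimit-closure of the representables indexed by $\mathcal C_S\subseteq\mathcal C_{/S}$, i.e. to the essential image of the fully faithful left Kan extension $\Psh(\mathcal C_S)\hookrightarrow\Psh(\mathcal C_{/S})$ along the inclusion. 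Unwinding functoriality of left Kan extensions, the restriction of $F_!$ to these subcategories becomes the colimit-preserving extension of the induced functor $\mathcal C_S\to\mathcal D_{F(S)}$, which is an equivalence exactly when $\mathcal C_S\to\mathcal D_{F(S)}$ is, that is, when $F$ is anodyne.

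\textbf{Main obstacle.} Nothing here is deep, but the genuinely fiddly points are: (a) verifying once and for all that weakly $\yo$-quasi-admissible maps form a local class, since it is the preservation/descent machinery of \cite[\S6.1.3]{htt} that legitimizes the reduction to representable targets; and (b) pinning down the identifications in the anodyne case so that ``restrict $F_!$'' is \emph{literally} the colimit-preserving extension of $\mathcal C_S\to\mathcal D_{F(S)}$ — a naturality check about commuting left Kan extensions across the squares of inclusions $\mathcal C_S\to\mathcal C_{/S}$ and $\mathcal D_{F(S)}\to\mathcal D_{/F(S)}$. Everything else is used purely formally: $F_!$ preserves colimits, colimits in presheaf topoi are universal, $\yo$ preserves limits, and $F$ respects quasi-admissible maps and their base changes.
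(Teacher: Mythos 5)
Your proposal is correct and follows essentially the same route as the paper: establish that weakly $\yo$-quasi-admissible maps form a local class via \Cref{lem:locality in psh} to reduce everything to representable targets, verify the representable case using colimit-preservation of $F_!$, universality of colimits, and the fact that $F$ preserves quasi-admissible maps and their base changes, and identify the restricted slice functor with the colimit-preserving extension of $\mathcal C_S \to \mathcal D_{F(S)}$ for the anodyne statement. Your treatment of the base-change computation is somewhat more explicit than the paper's, but the argument is the same.
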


\begin{lem} \label{lem:pull back pseudotop}
	Let $F : \mathcal B \to \mathcal C$ be a morphism of locally small pullback contexts, and let $F_! : \Psh(\mathcal B) \to \Psh(\mathcal C)$ be the evident colimit-preserving extension of $F$.

	If $\tau$ is a pseudotopology on $\mathcal C$ such that all $\tau$-acyclic pseudosieves are weakly $\yo$-quasi-admissible, then there is a pseudotopology $\tau_{\mathcal B}$ on $\mathcal B$ such that for any $S \in \mathcal B$, a pseudosieve $P \to \yo(S)$ is $\tau_{\mathcal B}$-acyclic if and only if $F_! P \to \yo(F(S))$ is $\tau$-acyclic.
	\begin{proof}
		To show that $\tau_{\mathcal B}$ is a pseudotopology, we need to show that if $F_!(P \to \yo(S))$ is $\tau$-acyclic, then for any $T \to S$ in $\mathcal B$, $F_!(P \times_{\yo(S)} \yo(T) \to \yo(T))$ is $\tau$-acyclic. Indeed, \Cref{lem:weakly yo-qadm morphs} says that the latter map is the base change of $F_!(P \to \yo(S))$ along $\yo F(T) \to \yo F(S)$, which is $\tau$-acyclic since $\tau$ is a pseudotopology, and $F_!(P \to \yo(S))$ is $\tau$-acyclic. 
	\end{proof}
\end{lem}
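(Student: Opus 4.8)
The plan is to define $\tau_{\mathcal B}$ by the only formula consistent with the asserted characterization, and then verify the single axiom demanded by \Cref{defn:pseudotop}: stability of acyclic pseudosieves under base change along maps of $\mathcal B$. Explicitly, I would declare a pseudosieve $P\to\yo(S)$ on $S\in\mathcal B$ to be $\tau_{\mathcal B}$-acyclic exactly when the map $F_!P\to F_!\yo(S)=\yo(F(S))$ is $\tau$-acyclic; here one uses that $F_!$ extends $\yo\circ F$, so it really does carry $\yo(S)$ to $\yo(F(S))$ and a map to $\yo(S)$ to a map to $\yo(F(S))$. With this definition the stated characterization holds by construction, so everything reduces to showing that this collection of pseudosieves is stable under base change.

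So fix a $\tau_{\mathcal B}$-acyclic pseudosieve $P\to\yo(S)$ and a map $h\colon T\to S$ in $\mathcal B$; one must show that $P\times_{\yo(S)}\yo(T)\to\yo(T)$ is again $\tau_{\mathcal B}$-acyclic, \ie{} that $F_!\bigl(P\times_{\yo(S)}\yo(T)\bigr)\to\yo(F(T))$ is $\tau$-acyclic. First I would record that, since $F_!P\to\yo(F(S))$ is $\tau$-acyclic, the running hypothesis on $\tau$ forces it to be weakly $\yo$-quasi-admissible, and weak $\yo$-quasi-admissibility is stable under base change (as in \Cref{cnstr:Huniv}). The key step is then to push $F_!$ past the pullback square defining $P\times_{\yo(S)}\yo(T)$: by \Cref{lem:weakly yo-qadm morphs}, $F_!$ preserves base changes along weakly $\yo$-quasi-admissible maps to representable presheaves, and $\yo(F(h))\colon\yo(F(T))\to\yo(F(S))$ is a map of representables, so one obtains an equivalence
\[
	F_!\bigl(P\times_{\yo(S)}\yo(T)\bigr)\;\simeq\;F_!P\times_{\yo(F(S))}\yo(F(T))
\]
over $\yo(F(T))$, identifying the left-hand side with the base change of $F_!P\to\yo(F(S))$ along $\yo(F(h))$.

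Granting this identification, the argument closes at once: $F_!P\to\yo(F(S))$ is $\tau$-acyclic and $\tau$ is a pseudotopology, so its base change along $\yo(F(h))$ is $\tau$-acyclic — which is precisely the required conclusion. The sole delicate point, which I expect to be the main obstacle, is this commutation of $F_!$ with the pullback $P\times_{\yo(S)}\yo(T)$: \Cref{lem:weakly yo-qadm morphs} is phrased for weakly $\yo$-quasi-admissible maps \emph{in} $\Psh(\mathcal B)$, whereas a priori one only knows that the image $F_!P\to\yo(F(S))$ in $\Psh(\mathcal C)$ is such, and $F_!$ need not reflect this property nor preserve pullbacks of representables in general. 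To make this airtight I would fold weak $\yo$-quasi-admissibility of $P\to\yo(S)$ (in $\Psh(\mathcal B)$) into the notion of $\tau_{\mathcal B}$-acyclicity — a harmless strengthening in the intended applications, where the relevant pseudotopologies are quasi-admissible in the sense of \Cref{defn:qadm pseudotop} anyway — and then apply \Cref{lem:weakly yo-qadm morphs} directly in $\Psh(\mathcal B)$ (using also that $F_!$ preserves weakly $\yo$-quasi-admissible maps) before transporting the conclusion to $\Psh(\mathcal C)$.
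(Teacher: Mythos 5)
Your proposal follows essentially the same route as the paper's proof: define $\tau_{\mathcal B}$ by the asserted characterization, reduce everything to stability under base change, and use \Cref{lem:weakly yo-qadm morphs} to commute $F_!$ past the pullback $P \times_{\yo(S)} \yo(T)$ so that the conclusion follows from base-change stability of $\tau$. The ``delicate point'' you isolate is genuine, and it is worth noting that the paper's own proof skates over it in exactly the way you anticipate: it invokes \Cref{lem:weakly yo-qadm morphs} as if $P \to \yo(S)$ were known to be weakly $\yo$-quasi-admissible in $\Psh(\mathcal B)$, whereas the hypothesis only yields that $F_!P \to \yo(F(S))$ is weakly $\yo$-quasi-admissible in $\Psh(\mathcal C)$, and $F_!$ reflects this property only in special cases (the lemma records reflection when $F$ is anodyne). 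Your patch --- folding weak $\yo$-quasi-admissibility of $P \to \yo(S)$ itself into the definition of $\tau_{\mathcal B}$-acyclicity --- is sound: base changes of weakly $\yo$-quasi-admissible maps are again such, so the strengthened class is still stable under base change, and \Cref{lem:weakly yo-qadm morphs} then applies legitimately. It does weaken the stated ``if and only if'' in general, but in every use of the lemma in the paper (\Cref{prp:quasi-top basis}, \Cref{cor:describe invariance localization of Huniv}, \Cref{rmk:abuse notation for univ PF}) the functor $F$ is a fully faithful anodyne inclusion or a slice projection $\mathcal C_S \to \mathcal C$, and there the final clause of \Cref{lem:weakly yo-qadm morphs} shows $F_!$ reflects weak $\yo$-quasi-admissibility, so your strengthening is vacuous and the original characterization is recovered.
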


\begin{prp} \label{prp:quasi-top basis}
	Let $\mathcal C$ be a locally small pullback context equipped with a pseudotopology $\tau$. Let $\mathcal C' \subseteq \mathcal C$ be a small full anodyne pullback subcontext such that for any object $S \in \mathcal C$, there is a family of quasi-admissible maps from objects of $\mathcal C'$ that generate a $\tau$-acyclic sieve on $S$.

	If every $\tau$-acyclic pseudosieve is a sieve generated by quasi-admissible maps, then the restriction functor $\Psh(\mathcal C) \to \Psh(\mathcal C')$ restricts to an equivalence $\Psh^\tau(\mathcal C) \to \Psh^{\tau'}(\mathcal C')$, where $\tau'$ is the pseudotopology on $\mathcal C'$ induced by $\tau$, as in \Cref{lem:pull back pseudotop}.

	\begin{proof}
		Note that a quasi-topology, as defined in \cite[C.1]{quadratic-refinement-GLV-trace}, is precisely a pseudotopology in which all acyclic pseudosieves are sieves. Thus, by hypothesis, we have that $\tau$ is a quasi-topology.

		By \Cref{lem:weakly yo-qadm morphs}, we have that the colimit-preserving functor $u_! : \Psh(\mathcal C') \to \Psh(\mathcal C)$ that extends $u$ preserves base changes along weakly $\yo$-quasi-admissible base changes. In particular, $u_!$ preserves diagonals of weakly $\yo$-quasi-admissible maps. Since $u$ is fully faithful, so is $u_!$, so $u_!$ is conservative. This shows that $u_!$ preserves and reflects weakly $\yo$-quasi-admissible monomorphisms (where we use that fact that $u$ is anodyne to reflect weakly $\yo$-quasi-admissible maps using \Cref{lem:weakly yo-qadm morphs}). Hence, since every $\tau$-acyclic pseudosieve is a weakly $\yo$-quasi-admissible sieve, the same is true of $\tau'$, and in particular, $\tau'$ is a quasi-topology.

		We will verify the hypotheses of \cite[Lemma C.3]{quadratic-refinement-GLV-trace}.
		Hypotheses (a1) of \cite[Lemma C.3]{quadratic-refinement-GLV-trace} follows from the fact that $\mathcal C' \to \mathcal C$ is a morphism of pullback contexts, so it preserves fibred products of quasi-admissible maps. Hypothesis (a2) follows from the definition of $\tau'$ and the fact that $u_!$ preserves monomorphisms.

		If $R \to \yo(u(X))$ is a $\tau$-acyclic pseudosieve, then it is weakly $\yo$-quasi-admissible, so since $u$ is anodyne, \Cref{lem:weakly yo-qadm morphs} says that it is of the form $u_!(R' \to \yo(X))$ for some pseudosieve $R' \to \yo(X)$, which is $\tau'$-acyclic by the definition of $\tau'$. Since $u$ is fully faithful, we have that $u_!$ is fully faithful, so the unit $\id \to u^* u_!$ is an equivalence, whence $u^* R \to \yo(X)$ is equivalent to the $\tau'$-acyclic pseudosieve $R' \to \yo(X)$. This shows hypothesis (b) of \cite[Lemma C.3]{quadratic-refinement-GLV-trace}.

		Finally, to show hypothesis (c), we note that every object $S$ of $\mathcal C$ admits a $\tau$-acyclic sieve $\mathcal U$ generated by quasi-admissible maps $u(S') \to S$ with $S' \in \mathcal C'$. Since $u : \mathcal C' \to \mathcal C$ is anodyne, it follows that if $u(S') \to S \gets u(S'')$ are quasi-admissible maps, then $u(S') \times_S u(S'')$ is in the essential image of $u$ since it admits a quasi-admissible map to $u(S')$.

		Therefore, we may apply \cite[Lemma C.3]{quadratic-refinement-GLV-trace} to see that the restriction functor $u^* : \Psh(\mathcal C) \to \Psh(\mathcal C')$ restricts to an equivalence $\Psh^\tau(\mathcal C) \to \Psh^{\tau'}(\mathcal C')$, as desired.
	\end{proof}
\end{prp}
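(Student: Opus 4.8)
The plan is to reduce the statement to the general comparison lemma for sheaves on a \emph{quasi-topology} and a dense subsite, namely \cite[Lemma C.3]{quadratic-refinement-GLV-trace}; recall from \cite[C.1]{quadratic-refinement-GLV-trace} that a quasi-topology is a pseudotopology all of whose acyclic pseudosieves are sieves. The first observation is that the hypothesis that every $\tau$-acyclic pseudosieve is a sieve generated by quasi-admissible maps says exactly that $\tau$ is a quasi-topology, and moreover that such a sieve, together with each of its base changes along maps between representables, is a small colimit of representables coming from the appropriate slice $\mathcal C_S$: the \v{C}ech nerve of a family of quasi-admissible maps has all of its terms in $\mathcal C_S$ since base changes and composites of quasi-admissible maps are quasi-admissible. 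In particular $\tau$-acyclic pseudosieves are weakly $\yo$-quasi-admissible, so the induced pseudotopology $\tau'$ of \Cref{lem:pull back pseudotop} makes sense.

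Next I would study the colimit-preserving extension $u_! : \Psh(\mathcal C') \to \Psh(\mathcal C)$ of the inclusion $u : \mathcal C' \hookrightarrow \mathcal C$. Full faithfulness of $u$ gives full faithfulness of $u_!$, hence $u^* u_! \simeq \id$ and $u_!$ conservative. By \Cref{lem:weakly yo-qadm morphs}, $u_!$ preserves weakly $\yo$-quasi-admissible maps and base changes along them, and, using that $u$ is anodyne, it reflects them; combined with conservativity this yields that $u_!$ preserves and reflects weakly $\yo$-quasi-admissible monomorphisms (via preservation of diagonals of weakly $\yo$-quasi-admissible maps). From this it follows that $\tau'$ is again a quasi-topology, that $u_!$ sends $\tau'$-acyclic pseudosieves to $\tau$-acyclic ones, and, conversely, that every $\tau$-acyclic pseudosieve on an object $u(X)$ has the form $u_!(R' \to \yo(X))$ with $R' \to \yo(X)$ being $\tau'$-acyclic, where $u^* R \simeq R'$ by full faithfulness of $u_!$.

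Then I would check the three hypotheses of \cite[Lemma C.3]{quadratic-refinement-GLV-trace}. Hypothesis (a1) — the subcategory is stable under the relevant fibred products — holds because $u$ is a morphism of pullback contexts, so it preserves fibred products along quasi-admissible maps, and these are the only fibre products entering the \v{C}ech nerves of our sieves. Hypothesis (a2) — the two quasi-topologies are compatible under $u^*$ and $u_!$ — is immediate from the definition of $\tau'$ together with the fact that $u_!$ preserves monomorphisms. Hypothesis (b) — pullback is essentially surjective on acyclic sieves — is the content of the previous paragraph. Hypothesis (c) — density — is precisely the standing assumption that every object $S \in \mathcal C$ carries a $\tau$-acyclic sieve generated by quasi-admissible maps from $\mathcal C'$; here one additionally needs that a fibred product $u(S') \times_S u(S'')$ of two such quasi-admissible maps lies again in $\mathcal C'$, which holds because it admits a quasi-admissible map to $u(S')$ and $\mathcal C' \subseteq \mathcal C$ is a full anodyne pullback subcontext (\Cref{rmk:full anodyne subctx}). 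Invoking the lemma then yields the desired equivalence $\Psh^\tau(\mathcal C) \xrightarrow{\sim} \Psh^{\tau'}(\mathcal C')$.

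I expect the main obstacle to be the bookkeeping: verifying that everything in sight — the \v{C}ech nerves, the base changes of sieves, and the induced pseudotopology $\tau'$ — can be expressed purely in terms of quasi-admissible data so that \Cref{lem:weakly yo-qadm morphs} and the pullback-context axioms apply, and tracking which fibre products actually exist and remain inside $\mathcal C'$, given that $\mathcal C'$ is only closed under fibre products along quasi-admissible maps and not in general. Once this is in place, matching the hypotheses of the external comparison lemma is formal.
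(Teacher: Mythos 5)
Your proposal is correct and follows essentially the same route as the paper's proof: identify $\tau$ and $\tau'$ as quasi-topologies, analyze $u_!$ via \Cref{lem:weakly yo-qadm morphs} to get preservation/reflection of weakly $\yo$-quasi-admissible sieves, verify hypotheses (a1), (a2), (b), (c) of \cite[Lemma C.3]{quadratic-refinement-GLV-trace}, and conclude. The only cosmetic difference is your extra commentary on \v{C}ech nerves, which the paper does not need to spell out.
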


\subsection{Locality of Quasi-Admissibility Structures} \label{S:local qadm}

Throughout this section, $\jmath : \mathcal C \to \tilde{\mathcal C}$ is a functor, and $\tau$ is a pseudotopology on $\tilde{\mathcal C}$. If $\jmath$ is a morphism of pullback contexts, and we write $\PF(\tilde{\mathcal C}; \tau)$ to denote the category of $\tau$-local pullback formalisms on $\tilde{\mathcal C}$, then we have a commutative square
\begin{equation} \label{eqn:local PF/sq}
	\begin{tikzcd}
		\PF(\tilde{\mathcal C}; \tau) \ar[d] \ar[r] & \PF(\mathcal C) \ar[d] \\
		\Psh^\tau_{\CAlg(\PrL)}(\tilde{\mathcal C}) \ar[r] & \Fun(\mathcal C^\op, \CAlg(\PrL))
	\end{tikzcd},
\end{equation}
where the vertical arrows are the evident subcategory inclusions, and the horizontal arrows are induced by $\jmath$. We will be concerned with the question of when this square is Cartesian, that is, when is it the case that a $\tau$-local presheaf $D : \tilde{\mathcal C}^\op \to \CAlg(\PrL)$ is a pullback formalism if and only if $\jmath^* D$ is a pullback formalism, and a transformation $D \to D'$ of $\tau$-local pullback formalisms respects quasi-admissibility if and only if it does after restricting along $\jmath$.

In fact, if $\tau'$ is a pseudotopology on $\mathcal C$ such that $\jmath$ sends every $\tau'$-acyclic pseudosieve to a $\tau$-acyclic pseudosieve, then we have that \eqref{eqn:local PF/sq} is Cartesian if and only if
\begin{equation} \label{eqn:local PF/sq2}
	\begin{tikzcd}
		\PF(\tilde{\mathcal C}; \tau) \ar[d] \ar[r] & \PF(\mathcal C; \tau') \ar[d] \\
		\Psh^\tau_{\CAlg(\PrL)}(\tilde{\mathcal C}) \ar[r] & \Psh^{\tau'}_{\CAlg(\PrL)}(\mathcal C)
	\end{tikzcd}
\end{equation}
is Cartesian.

These squares should become Cartesian when, in some sense, the quasi-admissible maps of $\tilde{\mathcal C}$ $\tau$-locally come from the quasi-admissible maps of $\mathcal C$.

\begin{defn} \label{defn:local qadm}
	Suppose $\mathcal C$ is a pullback context, and let $f$ be a map in $\tilde{\mathcal C}$.

	Say $f$ is \emph{$\tau$-locally $\jmath$-quasi-admissible} if for any $\tau$-local presheaf $D : \mathcal C^\op \to \CAlg(\PrL)$ such that $\jmath^* D$ is is a pullback formalism, we have that $D(f)$ has a linear left adjoint, $D$ has left base change for $f$, and if $\phi : D \to D'$ is a transformation of $\tau$-local presheaves such that $\jmath^* \phi$ is a morphism of pullback formalisms, then $\phi$ is left adjointable at $f$.

	When $\tau$ is the pseudotopology of universal colimits from \Cref{exa:colim pseudotop}, we may omit ``$\tau$-locally'' and simply say $f$ is \emph{$\jmath$-quasi-admissible}.

\end{defn}

\begin{rmk} \label{rmk:local PF sq}
	If $\jmath$ is a morphism of pullback contexts such that every quasi-admissible map in $\tilde{\mathcal C}$ is $\tau$-locally $\jmath$-quasi-admissible, then the square \eqref{eqn:local PF/sq} is Cartesian, and therefore \eqref{eqn:local PF/sq2} is also Cartesian.
\end{rmk}

\begin{rmk} \label{rmk:local qadm str}
	By \cite[Lemma F.6(3)]{TwAmb}, the collection of maps in $\tilde{\mathcal C}$ that are $\tau$-locally $\jmath$-quasi-admissible is closed under composition. Thus, the collection of maps in $\tilde{\mathcal C}$ all of whose base changes exist and are $\tau$-locally $\jmath$-quasi-admissible is a quasi-admissibility structure on $\mathcal C$. These are the universally $\tau$-locally $\jmath$-quasi-admissible maps.
\end{rmk}

Before coming to our main results, we establish the following \namecref{lem:Psh qadm source-local} that shows that at least when $\jmath$ is a Yoneda embedding, the $\jmath$-quasi-admissible maps are in some sense dominated by maps that are representable by quasi-admissible maps.
\begin{lem} \label{lem:Psh qadm source-local}
	Let $\mathcal C$ be a small pullback context. Then for any $S \in \mathcal C$, every $\yo$-quasi-admissible map to $\yo(S)$ is of the form
	\[
		\varinjlim \yo X \to \yo(S)
	\]
	for $K$ a simplicial set, and $X : K \to \mathcal C_S$ a diagram.

	In fact, if $Q'$ is the collection of maps $f$ in $\Psh(\mathcal C)$ such that for any morphism $\phi$ of pullback formalisms on $\mathcal C$, we have that $\phi$ is left adjointable at $f$, then the weakly $\yo$-quasi-admissible maps (see \Cref{defn:weak yo-qadm}) are stable under postcomposition by maps in $Q'$.
	\begin{proof}
		By \Cref{lem:locality in psh}, the collection of weakly $\yo$-quasi-admissible maps is a local class in $\Psh(\mathcal C)$ in the sense of \cite[Definition 6.1.3.8]{htt}, so by \cite[Lemma 6.1.3.7]{htt}, we have that it defines a limit-preserving presheaf $D : \Psh(\mathcal C)^\op \to \widehat{\Cat}$ that is equipped with a transformation $D \to \bar H^\univ$ such that for any $Y \in \Psh(\mathcal C)$, the functor $D(Y) \to \bar H^\univ(Y) \simeq \Psh(\mathcal C)_{/Y}$ is the inclusion of the full subcategory of weakly $\yo$-quasi-admissible maps to representable presheaves.

		Note that for any $S \in \mathcal C$, the evident colimit-preserving functor $\Psh(\mathcal C_S) \to \Psh(\mathcal C)_{/\yo(S)}$ is fully faithful with essential image given by $D(S)$. In particular, $D(S)$ is a presentable category. Since $\Psh(\mathcal C)$ has universal colimits, it follows that $D|_{\mathcal C}$ lands in $\PrL \subseteq \widehat{\Cat}$. Since $D$ preserves small limits, and $\PrL \to \widehat{\Cat}$ preserves small limits by \cite[Proposition 5.5.3.13]{htt}, we have that $D$ lands in $\PrL \subseteq \widehat{\Cat}$, and $D \to \bar H^\univ$ is a transformation of limit-preserving $\PrL$-valued presheaves.
		In fact, since the weakly $\yo$-quasi-admissible are stable under base change, and colimits are universal, we have that $D$ lands in $\CAlg(\PrL)$ (with the Cartesian monoidal structures), and for each $S \in \mathcal C$, the inclusion $D(S) \to \bar H^\univ(S)$ preserves finite products, so we can consider $D \to \bar H^\univ$ as a transformation of presheaves $\Psh(\mathcal C)^\op \to \CAlg(\PrL)$. Note that $D$ is still limit-preserving by \cite[Corollary 3.2.2.5]{ha}.

		By \Cref{prp:sub PF is PF}, we have that the transformation $(D \to \bar H^\univ)|_{\mathcal C}$ is a morphism of pullback formalisms. Thus, by the definition of $Q'$, we have that $D \to \bar H^\univ$ is left adjointable at maps in $Q'$, so for any map $X \to Y$ in $Q'$, the slice projection $\Psh(\mathcal C)_{/X} \to \Psh(\mathcal C)_{/Y}$ sends $D(X) \subseteq \Psh(\mathcal C)_{/X}$ to $D(Y) \subseteq \Psh(\mathcal C)_{/Y}$, \ie, postcomposition by $X \to Y$ preserves weakly $\yo$-quasi-admissible maps.

		To see that this implies the first statement, note that every identity map is weakly $\yo$-quasi-admissible, so that the above fact implies that $Q'$ is contained in the collection of weakly $\yo$-quasi-admissible maps, and we conclude since every $\yo$-quasi-admissible map is in $Q'$.
	\end{proof}
\end{lem}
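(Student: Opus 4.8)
\emph{Plan.} The idea is to realise the weakly $\yo$-quasi-admissible maps as a \emph{local class} of morphisms in $\Psh(\mathcal C)$, assemble them into a sub-presheaf of $\bar H^\univ_{\Psh(\mathcal C)}$ whose restriction to $\mathcal C$ is a morphism of pullback formalisms, and then extract the stated stability directly from the defining property of $Q'$.

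First I would use \Cref{lem:locality in psh} to see that the collection $\mathcal W$ of weakly $\yo$-quasi-admissible maps is a local class of morphisms in $\Psh(\mathcal C)$ in the sense of \cite[Definition 6.1.3.8]{htt}: membership is tested on base changes to representable presheaves, and such maps are stable under base change and under small colimits, the latter because colimits in $\Psh(\mathcal C)$ are universal. By \cite[Lemma 6.1.3.7]{htt} this produces a limit-preserving presheaf $D\colon \Psh(\mathcal C)^\op \to \widehat{\Cat}$ with a transformation $D \to \bar H^\univ_{\Psh(\mathcal C)}$ under which each $D(Y) \hookrightarrow \bar H^\univ(Y) \simeq \Psh(\mathcal C)_{/Y}$ is the inclusion of the full subcategory of weakly $\yo$-quasi-admissible maps with target $Y$. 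For $S \in \mathcal C$ one identifies $D(\yo S)$ with the essential image of the fully faithful colimit-preserving functor $\Psh(\mathcal C_S) \hookrightarrow \Psh(\mathcal C)_{/\yo S}$, so $D$ takes presentable values on $\mathcal C$; since $D$ preserves limits and $\PrL \hookrightarrow \widehat{\Cat}$ does too (\cite[Proposition 5.5.3.13]{htt}), $D$ lands in $\PrL$ everywhere, and as $\mathcal W$ is stable under base change with colimits universal, $D$ refines to a $\CAlg(\PrL)$-valued presheaf for the Cartesian monoidal structures, each inclusion $D(Y) \hookrightarrow \bar H^\univ(Y)$ being finite-product-preserving, with $D$ still limit-preserving by \cite[Corollary 3.2.2.5]{ha}.

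Next I would check, via \Cref{prp:sub PF is PF}, that $(D \to \bar H^\univ)|_{\mathcal C}$ is a morphism of pullback formalisms on $\mathcal C$: objectwise full faithfulness is immediate; for a quasi-admissible $f\colon X \to Y$ in $\mathcal C$, postcomposition by $\yo(f)$ carries a diagram valued in $\mathcal C_X$ to one valued in $\mathcal C_Y$ (as $Q$ is closed under composition), hence preserves weakly $\yo$-quasi-admissible maps; and each $D(\yo S)$ is closed under products since fibre products over $S$ of quasi-admissible maps are quasi-admissible, $\yo$ preserves fibre products, and colimits are universal; here $\bar H^\univ|_{\mathcal C}$ is a pullback formalism because $\yo\colon \mathcal C \to \Psh(\mathcal C)$ is a morphism of pullback contexts. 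Now the crux: both $D$ and $\bar H^\univ_{\Psh(\mathcal C)}$ are limit-preserving presheaves on $\Psh(\mathcal C)$, hence right Kan extensions of their restrictions to $\mathcal C$, so $D \to \bar H^\univ$ is the canonical extension of the morphism of pullback formalisms just produced; therefore, by the definition of $Q'$, it is left adjointable at every $f \in Q'$. Unwinding left adjointability at $f\colon X \to Y$, the relevant Beck--Chevalley mate identifies, for every weakly $\yo$-quasi-admissible $Z \to X$, the composite $f_\sharp^{\bar H}(Z \to X) = (Z \to X \to Y)$ with the image of $f_\sharp^D(Z \to X) \in D(Y)$; in particular $Z \to Y$ is again weakly $\yo$-quasi-admissible, which is the asserted stability. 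The first assertion then follows formally: every $\yo$-quasi-admissible map lies in $Q'$ by unwinding \Cref{defn:local qadm}, and since $\id$ is weakly $\yo$-quasi-admissible the stability just shown gives $Q' \subseteq \mathcal W$; so a $\yo$-quasi-admissible map to $\yo S$ is weakly $\yo$-quasi-admissible, hence, testing against $\id_{\yo S}$ in \Cref{defn:weak yo-qadm}, of the form $\varinjlim \yo X \to \yo S$ with $X\colon K \to \mathcal C_S$.

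I expect the main obstacle to be the bookkeeping around the crux step: making precise what ``left adjointable at a map in $\Psh(\mathcal C)$'' means for a morphism of pullback formalisms on $\mathcal C$, and confirming that this is exactly left adjointability of the extended transformation $D \to \bar H^\univ$ --- equivalently, that $D$ genuinely is that canonical limit-preserving extension. The remainder is routine manipulation of local classes and of universality of colimits in the presheaf topos.
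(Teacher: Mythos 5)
Your proposal is correct and follows essentially the same route as the paper's proof: realizing the weakly $\yo$-quasi-admissible maps as a local class, assembling them via \cite[Lemma 6.1.3.7]{htt} into a limit-preserving sub-presheaf $D \subseteq \bar H^\univ$ landing in $\CAlg(\PrL)$, invoking \Cref{prp:sub PF is PF} to see that $(D \to \bar H^\univ)|_{\mathcal C}$ is a morphism of pullback formalisms, and then reading off the stability from the definition of $Q'$. Your explicit remark that $D$ and $\bar H^\univ$ are the canonical limit-preserving extensions of their restrictions to $\mathcal C$ is exactly the (implicit) justification the paper uses for applying the definition of $Q'$ to the transformation $D \to \bar H^\univ$, so the "crux" you flag is not a gap but precisely the step the paper takes.
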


\subsubsection{Main applications:}

We will now present the statements of the main applications of this \namecref{S:local qadm}. The proofs will be presented later in this section.

\begin{prp} \label{prp:locality of local qadm str}
	The collection of (universally) $\tau$-locally $\jmath$-quasi-admissible maps (see \Cref{rmk:local qadm str}) is stable under transfinite $\tau$-composites, and if $\tilde{\mathcal C}$ admits all pullbacks, it is also strictly $\tau$-local on the source, and the collection of universally $\tau$-locally $\jmath$-quasi-admissible maps is strictly $\tau$-local on the target. 
\end{prp}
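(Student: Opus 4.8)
The plan is to fix once and for all a $\tau$-local presheaf $D\colon\tilde{\mathcal C}^{\op}\to\CAlg(\PrL)$ with $\jmath^{*}D$ a pullback formalism, together with (for the clause of \Cref{defn:local qadm} about transformations) a transformation $\phi\colon D\to D'$ of such presheaves whose restriction along $\jmath$ respects quasi-admissibility, and then to verify the three requirements of \Cref{defn:local qadm} --- that $D(f)$ has a linear left adjoint, that $D$ has left base change for $f$, and that $\phi$ is left-adjointable at $f$ --- for maps $f$ built by each of the three operations. The guiding principle is that $\tau$-locality of $D$ says precisely that $D$ carries every $\tau$-acyclic diagram $Z\colon K^{\triangleright}\to\tilde{\mathcal C}$ to a limit diagram in $\CAlg(\PrL)$, hence in $\PrL$ and in $\widehat{\Cat}$ (since $\PrL\to\widehat{\Cat}$ and $\CAlg(\PrL)\to\PrL$ preserve limits, \cite[Proposition 5.5.3.13]{htt} and \cite[Corollary 3.2.2.5]{ha}); thus $D(Z(\infty))\simeq\varprojlim_{a}D(Z(a))$ compatibly with all transition functors, and objects, tensor products, and functors between the $D(Z(a))$ assemble componentwise into the corresponding data over $D(Z(\infty))$. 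Combined with the closure of linear left adjoints and of left-adjointable squares under composition --- already used, via \cite[Lemma F.6]{TwAmb}, in \Cref{rmk:local qadm str} --- this is the engine of all three assertions; as a base case, every equivalence is quasi-admissible and hence $\tau$-locally $\jmath$-quasi-admissible.

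For stability under transfinite $\tau$-composites I would induct on the length $\lambda$ of the witnessing diagram $F\colon\lambda+1\to\tilde{\mathcal C}$. The successor step is immediate from closure under composition. At a limit ordinal $\lambda'$, the defining $\tau$-acyclicity gives $D(F(\lambda'))\simeq\varprojlim_{\alpha<\lambda'}D(F(\alpha))$, and for $\alpha\le\beta<\lambda'$ the restriction $F|_{[\alpha,\beta]}$ is again a transfinite $\tau$-composite of length $<\lambda'$ (the intervening limit stages remain $\tau$-acyclic because $[\alpha,\lambda'')$ is cofinal in $[0,\lambda'')$), so by the inductive hypothesis each transition functor $(F(\alpha)\to F(\beta))^{*}$ has a linear left adjoint with the base change property. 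Passing to these left adjoints turns the tower into an adjointable diagram, so via $\PrL\simeq(\PrR)^{\op}$ the limit $D(F(\lambda'))$ is identified with the $\PrL$-colimit of the adjoint tower; the cocone map $D(F(0))\to D(F(\lambda'))$ then provides the left adjoint of $f^{*}=(F(0)\to F(\lambda'))^{*}$, which is linear by the module-theoretic form of \Cref{defn:proj form}, and the base change property and $\phi$-adjointability at $f$ follow by applying the same reasoning to the accompanying adjointable diagrams of squares (the needed limit-of-adjointable-diagrams facts being those packaged in \cite[Lemma F.6]{TwAmb}). The ``universally'' variant then follows because base-changing $F$ yields again a transfinite $\tau$-composite of universally $\tau$-locally $\jmath$-quasi-admissible maps: $\tau$-acyclic pseudosieves, and hence $\tau$-acyclic diagrams, are stable under base change.

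I would handle strict $\tau$-locality on the source and on the target directly, on the same componentwise footing. For the source, with $X\colon K\star\Delta^{1}\to\tilde{\mathcal C}$ as in the definition and $\tilde{\mathcal C}$ admitting all pullbacks, $\tau$-acyclicity of $X|_{K\star\{0\}}$ gives $D(X(0))\simeq\varprojlim_{a\in K}D(X(a))$, under which $f^{*}=(X(0)\to X(1))^{*}$ satisfies $\sigma_{a}^{*}f^{*}\simeq(X(a)\to X(1))^{*}$ with each $(X(a)\to X(1))^{*}$ a right adjoint; hence $f^{*}$ preserves limits, and being accessible it has a left adjoint, computed by $f_{\sharp}(M_{\bullet})\simeq\varinjlim_{a\in K}(X(a)\to X(1))_{\sharp}M_{a}$ --- the diagram over $K$ being built from $(X(a)\to X(1))_{\sharp}\simeq(X(b)\to X(1))_{\sharp}(X(a)\to X(b))_{\sharp}$ and the counits, which is where the hypothesis $(X(a)\to X(b))\in Q$ enters. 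The projection formula, left base change (here the pullbacks in $\tilde{\mathcal C}$ are used, to form the base-changed diagrams), and left-adjointability of $\phi$ at $f$ then reduce via this formula to the corresponding facts for the $X(a)\to X(1)$, using that $\otimes$, $\phi$, and pullback functors preserve colimits. For the target, $f=f(\infty)$ arises from a Cartesian transformation $f\colon X\to Y\colon K^{\triangleright}\to\tilde{\mathcal C}$ of $\tau$-acyclic diagrams with each $f(a)\in Q$; then $X$ is itself $\tau$-acyclic, being the base change of $f(\infty)$ along $Y$ (by universality of colimits in presheaves and base-change stability of $\tau$-acyclicity), so $D(Y(\infty))\simeq\varprojlim_{a}D(Y(a))$, $D(X(\infty))\simeq\varprojlim_{a}D(X(a))$, and $f(\infty)^{*}\simeq\varprojlim_{a}f(a)^{*}$; Cartesianness turns the squares attached to the morphisms $f(a)\to f(b)$ into base-change squares for the maps in $Q$, hence left-adjointable, and the limit of this adjointable family produces $f(\infty)_{\sharp}(M_{\bullet})\simeq(f(a)_{\sharp}M_{a})_{\bullet}$ with projection formula and base change inherited componentwise. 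That every base change of $f(\infty)$ exists --- so that it is \emph{universally} $\tau$-locally $\jmath$-quasi-admissible rather than merely $\tau$-locally so --- follows by base-changing the entire Cartesian transformation, again using pullbacks in $\tilde{\mathcal C}$ together with base-change stability of $\tau$-acyclicity.

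I expect the main obstacle to be the careful bookkeeping of the symmetric monoidal structure across these limits and colimits: checking that the left adjoints produced are genuinely \emph{linear}, and not just left adjoints of the underlying functors. This rests on the limits being formed in $\CAlg(\PrL)$ rather than merely in $\PrL$ (so that tensor products, and hence the projection-formula comparison maps, are computed componentwise), together with the compatibility of $\otimes$ with the colimits appearing in the explicit description of $f_{\sharp}$; making these coherences precise rather than merely objectwise, and casting them in the form required by \cite[Lemma F.6]{TwAmb}, is the technical heart of the proof. A secondary subtlety, relevant only to the ``universally'' assertions, is establishing the existence of all base changes of the composite maps, which is exactly the role of the hypothesis that $\tilde{\mathcal C}$ admits all pullbacks.
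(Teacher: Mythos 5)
Your proposal is correct and follows essentially the same route as the paper, which delegates the three closure properties to \Cref{lem:local qadm transf composites}, \Cref{lem:local qadm source}, and \Cref{lem:local qadm target} and proves each by exactly the mechanism you describe: $\tau$-locality exhibits the relevant categories as limits, and the linear-adjoint, base-change, and adjointability conditions are then checked componentwise via the adjointable-limit results of \cite[Corollary 4.7.4.18]{ha}, \Cref{thm:Mod LAd}, \Cref{prp:Pr adj lims}, and \Cref{prp:transf comp base change}. The only (presentational) difference is that where you construct the left adjoints by explicit colimit formulas, the paper instead transposes families of left-adjointable squares to right-adjointable ones and passes to limits abstractly.
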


\begin{prp} \label{prp:local PF dense subctx}
	Suppose $\jmath$ is the inclusion of a full anodyne pullback subcontext, and that every object of $\tilde{\mathcal C}$ is equivalent to $X(\infty)$ for some $\tau$-acyclic diagram $X : K^\triangleright \to \tilde{\mathcal C}$ that sends $K$ to $\mathcal C$. Then every quasi-admissible map is $\tau$-locally $\jmath$-quasi-admissible. In particular, the square \eqref{eqn:local PF/sq2} is Cartesian.
\end{prp}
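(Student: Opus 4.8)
The plan is to invoke \Cref{rmk:local PF sq}: since $\jmath$ is a morphism of pullback contexts, it is enough to show that every quasi-admissible map of $\tilde{\mathcal C}$ is $\tau$-locally $\jmath$-quasi-admissible, after which the Cartesianness of \eqref{eqn:local PF/sq2} (indeed of \eqref{eqn:local PF/sq}) follows formally. So fix a $\tau$-local $D : \tilde{\mathcal C}^\op \to \CAlg(\PrL)$ with $\jmath^* D$ a pullback formalism, and, for the adjointability clause, a transformation $\phi : D \to D'$ of $\tau$-local presheaves with $\jmath^*\phi$ a morphism of pullback formalisms; the task is to exhibit, at an arbitrary quasi-admissible $f : X \to Y$, the linear left adjoint of $D(f)$, the left-base-change equivalences, and the left-adjointability of $\phi$ at $f$.

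The first step is to reduce to quasi-admissible maps \emph{between objects of $\mathcal C$}. By hypothesis $Y \simeq Y'(\infty)$ for a $\tau$-acyclic diagram $Y' : K^\triangleright \to \tilde{\mathcal C}$ with $Y'(K) \subseteq \mathcal C$. As $f$ is quasi-admissible, every base change of $f$ exists, so pulling $Y'$ back along $f$ produces a Cartesian transformation $f' : X' \Rightarrow Y'$ of $K^\triangleright$-diagrams with $f'(\infty) = f$ and $X'(a) = Y'(a) \times_Y X$. Because $\yo$ preserves the pullbacks that exist in $\tilde{\mathcal C}$ and colimits in $\Psh(\tilde{\mathcal C})$ are universal, the map $\varinjlim_{a\in K}\yo(X'(a)) \to \yo(X)$ is the base change along $\yo(f)$ of the $\tau$-acyclic pseudosieve $\varinjlim_{a\in K}\yo(Y'(a)) \to \yo(Y)$, hence is itself $\tau$-acyclic; thus $X'$ is $\tau$-acyclic. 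Each $X'(a)$ carries the quasi-admissible map $f'(a)$ to $Y'(a) \in \mathcal C$, so $X'(a) \in \mathcal C$ by \Cref{rmk:full anodyne subctx}, and $f'(a)$ is then a quasi-admissible map of the pullback subcontext $\mathcal C$. In other words, $f = f'(\infty)$ is strictly $\tau$-locally on the target in the class of quasi-admissible maps of $\mathcal C$.

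The second step is to check that a quasi-admissible map $g : U \to V$ between objects of $\mathcal C$ is \emph{universally} $\tau$-locally $\jmath$-quasi-admissible. That $D(g)$ has a linear left adjoint, and that $\phi$ is left-adjointable at $g$, is immediate from $\jmath^* D$ being a pullback formalism and $\jmath^*\phi$ a morphism of such. The base-change clause for $g$ — which, together with the analogous clauses for the base changes of $g$, whose domains may lie outside $\mathcal C$, is all that remains — I would handle by covering the base: given $b : W \to V$, write $W \simeq W'(\infty)$ with $W' : L^\triangleright \to \tilde{\mathcal C}$ $\tau$-acyclic and $W'(L) \subseteq \mathcal C$; base-changing the square in question along each $W'(c) \to W$ produces the base change of $g$ along the map $W'(c) \to V$ of $\mathcal C$, which lies entirely in $\mathcal C$ and is therefore left-adjointable since $\jmath^* D$ is a pullback formalism. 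Since $D$ and $D'$ are $\tau$-local, they turn the $\tau$-acyclic cover into a limit decomposition, the given square is the limit of the base-changed ones, and left-adjointability of a square of presentable categories descends through such a limit provided the transition squares are left-adjointable (the Cartesianness of $f'$ above is exactly what supplies this compatibility); this is the mechanism behind \cite[Lemma F.6]{TwAmb} and of \Cref{prp:locality of local qadm str}.

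Putting the pieces together: by \Cref{prp:locality of local qadm str} the universally $\tau$-locally $\jmath$-quasi-admissible maps are strictly $\tau$-local on the target, so the exhibition of $f$ as $f'(\infty)$ for the Cartesian transformation $f' : X' \Rightarrow Y'$ of $\tau$-acyclic diagrams with each $f'(a)$ quasi-admissible in $\mathcal C$ — hence universally $\tau$-locally $\jmath$-quasi-admissible by the second step — shows $f$ is $\tau$-locally $\jmath$-quasi-admissible. One caveat: \Cref{prp:locality of local qadm str} states target-locality of the universal class under the hypothesis that $\tilde{\mathcal C}$ has all pullbacks, but the only pullbacks used here are base changes of the quasi-admissible maps $f'(a)$, which exist, so the argument applies; alternatively one runs the limit-of-adjunctions argument of the previous paragraph directly at $f$. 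I expect the main obstacle to be exactly this gluing bookkeeping: verifying that the existence of the linear left adjoint, the left-base-change squares, and the morphism-of-formalisms square all descend from a $\tau$-acyclic cover of the target, which forces one to cover the base as well in the base-change clause and to track the Cartesianness of the induced transition squares so that the relevant limits of adjunctions compute correctly. The remaining assertion — Cartesianness of \eqref{eqn:local PF/sq2} — is then immediate from \Cref{rmk:local PF sq}.
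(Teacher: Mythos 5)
Your argument is correct and is essentially the route the paper takes: the paper's proof simply cites \Cref{prp:local PF}, whose proof is exactly your base-change step for maps in $\mathcal C$ (present each map to the codomain via a $\tau$-acyclic diagram landing in $\mathcal C$, pull back to a Cartesian transformation, and apply \Cref{prp:source locality of base change}), while the linear left adjoint and adjointability of $\phi$ are read off from $\jmath^*D$ and $\jmath^*\phi$. You additionally spell out the target-locality reduction (\Cref{lem:local qadm target}/\Cref{prp:locality of local qadm str}) needed for quasi-admissible maps whose codomain lies outside $\mathcal C$ — a step the paper's one-line proof leaves implicit — and your caveat about not needing all pullbacks of $\tilde{\mathcal C}$ there is well taken.
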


\begin{prp} \label{prp:PF on Shv}
	Suppose that $\mathcal C$ is a locally small pullback context, and $\jmath$ is of the form $L \circ \yo$, where $L : \Psh(\mathcal C) \to \tilde{\mathcal C}$ is a locally Cartesian localization.

	If $\jmath$ preserves base changes along quasi-admissible maps, then $\jmath$ sends quasi-admissible maps to $\jmath$-quasi-admissible maps.
\end{prp}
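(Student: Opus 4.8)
The plan is to factor $\jmath = L\circ\yo$ with $\yo\colon\mathcal C\to\Psh(\mathcal C)$ the Yoneda embedding, prove the statement with $\yo$ in place of $\jmath$, and then transport the conclusion along $L$. Throughout, $\tau$ denotes the pseudotopology of universal colimits on whichever category is under discussion, so that ``$\jmath$-quasi-admissible'' abbreviates ``$\tau$-locally $\jmath$-quasi-admissible''; note $\tilde{\mathcal C}$ has universal colimits, being a locally Cartesian localization of $\Psh(\mathcal C)$, so on it the $\tau$-local $\CAlg(\PrL)$-valued presheaves are exactly the limit-preserving ones (\Cref{exa:colim pseudotop}). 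First I would treat $\yo$. After reducing to $\mathcal C$ small (so every presheaf on $\mathcal C$ is a small colimit of representables; this is where local smallness is used, via a small full anodyne pullback subcontext), equip $\Psh(\mathcal C)$ with the quasi-admissibility structure $Q_{\Psh}$ obtained from \Cref{rmk:gen qadm} applied to the base-change-stable collection of base changes of the maps $\yo(q)$ with $q$ quasi-admissible in $\mathcal C$. An easy induction on composites shows a $Q_{\Psh}$-map into a representable presheaf has representable source; hence, by \Cref{rmk:full anodyne subctx}, $\yo$ identifies $\mathcal C$ with a full anodyne pullback subcontext of $(\Psh(\mathcal C),Q_{\Psh})$. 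Since $\mathcal C$ is small, every object of $\Psh(\mathcal C)$ is $X(\infty)$ for a $\tau$-acyclic (that is, colimiting) diagram $X\colon K^\triangleright\to\Psh(\mathcal C)$ with $X|_K$ landing in $\mathcal C$, so \Cref{prp:local PF dense subctx} applies and shows that every map in $Q_{\Psh}$ is $\yo$-quasi-admissible. In particular, for quasi-admissible $g$ in $\mathcal C$, the map $\yo(g)$ together with all of its base changes is $\yo$-quasi-admissible in $\Psh(\mathcal C)$.

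Now let $D\colon\tilde{\mathcal C}^\op\to\CAlg(\PrL)$ be limit-preserving with $\jmath^\ast D$ a pullback formalism, and set $E\coloneqq D\circ L^\op\colon\Psh(\mathcal C)^\op\to\CAlg(\PrL)$. Since $L$ preserves colimits, $E$ is limit-preserving, and $\yo^\ast E=(L\yo)^\ast D=\jmath^\ast D$ is a pullback formalism. As $E(\yo g)=D(\jmath g)$ and, more generally, $E$ applied to a base-change square of $\yo g$ equals $D$ applied to the $L$-image of that square, the $\yo$-quasi-admissibility of $\yo(g)$ immediately gives that $D(\jmath g)$ has a linear left adjoint. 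If moreover $\phi\colon D\to D'$ is a transformation of limit-preserving presheaves with $\jmath^\ast\phi$ a morphism of pullback formalisms, then $L^\ast\phi\colon E\to E'$ satisfies $\yo^\ast(L^\ast\phi)=\jmath^\ast\phi$, so $L^\ast\phi$ is left-adjointable at $\yo g$; since the naturality square of $L^\ast\phi$ at $\yo g$ is literally that of $\phi$ at $\jmath g$, $\phi$ is left-adjointable at $\jmath g$.

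It remains to show $D$ has left base change for $\jmath g$, and this is the step where the hypothesis on $\jmath$ enters. I would fix $h\colon Z\to\jmath Y$ in $\tilde{\mathcal C}$, write $\iota\colon\tilde{\mathcal C}\hookrightarrow\Psh(\mathcal C)$ for the right adjoint to $L$ and $\eta$ for the localization unit, and form in $\Psh(\mathcal C)$ the pullback $P\coloneqq\yo Y\times_{\iota\jmath Y}\iota Z$ of $\iota(h)$ along $\eta_{\yo Y}$. Because $\eta_{\yo Y}$ is a $\tau$-local equivalence and $L$ is locally Cartesian, the projection $P\to\iota Z$ is again a local equivalence, whence $LP\simeq Z$ compatibly with the maps to $\jmath Y$. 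The crucial claim is that $L$ carries the base-change square of $\yo g$ along $P\to\yo Y$ to the given Cartesian square of $\jmath g$ along $h$: since $\jmath$ preserves base changes along quasi-admissible maps, $L$ preserves the pullback $\yo X\times_{\yo Y}\yo Z_1$ for every $Z_1\in\mathcal C$ equipped with a map to $Y$, and expressing an arbitrary $P\to\yo Y$ as a universal colimit of representables — using that $L$ preserves colimits while both $\Psh(\mathcal C)$ and $\tilde{\mathcal C}$ have universal colimits — one deduces that $L$ preserves the base-change square of $\yo g$ along \emph{every} map into $\yo Y$; applying this to $P\to\yo Y$ and substituting $LP\simeq Z$ gives the square over $h$. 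Finally, $\yo$-quasi-admissibility of $\yo(g)$ gives that $E$ has left base change for $\yo g$, so the Beck--Chevalley transformation of $E$ on the square over $P\to\yo Y$ is an equivalence; as $E=D\circ L^\op$, this transformation \emph{is} the Beck--Chevalley transformation of $D$ on the square over $h$, which is therefore an equivalence. Since $h$ was arbitrary, $D$ has left base change for $\jmath g$.

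I expect the genuine difficulty to lie in the third paragraph: checking that $L$ sends a base-change square of $\yo g$ to a base-change square of $\jmath g$. This combines the use of ``$\jmath$ preserves quasi-admissible base changes'' to reduce from arbitrary presheaf targets to representable ones, with the delicate bookkeeping through the unit $\eta$ and the counit of the adjunction $L\dashv\iota$ needed to match up the four corners and edges of the transported square — in particular, to verify that the square on which $\yo$-quasi-admissibility of $\yo(g)$ is invoked really localizes to the prescribed square over $h$.
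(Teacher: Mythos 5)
Your argument is correct, but it is packaged differently from the paper's. The paper's proof is a three-line reduction: it notes that $\tilde{\mathcal C}$ has universal colimits (being a locally Cartesian localization), invokes \Cref{lem:LCL implies ess surj on slices} to see that every map to $\jmath(S)$ is of the form $\varinjlim \jmath X \to \jmath(S)$ for a small diagram $X : K \to \mathcal C_{/S}$, and then applies \Cref{cor:limit-preserving PF} directly to $\jmath$. You instead first equip $\Psh(\mathcal C)$ with a quasi-admissibility structure, realize $\mathcal C$ as a full anodyne pullback subcontext, deduce from \Cref{prp:local PF dense subctx} that $\yo(g)$ is $\yo$-quasi-admissible, and then transport the three conditions to $\tilde{\mathcal C}$ by hand via the auxiliary presheaf $E = D\circ L^\op$. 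The transport step in your third paragraph — constructing $P=\yo Y\times_{\iota\jmath Y}\iota Z$ with $LP\simeq Z$, and checking that $L$ preserves base changes of $\yo g$ along arbitrary presheaf maps by writing them as universal colimits of representables — is exactly the content of \Cref{lem:LCL implies ess surj on slices} together with the base-change portion of the proof of \Cref{prp:local PF} (which rests on \Cref{prp:source locality of base change}), so you are re-deriving inline what \Cref{cor:limit-preserving PF} already encapsulates. What your route buys is the intermediate, independently useful observation that $\yo(g)$ and its base changes are $\yo$-quasi-admissible in $\Psh(\mathcal C)$ (close in spirit to \Cref{lem:Psh qadm source-local}); what it costs is the extra bookkeeping through $L\dashv\iota$. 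One small caveat: your opening ``reduction to $\mathcal C$ small via a small full anodyne pullback subcontext'' is asserted rather than carried out, and it does not obviously interact with the \emph{fixed} localization $L$ of $\Psh(\mathcal C)$ (which is not a localization of $\Psh(\mathcal C')$); the paper instead works with $\Psh(\mathcal C)$ for locally small $\mathcal C$ directly and only flags non-presentability in a footnote. This is a size technicality shared with the paper rather than a gap in the mathematical content.
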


\subsubsection{Auxiliary results}

\begin{lem}[Transfinite composites of locally quasi-admissible maps] \label{lem:local qadm transf composites}
	The $\tau$-locally $\jmath$-quasi-admissible maps of $\tilde{\mathcal C}$ are closed under transfinite $\tau$-composites.
	\begin{proof}
		Let $\lambda$ be an ordinal, and let $F : \lambda + 1 \to \tilde{\mathcal C}$ be a diagram such that for every limit ordinal $\lambda' \leq \lambda$, the restriction $F|_{\lambda' + 1}$ is $\tau$-acyclic.
		\begin{enumerate}

			\item Suppose that $\phi : D \to D'$ is a transformation of $\tau$-local presheaves $D,D' : \tilde{\mathcal C}^\op \to \PrL$, and that $\phi$ is left adjointable at $F(\alpha \to \alpha + 1)$ for all $\alpha \in \lambda$. Then $\phi$ is left adjointable at $F(\emptyset) \to F(\lambda)$ by \Cref{cor:adjointability horiz composites}.

			\item Suppose that for all $\alpha \leq \beta \leq \lambda$, the map $F(\alpha) \to F(\beta)$ admits all base changes along maps to $F(\beta)$. Then any map to $F(\lambda)$ is of the form $F'(\lambda) \to F(\lambda)$ for some Cartesian transformation $F' \to F$. For any limit ordinal $\lambda' \leq \lambda$, $F'|_{\lambda' + 1}$ is a $\tau$-acyclic since it is a base change of the $\tau$-acyclic diagram $F|_{\lambda' + 1}$. Thus, if $D : \tilde{\mathcal C}^\op \to \PrL$ is a $\tau$-acyclic presheaf that has left base change for $F(\alpha \to \alpha + 1)$ for all $\alpha \in \lambda$, then \Cref{prp:transf comp base change} says that $D$ has left base change for $F(\emptyset) \to F(\lambda)$.

			\item Suppose that $D : \tilde{\mathcal C}^\op \to \CAlg(\PrL)$ is a $\tau$-local presheaf such that for all $\alpha \in \lambda$, $D$ sends $F(\alpha \to \alpha + 1)$ to a functor that has a linear left adjoint. It follows that $D$ sends all of these to functors with $D(F(\lambda))$-linear left adjoints, so \Cref{thm:Mod LAd} says that $D$ sends $F(\emptyset) \to F(\lambda)$ to a functor that has a linear left adjoint.

		\end{enumerate}
		This shows that if $F(\alpha \to \alpha + 1)$ is $\tau$-locally $\jmath$-quasi-admissible for all $\alpha \in \lambda$, then $F(\emptyset) \to F(\lambda)$ is also $\tau$-locally $\jmath$-quasi-admissible.
	\end{proof}
\end{lem}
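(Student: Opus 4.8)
The plan is to take \Cref{defn:local qadm} apart into its three requirements and check each one for a transfinite $\tau$-composite. Fix an ordinal $\lambda$ and a diagram $F : \lambda + 1 \to \tilde{\mathcal C}$ whose restriction to $\lambda' + 1 \cong (\lambda')^\triangleright$ is $\tau$-acyclic for every limit ordinal $\lambda' \leq \lambda$ and each of whose successor steps $F(\alpha \to \alpha + 1)$ is $\tau$-locally $\jmath$-quasi-admissible; write $g : F(\emptyset) \to F(\lambda)$. Given a $\tau$-local presheaf $D : \tilde{\mathcal C}^\op \to \CAlg(\PrL)$ with $\jmath^* D$ a pullback formalism, and a transformation $\phi : D \to D'$ of such presheaves whose restriction $\jmath^* \phi$ is a morphism of pullback formalisms, I must show (i) $\phi$ is left adjointable at $g$, (ii) $D$ has left base change for $g$, and (iii) $D(g)$ admits a linear left adjoint. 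In each case the property holds at every successor step by hypothesis and is stable under ordinary composition, so the substance lies at the limit ordinals $\lambda' \leq \lambda$: there the $\tau$-acyclicity of $F|_{\lambda' + 1}$ together with $\tau$-locality of $D$ (and $D'$) turns the colimit $F|_{\lambda' + 1}$ in $\tilde{\mathcal C}$ into a genuine limit diagram in $\CAlg(\PrL)$ (or $\PrL$), which is exactly the input needed by the relevant ``transfinite composite'' stability result.

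For (i): the naturality square of $\phi$ at each $F(\alpha \to \alpha + 1)$ is left adjointable because that step is $\tau$-locally $\jmath$-quasi-admissible, and I want to recognise the naturality square at $g$ as the transfinite horizontal composite of these squares. Since $D, D'$ are $\tau$-local the limit stages are honest limits, so this is precisely the setting of the general adjointability-of-horizontal-composites result (\Cref{cor:adjointability horiz composites}), which gives left adjointability at $g$. For (ii): given any map $Y' \to F(\lambda)$ along which the pullback of $g$ exists, base-change the whole tower to a Cartesian transformation $F' \to F$ over $Y'$; the pseudotopology axiom — acyclic pseudosieves, hence acyclic diagrams, are stable under base change — keeps each $F'|_{\lambda' + 1}$ $\tau$-acyclic, so $D$ remains $\tau$-local on the pulled-back tower, and each successor step of $F'$ still has left base change, being a base change of a $\tau$-locally $\jmath$-quasi-admissible step. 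The transfinite-composite base change result (\Cref{prp:transf comp base change}) then yields left base change for $F'(\emptyset) \to F'(\lambda)$, i.e. for $g$ after this base change, and $Y' \to F(\lambda)$ was arbitrary.

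For (iii): $D(g)$ arises as a transfinite composite of the functors $D(F(\alpha \to \alpha + 1))$ — the limit stages again being limits in $\CAlg(\PrL)$ by $\tau$-locality — and each such functor admits a linear, in fact $D(F(\lambda))$-linear, left adjoint after restricting the module structures along the projection $D(F(\lambda)) \to D(F(\alpha + 1))$. Stability of ``admits a linear left adjoint'' under transfinite composites of maps in $\CAlg(\PrL)$, via the module-category machinery (\Cref{thm:Mod LAd}), then produces the desired linear left adjoint of $D(g)$. Putting (i)–(iii) together shows $g$ is $\tau$-locally $\jmath$-quasi-admissible.

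The main obstacle is the limit-ordinal bookkeeping common to all three items: one must be sure that the relevant transfinite ``composite'' of the per-step data — adjointable squares, base-change equivalences, or linear adjoints — behaves correctly at limit stages, and this is exactly where the hypotheses that $F|_{\lambda' + 1}$ is $\tau$-acyclic and that $D, D'$ are $\tau$-local are indispensable, as they replace the colimit in $\tilde{\mathcal C}$ by an actual limit in the ambient category of presentable (symmetric monoidal) categories. Modulo the three transfinite-composite lemmas, whose proofs are the genuinely technical part and live elsewhere, the argument here is formal; the only delicate points internal to it are checking that $\tau$-acyclicity survives the base change in (ii) and tracking the module structures over $D(F(\lambda))$ in (iii).
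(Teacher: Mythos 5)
Your proposal is correct and follows essentially the same route as the paper: the same decomposition into the three conditions of \Cref{defn:local qadm}, with \Cref{cor:adjointability horiz composites} for adjointability of $\phi$, base-changing the tower and invoking \Cref{prp:transf comp base change} for left base change (using stability of $\tau$-acyclic diagrams under base change), and restricting module structures to $D(F(\lambda))$ before applying \Cref{thm:Mod LAd} for the linear left adjoint.
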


\begin{lem}[Locality on the target for locally quasi-admissible maps] \label{lem:local qadm target}
	If $f : X \to Y$ is any Cartesian transformation of small $\tau$-acyclic diagrams $X,Y : K^\triangleright \to \tilde{\mathcal C}$, and $\phi : D \to D'$ is a transformation of $\tau$-local presheaves $\tilde{\mathcal C}^\op \to \widehat{\Cat}$ that have left (right) base change for $f(b)$ against $Y(a \to b)$ for all $a \to b$ in $K$, we have the following:
	\begin{enumerate}

		\item If $\phi$ is left (right) adjointable at $f(a)$ for all $a \in K$, then $\phi$ is left (right) adjointable at $f(\infty)$.

		\item If $Y' \to Y$ is a transformation of $\tau$-acyclic diagrams such that for all $a \in K^\triangleright$, the base change $Y'(a) \times_{Y(a)} f(a)$ exists, and for any map $a \to b$ in $K$, $D$ has left (right) base change for $f(b)$ against $Y'(b) \to Y(b)$, and for $Y'(b) \times_{Y(b)} f(b)$ against $Y'(a \to b)$, then $D$ has left (right) base change for $f(\infty)$ against $Y'(\infty) \to Y(\infty)$.

		\item If $D$ lifts to a presheaf $\tilde{\mathcal C}^\op \to \CAlg(\widehat{\Cat})$ that sends $f(a)$ to a functor that has a linear left (right) adjoint for all $a \in K$, then $D$ sends $f(\infty)$ to a functor that has a linear left (right) adjoint.

	\end{enumerate}
	
	\begin{proof}
		We first make a general observation: if
		\[
			\begin{tikzcd}
				\mathcal Y \ar[d] \ar[r] & \mathcal X \ar[d] \\
				\mathcal Y' \ar[r] & \mathcal X'
			\end{tikzcd}
		\]
		is a diagram in $\Fun(K^\op, \widehat{\Cat})$, and this diagram evaluates to a left (right) adjointable square for all $a \in K$, and for all $a \to b$ in $K$, the squares
		\[
			\begin{tikzcd}
				\mathcal Y(b) \ar[d] \ar[r] & \mathcal X(b) \ar[d] \\
				\mathcal Y(a) \ar[r] & \mathcal X(a)
			\end{tikzcd}
			\text{ and }
			\begin{tikzcd}
				\mathcal Y'(b) \ar[d] \ar[r] & \mathcal X'(b) \ar[d] \\
				\mathcal Y'(a) \ar[r] & \mathcal X'(a)
			\end{tikzcd}
		\]
		are left (right) adjointable, then \cite[Corollary 4.7.4.18(2)]{ha} says that the limit of the square is left (right) adjointable.

		Now we address the listed statements:
		\begin{enumerate}

			\item We apply the above observation to the square
				\[
					\begin{tikzcd}
						D Y \ar[d] \ar[r] & D X \ar[d] \\
						D' Y \ar[r] & D' X
					\end{tikzcd}
				.\]

			\item We have a Cartesian square
				\[
					\begin{tikzcd}
						X' \ar[d] \ar[r, "f'"] & Y' \ar[d] \\
						X \ar[r, "f"'] & Y
					\end{tikzcd}
				\]
				in which all horizontal edges are Cartesian transformations. In particular, $X'$ is $\tau$-acyclic since it is a base change of the $\tau$-acyclic diagram $Y'$. By our assumptions on the base change properties of $D$, the square
				\[
					\begin{tikzcd}
						DY \ar[d] \ar[r] & DX \ar[d] \\
						DY' \ar[r] & DX'
					\end{tikzcd}
				\]
				satisfies the hypotheses of the general observation, and we conclude since $D$ is $\tau$-local.

			\item We may view $D \circ f^\op$ as a transformation of limiting diagrams in $\Mod_{D(Y(\infty))} \widehat{\Cat}$, and for all $a \in K$, $f(a)^*$ has a $D(Y(\infty))$-linear left (right) adjoint. Thus, for all $M \in D(Y(\infty))$, the square
				\[
					\begin{tikzcd}
						D Y^\op \ar[d, "\otimes M"'] \ar[r] & D X^\op \ar[d, "\otimes M"] \\
						D Y^\op \ar[r] & D X^\op \\
					\end{tikzcd}
				\]
				is left (right) adjointable when restricted to $K^\op$, so we may apply the general observation again.

		\end{enumerate}
	\end{proof}
\end{lem}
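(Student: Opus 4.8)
The plan is to deduce all three parts from one ``general observation'': if a commutative square of diagrams in $\Fun(K^\op, \widehat{\Cat})$ evaluates to a left (resp.\ right) adjointable square at every $a \in K$, and for every edge $a \to b$ of $K$ the two connecting squares along the left and right vertical sides are left (resp.\ right) adjointable, then the limit of the square over $K^\op$ is left (resp.\ right) adjointable; this is \cite[Corollary 4.7.4.18(2)]{ha}. The second ingredient is that, because $X$ and $Y$ (and everything we build from them by base change) are $\tau$-acyclic while $D$ and $D'$ are $\tau$-local, the diagrams $D \circ X^\op$, $D \circ Y^\op$, $D' \circ X^\op$, $D' \circ Y^\op$ are limiting over $(K^\op)^\triangleleft$; hence $\lim_{K^\op}$ of each corner of the squares below is the value at the cone point, i.e.\ at the source or target of $f(\infty)$. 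Thus ``adjointability of the limit square'' unwinds exactly to the desired assertion at $f(\infty)$. The ``right'' version is formally dual and I will not comment on it separately.

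For (1), I would apply the general observation to the square of $K^\op$-diagrams with rows $D \circ Y^\op \to D \circ X^\op$ and $D' \circ Y^\op \to D' \circ X^\op$ and verticals induced by $\phi$. Its value at $a$ is left adjointable because $\phi$ is left adjointable at $f(a)$, and, since $f$ is a Cartesian transformation, its connecting squares for $a \to b$ are the Beck--Chevalley squares of $f$ against $Y(a \to b)$ in $D$ and in $D'$, both left adjointable by the standing left-base-change hypotheses. For (2), I would first note that $X' \coloneqq X \times_Y Y'$ (using that the base changes exist) is again $\tau$-acyclic: a Cartesian transformation of diagrams, after applying $\yo$ and using universality of colimits in $\Psh(\tilde{\mathcal C})$, exhibits $\varinjlim \yo X' \to \yo X'(\infty)$ as a base change of the acyclic pseudosieve $\varinjlim \yo X \to \yo X(\infty)$ along a map of representables, and $\tau$-acyclic pseudosieves are stable under such base changes. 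Then I would apply the general observation to the square of $K^\op$-diagrams with rows $D \circ Y^\op \to D \circ X^\op$ and $D \circ (Y')^\op \to D \circ (X')^\op$: its value at $a$ is left adjointable because $D$ has left base change for $f(a)$ against $Y'(a) \to Y(a)$, and the connecting squares are left adjointable by the hypotheses on left base change for $f(b)$ against $Y'(b) \to Y(b)$ and for $Y'(b) \times_{Y(b)} f(b)$ against $Y'(a \to b)$; the limit is precisely the asserted left base change for $f(\infty)$ against $Y'(\infty) \to Y(\infty)$.

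For (3), I would work in $\Mod_{D(Y(\infty))}(\widehat{\Cat})$, writing $R \coloneqq D(Y(\infty))$. The cone maps $Y(a) \to Y(\infty)$ and the composites $X(a) \to X(\infty) \xrightarrow{f(\infty)} Y(\infty)$ make each $D(Y(a))$ and $D(X(a))$ an $R$-algebra, hence an $R$-module, compatibly along $K^\op$, and $\tau$-locality (together with the fact that the forgetful functor $\Mod_R(\widehat{\Cat}) \to \widehat{\Cat}$ preserves limits) identifies the limits of $D \circ Y^\op$ and $D \circ X^\op$ over $K^\op$ with $R$ and $D(X(\infty))$ as $R$-modules. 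Since any $D(Y(a))$-linear functor is a fortiori $R$-linear, $D \circ f^\op$ becomes a transformation of limiting $K^\op$-diagrams in $\Mod_R(\widehat{\Cat})$ each of whose components $f(a)^*$ has an $R$-linear left adjoint; equivalently, for every $M \in R$ the square expressing the projection formula for $f(a)^*$ against $- \otimes M$ is left adjointable. Applying the general observation to these squares over $K^\op$ then shows the corresponding square at $f(\infty)$ is left adjointable, i.e.\ the left adjoint of $f(\infty)^*$ satisfies the projection formula against every $M \in R$, hence is $R$-linear.

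The part I expect to demand the most care is the handedness bookkeeping: making sure that each hypothesis of the form ``$D$ (or $\phi$) has left (right) base change for $f$ against such-and-such'' matches precisely the adjointability of the connecting square required by \cite[Corollary 4.7.4.18(2)]{ha}, and, in (3), verifying that the $R$-module structures genuinely assemble into a diagram in $\Mod_R(\widehat{\Cat})$ and that nothing is lost in passing from $D(Y(a))$-linearity to $R$-linearity. Everything else is a formal consequence of $\tau$-locality together with the cited corollary.
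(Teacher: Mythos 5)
Your proposal is correct and follows essentially the same route as the paper's proof: the same general observation via \cite[Corollary 4.7.4.18(2)]{ha}, the same three squares of $K^\op$-diagrams, and in (3) the same passage to $\Mod_{D(Y(\infty))}(\widehat{\Cat})$ using that $D(Y(a))$-linearity implies $D(Y(\infty))$-linearity. The extra detail you supply (the universality-of-colimits argument for $\tau$-acyclicity of $X'$, the handedness bookkeeping) is consistent with what the paper leaves implicit.
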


\begin{lem}[Locality on the source for locally quasi-admissible maps] \label{lem:local qadm source}

	If $X : K^\triangleright \to \tilde{\mathcal C}$ is a $\tau$-acyclic diagram, then we have the following:
	\begin{enumerate}

		\item If $\phi : D \to D'$ is a transformation of $\tau$-local presheaves $D,D' : \tilde{\mathcal C}^\op \to \PrL$ ($\PrR$), and $\phi$ is left (right) adjointable at $X(a \to b)$ for all $a \to b$ in $K$, then $\phi$ left (right) adjointable at $X(a \to \infty)$ for all $a \in K$, and $\phi$ is left (right) adjointable at a map $X(\infty) \to Y$ if and only if it is left (right) adjointable at $X(a) \to Y$ for all $a \in K$.

		\item If $D$ is a $\tau$-local presheaf $D : \tilde{\mathcal C}^\op \to \CAlg(\PrL)$, and $D X(a \to b)$ has a linear left adjoint for all edges $a \to b$ in $K$, then $D X(a \to \infty)$ has a linear left adjoint for all $a \in K$, and for any map $f : X(\infty) \to Y$, $D(f)$ has a linear left adjoint if and only if $D(f \circ X(a \to \infty))$ has a linear left adjoint for all $a \in K$.

		\item For any $\tau$-local presheaf $D : \tilde{\mathcal C}^\op \to \PrL$ ($\PrR$), if $D$ has left (right) base change for $X(a \to b)$ for all $a \to b$ in $K$, then $D$ has left (right) base change for $X(a \to \infty)$ for all $a \in K$, and $D$ has left (right) base change for a map $X(\infty) \to Y$ if and only if all base changes of this map exist, and for all $a \in K$, $D$ has left (right) base change for $X(a) \to Y$.

	\end{enumerate}
	
	\begin{proof}\hfill
		\begin{enumerate}

			\item This follows from \Cref{prp:Pr adj lims}.

			\item This follows from \Cref{thm:Mod LAd}.

			\item This follows from \Cref{prp:source locality of base change}.

		\end{enumerate}
	\end{proof}
\end{lem}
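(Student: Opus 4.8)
The plan is to deduce all three parts from a single structural input — a limit presentation of $D(X(\infty))$ — together with the standard facts about detecting adjoints and verifying Beck--Chevalley conditions along limit cones in $\widehat{\Cat}$ (the material of \cite[\S4.7.4]{ha}, which I would extract as the auxiliary results \Cref{prp:Pr adj lims}, \Cref{thm:Mod LAd}, \Cref{prp:source locality of base change}). The key point is that $X\colon K^{\triangleright}\to\tilde{\mathcal C}$ being $\tau$-acyclic and $D$ being $\tau$-local says \emph{precisely} that $D\circ X^{\op}\colon (K^{\op})^{\triangleleft}\to\mathcal V$ is a limit diagram, where $\mathcal V$ is the target of $D$ ($\PrL$, $\PrR$, or $\CAlg(\PrL)$); since the forgetful functors $\PrL,\PrR\to\widehat{\Cat}$ (\cite[Proposition 5.5.3.13]{htt}) and $\CAlg(\widehat{\Cat})\to\widehat{\Cat}$ (\cite[Corollary 3.2.2.5]{ha}) create limits, this yields
\[
	D(X(\infty))\;\simeq\;\varprojlim_{a\in K^{\op}}D(X(a)),
\]
with limit cone the restriction functors $D(X(a\to\infty))$. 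Two facts are then used freely: (i) since $\PrL\simeq(\PrR)^{\op}$ by passage to right adjoints, and likewise for the linear variants, any diagram valued in these categories whose transition functors admit (linear) left adjoints automatically dualizes — the transition mate squares commute and $\varprojlim$ may be computed as a colimit of the adjoints — so the transition squares that the limit arguments below require are adjointable for free; and (ii) $\tau$-acyclic diagrams are stable under base change (this is the defining property of a pseudotopology), so if $Y\to X(\infty)$ is a map for which the fibre products $X(a)\times_{X(\infty)}Y$ exist, then $X\times_{X(\infty)}Y$ is again $\tau$-acyclic and $D(Y)\simeq\varprojlim_a D(X(a)\times_{X(\infty)}Y)$.

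For part (1): $\phi$ restricts to a map of limit diagrams over $(K^{\op})^{\triangleleft}$ that is componentwise left (right) adjointable over $K^{\op}$ by hypothesis and has adjointable transitions by (i). The general principle that such a map is adjointable on the limit cone (\cite[Corollary 4.7.4.18]{ha}, packaged as \Cref{prp:Pr adj lims}) shows the square at $X(\infty)$ is left (right) adjointable, and reading off the limit projections gives adjointability at each $X(a\to\infty)$. For a general $g\colon X(\infty)\to Y$, the square attached to $g$ is the $K^{\op}$-limit of the squares attached to $g\circ X(a\to\infty)$, so the same principle gives the implication ``adjointable at all $X(a)\to Y$ $\Rightarrow$ adjointable at $X(\infty)\to Y$'', while the converse is horizontal pasting of the (now established) adjointable square at $X(a\to\infty)$ with that at $g$ (\cite[Lemma F.6(3)]{TwAmb}).

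Parts (3) and (2) run along identical lines. For (3) one additionally invokes (ii): every pullback square against $X(\infty)\to Y$ is the $K^{\op}$-limit of pullback squares against the $X(a)\to Y$, and a limit of Beck--Chevalley squares (with Beck--Chevalley transitions, by (i)) is Beck--Chevalley, giving one implication; the other assertions follow by pasting pullback/BC squares as in (1), with the existence hypotheses ensuring the relevant fibre products are available. For (2) the same argument is carried out in $\Mod_{D(Y)}(\PrL)$: ``$D(f)$ has a $D(Y)$-linear left adjoint'' becomes ``$D(f)$ is a right adjoint in $\Mod_{D(Y)}(\PrL)$'', and the presentation $D(X(\infty))\simeq\varprojlim_a D(X(a))$ is compatible with the $D(Y)$-algebra, hence $D(Y)$-module, structure, so one reduces to the limit-of-adjointable-diagrams statement in $\Mod_{D(Y)}(\PrL)$; this is the content of \Cref{thm:Mod LAd}, extracted from \cite[\S4.8.5]{ha}.

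The substantive work — and the main obstacle — is not in any of the three reductions above, which are formal once the scaffolding is in place, but in establishing that scaffolding cleanly: the statement that a limit of adjointable squares in $\widehat{\Cat}$ with adjointable transition squares is again adjointable, with its projections admitting (linear) left adjoints, together with the $\CAlg(\PrL)$- and $\Mod_{D(Y)}(\PrL)$-enriched upgrades needed for the linear variant, and the correct bookkeeping of variance ($\PrL$ versus $\PrR$, and ``source'' versus ``target'' locality as opposed to \Cref{lem:local qadm target}).
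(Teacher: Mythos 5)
Your proposal is correct and follows the paper's proof essentially verbatim: the paper also observes that $\tau$-acyclicity of $X$ plus $\tau$-locality of $D$ gives the limit presentation $D(X(\infty))\simeq\varprojlim_{a}D(X(a))$, and then deduces the three parts from exactly the three auxiliary results you name (\Cref{prp:Pr adj lims}, \Cref{thm:Mod LAd}, \Cref{prp:source locality of base change}), which package the \cite[\S 4.7.4]{ha} limit-of-adjointable-squares arguments, the transposition between left- and right-adjointability, and the base-change stability of acyclic diagrams in the same way you describe.
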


\begin{prp} \label{prp:local PF}
	Suppose that $\jmath$ preserves base changes along quasi-admissible maps, and that for any $S \in \mathcal C$, every map to $\jmath(S)$ is of the form $Y(0) \to Y(1)$ for some small diagram $Y : K \star \Delta^1$ such that $Y|_{K \star \{0\}}$ is $\tau$-acyclic, and $Y|_{K \star \{1\}}$ factors through $\jmath$ by a diagram that sends $1$ to $S$.

	If $f$ is a quasi-admissible map in $\mathcal C$ such that all base changes of $\jmath(f)$ exist, then $\jmath(f)$ is $\tau$-locally $\jmath$-quasi-admissible.

	\begin{proof}
		First let us show that $\jmath(f)$ is a $\tau$-locally $\jmath$-quasi-admissible map.
		Let $D : \tilde{\mathcal C}^\op \to \CAlg(\PrL)$ be a $\tau$-local presheaf such that $\jmath^* D$ is a pullback formalism.
		\begin{itemize}

			\item If $\phi : D \to D'$ is a transformation of $\tau$-local presheaves such that $\jmath^* \phi$ is a morphism of pullback formalisms, then it is clear that $\phi$ is left adjointable at the image under $\jmath$ of any quasi-admissible map.

			\item Similarly, since $\jmath^* D(f)$ has a linear left adjoint, it follows that $D(\jmath(f))$ has a linear left adjoint.

			\item We have assumed that every map to the codomain of $\jmath(f)$ is of the form $\tilde Y(0 \to 1)$ for $\tilde Y : K \star \Delta^1 \to \tilde{\mathcal C}$ a small diagram such that $\tilde Y|_{K \star \{0\}}$ is $\tau$-acyclic, and $\tilde Y|_{K \star \{1\}}$ factors through $\jmath$ by a diagram $Y$ that sends $1$ to the codomain of $f$. Since all base changes of $\jmath(f)$ exist, we have that $\jmath(f) \simeq \tilde f(1)$ for some Cartesian transformation $\tilde f : \tilde X \to \tilde Y$. Since $\tilde f$ is Cartesian, and $\tilde Y|_{K \star \{0\}}$ is $\tau$-acyclic, it follows that $\tilde X|_{K \star \{0\}}$ is also $\tau$-acyclic, and since $\jmath$ preserves base changes along quasi-admissible maps, we have that $\tilde f|_{K \star \{1\}} \simeq \jmath f^\flat$, where $f^\flat$ is some Cartesian transformation $X \to Y$ that evaluates to $f$ at $1 \in K \star \{1\}$. Since $\jmath D^*$ has quasi-admissible base change, it follows from \Cref{prp:source locality of base change} that $D$ has left base change for $\tilde f(b)$ against $\tilde Y(a \to b)$ for all $a \to b$ in $K \star \Delta^1$.

		\end{itemize}
		In particular, since $D$ has left base change for $\jmath(f) \simeq \tilde f(1)$ against $\tilde Y(0 \to 1)$, we have that $\jmath(f)$ is a $\tau$-locally $\jmath$-quasi-admissible map.

	\end{proof}
\end{prp}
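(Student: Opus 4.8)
The plan is to fix an arbitrary $\tau$-local presheaf $D : \tilde{\mathcal C}^\op \to \CAlg(\PrL)$ with $\jmath^* D$ a pullback formalism and to check, for $f' := \jmath(f)$, the three conditions of \Cref{defn:local qadm}. Two of them are immediate from $f$ being quasi-admissible in $\mathcal C$ together with $\jmath^* D$ being a pullback formalism: $D$ sends $f'$ to $(\jmath^* D)(f)$, which has a linear left adjoint because $\jmath^* D$ satisfies the projection formula for quasi-admissible maps; and if $\phi : D \to D'$ is a transformation of $\tau$-local presheaves with $\jmath^*\phi$ a morphism of pullback formalisms, then $\jmath^*\phi$ respects quasi-admissibility, so it is left adjointable at $f$, which says exactly that $\phi$ is left adjointable at $f' = \jmath(f)$. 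So the content of the statement is that $D$ has left base change for $f'$, \ie{} that for \emph{every} map $b$ into $\jmath(S)$ the resulting base-change square is left adjointable.

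Fix such a $b$. By the hypothesis on maps into $\jmath(S)$, write $b = \tilde Y(0 \to 1)$ for a small diagram $\tilde Y : K \star \Delta^1 \to \tilde{\mathcal C}$ with $\tilde Y|_{K \star \{0\}}$ $\tau$-acyclic and $\tilde Y|_{K \star \{1\}} = \jmath \circ Y^\flat$ for some $Y^\flat : K \star \{1\} \to \mathcal C$ with $Y^\flat(1) = S$. Since all base changes of $f'$ exist, pull $f'$ back over the entire diagram to obtain a Cartesian transformation $\tilde f : \tilde X \to \tilde Y$ over $K \star \Delta^1$ with $\tilde f(1) = f'$; then the base change of $f'$ along $b$ is $\tilde f(0)$. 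Two features of this picture are the crux of the matter: $\tilde X|_{K \star \{0\}}$ is a base change of the $\tau$-acyclic diagram $\tilde Y|_{K \star \{0\}}$, hence is $\tau$-acyclic; and since $\jmath$ preserves base changes along quasi-admissible maps and $f$ is quasi-admissible, the restriction $\tilde f|_{K \star \{1\}}$ is (by uniqueness of base changes) the $\jmath$-image of a Cartesian transformation $f^\flat : X^\flat \to Y^\flat$ in $\mathcal C$ with $f^\flat(1) = f$, so every $f^\flat(a)$, $a \in K$, is quasi-admissible in $\mathcal C$ (being a base change of $f$).

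I would then show that $D$ has left base change for $\tilde f(c)$ against $\tilde Y(a \to c)$ for every edge $a \to c$ of $K \star \Delta^1$; applied to the edge $0 \to 1$ this is precisely left base change for $f' \simeq \tilde f(1)$ against our arbitrary $b = \tilde Y(0 \to 1)$, and since $b$ was arbitrary this establishes left base change for $f'$. The edges lying inside the subdiagram $K$, and the edges $a \to 1$ with $a \in K$, are handled directly: in each the maps in play are $\jmath$-images of a quasi-admissible map of $\mathcal C$ (some $f^\flat(c)$, or $f$ itself) and of a quasi-admissible base-change square of $\mathcal C$, so they are controlled by $\jmath^* D$ having quasi-admissible base change. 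The remaining edges — those $a \to 0$ with $a \in K$, and the edge $0 \to 1$ — are then deduced from the preceding ones by the source-locality of base change for $\tau$-local presheaves (\Cref{prp:source locality of base change}, whose mechanism also underlies \Cref{lem:local qadm source}): in each case the source of the map under consideration is the apex of the $\tau$-acyclic diagram $\tilde Y|_{K \star \{0\}}$ (or of its base change $\tilde X|_{K \star \{0\}}$), $D$ is $\tau$-local, and all needed base changes exist by hypothesis; along the way $\tilde f(0)$ picks up a linear left adjoint as a limit of those of the $\tilde f(a)$ (via \Cref{lem:local qadm source}). I expect this propagation step to be the main obstacle — not because any single implication is deep, each being one of the adjointability-under-limits results already in hand, but because it demands choosing the presentation $\tilde Y$ (and hence $\tilde X$) so that the source-locality statements apply verbatim, and tracking, stage by stage, exactly which base changes are already known to exist.
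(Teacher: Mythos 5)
Your proposal is correct and follows essentially the same route as the paper: the first two conditions of \Cref{defn:local qadm} are dispatched directly from $\jmath^* D$ being a pullback formalism, and the base change condition is obtained by presenting an arbitrary map into $\jmath(S)$ as $\tilde Y(0 \to 1)$, pulling back $\jmath(f)$ to a Cartesian transformation $\tilde f$, observing that $\tilde X|_{K \star \{0\}}$ is $\tau$-acyclic and $\tilde f|_{K \star \{1\}} \simeq \jmath f^\flat$, and invoking \Cref{prp:source locality of base change} to propagate left base change to the edge $0 \to 1$. The only difference is that you spell out the edge-by-edge bookkeeping slightly more explicitly than the paper does.
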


The next result is given by restating \Cref{prp:local PF} in the case where $\tau$ is the pseudotopology of universal colimits given in \Cref{exa:colim pseudotop}.
\begin{cor} \label{cor:limit-preserving PF}
	Suppose that $\jmath$ preserves base changes along quasi-admissible maps, and that for any $S \in \mathcal C$, every map to $\jmath(S)$ is of the form $\varinjlim \jmath X \to \jmath(S)$ for some small diagram $X : K \to \mathcal C_{/S}$ such that $\jmath X$ admits a universal colimit.

	If $f$ is a quasi-admissible map in $\mathcal C$ such that all base changes of $\jmath(f)$ exist, then $\jmath(f)$ is a $\jmath$-quasi-admissible map.

\end{cor}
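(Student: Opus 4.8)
The plan is to obtain \Cref{cor:limit-preserving PF} as the instance of \Cref{prp:local PF} in which $\tau$ is the pseudotopology of universal colimits on $\tilde{\mathcal C}$ from \Cref{exa:colim pseudotop}. The first step is to record the translation of the relevant notions for this particular $\tau$. Combining \Cref{defn:pseudotop} with the description of acyclic pseudosieves in \Cref{exa:colim pseudotop}, a diagram $X : K^\triangleright \to \tilde{\mathcal C}$ is $\tau$-acyclic exactly when it exhibits $X(\infty)$ as a universal colimit of $X|_K$; and by the convention recorded at the end of \Cref{defn:local qadm}, a map being $\tau$-locally $\jmath$-quasi-admissible for this $\tau$ is precisely what is meant by its being $\jmath$-quasi-admissible. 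Hence the conclusion of \Cref{prp:local PF} specializes verbatim to the conclusion we want, and the only work left is to check that the two hypotheses of \Cref{cor:limit-preserving PF} are the two hypotheses of \Cref{prp:local PF} under this identification.

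The hypothesis that $\jmath$ preserves base changes along quasi-admissible maps is common to both statements, so the content is in matching the source-locality hypothesis. Given $S \in \mathcal C$ and a map $g : T \to \jmath(S)$ in $\tilde{\mathcal C}$, the hypothesis of \Cref{cor:limit-preserving PF} supplies a small diagram $X : K \to \mathcal C_{/S}$ for which, writing also $X$ for the composite $K \to \mathcal C_{/S} \to \mathcal C$ and $X^\triangleright : K^\triangleright \to \mathcal C$ for the induced cocone with cone point $S$, the composite $\jmath X$ admits a universal colimit and $g$ is identified with the resulting comparison map $\varinjlim \jmath X \to \jmath(S)$. I would then assemble a diagram $Y : K \star \Delta^1 \to \tilde{\mathcal C}$ by setting $Y|_K = \jmath X$, taking $Y(0) = \varinjlim \jmath X$ with $Y|_{K \star \{0\}}$ the universal colimit extension of $\jmath X$, taking $Y(1) = \jmath(S)$ with $Y|_{K \star \{1\}} = \jmath X^\triangleright$, and letting $Y(0 \to 1) = g$; the coherence needed to glue these pieces into a single functor is exactly the universal property of the colimit $\varinjlim \jmath X$. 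By the translation in the previous paragraph, $Y|_{K \star \{0\}}$ is $\tau$-acyclic, $Y|_{K \star \{1\}}$ factors through $\jmath$ via a diagram sending $1$ to $S$, and $Y(0) \to Y(1)$ recovers $g$, so the source-locality hypothesis of \Cref{prp:local PF} holds.

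With both hypotheses verified, \Cref{prp:local PF} applies and yields the corollary. I do not anticipate a genuine obstacle, since this is purely a reformulation; the one point requiring care is the bookkeeping that identifies ``every map to $\jmath(S)$'' in the two statements and confirms that the canonical map $\varinjlim \jmath X \to \jmath(S)$ is realized as the edge $Y(0 \to 1)$ of the assembled diagram --- that is, that the cocone implicit in \Cref{exa:colim pseudotop}'s acyclic pseudosieve agrees with the cocone coming from $X^\triangleright$.
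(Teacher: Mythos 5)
Your proposal is correct and is exactly the paper's argument: the corollary is obtained by specializing \Cref{prp:local PF} to the pseudotopology of universal colimits from \Cref{exa:colim pseudotop}, with your verification of the source-locality hypothesis spelling out the bookkeeping the paper leaves implicit.
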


\subsubsection{Proofs of main applications:}

\begin{proof}[Proof of \Cref{prp:locality of local qadm str}]
	Note that since $\tau$-acyclic pseudosieves are stable under base change, \Cref{lem:local qadm transf composites} shows that this collection is closed under transfinite $\tau$-composites, and when $\tilde{\mathcal C}$ has all pullbacks, \Cref{lem:local qadm source}. Finally \Cref{lem:local qadm target} shows that the universally $\tau$-locally $\jmath$-quasi-admissible maps are strictly $\tau$-local on the target.
\end{proof}

\begin{proof}[Proof of \Cref{prp:local PF dense subctx}]
	Follows immediately from \Cref{prp:local PF}. 
\end{proof}

\begin{proof}[Proof of \Cref{prp:PF on Shv}]
	Note that by \cite[Proposition 1.4]{LCC}, $\tilde{\mathcal C}$ has (small) universal colimits.\footnote{The argument is only stated for presentable categories, and $\Psh(\mathcal C)$ might not be presentable if $\mathcal C$ is not small, but the same argument (from \cite[Proposition 6.1.3.15]{htt}) works to show that if a category $\mathcal D$ has universal $K$-indexed colimits, then any locally Cartesian localization of $\mathcal D$ also has universal $K$-indexed colimits.}

	Every object of $\tilde{\mathcal C}$ is a (universal) colimit of objects in the essential image of $\jmath$, and for any $S \in \mathcal C$, every map to $\jmath(S)$ is of the form $\varinjlim \jmath X \to \jmath(S)$ for some small diagram $X : K \to \mathcal C_{/S}$ by \Cref{lem:LCL implies ess surj on slices}.

	Thus, the result follows from \Cref{cor:limit-preserving PF}.
\end{proof}

\subsection{Invariance and Acyclicity} \label{S:acyclic}
 

In this \namecref{S:acyclic}, we will study invariance properties of cohomology theories with local coefficients.
As we will see in \Cref{lem:qadm acyclic}, \emph{quasi-admissible} invariance conditions are particularly well-behaved in the setting of pullback formalisms. This good behaviour turns out to be quite useful:
\begin{itemize}

	\item In \Cref{thm:D-topology}, we use this good behaviour to establish some powerful tools for working with descent properties of pullback formalisms.

	\item In \Cref{prp:invariance localization}, we use this good behaviour to explain how to freely impose quasi-admissible invariance properties on pullback formalisms. \Cref{cor:describe invariance localization of Huniv} then gives a useful description of the universal pullback formalism with prescribed quasi-admissible invariance properties.

\end{itemize}

%

Recalling \Cref{nota:cohomology}, for a system of coefficients $D$ on a category $\mathcal C$ to be invariant with respect to a map $p : V \to S$ in $\mathcal C$, we need that for any $M \in D(S)$, the natural map
\[
	D(S; M) \to D(V; M)
\]
is an equivalence. This says that the functor $p^* : D(V) \to D(S)$ induces an equivalence of mapping spaces
\[
	D(S)(1, M) \to D(V)(p^* 1, p^* M)
,\]
which leads us to the following definition:
\begin{defn} \label{defn:acyclic}
	Let $D : \mathcal C^\op \to \widehat{\Cat}$ be a presheaf on a category $\mathcal C$. Say a map in $\mathcal C$ is \emph{$D$-acyclic} if it is sent to a fully faithful functor by $D$. When $\mathcal C$ is small, $D$ can be thought of as a limit-preserving presheaf on $\Psh(\mathcal C)$, and we extend the notion of $D$-acyclic maps to maps in $\Psh(\mathcal C)$.

	Say a diagram $X : K^\triangleright \to \mathcal C$ is $D$-acyclic if
	\[
		D(X(\infty)) \to \varprojlim_{p \in K} D(X(p))
	\]
	is fully faithful.

	If $f$ is any map in $\Psh(\mathcal C)$, we say that $D$ is \emph{$f$-invariant} if $f$ is $D$-acyclic.
\end{defn}

\begin{exa} \label{exa:acyclic}
	In \Cref{defn:acyclic}, by allowing the map $f$ to be a map in $\Psh(\mathcal C)$, and not just a map in $\mathcal C$, we can use acyclicity and invariance to express more sophisticated conditions.
	\begin{itemize}

		\item If $f$ is the inclusion of the empty presheaf into the presheaf represented by an object $X$ of $\mathcal C$, then $D$ is $f$-invariant if and only if $D(X)$ is empty or contractible. In particular, if $D$ takes values in $\CAlg(\PrL)$, then this means that $D(X)$ is contractible (since it cannot be empty).

		\item If $f$ is the natural map $\yo(X) \coprod \yo(Y) \to \yo(X \coprod Y)$, for some objects $X,Y \in \mathcal C$ admitting a coproduct, then $D$ is $f$-invariant if and only if the natural functor
			\[
				D(X \coprod Y) \to D(X) \times D(Y)
			\]
			is fully faithful. In particular, for any $M \in D(X \coprod Y)$, we have an equivalence
			\[
				D(X \coprod Y; M) \to D(X; M) \times D(Y; M)
			.\]

		\item We may view any square
			\[
				\begin{tikzcd}
					U' \ar[d] \ar[r] & X' \ar[d] \\
					U \ar[r] & X
				\end{tikzcd}
			\]
			in $\mathcal C$ as a diagram $K^\triangleright \to \mathcal C$, where $K$ is the category $\bullet \gets \bullet \to \bullet$. This diagram is $D$-acyclic if and only if the functor
			\[
				D(X) \to D(X') \times_{D(U')} D(U)
			\]
			is fully faithful. In particular, for any $M \in D(X)$, the square
			\[
				\begin{tikzcd}
					D(X; M) \ar[d] \ar[r] & D(X'; M) \ar[d] \\
					D(U; M) \ar[r] & D(U'; M)
				\end{tikzcd}
			\]
			is Cartesian.

		\item Let $\mathcal U$ be a sieve on an object $X \in \mathcal C$. The map $\mathcal U \to \yo(X)$ is $D$-acyclic if and only if the map
			\[
				D(X) \to \varprojlim_{\substack{X' \to X \\ \text{in $\mathcal U$}}} D(X')
			\]
			is fully faithful, so for any $M \in D(X)$, the map
			\[
				D(X; M) \to \varprojlim_{\substack{X' \to X \\ \text{in $\mathcal U$}}} D(X'; M)
			\]
			is an equivalence. This corresponds to a descent condition for the cohomology theory.

			In fact, suppose that $\mathcal C$ is a pullback context, $D$ is a pullback formalism, and $\mathcal U$ is generated by a family of quasi-admissible maps $\{X_i \to X\}_i$. It turns out that the map $\mathcal U \to \yo(X)$ is $D$-acyclic if and only if the monoidal unit is the geometric realization of the simplicial diagram in $D(S)$ that sends $[n] \in \Delta^\op$ to
			\[
				\coprod_{i_0, \dotsc, i_n} \cls{X_{i_0} \times_X \dotsb \times_X X_{i_n}}
			.\]

	\end{itemize}
\end{exa}

We record some general closure properties of acyclic maps:
\begin{lem} \label{lem:props of acyclic}
	Let $D$ be a $\widehat{\Cat}$-valued presheaf on some category.
	\begin{enumerate}

		\item $D$-acyclic maps are stable under cobase change if $D$ preserves base changes.

		\item If $f : X \to Y$ is $D$-acyclic, then for any $g : Y \to Z$, $g$ is $D$-acyclic if and only if $g \circ f$ is $D$-acyclic.

		\item For any simplicial set $K$, if $D$ preserves $K$-indexed limits, then any $K$-indexed colimit of $D$-acyclic maps is $D$-acyclic.

	\end{enumerate}
	In particular, if $\mathcal C$ is a small category, and $D \in \Psh_\PrL(\mathcal C)$, then $D$ can be thought of as a limit-preserving presheaf on $\Psh(\mathcal C)$, and the $D$-acyclic maps in $\Psh(\mathcal C)$ are stable under cobase change and colimits, and satisfy that a map $g : Y \to Z$ is $D$-acyclic if and only if there is some $D$-acyclic map $f : X \to Y$ such that $g \circ f$ is $D$-acyclic.
	\begin{proof}\hfill
		\begin{enumerate}

			\item Since $D$ sends cobase changes to base changes of categories, this follows from the fact that fully faithful functors are stable under base change.

			\item This follows from the fact that if $\beta$ is fully faithful, then $\beta \circ \alpha$ is fully faithful if and only if $\alpha$ is fully faithful.

			\item Since $D$ sends $K$-indexed colimits to limits of categories, this follows from the fact that limits of fully faithful functors are fully faithful.

		\end{enumerate}
	\end{proof}
\end{lem}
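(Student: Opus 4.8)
The plan is to reduce all three items, and the ``in particular'' clause, to three elementary facts about $\widehat{\Cat}$: (a) fully faithful functors are stable under base change; (b) if $\beta$ is fully faithful, then $\beta\alpha$ is fully faithful if and only if $\alpha$ is; and (c) a limit (computed in $\widehat{\Cat}$) of a diagram all of whose components are fully faithful functors is fully faithful. Throughout one keeps in mind that, by \Cref{defn:acyclic}, $f$ is $D$-acyclic exactly when $D(f)$ is fully faithful, and that $D$ is contravariant, so that ``$D$ preserves base changes'' means $D$ sends cobase-change squares in $\mathcal C$ to pullback squares of categories, and ``$D$ preserves $K$-indexed limits'' means $D$ turns the relevant $K$-indexed colimits into limits of categories.

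For \textbf{(1)}, present a cobase change of a $D$-acyclic $f\colon X\to Y$ as a pushout square in $\mathcal C$ with top edge $f$ and bottom edge $f'\colon X'\to Y'$; applying $D$ gives a square of categories with right edge the fully faithful $D(f)$ and left edge $D(f')$, and this square is a pullback by hypothesis, so $D(f')$ is fully faithful by fact (a). For \textbf{(2)}, use $D(g\circ f)=D(f)\circ D(g)$, i.e.\ the composite
\[
	D(Z)\xrightarrow{\ D(g)\ }D(Y)\xrightarrow{\ D(f)\ }D(X),
\]
in which $D(f)$ is fully faithful; fact (b) then says $D(g\circ f)$ is fully faithful if and only if $D(g)$ is. For \textbf{(3)}, write the $K$-indexed colimit of $D$-acyclic maps as a colimit diagram $K^\triangleright\to\Fun(\Delta^1,\mathcal C)$ whose restriction to $K$ lands in $D$-acyclic maps and whose cone value is the colimit arrow $f\colon X\to Y$; applying $D$ and using that it converts this colimit to a limit identifies $D(f)$ with the limit over $K$ of the fully faithful functors $D(f_k)$, which is fully faithful by fact (c).

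For the ``in particular'' clause, invoke (as already recorded in \Cref{defn:acyclic}) the identification, for $\mathcal C$ small and $D\in\Psh_\PrL(\mathcal C)$, of $D$ with the limit-preserving presheaf on $\Psh(\mathcal C)$ obtained by right Kan extension along the Yoneda embedding; this $D$ preserves \emph{all} base changes and limits, so items (1) and (3) give stability of $D$-acyclic maps in $\Psh(\mathcal C)$ under cobase change and under arbitrary colimits. The cancellation statement follows from (2) together with the observation that identity maps are $D$-acyclic (as $D(\id)=\id$): if $f$ is $D$-acyclic and $g\circ f$ is $D$-acyclic then $g$ is $D$-acyclic by (2), and conversely take $f=\id$.

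I do not expect a genuine obstacle here: the argument is formal once the variance bookkeeping is straight. The only point that is not purely a matter of definitions is fact (c) — that a limit of fully faithful functors is fully faithful — which I would justify by computing mapping spaces in a limit of categories as a limit of mapping spaces, so that the comparison map is a limit of equivalences (this is the content of the relevant statement about limits in $\Fun(-,\widehat{\Cat})$, e.g.\ along the lines of \cite[Corollary 4.7.4.18]{ha}, but it is quick to check directly). Getting the indexing shape right in (3) (the dualization $K\leftrightarrow K^{\op}$ forced by contravariance of $D$) is the other place to be careful, though it does not affect the conclusion.
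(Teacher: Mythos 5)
Your proof is correct and follows essentially the same route as the paper: each item is reduced to the corresponding elementary fact about fully faithful functors (stability under base change, right cancellation in composites, and closure under limits), with the variance bookkeeping handled as you describe. The treatment of the ``in particular'' clause via item (2) and the acyclicity of identities also matches what the paper intends.
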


\begin{rmk}
	In the setting of Lemma \ref{lem:props of acyclic}, note that the collection of $D$-acyclic maps \emph{almost} satisfies the definition of a strongly saturated class of \cite[Definition 5.5.4.5]{htt}, except that it does not quite satisfy the full ``2-out-of-3'' property. It is possible to have $g \circ f$ and $g$ be $D$-acyclic without $f$ being $D$-acyclic.
\end{rmk}

Quasi-admissible invariance properties for pullback formalisms are particularly well-behaved:
\begin{lem} \label{lem:qadm acyclic}
	Let $\mathcal C$ be a pullback context, and let $D : \mathcal C^\op \to \CAlg(\widehat{\Cat})$ be a presheaf that has the projection formula for quasi-admissible maps.
	\begin{enumerate}

		\item A quasi-admissible map $V \to S$ is $D$-acyclic if and only if $\cls{V;S} \simeq 1$.

		\item If $D' : \mathcal C^\op \to \CAlg(\widehat{\Cat})$ also has the projection formula for quasi-admissible maps, and there is a transformation $D \to D'$ that respects quasi-admissibility, then every quasi-admissible $D$-acyclic map is $D'$-acyclic.

		\item If $D$ also has quasi-admissible base change, then quasi-admissible $D$-acyclic maps are stable under base change.

	\end{enumerate}
	\begin{proof}\hfill
		\begin{enumerate}

			\item This follows immediately from Lemma \ref{lem:proj form implies ff equiv triv fund class}.

			\item This follows immediately from the first point since $D \to D'$ respects quasi-admissibility.

			\item Let
				\[
					\begin{tikzcd}
						\tilde T \ar[d, "q"'] \ar[r, "\tilde f"] & \tilde S \ar[d, "p"] \\
						T \ar[r, "f"'] & S
					\end{tikzcd}
				\]
				be a Cartesian square. Then
				\[
					f^* p_\sharp(1) \simeq q_\sharp \tilde f^*(1) \simeq q_\sharp 1
				,\]
				so if $p_\sharp(1) \simeq 1$, then
				\[
					q_\sharp 1 \simeq f^* 1 \simeq 1
				,\]
				so we may conclude by the first point.

		\end{enumerate}
	\end{proof}
\end{lem}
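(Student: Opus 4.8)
The plan is to prove the first statement directly from the projection formula and then obtain the other two as short formal consequences of it. For the first statement, let $p \colon V \to S$ be quasi-admissible, so $p^* = D(p)$ has a linear left adjoint $p_\sharp$. Recall (\Cref{defn:acyclic}) that $p$ is $D$-acyclic exactly when $p^*$ is fully faithful, which, $p^*$ being a right adjoint, happens precisely when the counit $p_\sharp p^* \to \id_{D(S)}$ is an equivalence. Applying the projection formula with first argument $1 \in D(V)$ gives a natural equivalence $p_\sharp p^* N \simeq p_\sharp(1 \otimes p^* N) \simeq p_\sharp(1) \otimes N = \cls{V;S} \otimes N$, and one checks that under this identification the counit becomes the map $\cls{V;S} \otimes N \to N$ induced by the component of the counit at the unit, namely $\cls{V;S} = p_\sharp(1) \to 1$. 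Hence $p$ is $D$-acyclic iff $\cls{V;S} \otimes N \to N$ is an equivalence for all $N$; taking $N = 1$ forces $\cls{V;S} \simeq 1$, and conversely $\cls{V;S} \simeq 1$ makes $\cls{V;S} \otimes N \to N$ an equivalence for every $N$ since $-\otimes N$ preserves equivalences.

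For the second statement, let $\phi \colon D \to D'$ respect quasi-admissibility and let $p \colon V \to S$ be quasi-admissible and $D$-acyclic, so $\cls{V;S}_D \simeq 1$ by the first point. Respecting quasi-admissibility means $\phi$ is left-adjointable at $p$, i.e.\ $\phi_S \circ p_\sharp \simeq p_\sharp \circ \phi_V$, and each component of $\phi$ is symmetric monoidal, so $\phi_V(1) \simeq 1$ and $\phi_S(1) \simeq 1$. Therefore $\cls{V;S}_{D'} = p_\sharp(1) \simeq p_\sharp \phi_V(1) \simeq \phi_S p_\sharp(1) = \phi_S(\cls{V;S}_D) \simeq \phi_S(1) \simeq 1$, so $p$ is $D'$-acyclic by the first point applied to $D'$. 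For the third statement, given a quasi-admissible $D$-acyclic $p \colon V \to S$ and an arbitrary $f \colon T \to S$, form the Cartesian square whose other edges are the base change $q \colon \widetilde T \to T$ of $p$ (which exists and is quasi-admissible by the axioms of a quasi-admissibility structure) and $\widetilde f \colon \widetilde T \to V$; quasi-admissible base change gives $f^* p_\sharp \simeq q_\sharp \widetilde f^*$, and evaluating at $1 \in D(V)$ together with $\widetilde f^* 1 \simeq 1$ and $f^* 1 \simeq 1$ yields $\cls{\widetilde T;T} = q_\sharp(1) \simeq q_\sharp \widetilde f^*(1) \simeq f^* p_\sharp(1) = f^* \cls{V;S} \simeq f^* 1 \simeq 1$, so $q$ is $D$-acyclic by the first point.

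I do not anticipate a genuine obstacle: the only step requiring real care is the identification, in the first statement, of the counit of $p_\sharp \dashv p^*$ with the functor $\cls{V;S} \otimes (-)$ via the projection formula; once that naturality is pinned down (this is essentially the content of the cited \Cref{lem:proj form implies ff equiv triv fund class}), the remaining two parts are one-line manipulations using, respectively, left-adjointability of $\phi$ and quasi-admissible base change.
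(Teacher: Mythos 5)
Your proof is correct and follows essentially the same route as the paper: part (1) is the content of \Cref{lem:proj form implies ff equiv triv fund class} (which you re-derive inline, identifying the counit of $p_\sharp \dashv p^*$ with $\cls{V;S}\otimes(-)$ via the projection formula rather than citing the black-boxed criterion for fully faithfulness), and parts (2) and (3) are the same one-line reductions to part (1) using left-adjointability of $\phi$ and quasi-admissible base change, respectively. No gaps.
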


\begin{rmk} \label{rmk:describe qadm acyclic}
	If $\mathcal C$ is a pullback context, and $D : \mathcal C^\op \to \CAlg(\widehat{\Cat})$ is a presheaf that has the projection formula for quasi-admissible maps, then \Cref{lem:qadm acyclic} implies, in particular, that a quasi-admissible map $V \to S$ is $D$-acyclic if and only if for any $M \in D(S)$, the map $D(S; M) \to D(V; M)$ is an equivalence, and that if $D$ also has quasi-admissible base change, then this is equivalent to the condition that for any $X \in \mathcal C_{/S}$ and $M \in D(X)$, the map $D(X;M) \to D(X \times_S V; M)$ is an equivalence.
\end{rmk}

\begin{lem} \label{lem:acyclic qadm is local}
	Let $\mathcal C$ be a presentable pullback context in which colimits are universal, and the class of quasi-admissible maps is local in the sense of \cite[Definition 6.1.3.8]{htt}, and let $D : \mathcal C^\op \to \CAlg(\PrL)$ be a \emph{limit-preserving pullback formalism}. Then the collection of quasi-admissible $D$-acyclic maps is a local class.
	\begin{proof}
		Let $\mathcal W$ be the collection of quasi-admissible $D$-acyclic maps.

		By \Cref{lem:qadm acyclic}, we have that $\mathcal W$ is stable under base change. Now, by \cite[Lemma 6.1.3.5(3)]{htt}, we must show that if $\bar \alpha : \bar p \to \bar q$ is a transformation of colimiting diagrams $\bar p, \bar q : K^\triangleright \to \mathcal C$ such that $\bar \alpha$ is Cartesian on $K \subseteq K^\triangleright$, and for $x \in K$, $\bar \alpha(x) \in \mathcal W$, then $\bar \alpha$ is Cartesian and $\bar \alpha(\infty) \in \mathcal W$. Since quasi-admissibility is local, we have that $\bar \alpha$ is Cartesian, and $\bar \alpha(\infty)$ is quasi-admissible, so we just need to show that $\bar \alpha(\infty)$ is $D$-acyclic, but this follows from \Cref{lem:props of acyclic}, since $\bar \alpha(\infty)$ is a colimit of $D$-acyclic maps.
	\end{proof}
\end{lem}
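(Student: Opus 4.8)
The plan is to check the two defining conditions of a local class of morphisms in the sense of \cite[Definition 6.1.3.8]{htt} for the collection $\mathcal W$ of quasi-admissible $D$-acyclic maps: that it is stable under base change, and that it satisfies the descent-type condition of \cite[Lemma 6.1.3.5(3)]{htt}. Stability under base change is immediate from \Cref{lem:qadm acyclic}(3): since $D$ is a pullback formalism it has quasi-admissible base change, so the quasi-admissible $D$-acyclic maps are closed under base change.

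It then remains, by \cite[Lemma 6.1.3.5(3)]{htt}, to treat the following situation: $\bar\alpha : \bar p \to \bar q$ is a natural transformation of colimit diagrams $\bar p, \bar q : K^\triangleright \to \mathcal C$ which is Cartesian over $K$ and with $\bar\alpha(x) \in \mathcal W$ for all $x \in K$, and one must show that $\bar\alpha$ is Cartesian over $K^\triangleright$ and that $\bar\alpha(\infty) \in \mathcal W$. Each $\bar\alpha(x)$ is in particular quasi-admissible, and by hypothesis the class of quasi-admissible maps is local; applying \cite[Lemma 6.1.3.5(3)]{htt} to that class already gives that $\bar\alpha$ is Cartesian and that $\bar\alpha(\infty)$ is quasi-admissible, so only the $D$-acyclicity of $\bar\alpha(\infty)$ is left to establish.

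For the latter, I would view $\bar\alpha$ as a diagram $K^\triangleright \to \Fun(\Delta^1, \mathcal C)$; since colimits in $\Fun(\Delta^1, \mathcal C)$ are computed pointwise and $\bar p, \bar q$ are colimit diagrams, $\bar\alpha$ is itself a colimit diagram, i.e.\ the arrow $\bar\alpha(\infty)$ is the $K$-indexed colimit of the arrows $\{\bar\alpha(x)\}_{x \in K}$, each of which is $D$-acyclic. The composite $\mathcal C^\op \xrightarrow{D} \CAlg(\PrL) \to \widehat{\Cat}$ preserves small limits ($D$ by hypothesis, and $\CAlg(\PrL) \to \widehat{\Cat}$ by \cite[Proposition 5.5.3.13]{htt} and \cite[Corollary 3.2.2.5]{ha}), so \Cref{lem:props of acyclic}(3) applies and shows that this $K$-indexed colimit of $D$-acyclic maps is $D$-acyclic; hence $\bar\alpha(\infty) \in \mathcal W$, which completes the verification. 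The only step I expect to require genuine care is the identification of $\bar\alpha(\infty)$ with the $K$-colimit of the $\bar\alpha(x)$ in $\Fun(\Delta^1, \mathcal C)$, which rests on the pointwise computation of colimits in functor categories together with the hypothesis that $\bar p$ and $\bar q$ are colimit diagrams; after that the argument is a routine assembly of \Cref{lem:qadm acyclic}, \Cref{lem:props of acyclic}, and the assumed locality of the quasi-admissible maps.
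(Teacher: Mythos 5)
Your proposal is correct and follows essentially the same route as the paper's proof: base-change stability via \Cref{lem:qadm acyclic}, the criterion of \cite[Lemma 6.1.3.5(3)]{htt}, locality of quasi-admissibility to get Cartesianness and quasi-admissibility of $\bar\alpha(\infty)$, and \Cref{lem:props of acyclic} to see that a colimit of $D$-acyclic maps is $D$-acyclic. The extra care you take in identifying $\bar\alpha(\infty)$ as the pointwise colimit in $\Fun(\Delta^1,\mathcal C)$ is a detail the paper leaves implicit, but it is the same argument.
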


\subsection{Acyclic Sieves and Quasi-admissible Coverings} \label{S:qadm covers}

We will now consider more ``classical'' descent properties of pullback formalisms.

\begin{thm} \label{thm:PF descent}
	Let $\Cat^L$ be the subcategory of $\widehat{\Cat}$ consisting of categories that admit all small colimits, and functors between them that have right adjoints.

	Let $\mathcal C$ be a pullback context, and let $D : \mathcal C^\op \to \Cat^L$ be a presheaf that has quasi-admissible base change.

	For any small family $\{X_i \to Y\}_i$ of quasi-admissible maps, if $\mathcal U \subseteq \yo(Y)$ is the sieve generated by this family, then by viewing $D$ as a limit-preserving presheaf $\Psh(\mathcal C)^\op \to \widehat{\Cat}$,\footnotemark we have the following:
	\footnotetext{We do not require that $D$ sends presheaves that are not representable to objects of $\Cat^L$.}
	\begin{enumerate}

		\item The functor
			\[
				D(Y) \to D(\mathcal U)
			\]
			has a fully faithful left adjoint. In particular, $D$ descends along $\mathcal U$ if and only if the functors $\{D(Y) \to D(X_i)\}_i$ are jointly conservative.

		\item For any transformation $D' \to D$ of presheaves $\mathcal C^\op \to \Cat^L$ that respects quasi-admissibility, and every index $i$, the left square in
			\[
				\begin{tikzcd}
					D'(Y) \ar[d] \ar[r] & D'(\mathcal U) \ar[d] \ar[r] & D'(X_i) \ar[d] \\
					D(Y) \ar[r] & D(\mathcal U) \ar[r] & D(X_i)
				\end{tikzcd}
			\]
			is left adjointable, and the right square is left adjointable if the top right horizontal arrow admits a left adjoint.

		\item For any map $q : Y' \to Y$, and index $i$, all of the squares in
			\[
				\begin{tikzcd}
					D(Y) \ar[d] \ar[r] & D(\mathcal U) \ar[d] \ar[r] & D(X_i) \ar[d] \\
					D(Y') \ar[r] & D(q^{-1} \mathcal U) \ar[r] & D(X_i \times_Y Y')
				\end{tikzcd}
			\]
			are left adjointable.

		\item If $D$ lifts to a pullback formalism $\mathcal C^\op \to \CAlg(\PrL)$, then for every index $i$, both functors $D(Y) \to D(\mathcal U) \to D(X_i)$ have linear left adjoints, and if the functors $\{D(Y) \to D(X_i)\}_i$ are jointly conservative, then any pullback formalism admitting a morphism from $D$ descends along every base change of $\mathcal U$.

	\end{enumerate}
	\begin{proof}
		The first three statements follow immediately from \Cref{prp:nerve adj}, which also implies that the functors $D(Y) \to D(\mathcal U) \to D(X_i)$ have $D(Y)$-linear left adjoints. Since $D(Y) \to D(\mathcal U)$ has a fully faithful adjoint, it is essentially surjective, so it follows that these functors actually have linear left adjoints.

		By the previous parts, any pullback formalism $D'$ satisfies that for any $q : Y' \to Y$, $D'(Y') \to D'(q^{-1} \mathcal U)$ has a fully faithful left adjoint $\zeta'$, so $D'$ descends along $q^{-1} \mathcal U$ if and only if $D'(Y') \to D'(q^{-1} \mathcal U)$ is conservative. Note that by the previous parts, if $\phi : D \to D'$ is a morphism of pullback formalisms, then we have that the square
		\[
			\begin{tikzcd}
				D(Y) \ar[d] \ar[r] & D(\mathcal U) \ar[d] \\
				D'(Y') \ar[r] & D'(q^{-1} \mathcal U)
			\end{tikzcd}
		\]
		is left adjointable, so if we write $\zeta$ for the left adjoint of the top horizontal arrow, we have that $\phi$ sends the map $\zeta(1) \to 1$ to the map $\zeta'(1) \to 1$. Since we know that $D$ descends along $\mathcal U$, we have that $\zeta(1) \to 1$ is an equivalence, so $\zeta'(1) \to 1$ is an equivalence, so by \Cref{lem:proj form implies ff equiv triv fund class}, we have that $D'(Y') \to D'(q^{-1} \mathcal U)$ is fully faithful, so it is conservative, as desired.
	\end{proof}

\end{thm}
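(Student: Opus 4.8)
The plan is to isolate a single technical input about the sieve $\mathcal U$ and then deduce all four numbered statements from it by formal adjunction and Beck--Chevalley bookkeeping, together with \Cref{lem:proj form implies ff equiv triv fund class}. To set this up, write $X_{i_0\cdots i_n}\coloneqq X_{i_0}\times_Y\cdots\times_Y X_{i_n}$; each of these is quasi-admissible over $Y$, being a composite of base changes of the maps $X_i\to Y$. Since $\mathcal U$ is the colimit in $\Psh(\mathcal C)$ of the \v{C}ech nerve of $\coprod_i\yo(X_i)\to\yo(Y)$, and $D$ is limit-preserving on $\Psh(\mathcal C)$, we get $D(\mathcal U)\simeq\varprojlim_{[n]\in\Delta}\prod_{i_0,\dots,i_n}D(X_{i_0\cdots i_n})$ with $D(Y)\to D(\mathcal U)$ the canonical comparison functor. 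Each $D(Y)\to D(X_{i_0\cdots i_n})$ has a left adjoint because $D$ has quasi-admissible base change, and quasi-admissible base change makes the resulting cosimplicial diagram of adjunctions Beck--Chevalley, so assembling the levelwise left adjoints and taking the geometric realization of the bar construction produces a candidate functor $\zeta\colon D(\mathcal U)\to D(Y)$.

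The key input I would establish — call it \Cref{prp:nerve adj} — packages the following assertions about $\zeta$: it is left adjoint to $D(Y)\to D(\mathcal U)$, it is fully faithful, it is $D(Y)$-linear when $D$ takes values in $\CAlg(\PrL)$, and it is compatible in the Beck--Chevalley sense both with base change along any $q\colon Y'\to Y$ (which carries $\mathcal U$ to the sieve generated by $\{X_i\times_Y Y'\to Y'\}$) and with transformations $D'\to D$ of $\Cat^L$-valued presheaves that respect quasi-admissibility. This is where I expect the real difficulty to lie: constructing $\zeta$ and, above all, checking its full faithfulness requires the combinatorics of the bar construction and repeated use of the base-change and projection formulae for the $X_{i_0\cdots i_n}$. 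Everything after this point is formal.

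Granting \Cref{prp:nerve adj}, statements (1)--(3) are essentially immediate. For (1): $\zeta$ is a fully faithful left adjoint of $\mathrm{res}\coloneqq(D(Y)\to D(\mathcal U))$, so the unit $\id\to\mathrm{res}\,\zeta$ is an equivalence, and a triangle-identity argument then shows $\mathrm{res}$ is an equivalence precisely when it is conservative; since every object $T\to Y$ of the sieve factors through some $X_i$, a map is an equivalence in $D(\mathcal U)=\varprojlim_{\mathcal U}D$ as soon as it is one in each $D(X_i)$, so $\mathrm{res}$ is conservative exactly when $\{D(Y)\to D(X_i)\}_i$ is jointly conservative. Statement (2) is the compatibility clause of \Cref{prp:nerve adj} for a quasi-admissibility-respecting $D'\to D$, applied to the inclusions $\mathcal U\hookrightarrow\yo(Y)$ and $\yo(X_i)\hookrightarrow\mathcal U$; when the top arrow $D'(\mathcal U)\to D'(X_i)$ additionally has a left adjoint, the left adjointability of the right square follows by pasting it onto the already-left-adjointable left square via a pasting argument such as \Cref{lem:vert surj adj}. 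Statement (3) is the base-change compatibility in \Cref{prp:nerve adj}, using that $D$ has quasi-admissible base change and that $X_{i_0\cdots i_n}\times_Y Y'\to Y'$ is again quasi-admissible.

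For (4), when $D$ lifts to a pullback formalism, \Cref{prp:nerve adj} gives $D(Y)$-linear left adjoints of $D(Y)\to D(\mathcal U)$ and of $D(\mathcal U)\to D(X_i)$; since $D(Y)\to D(\mathcal U)$ is essentially surjective (it has a fully faithful left adjoint), these upgrade to genuine linear left adjoints of $D(Y)\to D(\mathcal U)\to D(X_i)$. For the last assertion, let $\phi\colon D\to D'$ be a morphism of pullback formalisms and $q\colon Y'\to Y$ any map. Applying (1)--(3) to $D'$ (with $\phi$ respecting quasi-admissibility) gives a fully faithful left adjoint $\zeta'$ of $\mathrm{res}'\coloneqq(D'(Y')\to D'(q^{-1}\mathcal U))$, so $D'$ descends along $q^{-1}\mathcal U$ iff $\mathrm{res}'$ is conservative, and by \Cref{lem:proj form implies ff equiv triv fund class} it suffices to show $\zeta'(1)\simeq 1$. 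Now the square with top $\mathrm{res}$, bottom $\mathrm{res}'$, and vertical arrows induced by $\phi$ followed by $q^*$ is left adjointable by (2)--(3), so $\phi$ then $q^*$ sends the counit $\zeta(1)\to 1$ of $\zeta\dashv\mathrm{res}$ to $\zeta'(1)\to 1$; since $\{D(Y)\to D(X_i)\}_i$ is jointly conservative, (1) says $D$ descends along $\mathcal U$, whence $\zeta(1)\simeq 1$ and therefore $\zeta'(1)\simeq 1$, as needed.
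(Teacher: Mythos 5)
Your proposal is correct and follows essentially the same route as the paper: the paper's proof likewise reduces (1)--(3) and the linearity claim to \Cref{prp:nerve adj} (which indeed supplies the fully faithful $D(Y)$-linear left adjoint of $D(Y)\to D(\mathcal U)$ together with the two Beck--Chevalley compatibilities you list, upgraded from $D(Y)$-linear to linear via essential surjectivity), and then deduces (4) exactly as you do, by transporting the counit $\zeta(1)\to 1$ to $\zeta'(1)\to 1$ along the left-adjointable square and invoking \Cref{lem:proj form implies ff equiv triv fund class}. The only caveat is that the substantive content lives in \Cref{prp:nerve adj}, which you correctly identify and sketch but do not prove — the same division of labor the paper itself adopts.
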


We will be interested in descent along sieves of the following form:
\begin{defn} \label{defn:D-cover}
	Let $D : \mathcal C^\op \to \widehat{\Cat}$ be a presheaf of categories on a pullback context $\mathcal C$. A sieve $\mathcal U$ on an object $S \in \mathcal C$ is a \emph{$D$-covering sieve} if it contains a family of quasi-admissible maps $\{X_i \to S\}_i$ such that
	\[
		D(S) \to \prod_i D(X_i)
	\]
	is conservative.

	As usual, a \emph{$D$-covering family} of $S \in \mathcal C$ is a family of maps $\{X_i \to S\}_i$ that generates a $D$-covering sieve.
\end{defn}

Our main result about $D$-covering sieves is the following:
\begin{thm} \label{thm:D-topology}
	Let $D$ be a pullback formalism on a small pullback context $\mathcal C$. Then the $D$-covering sieves form a Grothendieck topology on $\mathcal C$, which we will call the \emph{$D$-covering topology}, and which satisfies the following:
	\begin{enumerate}


		\item For any morphism of pullback formalisms $D \to D'$, we have that $D'$ is a sheaf for the $D$-covering topology.

		\item For any $S \in \mathcal C$, and $M \in D(S)$, the presheaf $D(-;M) \in \Psh(\mathcal C_{/S})$ is a sheaf for the restriction of the $D$-covering topology to $\mathcal C_{/S}$.

		\item The $D$-covering sieves are precisely the sieves containing $\yo$-quasi-admissible $D$-acyclic sieves.

	\end{enumerate}
\end{thm}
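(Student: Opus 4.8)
The plan is to deduce everything from \Cref{thm:PF descent}, viewing every pullback formalism as a limit-preserving presheaf on $\Psh(\mathcal C)$ and calling a family $\{X_i \to S\}_i$ of quasi-admissible maps \emph{$D$-conservative} if $D(S) \to \prod_i D(X_i)$ is conservative, so that by \Cref{defn:D-cover} the $D$-covering sieves are exactly the sieves containing a $D$-conservative family. First I would record two preliminary consequences for a $D$-conservative family $\{X_i \to S\}_i$ generating a sieve $\mathcal V$. One: \Cref{thm:PF descent}(1) shows $D$ descends along $\mathcal V$, so $D(\yo S) \to D(\mathcal V)$ is an equivalence, and \Cref{thm:PF descent}(4) shows that any pullback formalism $E$ receiving a morphism $D \to E$ descends along every base change of $\mathcal V$ (in particular along $\mathcal V$ itself). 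Two: for any $g : T \to S$ the family $\{X_i \times_S T \to T\}_i$ is again $D$-conservative, since it is quasi-admissible by the pullback-context axioms, it generates the sieve $g^{-1}\mathcal V$ (images being stable under base change in $\Psh(\mathcal C)$), and $D$ descends along $g^{-1}\mathcal V$ by the first point, so \Cref{thm:PF descent}(1) applied to this family gives joint conservativity. In particular the collection $\mathcal T$ of sieves generated by $D$-conservative families, over all objects, is stable under base change.

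Granting this, the Grothendieck topology axioms should be routine. The maximal sieve is generated by $\{\id_S\}$, which is $D$-conservative. For stability, if $\mathcal U$ is $D$-covering on $S$ with $D$-conservative family $\{X_i \to S\}_i \subseteq \mathcal U$, then for any $g : T \to S$ the sieve $g^*\mathcal U$ contains the $D$-conservative family $\{X_i \times_S T \to T\}_i$ (each composite $X_i \times_S T \to X_i \to S$ lies in $\mathcal U$). For local character, if moreover $\mathcal W$ is a sieve with $f^*\mathcal W$ $D$-covering for every $f \in \mathcal U$, I would pick $D$-conservative families $\{X_{ij} \to X_i\}_j$ inside each $(X_i \to S)^*\mathcal W$; then $\{X_{ij} \to X_i \to S\}_{i,j}$ consists of quasi-admissible maps lying in $\mathcal W$, and $D(S) \to \prod_{i,j} D(X_{ij})$ factors as the conservative composite $D(S) \to \prod_i D(X_i) \to \prod_i \prod_j D(X_{ij})$, so $\mathcal W$ is $D$-covering. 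This exhibits the $D$-covering sieves as a Grothendieck topology.

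For part (1), given a morphism $D \to D'$ of pullback formalisms and a $D$-covering sieve $\mathcal U$ on $S$ containing some $\mathcal V \in \mathcal T$, the first preliminary point, applied to each $\mathcal W \in \mathcal T$ in place of $\mathcal V$ with $E = D'$, shows $D'$ inverts the inclusion $\mathcal W \hookrightarrow \yo T$ for every $\mathcal W \in \mathcal T$; in particular $D'(\yo S) \to D'(\mathcal V)$ is an equivalence. I would then write $\mathcal U = \varinjlim_{(T \to S) \in \mathcal U} \yo T$ and, by universality of colimits, $\mathcal V = \mathcal V \times_{\mathcal U} \mathcal U = \varinjlim_{(T \to S) \in \mathcal U}(\mathcal V \times_{\yo S} \yo T)$; since each $\mathcal V \times_{\yo S} \yo T$ lies in $\mathcal T$ and $D'$ is limit-preserving, the map $D'(\mathcal U) \to D'(\mathcal V)$ is an equivalence, hence so is $D'(\yo S) \to D'(\mathcal U)$, i.e. $D'$ is a sheaf. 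Part (2) then follows by taking $D' = D$: since $D$ is a sheaf for its own covering topology, for $(p : X \to S)$ and a covering sieve $\mathcal U$ on it (i.e. a $D$-covering sieve on $X$) and $N := p^* M \in D(X)$, the equivalence $D(X) \simeq \varprojlim_{(r : T \to X) \in \mathcal U} D(T)$ carries $1$ and $N$ to the compatible families $(1)$ and $(r^* N)$, and since mapping spaces in a limit of categories are the limit of mapping spaces, $D(X; M) = \Map_{D(X)}(1, N) \simeq \varprojlim_{(T \to X) \in \mathcal U} \Map_{D(T)}(1, r^* N) = \varprojlim D(T; M)$, which is exactly the sheaf condition for $D(-; M)$.

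Finally, part (3). If a sieve $\mathcal U$ contains a $\yo$-quasi-admissible $D$-acyclic sieve $\mathcal V$ on $S$, I would use that $\yo$-quasi-admissibility makes $\mathcal V \hookrightarrow \yo S$ weakly $\yo$-quasi-admissible (see \Cref{lem:Psh qadm source-local}), so $\mathcal V$ is generated by a family of quasi-admissible maps $\{X(k) \to S\}_k$; then $D$-acyclicity makes $D(\yo S) \to D(\mathcal V) = \varprojlim_{(T \to S) \in \mathcal V} D(T)$ fully faithful, hence conservative, and as every object of $\mathcal V$ factors through some $X(k)$ the family $\{D(S) \to D(X(k))\}_k$ is jointly conservative, so $\mathcal U$ is $D$-covering. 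Conversely a $D$-covering $\mathcal U$ contains the sieve $\mathcal V$ generated by a $D$-conservative family, and $\mathcal V$ is $D$-acyclic because $D(\yo S) \to D(\mathcal V)$ is an equivalence by the preliminary step. The point I expect to be the main obstacle is showing $\mathcal V \hookrightarrow \yo S$ is $\yo$-quasi-admissible, i.e. that \Cref{thm:PF descent} supplies for it every clause of \Cref{defn:local qadm} — the linear left adjoint from parts (1) and (4), left adjointability of morphisms from part (2), and above all left base change against \emph{arbitrary} presheaves (not just representables), which I would extract from the source-locality of base change along quasi-admissible families underlying the proof of \Cref{thm:PF descent} via \Cref{prp:nerve adj}, while taking care that the colimit decompositions above are indexed over the correct slice categories of sieves.
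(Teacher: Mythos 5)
Your proposal is correct and follows essentially the same route as the paper: everything is reduced to \Cref{thm:PF descent} (your ``preliminary points'' are exactly the paper's \Cref{rmk:qadm pseudocover}), the topology axioms are checked the same way, and part (3) rests on identifying sieves generated by quasi-admissible maps with $\yo$-quasi-admissible sieves. The one step you flag as the main obstacle --- that a sieve generated by quasi-admissible maps is genuinely $\yo$-quasi-admissible, including base change against arbitrary presheaves --- is precisely the paper's \Cref{lem:sieve gen by qadm iff qadms are cofinal}, which is proved from exactly the ingredients you name (\Cref{thm:PF descent} for the adjoints and adjointability, and \Cref{prp:source locality of base change} to upgrade base change from representables to all presheaves).
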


Before addressing the proof of \Cref{thm:D-topology}, we will show the following corollary:
\begin{cor} \label{cor:dense slice}
	Let $D$ be a pullback formalism on a small pullback context $\mathcal C$. If $\mathcal B \subseteq \mathcal C$ is a full anodyne pullback subcontext such that every object $S \in \mathcal C$ admits a quasi-admissible $D$-covering family by objects of $\mathcal B$, then for any pullback formalism $D'$ on $\mathcal C$ admitting a morphism from $D$, the functor $\PF(\mathcal C)_{D'/} \to \PF(\mathcal B)|_{D'|_{\mathcal B}/}$ is an equivalence.
	\begin{proof}
		\Cref{thm:D-topology} says that descent for the $D$-covering topology is equivalent to descent along quasi-admissible $D$-covering sieves, so \Cref{prp:quasi-top basis} and \Cref{prp:local PF dense subctx} show that $\PF(\mathcal C) \to \PF(\mathcal B)$ restricts to an equivalence on the full subcategories of pullback formalisms that are sheaves for the $D$-covering topology.

		By \Cref{thm:D-topology}, we have that since $D'$ admits a morphism from $D$, every morphism of pullback formalisms $D' \to D''$ ($D'|_{\mathcal B} \to D''$) is a morphism between sheaves for the $D$-covering topology, so the result follows.
	\end{proof}
\end{cor}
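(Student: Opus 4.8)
The plan is to observe that both coslice categories in the statement consist entirely of pullback formalisms that are sheaves for a Grothendieck topology, and then to compare those sheaf categories along $u\colon\mathcal B\hookrightarrow\mathcal C$ using \Cref{prp:quasi-top basis} and \Cref{prp:local PF dense subctx}. By \Cref{thm:D-topology} the $D$-covering sieves form a Grothendieck topology $\tau$ on $\mathcal C$, and by its part (1) any pullback formalism admitting a morphism from $D$ is a $\tau$-sheaf; in particular so is $D'$. Since any $D''$ fitting into a morphism $D'\to D''$ also admits a morphism from $D$ by composition, every object of $\PF(\mathcal C)_{D'/}$ lies in the full subcategory $\PF(\mathcal C;\tau)\subseteq\PF(\mathcal C)$ of $\tau$-sheaves, so $\PF(\mathcal C)_{D'/}\simeq\PF(\mathcal C;\tau)_{D'/}$; the same reasoning on $\mathcal B$ applied to $D|_{\mathcal B}$ gives $\PF(\mathcal B)_{D'|_{\mathcal B}/}\simeq\PF(\mathcal B;\tau_{\mathcal B})_{D'|_{\mathcal B}/}$, with $\tau_{\mathcal B}$ the $D|_{\mathcal B}$-covering topology on $\mathcal B$. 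It therefore suffices to produce an equivalence $\PF(\mathcal C;\tau)\xrightarrow{\ \sim\ }\PF(\mathcal B;\tau_{\mathcal B})$ given by restriction along $u$ and carrying $D'$ to $D'|_{\mathcal B}$.

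For this I would replace the Grothendieck topologies by the quasi-topologies they are based on. Let $\sigma$ be the pseudotopology on $\mathcal C$ whose acyclic pseudosieves are exactly the $\yo$-quasi-admissible $D$-acyclic sieves; these are all sieves generated by quasi-admissible maps, so $\sigma$ is a quasi-topology in the sense of \cite[C.1]{quadratic-refinement-GLV-trace}, and \Cref{thm:D-topology}(3) says every $\tau$-covering sieve contains a $\sigma$-acyclic one. A short argument then shows the $\sigma$-local presheaves coincide with the $\tau$-sheaves: given a $\sigma$-acyclic sieve $\mathcal V\subseteq\mathcal U$ with $\mathcal U$ a $\tau$-covering sieve on $S$, the inclusion $\mathcal V\hookrightarrow\mathcal U$ is a $\sigma$-acyclic map (both being subobjects of $\yo(S)$), hence a $\sigma$-local equivalence by \Cref{rmk:pseudotop acyclic map is local equiv}, so a $\sigma$-local presheaf, being local along $\mathcal V\hookrightarrow\yo(S)$, is local along $\mathcal U\hookrightarrow\yo(S)$ by \Cref{lem:props of acyclic}. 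Thus $\PF(\mathcal C;\tau)=\PF(\mathcal C;\sigma)$, and likewise $\PF(\mathcal B;\tau_{\mathcal B})=\PF(\mathcal B;\sigma_{\mathcal B})$ for the quasi-topology $\sigma_{\mathcal B}$ on $\mathcal B$ induced by $\sigma$ via \Cref{lem:pull back pseudotop} — applicable since $\sigma$-acyclic pseudosieves, being generated by quasi-admissible maps, are weakly $\yo$-quasi-admissible.

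Next I would apply \Cref{prp:quasi-top basis} and \Cref{prp:local PF dense subctx} to $u$ with the pseudotopology $\sigma$. The hypotheses hold: $\mathcal B$ is a small full anodyne pullback subcontext; every $\sigma$-acyclic pseudosieve is by construction a sieve generated by quasi-admissible maps; and every $S\in\mathcal C$ carries a quasi-admissible $D$-covering family $\{X_i\to S\}_i$ with $X_i\in\mathcal B$ (by hypothesis), whose generated sieve $\mathcal V$ is $\sigma$-acyclic — it is generated by quasi-admissible maps and is $D$-acyclic because $D$ descends along it by \Cref{thm:PF descent}(1), the family being jointly conservative; moreover $\mathcal V$ is the colimit of the \v{C}ech nerve of $\coprod_i\yo(X_i)\to\yo(S)$, a $\sigma$-acyclic diagram with cone point $S$ whose remaining terms are representables of the fibre products $X_{i_0}\times_S\cdots\times_S X_{i_n}$, which lie in $\mathcal B$ because they map quasi-admissibly to $X_{i_0}\in\mathcal B$ and $\mathcal B$ is anodyne (\Cref{rmk:full anodyne subctx}). \Cref{prp:quasi-top basis} then gives an equivalence between $\sigma$-local presheaves on $\mathcal C$ and $\sigma_{\mathcal B}$-local presheaves on $\mathcal B$ via restriction along $u$, which combined with the Cartesian square of \Cref{prp:local PF dense subctx} upgrades to an equivalence $\PF(\mathcal C;\sigma)\xrightarrow{\ \sim\ }\PF(\mathcal B;\sigma_{\mathcal B})$ carrying $D'$ to $D'|_{\mathcal B}$. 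Taking coslices under $D'$ and $D'|_{\mathcal B}$ and combining with the first paragraph finishes the proof.

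The main obstacle is the topology bookkeeping: verifying that being a sheaf for the $D$-covering topology on $\mathcal C$, resp.\ for the $D|_{\mathcal B}$-covering topology on $\mathcal B$, is the same as locality for $\sigma$, resp.\ for its restriction $\sigma_{\mathcal B}$ along $u$, and that $\sigma_{\mathcal B}$ is exactly the quasi-topology of quasi-admissible $D|_{\mathcal B}$-acyclic sieves — i.e.\ that the several notions of ``covering'' in play match up under passing to the anodyne subcontext $\mathcal B$ — so that the hypotheses of \Cref{prp:quasi-top basis} and \Cref{prp:local PF dense subctx} genuinely hold. Once those identifications are in hand, the reduction to coslices and the remaining hypothesis checks are routine.
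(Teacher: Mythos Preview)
Your proposal is correct and follows essentially the same route as the paper: reduce both coslices to categories of pullback formalisms that are sheaves for the $D$-covering topology via \Cref{thm:D-topology}, then invoke \Cref{prp:quasi-top basis} and \Cref{prp:local PF dense subctx} to see that restriction along $\mathcal B\hookrightarrow\mathcal C$ is an equivalence on those sheaf categories. The paper's proof is simply the two-sentence version of yours; your explicit introduction of the quasi-topology $\sigma$ and the verification that $\sigma$-locality agrees with $\tau$-descent is exactly what the paper compresses into the phrase ``descent for the $D$-covering topology is equivalent to descent along quasi-admissible $D$-covering sieves'' (which is \Cref{thm:D-topology}(3)). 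One small imprecision: the \v{C}ech nerve as an augmented simplicial object in $\Psh(\mathcal C)$ has levels that are coproducts of representables, not single representables, so to obtain the $\sigma$-acyclic diagram $K^\triangleright\to\mathcal C$ required by \Cref{prp:local PF dense subctx} you should pass to the category of elements (objects $([n],i_0,\dotsc,i_n)$ with the evident simplicial morphisms) before asserting that all non-cone terms lie in $\mathcal B$.
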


The following result establishes the equivalence of various notions of ``quasi-admissibility'' for sieves in pullback contexts.
\begin{lem} \label{lem:sieve gen by qadm iff qadms are cofinal}
	Let $\mathcal C$ be a small pullback context, and let $\mathcal U \subseteq \mathcal C_{/X}$ be a sieve on $X \in \mathcal C$. Write $\mathcal U^\qadm$ for the full subcategory of $\mathcal U$ consisting of those $X' \to X$ that are quasi-admissible. Then the following are equivalent:
	\begin{enumerate}

		\item $\mathcal U^\qadm \to \mathcal U$ is cofinal.

		\item $\mathcal U$ is generated by the quasi-admissible $X' \to X$ in $\mathcal U$.

		\item The map $\mathcal U \to \yo(X)$ is $\yo$-quasi-admissible.

		\item The map $\mathcal U \to \yo(X)$ is weakly $\yo$-quasi-admissible (see \Cref{defn:weak yo-qadm}).

	\end{enumerate}
	\begin{proof}
		Note that the third condition implies the fourth by \Cref{lem:Psh qadm source-local}, and the fourth implies the second by \cite[Lemma 6.2.3.13]{htt} since $\mathcal U \to \yo(X)$ is a monomorphism.

		By \cite[Theorem 4.1.3.1]{htt}, we have that $\mathcal U^\qadm \to \mathcal U$ is cofinal if and only if for every $f : Y \to X$ in $\mathcal U$, the category
		\[
			\mathcal U^\qadm \times_{\mathcal U} \mathcal U_{f/}
		\]
		is weakly contractible.

		This category can be described as the full subcategory of $\mathcal U_{f/}$ consisting of those $Y \to X'$ over $X$ such that $X' \to X$ is quasi-admissible and in $\mathcal U$. Note that since quasi-admissible maps are stable under base change, this category admits binary products, which implies that it is cosifted if and only if it is nonempty. In particular, by \cite[Proposition 5.5.8.7]{htt}, this category is weakly contractible if and only if it is nonempty.

		Thus, $\mathcal U^\qadm \to \mathcal U$ is cofinal if and only if every $Y \to X$ in $\mathcal U$ factors through a quasi-admissible $X' \to X$ in $\mathcal U$, \ie, $\mathcal U$ is generated by the quasi-admissible $X' \to X$, which establishes the equivalence between the first two conditions.

		It only remains to show that the second condition implies the third. Indeed, by \Cref{thm:PF descent} it only remains to show that if $D \in \PF(\mathcal C)$, then $D$ has left base change for $\mathcal U \to \yo(X)$. But \Cref{thm:PF descent} already shows that $D$ has left base change for $\mathcal U \to \yo(X)$ against any map $\yo(X') \to \yo(X)$. Note that the base change of $\mathcal U \to \yo(X)$ along such a map is a sieve on $X'$ that is also generated by quasi-admissible maps, so we conclude by \Cref{prp:source locality of base change}.
	\end{proof}
\end{lem}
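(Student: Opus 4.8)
The plan is to prove the cycle of implications $(3)\Rightarrow(4)\Rightarrow(2)\Rightarrow(3)$ together with the equivalence $(1)\Leftrightarrow(2)$, which makes all four conditions equivalent. The arrows $(3)\Rightarrow(4)$ and $(4)\Rightarrow(2)$ are soft. For $(3)\Rightarrow(4)$ I would unwind \Cref{defn:local qadm} with $\jmath=\yo$ and $\tau$ the pseudotopology of universal colimits: a $\tau$-local presheaf on $\Psh(\mathcal C)$ is the same thing as an arbitrary presheaf on $\mathcal C$ (by the universal property of $\Psh(\mathcal C)$), so being $\yo$-quasi-admissible forces the map $\mathcal U\to\yo(X)$ into the class $Q'$ of \Cref{lem:Psh qadm source-local}; since $\id_{\yo(X)}$ is weakly $\yo$-quasi-admissible and weakly $\yo$-quasi-admissible maps are stable under postcomposition by maps in $Q'$, it follows that $\mathcal U\to\yo(X)$ is weakly $\yo$-quasi-admissible. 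For $(4)\Rightarrow(2)$: a weakly $\yo$-quasi-admissible map to $\yo(X)$ is, by \Cref{defn:weak yo-qadm}, equivalent to one of the form $\varinjlim_K\yo(X')\to\yo(X)$ with $X':K\to\mathcal C_X$, and since $\mathcal U\to\yo(X)$ is a monomorphism, \cite[Lemma 6.2.3.13]{htt} identifies $\mathcal U$ with the sieve generated by the quasi-admissible maps $X'(k)\to X$.

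For $(1)\Leftrightarrow(2)$ I would invoke Joyal's cofinality criterion \cite[Theorem 4.1.3.1]{htt}: the inclusion $\mathcal U^\qadm\to\mathcal U$ is cofinal if and only if for every $f:Y\to X$ in $\mathcal U$ the comma category $\mathcal U^\qadm\times_{\mathcal U}\mathcal U_{f/}$ is weakly contractible. Its objects are the factorizations $Y\to X'\to X$ of $f$ with $X'\to X$ quasi-admissible and in $\mathcal U$. The key point is that quasi-admissible maps are stable under base change and $\mathcal U$ is a sieve, so given two such objects $Y\to X'$ and $Y\to X''$ the fibre product $X'\times_X X''\to X$ is again quasi-admissible and in $\mathcal U$; hence the category has binary products, is therefore cosifted whenever it is nonempty, and so weakly contractible by \cite[Proposition 5.5.8.7]{htt}. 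Thus cofinality holds exactly when every $f\in\mathcal U$ factors through a quasi-admissible member of $\mathcal U$, which is precisely $(2)$.

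The remaining implication $(2)\Rightarrow(3)$ carries the real content, and is where I expect the main difficulty, since it requires matching the base-change clause of \Cref{defn:local qadm} against exactly what \Cref{thm:PF descent} delivers. Given that $\mathcal U$ is generated by a quasi-admissible family (small, as $\mathcal C$ is small), \Cref{thm:PF descent}(4) shows that for every pullback formalism $D$ the functor $D(\yo(X))\to D(\mathcal U)$ has a linear left adjoint, and \Cref{thm:PF descent}(2), applied to a pullback formalism $D'$ (which has quasi-admissible base change) and a morphism $\phi:D\to D'$ of pullback formalisms, shows that $\phi$ is left adjointable at $\mathcal U\to\yo(X)$. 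Combined with the identification of $\tau$-local presheaves on $\Psh(\mathcal C)$ with presheaves on $\mathcal C$, these verify every clause of \Cref{defn:local qadm} except left base change. For that, \Cref{thm:PF descent}(3) supplies left base change of $\mathcal U\to\yo(X)$ against maps out of representable presheaves, and since every such base change is again a sieve generated by quasi-admissible maps, the general case follows from \Cref{prp:source locality of base change}. The two delicate points to watch will be the source/target orientation when applying \Cref{thm:PF descent}(2), and the reduction of ``left base change for $\mathcal U\to\yo(X)$'' to the representable-target case via source-locality.
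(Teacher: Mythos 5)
Your proposal is correct and follows essentially the same route as the paper: \Cref{lem:Psh qadm source-local} for $(3)\Rightarrow(4)$, \cite[Lemma 6.2.3.13]{htt} for $(4)\Rightarrow(2)$, Joyal's cofinality criterion plus cosiftedness via binary products for $(1)\Leftrightarrow(2)$, and \Cref{thm:PF descent} combined with \Cref{prp:source locality of base change} for $(2)\Rightarrow(3)$. The only difference is that you spell out the verification of the remaining clauses of \Cref{defn:local qadm} more explicitly than the paper does, which is harmless.
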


In order to prove \Cref{thm:D-topology}, we will first need the following:
\begin{rmk} \label{rmk:qadm pseudocover}
	If $\{X_i \to X\}_i$ is a family of quasi-admissible maps in a small pullback context $\mathcal C$, then \Cref{thm:PF descent} shows that for any $D \in \PF(\mathcal C)$, we have that $D(X) \to \prod_i D(X_i)$ is conservative if and only if for any morphism of pullback formalisms $D \to D'$, $D'$ has descent along every base change of this family.
\end{rmk}

\begin{proof}[Proof of \Cref{thm:D-topology}]
	First we will show that the $D$-covering sieves form a Grothendieck topology. \Cref{rmk:qadm pseudocover} shows that these are stable under base change, so (following \cite[Definition 6.2.2.1]{htt}) it only remains to show the following: if $\{X_i \to S\}_{i \in I}$ is a family of quasi-admissible maps such that
	\[
		D(S) \to \prod_{i \in I} D(X_i)
	\]
	is conservative, then for any sieve $\mathcal U$ on $S$, if the base change $\mathcal U_i$ of $\mathcal U$ to $X_i$ is a $D$-covering sieve for all $i \in I$, then $\mathcal U$ is a $D$-covering sieve. Indeed, this means that for any $i \in I$, there is a family of quasi-admissible maps $\{X_{ij} \to X_i\}_{j \in J_i}$ in $\mathcal U_i$ such that
	\[
		D(X_i) \to \prod_{j \in J_i} D(X_{ij})
	\]
	is conservative.

	It follows that for any $i \in I$ and $j \in J_i$, the map $X_{ij} \to S$ is a quasi-admissible map in $\mathcal U$. Furthermore, the composite
	\[
		D(S) \to \prod_{i \in I} D(X_i) \to \prod_{\substack{i \in I \\ j \in J_i}} D(X_{ij})
	\]
	is conservative, so we have shown that $\mathcal U$ is a $D$-covering sieve, which concludes the proof that the $D$-covering sieves form a Grothendieck topology.

	Now, by \Cref{lem:sieve gen by qadm iff qadms are cofinal}, we have that the $\yo$-quasi-admissible sieves are precisely the ones generated by quasi-admissible maps, so by \Cref{rmk:qadm pseudocover}, we have that the $D$-covering sieves are precisely the sieves that contain $\yo$-quasi-admissible $D$-acyclic sieves. Thus, a presheaf has descent for the $D$-covering topology if and only if it has descent along the $\yo$-quasi-admissible $D$-acyclic sieves, so we conclude by \Cref{rmk:qadm pseudocover} and \Cref{rmk:describe qadm acyclic}.
\end{proof}

\section{Localizing Pullback Formalisms} \label{S:invertibility}

In this section we will study how to freely invert morphisms in the categories coming from pullback formalisms. We will be particularly interested in doing this in order to freely impose the invariance properties considered in \Cref{S:acyclic}. This will be achieved in \Cref{S:imposing invariance} after establishing some general results about localizing pullback formalisms in \Cref{S:localizing pf}.

\subsection{General Localizations} \label{S:localizing pf}

In this \lcnamecref{S:localizing pf} we will study how to produce objectwise localizations of pullback formalisms in a way that is compatible with the structure of a pullback formalism.

Our first result is is a fully general statement that describes when we can construct localizations of pullback formalisms along prescribed families of morphisms:
\begin{prp} \label{prp:localizing formalisms}
	Let $D$ be a pullback formalism on a pullback context $\mathcal C$, and for each $X \in \mathcal C$, let $W_X$ be a collection of morphisms in $D(X)$ such that
	\begin{enumerate}

		\item The strongly saturated class $\bar W_X$ generated by $W_X$ is of small generation for all $X \in \mathcal C$.

		\item For each $f : X \to Y$ in $\mathcal C$, the functor $f^* : D(Y) \to D(X)$ sends $W_Y$ to $\bar W_X$.

		\item For each quasi-admissible map $f : X \to Y$ in $\mathcal C$, the functor $f_\sharp : D(X) \to D(Y)$ sends $W_X$ to $\bar W_Y$.

		\item For any $X \in \mathcal C$, and $A \in D(X)$ if $\phi \in W_X$, then $\phi \otimes A \in \bar W_X$.

	\end{enumerate}
	Then there is a morphism of pullback formalisms $L : D \to W^{-1} D$ such that for each $X \in \mathcal C$, the functor $L(X) : D(X) \to (W^{-1} D)(X)$ is an accessible localization along $W_X$, and furthermore, a morphism of pullback formalisms $D \to E$ extends through $L$ if and only if for each $X \in \mathcal C$, the map $D(X) \to E(X)$ sends maps in $W_X$ to equivalences, in which case the space of extensions is contractible.

	Conversely, if $L : D \to E$ is a morphism of pullback formalisms such that for each $X \in \mathcal C$, if we write $W_X$ for the collection of morphisms in $D(X)$ that are inverted by $L$, we have that the family $\{W_X\}_{X \in \mathcal C}$ satisfies the above conditions.
	\begin{proof}
		Theorem \ref{thm:monoidal objectwise localization} guarantees the existence of an $L : D \to W^{-1} D$ in $\Psh_{\CAlg(\PrL)}(\mathcal C)$ such that for each $X \in \mathcal C$, the functor $L(X) : D(X) \to (W^{-1} D)(X)$ is an accessible localization along $W_X$.

		If $f : X \to Y$ is quasi-admissible, since $f_\sharp$ preserves colimits and sends $W_X$ to $\bar W_Y$, it also sends $\bar W_X$ to $\bar W_Y$, so Lemma \ref{lem:localizing adjunctions} says that $(W^{-1} D)(f)$ admits a left adjoint and that the unit and counit of the adjunction are induced by the unit and counit of the adjunction $f_\sharp \dashv f^*$. In particular, $L$ respects quasi-admissibility.

		Thus, Proposition \ref{prp:map from PF is to PF} shows that $W^{-1} D$ is a pullback formalism, and that for any $E \in \PF(\mathcal C)$, the square
		\[
			\begin{tikzcd}
				\PF(\mathcal C)(W^{-1} D, E) \ar[d] \ar[r] & \PF(\mathcal C)(D, E) \ar[d] \\
				\Psh_{\CAlg(\PrL)}(\mathcal C)(W^{-1} D, E) \ar[r] & \Psh_{\CAlg(\PrL)}(\mathcal C)(D, E)
			\end{tikzcd}
		\]
		is Cartesian. Theorem \ref{thm:monoidal objectwise localization} already shows that if $D \to E$ is any transformation of functors $\mathcal C^\op \to \CAlg(\PrL)$, then the space of extensions is nonempty if and only if for each $X \in \mathcal C$, $D(X) \to E(X)$ sends maps in $W_X$ to equivalences, in which case the space of extensions is contractible. It follows that since the above square is Cartesian, if $D \to E$ respects quasi-admissibility, then the space of extensions through $L$ by maps that respect quasi-admissibility is contractible.

		We prove the properties required for the converse individually, noting that for each $X$, $W_X$ is already strongly saturated by Remark 5.5.4.10 of \cite{htt}:
		\begin{enumerate}

			\item This follows from Proposition 5.5.4.16 and Example 5.5.4.9 of \cite{htt}.

			\item This follows from commutativity of the square
				\[
					\begin{tikzcd}
						D(Y) \ar[d] \ar[r, "f^*"] & D(X) \ar[d] \\
						E(Y) \ar[r, "f^*"'] & E(X)
					\end{tikzcd}
				.\]

			\item This follows from commutativity of the following square (which is guaranteed by the fact that $L$ respects quasi-admissibility):
				\[
					\begin{tikzcd}
						D(X) \ar[d] \ar[r, "f_\sharp"] & D(Y) \ar[d] \\
						E(X) \ar[r, "f_\sharp"'] & E(Y)
					\end{tikzcd}
				.\]

			\item  This follows from commutativity of the following square (which is guaranteed by the fact that $L(X) : D(X) \to E(X)$ is symmetric monoidal):
				\[
					\begin{tikzcd}
						D(X) \times D(X) \ar[d] \ar[r, "\otimes"] & D(X) \ar[d] \\
						E(X) \times E(X) \ar[r, "\otimes"'] & E(X)
					\end{tikzcd}
				.\]

		\end{enumerate}
		
	\end{proof}
\end{prp}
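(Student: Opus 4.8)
The plan is to build $L : D \to W^{-1}D$ first as a map of $\CAlg(\PrL)$-valued presheaves and then upgrade it to a morphism of pullback formalisms via \Cref{prp:map from PF is to PF}. First I would apply the objectwise monoidal localization result (\Cref{thm:monoidal objectwise localization}) to obtain a transformation $L : D \to W^{-1}D$ in $\Psh_{\CAlg(\PrL)}(\mathcal C)$ such that each $L(X) : D(X) \to (W^{-1}D)(X)$ is an accessible localization whose class of local equivalences is exactly $\bar W_X$; here hypothesis (1) supplies accessibility, hypothesis (2) ensures the functors $f^{*}=D(f)$ descend to the localized categories, and hypothesis (4) ensures the localizations are compatible with the symmetric monoidal structures, so that $W^{-1}D$ is genuinely $\CAlg(\PrL)$-valued. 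At this level \Cref{thm:monoidal objectwise localization} also records the universal property: a transformation $D \to E$ in $\Psh_{\CAlg(\PrL)}(\mathcal C)$ factors through $L$ precisely when each $D(X)\to E(X)$ inverts $W_X$, and then the space of factorizations is contractible.

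The crucial step is to verify that $L$ respects quasi-admissibility. Fix a quasi-admissible $f : X \to Y$. By hypothesis (3), $f_\sharp$ sends $W_X$ into $\bar W_Y$; since $f_\sharp$ preserves colimits and $\bar W_X$ is the strongly saturated class generated by $W_X$, it follows that $f_\sharp$ sends all of $\bar W_X$ into $\bar W_Y$. Hence \Cref{lem:localizing adjunctions} applies and shows that the adjunction $f_\sharp \dashv f^{*}$ descends to an adjunction between the localized categories, with unit and counit induced from those of the original adjunction; in particular the naturality square comparing $f^{*}$ on $D$ with $f^{*}$ on $W^{-1}D$ is left adjointable, which is exactly the statement that $L$ respects quasi-admissibility. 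Now I would invoke \Cref{prp:map from PF is to PF} with $D^\natural = D$ and the map $L : D \to W^{-1}D$: each $L(X)$, being a localization, is essentially surjective, so the image of $D(X)\to (W^{-1}D)(X)$ generates under small colimits; \Cref{prp:map from PF is to PF} then gives that $W^{-1}D$ is a pullback formalism and that, for every pullback formalism $E$, the square comparing $\PF(\mathcal C)(W^{-1}D,E)\to \PF(\mathcal C)(D,E)$ with the corresponding square of $\Psh_{\CAlg(\PrL)}(\mathcal C)$-mapping spaces is Cartesian. Chasing this Cartesian square against the universal property from the previous paragraph yields the asserted universal property of $L$: a morphism of pullback formalisms $D\to E$ factors through $L$ iff each $D(X)\to E(X)$ inverts $W_X$, and when it does the factorization is automatically a morphism of pullback formalisms, unique up to a contractible space of choices.

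For the converse, suppose $L : D \to E$ is a morphism of pullback formalisms and let $W_X$ be the class of maps in $D(X)$ inverted by $L(X)$. Each $W_X$ is already strongly saturated (it is the class of equivalences for the colimit-preserving functor $L(X)$, cf.\ \cite[Remark 5.5.4.10]{htt}) and of small generation (\cite[Proposition 5.5.4.16, Example 5.5.4.9]{htt}), which is (1). Condition (2) is immediate from commutativity of the naturality square for $f^{*}$: if $\phi\in W_Y$ then $L(X)(f^{*}\phi)\simeq f^{*}(L(Y)\phi)$ is an equivalence. Condition (3) follows from commutativity of the square of $(-)_\sharp$ functors, which holds because $L$ respects quasi-admissibility, so the relevant square is left adjointable and the induced square of left adjoints commutes. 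Condition (4) follows from $L(X)$ being symmetric monoidal, since $L(X)(\phi\otimes A)\simeq L(X)(\phi)\otimes L(X)(A)$. The main obstacle in the whole argument is the quasi-admissibility step of the first direction: it is the only place where the $(-)_\sharp$ functors interact with the localization, and it requires both promoting hypothesis (3) from $W_X$ to $\bar W_X$ and the descent-of-adjunctions input \Cref{lem:localizing adjunctions}; granting that, \Cref{prp:map from PF is to PF} handles everything else formally.
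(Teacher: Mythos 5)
Your proposal is correct and follows essentially the same route as the paper: objectwise monoidal localization via \Cref{thm:monoidal objectwise localization}, promotion of hypothesis (3) from $W_X$ to $\bar W_X$ so that \Cref{lem:localizing adjunctions} yields the descended adjunction and hence quasi-admissibility of $L$, then \Cref{prp:map from PF is to PF} for the pullback-formalism structure and the Cartesian square giving the universal property, with the converse checked condition by condition from the commuting squares. No gaps.
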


The following result tells us how to construct a localization of a pullback formalism along a ``generating family'' of morphisms:
\begin{prp} \label{prp:localizing formalisms along generating family}
	Let $D$ be a pullback formalism on a quasi-small pullback context $\mathcal C$, and for each $X \in \mathcal C$, let $W_X$ be a collection of morphisms in $D(X)$ such that
	\begin{enumerate}

		\item For each $X \in \mathcal C$, the collection $W_X \subseteq \Fun(\Delta^1, D(X))$ is generated under colimits by some small subcollection.

		\item For each $f : X \to Y$ in $\mathcal C$, the map $f^* : D(Y) \to D(X)$ sends $W_Y$ to the strongly saturated class $\bar W_X$ generated by $W_X$.

	\end{enumerate}
	Then there is a morphism of pullback formalisms $L : D \to W^{-1} D$ such that for each $S \in \mathcal C$, the functor $L(S) : D(S) \to (W^{-1} D)(S)$ is an accessible localization along morphisms of the form $p_\sharp(f \otimes A)$, for $p : X \to S$ quasi-admissible, $f \in W_X$, and $A \in D(X)$. If $D$ is geometrically generated, this is also a localization along only morphisms of the form $p_\sharp(f)$ for $p : X \to S$ quasi-admissible, and $f \in W_X$.

	Furthermore, a morphism of pullback formalisms $D \to E$ extends through $L$ if and only if for each $X \in \mathcal C$, the map $D(X) \to E(X)$ sends maps in $W_X$ to equivalences, in which case the space of extensions is contractible.
	\begin{proof}
		For each $S \in \mathcal C$, write $\mathcal W_S$ for the collection of maps in $D(S)$ of the form $p_\sharp(f \otimes A)$ for $p : X \to S$ quasi-admissible, $f \in W_X$, and $A \in D(X)$. Since $\mathcal C_S$ is quasi-small, and for every quasi-admissible $p : X \to S$, $W_X$ is generated under colimits by a small subcollection, $D(X)$ is generated under colimits by a small collection of objects, and $p_\sharp(- \otimes -)$ preserves colimits in each variable, we have that $\mathcal W_S$ is generated under colimits by a small subcollection, and in particular the strongly saturated class $\bar{\mathcal W}_S$ that it generates is of small generation.

		It follows from quasi-admissible base change that if $f : X \to Y$ is a map in $\mathcal C$, then $f^*$ sends $\mathcal W_Y$ to $\mathcal W_X$. By construction, we also have that if $f$ is quasi-admissible, then $f_\sharp$ sends $\mathcal W_X$ to $\mathcal W_Y$.

		Given a quasi-admissible map $p : X \to S$, $f \in W_X$, $A \in D(X)$, and $B \in D(S)$, we have that
		\[
			p_\sharp(f \otimes A) \otimes B \simeq p_\sharp(f \otimes A \otimes p^* B)
		,\]
		which is in $\mathcal W_S$.

		Thus, \Cref{prp:localizing formalisms} shows that there is a localization $L : D \to W^{-1} D$ which satisfies that a morphism of pullback formalisms $D \to E$ extends through $L$ if and only if for each $X \in \mathcal C$, the functor $D(X) \to E(X)$ sends maps $\mathcal W_X$ to equivalences. In particular, $D(X) \to E(X)$ must send maps in $W_X \subseteq \mathcal W_X$ to equivalences, so it only remains to show that if $D \to E$ is morphism of pullback formalisms, and for each $X \in \mathcal C$, $D(X) \to E(X)$ sends maps in $W_X$ to equivalences, then for each $X \in \mathcal C$, $D(X) \to E(X)$ sends maps in $\mathcal W_X$ to equivalences. This also follows easily from the description of $\mathcal W_X$, and the fact that $D \to E$ respects quasi-admissibility and is objectwise monoidal.

		To show the statement about the case when $D$ is geometrically generated, we note that if $p : X \to S$ is quasi-admissible, and $f \in W_X$, then the functor $p_\sharp(f \otimes -)$ preserves colimits, so since $D(X)$ is generated under colimits by objects of the form $\sigma_\sharp(1)$, where $\sigma : X' \to X$ is quasi-admissible, we have that any map $p_\sharp(f \otimes A)$ is a colimit of maps of the form $p_\sharp(f \otimes \sigma_\sharp(1))$, but by the projection formula,
		\[
			p_\sharp(f \otimes \sigma_\sharp(1)) \simeq p_\sharp(\sigma_\sharp(\sigma^*(f) \otimes 1)) \simeq (p \circ \sigma)_\sharp(\sigma^*(f))
		,\]
		which is of the form $q_\sharp(g)$ for the quasi-admissible map $q = p \circ \sigma : X' \to S$, and $g = \sigma^*(f) \in W_{X'}$.
	\end{proof}
\end{prp}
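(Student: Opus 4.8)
The plan is to deduce the statement from \Cref{prp:localizing formalisms} by enlarging the generating families. For each $S \in \mathcal C$, set $\mathcal W_S$ to be the collection of morphisms of $D(S)$ of the form $p_\sharp(f \otimes A)$ with $p : X \to S$ quasi-admissible, $f \in W_X$, and $A \in D(X)$; note $W_S \subseteq \mathcal W_S$ by taking $p = \id_S$ and $A = 1$. The substance of the proof is to verify that the family $\{\mathcal W_S\}_{S \in \mathcal C}$ satisfies hypotheses (1)--(4) of \Cref{prp:localizing formalisms}, after which that proposition immediately produces a morphism of pullback formalisms $L : D \to W^{-1}D$ whose components are accessible localizations along $\mathcal W_S$.

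For hypothesis (1): since $\mathcal C$ is quasi-small, $\mathcal C_S$ is small; each $D(X)$ is presentable, hence generated under colimits by a small set of objects; $W_X$ is by assumption generated under colimits by a small subcollection; and $p_\sharp(-\otimes-)$ preserves colimits separately in each variable, so $\mathcal W_S$ is generated under colimits by a small subcollection and $\bar{\mathcal W}_S$ is of small generation. For hypothesis (2): given $f : X \to Y$ and a morphism $p_\sharp(g \otimes A) \in \mathcal W_Y$ with $p : Z \to Y$ quasi-admissible, quasi-admissible base change along $f$ gives $f^* p_\sharp(g \otimes A) \simeq p'_\sharp(a^* g \otimes a^* A)$, where $p'$ is the base change of $p$ and $a$ the base change of $f$; since $a^* g \in \bar W_{Z'}$ by hypothesis and the colimit-preserving functor $h \mapsto p'_\sharp(h \otimes a^* A)$ carries $W_{Z'}$ into $\mathcal W_X$, the image lies in $\bar{\mathcal W}_X$. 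For hypothesis (3): if $f$ is quasi-admissible then $f_\sharp p_\sharp(g \otimes A) \simeq (f \circ p)_\sharp(g \otimes A) \in \mathcal W_Y$, since $f \circ p$ is quasi-admissible. For hypothesis (4): the projection formula gives $p_\sharp(f \otimes A) \otimes B \simeq p_\sharp\bigl(f \otimes (A \otimes p^* B)\bigr) \in \mathcal W_S$.

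It then remains to identify the universal property. \Cref{prp:localizing formalisms} tells us $D \to E$ factors uniquely through $L$ exactly when each $D(X) \to E(X)$ inverts $\mathcal W_X$; I must check that for a morphism $\phi : D \to E$ of pullback formalisms this is equivalent to each $D(X) \to E(X)$ inverting only $W_X$. One implication is $W_X \subseteq \mathcal W_X$; for the converse, since $\phi$ respects quasi-admissibility and is objectwise symmetric monoidal we have $\phi_S\bigl(p_\sharp(f \otimes A)\bigr) \simeq p_\sharp\bigl(\phi_X(f) \otimes \phi_X(A)\bigr)$, which is an equivalence as soon as $\phi_X(f)$ is. Finally, for the geometrically generated case I would use that $A \mapsto p_\sharp(f \otimes A)$ preserves colimits to write each $p_\sharp(f \otimes A)$ as a colimit of morphisms $p_\sharp(f \otimes \sigma_\sharp 1)$ with $\sigma$ quasi-admissible, and then the projection formula identifies $p_\sharp(f \otimes \sigma_\sharp 1) \simeq (p \circ \sigma)_\sharp(\sigma^* f)$, a morphism of the required form $q_\sharp(g)$; hence the localization is along (the strongly saturated class generated by) these.

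The step I expect to be the main obstacle is the verification of hypothesis (2) — stability of $\mathcal W$ under the pullback functors $f^*$ — since this is the one place where one must combine quasi-admissible base change with the projection formula and be careful about the passage between $W$ and its strongly saturated closure $\bar W$; the remaining verifications are essentially bookkeeping with the projection formula and with the compatibility of a morphism of pullback formalisms with $(-)_\sharp$ and $\otimes$.
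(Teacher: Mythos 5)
Your proposal is correct and follows essentially the same route as the paper: define $\mathcal W_S = \{p_\sharp(f \otimes A)\}$, verify the four hypotheses of \Cref{prp:localizing formalisms}, and then deduce the universal property and the geometrically generated refinement via the projection formula. If anything, your verification of hypothesis (2) is slightly more careful than the paper's (which asserts $f^* \mathcal W_Y \subseteq \mathcal W_X$ outright, whereas the base change really lands only in $\bar{\mathcal W}_X$ because $a^* g$ need only lie in $\bar W_{Z'}$ — your passage through the colimit-preserving functor $h \mapsto p'_\sharp(h \otimes a^* A)$ is exactly the right way to close that gap).
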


\subsection{Imposing Invariance} \label{S:imposing invariance}

We will now study how to freely impose invariance conditions on pullback formalisms.

\begin{defn}
	Let $\mathcal C$ be a pullback context with a pseudotopology $\tau$.

	Define $\PF^\tau(\mathcal C) \subseteq \PF(\mathcal C)$ to be the full subcategory of pullback formalisms $D$ such that every $\tau$-acyclic pseudosieve is $D$-acyclic. Call such pullback formalisms \emph{$\tau$-invariant pullback formalisms}.
\end{defn}

\begin{defn} \label{defn:qadm pseudotop}
	A pseudotopology $\tau$ on a pullback context $\mathcal C$ is said to be \emph{quasi-admissible} if every $\tau$-acyclic pseudosieve is $\yo$-quasi-admissible.
\end{defn}

For the remainder of this section, $\mathcal C$ is a locally small quasi-small pullback context, and $\tau$ is a small quasi-admissible pseudotopology on $\mathcal C$.

\begin{prp} \label{prp:invariance localization}

	The inclusion $\PF^\tau(\mathcal C) \to \PF(\mathcal C)$ admits a left adjoint $(-)^\tau$, and for any $D \in \PF(\mathcal C)$, the unit $L_\tau : D \to D^\tau$ is a morphism of pullback formalisms such that for each $S \in \mathcal C$, the functor $D(S) \to D^\tau(S)$ is the localization along all maps $\sigma_\sharp(\epsilon)$, where $\sigma : S' \to S$ is quasi-admissible, and $\epsilon$ is a counit map $u_\sharp u^* \to \id_{D(S')}$ for $u$ a $\tau$-acyclic pseudosieve on $S'$.\footnote{By \Cref{prp:PF on Shv}, we have that $u$ is $\yo$-quasi-admissible, so the functors $u_\sharp, u^*$ exist and behave as expected.}

	In fact, the slice projection $\PF(\mathcal C)_{D^\tau/} \to \PF(\mathcal C)_{D/}$ is fully faithful, with essential image given by the morphisms from $D$ to $\tau$-invariant pullback formalisms.

	Furthermore, if $D$ is geometrically generated, the localization $D(S) \to D^\tau(S)$ is the same as the localization only along maps of the form $\sigma_\sharp(u_\sharp 1 \to 1)$ for $\sigma : S' \to S$ quasi-admissible, and $u$ a $\tau$-acyclic pseudosieve on $S'$.
	\begin{proof}
		For each $S \in \mathcal C$, and $D \in \PF(\mathcal C)$, let $W_S^D$ be the collection of maps in $D(S)$ that are counit maps $u_\sharp 1 \simeq u_\sharp u^* 1 \to 1$, where $u$ is a $\tau$-acyclic pseudosieve on $S$.
		
		Let $D \in \PF(\mathcal C)$. Since the pseudotopology $\tau$ is small, and $\mathcal C$ is quasi-small, the collection of $\tau$-acyclic pseudosieves on $S$ is small, so $W_S^D$ is small for every $S \in \mathcal C$. For any $f : X \to Y$ in $\mathcal C$, the fact that $f^* : D(Y) \to D(X)$ preserves monoidal units and all $\tau$-acyclic pseudosieves are $\yo$-quasi-admissible, implies that $f^* W_Y^D \subseteq W_X^D$.

		Thus, we can apply \Cref{prp:localizing formalisms along generating family} to produce a morphism of pullback formalisms $L_\tau : D \to D^\tau$ satisfying the following:
		\begin{itemize}

			\item For any morphism of pullback formalisms $D \to E$, if $D(S) \to E(S)$ sends $W_S^D$ to equivalences for all $S$, then there is a contractible space of extensions to $D^\tau \to E$. Note that every map in $W_S^D$ is of the form $u_\sharp(1) \to 1$ for $u$ a $\tau$-acyclic pseudoseive on $S$, so since $D \to E$ preserves monoidal units and all $\tau$-acyclic pseudosieves are $\yo$-quasi-admissible, it follows that $D(S) \to E(S)$ sends $W_S^D$ to the collection $W_S^E$. Since every map in $W_S^E$ is invertible if $E$ is $\tau$-invariant, it follows that there is a contractible space of extensions $D^\tau \to E$ if $E \in \PF^\tau(\mathcal C)$.

			\item For any $S \in \mathcal C$, $D(S) \to D^\tau(S)$ is the localization along all maps of the form $\sigma_\sharp(f \otimes A)$, where $\sigma : S' \to S$ is quasi-admissible, $f \in W_{S'}^D$, and $A \in D(S')$. We know that $f$ is of the form $u_\sharp(1) \to 1$ for $u$ a $\tau$-acyclic pseudosieve on $S'$, so by the projection formula (and since $u$ is $\yo$-quasi-admissible), $f \otimes A \simeq \epsilon(A)$, where $\epsilon$ is the counit of $u_\sharp \dashv u^*$. 

			\item If $D$ is geometrically generated, then for any $S \in \mathcal C$, $D(S) \to D^\tau(S)$ is the localization along all maps of the form $\sigma_\sharp(u_\sharp(1) \to 1)$, where $\sigma : S' \to S$ is quasi-admissible, and $u_\sharp(1) \to 1$ is in $W_{S'}^D$.

		\end{itemize}
		Thus, if $E \in \PF^\tau(\mathcal C)$, then the map
		\[
			\PF^\tau(\mathcal C)(D^\tau, E) \to \PF(\mathcal C)(D, E)
		\]
		is an equivalence, so that $L_\tau$ exhibits $D^\tau$ as a $\PF^\tau(\mathcal C)$-localization of $D$ (see \cite[Definition 5.2.7.6]{htt}). We may then use \cite[Proposition 5.2.7.8]{htt} to see that the inclusion $\PF^\tau(\mathcal C) \to \PF(\mathcal C)$ admits a left adjoint, and the unit of the adjunction admits the desired description.
		
		Finally, the statement about $\PF(\mathcal C)_{D^\tau/} \to \PF(\mathcal C)_{D/}$ follows from the fact that $D \to D^\tau$ is initial among morphisms from $D$ to $\tau$-invariant pullback formalisms, and the fact that any morphism from $D^\tau$ must be to a $\tau$-invariant pullback formalism by \Cref{lem:qadm acyclic}.
	\end{proof}
\end{prp}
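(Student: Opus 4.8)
The plan is to obtain $(-)^\tau$ as an instance of the localization machinery of \Cref{prp:localizing formalisms along generating family}: for a fixed $D\in\PF(\mathcal C)$ we localize each $D(S)$ along the counit maps of the $\sharp$-adjunctions attached to $\tau$-acyclic pseudosieves, and then feed the output into the standard theory of reflective localizations.

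Concretely, I would fix $D\in\PF(\mathcal C)$ and, for each $S\in\mathcal C$, set $W_S=W_S^D$ to be the collection of morphisms $u_\sharp 1 = u_\sharp u^* 1 \to 1$ in $D(S)$ with $u$ a $\tau$-acyclic pseudosieve on $S$; these make sense because $\tau$ is quasi-admissible, so every such $u$ is $\yo$-quasi-admissible, whence (viewing $D$ as a limit-preserving presheaf on $\Psh(\mathcal C)$, \cf{} \Cref{prp:PF on Shv}) the extended $D$ has a linear left adjoint $u_\sharp$ to $u^*$, together with left base change and the projection formula at $u$. Then I would verify the hypotheses of \Cref{prp:localizing formalisms along generating family}: first, $W_S$ is small since $\mathcal C$ is quasi-small and $\tau$ is small, so it is trivially generated under colimits by a small subcollection; second, for $f\colon X\to Y$ in $\mathcal C$, base-change stability of the pseudotopology makes $f^{-1}(u)$ again $\tau$-acyclic, and since $f^*$ preserves the unit and commutes with $(-)_\sharp$ up to base change, it carries the counit $\epsilon_u$ to $\epsilon_{f^{-1}(u)}$, so $f^* W_Y\subseteq W_X$. \Cref{prp:localizing formalisms along generating family} then produces $L_\tau\colon D\to D^\tau$ in $\PF(\mathcal C)$ with $D(S)\to D^\tau(S)$ the accessible localization along the maps $\sigma_\sharp(f\otimes A)$ for $\sigma\colon S'\to S$ quasi-admissible, $f\in W_{S'}$, $A\in D(S')$; rewriting $f\otimes A=\epsilon_u\otimes A\simeq(u_\sharp u^* A\to A)$ by the projection formula gives the stated description of the localization, and the geometrically-generated clause of \Cref{prp:localizing formalisms along generating family} gives the last sentence of the proposition.

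Next I would check $D^\tau\in\PF^\tau(\mathcal C)$ and deduce the adjunction. Since $L_\tau$ respects quasi-admissibility, it sends the $D$-counit $\epsilon_u$ to the $D^\tau$-counit, which is inverted by construction, so $u_\sharp 1\simeq 1$ in $D^\tau(S)$; applying \Cref{lem:qadm acyclic}(1) to the presheaf extension of $D^\tau$ shows every $\tau$-acyclic pseudosieve is $D^\tau$-acyclic, \ie{} $D^\tau$ is $\tau$-invariant. For the universal property, if $E\in\PF^\tau(\mathcal C)$ then any morphism $\phi\colon D\to E$ in $\PF(\mathcal C)$ is unit-preserving and left adjointable at $\yo$-quasi-admissible maps (\Cref{lem:Psh qadm source-local}), hence sends $\epsilon_u^D$ to $\epsilon_u^E$, which is invertible because $u$ is $E$-acyclic by \Cref{lem:qadm acyclic}(1); so $\phi$ inverts every $W_S^D$, and \Cref{prp:localizing formalisms along generating family} gives a contractible space of factorizations through $L_\tau$. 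This exhibits $L_\tau$ as a $\PF^\tau(\mathcal C)$-localization of $D$ in the sense of \cite[Definition 5.2.7.6]{htt}, so \cite[Proposition 5.2.7.8]{htt} produces the left adjoint $(-)^\tau$ of the inclusion, with unit $L_\tau$.

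For the coslice statement the key observation is that any morphism of pullback formalisms $D^\tau\to E$ already forces $E\in\PF^\tau(\mathcal C)$: being unit-preserving and left adjointable at $\yo$-quasi-admissible maps, it carries the invertible map $\epsilon_u^{D^\tau}$ to $\epsilon_u^E$, which is therefore invertible, so $u$ is $E$-acyclic by \Cref{lem:qadm acyclic}(1). Hence $\PF(\mathcal C)_{D^\tau/}=\PF^\tau(\mathcal C)_{D^\tau/}$, and since $(-)^\tau$ is left adjoint to the fully faithful inclusion $\PF^\tau(\mathcal C)\hookrightarrow\PF(\mathcal C)$, the standard identification $\PF^\tau(\mathcal C)_{D^\tau/}\simeq\PF(\mathcal C)_{D/}\times_{\PF(\mathcal C)}\PF^\tau(\mathcal C)$ identifies the slice projection $\PF(\mathcal C)_{D^\tau/}\to\PF(\mathcal C)_{D/}$ with the inclusion of the full subcategory on the morphisms out of $D$ with $\tau$-invariant target. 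I expect the main obstacle to be the coherence bookkeeping needed to transport $D$, $D^\tau$, the adjunctions $u_\sharp\dashv u^*$, their counits, and the maps $f^*$ and $L_\tau$ to $\Psh(\mathcal C)$ so that \Cref{lem:qadm acyclic}, the projection formula, and \Cref{lem:Psh qadm source-local} genuinely apply to the non-representable pseudosieves $u$; once that setup is in place, the remaining steps are routine applications of \Cref{prp:localizing formalisms along generating family}.
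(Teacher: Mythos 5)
Your proposal is correct and follows essentially the same route as the paper: you define the same generating collections $W_S^D$ of counit maps, verify the same smallness and $f^*$-stability hypotheses, invoke \Cref{prp:localizing formalisms along generating family}, rewrite $f\otimes A$ via the projection formula, and conclude with \cite[Propositions 5.2.7.6 and 5.2.7.8]{htt} and the observation that any morphism out of $D^\tau$ lands in a $\tau$-invariant pullback formalism. The only difference is that you spell out a few steps the paper leaves implicit (e.g.\ that $D^\tau$ itself is $\tau$-invariant), which is harmless.
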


\begin{cor} \label{cor:describe invariance localization of Huniv}
	The category $\PF^\tau(\mathcal C)$ has an initial object $H^\tau$, and furthermore, for each $S \in \mathcal C$, the functor $H^\univ(S) \to H^\tau(S)$ is the localization $\Psh(\mathcal C_S) \to \Psh^\tau(\mathcal C_S)$, where we may view $\tau$ as a pseudotopology on $\mathcal C_S$ using the procedure of \Cref{lem:pull back pseudotop}, so this is the localization along all maps that lie over $\tau$-acyclic pseudosieves in $\Psh(\mathcal C)$.

	Furthermore, the slice projection $\PF(\mathcal C)_{H^\tau/} \to \PF(\mathcal C)$ is a fully faithful functor whose essential image is given by $\PF^\tau(\mathcal C)$, and for any $S \in \mathcal C$, the functor $H^\univ(S) \to H^\tau(S)$ is an accessible locally Cartesian localization between presentable categories with universal colimits.
	\begin{proof}
		By \Cref{prp:invariance localization}, there is a left adjoint $(-)^\tau$ of the inclusion $\PF^\tau(\mathcal C) \to \PF(\mathcal C)$, and for any $D \in \PF(\mathcal C)$, the slice projection $\PF(\mathcal C)_{D^\tau/} \to \PF(\mathcal C)_{D/}$ is fully faithful with essential image given by the maps from $D$ to $\tau$-invariant pullback formalisms. Furthermore, since $\mathcal C$ is quasi-small, the pullback formalism $H^\univ$ exists, and \Cref{thm:Huniv is univ} says it is initial in $\PF(\mathcal C)$. It follows that $H^\tau = (H^\univ)^\tau$ is initial in $\PF^\tau(\mathcal C)$, and that $\PF(\mathcal C)_{H^\tau/} \to \PF(\mathcal C)_{H^\univ/} \simeq \PF(\mathcal C)$ is equivalent to the inclusion of $\tau$-invariant pullback formalisms.

		By \Cref{prp:invariance localization}, since $H^\univ$ is geometrically generated, for any $S \in \mathcal C$, the map $H^\univ(S) \to H^\tau(S)$ is the localization along maps of the form $\sigma_\sharp(u_\sharp 1 \to 1)$, where $\sigma : S' \to S$ is quasi-admissible, and $u$ is a $\tau$-acyclic pseudosieve on $S'$.

		Now, recall the pullback formalism $\bar H^\univ \coloneqq H^\slice_{\Psh(\mathcal C)}$ on $\Psh(\mathcal C)$ of \Cref{cnstr:Huniv}. Since $\bar H^\univ$ corresponds to the limit-preserving pullback formalism on $\Psh(\mathcal C)$ that is simply given by the slice presheaf $P \mapsto \Psh(\mathcal C)_{/P}$, we have that for any quasi-admissible $f : P \to Q$ in $\Psh(\mathcal C)$, there is a commutative triangle
		\[
			\begin{tikzcd}
				\bar H^\univ(P) \ar[dr] \ar[r, "f_\sharp"] & \bar H^\univ(Q) \ar[d] \\
										& \Psh(\mathcal C)
			\end{tikzcd}
		.\]
		Thus, in $\bar H^\univ(\yo(S)) = \Psh(\mathcal C)_{/\yo(S)}$, the maps of the form $\sigma_\sharp(u_\sharp 1 \to 1)$ for $\sigma : S' \to S$ quasi-admissible and $u$ a $\tau$-acyclic pseudosieve on $S'$, are precisely the maps in $\Psh(\mathcal C)_{/\yo(S)}$ lying over maps in $\Psh(\mathcal C)$ that are $\tau$-acyclic pseudosieves on objects $S'$ that admit quasi-admissible maps to $S$.

		Since $H^\univ(S) \to \bar H^\univ(\yo(S))$ is fully faithful, and $H^\univ \to \bar H^\univ|_{\mathcal C}$ respects quasi-admissibility, we have that the maps in $H^\univ(S)$ that are sent to maps of the form $\sigma_\sharp(u_\sharp 1 \to 1)$ as above in $\bar H^\univ(\yo(S))$, are precisely the maps of the form $\sigma_\sharp(u_\sharp 1 \to 1)$ in $H^\univ(S)$. Thus, we may conclude by our description of these maps in $\bar H^\univ(\yo(S))$.

		It follows that for any $S \in \mathcal C$, the functor $H^\univ(S) \to H^\tau(S)$ is the localization $\Psh(\mathcal C_S) \to \Psh^{\tau_S}(\mathcal C_S)$, where $\tau_S$ is the pseudotopology on $\mathcal C_S$ given by pulling $\tau$ back along $\mathcal C_S \to \mathcal C$ using \Cref{lem:pull back pseudotop}, noting that the hypotheses are satisfied by \Cref{lem:Psh qadm source-local}. Thus, \Cref{prp:pseudotop LCL} says that $H^\univ(S) \to H^\tau(S)$ is an accessible locally Cartesian localization between presentable categories with universal colimits.
	\end{proof}
\end{cor}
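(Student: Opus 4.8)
The plan is to deduce everything from \Cref{prp:invariance localization} applied to $D = H^\univ$, together with the description of $H^\univ$ as a full subformalism of $\bar H^\univ$ from \Cref{cnstr:Huniv}.

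The formal part comes first. Since $\mathcal C$ is quasi-small, $H^\univ$ exists and is the initial pullback formalism by \Cref{thm:Huniv is univ}. The left adjoint $(-)^\tau$ of the inclusion $\PF^\tau(\mathcal C) \to \PF(\mathcal C)$ supplied by \Cref{prp:invariance localization} preserves initial objects, so $H^\tau \coloneqq (H^\univ)^\tau$ is initial in $\PF^\tau(\mathcal C)$. Likewise, \Cref{prp:invariance localization} says that $\PF(\mathcal C)_{H^\tau/} \to \PF(\mathcal C)_{H^\univ/}$ is fully faithful with essential image the morphisms from $H^\univ$ to $\tau$-invariant pullback formalisms; since $H^\univ$ is initial, $\PF(\mathcal C)_{H^\univ/} \simeq \PF(\mathcal C)$, and this essential image becomes all of $\PF^\tau(\mathcal C)$, giving the penultimate assertion.

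The substantive step is to identify the functor $H^\univ(S) \to H^\tau(S)$. Since $H^\univ$ is geometrically generated --- its value $\Psh(\mathcal C_S)$ is generated under colimits by the objects $\yo(X)$ for $X \in \mathcal C_S$, which get sent to the $\cls{X}$ --- \Cref{prp:invariance localization} identifies this functor with the localization along the maps $\sigma_\sharp(u_\sharp 1 \to 1)$ for $\sigma \colon S' \to S$ quasi-admissible and $u$ a $\tau$-acyclic pseudosieve on $S'$. To turn this into a statement about $\Psh^\tau(\mathcal C_S)$, I would pass to $\bar H^\univ$, whose value at a presheaf $P$ is $\Psh(\mathcal C)_{/P}$ with the $\sharp$-functors given by postcomposition (\Cref{exa:slice}): inside $\bar H^\univ(\yo(S)) = \Psh(\mathcal C)_{/\yo(S)}$ the map $\sigma_\sharp(u_\sharp 1 \to 1)$ is precisely the object over $\yo(S)$ associated to the $\tau$-acyclic pseudosieve $u$ on an object admitting a quasi-admissible map to $S$. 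Using that $H^\univ(S)$ sits inside $\bar H^\univ(\yo(S))$ as a full subcategory and that $H^\univ \to \bar H^\univ|_{\mathcal C}$ respects quasi-admissibility (both from \Cref{cnstr:Huniv}), these correspond exactly to the maps in $\Psh(\mathcal C_S)$ lying over $\tau$-acyclic pseudosieves, so $H^\univ(S) \to H^\tau(S)$ is the localization $\Psh(\mathcal C_S) \to \Psh^{\tau_S}(\mathcal C_S)$ with $\tau_S$ the pullback of $\tau$ along $\mathcal C_S \to \mathcal C$ via \Cref{lem:pull back pseudotop}; the hypotheses of that lemma hold because $\tau$ is quasi-admissible, so its acyclic pseudosieves are $\yo$-quasi-admissible, hence weakly $\yo$-quasi-admissible by \Cref{lem:Psh qadm source-local}.

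It remains to invoke \Cref{prp:pseudotop LCL} for the small pseudotopology $\tau_S$ on the small category $\mathcal C_S$: it gives that $\Psh^{\tau_S}(\mathcal C_S)$ is presentable with universal colimits and that $\Psh(\mathcal C_S) \to \Psh^{\tau_S}(\mathcal C_S)$ is an accessible locally Cartesian localization, which is the last assertion. I expect the main obstacle to be the bookkeeping in the third paragraph: carefully matching the generating maps $\sigma_\sharp(u_\sharp 1 \to 1)$ as computed \emph{inside} $H^\univ(S)$ with the maps ``lying over $\tau$-acyclic pseudosieves'', and verifying that $\tau_S$ on $\mathcal C_S$ really is the pseudotopology implicit in the notation $\Psh^\tau(\mathcal C_S)$; the remaining steps are formal consequences of results already in hand.
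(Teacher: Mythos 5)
Your proposal is correct and follows essentially the same route as the paper's own proof: the formal part via \Cref{prp:invariance localization} and initiality of $H^\univ$, the identification of the generating maps $\sigma_\sharp(u_\sharp 1 \to 1)$ by embedding $H^\univ(S)$ into $\bar H^\univ(\yo(S)) = \Psh(\mathcal C)_{/\yo(S)}$, and the final appeal to \Cref{lem:pull back pseudotop} (with hypotheses checked via \Cref{lem:Psh qadm source-local}) and \Cref{prp:pseudotop LCL}. The bookkeeping you flag in the third paragraph is exactly the content of the corresponding paragraph of the paper's proof, and your treatment of it is adequate.
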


\begin{defn} \label{defn:local equivalences for pseudotop}
	For $\mathcal C$ a quasi-small pullback context with small quasi-admissible pseudotopology $\tau$, if $D$ is a pullback formalism on $\mathcal C$, and $S \in \mathcal C$ is an object, we define the \emph{$\tau$-local equivalences} of $D(S)$ to be the collection of maps in $D(S)$ that are inverted by $L_\tau : D(S) \to D^\tau(S)$.

	Note that by Corollary \ref{cor:describe invariance localization of Huniv}, this extends the definition of $\tau$-local equivalence from Definition \ref{defn:pseudotop}.
\end{defn}

\begin{rmk} \label{rmk:general realization}
	Let $F : \mathcal C \to \mathcal D$ be a morphism of pullback contexts. Let $D$ be a pullback formalism on $\mathcal D$, such that $F$ sends every $\tau$-acyclic pseudosieve to a $D$-acyclic map. Then there is a unique morphism of pullback formalisms
	\[
		H^\tau \to F^* D
	.\]

	Note that for any quasi-admissible map $X \to S$ in $\mathcal C$, the functor $H^\tau(S) \to D(F(S))$ sends $L_\tau \yo(X) \in H^\tau(S)$ to $\cls{F(X);F(S)}_D$ in $D(F(S))$.

	When $\mathcal D$ is quasi-small, it is easy to deduce that, in fact, we have a commutative diagram
	\[
		\begin{tikzcd}
			\mathcal C_S \ar[d] \ar[r] & \mathcal D_{F(S)} \ar[d] \\
			H^\tau(S) \ar[r] & D(F(S))
		\end{tikzcd}
	\]
	by using the commutative diagram
	\[
		\begin{tikzcd}
			H^\univ \ar[d] \ar[r] & H^\univ \circ F \ar[d] \\
			H^\tau \ar[r] & D \circ F
		\end{tikzcd}
	\]
	in $\PF(\mathcal C)$.
\end{rmk}

\begin{rmk} \label{rmk:abuse notation for univ PF}
	Let $\mathcal C' \subseteq \mathcal C$ be a full subcategory such that for any object $X \in \mathcal C$ admitting a quasi-admissible map to an object of $\mathcal C'$, $X \in \mathcal C'$. Then $\mathcal C'$ is a full anodyne pullback subcontext of $\mathcal C$ by \Cref{rmk:full anodyne subctx}, and the pseudotopology $\tau$ restricts to a pseudotopology $\tau'$ on $\mathcal C'$ by \Cref{lem:pull back pseudotop} and \Cref{lem:Psh qadm source-local}, and the morphism
	\[
		H^{\tau'} \to H^\tau|_{\mathcal C'}
	\]
	of Remark \ref{rmk:general realization} is an equivalence. Therefore, we will often abuse notation and write $H^\tau$ also for its restriction to any such subcategory $\mathcal C'$.
\end{rmk}

\section{Stabilization} \label{S:stabilize PF}

The category of spectra, thought of as a ``stabilization'' of the category of spaces, is obtained by stabilizing the action of the suspension functor on pointed spaces. It turns out that this can be thought of as formally adjoining $\wedge$-inverses of $S^1$ in the category of pointed spaces.

In this \namecref{S:stabilize PF}, we will be concerned with a way of producing ``stabilizations'' of pullback formalisms. First we will study how to formally adjoin $\otimes$-inverses in \Cref{S:tensor inversion}, and later we will study pointing in \Cref{S:pointing pf}. Note that the relation between stable categories and adjoining $\otimes$-inverses to pointed categories is made clear in \Cref{lem:auto stability}.

\subsection{Formal \texorpdfstring{$\otimes$}{⊗}-inversions} \label{S:tensor inversion}

In this \lcnamecref{S:tensor inversion}, we will study how to formally adjoin $\otimes$-inverses to pullback formalisms.

The following terminology will be useful for our discussion of formally adjoining $\otimes$-inverses. It corresponds to the terminology of generating strongly saturated classes of morphisms in the context of formally adjoining inverses of morphisms to presentable categories, as in \cite[\S 5.5.4]{htt}.
\begin{defn} \label{defn:tensor generation}
	Given a symmetric monoidal category $\mathcal A \in \CAlg(\Cat)$, and a small collection of objects $A$ of $\mathcal A$, we say an object $a \in \mathcal A$ is \emph{$\otimes$-generated} by $A$ if for every symmetric monoidal functor $F : \mathcal A \to \mathcal B$, if $F$ sends every object of $A$ to a $\otimes$-invertible object of $\mathcal B$, then it sends $a$ to a $\otimes$-invertible object of $\mathcal B$.
\end{defn}

\begin{rmk}
	In the setting of Definition \ref{defn:tensor generation}, suppose that $a \in \mathcal A$ is an object such that there is some $a' \in \mathcal A$ where $a \otimes a'$ is equivalent to a tensor product of objects of $A$. Then $a$ is $\otimes$-generated by $A$.
	\begin{proof}
		For any symmetric monoidal $F : \mathcal A \to \mathcal B$ that sends objects of $A$ to $\Pic(\mathcal B)$,
		\[
			F(a \otimes a') \simeq F(a) \otimes F(a')
		\]
		is $\otimes$-invertible since it is equivalent to a tensor product of invertible objects, so let $b$ be a $\otimes$-inverse. It follows that $F(a)$ has an $\otimes$-inverse given by $F(a') \otimes b$.
	\end{proof}
\end{rmk}

We will be interesting in the following setting:

\begin{setng} \label{setng:stabilization}
	Let $\mathcal C$ be a pullback context, let $D$ be a pullback formalism on $\mathcal C$, and let $A = \{A_S\}_{S \in \mathcal C}$ be a family such that for each $S \in \mathcal C$, $A_S$ is a small collection of objects in $D(S)$, and for any map $f : X \to Y$ in $\mathcal C$, the functor $f^* : D(Y) \to D(X)$ sends every object in $A_Y$ to an object that is $\otimes$-generated by $A_X$.
\end{setng}

We would like to prove that in \Cref{setng:stabilization}, there is a universal morphism of pullback formalisms $\Sigma_A^\infty : D \to D[A^{\otimes -1}]$ such that for any $S \in \mathcal C$, $D(S) \to D[A^{\otimes -1}](S)$ sends $A_S$ to $\Pic D[A^{\otimes -1}](S)$.

We will need to introduce the following notion that will be useful for showing that $D[A^{\otimes -1}]$ is a pullback formalism:
\begin{defn} \label{defn:A-lifts}
	In \Cref{setng:stabilization}, say a map $f : X \to Y$ in $\mathcal C$ has $A$-lifts if every element of $A_X$ is $\otimes$-generated by $f^*(A_Y)$.
\end{defn}

\begin{rmk} \label{rmk:crit to check that obj sent to Pic}
	In \Cref{setng:stabilization}, if $D \to E$ is any transformation of presheaves $\mathcal C^\op \to \CAlg(\PrL)$, if $S' \to S$ is a map in $\mathcal C$ that has $A$-lifts, and the map $D(S) \to E(S')$ sends $A_S$ to $\Pic E(S')$, then $D(S') \to E(S')$ sends $A_{S'}$ to $\Pic E(S')$.

	In fact, we have more generally that if $S' \in \mathcal C$, and $a \in D(S')$, if there is a map $p : S' \to S$ such that $a$ is $\otimes$-generated by $p^*$ of the collection of objects in $D(S)$ that are sent to $\Pic E(S')$, then $a$ is sent to $\Pic E(S')$.
\end{rmk}

\begin{lem} \label{lem:crit for map from tensor inversion}
	In \Cref{setng:stabilization}, if $D \to E$ is a morphism of pullback formalisms, then the following conditions are equivalent:
	\begin{enumerate}

		\item For every $S \in \mathcal C$, the functor $D(S) \to E(S)$ sends $A_S$ to $\Pic E(S)$.

		\item For every $S \in \mathcal C$, there is a quasi-admissible $D$-covering family of $S$ by objects $S_0$ such that there is a map $S_0 \to S_1$ that has $A$-lifts, and $D(S_1) \to E(S_0)$ sends $A_{S_1}$ to $\Pic E(S_0)$.

	\end{enumerate}
	\begin{proof}
		It is clear that the first condition implies the second. Note that every quasi-admissible $D$-covering family is also a quasi-admissible $E$-covering family by \Cref{thm:D-topology}, so the converse follows from \Cref{lem:tensor invertibility is local} and \Cref{rmk:crit to check that obj sent to Pic}.
	\end{proof}
\end{lem}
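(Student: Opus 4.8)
The plan is as follows. The implication $(1) \Rightarrow (2)$ is immediate, and I would dispose of it first: for each $S \in \mathcal C$ take $S_0 = S_1 = S$ with the identity map $S \to S$, which trivially has $A$-lifts since $\id^*(A_S) = A_S$ and any object is $\otimes$-generated by any collection containing it. The one-element family $\{\id_S\}$ is a quasi-admissible $D$-covering family because identities are equivalences, hence quasi-admissible, and $D(S) \to D(S)$ is conservative; and $D(S) \to E(S)$ sends $A_S$ into $\Pic E(S)$ by hypothesis.

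For the converse, fix $S \in \mathcal C$ and $a \in A_S$, write $\phi \colon D \to E$ for the morphism and $\phi(a) \in E(S)$ for the image of $a$; the goal is $\phi(a) \in \Pic E(S)$. Using $(2)$, choose a quasi-admissible $D$-covering family $\{p_\alpha \colon S_0^\alpha \to S\}_\alpha$ and maps $q_\alpha \colon S_0^\alpha \to S_1^\alpha$ having $A$-lifts with $D(S_1^\alpha) \to E(S_0^\alpha)$ sending $A_{S_1^\alpha}$ into $\Pic E(S_0^\alpha)$. First, applying \Cref{rmk:crit to check that obj sent to Pic} to the map $q_\alpha$ shows that $D(S_0^\alpha) \to E(S_0^\alpha)$ sends $A_{S_0^\alpha}$ into $\Pic E(S_0^\alpha)$. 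Next, since $p_\alpha^* a \in D(S_0^\alpha)$ is $\otimes$-generated by $A_{S_0^\alpha}$ by \Cref{setng:stabilization}, and the component $\phi_{S_0^\alpha} \colon D(S_0^\alpha) \to E(S_0^\alpha)$ is symmetric monoidal and carries $A_{S_0^\alpha}$ into $\Pic E(S_0^\alpha)$, \Cref{defn:tensor generation} forces $\phi_{S_0^\alpha}(p_\alpha^* a) \in \Pic E(S_0^\alpha)$. By naturality of $\phi$ this element equals $p_\alpha^* \phi(a)$, so $p_\alpha^* \phi(a) \in \Pic E(S_0^\alpha)$ for every $\alpha$.

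It then remains to descend invertibility of $\phi(a)$ along $\{p_\alpha\}$. Because $\phi$ is a morphism of pullback formalisms, \Cref{thm:D-topology} shows that $E$ is a sheaf for the $D$-covering topology, and combined with \Cref{thm:PF descent} this yields that the quasi-admissible $D$-covering family $\{p_\alpha\}$ is also an $E$-covering family, i.e. $\{E(S) \to E(S_0^\alpha)\}_\alpha$ is jointly conservative. Then \Cref{lem:tensor invertibility is local}, applied to this family and to $\phi(a)$, concludes that $\phi(a) \in \Pic E(S)$. I expect the only delicate points to be the passage from a $D$-covering family to an $E$-covering family — which is precisely where \Cref{thm:D-topology} must be invoked — and the correct application of the locality of $\otimes$-invertibility; the remaining steps are bookkeeping with \Cref{setng:stabilization}, \Cref{rmk:crit to check that obj sent to Pic}, and the definition of $\otimes$-generation.
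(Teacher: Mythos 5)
Your proof is correct and follows essentially the same route as the paper: the forward implication via identities, and the converse by combining \Cref{rmk:crit to check that obj sent to Pic} with the $A$-lifts hypothesis, \Cref{defn:tensor generation} applied to $p_\alpha^* a$, \Cref{thm:D-topology} to upgrade the $D$-covering family to an $E$-covering family, and \Cref{lem:tensor invertibility is local} to descend invertibility. You have simply spelled out the details that the paper's one-line proof leaves implicit.
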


\begin{defn} \label{defn:tensor inversion PF}
	In \Cref{setng:stabilization}, a morphism of pullback formalisms $\Sigma_A^\infty : D \to D[A^{\otimes -1}]$ is said to be \emph{a formal $\otimes$-inversion of $A$} if the slice projection $\PF(\mathcal C)_{D[A^{\otimes -1}]/} \to \PF(\mathcal C)_{D/}$ is fully faithful with essential image given by those morphisms $D \to E$ that satisfy the equivalent conditions of \Cref{lem:crit for map from tensor inversion}
\end{defn}

Before addressing our main results about the existence and properties of formal $\otimes$-inversions, we make some general remarks about \Cref{setng:stabilization}.

\begin{rmk} \label{rmk:pull back stab setng}
	In \Cref{setng:stabilization}, if $F : \mathcal C' \to \mathcal C$ is a morphism of pullback contexts, then we can define $A' = \{A_{F(S')}\}_{S' \in \mathcal C'}$, and $F^* D, A'$ is another instance of \Cref{setng:stabilization}. Furthermore, a map $f'$ in $\mathcal C'$ has $A'$-lifts if and only if $F(f')$ has $A$-lifts.
\end{rmk}

\begin{rmk} \label{rmk:extend stabilization setng}
	In \Cref{setng:stabilization}, suppose that $\mathcal C$ is a full pullback subcontext of a pullback context $\tilde{\mathcal C}$, and that $D$ extends to a pullback formalism on $\tilde{\mathcal C}$. For each $\tilde S \in \tilde{\mathcal C}$, we can define a collection of maps $\tilde A_{\tilde S}$ by
	\[
		\tilde A_{\tilde S} = \bigcup_{\substack{S \in \mathcal C \\ f : \tilde S \to S}} f^* A_S
	.\]

	It follows that for each $S \in \mathcal C$, $A_S \subseteq \tilde A_S$, but $\tilde A_S$ is $\otimes$-generated by $A_S$, and that for any map $f : X \to Y$ in $\tilde{\mathcal C}$, we have that $f^*(\tilde A_Y) \subseteq \tilde A_X$. Thus, $\tilde{\mathcal C}, \tilde D, \{\tilde A_S\}_{S \in \tilde{\mathcal C}}$ gives us a new instance of \Cref{setng:stabilization}.

	Furthermore, any map $f : X \to Y$ in $\mathcal C$ has $A$-lifts if and only if it has $\tilde A$-lifts.
\end{rmk}

The following facts about maps with $A$-lifts can sometimes be helpful:
\begin{lem} \label{lem:A-lifts}
	In \Cref{setng:stabilization}, let $f : X \to Y$ be a map in $\mathcal C$.
	\begin{enumerate}

		\item\label{itm:A-lifts/13}
			If there is a map $g : Y \to Z$ such that $g \circ f$ has $A$-lifts, then $f$ has $A$-lifts.

		\item\label{itm:A-lifts/source-local}
			Suppose there is a diagram $\tilde X : K^\triangleright \to \mathcal C$ that sends all maps to quasi-admissible maps, sends the cone point to $X$, and such that for all $p \in K$, the composite $\tilde X(p) \to X \to Y$ has $A$-lifts. If $D \circ \tilde X^\op$ is a limiting diagram, then $f$ has $A$-lifts.

	\end{enumerate}
	\begin{proof}
		The first fact is straightforward: if $A_X$ is $\otimes$-generated by $f^* g^* A_Z$, then since $g^* A_Z$ is $\otimes$-generated by $A_Y$, we have that $A_X$ is $\otimes$-generated by $f^* A_Y$.

		For the second fact, consider the following commutative diagram:
		\[
			\begin{tikzcd}
				D(X)[A_Y^{\otimes - 1}] \ar[d] \ar[r] & \varprojlim_{p \in K} D(\tilde X(p))[A_Y^{\otimes - 1}] \ar[d] \\
				D(X)[A_X^{\otimes - 1}] \ar[r] & \varprojlim_{p \in K} D(\tilde X(p))[A_X^{\otimes - 1}] \ar[d] \\
				& \varprojlim_{p \in K} D(\tilde X(p))[A_{\tilde X(p)}^{\otimes - 1}]
			\end{tikzcd}
		.\]
		Note that we can view $D \circ \tilde X^\op$ as a limiting diagram in $\Mod_{D(X)}(\PrL)$, and therefore also in $\Mod_{D(Y)}(\PrL)$, so using \cref{itm:stabilization/limits} of \Cref{prp:stabilization}, we find that the two horizontal arrows are equivalences. Furthermore, for every $p \in K$, the composite map $\tilde X(p) \to X \to Y$ has $A$-lifts, so by the first property we showed, it follows that $\tilde X(p) \to X$ has $A$-lifts. Thus, the composite of the two leftmost vertical arrows, as well as the bottom left vertical arrow, are equivalences, which implies that the top left vertical arrow is an equivalence. Therefore, the remaining map
		\[
			D(X)[A_Y^{\otimes-1}] \to D(X)[A_X^{\otimes-1}]
		\]
		is an equivalence, which implies that $f^*(A_Y)$ $\otimes$-generates $A_X$, \ie, that $f : X \to Y$ has $A$-lifts, as desired.
	\end{proof}
\end{lem}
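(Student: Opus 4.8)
The plan is to handle part~(\ref{itm:A-lifts/13}) by pure formal manipulation of the notion of $\otimes$-generation, and then feed it into part~(\ref{itm:A-lifts/source-local}). Two elementary facts about \Cref{defn:tensor generation} do the work: \emph{transitivity}---if $a$ is $\otimes$-generated by a collection $B$, and each element of $B$ is $\otimes$-generated by $C$, then $a$ is $\otimes$-generated by $C$ (a symmetric monoidal functor that sends $C$ into $\Pic$ then sends $B$ into $\Pic$, hence $a$ into $\Pic$)---and \emph{transport}---if $a$ is $\otimes$-generated by $B$ in $\mathcal A$ and $G\colon\mathcal A\to\mathcal A'$ is symmetric monoidal, then $G(a)$ is $\otimes$-generated by $G(B)$ (compose with $G$ before applying the defining property). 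Granting these, part~(\ref{itm:A-lifts/13}) is one line: $A_X$ is $\otimes$-generated by $(g\circ f)^*A_Z = f^*g^*A_Z$ by hypothesis; by \Cref{setng:stabilization} each object of $g^*A_Z$ is $\otimes$-generated by $A_Y$, so by transport each object of $f^*g^*A_Z$ is $\otimes$-generated by $f^*A_Y$; transitivity then gives that $A_X$ is $\otimes$-generated by $f^*A_Y$, i.e.\ $f$ has $A$-lifts.

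For part~(\ref{itm:A-lifts/source-local}) I would first reformulate ``$f$ has $A$-lifts'' as the statement that the localization functor $D(X)[(f^*A_Y)^{\otimes-1}]\to D(X)[A_X^{\otimes-1}]$ is an equivalence: this functor exists because, by \Cref{setng:stabilization}, inverting $A_X$ already inverts $f^*A_Y$, and its being an equivalence is equivalent to the $A$-lifts condition by the basic properties of formal $\otimes$-inversions. Writing $\pi_p\colon\tilde X(p)\to X$ for $p\in K$, each $D(\tilde X(p))$ is a module over $D(X)$, hence (via $f$) over $D(Y)$, and the hypothesis that $D\circ\tilde X^{\op}$ is limiting makes it a limiting diagram in $\Mod_{D(X)}(\PrL)$ and in $\Mod_{D(Y)}(\PrL)$. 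The key input is that formal $\otimes$-inversion of a fixed collection of objects of the base commutes with such limits (\Cref{prp:stabilization}), which identifies
\[
	D(X)[A_X^{\otimes-1}]\simeq\varprojlim_{p\in K}D(\tilde X(p))[(\pi_p^*A_X)^{\otimes-1}],
	\qquad
	D(X)[(f^*A_Y)^{\otimes-1}]\simeq\varprojlim_{p\in K}D(\tilde X(p))[((f\circ\pi_p)^*A_Y)^{\otimes-1}],
\]
compatibly with the localization map. So it suffices to show the map is an equivalence on each vertex of the limit. By part~(\ref{itm:A-lifts/13}), since $f\circ\pi_p=\tilde X(p)\to X\to Y$ has $A$-lifts, so does $\pi_p$, i.e.\ $A_{\tilde X(p)}$ is $\otimes$-generated by $\pi_p^*A_X$; combining this with \Cref{setng:stabilization} (applied to $\pi_p$ and to $f$, via transport and transitivity) one checks that $(f\circ\pi_p)^*A_Y$, $\pi_p^*A_X$, and $A_{\tilde X(p)}$ all $\otimes$-generate one another, so the three localizations $D(\tilde X(p))[((f\circ\pi_p)^*A_Y)^{\otimes-1}]$, $D(\tilde X(p))[(\pi_p^*A_X)^{\otimes-1}]$, $D(\tilde X(p))[A_{\tilde X(p)}^{\otimes-1}]$ invert the same strongly saturated class and hence coincide. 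This yields the vertexwise equivalence, whence the limit map is an equivalence and $f$ has $A$-lifts.

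The main obstacle I anticipate is the bookkeeping inside part~(\ref{itm:A-lifts/source-local}): one must keep careful track of which collection of objects lives in, and is being inverted in, which category; verify that each localization functor appearing in the diagram is genuinely defined (every such claim rests on \Cref{setng:stabilization} or the $A$-lifts hypothesis); and---the real point---arrange $D\circ\tilde X^{\op}$ as a limiting diagram of module categories over \emph{both} $D(X)$ and $D(Y)$ so that the limit-exchange result \Cref{prp:stabilization} applies in the two flavours needed. Once this limit-exchange is in place, the $\otimes$-generation formalities of part~(\ref{itm:A-lifts/13}) and the final ``same saturated class'' comparison are routine.
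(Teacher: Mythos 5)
Your proposal is correct and follows essentially the same route as the paper: part~(1) is the same transport-plus-transitivity manipulation of $\otimes$-generation, and part~(2) rests on the same two ingredients, namely the limit-exchange of \cref{itm:stabilization/limits} of \Cref{prp:stabilization} applied to $D\circ\tilde X^\op$ viewed as a limiting diagram in both $\Mod_{D(X)}(\PrL)$ and $\Mod_{D(Y)}(\PrL)$, together with part~(1) to see that each $\tilde X(p)\to X$ has $A$-lifts. The only cosmetic difference is that you compare the localizations $D(\tilde X(p))[((f\circ\pi_p)^*A_Y)^{\otimes-1}]$ and $D(\tilde X(p))[(\pi_p^*A_X)^{\otimes-1}]$ vertexwise through $D(\tilde X(p))[A_{\tilde X(p)}^{\otimes-1}]$, whereas the paper runs the identical three-way comparison at the level of the limits.
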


We will need the following definition from \cite[Remark 2.20]{robalo}:
\begin{defn} \label{defn:symmetric}
	Let $\mathcal C$ be a symmetric monoidal category. An object $X \in \mathcal C$ is said to be symmetric if the cyclic permutation of $X^{\otimes n}$ is equivalent to the identity for some $n \geq 2$. A collection $A$ of objects in $\mathcal C$ is said to be symmetric if it is $\otimes$-generated by the symmetric objects in $A$.
\end{defn}

We now come to our first result establishing the existence of ``stabilizations'' of pullback formalisms. We will present the proof of this result later.
\begin{prp} \label{prp:PF stabilization}
	In \Cref{setng:stabilization}, assume that all quasi-admissible maps have $A$-lifts, and that for every $S \in \mathcal C$, every element of $A_S$ is $\otimes$-generated by the collection of symmetric objects in $A_S$.

	Then a formal $\otimes$-inversion $\Sigma_A^\infty : D \to D[A^{\otimes -1}]$ of $A$ exists, and it satisfies the following properties:
	\begin{enumerate}

		\item\label{itm:PF stabilization/adjoint}
			If $f$ is a map that has $A$-lifts, a transformation $D[A^{\otimes -1}] \to E$ is left adjointable at $f$ if $D \to E$ is left adjointable at $f$.

		\item\label{itm:PF stabilization/ptwise}
			For each $S \in \mathcal C$, the functor $(D \to D[A^{\otimes -1}])(S)$ is the functor $\Sigma_{A_S}^\infty : D(S) \to D(S)[A_S^{\otimes -1}]$ given by formally adjoining $\otimes$-inverses of elements of $A_S$. 

		\item\label{itm:PF stabilization/generation}
			For each $S \in \mathcal C$, the category $D[A^{\otimes -1}](S)$ is generated under filtered colimits by objects of the form $(\Sigma_A^\infty \alpha)^{\otimes -1} \otimes \Sigma_A^\infty M$ for $\alpha$ a tensor product of objects in $A_S$, and $M \in D(S)$.

		\item\label{itm:PF stabilization/descent}
			For any $X : K^\triangleright \to \mathcal C$ sending all edges in $K$ to quasi-admissible maps, and all edges $p \to \infty$ to maps with $A$-lifts, if
			\[
				D(X(\infty)) \to \varprojlim_{p \in K} D(X(p))
			\]
			is an equivalence, then
			\[
				D[A^{\otimes -1}](X(\infty)) \to \varprojlim_{p \in K} D[A^{\otimes -1}](X(p))
			\]
			is an equivalence.

		\item\label{itm:PF stabilization/closed pf}
			If $f : X \to Y$ has $A$-lifts, and $f_* : D(X) \to D(Y)$ preserves colimits, and is a linear right adjoint of $f^*$, then $f_* : D[A^{\otimes -1}](X) \to D[A^{\otimes -1}](Y)$ also preserves colimits, is a linear right adjoint of $f^*$, and the square
			\[
				\begin{tikzcd}
					D(Y) \ar[d] \ar[r] & D(X) \ar[d] \\
					D[A^{\otimes -1}](Y) \ar[r] & D[A^{\otimes -1}](X)
				\end{tikzcd}
			\]
			is right adjointable.

	\end{enumerate} 
\end{prp}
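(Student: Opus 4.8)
The plan is to build $D[A^{\otimes -1}]$ objectwise via Robalo's stabilization, assemble it into a presheaf $\mathcal C^\op \to \CAlg(\PrL)$, and then promote $\Sigma_A^\infty$ to a morphism of pullback formalisms by means of \Cref{prp:map from PF is to PF}. First I would construct the presheaf. For each $S \in \mathcal C$, the hypothesis that $A_S$ is $\otimes$-generated by its symmetric objects (\Cref{defn:symmetric}) puts us in the situation of \cite[\S2]{robalo}, which produces a presentable symmetric monoidal category $D(S)[A_S^{\otimes -1}]$ together with a colimit-preserving symmetric monoidal functor $\Sigma_{A_S}^\infty$ inverting $A_S$, universal among such. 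Because $f^*: D(Y) \to D(X)$ sends $A_Y$ into the objects $\otimes$-generated by $A_X$ for every $f$ in $\mathcal C$, these functors are compatible, and the functoriality of stabilization recorded in \Cref{prp:stabilization} assembles them into a presheaf $D[A^{\otimes -1}] : \mathcal C^\op \to \CAlg(\PrL)$ equipped with a transformation $\Sigma_A^\infty : D \to D[A^{\otimes -1}]$. This already yields \cref{itm:PF stabilization/ptwise}, and \cref{itm:PF stabilization/generation} from the explicit description of $A_S$-spectra. The objectwise universal property then says: a transformation $D \to E$ of $\CAlg(\PrL)$-valued presheaves factors through $\Sigma_A^\infty$, uniquely up to contractible choice, precisely when each $D(S) \to E(S)$ sends $A_S$ into $\Pic E(S)$.

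The crux is showing that $\Sigma_A^\infty$ respects quasi-admissibility. The governing principle is that whenever $g : X \to Y$ has $A$-lifts, the fact that $g^*(A_Y)$ already $\otimes$-generates $A_X$ gives $D[A^{\otimes -1}](X) \simeq D(X)[g^*(A_Y)^{\otimes -1}]$, and inverting pullbacks of objects of $A_Y$ in the $D(Y)$-module $D(X)$ is the same as base change along the localization $D(Y) \to D(Y)[A_Y^{\otimes -1}]$ in $\CAlg(\PrL)$; so $D[A^{\otimes -1}](X) \simeq D(X) \otimes_{D(Y)} D(Y)[A_Y^{\otimes -1}]$, naturally in $g^*$. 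For quasi-admissible $f : X \to Y$ the functor $f^* : D(Y) \to D(X)$ is a $D(Y)$-linear left adjoint by the projection formula, and $f$ has $A$-lifts by hypothesis, so base-changing the adjunction $f_\sharp \dashv f^*$ along $D(Y) \to D(Y)[A_Y^{\otimes -1}]$ produces a linear left adjoint of $f^* : D[A^{\otimes -1}](Y) \to D[A^{\otimes -1}](X)$ together with a left-adjointable comparison square, by the compatibility of module-category base change with adjunctions (\Cref{thm:Mod LAd}, cf. \cite[Lemma F.6]{TwAmb}). Hence $\Sigma_A^\infty$ respects quasi-admissibility. Since the image of $\Sigma_A^\infty(S)$ generates $D[A^{\otimes -1}](S)$ under colimits and $D$ is a pullback formalism, \Cref{prp:map from PF is to PF} shows $D[A^{\otimes -1}]$ is a pullback formalism and that, for every $E \in \PF(\mathcal C)$, the square comparing $\PF(\mathcal C)$-mapping spaces with $\Psh_{\CAlg(\PrL)}(\mathcal C)$-mapping spaces out of $D$ and $D[A^{\otimes -1}]$ is Cartesian. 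Combining this Cartesian square with the objectwise universal property above, and then identifying condition (1) of \Cref{lem:crit for map from tensor inversion} with condition (2), shows that $\PF(\mathcal C)_{D[A^{\otimes -1}]/} \to \PF(\mathcal C)_{D/}$ is fully faithful with essential image the morphisms $D \to E$ satisfying those equivalent conditions; that is, $\Sigma_A^\infty$ is a formal $\otimes$-inversion of $A$ in the sense of \Cref{defn:tensor inversion PF}.

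The enumerated properties then fall out of the same base-change picture. For \cref{itm:PF stabilization/adjoint} and \cref{itm:PF stabilization/closed pf}: when $f$ has $A$-lifts, identify $D[A^{\otimes -1}](X) \simeq D(X) \otimes_{D(Y)} D(Y)[A_Y^{\otimes -1}]$ as above; base change along $D(Y) \to D(Y)[A_Y^{\otimes -1}]$ preserves left adjoints, preserves left- and right-adjointability of squares, and preserves colimit-preservation of linear right adjoints, so left adjointability of $D \to E$ at $f$ (respectively: the hypothesis that $f_*$ is a colimit-preserving $D(Y)$-linear right adjoint of $f^*$) is inherited after stabilization, again invoking \Cref{thm:Mod LAd}. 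For \cref{itm:PF stabilization/descent}: the diagram $D \circ X^\op$ restricted to $(K^\op)^\triangleleft$ is a diagram of $D(X(\infty))$-module categories, limiting in $\Mod_{D(X(\infty))}(\PrL)$ since the forgetful functor reflects limits and the diagram is limiting in $\PrL$ by hypothesis; as every edge $p \to \infty$ has $A$-lifts, the stabilized diagram is obtained by applying the $D(X(\infty))$-linear stabilization along $A_{X(\infty)}$ termwise, which commutes with this limit by \cref{itm:stabilization/limits} of \Cref{prp:stabilization}, so the stabilized diagram is again limiting.

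I expect the main obstacle to be the bookkeeping in the second paragraph: making precise and $\infty$-categorically rigorous the identification $D[A^{\otimes -1}](X) \simeq D(X) \otimes_{D(Y)} D(Y)[A_Y^{\otimes -1}]$ for maps with $A$-lifts, and checking that this equivalence is natural enough to transport all of the adjunction and linearity data that is then needed. Everything downstream — the pullback-formalism structure, the universal property, and properties \cref{itm:PF stabilization/adjoint}, \cref{itm:PF stabilization/descent}, \cref{itm:PF stabilization/closed pf} — is a formal consequence of the behaviour of module categories in $\PrL$ under base change, once this identification is in place.
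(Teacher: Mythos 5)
Your proposal is correct and follows essentially the same route as the paper: both hinge on identifying, for a map $f:X\to Y$ with $A$-lifts, the stabilized pullback $f^*$ with the $D(Y)$-linear inversion of the action of $A_Y$ on $D(X)$ (the left adjointability of the square \eqref{eqn:tensor loc square}), and then transporting the adjunction, linearity, adjointability and limit data through that identification via \Cref{prp:stabilization}. The only cosmetic difference is that you package the verification that $D[A^{\otimes -1}]$ is a pullback formalism through \Cref{prp:map from PF is to PF}, whereas the paper checks quasi-admissible base change directly by pasting the left adjointable squares supplied by \cref{itm:stabilization/linear,itm:stabilization/adjointable} of \Cref{prp:stabilization}.
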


Sometimes we would like to apply \Cref{prp:PF stabilization} in cases where we do not know that all quasi-admissible maps have $A$-lifts, or that for every $S \in \mathcal C$, there are enough symmetric objects in $A_S$. In these cases, \Cref{thm:PF sheafy stabilization} shows that we can still define a good notion of stabilization $D \to D[A^{\otimes -1}]$, provided that we are able to reduce to the previous case. In particular, we need that in some sense, $\mathcal C$ is ``generated'' by a subcategory in which these conditions hold.

\begin{prp} \label{prp:dense stabilization}
	In \Cref{setng:stabilization}, suppose that $\mathcal C' \subseteq \mathcal C$ is a full anodyne pullback subcontext such that every object of $\mathcal C$ has a quasi-admissible $D$-covering family by objects of $\mathcal C'$, and let $A'$ be the restriction of $A$ to $\mathcal C'$. Then a formal $\otimes$-inversion of $A'$ exists if and only if a formal $\otimes$-inversion of $A$ exists, and a morphism of pullback formalisms $D \to \bar D$ is a formal $\otimes$-inversion of $A$ if and only if its restriction to $\mathcal C'$ is a formal $\otimes$-inversion of $A$.
	\begin{proof}
		Write $D' \coloneqq D|_{\mathcal C'}$. By \Cref{cor:dense slice}, we know that the functor $\PF(\mathcal C)_{D/} \to \PF(\mathcal C')_{D'/}$ is an equivalence. Thus, a formal $\otimes$-inversion $\Sigma_{A'}^\infty : D' \to D'[(A')^{\otimes-1}]$ of $A'$ exists if and only if there is a morphism of pullback formalisms $D \to \bar D$ that restricts to $\Sigma_{A'}^\infty$ on $\mathcal C'$. Hence, it suffices to show that $D \to \bar D$ is a formal $\otimes$-inversion of $A$ if and only if its restriction $D' \to \bar D'$ is a formal $\otimes$-inversion of $A'$.

		Indeed, by \Cref{cor:dense slice}, we have that the vertical arrows in the following diagram are equivalences:
		\[
			\begin{tikzcd}
				\PF(\mathcal C)_{\bar D/} \ar[d] \ar[r] & \PF(\mathcal C)_{D/} \ar[d] \\
				\PF(\mathcal C')_{\bar D'/} \ar[r] & \PF(\mathcal C')_{D'/}
			\end{tikzcd}
		.\]

		It follows that the top arrow is fully faithful if and only if the bottom arrow is fully faithful, so we conclude by noting that by \Cref{lem:tensor invertibility is local}, for any morphism $D \to E$ we have that $D(S) \to E(S)$ sends $A_S$ to $\Pic E(S)$ for all $S \in \mathcal C'$ if and only if it does for all $S \in \mathcal C$.
	\end{proof}
\end{prp}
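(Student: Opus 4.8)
The plan is to deduce everything from \Cref{cor:dense slice}. First I would set $D' \coloneqq D|_{\mathcal C'}$ and apply \Cref{cor:dense slice} to the pullback formalism $D$ itself (which trivially admits a morphism from $D$, the identity) to conclude that the restriction functor $\PF(\mathcal C)_{D/} \to \PF(\mathcal C')_{D'/}$ is an equivalence. Hence a morphism of pullback formalisms $D' \to \bar D'$ is, up to contractible ambiguity, the restriction of a unique morphism $D \to \bar D$, with $\bar D' = \bar D|_{\mathcal C'}$; in particular a formal $\otimes$-inversion of $A'$ (a certain object of $\PF(\mathcal C')_{D'/}$) exists exactly when the corresponding morphism $D \to \bar D$ exists. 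So the whole statement reduces to showing that, for a fixed morphism $D \to \bar D$ of pullback formalisms, it is a formal $\otimes$-inversion of $A$ if and only if $D' \to \bar D'$ is a formal $\otimes$-inversion of $A'$.

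To prove this equivalence I would unwind \Cref{defn:tensor inversion PF}: being a formal $\otimes$-inversion of $A$ means that the slice projection $\PF(\mathcal C)_{\bar D/} \to \PF(\mathcal C)_{D/}$ is fully faithful with essential image exactly the morphisms $D \to E$ satisfying the equivalent conditions of \Cref{lem:crit for map from tensor inversion}. Then I would consider the commuting square of restriction functors
\[
	\begin{tikzcd}
		\PF(\mathcal C)_{\bar D/} \ar[d] \ar[r] & \PF(\mathcal C)_{D/} \ar[d] \\
		\PF(\mathcal C')_{\bar D'/} \ar[r] & \PF(\mathcal C')_{D'/}
	\end{tikzcd}
.\]
Its right vertical arrow is the equivalence just produced, and its left vertical arrow is again an equivalence by \Cref{cor:dense slice}, this time applied to the pullback formalism $\bar D$, which admits the morphism $D \to \bar D$ from $D$. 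Since both verticals are equivalences and the square commutes, the top horizontal arrow is fully faithful if and only if the bottom one is, and under these identifications the essential image of the top arrow corresponds precisely to that of the bottom arrow.

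It then remains only to match the two essential-image conditions: a morphism $D \to E$ satisfies condition (1) of \Cref{lem:crit for map from tensor inversion} for $A$ — that is, $D(S) \to E(S)$ sends $A_S$ into $\Pic E(S)$ for every $S \in \mathcal C$ — if and only if the restricted morphism $D' \to E|_{\mathcal C'}$ does so for every $S \in \mathcal C'$. One direction is immediate, and the other is exactly \Cref{lem:tensor invertibility is local}, using that every $S \in \mathcal C$ admits a quasi-admissible $D$-covering family by objects of $\mathcal C'$ and that such families are also $E$-covering by \Cref{thm:D-topology}. Combining the three steps yields both the existence equivalence and the ``if and only if'' for morphisms. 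I do not anticipate a real obstacle here: the substantive input — the slice-category equivalence and the locality of $\otimes$-invertibility — is already packaged in \Cref{cor:dense slice} and \Cref{lem:tensor invertibility is local}, and what is left is bookkeeping; the one point worth stating carefully is that $\bar D$, and not merely $D$, satisfies the hypotheses of \Cref{cor:dense slice}, which holds precisely because $D \to \bar D$ is a morphism of pullback formalisms.
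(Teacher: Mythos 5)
Your proposal is correct and follows essentially the same route as the paper's proof: reduce via \Cref{cor:dense slice} to the statement that a fixed morphism $D \to \bar D$ is a formal $\otimes$-inversion of $A$ iff its restriction is one of $A'$, compare the two slice projections through the square of restriction equivalences, and match essential images using \Cref{lem:tensor invertibility is local}. Your added care in noting that \Cref{cor:dense slice} applies to $\bar D$ because it admits a morphism from $D$ is a point the paper leaves implicit, but the argument is the same.
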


\begin{defn} \label{defn:A-good}
	In \Cref{setng:stabilization}, say an object $S \in \mathcal C$ is $A$-good if every quasi-admissible map to $S$ has $A$-lifts, and $A_S$ is symmetric.
\end{defn}

\begin{rmk} \label{rmk:A-good anodyne}
	In \Cref{setng:stabilization}, any object admitting a quasi-admissible map to an $A$-good object is itself $A$-good, so the full subcategory of $\mathcal C$ consisting of $A$-good objects is a full anodyne pullback subcontext of $\mathcal C$.
\end{rmk}

Note that \Cref{lem:A-lifts} is sometimes helpful for verifying the hypotheses of this \namecref{thm:PF sheafy stabilization}.
\begin{thm} \label{thm:PF sheafy stabilization}
	In \Cref{setng:stabilization}, suppose that every object of $\mathcal C$ has a quasi-admissible $D$-covering family by $A$-good objects.
	Then there is a formal $\otimes$-inversion $\Sigma_A^\infty : D \to D[A^{\otimes -1}]$ of $A$ satisfying the following:
	\begin{enumerate}

		\item\label{itm:PF sheafy stabilization/descent}
			Descent: Let $K$ be a simplicial set, and let $X : K^\triangleright \to \mathcal C$ be a diagram that sends every edge to a quasi-admissible map, and such that for any quasi-admissible map $X' \to X(\infty)$, if $X'$ is $A$-good, then $D(X \times_{X(\infty)} X')^\op$ is limiting. Then $D[A^{\otimes -1}] X^\op$ is a limiting diagram.

		\item Let $S$ be an $A$-good object, and let $A'$ be any small collection of objects of $D(S)$ that $\otimes$-generate $A_S$, and that are $\otimes$-generated by $A_S$.
			\begin{enumerate}

				\item\label{itm:PF sheafy stabilization/ptwise}
					Objectwise description: The functor $(D \to D[A^{\otimes -1}])(S)$ is the functor $D(S) \to D(S)[A_B^{\otimes -1}]$ given by formally adjoining $\otimes$-inverses of elements of $A'$.

				\item\label{itm:PF sheafy stabilization/generation}
					Generation: The category $D[A^{\otimes -1}](S)$ is generated under filtered colimits by objects of the form $(\Sigma_A^\infty \alpha)^{\otimes -1} \otimes \Sigma_A^\infty M$ for $\alpha$ a tensor product of objects in $A'$, and $M \in D(S)$.

			\end{enumerate}
			
	\end{enumerate} 

	\begin{proof}
		Let $\mathcal B \subseteq \mathcal C$ be the full subcategory of $A$-good objects, so $\mathcal B$ is a full anodyne pullback subcontext by \Cref{rmk:A-good anodyne}. Thus, \Cref{prp:PF stabilization} and \Cref{prp:dense stabilization} show that there is a formal $\otimes$-inversion $\Sigma_A^\infty : D \to D[A^{\otimes -1}]$ of $A$ that restricts to the formal $\otimes$-inversion of $A|_{\mathcal B}$ given by \Cref{prp:PF stabilization}.

		We now address the listed statements:
		\begin{description}

			\item[\ref{itm:PF sheafy stabilization/descent}]
				Let $X : K^\triangleright \to \mathcal C$ be a diagram that sends all edges to quasi-admissible maps, and such that for any quasi-admissible map $X' \to X(\infty)$, the map
				\begin{equation} \label{eqn:descent hyp}
					D(X') \to \varprojlim_{p \in K} D(X' \times_{X(\infty)} X(p))
				\end{equation}
				is an equivalence. To show \cref{itm:PF sheafy stabilization/descent}, it suffices to show that
				\begin{equation} \label{eqn:descent conc}
					D[A^{\otimes -1}](X(\infty)) \to \varprojlim_{p \in K} D[A^{\otimes -1}](X(p))
				\end{equation}
				is an equivalence.

				Since $X(\infty)$ has a quasi-admissible $D$-covering family by objects of $\mathcal B$, and $\mathcal B$ is a full anodyne pullback subcontext of $\mathcal C$, by considering \v{C}ech nerves, there is a small diagram $X' : K' \to \mathcal C_{X(\infty)}$ that sends all vertices to quasi-admissible maps to $X(\infty)$ from objects of $\mathcal B$, and such that
				\[
					\varinjlim_{a' \in K'} \yo X'(a') \to \yo(X(\infty))
				\]
				is a local equivalence for the $D$-covering topology.

				Consider the commutative square:
				\[
					\begin{tikzcd}
						D[A^{\otimes -1}](X(\infty)) \ar[d] \ar[r] & \displaystyle\varprojlim_{a \in K} D[A^{\otimes -1}](X(a)) \ar[d] \\
						\displaystyle\varprojlim_{a' \in K'} D[A^{\otimes -1}](X'(a')) \ar[r] & \displaystyle\varprojlim_{\substack{a \in K \\ a' \in K'}} D[A^{\otimes -1}](X(a) \times_{X(\infty)} X'(a'))
					\end{tikzcd}
				.\]
				Since $D[A^{\otimes -1}]$ has descent for the $D$-covering topology, the vertical arrows are equivalences. For any $a' \in K'$, since $X'(a') \in \mathcal B$, we know that for any $a \in K$, the object $X(a) \times_{X(\infty)} X'(a)$ admits a quasi-admissible map to an object $X'(a')$ of $\mathcal B$, so it is in $\mathcal B$. Therefore, for any $a' \in K'$, $X \times_{X(\infty)} X'(a')$ is a diagram taking values in $\mathcal B$, and that sends all edges to quasi-admissible maps, so since \eqref{eqn:descent hyp} is an equivalence (since $X'(a') \to X(\infty)$ is quasi-admissible and $X'(a')$ is $A$-good), it follows from \cref{itm:PF stabilization/descent} of \Cref{prp:PF stabilization} that the bottom horizontal arrow is an equivalence. Thus, the top horizontal arrow is an equivalence, which concludes the proof of \cref{itm:PF sheafy stabilization/descent}.

			\item[\ref{itm:PF sheafy stabilization/ptwise} and \ref{itm:PF sheafy stabilization/generation}] follow immediately from \cref{itm:PF stabilization/ptwise,itm:PF stabilization/generation} of \Cref{prp:PF stabilization}, where we note that formally adjoining $\otimes$-inverses of objects in $A_S$ is equivalent to formally adjoining $\otimes$-inverses of objects of $A'$.

		\end{description}
		
	\end{proof}
\end{thm}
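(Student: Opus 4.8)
The plan is to obtain the formal $\otimes$-inversion by restricting to the full subcategory $\mathcal B \subseteq \mathcal C$ of $A$-good objects, where \Cref{prp:PF stabilization} applies directly, and then to transport the result back to $\mathcal C$ via \Cref{prp:dense stabilization}, upgrading the descent property afterwards. First, by \Cref{rmk:A-good anodyne} the subcategory $\mathcal B$ is a full anodyne pullback subcontext of $\mathcal C$, and by \Cref{defn:A-good} every quasi-admissible map in $\mathcal B$ has $A$-lifts while $A_S$ is symmetric for every $S \in \mathcal B$, so the hypotheses of \Cref{prp:PF stabilization} hold for $D|_{\mathcal B}$ and the restricted family; this produces a formal $\otimes$-inversion $\Sigma^\infty_{A|_{\mathcal B}} : D|_{\mathcal B} \to D|_{\mathcal B}[(A|_{\mathcal B})^{\otimes -1}]$ with the objectwise, generation, and descent properties listed there. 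Since by hypothesis every object of $\mathcal C$ admits a quasi-admissible $D$-covering family by objects of $\mathcal B$, \Cref{prp:dense stabilization} then yields a formal $\otimes$-inversion $\Sigma^\infty_A : D \to D[A^{\otimes -1}]$ of $A$ whose restriction to $\mathcal B$ recovers $\Sigma^\infty_{A|_{\mathcal B}}$.

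For $S \in \mathcal B$, statements \cref{itm:PF sheafy stabilization/ptwise} and \cref{itm:PF sheafy stabilization/generation} then follow at once from \cref{itm:PF stabilization/ptwise,itm:PF stabilization/generation} of \Cref{prp:PF stabilization}, together with the elementary fact that for a small $A' \subseteq D(S)$ that both $\otimes$-generates and is $\otimes$-generated by $A_S$, inverting $A'$ or $A_S$ produces the same objectwise localization. The real content is the descent statement \cref{itm:PF sheafy stabilization/descent}. The enabling observation is that $D[A^{\otimes -1}]$ admits the morphism $\Sigma^\infty_A$ from $D$, so by \Cref{thm:D-topology} it is a sheaf for the $D$-covering topology; hence the asserted limit statement for a diagram $X : K^\triangleright \to \mathcal C$ may be checked after base change along a quasi-admissible $D$-covering family of $X(\infty)$, which I would take to consist of $A$-good objects and then replace by its \v{C}ech nerve, obtaining a small diagram $X' : K' \to \mathcal C_{X(\infty)}$ landing in $\mathcal B$, with all structure maps quasi-admissible and $\varinjlim_{a'} \yo X'(a') \to \yo X(\infty)$ a $D$-covering-local equivalence.

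One then contemplates the commuting square
\[
	\begin{tikzcd}
		D[A^{\otimes -1}](X(\infty)) \ar[d] \ar[r] & \displaystyle\varprojlim_{a \in K} D[A^{\otimes -1}](X(a)) \ar[d] \\
		\displaystyle\varprojlim_{a' \in K'} D[A^{\otimes -1}](X'(a')) \ar[r] & \displaystyle\varprojlim_{\substack{a \in K \\ a' \in K'}} D[A^{\otimes -1}]\bigl(X(a) \times_{X(\infty)} X'(a')\bigr)
	\end{tikzcd}
,\]
whose vertical maps are equivalences by $D$-covering descent. For fixed $a'$ the fibre products $X(a) \times_{X(\infty)} X'(a')$ admit quasi-admissible maps to $X'(a') \in \mathcal B$, hence lie in $\mathcal B$, so $X \times_{X(\infty)} X'(a')$ is a $K^\triangleright$-diagram in $\mathcal B$ with quasi-admissible edges whose cone value $X'(a')$ is $A$-good with $X'(a') \to X(\infty)$ quasi-admissible; the theorem's hypothesis then makes $D(X \times_{X(\infty)} X'(a'))^\op$ limiting, and \cref{itm:PF stabilization/descent} of \Cref{prp:PF stabilization}, applied on $\mathcal B$ where $\Sigma^\infty_A$ restricts to $\Sigma^\infty_{A|_{\mathcal B}}$, makes $D[A^{\otimes -1}](X \times_{X(\infty)} X'(a'))^\op$ limiting as well; taking $\varprojlim_{a'}$ shows the bottom arrow is an equivalence, whence so is the top one. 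The routine reductions aside, I expect the only step with genuine content to be this descent argument, and within it the delicate points are purely organizational: keeping track that the fibre products remain $A$-good so that the $\mathcal B$-level descent conclusion of \Cref{prp:PF stabilization} is available, and precisely matching the theorem's hypothesis to the input that proposition demands. The sole non-formal ingredient imported from elsewhere is that $D[A^{\otimes -1}]$ satisfies $D$-covering descent, \ie{} \Cref{thm:D-topology} applied to the morphism $\Sigma^\infty_A$.
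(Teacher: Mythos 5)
Your proposal is correct and follows essentially the same route as the paper: restrict to the anodyne subcontext $\mathcal B$ of $A$-good objects, combine \Cref{prp:PF stabilization} with \Cref{prp:dense stabilization} to produce the formal $\otimes$-inversion, and prove descent via the same \v{C}ech-nerve square, using that $D[A^{\otimes -1}]$ is a sheaf for the $D$-covering topology and that the fibre products land in $\mathcal B$. No gaps.
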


\begin{rmk} \label{rmk:stabilization compatible with restriction}
	In the setting of \Cref{thm:PF sheafy stabilization}, let $F : \mathcal C' \to \mathcal C$ be a morphism of pullback contexts, and write $A' = \{A'_{S'}\}_{S' \in \mathcal C'}$ for the family given by $A'_{S'} \coloneqq A_{F(S')}$ for $S' \in \mathcal C'$. If $\mathcal C'_{S'} \to \mathcal C_{F(S')}$ is essentially surjective for all $S' \in \mathcal C'$, then $F^* D, A'$ satisfy the hypotheses of \Cref{thm:PF sheafy stabilization}, and the morphism $F^*(\Sigma_A^\infty)$ factors as
	\[
		F^* D \xrightarrow{\Sigma_{A'}^\infty} (F^* D)[(A')^{\otimes -1}] \to F^*(D[A^{\otimes - 1}])
	,\]
	where the second map is an equivalence.

	In particular, if $F$ is the inclusion of an anodyne pullback subcontext, then $D|_{\mathcal C'}[(A')^{\otimes -1}]$ exists and is equivalent to $D[A^{\otimes -1}]|_{\mathcal C'}$.
	\begin{proof}
		We have already noted in \Cref{rmk:pull back stab setng} that $F^* D, A'$ is an instance of \Cref{setng:stabilization}, and that a map $f'$ in $\mathcal C'$ has $A'$-lifts if and only if $F(f')$ has $A$-lifts.

		Now, let $S' \in \mathcal C$ be an object such that $F(S')$ is $A$-good. Since every quasi-admissible map to $S'$ is sent to a quasi-admissible map to $F(S')$, which has $A$-lifts since $F(S')$ is $A$-good, we have that every quasi-admissible map to $S'$ has $A$-lifts. Similarly, since $A_{F(S')}$ is symmetric, we conclude that $S'$ is $A'$-good.

		Now, let $S' \in \mathcal C$. Then $F(S')$ has a quasi-admissible $D$-covering family by $A$-good objects, and $\mathcal C'_{S'} \to \mathcal C_{F(S')}$ is essentially surjective, we have that $S'$ has a quasi-admissible $F^* D$-covering family by $A'$-good objects.

		Thus we obtain a morphism $\Sigma_{A'}^\infty : F^* D \to (F^* D)[(A')^{\otimes -1}]$ as in \Cref{thm:PF sheafy stabilization}. It follows that $F^*(\Sigma_A^\infty)$ extends through $\Sigma_{A'}^\infty$ by a morphism $(F^* D)[(A')^{\otimes -1}] \to F^*(D[A^{\otimes -1}])$ that is an equivalence on $A'$-good objects by \cref{itm:PF sheafy stabilization/ptwise} of \Cref{thm:PF sheafy stabilization}, so it is an equivalence by \Cref{cor:dense slice}.
	\end{proof}
\end{rmk}

\begin{proof}[Proof of \Cref{prp:PF stabilization}]
	First we will construct $\Sigma_A^\infty$ as a transformation of functors $\mathcal C^\op \to \CAlg(\PrL)$. Indeed, by the discussion at the beginning of \cite[Section 6.1]{sixopsequiv}, and \cite[Proposition 2.9]{robalo}, for each $S \in \mathcal C$, there is a symmetric monoidal colimit-preserving functor $\Sigma_{A_S}^\infty : D(S) \to D(S)[A_S^{\otimes -1}]$ such that any morphism $F: D(S) \to \mathcal A \in \CAlg(\PrL)$ in $\CAlg(\PrL)$ extends along $\Sigma_{A_S}^\infty$ if and only if $F(a) \in \Pic(\mathcal A)$ for all $a \in A_S$, in which case the space of extensions is contractible.

	Thus, by \cite[Proposition A.1]{objwise-mono}, there is a morphism $\Sigma_A^\infty : D \to D[A^{\otimes -1}]$ in $\Fun(\mathcal C^\op, \CAlg(\PrL))$ that evaluates to $\Sigma_{A_S}^\infty$ for each $S \in \mathcal C$, as required in \cref{itm:PF stabilization/ptwise}.

	By \cite[Proposition A.11]{objwise-mono}, it follows that a morphism $D \to E$ in $\Fun(\mathcal C^\op, \CAlg(\PrL))$ extends through $\Sigma_A^\infty$ if and only if for each $S \in \mathcal C$, the functor $D(S) \to E(S)$ sends $A_S$ to $\Pic(E(S))$, and in this case the space of extensions is contractible.

	Let $f : X \to Y$ be a map that has $A$-lifts. By the left adjointability of the square \eqref{eqn:tensor loc square} of \Cref{S:stabilization}, we have that $f^* : D[A^{\otimes -1}](Y) \to D[A^{\otimes -1}](X)$ is the map 
	\[
		D(Y)[A_Y^{\otimes -1}] \to D(X)[A_Y^{\otimes -1}]
	\]
	in $\Mod_{D(Y)}(\PrL)$ induced from $f^* : D(Y) \to D(X)$ by formally inverting the action of objects in $A_Y$.

	If $A_Y$ is symmetric, we may apply \Cref{prp:stabilization}: if $f$ is quasi-admissible, since $f^* : D(Y) \to D(X)$ admits a $D(Y)$-linear left adjoint $f_\sharp$, \cref{itm:stabilization/linear} says that $f^* : D[A^{\otimes -1}](Y) \to D[A^{\otimes -1}](X)$ has a $D[A^{\otimes -1}](Y)$-linear left adjoint, and the square
	\[
		\begin{tikzcd}
			D(Y) \ar[d] \ar[r] & D(X) \ar[d] \\
			D[A^{\otimes -1}](Y) \ar[r] & D[A^{\otimes -1}](X)
		\end{tikzcd}
	\]
	is left-adjointable.

	Thus $\Sigma_A^\infty$ is left adjointable at $f$, and $D[A^{\otimes -1}]$ sends $f$ to a functor that has a linear left adjoint.

	Let us show that $\Sigma_A^\infty$ satisfies \Cref{itm:PF stabilization/adjoint}: if $f : X \to Y$ has $A$-lifts, and
	\[
		\begin{tikzcd}
			D(Y) \ar[d] \ar[r] & D(X) \ar[d] \\
			E(Y) \ar[r] & E(X)
		\end{tikzcd}
	\]
	is left adjointable, by \cref{itm:stabilization/adjointable} of \Cref{prp:stabilization}, we have that
	\[
		\begin{tikzcd}
			D(Y)[A_Y^{\otimes -1}] \ar[d] \ar[r] & D(X)[A_Y^{\otimes -1}] \ar[d] \\
			E(Y)[A_Y^{\otimes -1}] \ar[r] & E(X)[A_Y^{\otimes -1}]
		\end{tikzcd}
	\]
	is left adjointable, but $A_Y$ already acts invertibly on $E(Y)$ and $E(X)$, so by our previous observations, we find that this left adjointable square is equivalent to
	\[
		\begin{tikzcd}
			D[A^{\otimes -1}](Y) \ar[d] \ar[r] & D[A^{\otimes -1}](X) \ar[d] \\
			E(Y) \ar[r] & E(X)
		\end{tikzcd}
	,\]
	which shows that $D[A^{\otimes -1}] \to E$ is left adjointable at $f$, as desired.

	Thus, to see that $\Sigma_A^\infty$ is a formal $\otimes$-inversion of $A$, it only remains to show that $D[A^{\otimes -1}]$ has quasi-admissible base change: if $f : X \to Y$ is quasi-admissible, and
	\[
		\begin{tikzcd}
			X' \ar[d] \ar[r] & Y' \ar[d] \\
			X \ar[r, "f"'] & Y
		\end{tikzcd}
	\]
	is a Cartesian square in $\mathcal C$, then since
	\[
		\begin{tikzcd}
			D(Y) \ar[d] \ar[r] & D(X) \ar[d] \\
			D(Y') \ar[r] & D(X')
		\end{tikzcd}
	\]
	is left adjointable, \cref{itm:stabilization/adjointable} of \Cref{prp:stabilization} says that
	\[
		\begin{tikzcd}
			D(Y)[A_Y^{\otimes -1}] \ar[d] \ar[r] & D(X)[A_Y^{\otimes -1}] \ar[d] \\
			D(Y')[A_Y^{\otimes -1}] \ar[r] & D(X')[A_Y^{\otimes -1}]
		\end{tikzcd}
	\]
	is left-adjointable. By \cref{itm:stabilization/linear} of the same result, the square
	\[
		\begin{tikzcd}
			D(Y')[A_Y^{\otimes -1}] \ar[d] \ar[r] & D(X')[A_Y^{\otimes -1}] \ar[d] \\
			D(Y')[A_{Y'}^{\otimes -1}] \ar[r] & D(X')[A_{Y'}^{\otimes -1}]
		\end{tikzcd}
	\]
	is left adjointable, so the composite square
	\[
		\begin{tikzcd}
			D(Y)[A_Y^{\otimes -1}] \ar[d] \ar[r] & D(X)[A_Y^{\otimes -1}] \ar[d] \\
			D(Y')[A_{Y'}^{\otimes -1}] \ar[r] & D(X')[A_{Y'}^{\otimes -1}]
		\end{tikzcd}
	\]
	is left adjointable by \cite[Lemma F.6 (3)]{TwAmb}, but we have already observed that this square is equivalent to
	\[
		\begin{tikzcd}
			D[A^{\otimes -1}](Y) \ar[d] \ar[r] & D[A^{\otimes -1}](X) \ar[d] \\
			D[A^{\otimes -1}](Y') \ar[r] & D[A^{\otimes -1}](X')
		\end{tikzcd}
	,\]
	which concludes the proof that $D[A^{\otimes -1}]$ has quasi-admissible base change, so that $\Sigma_A^\infty$ is a formal $\otimes$-inversion of $A$.

	\Cref{itm:PF stabilization/generation} follows from \cref{itm:stabilization/generation} of \Cref{prp:stabilization}.

	Next, we will show \cref{itm:PF stabilization/descent}. Since $X$ sends every edge $p \to \infty$ to a map with $A$-lifts, we have already observed that
	\[
		D[A^{\otimes -1}] \circ X^\op
	\]
	is
	\[
		(D \circ X^\op)[A_{X(\infty)}^{\otimes -1}]
	.\]
	Thus, since we have that $X(p \to p')$ is quasi-admissible for $p \to p'$ in $K$, \cref{itm:stabilization/limits} of \Cref{prp:stabilization} says that if $D \circ X^\op$ is limiting, then so is $D[A^{\otimes -1}] \circ X^\op$.

	Finally, to see \ref{itm:PF stabilization/closed pf}, first note that as before, $f^* : D[A^{\otimes -1}](Y) \to D[A^{\otimes -1}](X)$ is the map $f^*[A_Y^{\otimes -1}] : D(Y)[A_Y^{\otimes -1}] \to D(X)[A_Y^{\otimes -1}]$ in $\Mod_{D(Y)[A_Y^{\otimes -1}]}(\PrL)$. Since $f_* : D(X) \to D(Y)$ is a map in $\Mod_{D(Y)}(\PrL)$ that has a $D(Y)$-linear left adjoint ($f^*$), part \ref{itm:stabilization/linear} of \Cref{prp:stabilization} says that the left adjoint of the induced map $f_*[A_Y^{\otimes -1}] : D(X)[A_Y^{\otimes -1}] \to D(Y)[A_Y^{\otimes -1}]$ in $\Mod_{D(Y)[A_Y^{\otimes -1}]}(\PrL)$ is $f^*[A_Y^{\otimes -1}] : D(Y)[A_Y^{\otimes -1}] \to D(X)[A_Y^{\otimes -1}]$, that
	\begin{equation} \label{eqn:radj stab sq}
		\begin{tikzcd}
			D(X) \ar[d] \ar[r] & D(Y) \ar[d] \\
			D(X)[A_Y^{\otimes -1}] \ar[r] & D(Y)[A_Y^{\otimes -1}]
		\end{tikzcd}
	\end{equation}
	is left adjointable, that $f^*[A_Y{^\otimes -1}]$ is a $D(Y)[A_Y^{\otimes -1}]$-linear left adjoint of the functor $f_*[A_Y^{\otimes -1}]$, and that the equivalence $f^* \Sigma_A^\infty \to \Sigma_A^\infty f^*$ making
	\[
		\begin{tikzcd}
			D(Y) \ar[d] \ar[r] & D(X) \ar[d] \\
			D[A^{\otimes -1}](Y) \ar[r] & D[A^{\otimes -1}](X)
		\end{tikzcd}
	\]
	commute is the left base change transformation of \eqref{eqn:radj stab sq}. It follows from \cite[Lemma F.4]{TwAmb} or \cite[Corollary 4.7.4.18(3)]{ha} that the latter square is right adjointable.
\end{proof}

\subsection{Pointing} \label{S:pointing pf}

Often before stabilizing a pullback formalism $D$, we will want to consider a ``pointed'' version $D_\bullet$ of $D$, in which $D_\bullet(S)$ is the category of pointed objects of $D(S)$.

First we recall some definitions:
\begin{defn}
	Following \cite[Definition 1.1.1.1]{ha}, say a category $\mathcal C$ is \emph{pointed} if it has a \emph{zero object}, \ie, an object which is both initial and terminal.

	Given a pullback context $\mathcal C$, say a pullback formalism $D$ on $\mathcal C$ is \emph{pointed} if it takes values in pointed categories, \ie, if $E(S)$ has a zero object for all $S \in \mathcal C$. Write $\PF_\bullet(\mathcal C) \subseteq \PF_\bullet(\mathcal C)$ for the full subcategory of pointed pullback formalisms on $\mathcal C$.
\end{defn}

The following result describes how to freely produce ``pullback formalisms of pointed objects''. This may be seen as a description of the left adjoint in \cite[Proposition 6.5]{UnivFF}.
\begin{prp} \label{prp:pointing}
	If $\mathcal C$ is a pullback context, the inclusion $\PF_\bullet(\mathcal C) \to \PF(\mathcal C)$ has a left adjoint. For $D \in \PF(\mathcal C)$, we write $D \to D_\bullet$ for the unit of the adjunction, which has the following properties:
	\begin{enumerate}

		\item \label{itm:pointing/generation}
			For each $S \in \mathcal C$, the functor $D(S) \to D_\bullet(S)$ is equivalent to the ``pointing'' functor $D(S) \to D(S)_{\pt/}$ given by $M \mapsto \pt \coprod M$, and $D_\bullet(S)$ is generated under simplicial colimits by the essential image of this functor.

		\item \label{itm:pointing/limits}
			If $F : K^\triangleright \to \mathcal C$ is a diagram such that $D \circ F^\op$ is limiting, and for all maps $f$ in $K^\triangleright$, $F(f)^*$ preserves terminal objects, then $D_\bullet \circ F^\op$ is also limiting. Note that this holds if $F$ sends all morphisms to quasi-admissible maps, or if $F$ sends all objects to objects $X \in \mathcal C$ such that the monoidal unit of $F(X)$ is terminal.

		\item \label{itm:pointing/colimits}
			If $K$ is a simplicial set, and $f : X \to Y$ is a map in $\mathcal C$ such that $f_* : D(X) \to D(Y)$ preserves $K^\triangleleft$-indexed colimits, then $f_* : D_\bullet(X) \to D_\bullet(Y)$ preserves $K$-indexed colimits. In particular, if $f_* : D(X) \to D(Y)$ preserves weakly contractible colimits, then $f_* : D_\bullet(X) \to D_\bullet(Y)$ preserves colimits.

	\end{enumerate}
\end{prp}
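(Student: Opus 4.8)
The plan is to build $D_\bullet$ objectwise by pointing each category and to recognise this as base change along the symmetric monoidal localization $\spaces\to\spaces_*$. Recall that $\spaces_*=\spaces_{\pt/}$ is an idempotent commutative algebra in $\CAlg(\PrL)$, so that $-\otimes\spaces_*\colon\CAlg(\PrL)\to\CAlg(\PrL)$ is a localization onto the full subcategory of pointed symmetric monoidal presentable categories, with unit $\mathcal V\to\mathcal V\otimes\spaces_*\simeq\mathcal V_{\pt/}$ the symmetric monoidal left adjoint $M\mapsto\pt\coprod M$ of the forgetful functor $U\colon\mathcal V_{\pt/}\to\mathcal V$ (see \cite[\S2]{robalo} and \cite[\S4.8.2]{ha}). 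Postcomposing $D$ with this localization yields $D_\bullet\coloneqq D(-)_{\pt/}$ together with a transformation $D\to D_\bullet$ in $\Psh_{\CAlg(\PrL)}(\mathcal C)$ which, by the theory of objectwise localizations (\cite[Proposition A.1, Proposition A.11]{objwise-mono} and \cite[\S5.2.7]{htt}), exhibits $D_\bullet$ as a reflection of $D$ into the full subcategory of presheaves valued in pointed categories. Property \ref{itm:pointing/generation} then follows at once: $U$ is monadic over $\mathcal V$ for the monad $M\mapsto\pt\coprod M$ (Barr--Beck: $U$ is conservative and preserves geometric realizations of $U$-split simplicial objects), so $\mathcal V_{\pt/}$ is generated under simplicial colimits by the free objects $\pt\coprod M$, which form the essential image of the pointing functor.

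The main work is to promote $D\to D_\bullet$ to a morphism of pullback formalisms. First I would check that $D\to D_\bullet$ respects quasi-admissibility: for a quasi-admissible $f\colon X\to Y$, the projection formula gives a $D(Y)$-linear left adjoint $f_\sharp$ of $f^*\colon D(Y)\to D(X)$, and base-changing this linear adjunction along the map $D(Y)\to D(Y)\otimes\spaces_*$ in $\CAlg(\PrL)$ — under which $D(X)\otimes_{D(Y)}(D(Y)\otimes\spaces_*)\simeq D(X)_{\pt/}$ and $f^*$ becomes $D_\bullet(f)$ — produces a $D_\bullet(Y)$-linear left adjoint of $D_\bullet(f)$ together with a left-adjointable square, by the usual compatibility of linear adjunctions with base change of the base (cf.\ \Cref{thm:Mod LAd} and \cite[Corollary 4.7.4.18]{ha}). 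Since the image of $D(S)\to D_\bullet(S)$ generates the target under (simplicial, hence small) colimits by Property \ref{itm:pointing/generation}, \Cref{prp:map from PF is to PF} applies with $D^\natural=D$ and ``$D$''$=D_\bullet$: it shows $D_\bullet$ is a pullback formalism and that, for every pullback formalism $E$, the square comparing $\PF(\mathcal C)(D_\bullet,E)\to\PF(\mathcal C)(D,E)$ with the $\Psh_{\CAlg(\PrL)}$-level mapping spaces is Cartesian. When $E\in\PF_\bullet(\mathcal C)$ the bottom map of that square is an equivalence by the reflection property, hence so is the top map; since $D_\bullet\in\PF_\bullet(\mathcal C)$ this exhibits $D\to D_\bullet$ as a $\PF_\bullet(\mathcal C)$-localization, and \cite[Proposition 5.2.7.8]{htt} produces the left adjoint to the inclusion.

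For Property \ref{itm:pointing/limits} I would use that a limit of presentable symmetric monoidal categories whose transition functors preserve terminal objects has its pointed-objects category computed as the corresponding limit of pointed-objects categories; applied to $D\circ F^\op$ (a limit diagram in $\CAlg(\PrL)$, hence on underlying categories by \cite[Corollary 3.2.2.5]{ha} and \cite[Proposition 5.5.3.13]{htt}), whose transition functors $F(\phi)^*$ preserve terminal objects by hypothesis, this gives that $D_\bullet\circ F^\op$ is limiting. The two special cases follow since a quasi-admissible $f^*$ preserves terminal objects (being a right adjoint of $f_\sharp$), and a symmetric monoidal functor preserves the monoidal unit, which is terminal in the second case. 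For Property \ref{itm:pointing/colimits} I would note that $D_\bullet(f)$ is base-changed from $f^*$, so its right adjoint $f_{*,\bullet}$ satisfies $U\circ f_{*,\bullet}\simeq f_*\circ U$, and that a $K$-indexed colimit in $D(X)_{\pt/}$ is, under $U$, the corresponding $K^\triangleleft$-indexed colimit in $D(X)$ with cone point $\pt$; since $f_*$ preserves terminal objects and, by hypothesis, $K^\triangleleft$-indexed colimits, $f_{*,\bullet}$ preserves $K$-indexed colimits, and the final clause follows because $K^\triangleleft$ is always weakly contractible. The step I expect to be the main obstacle is the precise identification of $D_\bullet(f)$ and its adjoints with the base change of $f^*$ and its linear left adjoint — that is, carrying out the bookkeeping of mates and base change of module categories carefully enough to invoke \Cref{prp:map from PF is to PF} cleanly.
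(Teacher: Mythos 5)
Your proposal is correct and follows essentially the same route as the paper: objectwise pointing via the universal property of $\mathcal V \to \mathcal V_{\pt/}$, assembly with \cite[Propositions A.1 and A.11]{objwise-mono}, verification that $D \to D_\bullet$ respects quasi-admissibility, and then \Cref{prp:map from PF is to PF} together with \cite[Proposition 5.2.7.8]{htt} to produce the adjoint, with the same arguments for properties (1)--(3). The only real divergence is the adjointability step, where you base-change the linear adjunction $f_\sharp \dashv f^*$ along $D(Y) \to D(Y) \otimes \spaces_*$ (true, but not what \Cref{thm:Mod LAd} says), whereas the paper argues more elementarily that the square of pointing functors is right adjointable via the forgetful functors and then transposes; your version works but would need the base-change-of-linear-adjoints fact spelled out, which you correctly identify as the delicate point.
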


Using \Cref{prp:pointing}, we can also prove the following corollary of \Cref{thm:PF sheafy stabilization}. Note that in the statement of this \namecref{thm:ptd PF sheafy stabilization}, we use the fact that for any category $\mathcal C$ that has a terminal object and pushouts, and any map $a : a_0 \to a_1$ in $\mathcal C$, the cofibre $\cofib(a)$ naturally has the structure of a pointed object of $\mathcal C$ by considering it as the cobase change of $a$ along $a_0 \to \pt$.
\begin{thm} \label{thm:ptd PF sheafy stabilization}
	Let $\mathcal C$ be a pullback context, let $D$ be a pullback formalism on $\mathcal C$, and let $A = \{A_S\}_{S \in \mathcal C}$ be a family such that for each $S \in \mathcal C$, $A_S$ is a small collection of morphisms in $D(S)$, and for any map $f : X \to Y$ in $\mathcal C$, and $b \in A_Y$, we have that $\cofib(f^*(b)) \in D_\bullet(X)$ is $\otimes$-generated by objects of the form $\cofib(a)$ for $a \in A_X$.

	Say an object $S \in \mathcal C$ is \emph{$A_\bullet$-good} if
	\begin{enumerate}

		\item for any quasi-admissible map $f : X \to S$, and $a \in A_X$, the object $\cofib(a) \in D_\bullet(X)$ is $\otimes$-generated by objects of the form $f^* \cofib(b)$ for $b \in A_S$, and

		\item the collection of objects $(A_\bullet)_S = \{\cofib(a) \in D_\bullet(S) : a \in A_S\}$ in $D_\bullet(S)$ is symmetric in the sense of \Cref{defn:symmetric}.

	\end{enumerate}

	Suppose that every object of $\mathcal C$ has a quasi-admissible $D$-covering family by $A_\bullet$-good objects. If we define $A_\bullet$ to be the family $\{(A_\bullet)_S\}_{S \in \mathcal C}$, then the hypotheses of \Cref{thm:PF sheafy stabilization} are satisfied for $A_\bullet$, which provides the morphism $D_\bullet \to D_\bullet[A_\bullet^{\otimes -1}]$, and if we define $\Sigma_{A,+}^\infty : D \to D[A^{\wedge -1}]$ to be the composite
	\[
		D \to D_\bullet \to D_\bullet[A_\bullet^{\otimes -1}]
	,\]
	then we have the following:
	\begin{enumerate}

		\item\label{itm:ptd PF sheafy stabilization/univ}
			Universal property:
			The functor $\PF_\bullet(\mathcal C)_{D[A^{\wedge -1}]/} \to \PF(\mathcal C)_{D/}$ is fully faithful with essential image given by those morphisms $\phi : D \to E$ such that $E$ is a pointed pullback formalism, and the following equivalent conditions are satisfied:
			\begin{enumerate}

				\item For every $S \in \mathcal C$, $D(S) \to E(S)$ sends $(A_\bullet)_S$ to $\Pic E(S)$.

				\item For every $S \in \mathcal C$, there is a quasi-admissible $D$-covering family by objects $S_0$ such that there is a map $f : S_0 \to S_1$ that has $A_\bullet$-lifts, and for any $a \in A_{S_1}$, $f^* \cofib(\phi(a)) \in \Pic E_0$.

			\end{enumerate}

		\item\label{itm:ptd PF sheafy stabilization/descent}
			Descent: Let $K$ be a simplicial set, and let $X : K^\triangleright \to \mathcal C$ be a diagram that sends every edge to a quasi-admissible map, and such that for any quasi-admissible map $X' \to X(\infty)$, if $X'$ is $A_\bullet$-good, then $D(X \times_{X(\infty)} X')^\op$ is limiting. Then $D[A^{\otimes -1}] X^\op$ is a limiting diagram.

		\item\label{itm:ptd PF sheafy stabilization/generation}
			Generation: If $S \in \mathcal C$ is $A_\bullet$-good, and $A'$ is a small symmetric collection of objects in $D_\bullet(S)$ that $\otimes$-generates and is $\otimes$-generated by $(A_\bullet)_S$, then the category $D[A^{\wedge -1}](S)$ is generated under sifted colimits by objects of the form
			\[
				(\Sigma_{(A_\bullet)_S}^\infty \alpha)^{\otimes -1} \otimes \Sigma_{A_S, +}^\infty M
			\]
			for $M \in D(S)$, and $\alpha$ a tensor product of objects in $A'$.

	\end{enumerate} 
\end{thm}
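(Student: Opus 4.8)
The strategy is to reduce the entire statement to \Cref{thm:PF sheafy stabilization} applied to the pointed pullback formalism $D_\bullet$ and the family $A_\bullet = \{(A_\bullet)_S\}_{S\in\mathcal C}$, together with the universal property of the pointing functor $D\to D_\bullet$ from \Cref{prp:pointing}. First I would verify that $D_\bullet, A_\bullet$ is an instance of \Cref{setng:stabilization}: for each $S$, $(A_\bullet)_S$ is a small collection of objects of $D_\bullet(S)$, and for a map $f:X\to Y$ we must show $f^*$ sends each $\cofib(b)$, $b\in A_Y$, to an object $\otimes$-generated by $(A_\bullet)_X$. Here one uses that $f^*: D_\bullet(Y)\to D_\bullet(X)$ is the pointed version of $f^*$ on $D$, hence commutes with $\cofib$ (it preserves pushouts and terminal objects), so $f^*\cofib(b)\simeq \cofib(f^*b)$, and the hypothesis on $A$ says exactly that this is $\otimes$-generated by $\{\cofib(a):a\in A_X\}=(A_\bullet)_X$. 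Next, observe that ``$S$ is $A_\bullet$-good'' in the sense of \Cref{defn:A-good} (every quasi-admissible map to $S$ has $A_\bullet$-lifts, and $(A_\bullet)_S$ is symmetric) is literally the definition of ``$A_\bullet$-good'' given in the theorem statement, once one unwinds \Cref{defn:A-lifts}: a quasi-admissible $f:X\to S$ has $A_\bullet$-lifts iff every object of $(A_\bullet)_X$ is $\otimes$-generated by $f^*((A_\bullet)_S)$, i.e. every $\cofib(a)$, $a\in A_X$, is $\otimes$-generated by $\{f^*\cofib(b):b\in A_S\}$. Since also a $D$-covering family of $S$ is the same whether computed with $D$ or $D_\bullet$ (the functors $D(S)\to D(X_i)$ and $D_\bullet(S)\to D_\bullet(X_i)$ are jointly conservative simultaneously, as the pointing functors are conservative and compatible with pullback), the hypothesis ``every object of $\mathcal C$ has a quasi-admissible $D$-covering family by $A_\bullet$-good objects'' gives precisely the hypothesis of \Cref{thm:PF sheafy stabilization} for $D_\bullet, A_\bullet$. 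This yields the morphism $\Sigma_{A_\bullet}^\infty: D_\bullet\to D_\bullet[A_\bullet^{\otimes -1}]$, and we set $D[A^{\wedge -1}] := D_\bullet[A_\bullet^{\otimes -1}]$ and $\Sigma_{A,+}^\infty := \Sigma_{A_\bullet}^\infty\circ(D\to D_\bullet)$.

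For the universal property \cref{itm:ptd PF sheafy stabilization/univ}, I would compose adjunctions: \Cref{prp:pointing} says $\PF_\bullet(\mathcal C)_{D_\bullet/}\to\PF(\mathcal C)_{D/}$ is fully faithful with essential image the morphisms $D\to E$ with $E$ pointed, and \Cref{defn:tensor inversion PF}/\Cref{thm:PF sheafy stabilization} says $\PF_\bullet(\mathcal C)_{D_\bullet[A_\bullet^{\otimes-1}]/}\to\PF_\bullet(\mathcal C)_{D_\bullet/}$ is fully faithful with essential image the morphisms $D_\bullet\to E$ satisfying the conditions of \Cref{lem:crit for map from tensor inversion} for $A_\bullet$. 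Composing, $\PF_\bullet(\mathcal C)_{D[A^{\wedge-1}]/}\to\PF(\mathcal C)_{D/}$ is fully faithful with essential image exactly those $\phi:D\to E$ with $E$ pointed such that the adjunct $D_\bullet\to E$ satisfies the $A_\bullet$-inversion condition. The two listed equivalent conditions (a), (b) are then just condition (1), (2) of \Cref{lem:crit for map from tensor inversion} transported across the pointing adjunction: condition (a) is that $D(S)\to E(S)$ sends $(A_\bullet)_S=\{\cofib(a)\}$ to $\Pic E(S)$ — equivalently the pointed functor $D_\bullet(S)\to E(S)$ does — and (b) is the ``local'' reformulation, where one uses that $\cofib(\phi(a))$ is the image of $\cofib(a)\in D_\bullet(S_1)$ under $D_\bullet(S_1)\to E(S_1)$ and that $S_0\to S_1$ having $A_\bullet$-lifts in the sense of \Cref{defn:A-lifts} for the family $A_\bullet$ is what is asserted; \Cref{lem:crit for map from tensor inversion} gives the equivalence of (a) and (b) once we know the covering family is also a $D$-covering family, which holds by \Cref{thm:D-topology}.

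For \cref{itm:ptd PF sheafy stabilization/descent}: apply \cref{itm:PF sheafy stabilization/descent} of \Cref{thm:PF sheafy stabilization} to $D_\bullet, A_\bullet$. The hypothesis there requires that for quasi-admissible $X'\to X(\infty)$ with $X'$ $A_\bullet$-good, $D_\bullet(X\times_{X(\infty)}X')^\op$ is limiting. But by \cref{itm:pointing/limits} of \Cref{prp:pointing}, if $D(X\times_{X(\infty)}X')^\op$ is limiting and the structure maps are quasi-admissible (so their pullbacks preserve terminal objects), then $D_\bullet(X\times_{X(\infty)}X')^\op$ is also limiting — which is exactly what our hypothesis provides. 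Hence $D_\bullet[A_\bullet^{\otimes-1}]X^\op = D[A^{\wedge-1}]X^\op$ is limiting. For \cref{itm:ptd PF sheafy stabilization/generation}: this is \cref{itm:PF sheafy stabilization/generation} of \Cref{thm:PF sheafy stabilization} applied to $D_\bullet, A_\bullet$ at the $A_\bullet$-good object $S$, combined with \cref{itm:pointing/generation} of \Cref{prp:pointing}, which says $D_\bullet(S)$ is generated under sifted (simplicial) colimits by the essential image of the pointing functor $D(S)\to D_\bullet(S)$, $M\mapsto \pt\amalg M = \Sigma^\infty_{A_S,+}$-preimage; so every object of $D[A^{\wedge-1}](S)$, being a filtered colimit of $(\Sigma^\infty_{(A_\bullet)_S}\alpha)^{\otimes-1}\otimes\Sigma^\infty_{A_\bullet} N$ with $N\in D_\bullet(S)$ and $N$ in turn a sifted colimit of pointed $M$'s, is a sifted colimit of the asserted generators. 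The main obstacle is bookkeeping the compatibility of $f^*$ with $\cofib$ on pointed objects and making sure ``$A_\bullet$-good'' and ``$A_\bullet$-lifts'' in the sense used in the theorem really coincide with \Cref{defn:A-good} and \Cref{defn:A-lifts} applied to the family $A_\bullet$; once this identification is nailed down, everything is a formal composition of the already-established results.
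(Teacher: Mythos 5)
Your proposal is correct and follows essentially the same route as the paper: verify via \Cref{rmk:describe pointing} that $f^*\cofib(b)\simeq\cofib(f^*b)$ so that $D_\bullet, A_\bullet$ satisfies the hypotheses of \Cref{thm:PF sheafy stabilization}, then obtain the universal property by composing the fully faithful slice functors from \Cref{prp:pointing} and \Cref{thm:PF sheafy stabilization}, and deduce descent and generation from the corresponding items of those two results. The paper's proof is just a terser version of the same reduction (it checks that the $D$-covering family is a $D_\bullet$-covering family via \Cref{thm:D-topology} rather than your conservativity-of-forgetful-functors argument, but both work).
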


In fact, the stabilizations produced by \Cref{thm:ptd PF sheafy stabilization} often produce pullback formalisms that take values in stable categories:
\begin{lem} \label{lem:auto stability}
	Let $\mathcal C$ be a pullback context, and let $D$ be a pointed pullback formalism on $\mathcal C$. For any $S \in \mathcal C$, if $1/\cls{X}$ is $\otimes$-invertible in $D(S)$ for some $X \in \mathcal C_S$ such that $X \to S$ admits a section, then $D(S)$ is a stable category.
	\begin{proof}
		By \Cref{lem:monoidal crit for stability}, it suffices to show that $1/\cls{X}$ is the suspension of some object.

		Since $D$ takes values in pointed categories, \Cref{prp:pointing} and \Cref{thm:Huniv is univ} say that there is a morphism of pullback formalisms $\phi : H^\univ_\bullet \to D$, so $\phi(1/\cls{X}) \simeq 1/\cls{X}$. Since $H^\univ_\bullet(S) \to D(S)$ preserves colimits and zero objects, it preserves suspensions, so it suffices to show that $1/\cls{X}$ is the suspension of some object when $D = H^\univ_\bullet$.

		The square
		\[
			\begin{tikzcd}
				\cls{X} \ar[d] \ar[r] & 1 \ar[d] \\
				0 \ar[r] & 1/\cls{X}
			\end{tikzcd}
		\]
		in $H^\univ_\bullet(S) \simeq \Psh(\mathcal C_S)_{\pt/}$ forgets to the bottom square in the following commutative diagram in $\Psh(\mathcal C_S)$:
		\[
			\begin{tikzcd}
				\yo(X) \ar[d] \ar[r] & \pt \ar[d] \\
				\yo(X) \sqcup \pt \ar[d] \ar[r] & \pt \sqcup \pt \ar[d] \\
				\pt \ar[r] & 1/\cls{X}
			\end{tikzcd}
		\]
		in which all the squares are coCartesian. If $\yo(X)$ has a global section, then we have some $\bar X \in \Psh(\mathcal C_S)_{\pt/}$ lying over $\yo(X) \in \Psh(\mathcal C_S)$, so the outer square lifts to a square in $\Psh(\mathcal C_S)_{\pt/} \simeq H^\univ_\bullet(S)$:
		\[
			\begin{tikzcd}
				\bar X \ar[d] \ar[r] & \pt \ar[d] \\
				\pt \ar[r] & 1/\cls{X}
			\end{tikzcd}
		,\]
		which is coCartesian, since the functor $\Psh(\mathcal C_S)_{\pt/} \to \Psh(\mathcal C_S)$ is conservative and preserves pushouts.

		Thus,
		\[
			1/\cls{X} \simeq \Sigma \bar X
		.\]
	\end{proof}

\end{lem}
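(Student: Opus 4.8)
The plan is to reduce stability of $D(S)$ to the single assertion that the $\otimes$-invertible object $1/\cls{X}$ is a suspension, and then verify this by an explicit computation in the universal pointed pullback formalism. For the reduction I would invoke a monoidal criterion for stability (\Cref{lem:monoidal crit for stability}): a presentable symmetric monoidal category is stable whenever it contains a $\otimes$-invertible object of the form $\Sigma M$. So it is enough to write $1/\cls{X} \simeq \Sigma M$ for some $M \in D(S)$.

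To get at the suspension I would exploit universality. Since $D$ is pointed, \Cref{prp:pointing} together with \Cref{thm:Huniv is univ} supplies a morphism of pullback formalisms $\phi : H^\univ_\bullet \to D$. The induced functor $H^\univ_\bullet(S) \to D(S)$ lies in $\CAlg(\PrL)$ and preserves the zero object, hence preserves pushouts and therefore both suspensions and cofibres; moreover it sends $\cls{X}$ to $\cls{X}$ and the monoidal unit to the monoidal unit, so it carries the cofibre $1/\cls{X}$ formed in $H^\univ_\bullet(S)$ to the one formed in $D(S)$. Thus it suffices to exhibit $1/\cls{X}$ as a suspension in $H^\univ_\bullet(S) \simeq \Psh(\mathcal C_S)_{\pt/}$.

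In $\Psh(\mathcal C_S)_{\pt/}$ I would unwind the definitions: $\cls{X}$ is the image of $\yo(X) \in \Psh(\mathcal C_S)$ under the pointing functor, namely $\yo(X) \sqcup \pt$, and the map $\cls{X} \to 1$ is the map $\yo(X) \sqcup \pt \to \pt \sqcup \pt$ collapsing $\yo(X)$. Forgetting along the conservative, pushout-preserving functor $\Psh(\mathcal C_S)_{\pt/} \to \Psh(\mathcal C_S)$, the cofibre $1/\cls{X}$ becomes the iterated pushout $\pt \sqcup_{\yo(X)} \pt$. The hypothesis that $X \to S$ admits a section gives a global point $\pt \to \yo(X)$, hence a lift $\bar X \in \Psh(\mathcal C_S)_{\pt/}$ of $\yo(X)$; the square computing $\pt \sqcup_{\yo(X)} \pt$ then lifts to the pushout square with corners $\bar X, \pt, \pt, 1/\cls{X}$ in the pointed category (again using that the forgetful functor is conservative and preserves pushouts), so $1/\cls{X} \simeq \Sigma \bar X$, and we are done.

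I expect the only real care to be in the bookkeeping of the last step: correctly identifying $\cls{X}$ and the cofibre inside $\Psh(\mathcal C_S)_{\pt/}$, and noting that the pointing functor, being a left adjoint, preserves colimits but \emph{not} the terminal object — which is precisely why the section is needed to promote the (unreduced) suspension square to a genuine (reduced) suspension square in the pointed category. There is no deeper obstacle; once the universal case is isolated, the argument is a short sequence of (co)Cartesian-square manipulations.
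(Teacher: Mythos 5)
Your proposal is correct and follows the paper's proof essentially step for step: the same reduction via \Cref{lem:monoidal crit for stability}, the same passage to $H^\univ_\bullet$ via \Cref{prp:pointing} and \Cref{thm:Huniv is univ}, and the same identification of $1/\cls{X}$ as $\Sigma\bar X$ using the section to lift the unreduced suspension square to $\Psh(\mathcal C_S)_{\pt/}$. No changes needed.
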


\begin{rmk} \label{rmk:describe pointing}
	For any category $\mathcal A$ with a terminal object $\pt$ and finite coproducts, every object $\pt \xrightarrow{x} X$ of $\mathcal A_{\pt/}$ is a cofibre $\cofib(\pt \coprod x)$ of the image of a map (namely $x$) in $\mathcal A$. We can use this observation to give concrete interpretations of some of the structure of $\mathcal A_{\pt/}$, given that $\mathcal A$ is a symmetric monoidal presentable category.

	\begin{itemize}

		\item If $F : \mathcal A \to \mathcal B$ is a functor, and $\mathcal B$ has a zero object, then for any extension $\bar{F} : \mathcal A_{\pt/} \to \mathcal B$ of $F$ that preserves finite colimits, we have that $\bar F$ sends the object $\pt \xrightarrow{x} X$ of $\mathcal A_{\pt/}$ to $\cofib(F(x))$.

		\item Given a symmetric monoidal presentable category $\mathcal A \in \CAlg(\PrL)$, the monoidal product $\otimes : \mathcal A_{\pt/} \times \mathcal A_{\pt/} \to \mathcal A_{\pt/}$ is given by a ``smash product'' built using the monoidal product of $\mathcal A$.
		
	\end{itemize}
\end{rmk}

\begin{proof}[Proof of \Cref{thm:ptd PF sheafy stabilization}]
	Note that by \cref{itm:pointing/limits} of \Cref{prp:pointing}, or \Cref{thm:D-topology}, $D_\bullet$ has descent for the $D$-covering topology. Thus, By \Cref{rmk:describe pointing}, we may apply \Cref{thm:PF sheafy stabilization} to $D_\bullet$ and the family $\{(A_\bullet)_S\}_{S \in \mathcal C}$.

	\begin{enumerate}

		\item \Cref{itm:ptd PF sheafy stabilization/univ} follows immediately from \Cref{prp:pointing} and the fact that $\Sigma_{A_\bullet}^\infty$ is a formal $\otimes$-inversion of $A_\bullet$ by \Cref{thm:PF sheafy stabilization}.

		\item \Cref{itm:ptd PF sheafy stabilization/descent} follows immediately from \cref{itm:pointing/limits} of \Cref{prp:pointing} and \cref{itm:PF sheafy stabilization/descent} of \Cref{thm:PF sheafy stabilization}.

		\item \Cref{itm:ptd PF sheafy stabilization/generation} follows immediately from \cref{itm:pointing/generation} of \Cref{prp:pointing} and \cref{itm:PF sheafy stabilization/generation} of \Cref{thm:PF sheafy stabilization}.

	\end{enumerate}
\end{proof}

\begin{proof}[Proof of \Cref{prp:pointing}]
	We know that for any $\mathcal A \in \CAlg(\PrL)$, there is a map $\mathcal A \to \mathcal A_{\pt/}$ in $\CAlg(\PrL)$ that lies over the functor $M \mapsto \pt \coprod M$ in $\widehat{\Cat}$, and such that for any map $\mathcal A \to \mathcal B$ in $\CAlg(\PrL)$, if $\mathcal B$ has a zero object, then there is a contractible space of extensions $\mathcal A_{\pt/} \to \mathcal B$ in $\CAlg(\PrL)$.

	Note that if we consider the wide subcategory of $\CAlg(\PrL)$ where the morphisms send initial objects to terminal objects, then we find that $\mathcal A \to \mathcal A_{\pt/}$ defines a monomorphism in the opposite of this category\footnote{Indeed, if $\mathcal B$ has no zero object, then there are no colimit-preserving functors $\mathcal A \to \mathcal B$ that send the initial object to a terminal object, and otherwise, all colimit-preserving functors send the initial object to the terminal object}, so by \cite[Proposition A.1]{objwise-mono}, it follows that there is a map $D \to D_\bullet$ in $\Psh_{\CAlg(\PrL)}(\mathcal C)$ such that for each $S \in \mathcal C$, the functor $D(S) \to D_\bullet(S)$ is equivalent to $D(S) \to D(S)_{\pt/}$.

	If $f^* : D(Y) \to D(X)$ preserves terminal objects, $f^* : D_\bullet(Y) \to D_\bullet(X)$ is simply the induced functor $D(Y)_{\pt/} \to D(X)_{\pt/}$, so that
	\[
		\begin{tikzcd}
			D(Y) \ar[d] \ar[r] & D(Y)_{\pt/} \ar[d] \\
			D(X) \ar[r] & D(X)_{\pt/}
		\end{tikzcd}
	\]
	is right adjointable. Since the right adjoints preserve and reflect limits, it follows that if $D(Y) \to D(X)$ preserves limits, then so does $D_\bullet(Y) \to D_\bullet(X)$, so if $f$ is quasi-admissible, then $f^* : D_\bullet(Y) \to D_\bullet(X)$ has a left adjoint, and the square
	\[
		\begin{tikzcd}
			D(Y) \ar[d] \ar[r] & D(X) \ar[d] \\
			D_\bullet(Y) \ar[r] & D_\bullet(X)
		\end{tikzcd}
	\]
	is left adjointable since its transpose is right adjointable. This shows that $D \to D_\bullet$ respects quasi-admissibility. 

	Thus, Proposition \ref{prp:map from PF is to PF} says that $D_\bullet$ is a pullback formalism. Since we can produce $D \to D_\bullet$ for any $D \in \PF(\mathcal C)$, in order to show that $D \to D_\bullet$ is the unit for a left adjoint of the inclusion $\PF_\bullet(\mathcal C) \to \PF(\mathcal C)$, by \cite[Proposition 5.2.7.8]{htt}, it suffices to show that for any pointed pullback formalism $E$ on $\mathcal C$, the map
	\begin{equation} \label{eqn:pointing}
		\PF(\mathcal C)(D_\bullet, E) \to \PF(\mathcal C)(D, E)
	\end{equation}
	is an equivalence.

	By our construction of $D_\bullet$ and \cite[Proposition A.11]{objwise-mono}, we know that
	\[
		\Psh_{\CAlg(\PrL)}(\mathcal C)(D_\bullet, E) \to \Psh_{\CAlg(\PrL)}(\mathcal C)(D, E)
	\]
	is an equivalence of spaces, so in order to show that \eqref{eqn:pointing} is an equivalence, it suffices to show that
	\[
		\begin{tikzcd}
			\PF(\mathcal C)(D_\bullet, E) \ar[d] \ar[r] & \PF(\mathcal C)(D, E) \ar[d] \\
			\Psh_{\CAlg(\PrL)}(\mathcal C)(D_\bullet, E) \ar[r] & \Psh_{\CAlg(\PrL)}(\mathcal C)(D, E)
		\end{tikzcd}
	\]
	is Cartesian, but this follows from Proposition \ref{prp:map from PF is to PF}.

	Now we will address the listed statements:
	\begin{enumerate}

		\item For any $S \in \mathcal C$, the functor $D(S) \to D_\bullet(S)$ is equivalent to $D(S) \to D(S)_{\pt/}$, so the image of $D(S)$ generates $D_\bullet(S)$ under simplicial colimits.\footnote{This follows from \cite[Proposition 4.8.2.11]{ha}, which implies that $D(S) \to D(S)_{\pt/}$ is the left adjoint of a monadic adjunction.}

		\item Write $\PrL_*$ for the wide subcategory of $\PrL$ whose morphisms are those that preserve terminal objects. Then the inclusion $\PrL_* \to \PrL$ preserves and reflects limits, and the limit-preserving (see \cite[Proposition 5.5.3.13]{htt}) inclusion $\PrL \to \widehat{\Cat}$ restricts on $\PrL_*$ to a functor that lifts through $\widehat{\Cat}_{\Delta^0/} \to \widehat{\Cat}$, so we have a commutative square
			\[
				\begin{tikzcd}
					\PrL_* \ar[d] \ar[r] & \PrL \ar[d] \\
					\widehat{\Cat}_{\Delta^0/} \ar[r] & \widehat{\Cat}
				\end{tikzcd}
			\]
			of functors all of which preserve limits. Now, by our hypothesis, $D \circ F^\op$ can be seen as a functor that lands in $\PrL_*$, in which case $D_\bullet \circ F^\op$ is given by simply postcomposing $D \circ F^\op : (K^\op)^\triangleleft \to \widehat{\Cat}_{\Delta^0/}$ by the right adjoint of the functor $\Delta^0 \star - : \widehat{\Cat} \to \widehat{\Cat}_{\Delta^0/}$.\footnote{The fact that this is a right adjoint follows from \cite[Proposition 1.2.9.2]{htt} and \cite[Corollary 2.1.2.2]{htt}. Indeed, the first result gives an adjunction between model categories in which it is clear that the left adjoint preserves cofibrations, and the second result implies that the right adjoint preserves fibrations, so this is actually a Quillen adjunction, which implies that we have the desired adjunction of ($\infty$-)categories.}

			Since $\CAlg(\PrL) \to \PrL$ preserves limits, and $\PrL_* \to \PrL$ reflects limits, $D \circ F^\op$ is limiting, so since the right adjoint functor $\widehat{\Cat}_{\Delta^0/} \to \widehat{\Cat}$ preserves limits (as a right adjoint), the functor $D_\bullet \circ F^\op : (K^\op)^\triangleleft \to \widehat{\Cat}_{\Delta^0/}$ is limiting. Finally, since $\PrL_* \to \widehat{\Cat}_{\Delta^0/}$ reflects limits, $\PrL_* \to \PrL$ preserves limits, and $\CAlg(\PrL) \to \PrL$ reflects limits, it follows that $D_\bullet \circ F^\op$ is limiting also as a functor to $\CAlg(\PrL)$.

		\item We must show that if $f_* : D(X) \to D(Y)$ preserves $K^\triangleleft$-indexed colimits, then $f_* : D_\bullet(X) \to D_\bullet(Y)$ preserves $K$-indexed colimits.

			Indeed, since the latter functor is a limit-preserving functor between pointed categories, it is a functor between pointed categories that preserves zero objects, so it suffices to show that it preserves $K^\triangleleft$-indexed colimits.

			Consider the commutative diagram
			\[
				\begin{tikzcd}
					D(X)_{\pt/} \ar[d] \ar[r, "f_*"] & D(Y)_{\pt/} \ar[d] \\
					D(X) \ar[r, "f_*"'] & D(Y)
				\end{tikzcd}
			\]
			given by taking right adjoints in the commutative diagram
			\[
				\begin{tikzcd}
					D(Y) \ar[d] \ar[r, "f^*"] & D(X) \ar[d] \\
					D_\bullet(Y) \ar[r, "f^*"'] & D_\bullet(X)
				\end{tikzcd}
			.\]
			We note that the vertical arrows in the diagram of right adjoints are conservative and preserve weakly contractible colimits, so since $f_* : D(X) \to D(Y)$ preserves $K^\triangleleft$-indexed colimits, we have that the composite $D_\bullet(X) \to D(Y)$ preserves $K^\triangleleft$-indexed colimits, but since this factors as
			\[
				D_\bullet(X) \xrightarrow{f_*} D_\bullet(Y) \simeq D(Y)_{\pt/} \to D(Y)
			,\]
			and the last functor is conservative and preserves $K^\triangleleft$-indexed colimits, it follows that the first functor preserves $K^\triangleleft$-indexed colimits.

	\end{enumerate}

\end{proof}

\section{Applications to Motivic Homotopy Theories on Stacks} \label{S:apps}

We now give some examples of how to apply our abstract framework to produce both familiar and new examples of coefficient systems in the form of pullback formalisms. Our results will not only allow us to construct these examples, but also deduce important properties about each one.

We will be particularly interested in versions of ``motivic homotopy'' in different stacky contexts. The idea of emulating homotopy theory in settings outside of topology was introduced by Fabien Morel and Vladimir Voevodsky in \cite{Voe-A1} and \cite{A1htpysch}, and developed in the setting of algebraic geometry. For a category $\mathcal C$ of geometric objects, the general construction is given by choosing an interval for $\mathcal C$ that defines a notion of homotopy, as well as a suitable Grothendieck topology on $\mathcal C$, and defining the homotopy category over $S \in \mathcal C$ to be the category of ``homotopy-invariant'' sheaves on a suitable subcategory of $\mathcal C_{/S}$. The stable homotopy category is then defined by considering pointed objects and formally adjoining $\wedge$-inverses of Thom spaces of vector bundles.

In our setting, the relevant data from the interval object and Grothendieck topology are encoded in the form of a pseudotopology on $\mathcal C$, and the appropriate subcategories of the slice categories are determined by a choice of quasi-admissibility structure on $\mathcal C$. We can then construct the unstable homotopy categories using the construction of \Cref{cor:describe invariance localization of Huniv}, and stabilized versions using \Cref{thm:ptd PF sheafy stabilization}, or simply \Cref{prp:pointing} and \Cref{prp:PF stabilization}.

We will produce these generalized homotopy categories in three different settings.
\begin{enumerate}

	\item In \S\ref{S:alg}, we will recover the motivic homotopy theory of algebraic stacks, as developed in \cite{SixAlgSt}, and which specializes to the equivariant motivic homotopy theory of schemes, as developed in \cite{sixopsequiv}.

	\item In \S\ref{S:diff}, we will recover the motivic homotopy theory of differentiable stacks, as developed in \cite{TwAmb}, and which specializes to the usual topological equivariant homotopy theory.

	\item Finally, in \S\ref{S:hol}, we will produce a motivic homotopy theory for complex analytic stacks.

\end{enumerate}

\begin{rmk}[Some notes on the stacky setting:] \label{rmk:stacky challenges}

In classical non-stacky settings, the stabilization procedure often involves adjoining $\wedge$-inverses of only a single type of sphere. Indeed, since vector bundles are locally trivial, one can use descent properties to show that $\otimes$-invertibility of the Thom space of a trivial vector bundle implies $\otimes$-invertibility of Thom spaces of all vector bundles. In the stacky setting this is no longer the case, and we need to explicitly adjoin $\wedge$-inverses of Thom spaces of more general vector bundles.

This fact makes the stabilization process in the setting of stacks more difficult, and is the reason we need to use the more complicated \Cref{thm:PF sheafy stabilization} rather than the more straightforward \Cref{prp:PF stabilization}. Indeed, if we want to stabilize a pullback formalism $D$ on $\mathcal C$, and the collection of objects in $D(S)$ to which we adjoin $\wedge$-inverses varies significantly as $S$ ranges over the objects of $\mathcal C$, then it is not so easy to establish that quasi-admissible maps have ``$A$-lifts'', which is the condition that we use to show that the straightforward stabilization process of \Cref{prp:PF stabilization} produces a pullback formalism.

Another notable challenge that sometimes arises in the stacky case is that intervals may not generate the correct notion of homotopy invariance. For a suitable category $\mathcal C$ of geometric objects, it is generally the case that interval-homotopy is enough to make vector bundle projections into homotopy equivalences: indeed, if $V \to S$ is a vector bundle projection, then we can show that the zero section $S \to V$ defines a homotopy inverse by using the scalar multiplication of $V$ by the interval object to exhibit a homotopy between the composite $V \to S \to V$ and the identity. However, if $F \to S$ is a torsor under $V$, then we would generally like for $F \to S$ to also be a homotopy equivalence, but we no longer know that we have a section $S \to F$, nor a scalar multiplication of $F$ by the interval.

It is often the case that in non-stacky settings, or even sufficiently nice stacky settings, we have that $S$ can be covered by objects that have ``cohomological affineness'' properties which guarantee that $F \to S$ locally admits a section. Since $F \to S$ is a torsor under a vector bundle, this shows that $F \to S$ is locally equivalent to a vector bundle projection, so that $F \to S$ is locally a homotopy equivalence. In more general stacky settings, this may not be possible, and the fact that vector bundle torsors give equivalences in our homotopy categories is often quite useful, so we will need to impose it explicitly. Fortunately this is not a problem for us, as this sort of invariance property is still easily encoded by pseudotopologies.

\end{rmk}

\subsection{Algebraic Motivic Homotopy} \label{S:alg}

In this section, we will recover the usual notions motivic spaces and motivic spectra over algebraic stacks, as developed in \cite{SixAlgSt}, and in \cite{sixopsequiv} for quotient stacks.

We refer to \Cref{S:qproj} for notions of quasi-projectivity for algebraic stacks.

For the purposes of this section, we will use the following definition of the Nisnevich site of derived algebraic stacks.
\begin{defn}
	Let $\AlgStk$ be the category of (possibly derived) algebraic stacks, and define the Nisnevich topology on $\AlgStk$ to be the smallest Grothendieck topology such that the Zariski covering families cover, and families of the form $\{U,\tilde X \to X\}$ are covering when $U \to X$ is an open immersion, and $\tilde X \to X$ is an \'{e}tale map such that for any closed immersion $Z \to X$ that is disjoint form $U$, we have that $Z \times_{\tilde X} \tilde X \to Z$ is an equivalence.
\end{defn}
This definition of the Nisnevich topology may not be standard for (derived) algebraic stacks that are not qcqs, but in the qcqs case, the representable Nisnevich covers in the above definition recover the Nisnevich topology of \cite[Definition 2.4]{SixAlgSt}.

\begin{defn}
	Let $\AlgStk$ be the category of (possibly derived) algebraic stacks, and fix a quasi-admissibility structure on $\AlgStk$.

	We define a small quasi-admissible pseudotopology $\tau_\alg$ on $\AlgStk$ as the union of the Grothendieck topology $\tau_\Nis$ of quasi-admissible Nisnevich covers, along with the pseudotopology consisting of quasi-admissible vector bundle torsors. Also write $\tau_{\aff^1}$ for the pseudotopology consisting of projections $\aff^1_X \to X$, for $X \in \AlgStk$.

	Let $H^\alg$ be the pullback formalism $H^{\tau_\alg}$. As in Remark \ref{rmk:abuse notation for univ PF}, we will also write $H^\alg$ to denote its restriction to any full subcategory $\mathcal C$ of $\AlgStk$ such that for any quasi-admissible map $X \to Y$, if $Y \in \mathcal C$, then $X \in \mathcal C$.
\end{defn}

\begin{rmk}
	If every quasi-admissible map in $\AlgStk$ is representable by classical stacks, then the full subcategory of classical algebraic stacks is a full anodyne pullback subcontext of $\AlgStk$. Readers who wish to only consider classical stacks may use this fact to replace $\AlgStk$ by the category of classical algebraic stacks in what follows.
\end{rmk}

\begin{rmk}
	Generally, any quasi-admissibility structure we consider on $\AlgStk$ will satisfy the following:
	\begin{itemize}

		\item Every quasi-admissible map is qcqs, representable, and smooth.

		\item Every smooth quasi-projective map is quasi-admissible. In particular, qcqs open immersions are quasi-admissible, and vector bundle projections are quasi-admissible.

	\end{itemize}
	
\end{rmk}

\begin{rmk}
	Suppose that every quasi-admissible map in $\AlgStk$ is quasi-projective, and that every quasi-projective \'{e}tale map is quasi-admissible. Then for any qcqs algebraic stack $S$, the Grothendieck topology on $\AlgStk_S$ induced by the quasi-projective Nisnevich topology on $\AlgStk$ is the same as the ordinary Nisnevich topology on $\AlgStk_S$, since by Lemma \ref{lem:map of qproj is qproj}, any map in $\AlgStk_S$ is quasi-projective.
\end{rmk}


Before proceeding to our result on the properties of $H^\alg$, and the construction of its stabilization $\SH^\alg$, we discuss two examples of categories of algebraic stack that will be of central interest to us in this section:
\begin{exa}
	Let $\mathcal C^{\alg, \lred}$ be the category of qcqs algebraic stacks that admit quasi-projective Nisnevich covers by global quotients $X/G$, where $G$ is a linearly reductive group scheme over an affine scheme, and $X$ is a $G$-quasi-projective $G$-scheme.

	Let $\mathcal C^{\alg, \nice}$ be the category of qcqs derived algebraic stacks with separated diagonal whose stabilizers are nice groups in the sense of \cite[Definition 2.1(i)]{SixAlgSt}, which we recall here for convenience: an fppf affine group scheme $G$ over an affine scheme $S$ is \emph{nice} if it is an extension of a finite \'{e}tale group scheme of order prime to the residue characteristic of $S$, by a group scheme of multiplicative type.
	
	Note that if $X$ is a qcqs derived algebraic stack with separated diagonal, and $X$ is a tame derived Deligne-Mumford stack, or a tame algebraic stack in the sense of \cite[\S3]{tame-stacks}, then $X \in \mathcal C^{\alg,\nice}$ by \cite[Examples 2.15 and 2.16]{SixAlgSt}. 

	Furthermore, by \cite[Theorem 2.12]{SixAlgSt}, any object of $\mathcal C^{\alg,\nice}$ that has affine diagonal is also in $\mathcal C^{\alg, \lred}$.
\end{exa}

\begin{rmk}
	Note that \cite[Theorem 2.12(ii)]{SixAlgSt} says that the objects of $\mathcal C^{\alg,\nice}$ are precisely the nicely scalloped stacks in the sense of \cite[Definition 2.9(iii)]{SixAlgSt}. Therefore, if $G$ is a nice group scheme over an affine scheme, and $G$ acts on a qcqs derived algebraic space $X$, then $X/G$ is in $\mathcal C^{\alg, \nice}$ by \cite[Theorem 2.14(ii)]{SixAlgSt}. In particular, for any qcqs map $X \to Y$ in $\AlgStk$ that is representable by derived algebraic spaces, if $Y \in \mathcal C^{\alg, \nice}$, then $X \in \mathcal C^{\alg, \nice}$.

	Using \Cref{rmk:full anodyne subctx}, it follows that if every quasi-admissible map in $\AlgStk$ is quasi-projective, then $\mathcal C^{\alg, \lred}$ is a full anodyne pullback subcontext of $\AlgStk$, and if every quasi-admissible map in $\AlgStk$ is qcqs and representable by algebraic spaces, then by \cite[Theorem 2.12(ii)]{SixAlgSt}, $\mathcal C^{\alg, \nice}$ is a full anodyne pullback subcontext of $\AlgStk$.
\end{rmk}

\begin{rmk} \label{rmk:Halg is H}
	We compare $H^\alg$ to the ``$(*, \sharp, \otimes)$-formalisms'' considered in \cite{SixAlgSt}:
	\begin{itemize}

		\item Suppose the quasi-admissible maps in $\AlgStk$ are precisely the qcqs representable smooth morphisms. Then the pullback formalism $H^\alg$ on $\mathcal C^{\alg, \nice}$ is equivalent to the pullback formalism considered in \cite[\S3]{SixAlgSt} by \cite[A.2.2(a)]{SixAlgSt}.

		\item Suppose the quasi-admissible maps in $\AlgStk$ are precisely the quasi-projective smooth morphisms. Then the pullback formalism $H^\alg$ is equivalent to the pullback formalism considered in \cite[A.2.2]{SixAlgSt} wherever it is defined.

	\end{itemize}

	Note also that by \cite[A.2.2]{SixAlgSt}, both of these quasi-admissibility structures define the same $\CAlg(\PrL)$-valued presheaf $H^\alg$ on $\mathcal C^{\alg, \nice} \cap \mathcal C^{\alg, \lred}$. In particular, this holds for the full subcategory of stacks in $\mathcal C^{\alg, \nice}$ that have affine diagonal.

	If $G$ is a linearly reductive group scheme over an affine scheme, and $X$ is a $G$-quasi-projective $G$-scheme then in the second case \cite[A.2.4]{SixAlgSt} says that
	\[
		H^\alg(X/G) \simeq H^G(X)
	,\]
	where the right-hand side is the construction considered in \cite{sixopsequiv}. This is the category of homotopy-invariant Nisnevich sheaves on the site of smooth $G$-quasi-projective schemes over $X$.

	In the first case, the same result holds by \cite[Remark 3.8]{SixAlgSt} if we assume $G$ is a nice group scheme over an affine scheme.
\end{rmk}

We now come to our main result outlining the properties of $H^\alg$:
\begin{prp}[Properties of $H^\alg$] \label{prp:univ prop of Halg}
	Suppose that every qcqs open immersion in $\AlgStk$ is quasi-admissible, and that $\mathcal C \subseteq \AlgStk$ is a full anodyne pullback subcontext consisting only of qcqs algebraic stacks.

	We record the following properties of the pullback formalism $H^\alg$ on $\mathcal C$:
	\begin{enumerate}

		\item $H^\alg$ has descent for the quasi-admissible Nisnevich topology.

		%
		%
		%
		%

		\item Let $D$ be a pullback formalism on $\mathcal C$ that satisfies the following:
			\begin{description}

				\item[Reduced] $D(\emptyset)$ is contractible.

				\item[Excision] If $U \to S$ is an open immersion in $\mathcal C$, and $X \to S$ is a quasi-admissible \'{e}tale map that induces an equivalence away from $U$, then for any $M \in D(S)$,
					\[
						\begin{tikzcd}
							D(S;M) \ar[d] \ar[r] & D(U;M) \ar[d] \\
							D(X;M) \ar[r] & D(X \times_S U; M)
						\end{tikzcd}
					\]
					is Cartesian.


				\item[Homotopy invariance] If $V \to S$ is a quasi-admissible vector bundle torsor, and $M \in D(S)$, then
					\[
						D(S;M) \to D(V;M)
					\]
					is an equivalence.

			\end{description}
			Then the space of morphisms of pullback formalisms $H^\alg \to D$ is contractible. For any other pullback formalism $D$, there are no maps $H^\alg \to D$.

	\end{enumerate}
	\begin{proof} \hfill
		\begin{enumerate}

			\item This follows immediately from \Cref{thm:D-topology}.

			\item Note that $\PF(\mathcal C)_{H^\alg/} \to \PF(\mathcal C)$ is fully faithful with essential image given by the $\tau_\alg$-invariant pullback formalisms. Thus, we may conclude by \Cref{rmk:describe qadm acyclic}, and the description of Nisnevich descent given by \cite[Proposition 2.5]{SixAlgSt}. In fact, we need to adapt \cite[Proposition 2.5]{SixAlgSt} since we are only considering the quasi-admissible Nisnevich topology, but the same argument using \cite[Theorem 2.2.7]{locspalg} applies in this case, since the diagonal of any quasi-admissible \'{e}tale map is a quasi-admissible \'{e}tale map.

		\end{enumerate}
		
	\end{proof}
\end{prp}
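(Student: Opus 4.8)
The plan is to deduce both assertions from the universal property of $H^{\tau_\alg}$ in \Cref{cor:describe invariance localization of Huniv} together with the descent machinery of \Cref{S:qadm covers}, the only genuinely geometric input being the Morel--Voevodsky-type criterion for Nisnevich descent from \cite{SixAlgSt}. (Size issues are handled by \Cref{rmk:abuse notation for univ PF}, since $\tau_\alg$, the formalism $H^\alg$, and all three conditions in the statement are compatible with passing to small full anodyne pullback subcontexts, so we may assume $\mathcal C$ small.) For (1): since $\tau_\alg$ contains $\tau_\Nis$, \Cref{cor:describe invariance localization of Huniv} shows every quasi-admissible Nisnevich covering sieve is $H^\alg$-acyclic; such a sieve is $\yo$-quasi-admissible by \Cref{lem:sieve gen by qadm iff qadms are cofinal}, hence an $H^\alg$-covering sieve by \Cref{thm:D-topology}(3). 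Thus the quasi-admissible Nisnevich topology is contained in the $H^\alg$-covering topology, and \Cref{thm:D-topology}(1) applied to $\id_{H^\alg}$ yields Nisnevich descent for $H^\alg$.

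\textbf{Part (2): reduction and the homotopy-invariance part.} By \Cref{cor:describe invariance localization of Huniv} the slice projection $\PF(\mathcal C)_{H^\alg/} \to \PF(\mathcal C)$ is fully faithful with essential image the full subcategory of $\tau_\alg$-invariant pullback formalisms, so $\PF(\mathcal C)(H^\alg, D)$ is contractible when $D$ is $\tau_\alg$-invariant and empty otherwise; it therefore suffices to identify the $\tau_\alg$-invariant pullback formalisms on $\mathcal C$ with those satisfying the conditions \textbf{Reduced}, \textbf{Excision}, and \textbf{Homotopy invariance} of the statement. The $\tau_\alg$-acyclic pseudosieves are exactly the quasi-admissible Nisnevich covering sieves together with the maps $\yo(V) \to \yo(S)$ for $V \to S$ a quasi-admissible vector bundle torsor, so $\tau_\alg$-invariance of $D$ is precisely $D$-acyclicity of both families. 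For the torsor family, $D$-acyclicity of the quasi-admissible map $V \to S$ is, by \Cref{lem:qadm acyclic}, the statement $\cls{V;S} \simeq 1$, which \Cref{rmk:describe qadm acyclic} restates as: $D(S;M) \to D(V;M)$ is an equivalence for every $M$ — that is, \textbf{Homotopy invariance} ($\aff^1$-invariance being the trivial-torsor case).

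\textbf{Part (2): the Nisnevich part.} It remains to match $D$-acyclicity of the quasi-admissible Nisnevich covering sieves with \textbf{Reduced} $+$ \textbf{Excision}. First one passes from this category-level acyclicity to the assertion that every cohomology presheaf $D(-;M) \in \Psh(\mathcal C_{/S})$ is a Nisnevich sheaf: the ``only if'' is \Cref{thm:D-topology}(2) (using that $\tau_\Nis$ lies in the $D$-covering topology), and the ``if'' holds because the maps in such a sieve $\mathcal U$ are quasi-admissible, so the comparison $f^*\underline\Hom_S(M,N) \xrightarrow{\sim} \underline\Hom_X(f^*M,f^*N)$ identifies $\Map_{D(X')}(M|_{X'},N|_{X'})$ with $D(X';\underline\Hom_S(M,N))$ for each $X'$ in $\mathcal U$, whence sheafhood of the presheaves $D(-;\underline\Hom_S(M,N))$ forces $D(S) \to D(\mathcal U)$ to be fully faithful, i.e. $\mathcal U$ is $D$-acyclic (the fully faithful left adjoint of \Cref{thm:PF descent}(1) then even upgrades this to descent). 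One then invokes the version of \cite[Proposition 2.5]{SixAlgSt} for the quasi-admissible Nisnevich topology — legitimate because the diagonal of a quasi-admissible \'etale map is again quasi-admissible \'etale, so the reduction via \cite[Theorem 2.2.7]{locspalg} to reducedness and Cartesianness of distinguished squares goes through verbatim — to identify ``each $D(-;M)$ is a Nisnevich sheaf'' with ``$D$ satisfies \textbf{Reduced} and \textbf{Excision}''. The ``no maps'' clause is then immediate from full faithfulness of the slice projection: a $D$ failing one of the three conditions is not $\tau_\alg$-invariant, so no object of $\PF(\mathcal C)_{H^\alg/}$ lies over it.

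\textbf{Main obstacle.} The step I expect to require the most care is this last identification: getting the category-level-versus-space-level dictionary exactly right (the quantification over all coefficient objects $M$, the compatibility of $\underline\Hom$ with pullback along quasi-admissible maps, and the coherence between ``quasi-admissible Nisnevich topology on $\mathcal C_{/S}$'' and the Nisnevich topology used in the cited results), and checking that \cite[Proposition 2.5]{SixAlgSt} and the argument behind \cite[Theorem 2.2.7]{locspalg} indeed survive when ``\'etale'' is replaced systematically by ``quasi-admissible \'etale''.
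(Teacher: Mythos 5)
Your proposal is correct and follows essentially the same route as the paper's proof: part (1) is exactly the intended application of \Cref{thm:D-topology}, and part (2) uses the same reduction via full faithfulness of $\PF(\mathcal C)_{H^\alg/} \to \PF(\mathcal C)$ (from \Cref{cor:describe invariance localization of Huniv}), the dictionary of \Cref{rmk:describe qadm acyclic} between acyclicity and the cohomology-level conditions, and the adapted \cite[Proposition 2.5]{SixAlgSt} via \cite[Theorem 2.2.7]{locspalg}. Your internal-hom argument for the ``if'' direction of the Nisnevich part is just a spelled-out version of the step the paper delegates to \Cref{thm:PF descent} and \Cref{lem:qadm acyclic}.
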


For algebraic stacks with nice stabilizers, the homotopy invariance condition on pullback formalisms reduces to the familiar condition of $\aff^1$-invariance.
\begin{lem} \label{lem:nice=>A1}
	If $X \in \mathcal C^{\alg, \nice}$, and qcqs representable \'{e}tale maps to $X$ are quasi-admissible, then any vector bundle torsor $V \to X$ is a $(\tau_\Nis \cup \tau_{\aff^1})$-acyclic map. In particular, in \Cref{prp:univ prop of Halg}, if $\mathcal C \subseteq \mathcal C^{\alg,\nice}$, and every qcqs representable smooth map is quasi-admissible, then the homotopy invariance condition can be replaced by the condition that for any $S \in \mathcal C$, and $M \in D(S)$, the map $D(S;M) \to D(\aff^1_S;M)$ is an equivalence.
	\begin{proof}
		Follows from \cite[A.2.2(a)]{SixAlgSt}, where the assumption on qcqs representable \'{e}tale maps is used to guarantee that the Nisnevich cover provided by \cite[Theorem 2.12(ii)]{SixAlgSt} is $\tau_\Nis$-acyclic.

		For the simplification of the statement of \Cref{prp:univ prop of Halg}, note that our assumption about quasi-admissible maps ensures both that qcqs representable \'{e}tale maps are quasi-admissible, and that $\tau_{\aff^1}$ consists of quasi-admissible vector bundle torsors, so that $\tau_\Nis \cup \tau_{\aff^1}$ and $\tau_\alg$ have the same acyclic maps on $\mathcal C$.
	\end{proof}
\end{lem}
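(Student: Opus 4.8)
The plan is to deduce the first assertion from the corresponding statement for Khan--Ravi's motivic homotopy theory, namely the computation underlying \cite[A.2.2(a)]{SixAlgSt}, the only extra input being a bookkeeping check that the Nisnevich covers involved are \emph{quasi-admissible}; and then to read off the simplification of \Cref{prp:univ prop of Halg} by comparing the two pseudotopologies on $\mathcal C$.

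\textbf{First assertion.} Fix $X \in \mathcal C^{\alg,\nice}$ and a vector bundle torsor $p : V \to X$. It suffices to show that $p$ is inverted by the $(\tau_\Nis \cup \tau_{\aff^1})$-localization, equivalently (by \Cref{lem:qadm acyclic} and \Cref{rmk:describe qadm acyclic}) that $p$ is $D$-acyclic for every $(\tau_\Nis \cup \tau_{\aff^1})$-invariant pullback formalism $D$; this is exactly what both parts of the statement require. By \cite[Theorem 2.12(ii)]{SixAlgSt}, $X$ admits a finite scallop decomposition with charts of the form $W/G$, $G$ nice and $W$ a nice $G$-scheme, and the resulting Nisnevich cover $\{Y_i \to X\}$ has the property that each $V \times_X Y_i \to Y_i$ admits a section (using the cohomological affineness built into niceness); being a torsor under the pulled-back bundle with a rational point, it is then isomorphic over $Y_i$ to that bundle itself, which is Zariski-locally trivial and hence, after one more (Zariski, a fortiori Nisnevich) refinement, an iterated $\aff^1$-projection. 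The hypothesis that qcqs representable \'{e}tale maps to $X$ are quasi-admissible ensures that all Nisnevich covers produced this way are quasi-admissible, so their \v{C}ech diagrams are $\tau_\Nis$-acyclic and hence $D$-acyclic. Combining Nisnevich descent for $D$ (valid by $\tau_\Nis$-invariance, cf.\ \Cref{thm:D-topology}) with the stability of $D$-acyclic maps under the relevant colimits and compositions (\Cref{lem:props of acyclic}), $D$-acyclicity of $p$ reduces to that of iterated $\aff^1$-projections, which holds by $\tau_{\aff^1}$-invariance. This is precisely the argument of \cite[A.2.2(a)]{SixAlgSt}; only the quasi-admissibility of the covers must be added, which is exactly where the hypothesis on \'{e}tale maps is used.

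\textbf{Simplification of \Cref{prp:univ prop of Halg}.} Now assume $\mathcal C \subseteq \mathcal C^{\alg,\nice}$ and that every qcqs representable smooth map is quasi-admissible. Then every $\aff^1$-projection is quasi-admissible, and every vector bundle torsor over an object of $\mathcal C$ is qcqs, representable and smooth (Nisnevich-locally a vector bundle projection by the above), hence quasi-admissible; in particular $\tau_{\aff^1}$ consists of quasi-admissible vector bundle torsors, and $\tau_\alg$ is defined on $\mathcal C$. Since an $\aff^1$-projection is a trivial quasi-admissible vector bundle torsor, every $\tau_{\aff^1}$-acyclic pseudosieve is $\tau_\alg$-acyclic; conversely, by the first assertion every quasi-admissible vector bundle torsor is a $(\tau_\Nis \cup \tau_{\aff^1})$-local equivalence. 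Hence $\tau_\Nis \cup \tau_{\aff^1}$ and $\tau_\alg$ have the same local equivalences on $\mathcal C$, so they cut out the same full subcategory $\PF^{\tau_\alg}(\mathcal C) = \PF^{\tau_\Nis \cup \tau_{\aff^1}}(\mathcal C) \subseteq \PF(\mathcal C)$. A pullback formalism lies in this subcategory iff it is reduced, has Nisnevich excision, and is $\aff^1$-invariant, and by \Cref{rmk:describe qadm acyclic} the last condition unwinds to ``$D(S;M) \to D(\aff^1_S;M)$ is an equivalence for all $S \in \mathcal C$ and $M \in D(S)$''. This is the asserted replacement.

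\textbf{Main obstacle.} The genuinely delicate point is the geometric step inside the first assertion: that over the scallop charts the torsor $V$ not merely acquires a section but, after a further Nisnevich refinement, becomes a map assembled from $\aff^1$-projections, and that the entire tower of covers can be taken quasi-admissible so as to be admissible inputs to the descent machinery of \Cref{S:acyclic}. This is packaged into \cite[Theorem 2.12, A.2.2(a)]{SixAlgSt}; our work is confined to the quasi-admissibility bookkeeping, which is the reason for the hypotheses on \'{e}tale (resp.\ smooth) maps.
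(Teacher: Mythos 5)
Your proof is correct and follows the same route as the paper: the paper simply cites \cite[A.2.2(a)]{SixAlgSt} together with \cite[Theorem 2.12(ii)]{SixAlgSt} for the geometric content (Nisnevich-local trivialization of the torsor over nice scallop charts), adds the observation that the hypothesis on qcqs representable \'{e}tale maps makes those covers quasi-admissible and hence $\tau_\Nis$-acyclic, and derives the simplification of \Cref{prp:univ prop of Halg} by noting that $\tau_\Nis \cup \tau_{\aff^1}$ and $\tau_\alg$ then cut out the same invariant pullback formalisms on $\mathcal C$. You have merely unpacked the cited argument and the descent bookkeeping in more detail; no gap.
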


We now come to our main result describing $\SH^\alg$, the stabilized version of $H^\alg$:
\begin{thm} \label{thm:SHalg}
	Let $\mathcal C$ be a full anodyne pullback subcontext of $\AlgStk$ and assume one of the following conditions:
	\begin{enumerate}

		\item \label{itm:SHalg lred}
			The quasi-admissible maps in $\AlgStk$ are the quasi-projective smooth morphisms, and $\mathcal C \subseteq \mathcal C^{\alg, \lred}$.

		\item \label{itm:SHalg nice}
			The quasi-admissible maps in $\AlgStk$ are the qcqs representable smooth morphisms, and $\mathcal C \subseteq \mathcal C^{\alg, \nice}$.

	\end{enumerate}
	
	There is a pullback formalism $\SH^\alg$ on $\mathcal C$ that is initial among pointed pullback formalisms $D$ satisfying the following:
	\begin{description}

		\item[Reduced] $D(\emptyset)$ is contractible.

		\item[Excision] If $U \to S$ is an open immersion in $\mathcal C$, and $X \to S$ is a quasi-admissible \'{e}tale map that induces an equivalence away from $U$, then for any $M \in D(S)$,
			\[
				\begin{tikzcd}
					D(S;M) \ar[d] \ar[r] & D(U;M) \ar[d] \\
					D(X;M) \ar[r] & D(X \times_S U; M)
				\end{tikzcd}
			\]
			is Cartesian.

		\item[Stability] For any linearly reductive group $G$ over an affine scheme, $G$-scheme $S$ such that $S/G \in \mathcal C$, and $G$-equivariant vector bundle $V \to S$, the object $\cls{V/G}/\cls{V/G \setminus 0}$ is $\otimes$-invertible in $D(S)$.

		\item[Homotopy invariance] For any $S \in \mathcal C$, and $M \in D(S)$,
			\begin{enumerate}

				\item In case \ref{itm:SHalg lred}, for any vector bundle torsor $V \to S$, the map $D(S;M) \to D(V;M)$ is an equivalence.

				\item In case \ref{itm:SHalg nice}, the map $D(S;M) \to D(\aff^1_S;M)$ is an equivalence.

			\end{enumerate}

	\end{description}

	We also have the following properties of $\SH^\alg$:
	\begin{enumerate}

		\item $\SH^\alg$ has descent for quasi-admissible Nisnevich covers.

		\item For every vector bundle $V \to S$ in $\mathcal C$, we have that $\cls{V}/\cls{V \setminus 0}$ is $\otimes$-invertible in $\SH^\alg(S)$, and for any torsor $F$ under $V$, the functor $\SH^\alg(S) \to \SH^\alg(F)$ is fully faithful. In fact, this also holds for the pullback formalisms $D$ as above.

		\item\label{itm:SHalg/excision}
			Excision: If $Z \to S$ is a closed immersion, and $X \to S$ is a quasi-admissible \'{e}tale neighborhood of $Z$, then for any $M \in \SH^\alg(S)$,
			\[
				\SH^\alg_Z(S;M) \to \SH^\alg_Z(X;M)
			\]
			is an equivalence, where for any $Y \to S$, pointed pullback formalism $D$, and $M \in D(S)$, $D_Z(M)$ is defined as the fibre of the map of pointed spaces
			\[
				D(Y;M) \to D(Y \times_S (S \setminus Z); M)
			.\]

		\item\label{itm:SHalg/ptwise}
			Objectwise description: Let $G$ be a linearly reductive embdeddable\footnotemark{} group scheme over an affine scheme, and let $S$ be a derived qcqs $G$-scheme such that $S/G \in \mathcal C$. In case \ref{itm:SHalg lred}, also assume $S$ is $G$-quasi-projective, and in case \ref{itm:SHalg nice}, also assume $G$ is nice. Then the natural functor $\Sigma^\infty_\alg : H^\alg_\bullet(S/G) \to \SH^\alg(S/G)$ provided by \Cref{prp:univ prop of Halg} is given by formally adjoining $\otimes$-inverses of objects of the form $p^*(\cls{V}/\cls{V \setminus 0})$, where $p$ is the map $S/G \to \B G$, and $V$ ranges over vector bundles on $\B G$.

			\footnotetext{Recall from \cite[Definition 2.1(ii)]{SixAlgSt} that an fppf affine group scheme $G$ over an affine scheme $S$ is embeddable if it is a closed subgroup of $\GL_S(\mathcal E)$ for some locally free sheaf $\mathcal E$ on $S$.}

		\item\label{itm:SHalg/generation}
			Generation: Let $G$ be a linearly reductive embeddable group scheme over an affine scheme $B$, and let $S$ be a derived qcqs $G$-scheme such that $S/G \in \mathcal C$, and we write $p : S \to B$ is the structure map. Then $\SH^\alg(S/G)$ is generated under sifted colimits by objects of the form
			\[
				\cls{X/G} \otimes (p/G)^*(\cls{V/G}/\cls{(V \setminus 0)/G})^{\otimes -1}
			,\]
			where $V$ ranges over $G$-equivariant vector bundles on $B$, and quasi-admissible maps $(X \to S)/G$, under the following conditions:
			\begin{enumerate}

				\item In case \ref{itm:SHalg lred}, we must assume that $S$ is $G$-quasi-projective (so it must be a classical stack).\footnote{In fact, \cite[Proposition A.8]{SixAlgSt} lets us reduce either to affine $X$, or to affine maps $X \to S$.}

				\item In case \ref{itm:SHalg nice}, we must assume that $G$ is nice.\footnote{In fact, \cite[Proposition 3.7(ii)]{SixAlgSt} lets us reduce to quasi-affine $X$.}

			\end{enumerate}

	\end{enumerate}
	\begin{proof}
		For each $S \in \AlgStk$, let $A_S$ be the collection of morphisms in $H^\alg(S)$ of the form $\cls{V \setminus 0} \to \cls{V}$ for $V$ a vector bundle on $S$.

		We will define a collection $R$ of maps in $\mathcal C$, and a full subcategory $\mathcal B \subseteq \mathcal C$:
		\begin{enumerate}

			\item In case \ref{itm:SHalg lred}, $R$ is the collection of quasi-projective maps, and $\mathcal B$ is the category of linearly quasi-fundamental stacks (see \cite[Definition A.1(ii)]{SixAlgSt}).

			\item In case \ref{itm:SHalg nice}, $R$ is the collection of representable maps, and $\mathcal B$ is the category of nicely quasi-fundamental stacks (see \cite[Definition 2.9(ii)]{SixAlgSt}).

		\end{enumerate}
		
		We will verify the hypotheses of \Cref{thm:ptd PF sheafy stabilization}. Note that \cite[Lemma 4.13 or A.3.1]{SixAlgSt} show that maps in $R$ between objects of $\mathcal B$ have $A_\bullet$-lifts, and the proof of \cite[Lemma 6.3]{sixopsequiv} shows that for any $S \in \mathcal B$, every element of $(A_\bullet)_S$ is symmetric.

		Recall that $H^\alg$ has $\tau_\Nis$-descent by \Cref{prp:univ prop of Halg}.
		\begin{enumerate}

			\item In case \ref{itm:SHalg lred}, note that \Cref{lem:map of qproj is qproj} says that quasi-projective maps between qcqs stacks are closed under composition, so every map in $R$ to an object of $\mathcal B$ is also from an object in $\mathcal B$, so it has $A$-lifts. By \cite[Example A.4]{SixAlgSt} as well as the definition of $\mathcal C^{\alg,\lred}$, every object of $\mathcal C$ has a $\tau_\Nis$-cover by objects of $\mathcal B$, so the hypotheses of \Cref{thm:ptd PF sheafy stabilization} are satisfied.

			\item In case \ref{itm:SHalg nice}, we still have that since any object $S$ of $\mathcal C$ is nicely scalloped by \cite[Theorem 2.12(ii)]{SixAlgSt}, the argument of \cite[Proposition 3.7(i)]{SixAlgSt} shows that there is a diagram $X : K^\triangleright \to \mathcal C$ that sends the cone point to $S$, all maps to quasi-admissible maps, and $K$ to $\mathcal B$. It follows that since every quasi-admissible map is in $R$, which is closed under composition, \cref{itm:A-lifts/source-local} of \Cref{lem:A-lifts} shows that for $B \in \mathcal B$, any map $S \to B$ in $R$ has $A$-lifts. Thus, for any nicely embeddable group scheme $G$ over an affine scheme, and $S$ a qcqs $G$-scheme, since $S/G$ has a map in $R$ to an object of $\mathcal B$, by \cref{itm:A-lifts/13} of \Cref{lem:A-lifts}, every map in $R$ to $S/G$ has $A$-lifts. In particular, the objects of $\mathcal B$ are $A$-good, so since every object of $\mathcal C$ has a quasi-admissible $\tau_\Nis$-cover by nicely quasi-fundamental stacks, the hypotheses of \Cref{thm:ptd PF sheafy stabilization} are satisfied.

		\end{enumerate}

		Thus, we may produce $H^\alg \to \SH^\alg$ by applying \Cref{thm:ptd PF sheafy stabilization}.
		\begin{itemize}

			\item The universal property follows from \Cref{prp:univ prop of Halg}, and \cref{itm:ptd PF sheafy stabilization/univ} of \Cref{thm:ptd PF sheafy stabilization}. We use \Cref{lem:nice=>A1} to get the version of the homotopy invariance statement for case \ref{itm:SHalg nice}. This also shows that vector bundle torsors are $\SH^\alg$-acyclic in either case, and \cref{itm:ptd PF sheafy stabilization/univ} of \Cref{thm:ptd PF sheafy stabilization} also shows that for any $S \in \mathcal C$, all objects of the form $\cls{V}/\cls{V \setminus 0}$ of $\SH^\alg(S)$ are $\otimes$-invertible.

			\item Since $H^\alg$ has descent for quasi-admissible Nisnevich covers, \Cref{thm:D-topology} or \cref{itm:ptd PF sheafy stabilization/descent} of \Cref{thm:ptd PF sheafy stabilization} says that $\SH^\alg$ does as well.

			\item \Cref{itm:SHalg/excision} follows immediately from the fact that $\SH^\alg$ is $\tau_\Nis$-invariant.

			\item \Cref{itm:SHalg/ptwise} follows from \cref{itm:PF sheafy stabilization/ptwise} of \Cref{thm:PF sheafy stabilization} by noting that $S/G \to \B G$ is a map that has $A_\bullet$-lifts, and $S/G$ is $A_\bullet$-good in the sense of \Cref{thm:PF sheafy stabilization}.

			\item Finally, we explain how to deduce \cref{itm:SHalg/generation}. Indeed, the assumptions guarantee that $\B G \in \mathcal B$, and $(p/G) : S/G \to \B G$ is in $R$, so \cref{itm:ptd PF sheafy stabilization/generation} of \Cref{thm:ptd PF sheafy stabilization} says that $\SH^\alg(S/G)$ is generated under sifted colimits by objects of the form
				\[
					\Sigma^\infty_\alg M \otimes (p/G)^*(\cls{V/G}/\cls{(V \setminus 0)/G})^{\otimes -1}
				,\]
				where $V$ ranges over $G$-equivariant vector bundles on $B$, and $M$ ranges over objects of $H^\alg_\bullet(S/G)$. Thus, we may conclude since $H^\alg(S/G)$ is generated under sifted colimits by objects $\cls{X/G}$ for quasi-admissible $(X \to S)/G$.
				%
				%
				%

		\end{itemize}

	\end{proof}
\end{thm}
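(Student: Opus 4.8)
The plan is to realise $\SH^\alg$ as a formal $\wedge$-inversion in the sense of \Cref{thm:ptd PF sheafy stabilization}. First I would instantiate \Cref{setng:stabilization}: for each $S \in \AlgStk$ let $A_S$ be the collection of morphisms $\cls{V \setminus 0} \to \cls{V}$ in $H^\alg(S)$ as $V$ runs over the vector bundles on $S$, so that $(A_\bullet)_S$ consists of the Thom-type objects $\cls{V}/\cls{V \setminus 0}$. The compatibility requirement --- that $\cofib(f^* b)$ be $\otimes$-generated by the $\cofib(a)$, $a \in A_X$, for any $f : X \to Y$ --- is immediate since $f^* \cls{V} \simeq \cls{f^{-1}V}$, so $\cofib(f^* b)$ is literally an element of $(A_\bullet)_X$. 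I would then introduce, depending on the case, a collection of maps $R$ and a full anodyne pullback subcontext $\mathcal B \subseteq \mathcal C$: in case \ref{itm:SHalg lred}, $R$ the class of quasi-projective maps and $\mathcal B$ the category of linearly quasi-fundamental stacks; in case \ref{itm:SHalg nice}, $R$ the class of representable maps and $\mathcal B$ the category of nicely quasi-fundamental stacks. The two ``local'' hypotheses of \Cref{thm:ptd PF sheafy stabilization} are then checked by invoking \cite{SixAlgSt}: maps in $R$ between objects of $\mathcal B$ have $A_\bullet$-lifts by \cite[Lemma 4.13, \S A.3.1]{SixAlgSt}, and for $S \in \mathcal B$ every object of $(A_\bullet)_S$ is symmetric in the sense of \Cref{defn:symmetric} by the argument in the proof of \cite[Lemma 6.3]{sixopsequiv}.

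Next I would verify that every object of $\mathcal C$ admits a quasi-admissible Nisnevich cover by objects of $\mathcal B$, all of which are $A_\bullet$-good. Since $H^\alg$ is $\tau_\Nis$-invariant (\Cref{prp:univ prop of Halg}), such a cover is in particular a quasi-admissible $H^\alg$-covering family (\Cref{thm:D-topology}), which is what the hypothesis of \Cref{thm:ptd PF sheafy stabilization} requires. In case \ref{itm:SHalg lred} the cover comes from \cite[Example A.4]{SixAlgSt} together with the definition of $\mathcal C^{\alg,\lred}$, and $A_\bullet$-goodness of $\mathcal B$ follows because $R$ is closed under composition (\Cref{lem:map of qproj is qproj}), so any map in $R$ into an object of $\mathcal B$ is also out of one. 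In case \ref{itm:SHalg nice} the cover is the nicely scalloped decomposition of \cite[Theorem 2.12(ii), Proposition 3.7(i)]{SixAlgSt}, and $A_\bullet$-goodness of the objects of $\mathcal B$ (and, for $G$ nice, of the quotients $S/G$) is obtained by feeding that scalloped tower through \cref{itm:A-lifts/source-local} and \cref{itm:A-lifts/13} of \Cref{lem:A-lifts}. With all hypotheses in place, \Cref{thm:ptd PF sheafy stabilization} produces $\Sigma^\infty_\alg : H^\alg \to H^\alg[A^{\wedge -1}] =: \SH^\alg$ and directly yields the Nisnevich-descent property, the objectwise description (\cref{itm:SHalg/ptwise}) and the generation statement (\cref{itm:SHalg/generation}), while the invertibility of $\cls{V}/\cls{V \setminus 0}$ and the full faithfulness of $\SH^\alg(S) \to \SH^\alg(F)$ for $F$ a torsor under $V$ follow from the universal property below (the objects are inverted, and vector-bundle torsors are $\tau_\alg$-acyclic hence $\SH^\alg$-acyclic), and excision with supports (\cref{itm:SHalg/excision}) is formal from $\tau_\Nis$-invariance by passing to fibres in the Excision square.

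The universal property would be obtained by combining the universal property of $H^\alg$ from \Cref{prp:univ prop of Halg} (identifying morphisms $H^\alg \to D$ with the Reduced, Excision and Homotopy-invariance conditions on a $\tau_\alg$-invariant $D$) with \cref{itm:ptd PF sheafy stabilization/univ} of \Cref{thm:ptd PF sheafy stabilization} (which adjoins the single extra requirement that each $D(S) \to E(S)$ send $(A_\bullet)_S$ into $\Pic E(S)$); in case \ref{itm:SHalg nice}, \Cref{lem:nice=>A1} is used to rephrase vector-bundle-torsor invariance as the $\aff^1$-invariance appearing in the statement. The one step that needs a small argument is the identification of the ``Stability'' hypothesis as phrased --- invertibility of $\cls{V/G}/\cls{V/G \setminus 0}$ for $G$ linearly reductive and $V$ a $G$-equivariant bundle --- with the requirement that all $\cls{W}/\cls{W \setminus 0}$ be inverted: one direction is trivial, and for the converse one uses \Cref{lem:crit for map from tensor inversion}(2), i.e.\ that $\otimes$-invertibility can be checked $\tau_\Nis$-locally (\Cref{lem:tensor invertibility is local}), applied to a quasi-admissible Nisnevich cover of an arbitrary object of $\mathcal C$ by quasi-fundamental stacks, which are quotient stacks of the relevant form.

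The genuinely hard part is the verification of the $A_\bullet$-lift conditions and of the existence of the Nisnevich/scalloped covers by quasi-fundamental stacks: these encode the assertions that the Thom objects of bundles on a (linearly or nicely) quasi-fundamental stack already $\otimes$-generate the Thom objects on everything lying above it, and that stacks with linearly reductive (resp.\ nice) stabilizers decompose Nisnevich-locally into such pieces. These are exactly the structural inputs supplied by \cite{SixAlgSt} --- resolution-property and Jouanolou-type torsor arguments together with the scalloped decompositions --- and once they are granted, everything else is formal bookkeeping with the machinery of \Cref{S:stabilize PF}.
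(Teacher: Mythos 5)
Your proposal is correct and follows essentially the same route as the paper: the same family $A_S$ of Thom-type morphisms, the same auxiliary class $R$ and subcategory $\mathcal B$ of (linearly resp.\ nicely) quasi-fundamental stacks in each case, the same citations to \cite{SixAlgSt} and \cite{sixopsequiv} for the $A_\bullet$-lifts, symmetry, and covering hypotheses, and the same application of \Cref{thm:ptd PF sheafy stabilization} to produce $\SH^\alg$ and read off its properties. Your extra paragraph making explicit the $\tau_\Nis$-local identification of the ``Stability'' hypothesis with inversion of all Thom objects (via \Cref{lem:crit for map from tensor inversion} and \Cref{lem:tensor invertibility is local}) is a detail the paper's proof leaves implicit, and it is handled correctly.
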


\begin{rmk}
	The pullback formalism $\SH^\alg$ is equivalent to the pullback formalism $\SH$ considered in \cite[A.3]{SixAlgSt} in case \ref{itm:SHalg lred}, and to the version considered in \cite[\S4]{SixAlgSt} in case \ref{itm:SHalg nice}. We will only explain the comparison with case \ref{itm:SHalg nice} and \cite[\S4]{SixAlgSt}, as the other comparison is completely analogous. Indeed, if $S$ is a nicely quasi-fundamental stack, then using \Cref{rmk:Halg is H}, the map $H^\alg_\bullet(S) \to \SH^\alg(S)$ is given simply by formally adjoining $\otimes$-inverses of objects of the form $\cls{V}/\cls{V \setminus 0}$ for vector bundles $V \to S$, and this is also what is done in \cite{SixAlgSt}. By \cite[Proposition A.11]{objwise-mono}, it follows that the transformation $H_\bullet \to \SH$ of \cite{SixAlgSt} coincides with the transformation $H^\alg_\bullet \to \SH^\alg$ when restricted to full subcategory nicely quasi-fundamental stacks, and we may conclude by Nisnevich descent.

	Given a linearly reductive group $G$ over an affine scheme $B$, recall that in \cite[\S6]{sixopsequiv}, the category $\SH^G(X)$ is constructed for $X$ a $G$-quasi-projective $G$-scheme by formally adjoining $\otimes$-inverses of objects of the form $f^*(\cls{V}/\cls{V \setminus 0})$ to the category $H^G_\bullet(X)$ defined in \cite[Definition 3.12]{sixopsequiv}, for $V$ a $G$-equivariant vector bundle on $B$, and $f$ the map $X \to B$. In \Cref{rmk:Halg is H}, we saw that in case \ref{itm:SHalg lred} of \Cref{thm:SHalg}, there is a natural equivalence $H^\alg_\bullet(X/G) \simeq H^G_\bullet(X)$, and the same holds in case \ref{itm:SHalg nice} if we assume that $G$ is nice.

	Thus, by \cref{itm:SHalg/ptwise} of \Cref{thm:SHalg}, if $G$ is embeddable, we have that as long as $\SH^G(X)$ is defined, it is equivalent to $\SH^\alg(X/G)$. See also \cite[Remark 4.9]{SixAlgSt}.
\end{rmk}

\subsection{Differentiable Motivic Homotopy} \label{S:diff}

In this section, we will recover the usual notions of genuine sheaves of animae and genuine sheaves of spectra for differentiable stacks, as developed in \cite[II.4]{TwAmb}.

We will need to recall some basic notions about stacks on manifolds from \cite[\S II.2.1]{TwAmb}. Note that we will not make use of notion of ``differentiable stack'' as defined \cite[Definition 2.2.1]{TwAmb} as it will not be necessary for our purposes. We only make reference to it when comparing to constructions in \cite[\S II]{TwAmb}.
\begin{defn} \label{defn:diff}
	Let $\Mfld$ be the site of manifolds and open covers. Since this is a subcanonical site, we can and will view $\Mfld$ as a full subcategory of $\Shv(\Mfld)$. We consider the following properties of a map $f : X \to Y$ in $\Shv(\Mfld)$.
	\begin{enumerate}

		\item $f$ is said to be a \emph{representable (surjective) submersion} if for any manifold $N$ and map $N \to Y$, the base change $X \times_Y N \to N$ is a (surjective) submersion.

		\item $f$ is said to be an \emph{open embedding} if for any manifold $N$ and map $N \to Y$, the base change $X \times_Y N \to N$ is an open embedding.

		\item $f$ is said to be \emph{representable} if for any representable submersion $N \to Y$, the pullback $N \times_Y X$ is a manifold.

	\end{enumerate}
	An \emph{open cover} of $X \in \Shv(\Mfld)$ is a family of maps $\{U_i \to X\}_i$ such that for any manifold $M$ and map $M \to X$, the base change $\{U_i \times_X M \to M\}_i$ is an open cover.
\end{defn}

\begin{rmk}
	The maps considered in \Cref{defn:diff} are closed under compositions by \cite[Corollary 2.1.11]{TwAmb}.
\end{rmk}


\begin{defn}
	We equip $\Shv(\Mfld)$ with the quasi-admissibility structure of representable submersions, and define a small quasi-admissible pseudotopology $\tau_\diff$ on $\Shv(\Mfld)$ as the union of the Grothendieck topology of open covers, along with the pseudotopology consisting of projections $\reals \times X \to X$.

	Let $H^\diff$ be the pullback formalism $H^{\tau_\diff}$. As in Remark \ref{rmk:abuse notation for univ PF}, we will also write $H^\diff$ to denote its restriction to any full subcategory $\mathcal C$ of $\Shv(\Mfld)$ such that for any representable submersion $X \to Y$, if $Y \in \mathcal C$, then $X \in \mathcal C$.
\end{defn}

\begin{exa}
	Let $\mathcal C^\diff$ be the full subcategory of $\Shv(\Mfld)$ consisting of objects that have an open cover by global quotients of the form $M/G$ for $G$ a compact Lie group, and $M$ a $G$-manifold. Note that if $X \to Y$ is a representable submersion in $\Shv(\Mfld)$, and $Y \in \mathcal C^\diff$, then $X \in \mathcal C^\diff$. Also note that by \cite[Theorem 3.7.2]{TwAmb}, every separated differentiable stack in the sense of \cite[Definition 3.3.1]{TwAmb} is in $\mathcal C^\diff$. Spelling this out, we have that if $X \in \Shv(\Mfld)$ satisfies that there exists a representable surjective submersion $M \to X$, where $M$ is a manifold, and the diagonal map $X \to X \times X$ is proper, then $X \in \mathcal C^\diff$.
\end{exa}

\begin{rmk}
	The pullback formalism $H^\diff$ is equivalent to the presheaf $\mathrm{H}$ constructed in \cite[Construction 4.2.23]{TwAmb} whenever it is defined. In particular, for any Lie group $G$, and $G$-manifold $M$, $H^\diff(M/G)$ is the category of $\reals$-invariant sheaves on the site of $G$-equivariant submersions to $M$.

	Alternatively, we can use the following \Cref{prp:univ prop of Hdiff} and \cite[Proposition 4.5.21]{TwAmb} to quickly see that $H^\diff$ and $\mathrm{H}$ define equivalent pullback formalisms on the pullback context of separated differentiable stacks (see \cite[Definition 3.3.1]{TwAmb}). Note that in \cite[\S II.4.5]{TwAmb}, pullback formalisms are always assumed to be sheaves.
\end{rmk}

\begin{rmk} \label{rmk:Hdiff on mflds}
	We can make $\Mfld$ into a pullback context in which the quasi-admissible maps are the open embeddings, so that the Grothendieck topology on $\Mfld$ is quasi-admissible, and defines a pullback formalism $H^\shf$ on $\Mfld$ given by sending a manifold $M$ to the usual category $\Shv(M)$ of sheaves on $\Op(M)$. The inclusion $\Mfld \to \Shv(\Mfld)$ is a morphism of pullback contexts, and since covering sieves in $\Mfld$ are sent to $\tau_\diff$-acyclic maps, \Cref{rmk:general realization} says that there is an essentially unique morphism of pullback formalisms $H^\shf \to H^\diff|_\Mfld$, which evaluates on $M \in \Mfld$ to the colimit-preserving functor
	\[
		\Shv(M) \to H^\diff(M)
	\]
	that sends an open subspace $U \subseteq M$ to $\cls{U;M} \in H^\diff(M)$. In fact, \cite[Proposition 4.2.27]{TwAmb} shows that $H^\shf \to H^\diff|_\Mfld$ is an equivalence.
\end{rmk}

\begin{prp}[Properties of $H^\diff$] \label{prp:univ prop of Hdiff}
	We record the following properties of the pullback formalism $H^\diff$:
	\begin{enumerate}

		\item $H^\diff$ has descent for the topology of open covers.

		\item\label{itm:Hdiff/top}
			For any Lie group $G$, the category $H^\diff(\B G)$ is equivalent to the usual $G$-equivariant unstable homotopy category.

		\item\label{Hdiff/mfld}
			The restriction of $H^\diff$ to $\Mfld$ is the natural functor sending a manifold $M$ to the category $\Shv(M)$.
		
		\item For any full anodyne pullback subcontext $\mathcal C$ of $\Shv(\Mfld)$, let $D$ be a pullback formalism on $\mathcal C$ that satisfies the following:
			\begin{description}

				\item[Reduced] $D(\emptyset)$ is contractible.

				\item[Excision] If $U,V \subseteq S$ are opens that form an open cover, then for any $M \in D(S)$,
					\[
						\begin{tikzcd}
							D(S;M) \ar[d] \ar[r] & D(U;M) \ar[d] \\
							D(V;M) \ar[r] & D(U \cap V; M)
						\end{tikzcd}
					\]
					is Cartesian.

				\item[Descent for nested covers]
					If $\lambda$ is an ordinal, and $\{U_\alpha \subseteq S\}_{\alpha \in \lambda}$ are opens of $S$ such that for $\alpha < \beta$, $U_\alpha \subseteq U_\beta$, and
					\[
						S = \bigcup_{\alpha \in \lambda} U_\alpha
					,\]
					then for any $M \in D(S)$, the map
					\[
						D(S;M) \to \varprojlim_{\alpha \in \lambda} D(U_\alpha; M)
					\]
					is an equivalence.

				\item[Homotopy invariance] For any $S \in \mathcal C$, and $M \in D(S)$,
					\[
						D(S;M) \to D(S \times \reals; M)
					\]
					is an equivalence.

			\end{description}
			Then the space of morphisms of pullback formalisms $H^\diff \to D$ is contractible. For any other pullback formalism $D$, there are no maps $H^\diff \to D$.

	\end{enumerate}
	\begin{proof}\hfill
		\begin{enumerate}

			\item This follows from \Cref{thm:D-topology}.

			\item This follows from \cite[Theorem 4.4.16]{TwAmb}.

			\item This is \Cref{rmk:Hdiff on mflds}.

			\item Note that $\PF(\mathcal C)_{H^\diff/} \to \PF(\mathcal C)$ is fully faithful with essential image given by the $\tau_\diff$-invariant pullback formalisms. Thus, we conclude by \Cref{rmk:describe qadm acyclic}, and \Cref{prp:descent by open covers} (see also \cite[Proposition 4.1.5]{TwAmb}).

		\end{enumerate}
		
	\end{proof}
\end{prp}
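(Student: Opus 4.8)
The plan is to deduce everything from the description of $\tau_\diff$-invariant pullback formalisms furnished by \Cref{cor:describe invariance localization of Huniv}. That corollary identifies $H^\diff = H^{\tau_\diff}$ with the initial object of $\PF^{\tau_\diff}(\mathcal C)$ and, crucially, says that the slice projection $\PF(\mathcal C)_{H^\diff/} \to \PF(\mathcal C)$ is fully faithful with essential image the $\tau_\diff$-invariant pullback formalisms; since \Cref{lem:qadm acyclic} forces the target of any morphism out of $H^\diff$ to itself be $\tau_\diff$-invariant, this already shows that the space $\PF(\mathcal C)(H^\diff, D)$ is contractible when $D$ is $\tau_\diff$-invariant and empty otherwise. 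So the entire content of part (4) is the identification of the class of $\tau_\diff$-invariant pullback formalisms with the class of pullback formalisms that are Reduced, have Excision, have Descent for nested covers, and are Homotopy invariant.

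To carry this out I would use that, by construction, $\tau_\diff$ is the union of the Grothendieck topology $\tau_{\mathrm{open}}$ of open covers with the pseudotopology of projections $\reals \times X \to X$. Hence a pullback formalism $D$ is $\tau_\diff$-invariant if and only if it is $\tau_{\mathrm{open}}$-invariant and invariant along every $\reals$-projection. The projections $\reals \times X \to X$ are representable submersions, hence quasi-admissible, so by \Cref{rmk:describe qadm acyclic} invariance along all of them (including their base changes) is precisely the Homotopy invariance condition. For the $\tau_{\mathrm{open}}$ part, by \Cref{exa:acyclic} together with \Cref{rmk:describe qadm acyclic}, a pullback formalism is $\tau_{\mathrm{open}}$-invariant exactly when all of its associated cohomology theories $D(-;M)$ (for $X \in \mathcal C_{/S}$ and $M \in D(X)$) are sheaves for the open-cover topology on the relevant slice; and I would invoke \Cref{prp:descent by open covers} (\cf{} \cite[Proposition 4.1.5]{TwAmb}) to unwind sheafhood for the open-cover topology on $\Shv(\Mfld)$, and on its slices, into the conjunction of Reduced, Excision for binary open covers, and Descent for transfinite nested unions — this works because that topology is generated by binary covers together with directed unions.

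Parts (1)–(3) are then corollaries: (1) follows from \Cref{thm:D-topology} once we know $H^\diff$ is $\tau_\diff$-invariant, hence in particular has descent for the open-cover topology; (2) is the equivalence of $H^\diff(\B G)$ with the genuine $G$-equivariant unstable homotopy category, taken from \cite[Theorem 4.4.16]{TwAmb}; and (3) is exactly \Cref{rmk:Hdiff on mflds}, where the comparison morphism $H^\shf \to H^\diff|_{\Mfld}$ produced by \Cref{rmk:general realization} is shown to be an equivalence via \cite[Proposition 4.2.27]{TwAmb}.

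I expect the combinatorial identification of open-cover descent with Reduced $+$ Excision $+$ Descent for nested covers to be the only step with real content: one must show that an arbitrary open cover of a manifold — or of an object of $\Shv(\Mfld)$ — can be assembled from finite unions and transfinite increasing unions in a manner compatible with the descent condition, and that the nested-union condition is exactly what bridges the gap between binary and arbitrary covers. In the present write-up, however, this is packaged into \Cref{prp:descent by open covers}, so here it is a citation; everything else reduces to bookkeeping around the universal property of $H^{\tau_\diff}$.
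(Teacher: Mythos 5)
Your proposal is correct and follows essentially the same route as the paper: the universal property is reduced, via \Cref{cor:describe invariance localization of Huniv} and \Cref{lem:qadm acyclic}, to identifying the $\tau_\diff$-invariant pullback formalisms, which is done by combining \Cref{rmk:describe qadm acyclic} with \Cref{prp:descent by open covers}, while parts (1)--(3) are handled by the same citations. Your write-up merely spells out the decomposition of $\tau_\diff$ into the open-cover topology and the $\reals$-projection pseudotopology, which the paper leaves implicit.
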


\begin{rmk}
	In the setting of \Cref{prp:univ prop of Hdiff}, if we assume that $\mathcal C$ only contains differentiable stacks, \ie, objects $X \in \Shv(\Mfld)$ for which there is a manifold $M$, and a representable surjective submersion $M \to X$, then in the condition about nested open covers, it suffices to only consider countable nested covers, which is the case $\lambda = \omega$. This follows from \cite[Proposition 4.1.5]{TwAmb}.
\end{rmk}

Next, we will consider the stabilized version of $H^\diff$:
\begin{thm} \label{thm:SHdiff}
	For any full anodyne pullback subcontext $\mathcal C$ of $\mathcal C^\diff$ there is a pullback formalism $\SH^\diff$ on $\mathcal C$ that is initial among pointed pullback formalisms $D$ satisfying the following:
	\begin{description}

		\item[Reduced] $D(\emptyset)$ is contractible.

		\item[Excision] If $U,V \subseteq S$ are opens that form an open cover, then for any $M \in D(S)$,
			\[
				\begin{tikzcd}
					D(S;M) \ar[d] \ar[r] & D(U;M) \ar[d] \\
					D(V;M) \ar[r] & D(U \cap V; M)
				\end{tikzcd}
			\]
			is Cartesian.

		\item[Descent for nested covers]
			If $\lambda$ is an ordinal, and $\{U_\alpha \subseteq S\}_{\alpha \in \lambda}$ are opens of $S$ such that for $\alpha < \beta$, $U_\alpha \subseteq U_\beta$, and
			\[
				S = \bigcup_{\alpha \in \lambda} U_\alpha
			,\]
			then for any $M \in D(S)$, the map
			\[
				D(S;M) \to \varprojlim_{\alpha \in \lambda} D(U_\alpha; M)
			\]
			is an equivalence.

		\item[Stability] For any compact Lie group $G$ and $G$-manifold $M$ such that $M/G \in \mathcal C$, if $V$ is a finite-dimensional real $G$-representation, the object $\cls{(M \times V)/G}/\cls{(M \times V \setminus 0)/G}$ is $\otimes$-invertible in $D(M/G)$.

		\item[Homotopy invariance] For any $S \in \mathcal C$, and $M \in D(S)$,
			\[
				D(S;M) \to D(S \times \reals; M)
			\]
			is an equivalence.

	\end{description}

	We also have the following properties of $\SH^\diff$:
	\begin{enumerate}

		\item $\SH^\diff$ has descent for open covers.

		\item For every vector bundle $V \to S$ in $\mathcal C$, we have that $\cls{V}/\cls{V \setminus 0}$ is $\otimes$-invertible in $\SH^\diff(S)$. In fact, this also holds for the pullback formalisms $D$ as above.

		\item\label{itm:SHdiff/top}
			For any compact Lie group $G$ such that $\B G \in \mathcal C$, the category $\SH^\diff(\B G)$ is equivalent to the genuine $G$-equivariant stable homotopy category.

		\item\label{itm:SHdiff/excision}
			Excision: If $Z \to S$ is a map in $\mathcal C$ that is representable by closed embeddings, and $U \to S$ is an open neighborhood of $Z$, then for any $M \in \SH^\diff(S)$,
			\[
				\SH^\diff_Z(S;M) \to \SH^\diff_Z(U;M)
			\]
			is an equivalence, where for any $Y \to S$, pointed pullback formalism $D$ on $\mathcal C$, and $M \in D(S)$, $D_Z(M)$ is defined as the fibre of the map of pointed spaces
			\[
				D(Y;M) \to D(Y \times_S (S \setminus Z); M)
			.\]

		\item\label{itm:SHdiff/ptwise}
			Objectwise description: For any compact Lie group $G$ and $G$-manifold $M$ such that $M/G \in \mathcal C$, we have that the natural functor $H^\diff_\bullet(M/G) \to \SH^\diff(M/G)$ provided by \Cref{prp:univ prop of Hdiff} is given by formally adjoining $\otimes$-inverses of objects of the form $\cls{(M \times V)/G}/\cls{(M \times V \setminus 0)/G}$, where $V$ is a finite-dimensional real $G$-representation. 

		\item\label{itm:SHdiff/generation}
			Generation: For any compact Lie group $G$ and $G$-manifold $M$ such that $M/G \in \mathcal C$, we have that $\SH^\diff(M/G)$ is generated under sifted colimits by objects of the form
			\[
				\cls{X/G} \otimes (\cls{M \times V/G}/\cls{(M \times V \setminus 0)/G})^{\otimes -1}
			,\]
			where $X \to M$ is a $G$-equivariant submersion of $G$-manifolds, and $V$ is a finite-dimensional real $G$-representation.

	\end{enumerate}
	\begin{proof}
		For each $S \in \Shv(\Mfld)$, let $A_S$ be the collection of morphisms in $H^\diff(S)$ of the form $\cls{V \setminus 0} \to \cls{V}$ for $V$ a vector bundle on $S$. By \Cref{prp:univ prop of Hdiff}, $H^\diff$ is a sheaf for the topology of open covers, and since $\mathcal C \subseteq \mathcal C^\diff$, every object of $\mathcal C$ has an open cover by objects of the form $M/G$ for $G$ a compact Lie group acting on a manifold $M$.

		If $S$ is of the form $M/G$ for $G$ a compact Lie group acting on a manifold $M$, and $V$ is a vector bundle on $S$, then $\cls{V}/\cls{V \setminus 0}$ is a symmetric object of $H^\diff_\bullet(S)$ by \cite[Lemma 4.3.8]{TwAmb} or the argument of \cite[Lemma 6.3]{sixopsequiv}. Furthermore, every representable map to $S$ has $A_\bullet$-lifts by \cite[Lemma 4.3.9]{TwAmb}.

		Thus we may produce $H^\diff \to \SH^\diff$ by applying \Cref{thm:ptd PF sheafy stabilization}.
		\begin{itemize}

			\item Note that for any compact Lie group $G$, and $G$-manifold $M$, the map $M/G \to \B G$ has $A_\bullet$-lifts. Thus, the universal property follows from \Cref{rmk:describe qadm acyclic}, \Cref{prp:univ prop of Hdiff}, and \cref{itm:ptd PF sheafy stabilization/univ} of \Cref{thm:ptd PF sheafy stabilization}, which also shows that for any vector bundle $V \to S$ in $\mathcal C$, the object $\cls{V}/\cls{V \setminus 0}$ of $\SH^\diff(S)$ is $\otimes$-invertible.

			\item Since $H^\diff$ has descent for open covers, $\SH^\diff$ does too by \Cref{thm:D-topology} or \cref{itm:ptd PF sheafy stabilization/descent} of \Cref{thm:ptd PF sheafy stabilization}.

			\item \Cref{itm:SHdiff/excision} follows immediately from the fact that $\SH^\diff$ has descent for open covers.

			\item \Cref{itm:SHdiff/top} follows from \cref{itm:SHdiff/ptwise}, and \cref{itm:Hdiff/top} of \Cref{prp:univ prop of Hdiff}, or simply \cite[Proposition 4.4.17]{TwAmb}.

			\item \Cref{itm:SHdiff/generation} follows from \cref{itm:ptd PF sheafy stabilization/generation} of \Cref{thm:ptd PF sheafy stabilization}, the fact that $M/G \to \B G$ has $A_\bullet$-lifts, and the fact that $H^\diff(M/G)$ is generated under sifted colimits by objects of the form $\cls{X/G}$ for $X \to M$ a $G$-equivariant submersion of $G$-manifolds.

		\end{itemize}
	\end{proof}
\end{thm}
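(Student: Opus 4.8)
The plan is to realise $\SH^\diff$ as the pointed formal $\otimes$-inversion of $H^\diff$ along Thom classes, i.e.\ as an application of \Cref{thm:ptd PF sheafy stabilization}. For each $S \in \Shv(\Mfld)$ let $A_S$ be the collection of morphisms $\cls{V\setminus 0} \to \cls{V}$ in $H^\diff(S)$ indexed by the vector bundles $V$ on $S$, so that the cofibre of such a map is the Thom space $\cls{V}/\cls{V\setminus 0}$ and $(A_\bullet)_S$ is the collection of these Thom spaces in $H^\diff_\bullet(S)$. Since the pullback of a vector bundle along any $f\colon X \to Y$ in $\Shv(\Mfld)$ is again a vector bundle, $f^*$ carries $A_Y$ into $A_X$ up to equivalence, which is the compatibility hypothesis of \Cref{thm:ptd PF sheafy stabilization}.

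The substantive step is to exhibit enough $A_\bullet$-good objects in $\mathcal C$. I would take these to be the global quotients $M/G$ with $G$ a compact Lie group and $M$ a $G$-manifold: for $S = M/G$ the objects of $(A_\bullet)_S$ are symmetric by \cite[Lemma 4.3.8]{TwAmb} (equivalently by the argument of \cite[Lemma 6.3]{sixopsequiv}), and every representable submersion $X \to S$ has $A_\bullet$-lifts by \cite[Lemma 4.3.9]{TwAmb}. Since $H^\diff$ has descent for open covers by \Cref{prp:univ prop of Hdiff}, and every object of $\mathcal C \subseteq \mathcal C^\diff$ admits an open cover by such quotients (whose structure maps, being open embeddings, are quasi-admissible), the hypotheses of \Cref{thm:ptd PF sheafy stabilization} are met, and I would set $\SH^\diff \coloneqq H^\diff[A^{\wedge -1}]$ with structure map $\Sigma^\infty\colon H^\diff \to \SH^\diff$.

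For the universal property I would combine \Cref{prp:univ prop of Hdiff}, which presents $H^\diff$ as the initial pullback formalism whose cohomology theories are reduced, excisive for open covers, satisfy descent for nested open covers, and are $\reals$-invariant, with \cref{itm:ptd PF sheafy stabilization/univ} of \Cref{thm:ptd PF sheafy stabilization}. The latter identifies, inside $\PF(\mathcal C)_{H^\diff/}$, the morphisms factoring through $\Sigma^\infty$ with those $\phi\colon H^\diff \to E$ for which $E$ is a pointed pullback formalism and, on an open cover of each $S \in \mathcal C$ by quotients $M/G$, the pullback of $A_{\B G}$ along $M/G \to \B G$ lands in $\Pic E(M/G)$; since $A_{\B G}$ consists of the maps $\cls{V\setminus 0}\to\cls{V}$ for $V$ a finite-dimensional real $G$-representation, this is exactly the \emph{Stability} axiom as stated. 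Together with the fully faithfulness of $\PF_\bullet(\mathcal C)_{\SH^\diff/} \to \PF(\mathcal C)_{H^\diff/}$ this yields initiality of $\SH^\diff$ among pointed pullback formalisms satisfying the listed axioms.

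The remaining assertions are then read off: open-cover descent for $\SH^\diff$ from \cref{itm:ptd PF sheafy stabilization/descent} (or \Cref{thm:D-topology}); invertibility of every $\cls{V}/\cls{V\setminus 0}$ from the universal property; excision with supports \cref{itm:SHdiff/excision} formally from open-cover descent; the objectwise description \cref{itm:SHdiff/ptwise} from \cref{itm:PF sheafy stabilization/ptwise} of \Cref{thm:PF sheafy stabilization}, using that $M/G$ is $A_\bullet$-good and $M/G \to \B G$ has $A_\bullet$-lifts; the generation statement \cref{itm:SHdiff/generation} from \cref{itm:ptd PF sheafy stabilization/generation} together with the fact that $H^\diff(M/G)$ is generated under sifted colimits by the classes $\cls{X/G}$ of equivariant submersions $X \to M$; and the identification \cref{itm:SHdiff/top} of $\SH^\diff(\B G)$ with the genuine $G$-equivariant stable homotopy category from \cref{itm:SHdiff/ptwise} and \cref{itm:Hdiff/top} of \Cref{prp:univ prop of Hdiff} (or directly from \cite[Proposition 4.4.17]{TwAmb}). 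I expect the main obstacle to be precisely the hypothesis-checking for \Cref{thm:ptd PF sheafy stabilization} — the $A_\bullet$-lifts property for representable submersions between quotient stacks and the symmetry of Thom spaces — which is where the imported computations \cite[Lemmas 4.3.8 and 4.3.9]{TwAmb} do the real work; everything downstream is bookkeeping.
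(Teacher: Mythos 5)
Your proposal is correct and follows essentially the same route as the paper: the same choice of $A_S$ (Thom class maps for vector bundles), the same $A_\bullet$-good objects (global quotients $M/G$), the same imported inputs \cite[Lemmas 4.3.8 and 4.3.9]{TwAmb} for symmetry and $A_\bullet$-lifts, and the same application of \Cref{thm:ptd PF sheafy stabilization} with the downstream properties read off from its clauses. The only difference is cosmetic — you spell out the pullback-compatibility of the families $A_S$ and the quasi-admissibility of the covering maps slightly more explicitly than the paper does.
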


\begin{rmk}
	The pullback formalism $\SH^\diff$ is equivalent to the pullback formalism $\mathrm{SH}$ considered in \cite[4.3.3]{TwAmb}. Indeed, by descent, it suffices to show this fact after restricting to global quotient stacks of the form $M/G$, for $G$ a compact Lie group and $M$ a $G$-manifold, but this follows immediately from \cite[Definition 4.3.4]{TwAmb}.

	Alternatively, the universal property given by \cite[Proposition 4.5.27]{TwAmb} immediately shows that $\SH^\diff$ and $\mathrm{SH}$ are equivalent pullback formalisms on the pullback context of separated differentiable stacks (see \cite[Definition 3.3.1]{TwAmb}). Note that in \cite[\S II.4.5]{TwAmb}, pullback formalisms are always assumed to be sheaves.
\end{rmk}

\subsection{Holomorphic Motivic Homotopy} \label{S:hol}

In this section, we will define complex analytic versions of motivic spaces and spectra.

We refer to \cite{Fischer} for basic notions about complex spaces. Other useful references are \cite{CohAnalSh} and \cite{SteinSpaces}.

Let us recall some basic definitions:
\begin{defn}
	The category $\An_\comps$ of complex spaces is the full subcategory of the category of locally $\comps$-ringed spaces that are Hausdorff and second countable, and that admit an open cover by subspaces $W \subseteq U$, where $U$ is an open subspace of the locally $\comps$-ringed space given by $\comps^n$ and the sheaf of holomorphic functions $\comps$-valued functions, and $W$ is the closed subspace given by the vanishing of a coherent ideal sheaf. See \cite[0.14]{Fischer}. The maps in $\An_\comps$ are called holomorphic maps.

	Given a complex group $G$ -- a group object in $\An_\comps$ -- we say \emph{complex $G$-space} to refer to a complex space $X$ equipped with a holomorphic $G$-action $G \times X \to X$.

	See \cite[2.18]{Fischer} for the definition of a submersion of complex spaces, and \cite[0.23]{Fischer} for the definitions of embeddings, immersions, and local biholomorphisms. It is worth noting that the word ``embedding'' refers to a map of locally ringed spaces that is usually called a ``closed immersion'' in the setting of algebraic geometry. We will say \emph{open subspace} to refer to the usual notion of open immersions of locally ringed spaces.
\end{defn}

\begin{rmk}
	Oka's coherence theorem states that the structure sheaf of any complex space is coherent.
\end{rmk}

\begin{defn}
	Equip $\An_\comps$ with the Grothendieck topology of open covers. We say that a map $X \to Y$ in $\Shv(\An_\comps)$ is an open subspace, embedding, representable submersion, representable local biholomorphism, if for any $Y' \in \An_\comps$ and map $Y' \to Y$, the map $Y' \times_Y X \to Y'$ is an open subspace, embedding, submersion, local biholomorphism, respectively, of complex spaces.

	We make $\Shv(\An_\comps)$ into a quasi-small pullback context by equipping it with the quasi-admissibility structure of representable submersions.

	Define $\tau_\NisC$ to be the Grothendieck topology on $\Shv(\An_\comps)$ given by open covers and \'{e}tale excision: it is the Grothendieck topology given by declaring that the following are covering families:
	\begin{enumerate}

		\item Families $\{U_i \to X\}_i$ of open subspaces in $\Shv(\An_\comps)$ such that for any $X' \in \An_\comps$ and map $X' \to X$, the family $\{U_i \times_X X' \to X'\}_i$ is an open covering family.

		\item Families $\{U, \tilde X \to X\}$, where $U \to X$ is an open subspace and $\tilde X \to X$ is a representable local biholomorphism in $\Shv(\An_\comps)$ such that $\tilde X \to X$ is invertible away from $U$, \ie, for any embedding $Z \to X$ such that $Z \times_X U$ is empty, we have that $\tilde X \times_X Z \to Z$ is an equivalence.

	\end{enumerate}

	We define $\tau_\htpy$ to be the small quasi-admissible pseudotopology on $\Shv(\An_\comps)$ consisting of maps $\yo(V \to S)$ in $\Psh(\Shv(\An_\comps))$ where $V \to S$ is a torsor under a vector bundle on $S$. 

	Define $\tau_\hol$ to be the small quasi-admissible pseudotopology on $\Shv(\An_\comps)$ given as the union $\tau_\NisC \cup \tau_\htpy$.

	Let $H^\hol$ be the pullback formalism $H^{\tau_\hol}$ of Corollary \ref{cor:describe invariance localization of Huniv}. As in Remark \ref{rmk:abuse notation for univ PF}, we will also write $H^\hol$ to denote its restriction to any full subcategory $\mathcal C$ of $\Shv(\An_\comps)$ such that for any representable submersion $X \to Y$, if $Y \in \mathcal C$, then $X \in \mathcal C$.
\end{defn}

\begin{nota}
	Given a complex group $G$ and a complex $G$-space $X$, we write $X/G$ for the object of $\Shv(\An_\comps)$ given as the colimit in $\Shv(\An_\comps)$ of the action groupoid of the action of $G$ on $X$.
\end{nota}

\begin{exa}
	We define $\mathcal C^\hol$ to be the full subcategory of $\Shv(\An_\comps)$ consisting of those objects that admit a $\tau_\NisC$-cover by global quotients of the form $X/G$ for $G$ a complex reductive group acting on a complex space $X$. Note that if $X \to Y$ is a representable submersion, and $Y \in \mathcal C^\hol$, then $X \in \mathcal C^\hol$, so $\mathcal C^\hol$ is a full anodyne pullback subcontext of $\Shv(\An_\comps)$.
\end{exa}

\begin{prp}[Properties of $H^\hol$] \label{prp:univ prop of Hhol}
	We record the following properties of the pullback formalism $H^\hol$:
	\begin{enumerate}

		\item $H^\hol$ satisfies \'{e}tale excision and descent for open covers.

		\item For any full anodyne pullback subcontext $\mathcal C \subseteq \Shv(\An_\comps)$, let $D$ be a pullback formalism on $\mathcal C$ that satisfies the following:
			\begin{description}

				\item[Reduced] $D(\emptyset)$ is contractible.

				\item[Excision] If $U \to S$ is an open subspace, and $X \to S$ is a local biholomorphism that is invertible away from $U$, then for any $M \in D(S)$,
					\[
						\begin{tikzcd}
							D(S;M) \ar[d] \ar[r] & D(U;M) \ar[d] \\
							D(X;M) \ar[r] & D(X \times_S U; M)
						\end{tikzcd}
					\]
					is Cartesian.

				\item[Descent for nested covers] If $\lambda$ is an ordinal, and $\{U_\alpha \subseteq S\}_{\alpha \in \lambda}$ are opens of $S$ such that for $\alpha < \beta$, $U_\alpha \subseteq U_\beta$, and
					\[
						S = \bigcup_{\alpha \in \lambda} U_\alpha
					,\]
					then for any $M \in D(S)$, the map
					\[
						D(S;M) \to \varprojlim_{\alpha \in \lambda} D(U_\alpha; M)
					\]
					is an equivalence.

				\item[Homotopy invariance] For any $S \in \mathcal C$, vector bundle torsor $V \to S$, and $M \in D(S)$,
					\[
						D(S;M) \to D(V; M)
					\]
					is an equivalence.

			\end{description}
			Then the space of morphisms of pullback formalisms $H^\hol \to D$ is contractible. For any other pullback formalism $D$, there are no maps $H^\hol \to D$.

	\end{enumerate}
	\begin{proof}\hfill
		\begin{enumerate}

			\item This follows from \Cref{thm:D-topology}.

			\item Note that $\PF(\mathcal C)_{H^\hol/} \to \PF(\mathcal C)$ is fully faithful with essential image given by the $\tau_\hol$-invariant pullback formalisms. Thus, we may conclude by \Cref{rmk:describe qadm acyclic}, and \Cref{prp:descent by open covers}. Compare to \cite[Proposition 4.1.5]{TwAmb}.

		\end{enumerate}
		
	\end{proof}
\end{prp}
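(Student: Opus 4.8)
The plan is to deduce both parts by unwinding the general machinery of \Cref{S:generalized descent} and \Cref{S:invertibility}; there is essentially no new computation. Recall that $H^\hol = H^{\tau_\hol}$ with $\tau_\hol = \tau_\NisC \cup \tau_\htpy$, and that by \Cref{rmk:abuse notation for univ PF} the restriction of $H^\hol$ to a full anodyne pullback subcontext $\mathcal C \subseteq \Shv(\An_\comps)$ is again of the form $H^{\tau_\hol}$ for the pseudotopology induced on $\mathcal C$, so that \Cref{cor:describe invariance localization of Huniv} applies verbatim with $\mathcal C$ in place of $\Shv(\An_\comps)$.

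For part (1): by \Cref{cor:describe invariance localization of Huniv}, $H^\hol = H^{\tau_\hol}$ lies in $\PF^{\tau_\hol}(\mathcal C) \subseteq \PF^{\tau_\NisC}(\mathcal C)$, so every $\tau_\NisC$-covering sieve is $H^\hol$-acyclic. Each generating $\tau_\NisC$-covering family $\{X_i \to S\}_i$ consists of representable submersions (an open subspace and a representable local biholomorphism are both representable submersions), hence of quasi-admissible maps, so the sieve it generates is $\yo$-quasi-admissible; for such sieves $D$-acyclicity coincides with $D$-descent by \Cref{thm:PF descent}(1) (a functor with a fully faithful left adjoint is fully faithful iff it is an equivalence), and $D$-descent is in turn equivalent to joint conservativity of $\{H^\hol(S) \to H^\hol(X_i)\}_i$, again by \Cref{thm:PF descent}(1). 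Thus every generating $\tau_\NisC$-covering family is an $H^\hol$-covering family. By \Cref{thm:D-topology}, the $H^\hol$-covering sieves form a Grothendieck topology for which $H^\hol$ is a sheaf (apply item (1) of that theorem to $\id\colon H^\hol \to H^\hol$), and since $\tau_\NisC$ is by definition the smallest Grothendieck topology for which these families cover, $\tau_\NisC$ is contained in the $H^\hol$-covering topology. Hence $H^\hol$ is a $\tau_\NisC$-sheaf, which is precisely étale excision together with descent for open covers.

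For part (2): by \Cref{cor:describe invariance localization of Huniv}, the slice projection $\PF(\mathcal C)_{H^\hol/} \to \PF(\mathcal C)$ is fully faithful with essential image exactly $\PF^{\tau_\hol}(\mathcal C)$. Consequently $\PF(\mathcal C)(H^\hol, D)$ is contractible when $D$ is $\tau_\hol$-invariant and empty otherwise (this already gives the ``no maps'' clause), and the problem reduces to identifying $\tau_\hol$-invariance with the conjunction of the four listed conditions. Since $\tau_\hol = \tau_\NisC \cup \tau_\htpy$ as collections of acyclic pseudosieves, $\tau_\hol$-invariance is equivalent to $\tau_\NisC$-invariance and $\tau_\htpy$-invariance together. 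For $\tau_\htpy$: its acyclic pseudosieves are the $\yo(V \to S)$ for $V \to S$ a vector bundle torsor, and such a $V \to S$ is a representable submersion, hence quasi-admissible, so by \Cref{rmk:describe qadm acyclic} (using that $D$ has quasi-admissible base change) $\tau_\htpy$-invariance of $D$ is exactly the Homotopy invariance condition. For $\tau_\NisC$: exactly as in part (1), $D$ is $\tau_\NisC$-invariant iff $\tau_\NisC$ is contained in the $D$-covering topology, iff $D$ --- equivalently, by \Cref{thm:D-topology}(2), each cohomology theory $D(-;M) \in \Psh(\mathcal C_{/S})$ --- is a $\tau_\NisC$-sheaf. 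Finally, \Cref{prp:descent by open covers} identifies this sheaf condition with the remaining three conditions: Reduced corresponds to the empty covering family of $\emptyset$, Excision to the generating distinguished squares of $\tau_\NisC$ (the Cartesian-square condition on $D(-;M)$ being the cohomological shadow of $D$-acyclicity of the square, via \Cref{rmk:describe qadm acyclic} and \Cref{exa:acyclic}), and Descent for nested covers is imposed directly.

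The one step that is not pure bookkeeping is the reduction, inside part (2), of $\tau_\NisC$-sheafiness to the three explicit conditions, i.e., \Cref{prp:descent by open covers}; its proof parallels the Nisnevich-descent criterion used in the algebraic case (cf. \cite[Proposition 2.5]{SixAlgSt} and \cite[Proposition 4.1.5]{TwAmb}) via a scalloping argument, and I would simply invoke it. A secondary point to be careful about is that ``acyclicity'' is a priori a statement about categories and fully faithful functors, whereas Excision and Descent for nested covers are phrased on the cohomology theories $D(-;M)$; the translation is supplied by \Cref{rmk:describe qadm acyclic} together with \Cref{thm:PF descent} and \Cref{exa:acyclic}, which apply because for $\yo$-quasi-admissible sieves $D$-acyclicity and $D$-descent agree.
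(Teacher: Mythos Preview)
Your proof is correct and follows the same route as the paper: both reduce part (1) to \Cref{thm:D-topology} and part (2) to the fact (from \Cref{cor:describe invariance localization of Huniv}) that $\PF(\mathcal C)_{H^\hol/} \to \PF(\mathcal C)$ is the inclusion of $\tau_\hol$-invariant formalisms, then translate $\tau_\hol$-invariance into the listed cohomological conditions via \Cref{rmk:describe qadm acyclic} and \Cref{prp:descent by open covers}.

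One small point to tighten in your write-up: \Cref{prp:descent by open covers} only applies to topologies generated by a class $\mathcal I$ of \emph{monomorphisms} admitting unions, so it characterizes sheafiness for the open-cover part of $\tau_\NisC$, not for all of $\tau_\NisC$; the \'etale-excision generators $\{U,\tilde X \to S\}$ involve a local biholomorphism $\tilde X \to S$ that is typically not a monomorphism and so fall outside the scope of that proposition. The clean decomposition is: $\tau_\NisC$-invariance $=$ (open-cover invariance) $+$ (\'etale-square acyclicity); the second is exactly the Excision hypothesis via \Cref{exa:acyclic} and \Cref{rmk:describe qadm acyclic}, while \Cref{prp:descent by open covers} reduces the first to Reduced, nested covers, and \emph{open-pair} excision---which is recovered as the special case of Excision in which $X \to S$ is itself an open subspace (an open $V \subseteq S$ with $U \cup V = S$ is a local biholomorphism invertible away from $U$).
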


In order to define the stabilization $\SH^\hol$ of $H^\hol$, we will need to use the notion of \emph{Stein spaces}. A complex space $X$ is said to be a Stein space if it is ``cohomologically affine for coherent sheaves'', \ie, taking global sections defines an exact functor $\Coh(X) \to \Mod_\comps$. This is not the standard definition, but follows from Cartan's Theorem B -- see \cite[Corollary 2 of 0.37]{Fischer}. 

\begin{rmk}
	We collect some remarks about Stein spaces:
	\begin{enumerate}

		\item Any complex space that is a closed subspace of a Stein space is itself Stein.

		\item For $n \geq 0$ any integer, any domain of holomorphy in $\comps^n$ is Stein.

		\item Products and fibred products of Stein spaces are Stein (see \cite[0.32]{Fischer}).

		\item If $X$ is a complex space, then Stein open subspaces of $X$ are closed under finite intersections (\cite[bottom of page 33]{CohAnalSh}).

	\end{enumerate}
	We may use these facts to show that any complex space has an open cover by Stein spaces:
	\begin{itemize}

		\item Any disc in $\comps$ is a domain of holomorphy, so it is Stein.

		\item Any product of discs is Stein, so for any integer $n \geq 0$, the topology of $\comps^n$ has a basis consisting of Stein open subspaces.

		\item Thus, if $U \subseteq \comps^n$ is open it has an open covering by Stein open subspaces.

		\item If $W \subseteq U$ is a closed complex subspace of an open $U \subseteq \comps^n$, then $W$ has an open covering by closed subspaces of Stein spaces, so $W$ has an open covering by Stein spaces.

		\item Every complex space has an open cover by complex subspaces that are closed subspaces of open subspaces of $\comps^n$ for $n \geq 0$, so we are done.

	\end{itemize}
\end{rmk}

One challenge for defining $\SH^\hol$ involves verifying the hypotheses about ``$A$-lifts'' when applying \Cref{thm:ptd PF sheafy stabilization}. This ends up to reducing to the following problem: we must show that for certain complex Lie groups $G$, and certain complex spaces $X$ with $G$-action, every $G$-equivariant vector bundle on $X$ is a retract of a ``trivial'' $G$-equivariant vector bundle, \ie, one that is pulled back from a $G$-equivariant vector bundle on $\pt$.

Here we outline one possible strategy for showing this:
\begin{rmk} \label{rmk:hol resol fin gp}
	Let $X$ be a reduced Stein space with an action of a complex reductive group $G$, and let $V \to X$ be a $G$-equivariant vector bundle on $X$. If $K$ is a maximal compact subgroup of $G$, then \cite[Proposition 2.4]{equivK} says that there is a $K$-equivariant topological vector bundle $V' \to X$ and a finite-dimensional $K$-representation $W$ such that $V \oplus V' \simeq X \times W$ as $K$-equivariant topological vector bundles.

	Since $K$ is a maximal compact subgroup of the reductive group $G$, we have that $G$ is isomorphic to the complexification of $K$, so that the action of $K \to \GL(W)$ extends to $G$, so that $W$ has the structure of a $G$-representation. Furthermore, by \cite[Theorem 2.3(a)]{equivOkaBundles}, we have a (holomorphic) $G$-equivariant vector bundle $V'' \to X$ and an equivalence of $K$-equivariant topological vector bundles $V'' \simeq V'$. Thus we have an equivalence of $K$-equivariant topological vector bundles $V \oplus V'' \simeq X \times W$, but by \cite[Theorem 1.5]{equivOkaBundles}, this means that there is also a equivalence of $K$-equivariant \emph{holomorphic} vector bundles $V \oplus V'' \simeq X \times W$.

	When $G$ is finite so $G = K$, this shows that for every vector bundle $V \to X/G$, there is a vector bundle $V' \to X/G$, and a vector bundle $W \to \pt/G$ such that $V \oplus V' \simeq p^* W$, where $p$ is the map $X/G \to \pt/G$.
\end{rmk}

Thus, it will be particularly convenient to consider complex analytic stacks that have finite stabilizers. We record some properties of such stacks:
\begin{lem} \label{lem:hol pseudotop for finite groups}
	Let $\mathcal C^{\hol,\fin}$ be the full subcategory of $S \in \Shv(\An_\comps)$ such that $S$ admits a $\tau_\NisC$-cover by objects of the form $X/G$ for $G$ a finite group acting on a complex space $X$.
	\begin{enumerate}

		\item $\mathcal C^{\hol,\fin}$ is a full anodyne pullback subcontext of $\Shv(\An_\comps)$.

		\item If $G$ is a finite group acting on a complex space $X$, then any $\tau_\NisC$-cover of $X/G$ is covering for the topology $\tau_\open$ of open covers, and in fact, it contains an open covering by objects of the form $X'/G$, where $X'$ is a Stein space.

		\item For any $S \in \mathcal C^{\hol,\fin}$, any vector bundle torsor $V \to S$ is a $(\tau_\NisC \cup \tau_\comps)$-acyclic map, where $\tau_\comps$ is the pseudotopology on $\Shv(\An_\comps)$ whose acyclic pseudosieves are maps of the form $\comps \times X \to X$ for $X \in \Shv(\An_\comps)$.

	\end{enumerate}
	
	\begin{proof}\hfill
		\begin{enumerate}

			\item To see that the inclusion $\mathcal C^{\hol,\fin} \to \Shv(\An_\comps)$ is anodyne, it suffices to note that if $G$ is a complex Lie group acting on a complex space $X$, then any representable map to $X/G$ is of the form $(Y \to X)/G$, where $Y \to X$ is a map in $\An_\comps$. Thus, we may conclude by \Cref{rmk:full anodyne subctx}.

			\item
				Next, we show that if $G$ is a finite group acting on a complex space $X$, then any $\tau_\NisC$-covering sieve of $X/G$ is also $\tau_\open$-covering. Indeed, by the definition of $\tau_\NisC$, it suffices to show that if $U$ is a $G$-invariant open subspace of $X$, and $X' \to X$ is a $G$-equivariant \'{e}tale map that is invertible away from $U$, then the sieve generated by $U/G,X'/G \to X/G$ is $\tau_\open$-covering, but this follows from \Cref{lem:existence of equivariant local sections Hausdorff}.

				By \Cref{lem:invariant nbhd basis Hausdorff}, we can then further refine this sieve so that it is generated by open subspaces of the form $(Y \to X)/G$, for $Y$ a $G$-invariant Stein open subspace of $X$.

			\item Since every object of $\mathcal C^{\hol,\fin}$ has a $\tau_\NisC$-cover by global quotients of the form $X/G$ for $G$ a finite group acting on a Stein space $X$, it suffices to show that any vector bundle torsor on $X/G$ is a $\tau_\comps$-acyclic map.
				
				Indeed, any vector bundle torsor on $X/G$ is of the form $(Y \to X)/G$, for $Y$ a torsor under a $G$-equivariant holomorphic vector bundle $V \to X$, such that the map $Y \times_X V \to Y$ is $G$-equivariant. As in the discussion following \cite[Definition 2.18]{sixopsequiv}, we know that $Y$ corresponds to the sheaf on $X$ that classifies sections of some $G$-equivariant surjection $\mathcal E \to \mathcal O_X$, where $\mathcal E$ is finite locally free.

				Since $X$ is Stein, any such surjection $\mathcal E \to \mathcal O_X$ admits a section, and since $G$ is finite, this section can further be chosen to be $G$-equivariant, so that $\mathcal E \to \mathcal O_X$ is $G$-equivariantly isomorphic to a product projection $\mathcal O_X \oplus \mathcal F \to \mathcal O_X$, where $\mathcal F$ is a $G$-equivariant finite locally free sheaf. Thus, $Y \to X$ is isomorphic to the vector bundle projection for the $G$-equivariant vector bundle corresponding to $\mathcal F$.

				In this case, scalar multiplication defines a $\comps$-indexed homotopy between $Y/G \to X/G \to Y/G$ and the identity, whence $Y/G \to X/G$ is $\tau_\comps$-acyclic.


		\end{enumerate}
	\end{proof}
\end{lem}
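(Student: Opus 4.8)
The plan is to take the three parts in turn, in each case reducing to a statement about a global quotient $X/G$ with $G$ finite. For part~(1) I would invoke \Cref{rmk:full anodyne subctx}: it is enough to check that whenever $f\colon X\to S$ is a representable submersion with $S\in\mathcal C^{\hol,\fin}$, we have $X\in\mathcal C^{\hol,\fin}$. Pick a $\tau_\NisC$-cover $\{X_i/G_i\to S\}_i$ of $S$ with each $G_i$ finite, and base-change it along $f$ to a $\tau_\NisC$-cover $\{X\times_S(X_i/G_i)\to X\}_i$. Since $X_i\to X_i/G_i$ is a representable local biholomorphism and a $G_i$-torsor, every representable map to $X_i/G_i$ has the form $(Y_i\to X_i)/G_i$ for a complex space $Y_i$; applying this to the representable submersion $X\times_S(X_i/G_i)\to X_i/G_i$ identifies $X\times_S(X_i/G_i)\simeq Y_i/G_i$ with $G_i$ finite, so $X$ has a $\tau_\NisC$-cover by finite-group quotients and lies in $\mathcal C^{\hol,\fin}$.

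For part~(2), since the $\tau_\open$-covering sieves form a Grothendieck topology and $\tau_\NisC$ is generated by open covers together with the \'etale-excision families $\{U,\tilde X\to X\}$, it suffices to verify that these generating families (based over objects mapping to $X/G$) are $\tau_\open$-covering. Open covers are so by definition; for an \'etale-excision family $\{U/G,X'/G\to X/G\}$ --- with $U$ a $G$-invariant open subspace and $X'\to X$ a $G$-equivariant local biholomorphism invertible away from $U$ --- I would use \Cref{lem:existence of equivariant local sections Hausdorff} to produce $G$-equivariant local sections of $X'\to X$ away from a neighbourhood of $U$, which exhibits $\{U/G,X'/G\to X/G\}$ as an open cover locally on $X/G$. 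Then \Cref{lem:invariant nbhd basis Hausdorff}, together with the fact that $X$ has a basis of $G$-invariant Stein open subspaces, lets me refine the resulting open cover to one by quotients $X'/G$ with $X'$ Stein.

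For part~(3), part~(2) supplies, for any $S\in\mathcal C^{\hol,\fin}$, a $\tau_\NisC$-cover by quotients $X/G$ with $G$ finite and $X$ Stein. Because $\tau_\NisC$-covering sieves and $\tau_\comps$-acyclic pseudosieves are all $(\tau_\NisC\cup\tau_\comps)$-acyclic, and $(\tau_\NisC\cup\tau_\comps)$-acyclic maps are stable under base change and enjoy the $2$-out-of-$3$-type closure of \Cref{lem:props of acyclic}, it is enough --- after base-changing $V\to S$ along such a cover, which carries vector bundle torsors to vector bundle torsors --- to show that a vector bundle torsor $Y/G\to X/G$ with $G$ finite and $X$ Stein is $\tau_\comps$-acyclic. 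I would present $Y$ as the sheaf of sections of a $G$-equivariant surjection $\mathcal E\to\mathcal O_X$ with $\mathcal E$ finite locally free (the standard description, after the discussion around \cite[Definition~2.18]{sixopsequiv}); Cartan's Theorem~B splits this surjection, since its kernel is coherent and hence has vanishing $H^1$ on the Stein space $X$, and averaging the splitting over the finite group $G$ (whose order is invertible in $\comps$) makes it $G$-equivariant, so $\mathcal E\cong\mathcal O_X\oplus\mathcal F$ equivariantly and $Y\to X$ becomes the projection of the $G$-equivariant vector bundle attached to $\mathcal F$. Fibrewise scalar multiplication $\comps\times Y\to Y$ then exhibits the zero section $X/G\to Y/G$ as a $\tau_\comps$-homotopy inverse to $Y/G\to X/G$, so the latter is $\tau_\comps$-acyclic, completing the reduction.

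I expect the main friction to be the locality bookkeeping in part~(3) --- making precise that $(\tau_\NisC\cup\tau_\comps)$-acyclicity of $V\to S$ may be checked $\tau_\NisC$-locally on $S$, which combines the easy observation that $\tau_\NisC$-covers are $(\tau_\NisC\cup\tau_\comps)$-local equivalences with the closure properties of \Cref{lem:props of acyclic} --- and, in part~(2), confirming that the \'etale-excision hypothesis genuinely yields $G$-equivariant local sections of $X'\to X$ on the nose, which is precisely the content of \Cref{lem:existence of equivariant local sections Hausdorff}. Once the torsor is rewritten via a surjection of coherent sheaves the splitting--averaging step is routine, and the remaining content is a direct application of \Cref{rmk:full anodyne subctx} and standard Stein theory.
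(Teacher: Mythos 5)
Your proposal is correct and follows essentially the same route as the paper: part (1) via \Cref{rmk:full anodyne subctx} and the observation that representable maps to $X/G$ are of the form $(Y\to X)/G$; part (2) by reducing to the generating excision families and applying \Cref{lem:existence of equivariant local sections Hausdorff} and \Cref{lem:invariant nbhd basis Hausdorff}; part (3) by reducing to $X/G$ with $X$ Stein, splitting the classifying surjection $\mathcal E\to\mathcal O_X$ equivariantly by averaging, and using scalar multiplication as the $\comps$-homotopy. The only difference is that you spell out the locality bookkeeping in parts (1) and (3) slightly more explicitly than the paper does.
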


Finally we come to our main result describing $\SH^\hol$, the stabilized version of $H^\hol$:
\begin{thm} \label{thm:SHhol}
	Let $\mathcal C$ be a full anodyne pullback subcontext of $\Shv(\An_\comps)$ such that every object $S \in \mathcal C$ admits a $\tau_\NisC$-cover by global quotients of the form $X/G$ for $G$ a finite group acting on a \emph{reduced} complex space $X$. Then there is a pullback formalism $\SH^\hol$ on $\mathcal C$ that is initial among pointed pullback formalisms $D$ satisfying the following:
	\begin{description}

		\item[Reduced] $D(\emptyset)$ is contractible.

		\item[Excision] If $U \to S$ is an open subspace, and $X \to S$ is a local biholomorphism that is invertible away from $U$, then for any $M \in D(S)$,
			\[
				\begin{tikzcd}
					D(S;M) \ar[d] \ar[r] & D(U;M) \ar[d] \\
					D(X;M) \ar[r] & D(X \times_S U; M)
				\end{tikzcd}
			\]
			is Cartesian.

		\item[Descent for nested covers] If $\lambda$ is an ordinal, and $\{U_\alpha \subseteq S\}_{\alpha \in \lambda}$ are opens of $S$ such that for $\alpha < \beta$, $U_\alpha \subseteq U_\beta$, and
			\[
				S = \bigcup_{\alpha \in \lambda} U_\alpha
			,\]
			then for any $M \in D(S)$, the map
			\[
				D(S;M) \to \varprojlim_{\alpha \in \lambda} D(U_\alpha; M)
			\]
			is an equivalence.

		\item[Stability] For any finite group $G$ acting on a reduced Stein space $X$, and finite-dimensional complex $G$-representation $V$, the object $\cls{(X \times V)/G}/\cls{(X \times V \setminus 0)/G}$ is $\otimes$-invertible in $D(X/G)$.

		\item[Homotopy invariance] For any $S \in \mathcal C$, and $M \in D(S)$,
			\[
				D(S;M) \to D(S \times \comps; M)
			\]
			is an equivalence.

	\end{description}

	We also have the following properties of $\SH^\hol$:
	\begin{enumerate}

		\item $\SH^\hol$ has \'{e}tale excision and descent for open covers.

		\item For every vector bundle $V \to S$ in $\mathcal C$, we have that $\cls{V}/\cls{V \setminus 0}$ is $\otimes$-invertible in $\SH^\hol(S)$, and for any torsor $F$ under $V$, the functor $\SH^\hol(S) \to \SH^\hol(F)$ is fully faithful. In fact, this also holds for the pullback formalisms $D$ as above.

		\item\label{itm:SHhol/excision}
			If $Z \to S$ is a map in $\mathcal C$ that is representable by embeddings\footnote{See \cite[0.23]{Fischer}.}, and $X \to S$ is an \'{e}tale neighborhood of $Z$,\footnotemark{} then for any $M \in \SH^\hol(S)$,
			\[
				\SH^\hol_Z(S;M) \to \SH^\hol_Z(X;M)
			\]
			is an equivalence, where for any $Y \to S$, pointed pullback formalism $D$ on $\mathcal C$, and $M \in D(S)$, $D_Z(M)$ is defined as the fibre of the following map of pointed spaces
			\[
				D(Y;M) \to D(Y \times_S (S \setminus Z); M)
			.\]
			\footnotetext{This means that $X \times_S Z \to Z$ is an equivalence.}

		\item \label{itm:SHhol/ptwise}
			Objectwise description: For any finite group $G$ acting on a reduced complex space $X$ such that $X/G \in \mathcal C$, we have that the natural functor $H^\hol_\bullet(X/G) \to \SH^\hol(X/G)$ provided by \Cref{prp:univ prop of Hhol} is given by formally adjoining $\otimes$-inverses of objects of the form $\cls{(X \times V)/G}/\cls{(X \times V \setminus 0)/G}$, where $V$ is a finite-dimensional complex $G$-representation. 

		\item \label{itm:SHhol/generation}
			Generation: For any finite group $G$ acting on a reduced complex space $S$ such that $S/G \in \mathcal C$, we have that $\SH^\hol(S/G)$ is generated under sifted colimits by objects of the form
			\[
				\cls{X/G} \otimes (\cls{(X \times V)/G}/\cls{(X \times V \setminus 0)/G})^{\otimes -1}
			,\]
			where $X \to S$ is a $G$-equivariant submersion of complex $G$-spaces, $X$ is Stein, and $V$ is a finite-dimensional complex $G$-representation.

	\end{enumerate}
	\begin{proof}
		For each $S \in \Shv(\An_\comps)$, let $A_S$ be the collection of morphisms in $H^\hol(S)$ of the form $\cls{V \setminus 0} \to \cls{V}$ for $V$ a vector bundle on $S$, and write $(A_\bullet)_S$ for the collection of objects of the form $\cls{V}/\cls{V \setminus 0}$, as in \Cref{thm:ptd PF sheafy stabilization}.

		Then \Cref{rmk:hol resol fin gp} shows that for any finite group $G$ acting on a reduced Stein space $X$, the map $X/G \to \B G$ has $A_\bullet$-lifts, since for any $S \in \Shv(\An_\comps)$, and vector bundles $V,V'$ on $S$, we have that
		\[
			\cls{V \oplus V'}/\cls{V \oplus V' \setminus 0} \simeq \cls{V}/\cls{V \setminus 0} \wedge \cls{V'}/\cls{V' \setminus 0}
		\]
		in $H^\univ_\bullet(S)$.

		If $G$ is a finite group acting on a reduced complex space $X$, then \Cref{lem:hol pseudotop for finite groups} says that $X/G$ has an open cover by open subspaces of the form $(U \to X)/G$, where $U \to X$ is a $G$-invariant Stein open subspace. Since $G$-invariant Stein open subspaces are closed under finite intersections, it follows from \cref{itm:A-lifts/source-local} of \Cref{lem:A-lifts} that the map $X/G \to \B G$ has $A_\bullet$-lifts in the sense of \Cref{thm:PF sheafy stabilization}.

		Thus, if we let $\mathcal B \subseteq \mathcal C$ be the full subcategory of $\mathcal C$ consisting of objects of the form $X/G$ for $G$ a finite group acting on a reduced complex space $X$, then by \cref{itm:A-lifts/13} of \Cref{lem:A-lifts}, any representable map to an object of $\mathcal B$ has $A_\bullet$-lifts. Using the argument of \cite[Lemma 6.3]{sixopsequiv}, we can show that if $V$ is a vector bundle on an object $S$ of $\mathcal B$, then $\cls{V}/\cls{V \setminus 0}$ is symmetric in $H^\hol_\bullet(S)$, which shows that the objects of $\mathcal B$ are $A_\bullet$-good in the sense of \Cref{thm:PF sheafy stabilization}.

		By \Cref{prp:univ prop of Hhol} and \Cref{thm:D-topology}, $H^\hol_\bullet$ has $\tau_\NisC$-descent, so since every object of $\mathcal C$ has a quasi-admissible $\tau_\NisC$-cover by objects in $\mathcal B$, we may apply \Cref{thm:ptd PF sheafy stabilization} to produce $H^\hol \to \SH^\hol$ as the map $\Sigma_{A,+} : H^\hol \to H^\hol[A^{\wedge -1}]$.

		\begin{itemize}

			\item The universal property of $\SH^\hol$ follows from \Cref{prp:univ prop of Hhol}, and \cref{itm:ptd PF sheafy stabilization/univ} of \Cref{thm:ptd PF sheafy stabilization}, where we use \Cref{lem:hol pseudotop for finite groups} to see that the homotopy invariance condition given in this result in terms of projection maps $S \times \comps \to S$, is equivalent to the homotopy invariance condition given in \Cref{prp:univ prop of Hhol} in terms of vector bundle torsors. \Cref{itm:ptd PF sheafy stabilization/univ} of \Cref{thm:ptd PF sheafy stabilization} also shows that for any vector bundle $V \to S$ in $\mathcal C$, the object $\cls{V}/\cls{V \setminus 0}$ of $\SH^\hol(S)$ is $\otimes$-invertible.

			\item The descent statement follows either from \Cref{thm:D-topology} or \cref{itm:ptd PF sheafy stabilization/descent} of \Cref{thm:ptd PF sheafy stabilization}.

			\item \Cref{itm:SHhol/excision} follows immediately from the fact that $\SH^\hol$ has \'{e}tale excision.

			\item \Cref{itm:SHhol/ptwise} follows from \cref{itm:PF sheafy stabilization/ptwise} of \Cref{thm:PF sheafy stabilization} since the map $X/G \to \B G$ has $A_\bullet$-lifts.

			\item For \cref{itm:SHhol/generation}, first note that $H^\hol(S/G)$ is generated under sifted colimits by objects of the form $\cls{X/G}$ for $X \to S$ a $G$-equivariant submersion, but since $S$ is reduced, so is $X$. Since $G$ is finite, $X$ admits a $G$-invariant Stein open cover by \Cref{lem:hol pseudotop for finite groups}, so since Stein open subspaces are closed under finite intersections, $H^\hol(S/G)$ is generated under sifted colimits by objects of the form $\cls{X/G}$ for $X \to S$ a $G$-equivariant submersion, and $X$ Stein. Thus, we may conclude by \cref{itm:ptd PF sheafy stabilization/generation} of \Cref{thm:ptd PF sheafy stabilization}, since $S/G \to \B G$ has $A_\bullet$-lifts.

		\end{itemize}

	\end{proof}
\end{thm}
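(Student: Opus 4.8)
The plan is to exhibit $\SH^\hol$ as a formal $\wedge$-inversion of Thom objects, i.e.\ to apply \Cref{thm:ptd PF sheafy stabilization} to the pullback formalism $H^\hol$ of \Cref{prp:univ prop of Hhol}. Concretely, for each $S \in \Shv(\An_\comps)$ I would take $A_S$ to be the collection of morphisms $\cls{V \setminus 0} \to \cls{V}$ in $H^\hol(S)$ for $V$ a vector bundle on $S$, so that $(A_\bullet)_S$ consists of the Thom objects $\cls{V}/\cls{V \setminus 0}$, and set $\SH^\hol := H^\hol[A^{\wedge -1}]$ with $\Sigma^\infty_{A,+} : H^\hol \to \SH^\hol$ the resulting map. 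Let $\mathcal B \subseteq \mathcal C$ be the full subcategory of objects of the form $X/G$ with $G$ finite and $X$ a reduced complex space; by hypothesis every object of $\mathcal C$ has a quasi-admissible $\tau_\NisC$-cover by objects of $\mathcal B$, and $H^\hol_\bullet$ has $\tau_\NisC$-descent by \Cref{prp:univ prop of Hhol} together with \Cref{thm:D-topology}. To invoke \Cref{thm:ptd PF sheafy stabilization} it then suffices to check that the objects of $\mathcal B$ are $A_\bullet$-good.

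The bulk of the work, and the step I expect to be the main obstacle, is precisely this $A_\bullet$-goodness, which has two parts: that $(A_\bullet)_S$ is symmetric for $S \in \mathcal B$, and that every representable map into an object of $\mathcal B$ has $A_\bullet$-lifts. For symmetry one transports the cyclic-permutation argument of \cite[Lemma 6.3]{sixopsequiv} to the complex-analytic setting. For the $A_\bullet$-lifts, the crucial input is \Cref{rmk:hol resol fin gp}: for $G$ finite acting on a \emph{reduced Stein} space $X$, every $G$-equivariant holomorphic vector bundle on $X$ is a direct summand of one pulled back from $\B G$ (via the equivariant Oka-principle inputs cited there), so using $\cls{V \oplus V'}/\cls{(V\oplus V')\setminus 0} \simeq \cls{V}/\cls{V\setminus 0} \wedge \cls{V'}/\cls{V'\setminus 0}$ in $H^\univ_\bullet(S)$ one gets that $X/G \to \B G$ has $A_\bullet$-lifts when $X$ is Stein. \Cref{lem:hol pseudotop for finite groups} then provides, for $G$ finite acting on a reduced complex space $X$, a $G$-invariant Stein open cover of $X$; since $G$-invariant Stein opens are closed under finite intersection, \cref{itm:A-lifts/source-local} of \Cref{lem:A-lifts} upgrades this to $X/G \to \B G$ having $A_\bullet$-lifts for arbitrary reduced $X$, and \cref{itm:A-lifts/13} of \Cref{lem:A-lifts} then yields $A_\bullet$-lifts for every representable map into an object of $\mathcal B$. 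Matching up the precise reducedness/Stein hypotheses of \Cref{rmk:hol resol fin gp} and \Cref{lem:hol pseudotop for finite groups} with what \Cref{lem:A-lifts} needs is the delicate bookkeeping here.

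Granting the hypotheses of \Cref{thm:ptd PF sheafy stabilization}, the stated results follow formally. The universal property is \cref{itm:ptd PF sheafy stabilization/univ} of \Cref{thm:ptd PF sheafy stabilization} combined with the universal property of $H^\hol$ from \Cref{prp:univ prop of Hhol}, where \Cref{lem:hol pseudotop for finite groups} is used to see that on $\mathcal C$ the homotopy-invariance condition phrased via projections $S \times \comps \to S$ agrees with the one phrased via vector bundle torsors, and the Stability condition transfers along $\Sigma^\infty_{A,+}$ because the Thom objects become $\otimes$-invertible. \'Etale excision and open-cover descent come from \Cref{thm:D-topology} or \cref{itm:ptd PF sheafy stabilization/descent}; $\otimes$-invertibility of every $\cls{V}/\cls{V\setminus 0}$ and fully-faithfulness of $\SH^\hol(S) \to \SH^\hol(F)$ for $F$ a $V$-torsor follow from \cref{itm:ptd PF sheafy stabilization/univ}; the cohomology-with-supports statement \cref{itm:SHhol/excision} is immediate from \'etale excision; the objectwise description \cref{itm:SHhol/ptwise} is \cref{itm:PF sheafy stabilization/ptwise} of \Cref{thm:PF sheafy stabilization}, using that $X/G \to \B G$ has $A_\bullet$-lifts; and the generation statement \cref{itm:SHhol/generation} is \cref{itm:ptd PF sheafy stabilization/generation} of \Cref{thm:ptd PF sheafy stabilization}, combined with the fact that $H^\hol(S/G)$ is generated under sifted colimits by the $\cls{X/G}$ for $X \to S$ a $G$-equivariant submersion, refined to Stein $X$ via \Cref{lem:hol pseudotop for finite groups}.
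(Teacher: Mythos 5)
Your proposal is correct and follows essentially the same route as the paper's proof: the same choice of $A_S$, the same use of \Cref{rmk:hol resol fin gp}, \Cref{lem:hol pseudotop for finite groups}, and \Cref{lem:A-lifts} to establish $A_\bullet$-goodness of the objects $X/G$, and the same appeal to \Cref{thm:ptd PF sheafy stabilization} and \Cref{thm:PF sheafy stabilization} for the universal property and the listed consequences. No substantive differences to flag.
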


\appendix
\section{Quasi-projective Morphisms of Algebraic Stacks} \label{S:qproj}

The following definition of quasi-projectivity agrees with \cite[\S 14.3.4]{ChampsAlg}. Also see \cite[Theorem 8.6]{Rydh_2015}.
\begin{defn} \label{defn:qproj}
	A map $X \to Y$ of algebraic stacks is \emph{(quasi-)projective} if there is a finite type $\mathcal F \in \QC(Y)$, and a map over $Y$ from $X$ to $\proj_Y(\mathcal F)$ which is representable by (quasi-compact) closed immersions.

\end{defn}

The following lemma shows that any quasi-projective morphism can be factored as an open immersion followed by a projective map.
\begin{lem} \label{lem:qc imm}
	Let $f : X \to Y$ be a quasi-compact immersion of algebraic stacks. Then $f$ factors as a quasi-compact open immersion followed by a closed immersion.
	\begin{proof}
		By \stackscite{0CPU}, $f$ has a scheme-theoretic image, giving us a factorization $X \to Z \to Y$. Let $\tilde Y \to Y$ be a surjective smooth map, where $\tilde Y$ is a scheme. Thus, we have Cartesian squares
		\[
			\begin{tikzcd}
				\tilde X \ar[d] \ar[r] & \tilde Z \ar[d] \ar[r] & \tilde Y \ar[d] \\
				X \ar[r] & Z \ar[r] & Y
			\end{tikzcd}
		,\]
		and since $f : X \to Y$ is quasi-compact, \stackscite{0CMK} says that $\tilde Z \to \tilde Y$ is the scheme-theoretic image of the base change $\tilde f : \tilde X \to \tilde Y$ of $f$.

		By the definition of scheme-theoretic image, we know that $Z \to Y$ is a closed immersion. Note that since the diagonal of $Z \to Y$ is quasi-compact, and $X \to Y$ is quasi-compact, we have that $X \to Z$ is quasi-compact, so it only remains to show that it is an open immersion. By \stackscite{0503}, it suffices to show that $\tilde X \to \tilde Z$ is an open immersion. This follows from \stackscite{01RG}.
	\end{proof}
\end{lem}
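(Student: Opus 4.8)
The plan is to build the required factorization out of the \emph{scheme-theoretic image} of $f$. First I would invoke the existence of a scheme-theoretic (closed-substack) image for a quasi-compact morphism of algebraic stacks, which gives a factorization $X \xrightarrow{g} Z \xrightarrow{i} Y$ in which $i$ is by construction a closed immersion. The entire content of the lemma is then the assertion that $g$ is a quasi-compact open immersion.

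Quasi-compactness of $g$ is formal: a closed immersion $i$ is separated (it has affine, in particular quasi-compact, diagonal), and $f = i \circ g$ is quasi-compact by hypothesis, so $g$ is quasi-compact by the standard cancellation property for quasi-compact morphisms.

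For the statement that $g$ is an open immersion I would reduce to schemes via a smooth atlas. Pick a smooth surjection $p : \tilde Y \to Y$ with $\tilde Y$ a scheme, and form the base changes $\tilde X = X \times_Y \tilde Y$ and $\tilde Z = Z \times_Y \tilde Y$; then $\tilde Z \to \tilde Y$ is a closed immersion and $\tilde X \to \tilde Y$ is a quasi-compact immersion of schemes (this is where one uses that $f$, being an immersion, is representable, so that the base change along a scheme lands in schemes, or at worst algebraic spaces, which are handled identically). The crucial input is that formation of the scheme-theoretic image commutes with flat — in particular smooth — base change for quasi-compact morphisms, so $\tilde Z \to \tilde Y$ is exactly the scheme-theoretic image of $\tilde X \to \tilde Y$. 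The scheme-level version of the lemma — that a quasi-compact immersion of schemes factors through its scheme-theoretic closure as an open immersion — then shows that $\tilde X \to \tilde Z$ is an open immersion. Since the base change of $g$ along the smooth surjection $\tilde Z \to Z$ is precisely $\tilde X \to \tilde Z$, and the property of being an open immersion is smooth-local on the target, it follows that $g$ is an open immersion, completing the factorization.

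The main obstacle is the bookkeeping in the stacky-to-scheme reduction rather than any deep idea: one must be careful that the scheme-theoretic image is genuinely insensitive to smooth base change — this is exactly where the quasi-compactness hypothesis on $f$ is used — and one must confirm that ``open immersion'' is a property that can be detected after a smooth cover of the target. Given the corresponding facts for schemes, the rest is a direct appeal.
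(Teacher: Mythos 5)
Your proposal is correct and follows essentially the same route as the paper: existence of the scheme-theoretic image for the quasi-compact $f$, quasi-compactness of $X \to Z$ by cancellation against the quasi-compact diagonal of the closed immersion, compatibility of the scheme-theoretic image with smooth base change, the scheme-level factorization result, and smooth-locality of ``open immersion'' on the target. No gaps.
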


\begin{lem} \label{lem:map of qproj is qproj}
	Let
	\[
		X \xrightarrow{f} Y \xrightarrow{g} S
	\]
	be morphisms of qcqs algebraic stacks, and assume $g$ is (quasi-)projective. Then $f$ is (quasi-)projective if and only if $g \circ f$ is (quasi-)projective.
	\begin{proof}

		Since $g$ is quasi-projective, its diagonal is a closed immersion, so it is projective. Thus, using the usual argument of factoring $f$ as the composite $X \to X \times_S Y \to Y$, we see that once we show that (quasi-)projective maps are closed under composition (the ``if'' direction), we will also have that if $g \circ f$ is (quasi-)projective, then so is $f$.

		By \cite[Theorem 8.6 (i)]{Rydh_2015}, we have that a representable morphism $A \to B$ of qcqs algebraic stacks is quasi-projective if and only if it is of finite type, and there exists an invertible $\mathcal O_A$-module that is ample on $A/B$. Furthermore, by \cite[Theorem 8.6 (ii)]{Rydh_2015}, a map of qcqs algebraic stacks is projective if and only if it is proper and quasi-projective. Thus, since proper maps are closed under composition, it suffices to show that if $f$ is quasi-projective, then $g \circ f$ is quasi-projective (where $g$ is already assumed to be quasi-projective). Since we already know that representable maps of finite type are closed under composition, we just need to show that if there exists a $g$-ample invertible $\mathcal O_Y$-module, and an $f$-ample invertible $\mathcal O_X$-module, then there is a $g \circ f$-ample invertible $\mathcal O_X$-module.

		Since $S$ is quasi-compact algebraic stack, by \stackscite{04YC}, there exist Cartesian squares
		\[
			\begin{tikzcd}
				\tilde X \ar[d, "p"'] \ar[r, "\tilde f"] & \tilde Y \ar[d, "q"] \ar[r, "\tilde g"] & \tilde S \ar[d] \\
				X \ar[r, "f"'] & Y \ar[r, "g"'] & S
			\end{tikzcd}
		,\]
		where the vertical arrows are surjective smooth maps from quasi-compact schemes. Since $f$ and $g$ are quasi-projective, there is an $f$-ample invertible $\mathcal O_X$-module $\mathcal L$, and a $g$-ample invertible $\mathcal O_Y$-module $\mathcal M$. By \stackscite{0C4K}, there is an $n \gg 0$ such that
		\[
			p^* \mathcal L \otimes \tilde f^* q^* \mathcal M^{\otimes n}
		\]
		is $\tilde g \circ \tilde f$-ample. Thus, $\mathcal N \coloneqq \mathcal L \otimes f^* \mathcal M^{\otimes n}$ is an invertible $\mathcal O_X$-module such that
		\[
			p^*(\mathcal N) \simeq p^* \mathcal L \otimes \tilde f^* q^* \mathcal M^{\otimes n}
		\]
		is $\tilde g \circ \tilde f$-ample, so $\mathcal N$ is $g \circ f$-ample by fpqc-locality of relative ampleness (see \cite[Corollary 2.7.2]{EGA4}).

	\end{proof}
\end{lem}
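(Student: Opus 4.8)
The plan is to reduce the statement to two closure properties of (quasi-)projective morphisms of qcqs algebraic stacks: closure under composition, and the usual cancellation property, and then assemble them. First I would dispatch the \emph{only if} direction by reducing it to composition closure. Since $g$ is quasi-projective, its diagonal $\Delta_{Y/S}$ is a (quasi-compact) closed immersion, so $g$ is separated. Factoring $f$ as the graph $X \xrightarrow{\Gamma_f} X \times_S Y$ followed by the projection $\mathrm{pr}\colon X \times_S Y \to Y$, one notes that $\Gamma_f$ is a base change of $\Delta_{Y/S}$, hence a closed immersion and in particular projective, while $\mathrm{pr}$ is a base change of $g \circ f$. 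Thus if $g \circ f$ is quasi-projective then $\mathrm{pr}$ is quasi-projective by stability under base change, and $f = \mathrm{pr} \circ \Gamma_f$ is quasi-projective once composition closure is available; if moreover $g$ and $g \circ f$ are projective, then $f$ is proper by the standard cancellation lemma (a morphism $h$ with $g \circ h$ proper and $g$ separated is proper), hence projective.

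For composition closure I would invoke Rydh's criterion \cite[Theorem 8.6]{Rydh_2015}: a representable morphism of qcqs algebraic stacks is quasi-projective iff it is of finite type and admits a relatively ample invertible sheaf, and projective iff it is moreover proper. Since proper morphisms and representable finite-type morphisms are each closed under composition, the only remaining point is to produce, from an $f$-ample invertible $\mathcal{O}_X$-module $\mathcal{L}$ and a $g$-ample invertible $\mathcal{O}_Y$-module $\mathcal{M}$, a $(g \circ f)$-ample invertible $\mathcal{O}_X$-module; the candidate is $\mathcal{N} := \mathcal{L} \otimes f^* \mathcal{M}^{\otimes n}$ for $n \gg 0$. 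To verify relative ampleness I would choose, via \stackscite{04YC}, Cartesian squares relating $X, Y, S$ to surjective smooth covers $\tilde X, \tilde Y, \tilde S$ by quasi-compact schemes, apply the scheme-level statement \stackscite{0C4K} to conclude that $p^* \mathcal{L} \otimes \tilde f^* q^* \mathcal{M}^{\otimes n}$ is relatively ample for $n \gg 0$, and then transport this back to $\mathcal{N}$ using fpqc-locality of relative ampleness \cite[Corollary 2.7.2]{EGA4}.

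The main obstacle is precisely the relative-ampleness bookkeeping in the last step: one must check that Rydh's equivalence applies in the generality at hand, that $p^*\mathcal{N}$ really is the claimed ample bundle on the schematic cover, and that a \emph{single} $n \gg 0$ suffices — which is where quasi-compactness of $S$ (hence of the chosen covers) is genuinely used. By contrast the cancellation and base-change manipulations are purely formal. It is also worth noting that the reduction to the ``representable plus ample sheaf'' picture implicitly relies on the fact that a quasi-compact immersion of algebraic stacks factors as a quasi-compact open immersion followed by a closed immersion (proved separately via scheme-theoretic images), which is what makes \Cref{defn:qproj} compatible with Rydh's formulation.
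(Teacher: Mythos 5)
Your proposal is correct and follows essentially the same route as the paper: the cancellation direction via the graph factorization $X \to X \times_S Y \to Y$ using that $\Delta_{Y/S}$ is a closed immersion, and composition closure via Rydh's characterization, reduction to producing a $(g\circ f)$-ample sheaf $\mathcal L \otimes f^*\mathcal M^{\otimes n}$, smooth quasi-compact covers, the scheme-level statement, and fpqc descent of relative ampleness. The only (harmless) difference is that you offer an additional properness-by-cancellation argument for the projective case, which the composition argument already covers.
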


We recall the following definitions from \cite{SixAlgSt}:
\begin{defn}
	Following \cite[Definition 2.1(ii)]{SixAlgSt}, given an affine scheme $S$, say an fppf affine group scheme $G$ over $S$ is \emph{embeddable} if it is a closed subgroup of $\GL_S(\mathcal E)$ for some locally free sheaf $\mathcal E$ on $S$.

	Following \cite[Definition A.1(ii)]{SixAlgSt}, say an algebraic stack $X$ is \emph{linearly quasi-fundamental} if it admits a quasi-projective map to a stack $\B G$, for $G$ a linearly reductive embeddable group scheme over an affine scheme.
\end{defn}

Hoyois introduces a different notion of (quasi-)projectivity for global quotient stacks in \cite[Definition 2.5]{sixopsequiv}:
\begin{defn} \label{defn:G-qproj}
	Let $B$ be a scheme, and $G$ be a flat finitely presented group scheme over $B$. A \emph{$G$-scheme} is a scheme over $B$ equipped with a $G$-action. A $G$-equivariant morphism of $G$-schemes $f : X \to Y$ is said to be \emph{$G$-(quasi-)projective} if there is a finite locally free $G$-equivariant $\mathcal O_X$-module $\mathcal E$ and a closed (quasi-compact) immersion $X \to \proj(\mathcal E)$ over $Y$.
\end{defn}

We use the following definition from \cite[Definition 2.18]{SixAlgSt}:
\begin{defn} \label{defn:resolution property}
	Say a stack $X \in \AlgStk$ has the \emph{resolution property} if for any quasi-coherent module $\mathcal M$ on $X$, there is a surjection
	\[
		\bigoplus_\alpha \mathcal E_\alpha \to \mathcal M
	,\]
	where for each $\alpha$, $\mathcal E_\alpha$ is locally free of finite rank, and the direct sum is indexed over a small collection.
\end{defn}

\begin{rmk} \label{rmk:finite resolution property}
	If $X \in \AlgStk$ is quasi-compact and has the resolution property, and $\mathcal M$ is a finitely generated quasi-coherent module on $X$, then there is an $\mathcal O_X$-module $\mathcal E$ that is locally free of finite rank and a surjection $\mathcal E \to \mathcal M$.
	\begin{proof}
		Since $X$ is quasi-compact, \stackscite{04YC} says that there is a smooth surjective morphism $p : \Spec A \to X$. Since $X$ has the resolution property, we have a surjection
		\[
			\bigoplus_\alpha \mathcal E_\alpha \to \mathcal M
		,\]
		where for each $\alpha$, $\mathcal E_\alpha$ is locally free of finite rank.

		Applying $p^*$ to this surjection gives a surjection $\bigoplus_\alpha E_\alpha \to M$ of $A$-modules, where $M$ is finitely generated since $\mathcal M$ is. By picking a finite generating set for $M$, we can choose a finite collection of indices $\alpha_1, \dotsc, \alpha_n$ such that the restriction
		\[
			E_{\alpha_1} \oplus \dotsb \oplus E_{\alpha_n} \to M
		\]
		is surjective.

		Since $p$ is smooth surjective, and $p^*$ sends $\mathcal E_{\alpha_1} \oplus \dotsb \oplus \mathcal E_{\alpha_n} \to \mathcal M$ to the above surjection, this map is also a surjection.
	\end{proof}
\end{rmk}

\begin{lem} \label{lem:G-qproj vs qproj}
	Let $G$ be a linearly reductive group over an affine scheme $B$. Then a $G$-scheme $Y$ is $G$-(quasi-)projective in the sense of Definition \ref{defn:G-qproj} if and only if $Y/G \to B/G$ is (quasi-)projective, and in this case, if a $G$-equivariant map $X \to Y$ is $G$-(quasi-)projective then $(X \to Y)/G$ is (quasi-)projective.

	If $G$ is embeddable, then the converse of the last statement holds, so any (quasi-)projective map to a linearly quasi-fundamental stack is given by a closed (quasi-compact) immersion followed by the projection of a projective bundle.
	\begin{proof}
		Since $B$ is trivially $G$-projective, it suffices to show the statements about maps $X \to Y$ and $(X \to Y)/G$.

		Under the identification $\QC^G(Y) = \QC(Y/G)$, we have that $(\proj_Y(\mathcal E) \to Y)/G$ is $\proj_{Y/G}(\mathcal E) \to Y/G$. Furthermore, since closed (quasi-compact) immersions are local on the target by \stackscite{02L6} (\stackscite{02L8}), if $X \to \proj_Y(\mathcal E)$ is a $G$-equivariant closed (quasi-compact) immersion, then $(X \to \proj_Y(\mathcal E))/G$ is a closed (quasi-compact) immersion, so that $(X \to Y)/G$ is indeed (quasi-)projective.

		For the converse when $G$ is embeddable, note that if $(X \to Y)/G$ is (quasi-)projective, then we have that it factors as $X/G \to \proj_{Y/G}(\mathcal F) \to Y/G$ for some finite type $\mathcal F \in \QC(Y/G)$, where the first map is a (quasi-compact) closed immersion. Since $G$ is embeddable \cite[Remark A.2]{SixAlgSt} says that $Y$ has the resolution property, so by \Cref{rmk:finite resolution property}, there is a surjection $\mathcal E \to \mathcal F$ in $\QC(Y/G)$, where $\mathcal E$ is finite locally free. This gives a closed immersion $\proj_{Y/G}(\mathcal F) \to \proj_{Y/G}(\mathcal E)$ over $Y/G$, so we actually have a factorization $X/G \to \proj_{Y/G}(\mathcal E) \to Y/G$.

		The base change of this along $Y \to Y/G$ gives a factorization as required in Definition \ref{defn:G-qproj}.
	\end{proof}
\end{lem}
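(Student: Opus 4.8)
The plan is to reduce everything to the existence of enough locally free sheaves, using the identification $\QC^G(Y) \simeq \QC(Y/G)$ and the fact that projective-bundle formation commutes with passing to the quotient. First I would record the basic dictionary: a $G$-equivariant quasi-coherent $\mathcal O_Y$-module is the same thing as a quasi-coherent module on $Y/G$, and under this equivalence $\proj_Y(\mathcal E)/G \simeq \proj_{Y/G}(\mathcal E)$, since $\proj$ is a colimit-preserving construction compatible with pullback (in particular with the smooth atlas $Y \to Y/G$). Next I would handle the \emph{forward} implications, which are formal: if $X \to \proj_Y(\mathcal E)$ is a $G$-equivariant closed (resp.\ quasi-compact) immersion over $Y$, then quotienting gives a map $X/G \to \proj_{Y/G}(\mathcal E)$ over $Y/G$, and closed (resp.\ quasi-compact) immersions can be checked after the smooth surjective base change $Y \to Y/G$ by \stackscite{02L6} (resp.\ \stackscite{02L8}), so $(X \to Y)/G$ is (quasi-)projective; finite-typeness of $\mathcal E$ is automatic since it is even finite locally free. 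Taking $X = Y$ and $\mathcal E$ the structure sheaf shows $B$ is $G$-projective, which disposes of the statements about $B/G \to B/G$.

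The substantive direction is the \emph{converse} when $G$ is embeddable: given that $(X \to Y)/G$ is (quasi-)projective, I must produce a $G$-equivariant finite locally free $\mathcal E$ on $Y$ together with a $G$-equivariant closed (quasi-compact) immersion $X \hookrightarrow \proj_Y(\mathcal E)$ over $Y$. By \Cref{defn:qproj} there is a finite type $\mathcal F \in \QC(Y/G)$ and a factorization $X/G \to \proj_{Y/G}(\mathcal F) \to Y/G$ with the first map a (quasi-compact) closed immersion. Embeddability of $G$ gives, via \cite[Remark A.2]{SixAlgSt}, that $Y$ has the resolution property, and since $Y$ is quasi-compact, \Cref{rmk:finite resolution property} upgrades this to a surjection $\mathcal E \twoheadrightarrow \mathcal F$ with $\mathcal E$ finite locally free on $Y/G$. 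A surjection of quasi-coherent modules induces a closed immersion $\proj_{Y/G}(\mathcal F) \hookrightarrow \proj_{Y/G}(\mathcal E)$ over $Y/G$, so composing yields the desired factorization $X/G \hookrightarrow \proj_{Y/G}(\mathcal E) \to Y/G$ with the first map still a (quasi-compact) closed immersion (composites of closed immersions are closed immersions; quasi-compactness is preserved since $Y/G$, hence $\proj_{Y/G}(\mathcal E)$, is quasi-separated). Base-changing this whole picture along the smooth atlas $Y \to Y/G$ and using $\proj_{Y/G}(\mathcal E)\times_{Y/G} Y \simeq \proj_Y(\mathcal E)$ produces exactly the data required by \Cref{defn:G-qproj}, giving $G$-(quasi-)projectivity of $X \to Y$; in particular any (quasi-)projective map to a linearly quasi-fundamental stack factors as a (quasi-compact) closed immersion followed by a projective-bundle projection, applied with $Y/G = \B G$.

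Finally, the assertion that $Y$ is $G$-(quasi-)projective iff $Y/G \to B/G$ is (quasi-)projective is the special case $X = Y$, $Y \rightsquigarrow B$ of the above (noting that when $G$ is linearly reductive but perhaps not a priori embeddable, one still has the forward direction for free, and the cited results of \cite{SixAlgSt} cover the reverse). The main obstacle is the converse direction, and within it the only real input is the resolution property: one must be careful that \cite[Remark A.2]{SixAlgSt} applies to $Y$ (not merely to $\B G$), which is why embeddability of $G$ is hypothesized, and that \Cref{rmk:finite resolution property} is available, for which quasi-compactness of $Y$ — part of the standing ``qcqs'' assumption in \Cref{defn:G-qproj}'s ambient setting via \stackscite{04YC} — is exactly what is needed. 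Everything else is bookkeeping with smooth descent of immersions and the compatibility of $\proj$ with base change.
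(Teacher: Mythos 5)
Your proposal is correct and follows essentially the same route as the paper's proof: the forward direction via the identification $\proj_Y(\mathcal E)/G \simeq \proj_{Y/G}(\mathcal E)$ and smooth-locality of (quasi-compact) closed immersions on the target, and the converse via the resolution property of $Y$ (from embeddability), a surjection $\mathcal E \twoheadrightarrow \mathcal F$ with $\mathcal E$ finite locally free, the induced closed immersion of projective bundles, and base change along $Y \to Y/G$. The extra care you take about quasi-separatedness and about where embeddability is actually needed is fine but not a departure from the paper's argument.
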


\begin{lem} \label{lem:desc lin qfund}
	An algebraic stack $S$ is linearly quasi-fundamental if and only if it is of the form $X/G$ for $G$ a linearly reductive embeddable group scheme over an affine scheme $S$, and $X$ a $G$-quasi-projective $G$-scheme.
	\begin{proof}
		By definition, we have that $S$ is linearly quasi-fundamental if and only if it admits a quasi-projective map to $\B G$ for some linearly reductive embeddable group scheme $G$ over an affine scheme $B$, so $S \simeq X/G$ for some $G$-scheme $X$ such that the quasi-projective map $S \to \B G$ is equivalent to $(X \to B)/G$. Since $G$ is embeddable, \Cref{lem:G-qproj vs qproj} says that the map $X \to B$ is $G$-quasi-projective, so $X$ is a $G$-quasi-projective $G$-scheme.

		For the converse, note that if $G$ is a linearly reductive embeddable group scheme over an affine scheme $B$, and $X$ is a $G$-quasi-projective $G$-scheme, so that the map $X \to B$ is $G$-quasi-projective, then \Cref{lem:G-qproj vs qproj} says that the map $(X \to B)/G$ is a quasi-projective map $X/G \to \B G$.
	\end{proof}
\end{lem}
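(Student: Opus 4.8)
The plan is to prove this purely by unwinding the two definitions in play and invoking \Cref{lem:G-qproj vs qproj} in each direction. Recall that, by definition, $S$ is linearly quasi-fundamental precisely when it admits a quasi-projective map to $\B G$ for some linearly reductive embeddable group scheme $G$ over an affine scheme, so the whole content is a translation between ``quasi-projective map to $\B G$'' and ``$G$-quasi-projective $G$-scheme presentation''.

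For the forward direction I would start with a quasi-projective map $f \colon S \to \B G$, $G$ linearly reductive and embeddable over an affine scheme $B$, write $\B G = B/G$, and form the pullback $X \coloneqq S \times_{\B G} B$ along the atlas $B \to \B G$. Two quick observations: first, since $f$ is quasi-projective and $B$ is affine, its base change $X \to B$ is a quasi-compact immersion into a projective bundle over $B$, and projective bundles over affine schemes are schemes, so $X$ is a scheme; second, $B \to \B G$ is a $G$-torsor, hence so is $X \to S$, giving $S \simeq X/G$ by descent. Now $G$ is embeddable, so \Cref{lem:G-qproj vs qproj} applies to the $G$-scheme $X$: because $X/G \simeq S \to B/G = \B G$ is quasi-projective, the structure map $X \to B$ is $G$-quasi-projective, i.e.\ $X$ is a $G$-quasi-projective $G$-scheme, as desired.

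For the converse, suppose $S \simeq X/G$ with $G$ linearly reductive and embeddable over an affine scheme $B$ and $X$ a $G$-quasi-projective $G$-scheme, so $X \to B$ is $G$-quasi-projective. Then \Cref{lem:G-qproj vs qproj} immediately gives that the induced map $X/G \to B/G$, namely $S \to \B G$, is quasi-projective, so $S$ is linearly quasi-fundamental by definition.

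I do not expect a genuine obstacle here: all the real work is packaged inside \Cref{lem:G-qproj vs qproj} (itself resting on the resolution property of embeddable-group quotients and fpqc-locality of relative ampleness), which is already available. The only point in the present argument requiring a moment's care is the forward direction's observation that the pullback of a quasi-projective morphism along an affine atlas is representable by a \emph{scheme}, together with the identification $S \simeq X/G$ via descent along the $G$-torsor $B \to \B G$; both are standard. So the lemma should come out as a short definition-chase once \Cref{lem:G-qproj vs qproj} is in hand.
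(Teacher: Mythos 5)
Your proof is correct and follows essentially the same route as the paper: both directions are a definition-chase through \Cref{lem:G-qproj vs qproj}. The only difference is that you spell out the step the paper leaves implicit — that pulling back the quasi-projective map $S \to \B G$ along the atlas $B \to \B G$ yields a $G$-\emph{scheme} $X$ with $S \simeq X/G$ — which is a worthwhile clarification but not a different argument.
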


\section{Finite Groups Acting on Hausdorff Spaces}

\begin{cnstr} \label{cnstr:equiv nbhd}
	Let $G$ be a topological group acting on a topological space $X$. Let $H \leq G$ be a subgroup, and let $\{N_{gH} \subseteq X\}_{gH \in G/H}$ be a family of subspaces.

	Define
	\[
		N \coloneqq \bigcap_{g \in G} g^{-1} N_{gH}
	,\]
	so for all $g \in G$, $gN \subseteq N_{gH}$. If $h \in H$, then (since the actions of $h$ on $G,X$ are invertible)
	\[
		hN = \bigcap_{g \in G} hg^{-1} N_{gH} = \bigcap_{g \in G} h(gh)^{-1} N_{ghH} = \bigcap_{g \in G} g^{-1} N_{gH} = N
	,\]
	so $N$ is $H$-invariant, and we can define a map
	\[
		\imath : G \times_H N \to X
	\]
	such that $\imath(g,x) = gx$.

	This construction satisfies some properties:
	\begin{enumerate}

		\item If $N_{g_1H} \cap N_{g_2H} = \emptyset$, then $g_1N \cap g_2N = \emptyset$.

		\item If $x \in N_1$ is stabilized by $H$, and $gx \in N_{gH}$ for all $g \in G$, then $x \in N$.

		\item If $G$ is finite, and $N_{gH}$ is open for all $g \in G$, then $N$ is open. If $G$ is discrete, then $\imath : G \times_H N \to X$ is $\coprod_{gH \in G/H} N \to X$ such that the map on the disjoint summand corresponding to $gH \in G/H$ is the map $N \xrightarrow{g} gN \to X$.

	\end{enumerate}
\end{cnstr}

\begin{lem} \label{lem:invariant nbhd basis Hausdorff}
	Let $G$ be a finite group acting on a topological space $X$. Let $\mathcal B$ be a neighborhood basis of a point $x \in X$ such that $\mathcal B$ is closed under finite intersections. If $x \in X$ is stabilized by $H \leq G$, then the collection of $H$-invariant neighborhoods of $x$ in $\mathcal B$ is a neighborhood basis of $x$ in $X$.

	If $X$ is also Hausdorff, $G$ is discrete, and $H$ is equal to the stabilizer of $x$, then the collection of those $H$-invariant neighborhoods $U$ of $x$ in $\mathcal B$ such that $G \times_H U \to X$ is an open embedding, is a neighborhood basis.
	\begin{proof}
		Suppose $x \in X$ is stabilized by $H \leq G$. Since $\mathcal B$ is a neighborhood basis of $x$, it suffices to show that if $U' \in \mathcal B$ is a neighborhood of $x$, then there is a $U \in \mathcal B$ that is an $H$-invariant neighborhood of $x$ contained in $U'$. Since $H$ is finite, and $\mathcal B$ is closed under finite intersections, we can take
		\[
			U = \bigcap_{h \in H} hU'
		.\]
		Note this contains $x$ since $x$ is fixed by the action of $H$.

		For the second statement, when $X$ is Hausdorff, and $H$ is the stabilizer of $x$, since $G/H$ is finite, we may choose a (finite) pairwise disjoint family $\{U'_{gH}\}_{gh \in G/H}$ such that $U'_{gH}$ is an open neighborhood of $gx$ for all $g \in G$, and $U'_H \subseteq U'$ is an open subneighborhood.

		Since $\mathcal B$ is a neighborhood basis of $x$, we have that $g \mathcal B$ is a neighborhood basis of $gx$ for each $g \in G$, so for $gH \in G/H$, we can pick an open subneighborhood $U_{gH} \subseteq U'_{gH}$ of $gx$ such that $g^{-1} U_{gH} \in \mathcal B$.

		Now, since $\mathcal B$ is closed under finite intersections, applying Construction \ref{cnstr:equiv nbhd} to the family $\{U_{gH}\}_{gH \in G/H}$ produces an $H$-invariant neighborhood $U$ of $x$ that is in $\mathcal B$, and such that $\{gU\}_{gH \in G/H}$ are pairwise disjoint. Since $G$ is discrete, the map $G \times_H U \to X$ is equivalent to
		\[
			\coprod_{gH \in G/H} U \to X
		,\]
		which is equivalent to
		\[
			\bigcup_{gH \in G/H} gU \to X
		,\]
		which is an open embedding.
	\end{proof}
\end{lem}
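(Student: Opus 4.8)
The plan is to prove both assertions by averaging a neighbourhood of $x$ over the relevant finite group, in the bookkeeping of \Cref{cnstr:equiv nbhd}.

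For the first assertion, since $\mathcal B$ is already a neighbourhood basis of $x$ it suffices to show that every open neighbourhood $W$ of $x$ contains an $H$-invariant member of $\mathcal B$. I would pick $U' \in \mathcal B$ with $x \in U' \subseteq W$ and set $U \coloneqq \bigcap_{h \in H} h U'$. Then $U$ is open (a finite intersection of opens, as $H$ is finite), it contains $x$ because $H$ fixes $x$, it lies in $\mathcal B$ since $\mathcal B$ is closed under finite intersections, it is contained in $U'$ and hence in $W$ (take $h = e$), and it is $H$-invariant because $h_0 U = \bigcap_{h \in H} (h_0 h) U' = U$ for every $h_0 \in H$ by reindexing. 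This is the instance of \Cref{cnstr:equiv nbhd} in which the family is indexed by the single coset of $H$ in $H$.

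For the second assertion, fix an open neighbourhood $W$ of $x$; the goal is an $H$-invariant $U \in \mathcal B$ with $x \in U \subseteq W$ for which $G \times_H U \to X$ is an open embedding. Since $H$ is the stabiliser of $x$, distinct cosets $gH$ correspond to distinct orbit points $gx$, and the orbit is finite because $G$ is; as $X$ is Hausdorff I can choose pairwise disjoint open sets $W_{gH} \ni gx$, and after replacing $W_{eH}$ by $W_{eH} \cap W$ I may assume $W_{eH} \subseteq W$. For each coset choose $M_{gH} \in \mathcal B$ with $x \in M_{gH} \subseteq g^{-1} W_{gH}$, possible since $g^{-1} W_{gH}$ is an open neighbourhood of $x$, and put $N_{gH} \coloneqq g M_{gH} \subseteq W_{gH}$. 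Feeding the family $\{N_{gH}\}_{gH \in G/H}$ and the subgroup $H$ into \Cref{cnstr:equiv nbhd} produces $N \coloneqq \bigcap_{g \in G} g^{-1} N_{gH}$: it lies in $\mathcal B$ as a finite intersection of members of $\mathcal B$, it is open, it is $H$-invariant, it contains $x$ by the second listed property of the construction (using that $x$ is $H$-fixed and $gx \in N_{gH}$ for all $g$), it satisfies $N \subseteq N_{eH} = M_{eH} \subseteq W$, and the translates $gN \subseteq N_{gH}$ are pairwise disjoint by the first listed property, since the $W_{gH}$ are. Finally, because $G$ is discrete the third property of \Cref{cnstr:equiv nbhd} identifies $G \times_H N \to X$ with $\coprod_{gH \in G/H} N \to X$, the component indexed by $gH$ being the homeomorphism $N \to gN$, $u \mapsto gu$, followed by the inclusion of the open set $gN$ into $X$; as these images are pairwise disjoint the whole map is an open embedding. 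Letting $W$ run over a neighbourhood basis of $x$ exhibits such $N$ as a neighbourhood basis, as claimed.

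The first assertion is routine averaging; the real work is the second, where one must produce a single neighbourhood that is simultaneously (a) inside the prescribed $W$, (b) $H$-invariant and an honest member of $\mathcal B$, and (c) small enough that its $G$-translates are pairwise disjoint. Property (c) is exactly what upgrades the induced map $G \times_H N \to X$ from a local homeomorphism to an \emph{embedding}, and it is the only place where Hausdorffness of $X$ and discreteness of $G$ are used; \Cref{cnstr:equiv nbhd} is designed precisely to carry out (b) and (c) at once.
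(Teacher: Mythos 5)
Your proof follows essentially the same route as the paper's: average $U'$ over $H$ for the first claim, and for the second use Hausdorffness to separate the finitely many orbit points, shrink to members of $\mathcal B$, and feed the resulting family into \Cref{cnstr:equiv nbhd} so that the translates $gU$ are pairwise disjoint and $G \times_H U \to X$ becomes a disjoint union of open embeddings with disjoint images. The only caveat is one you share with the paper itself: both arguments assert that $\bigcap_{h \in H} hU'$ (and likewise $\bigcap_{g} g^{-1}N_{gH}$) lies in $\mathcal B$ because $\mathcal B$ is closed under finite intersections, which tacitly uses that the $H$-translates of members of $\mathcal B$ again lie in $\mathcal B$ — true in the paper's application (where $\mathcal B$ is $G$-stable) but not literally guaranteed by the stated hypotheses.
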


\begin{lem} \label{lem:existence of equivariant local sections Hausdorff}
	Let $G$ be a finite discrete group acting on Hausdorff topological spaces $X,S$, and let $p : X \to S$ be a $G$-equivariant continuous map. Suppose $p$ is a local homeomorphism at $x \in X$, and the stabilizer of $x$ equals the stabilizer of $p(x)$. Then there is a $G$-invariant open neighborhood of $x$ where $p$ is an open embedding.
	\begin{proof}

		Let $H \leq G$ be the stabilizer of $x$. Since $p$ is a local homeomorphism at $x$, we have a neighborhood basis $\mathcal B$ of $x$ consisting of those open neighborhoods $U$ of $x$ such that $p|_U$ is an open embedding. Note that $\mathcal B$ is closed under finite intersections. By Lemma \ref{lem:invariant nbhd basis Hausdorff}, we get an $H$-invariant open neighborhood $U$ of $x$ such that $p|_U$ is an open embedding.

		In particular, for $g \in G \setminus H$, we have that $gU$ is disjoint from $U$. Furthermore, since $p$ is $G$-equivariant, we have a commutative diagram
		\[
			\begin{tikzcd}
				U \ar[d, "g"'] \ar[r] & X \ar[d, "g"] \ar[r, "p"] & S \ar[d, "g"] \\
				gU \ar[r] & X \ar[r, "p"'] & S
			\end{tikzcd}
		,\]
		so since the top composite is an open embedding, and the vertical arrows are homeomorphisms, we have that the bottom composite, which is $p|_{gU}$ is an open embedding.

		By Lemma \ref{lem:invariant nbhd basis Hausdorff}, since $H$ is also the stabilizer of $p(x)$ we have an $H$-invariant open neighborhood $V$ of $p(x)$ that is contained in $p(U)$, and such that $G \times_H V \to S$ is an open embedding.

		Let $U' = U \cap p^{-1}(V)$, so $U'$ is an $H$-invariant open neighborhood $x$ that is contained in $U$, whence $G \times_H U' \to X \to S$ is an open embedding. In particular $G \times_H U'$ is a $G$-invariant open neighborhood of $x$ on which $p$ is an open embedding.
	\end{proof}
\end{lem}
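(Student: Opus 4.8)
The plan is to reduce to a local-homeomorphism chart around $x$, make that chart $H$-invariant without destroying the embedding property, and then spread it out over the orbit of $x$ while keeping the images of the various translates disjoint in $S$. Throughout, write $H \le G$ for the common stabilizer of $x$ and $p(x)$.

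First I would consider the collection $\mathcal B$ of those open neighbourhoods $U$ of $x$ on which $p|_U$ is an open embedding. Since $p$ is a local homeomorphism at $x$, this is a neighbourhood basis of $x$, and it is closed under finite intersections because the restriction of an open embedding to an open subset is again an open embedding. As $G$ is discrete and $H$ is the stabilizer of $x$, the second part of \Cref{lem:invariant nbhd basis Hausdorff} (which is proved via the equivariant neighbourhood \Cref{cnstr:equiv nbhd}) produces an $H$-invariant $U \in \mathcal B$ such that $G \times_H U \to X$ is an open embedding; in particular the translates $\{gU\}_{gH \in G/H}$ are pairwise disjoint, and $p|_U$ is an open embedding, so by $G$-equivariance $p|_{gU}$ is an open embedding for every $g \in G$. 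Now $p(U)$ is an open neighbourhood of $p(x)$, and since $S$ is Hausdorff, $G$ is discrete, and $H$ is also the stabilizer of $p(x)$, applying the second part of \Cref{lem:invariant nbhd basis Hausdorff} to the basis of all open neighbourhoods of $p(x)$ in $S$ yields an $H$-invariant open $V$ with $p(x) \in V \subseteq p(U)$ and $G \times_H V \to S$ an open embedding, so the translates $\{gV\}_{gH \in G/H}$ are pairwise disjoint in $S$. Setting $U' := U \cap p^{-1}(V)$, which is $H$-invariant by equivariance of $p$, open, and contains $x$, the $G$-saturation $G \cdot U' = \bigcup_{gH \in G/H} gU'$ is a $G$-invariant open neighbourhood of $x$, and I claim it is the one we want.

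It then remains to check that $p$ is an open embedding on $G \cdot U'$. Openness is immediate: for open $W \subseteq G \cdot U'$ one has $p(W) = \bigcup_{gH} g \cdot p(g^{-1}W \cap U')$, and each $p(g^{-1}W \cap U')$ is open in $S$ because $p|_U$ is open and $g^{-1}W \cap U'$ is open in $U$, while translation by $g$ is a homeomorphism of $S$. For injectivity, if $p(g_1 u_1) = p(g_2 u_2)$ with $u_1,u_2 \in U'$, then $g_1 p(u_1) \in g_1 V$ and $g_2 p(u_2) \in g_2 V$ coincide, so disjointness of the $gV$ forces $g_1 H = g_2 H$; writing $g_2 = g_1 h$ with $h \in H$ and using that $U'$ is $H$-invariant and that $p|_{U'}$ (a restriction of $p|_U$) is injective, we get $u_1 = h u_2$, hence $g_1 u_1 = g_2 u_2$. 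The main obstacle, exactly as in the two preceding lemmas, is organizational rather than conceptual: one must simultaneously arrange $H$-invariance of the chart and pairwise disjointness of translates \emph{both} upstairs in $X$ (to make $G \times_H U' \to X$ injective) and downstairs in $S$ (so that injectivity can be transported along $p$), and it is precisely the Hausdorff hypothesis together with \Cref{cnstr:equiv nbhd} and \Cref{lem:invariant nbhd basis Hausdorff} that makes this bookkeeping go through; once it does, the verification above is routine.
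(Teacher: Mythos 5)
Your proposal is correct and follows essentially the same route as the paper's proof: obtain an $H$-invariant chart $U$ on which $p$ is an open embedding via \Cref{lem:invariant nbhd basis Hausdorff}, pull back an $H$-invariant $V \subseteq p(U)$ downstairs with $G \times_H V \to S$ an open embedding, and take $U' = U \cap p^{-1}(V)$. The only difference is that you spell out the final injectivity/openness check for $G \times_H U' \to S$, which the paper asserts more tersely; your verification is correct.
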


\section{Localizations}

\begin{lem} \label{lem:localizing adjunctions}
	Let $F : \mathcal C \rightleftharpoons \mathcal D : G$ be functors, and $S,T$ classes of morphisms in $\mathcal C, \mathcal D$ respectively such that $F(S) \subseteq T$ and $G(T) \subseteq S$. Write $L_S, L_T$ for the localization functors, and $\bar F : \mathcal S^{-1} \mathcal C \rightleftharpoons T^{-1} \mathcal D : \bar G$ for the induced functors on the localizations (so $\bar F \circ L_S \simeq L_T \circ F$, and similarly for $\bar G$). If $\eta, \epsilon$ are the unit and counit of an adjunction $F \dashv G$, write $\bar \eta$ and $\bar \epsilon$ for the corresponding $\id \to \bar G \bar F$ and $\bar F \bar G \to \id$ (so $\bar \eta L_S \simeq L_S \bar \eta$ and $\bar \epsilon L_T \simeq L_T \bar \epsilon$). Then $\bar \eta, \bar \epsilon$ are the unit and counit of an adjunction $\bar F \dashv \bar G$.
	\begin{proof}
		We need to show that if $\eta$ is a unit of an adjunction $F \dashv G$, then $\bar \eta$ is a unit of an adjunction $\bar F \dashv \bar G$, and similarly for $\epsilon$ and $\bar \epsilon$.

		First, note that since $F(S) \subseteq T$, it follows that $G$ sends $T$-local objects to $S$-local objects, and indeed, the restriction of $G$ to $T$-local objects coincides with $\bar G$.

		Now, let $\epsilon : FG \to \id$ with corresponding $\bar \epsilon : \bar F \bar G \to \id$, and consider the following commutative diagram for $X \in S^{-1}(\mathcal C)$ and $Y \in T^{-1} \mathcal D$:
		\[
			\begin{tikzcd}
				\mathcal C(X,GY) \ar[d] \ar[r] & \mathcal D(FX, FGY) \ar[d] \ar[r] & \mathcal D(FX, Y) \ar[d] \\
				S^{-1} \mathcal C(X, GY) \ar[d, equals] \ar[r] & T^{-1} \mathcal D(L_T FX,  L_T FGY) \ar[d, equals] \ar[r] & T^{-1} \mathcal D(L_T FX, L_T Y) \ar[d, equals] \\
				S^{-1} \mathcal C(X, \bar G Y) \ar[r] & T^{-1} \mathcal D(\bar F X,  \bar F \bar G Y) \ar[r] & T^{-1} \mathcal D(\bar F X, \bar F \bar G Y)
			\end{tikzcd}
		.\]
		Assuming the top row composes to an equivalence, we would like to show the bottom row does too. Indeed, since $Y$ is $T$-local, the top right arrow is invertible, and since $X$ and $G Y$ are $S$-local, so is the top left arrow. It follows that the middle row composes to an equivalence, whence the bottom row does too.

		For the unit, consider
		\[
			\begin{tikzcd}
				\mathcal D(FX, Y) \ar[d] \ar[r] & \mathcal C(GFX, GY) \ar[d] \ar[r] & \mathcal C(X, GY) \ar[d] \\
				T^{-1} \mathcal D(L_T FX, Y) \ar[d, equals] \ar[r] & S^{-1} \mathcal C(L_S G FX,  G Y) \ar[d, equals] \ar[r] & S^{-1} \mathcal C(L_S X, GY) \ar[d, equals] \\
				T^{-1} \mathcal D(\bar F X, Y) \ar[r] & S^{-1} \mathcal C(\bar G \bar F X,  \bar G Y) \ar[r] & S^{-1} \mathcal C(X, \bar GY )
			\end{tikzcd}
		.\]
		Since $Y$ is $T$-local, $GY$ is $S$-local, so the top middle and top right arrows are invertible. Since $Y$ is $T$-local, the top left arrow is also invertible, so if the top row is an equivalence, so is the bottom row.
	\end{proof}
\end{lem}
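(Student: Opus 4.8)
The plan is to establish the adjunction $\bar F \dashv \bar G$ by a direct computation with mapping spaces, and to do so in a way that simultaneously pins down that its unit and counit are the descended transformations $\bar\eta$ and $\bar\epsilon$.

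First I would set up the standard picture of a localization: identify $S^{-1}\mathcal C$ and $T^{-1}\mathcal D$ with the full subcategories of $S$-local (resp.\ $T$-local) objects, so that $L_S, L_T$ become the reflectors and $\bar F, \bar G$ are the essentially unique functors with $\bar F L_S \simeq L_T F$ and $\bar G L_T \simeq L_S G$ (these exist precisely because $F(S) \subseteq T$ and $G(T) \subseteq S$). The structural input I would extract at the outset is that $G$ carries $T$-local objects to $S$-local objects: if $Y$ is $T$-local and $s \in S$, then $\mathcal C(s, GY) \simeq \mathcal D(Fs, Y)$ is an equivalence since $Fs \in T$; consequently the restriction of $G$ to $T$-local objects agrees with $\bar G$, while $\bar F$ restricted to $S$-local objects is $L_T \circ F$.

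Then I would use the hom-space characterization of units and counits: a transformation $\epsilon : FG \to \id$ is the counit of an adjunction $F \dashv G$ iff for all $X, Y$ the composite $\mathcal C(X, GY) \xrightarrow{F} \mathcal D(FX, FGY) \xrightarrow{\epsilon_Y \circ -} \mathcal D(FX, Y)$ is an equivalence, and symmetrically for a unit $\eta : \id \to GF$. So it suffices to check, for $X \in S^{-1}\mathcal C$ and $Y \in T^{-1}\mathcal D$, that the analogous composites built from $\bar F, \bar G, \bar\epsilon$ (resp.\ $\bar\eta$) are equivalences. I would do this by comparing, via the localization maps, the $(\bar F, \bar G, \bar\epsilon)$-composite to the $(F, G, \epsilon)$-composite for the underlying objects of $\mathcal C, \mathcal D$: a commuting ladder whose rungs are the comparison maps $\mathcal C(X, GY) \to S^{-1}\mathcal C(L_S X, L_S GY)$, $\mathcal D(FX, FGY) \to T^{-1}\mathcal D(L_T FX, L_T FGY)$, and $\mathcal D(FX, Y) \to T^{-1}\mathcal D(L_T FX, L_T Y)$. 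Because $X$ is $S$-local, $GY$ is $S$-local (by the observation above), and $Y$ is $T$-local, every relevant rung is an equivalence; since the top row is an equivalence by $F \dashv G$, the bottom row is too. The dual ladder with $\bar\eta$ handles the unit. (Alternatively one could verify the two triangle identities for $(\bar F,\bar G,\bar\eta,\bar\epsilon)$ after whiskering by $L_S$, resp.\ $L_T$, where each becomes the image under $L_T$, resp.\ $L_S$, of the triangle identities for $F\dashv G$; this works because precomposition with $L_S$ is fully faithful on functor categories out of $S^{-1}\mathcal C$, hence detects equivalences of natural transformations.)

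The main obstacle is exactly the bookkeeping in that last step: making sure the ladder genuinely commutes with $\bar\epsilon$ (resp.\ $\bar\eta$) inserted in the correct slot — i.e.\ that $\bar\epsilon$ as defined by descent along $L_T$ is the transformation one produces by pushing the $(F,G,\epsilon)$-composite through the localizations — and keeping straight which locality hypothesis ($T$-locality of $Y$, $S$-locality of $X$ and $GY$) makes which comparison map invertible. No deep input is needed beyond the universal property of localizations and the hom-space characterization of adjunctions; the content is organizing the naturality so that "the rest of the square is invertible, hence so is this map".
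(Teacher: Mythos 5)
Your proposal is correct and follows essentially the same route as the paper's proof: the same preliminary observation that $G$ carries $T$-local objects to $S$-local objects (so that $\bar G$ is the restriction of $G$), followed by the same hom-space characterization of the counit and unit, verified by exactly the same comparison ladders whose vertical rungs are invertible by the locality of $X$, $GY$, and $Y$.
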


\begin{lem} \label{lem:compatible localizations}
	Let $F : \mathcal C \to \mathcal D$ be a functor, let $\mathcal C' \subseteq \mathcal C$ be a reflective subcategory,
	and let $\mathcal D' \subseteq \mathcal D$ be a reflective subcategory containing $F(\mathcal C')$. The following are equivalent:
	\begin{enumerate}

		\item The square
			\[
				\begin{tikzcd}
					\mathcal C' \ar[d] \ar[r] & \mathcal C \ar[d, "F"] \\
					\mathcal D' \ar[r] & \mathcal D
				\end{tikzcd}
			\]
			is left adjointable.

		\item The functor $F$ sends $\mathcal C'$-equivalences to $\mathcal D'$-equivalences.

		\item For every $X \in \mathcal C$,
		if $X \to X'$ is the localization map for $X$, then $F(X \to X')$ is the localization map for $F(X)$.

	\end{enumerate}
	\begin{proof}
		Write $L : \mathcal C \to \mathcal C'$ and $M : \mathcal D \to \mathcal D'$ for the localization maps.

		Suppose the square is left adjointable. If $\phi$ is an $L$-equivalence in $\mathcal C$,
		then $MF(\phi)$ is $FL(\phi)$ is $F$ of an equivalence,
		so it is an equivalence,
		so $F(\phi)$ is an $M$-equivalence as desired.

		Next, suppose $F$ sends $L$-equivalences to $M$-equivalences. Then $F(X \to X')$ is an $M$-equivalence, and we already know that $F$ sends $\mathcal C'$ to $\mathcal D'$, so $F(X') \in \mathcal D'$ as desired.

		Finally, suppose that for $X \in \mathcal C$, if $X \to X'$ is a localization map for $X$,
		then $F(X \to X')$ is the localization map for $F(X)$. Consider the left base change morphism of the square
		\[
			\begin{tikzcd}
				\mathcal C' \ar[d, "F'"'] \ar[r, "\imath"] & \mathcal C \ar[d, "F"] \\
				\mathcal D' \ar[r, "\jmath"'] & \mathcal D
			\end{tikzcd}
		\]
		which is given by
		\[
			MF \to MF\imath L \simeq M\jmath F' L \to F' L
		.\]
		At $X \in \mathcal C$, this is
		\[
			MF(X) \xrightarrow{MF(X \to X')} MFX' \simeq MF'X' \simeq F'X'
		.\]
		The first arrow is invertible since $F(X \to X')$ is an $M$-equivalence.
	\end{proof}
\end{lem}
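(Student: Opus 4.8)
The plan is to run the cyclic chain of implications $(1)\Rightarrow(2)\Rightarrow(3)\Rightarrow(1)$. Fix notation: write $\imath : \mathcal C' \hookrightarrow \mathcal C$ and $\jmath : \mathcal D' \hookrightarrow \mathcal D$ for the reflective inclusions, $L : \mathcal C \to \mathcal C'$ and $M : \mathcal D \to \mathcal D'$ for their left adjoints, and $F' : \mathcal C' \to \mathcal D'$ for the corestriction of $F\imath$, which lands in $\mathcal D'$ by the hypothesis $F(\mathcal C') \subseteq \mathcal D'$; thus $F\imath \simeq \jmath F'$. Recall that the square being left adjointable, in the sense of \Cref{defn:compat}, means exactly that the mate $\theta : MF \Rightarrow F'L$ of this commuting square --- built from the reflection unit $\eta : \id_{\mathcal C} \Rightarrow \imath L$ and the counit $M\jmath \Rightarrow \id_{\mathcal D'}$, the latter an equivalence since $\jmath$ is fully faithful --- is an equivalence of functors.

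For $(1)\Rightarrow(2)$: let $\phi$ be a $\mathcal C'$-equivalence, so $L\phi$ is an equivalence. The naturality square of $\theta$ at $\phi$ has invertible horizontal edges, so $MF\phi$ is an equivalence if and only if $F'L\phi$ is, and the latter is the image under $F'$ of an equivalence; hence $F\phi$ is a $\mathcal D'$-equivalence. For $(2)\Rightarrow(3)$: for $X \in \mathcal C$ write $X' \coloneqq \imath L X$, so the reflection unit $\eta_X : X \to X'$ is a $\mathcal C'$-equivalence (a standard fact about reflective localizations, see \cite[\S5.2.7]{htt}). By hypothesis $F\eta_X : FX \to FX'$ is then a $\mathcal D'$-equivalence, and since $X' \in \mathcal C'$ we have $FX' \in F(\mathcal C') \subseteq \mathcal D'$, \ie, $FX'$ is $\mathcal D'$-local. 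A $\mathcal D'$-equivalence with $\mathcal D'$-local target is, by the universal property of $M$, a $\mathcal D'$-localization map for its source, so $F\eta_X$ is a $\mathcal D'$-localization map for $FX$.

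For $(3)\Rightarrow(1)$ it suffices to check that $\theta$ is objectwise an equivalence. Unwinding the mate at $X$, the map $\theta_X : MFX \to F'LX = FX'$ is the composite of $MF\eta_X : MFX \to MFX'$ with the counit equivalence $MFX' \simeq FX'$, valid because $FX'$ is already $\mathcal D'$-local. This composite is the unique map out of $MFX$ compatible with the localization unit $FX \to MFX$ and with $F\eta_X$; since by hypothesis $F\eta_X$ itself exhibits $FX'$ as the $\mathcal D'$-localization of $FX$, this forces the composite to be an equivalence. Hence $\theta_X$ is an equivalence for every $X$, so the square is left adjointable, completing the cycle.

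I expect the only delicate point to be the bookkeeping in $(3)\Rightarrow(1)$: one must check carefully that the abstract left base change transformation of the square, evaluated at $X$, genuinely factors as $MF\eta_X$ followed by the counit equivalence $MFX' \simeq FX'$, so that its invertibility is controlled by the hypothesis on $F\eta_X$. Every other step is a purely formal consequence of the universal property of reflective localizations (\cite[\S5.2.7, \S5.5.4]{htt}) together with the containment $F(\mathcal C') \subseteq \mathcal D'$.
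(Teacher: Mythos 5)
Your proposal is correct and follows essentially the same route as the paper: the same cyclic chain $(1)\Rightarrow(2)\Rightarrow(3)\Rightarrow(1)$, with the key step in both cases being the identification of the mate at $X$ with $MF\eta_X$ followed by the counit equivalence $MFX' \simeq F'X'$. The only cosmetic difference is that in $(3)\Rightarrow(1)$ you invoke uniqueness of localizations where the paper simply notes that a localization map is in particular an $M$-equivalence, so $MF\eta_X$ is invertible; both are valid.
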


\subsection{Locally Cartesian Localizations} \label{S:LCL}

In this section we will prove some results about locally Cartesian localizations. See \cite[\S1]{LCC} for a more in depth discussion of this notion. This is a generalization of the notion of left-exact localizations such as sheafifications, which also includes other important examples, for example, if $\mathcal C$ is a small category and $\mathcal A$ is a set of morphisms in $\mathcal C$ that is stable under base change, then \cite[Proposition 3.4]{sixopsequiv} says that the full subcategory of $\Psh(\mathcal C)$ consisting of presheaves $P$ such that $P$ sends $\mathcal A$ to equivalences is a locally Cartesian localization of $\Psh(\mathcal C)$.

In fact, we will give a very general criterion to check that localizations are locally Cartesian in \Cref{prp:crit for LCL}. In particular, a localization of presentable categories is locally Cartesian if it is given by localizing along a small collection of morphisms that is stable under base change.

We begin by recalling the definition. Although we will generally only consider these localizations in the context of presentable categories, it is not always necessary to assume our categories are presentable for the purposes of this section, so we will drop this assumption from the definition considered in \cite[\S1]{LCC}.
\begin{defn}
	A \emph{locally Cartesian localization} of a category $\mathcal C$ is a functor $L : \mathcal C \to L\mathcal C$ that is a left adjoint of the inclusion of a full subcategory $L \mathcal C \to \mathcal C$, and such that $L$-equivalence are stable under base change along maps between local objects, \ie, if $S' \to S$ is a map in $L\mathcal C$, and
	\[
		X \xrightarrow{f} Y \to S
	\]
	are maps in $\mathcal C$ such that $L(f)$ is an equivalence, then $L(S' \times_S f)$.
\end{defn}

Note that every left exact localization of a category that has finite limits is locally Cartesian by \cite[Proposition 6.2.1.1]{htt}, so sheafification functors are locally Cartesian localizations.

\begin{lem} \label{lem:char LCL}
	Given a localization functor $L : \mathcal C \to L\mathcal C$, if $\mathcal C$ has pullbacks, then the following are equivalent
	\begin{enumerate}

		\item For any map $Y' \to Y$ in $L\mathcal C$, and map $X \to Y$ in $\mathcal C$, the square
			\[
				\begin{tikzcd}
					L(X \times_Y Y') \ar[d] \ar[r] & Y' \ar[d] \\
					LX \ar[r] & Y
				\end{tikzcd}
			\]
			is Cartesian.

		\item For any map $Y' \to Y$ in $L\mathcal C$, and map $X \to Y$ in $\mathcal C$, the base change of $X \to LX$ along $Y' \to Y$ is an $L$-equivalence.

		\item $L$ is locally Cartesian.

	\end{enumerate}
	
	\begin{proof}
		For any maps $Y' \to Y \gets X$ as above, we have the following commutative diagram
		\begin{equation} \label{eqn:lcl rect}
			\begin{tikzcd}
				X \times_Y Y' \ar[d] \ar[r] & L(X \times_Y Y') \ar[d] \ar[r] & Y' \ar[d] \\
				X \ar[r] & LX \ar[r] & Y
			\end{tikzcd}
		\end{equation}
		Since the outer rectangle is always Cartesian, if the right square is Cartesian, then so is the left. This means that $L$-equivalences of the form $X \to LX$ are preserved by base change along maps between objects of $L \mathcal C$, so the first condition implies the second.

		Now, for any $L$-equivalence $X_0 \to X_1$ in $\mathcal C$, we have that the composite $X_0 \to X_1 \to LX_1$ is an $L$-equivalence to an object of $L\mathcal C$, so it is equivalent to $X_0 \to LX_0$. Now, for any map $Y' \to Y$ in $L\mathcal C$, we have that any map $X_1 \to Y$ factors through $X_1 \to LX_1$, so we have that $(X_0 \to X_1) \times_Y Y'$ composed with $(X_1 \to LX_1) \times_Y Y'$ is equivalent to $(X_0 \to LX_0) \times_Y Y'$, and these latter two maps are $L$-equivalences under the second condition, so that $(X_0 \to X_1) \times_Y Y'$ is an $L$-equivalence. Thus, the second condition implies that $L$ is locally Cartesian.

		Finally, assume that $L$ is locally Cartesian, and let $Y' \to Y \gets X$ be as above. Then we have a commutative diagram
		\[
			\begin{tikzcd}
				X \times_Y Y' \ar[d] \ar[r] & LX \times_Y Y' \ar[d] \ar[r] & Y' \ar[d] \\
				X \ar[r] & LX \ar[r] & Y
			\end{tikzcd}
		\]
		where all of the squares are Cartesian. Now $LX \times_Y Y'$ is in $L\mathcal C$, and the map $X \times_Y Y' \to LX \times_Y Y'$ is an $L$-equivalence since it is a base change of $X \to LX$ along $Y' \to Y$, and $L$ is locally Cartesian. Therefore the map $X \times_Y Y' \to LX \times_Y Y'$ lifts to an equivalence $L(X \times_Y Y') \to LX \times_Y Y'$, which shows that the third condition implies the first, as required.
	\end{proof}
\end{lem}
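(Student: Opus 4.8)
\textbf{Proof plan for \Cref{lem:char LCL}.}

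The statement to prove is the equivalence of the three conditions characterizing locally Cartesian localizations, under the standing assumption that $\mathcal C$ has pullbacks. The plan is to prove the cycle of implications (1)$\Rightarrow$(2)$\Rightarrow$(3)$\Rightarrow$(1), exploiting throughout the pasting lemma for Cartesian squares (the fact that in a horizontally-composed rectangle, if the outer rectangle and the right-hand square are Cartesian then so is the left-hand square, and if both inner squares are Cartesian then so is the outer).

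First I would set up the basic commutative rectangle \eqref{eqn:lcl rect}: given maps $Y' \to Y \gets X$ with $Y' \to Y$ a map between local objects, factor $X \to Y$ (which, after applying $L$, factors through $LX$ since $Y \in L\mathcal C$) to obtain the horizontal composition of the square with corners $X \times_Y Y', L(X \times_Y Y'), X, LX$ on the left and the square with corners $L(X\times_Y Y'), Y', LX, Y$ on the right. The outer rectangle is $X\times_Y Y' \to Y'$ over $X \to Y$, hence Cartesian by definition of pullback. For (1)$\Rightarrow$(2): if the right-hand square is Cartesian, the pasting lemma makes the left-hand square Cartesian, which says precisely that $X\times_Y Y' \to LX \times_Y Y' \simeq L(X\times_Y Y')$ — i.e. the base change of the $L$-equivalence $X \to LX$ along $Y' \to Y$ — is an $L$-equivalence. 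For (2)$\Rightarrow$(3): given an arbitrary $L$-equivalence $f : X_0 \to X_1$, compose with $X_1 \to LX_1$; the composite $X_0 \to LX_1$ is an $L$-equivalence with local target, hence canonically identified with $X_0 \to LX_0$. Any map $X_1 \to Y$ with $Y \in L\mathcal C$ factors through $X_1 \to LX_1$, so base-changing the two-step factorization along $Y' \to Y$ and using (2) on both $X_0 \to LX_0$ and $X_1 \to LX_1$ (both of the form ``unit to local object'') shows, by 2-out-of-3 for $L$-equivalences, that $f \times_Y Y'$ is an $L$-equivalence. This is exactly local Cartesianness. Finally (3)$\Rightarrow$(1): form the rectangle with all squares Cartesian, $X\times_Y Y' \to LX\times_Y Y' \to Y'$ over $X \to LX \to Y$; here $LX \times_Y Y'$ is a pullback of two maps with local source, hence local (it lies in $L\mathcal C$ because $L\mathcal C$ is closed under pullbacks of maps between its objects — or, more carefully, because it is a limit of local objects), the map $X\times_Y Y' \to LX\times_Y Y'$ is a base change of the $L$-equivalence $X \to LX$ along $Y' \to Y$ hence an $L$-equivalence by (3), so it exhibits $LX \times_Y Y'$ as $L(X\times_Y Y')$, giving the Cartesian square of (1).

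The main obstacle I anticipate is the bookkeeping in (2)$\Rightarrow$(3): condition (2) is literally a statement about base changes of units $X \to LX$, whereas (3) concerns base changes of \emph{arbitrary} $L$-equivalences, and bridging this gap cleanly requires the observation that every $L$-equivalence factors through a unit map after postcomposition with a unit, plus a careful application of 2-out-of-3 for the class of $L$-equivalences (which holds since this class is the class inverted by a functor). One subtlety worth flagging is that the factorization ``any map $X_1 \to Y$ with $Y$ local factors through $X_1 \to LX_1$'' and the resulting identification of iterated base changes should be spelled out as an equivalence of pullback diagrams rather than asserted, but this is routine once the pasting lemma is invoked. No step requires anything beyond pullbacks in $\mathcal C$ and the universal property of the localization $L$, so presentability is genuinely not needed here, consistent with the remark preceding the lemma.
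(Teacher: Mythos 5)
Your proposal is correct and follows essentially the same route as the paper's proof: the same cycle (1)$\Rightarrow$(2)$\Rightarrow$(3)$\Rightarrow$(1), the same pasting of the rectangle \eqref{eqn:lcl rect}, the same factorization of an arbitrary $L$-equivalence through the unit plus 2-out-of-3 for (2)$\Rightarrow$(3), and the same identification $L(X \times_Y Y') \simeq LX \times_Y Y'$ for (3)$\Rightarrow$(1). The subtleties you flag (closure of $L\mathcal C$ under pullbacks of maps between local objects, the identification of iterated base changes) are handled exactly as you anticipate.
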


For a general localization functor $L : \mathcal C \to L\mathcal C$, we always have that $L$ is essentially surjective, but it is not clear if it is surjective on morphisms in any sense. Even if $L$ is a sheafification functor, it may not induce effective epimorphisms on mapping spaces. The following result shows that locally Cartesian localizations are ``locally essentially surjective'' in that they induce essentially surjective functors on slice categories:
\begin{lem} \label{lem:LCL implies ess surj on slices}
	Let $L : \mathcal C \to L\mathcal C$ be a locally Cartesian localization functor. Then for any $S \in \mathcal C$, the functor $\mathcal C_{/S} \to L\mathcal C_{/LS}$ is essentially surjective.
	\begin{proof}
		If $S' \to LS$ is any object of $L\mathcal C_{/LS}$, then since $S \to LS$ is an $L$-equivalence, we have a Cartesian square
		\[
			\begin{tikzcd}
				S'' \ar[d] \ar[r] & S \ar[d] \\
				S' \ar[r] & LS
			\end{tikzcd}
		\]
		where the leftmost vertical map is an $L$-equivalence. It follows that $S' \to LS$ is equivalent to $L(S'' \to S)$.
	\end{proof}
\end{lem}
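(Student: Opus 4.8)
The plan is to show that every object of $(L\mathcal C)_{/LS}$ is, up to equivalence, of the form $L(f)$ for some $f\in\mathcal C_{/S}$, where the functor in question sends an object $f:X\to S$ of $\mathcal C_{/S}$ to the morphism $L(f):LX\to LS$ in $L\mathcal C$. So I would fix an object $g:S'\to LS$ of $(L\mathcal C)_{/LS}$, meaning that $S'$ is $L$-local, and produce a preimage by pulling back the localization map $\eta_S:S\to LS$.

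Concretely, the first step is to form the pullback $S'':=S\times_{LS}S'$ in $\mathcal C$ (which exists, as $\mathcal C$ is assumed to have pullbacks, exactly as in \Cref{lem:char LCL}), with projections $p:S''\to S$ and $q:S''\to S'$. The morphism $q$ is the base change of $\eta_S$ along $g$; since $\eta_S$ is an $L$-equivalence and $g$ is a morphism between the $L$-local objects $S'$ and $LS$, the characterization of locally Cartesian localizations in \Cref{lem:char LCL} (second condition) shows that $q$ is an $L$-equivalence.

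The second step is to apply $L$ to the Cartesian square with corners $S'',S,S',LS$. Because $S'$ is already $L$-local, $Lq:LS''\to LS'\simeq S'$ is an equivalence; because $\eta_S$ is an $L$-equivalence, $L\eta_S:LS\to LLS$ is an equivalence identifying $LS$ with its own localization, and $Lg$ is canonically identified with $g$. Chasing the resulting commuting square exhibits $L(p):LS''\to LS$ as $g\circ Lq$ up to canonical equivalence; hence the object $p:S''\to S$ of $\mathcal C_{/S}$ is carried by our functor to an object of $(L\mathcal C)_{/LS}$ equivalent to $g:S'\to LS$, which is what we wanted.

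I expect the only real point requiring care to be this last identification: verifying that $L$ applied to the projection $p$ recovers the prescribed structure map $g$ after precomposition with the equivalence $Lq$, i.e. tracking the coherences so that the constructed preimage genuinely lives over the right map to $LS$ rather than merely over some equivalent one. Everything else is an immediate consequence of the definition of a locally Cartesian localization via \Cref{lem:char LCL}, so this coherence bookkeeping is the main (and rather mild) obstacle.
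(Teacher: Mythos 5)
Your proposal is correct and follows exactly the paper's argument: pull back the unit $S \to LS$ along $S' \to LS$, use the locally Cartesian property to see that $S'' \to S'$ is an $L$-equivalence into a local object, and conclude that $L(S'' \to S) \simeq (S' \to LS)$. The coherence check you flag at the end is the only content beyond what the paper's (very terse) proof records, and it goes through as you describe.
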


Although locally Cartesian localizations need not preserve all finite limits, we can give an easy criterion for when they preserve finite products:
\begin{lem} \label{lem:crit for LCL to be Cart mon}
	Let $L : \mathcal C \to L\mathcal C$ be a locally Cartesian localization between categories that admit small colimits, and assume that $L\mathcal C$ has a terminal object. Then $L$ preserves finite products if and only if for any objects $X,Y \in \mathcal C$, the map $L(X \times (Y \to LY))$ is an equivalence, \ie, $X \times -$ preserves $L$-equivalences. 
	\begin{proof}
		Since $L\mathcal C \to \mathcal C$ preserves limits, we have that $\mathcal C$ has a terminal object which is already in $L\mathcal C$, so $L$ preserves terminal objects, and it only remains to consider the condition that $L$ preserves binary products.

		Indeed, $L$ preserves binary products if and only if for any $X,Y \in \mathcal C$, the map $X \times Y \to LX \times LY$ is an $L$-equivalence. Note that this map is equivalent to the following composite:
		\[
			X \times Y \to X \times LY \to LX \times LY
		.\]
		The second map is the base change of $X \to LX$ along $LY \to \pt$, which is a map between objects of $L\mathcal C$. Since $L$ is locally Cartesian, it follows that this map is an $L$-equivalence, so $L$ preserves binary products if and only if
		$X \times (Y \to LY)$ is an $L$-equivalence for all $X,Y \in \mathcal C$.
	\end{proof}
\end{lem}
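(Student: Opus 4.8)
The plan is to reduce everything to the single question of whether the product functor $X \times -$ preserves $L$-equivalences, exploiting the fact that local Cartesianness already gives us half of the statement for free. First I would observe that since $L\mathcal C \hookrightarrow \mathcal C$ is a right adjoint, it preserves limits; in particular the terminal object of $\mathcal C$ (which exists because $\mathcal C$ has a terminal object, or by hypothesis that $L\mathcal C$ has one and the inclusion preserves it) lies in $L\mathcal C$, so $L$ preserves the terminal object automatically. Hence $L$ preserves finite products if and only if it preserves binary products, and I would spend the rest of the proof on that case.

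The core step is the factorization of the comparison map. For $X, Y \in \mathcal C$, the canonical map $X \times Y \to LX \times LY$ (whose being an $L$-equivalence for all $X,Y$ is precisely what it means for $L$ to preserve binary products, since $LX \times LY \in L\mathcal C$) factors as
\[
	X \times Y \longrightarrow X \times LY \longrightarrow LX \times LY,
\]
using the units $Y \to LY$ and $X \to LX$. The second map here is the base change of $X \to LX$ along the projection $LX \times LY \to LX$ — wait, more precisely it is the base change of $X \to LX$ along $LY \to \pt$, i.e.\ the functor $- \times LY$ applied to $X \to LX$ — and this is a map obtained by pulling back an $L$-equivalence along a map between local objects ($LY \to \pt$, both in $L\mathcal C$). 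Since $L$ is locally Cartesian, this second map is always an $L$-equivalence. Therefore the composite is an $L$-equivalence if and only if the first map $X \times Y \to X \times LY$, which is exactly $X \times (Y \to LY)$, is an $L$-equivalence. Quantifying over all $X$ and $Y$ (and noting that by symmetry of the product it does not matter which factor we strictify first) gives the claimed equivalence of conditions.

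I do not anticipate a serious obstacle here; the lemma is essentially a bookkeeping consequence of \Cref{lem:char LCL}. The only point requiring a little care is making sure the intermediate object $X \times LY$ is legitimately the target of a base change of $X \to LX$ \emph{along a map in $L\mathcal C$} — one should phrase this as pulling back $X \to LX$ along $LX \times LY \to LX$, and then observe $LX \times LY$ is a product of local objects hence local (the inclusion $L\mathcal C \to \mathcal C$ preserves the product), so the relevant square has its bottom edge $LX \times LY \to LX$ a morphism between local objects, which is exactly the hypothesis needed to invoke local Cartesianness. With that observation in place the argument closes immediately.
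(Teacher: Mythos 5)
Your proposal is correct and follows essentially the same route as the paper's proof: reduce to binary products via the terminal object being local, factor $X \times Y \to LX \times LY$ through $X \times LY$, and use local Cartesianness (base change of $X \to LX$ along the map $LY \to \pt$ between local objects) to see the second leg is always an $L$-equivalence. The two phrasings you give for the base change are interchangeable and both match the argument in the text.
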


It can be difficult to check that the localization along some prescribed set of morphisms is left-exact. Indeed, even if the set of morphisms is stable under base change, the strongly saturated class it generates might not be. The following result shows that even though such a localization might not be left-exact, it is always locally Cartesian:
\begin{prp} \label{prp:crit for LCL}
	Let $\mathcal C$ be a presentable category with universal colimits, and let $W$ be a small collection of morphisms in $\mathcal C$.

	Suppose that base changes along maps between $W$-local objects send maps in $W$ to $W$-equivalences. More precisely, we assume that for any map $f : X \to Y$ in $W$, if $S' \to S$ is a map between local objects, then for any map $Y \to S$, we have that $f \times_S S'$ is a $W$-equivalence.

	Then $W^{-1} \mathcal C$ is a presentable category that has universal colimits, and the inclusion $W^{-1} \mathcal C \subseteq \mathcal C$ admits an accessible left adjoint $L$ which is a locally Cartesian localization.

	Furthermore $L$ preserves finite products if and only if for any object $X \in \mathcal C$, and map $f \in W$, we have that $X \times f$ is a $W$-equivalence.
\end{prp}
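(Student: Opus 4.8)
\textbf{Proof proposal for \Cref{prp:crit for LCL}.}

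The plan is to deduce everything from the general machinery of strongly saturated classes and local objects in presentable categories, together with the characterization of locally Cartesian localizations given in \Cref{lem:char LCL}. First I would observe that since $\mathcal C$ is presentable and $W$ is a small collection of morphisms, the full subcategory $W^{-1}\mathcal C$ of $W$-local objects is an accessible localization of $\mathcal C$ by \cite[Proposition 5.5.4.15]{htt}; write $L : \mathcal C \to W^{-1}\mathcal C$ for the (accessible) left adjoint, and let $\bar W$ denote the strongly saturated class of $W$-equivalences, which is of small generation by \cite[Remark 5.5.4.10, Proposition 5.5.4.16]{htt}. In particular $W^{-1}\mathcal C$ is presentable. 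The content of the proposition is then (a) that $L$ is locally Cartesian, (b) that $W^{-1}\mathcal C$ has universal colimits, and (c) the criterion for $L$ to preserve finite products, which I address in that order.

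For (a), by \Cref{lem:char LCL} it suffices to show that for any map $S' \to S$ between $W$-local objects and any map $X \to S$ in $\mathcal C$, the base change $X\times_S S' \to LX \times_S S'$ of the unit map $X \to LX$ is a $W$-equivalence. Here I would argue that the class $\mathcal W'$ of morphisms whose base change along \emph{every} map to a $W$-local object (from an arbitrary object of $\mathcal C$) is a $W$-equivalence is itself strongly saturated: it is stable under pushout, transfinite composition, and retracts because $\bar W$ is and because in a category with universal colimits base change along a fixed map commutes with these colimit-type operations, and moreover base change along a map to a $W$-local object followed by a further base change is again a base change of the same form. Since the hypothesis of the proposition says exactly that $\mathcal W'$ contains $W$, it contains $\bar W$; in particular it contains every unit map $X \to LX$ (which is a $W$-equivalence). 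Chasing the rectangle \eqref{eqn:lcl rect} as in the proof of \Cref{lem:char LCL} then gives the Cartesian-square criterion, so $L$ is locally Cartesian.

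For (b), I would invoke \cite[Proposition 1.4]{LCC}, exactly as in the proof of \Cref{prp:PF on Shv}: a locally Cartesian localization of a presentable category with universal colimits is again a presentable category with universal colimits. For (c), I would apply \Cref{lem:crit for LCL to be Cart mon}: $W^{-1}\mathcal C$ has a terminal object (the terminal object of $\mathcal C$, which is automatically $W$-local since $W$-local objects are closed under limits), and $L$ preserves finite products if and only if $X\times(Y\to LY)$ is a $W$-equivalence for all $X, Y$. Writing $X\times(Y\to LY)$ as the composite $X\times Y \to X\times LY \to LX\times LY$, the second map is the base change of $X\to LX$ along $LY\to\pt$, a map between $W$-local objects, hence a $W$-equivalence by part (a); so $L$ preserves finite products iff the first map $X\times Y \to X\times LY$, i.e.\ $X\times(Y\to LY)$, is a $W$-equivalence. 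Finally, to match the stated criterion — that it suffices to test this on $f\in W$ rather than on all $W$-equivalences — I would again use that $\{g : X\times g \in \bar W\}$ is strongly saturated for fixed $X$ (using universality of colimits), so that $X\times f\in\bar W$ for all $f\in W$ forces $X\times g\in\bar W$ for all $g\in\bar W$, in particular for all unit maps $Y\to LY$. The main obstacle is the first one: verifying carefully that the class $\mathcal W'$ in part (a) really is strongly saturated, since this is where universality of colimits is doing essential work and where one must be careful that ``base change along a map to a local object'' behaves well under the saturation operations; everything else is a formal consequence of results already available in the excerpt.
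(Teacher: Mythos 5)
Your overall architecture matches the paper's: existence, accessibility and presentability from \cite[Proposition 5.5.4.15]{htt}, universal colimits from \cite[Proposition 1.4]{LCC}, the finite-products criterion via \Cref{lem:crit for LCL to be Cart mon} together with \cite[Remark 5.5.4.10]{htt}, and the locally Cartesian property by showing that the class of maps whose base changes along maps between local objects are $W$-equivalences is strongly saturated and contains $W$ (this is exactly the paper's auxiliary \Cref{lem:LC strongly sat}). However, there is a genuine gap at the step you yourself flag as the main obstacle. You assert that your class $\mathcal W'$ is strongly saturated "because it is stable under pushout, transfinite composition, and retracts." Those are the closure properties of a \emph{weakly} saturated class; a strongly saturated class in the sense of \cite[Definition 5.5.4.5]{htt} must be closed under cobase change, under all small colimits in $\Fun(\Delta^1,\mathcal C)$, \emph{and} satisfy the two-out-of-three property. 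Two-out-of-three is precisely what you need in order to conclude that $\bar W\subseteq\mathcal W'$ (the unit maps $X\to LX$ are not obtained from $W$ by the weak saturation operations alone), and it is the only non-formal part of the verification. The delicate direction is: if $g:Y\to Z$ and $g\circ f$ lie in the class, why does $f$? The point is that $g$ is then in particular a $W$-equivalence, so $Y\to LY$ factors as $Y\xrightarrow{g}Z\to LZ\xrightarrow{L(g)^{-1}}LY$; hence \emph{every} map from $Y$ to a local object $S$ factors through $g$, which lets you write $(g\circ f)\times_S S' \simeq (g\times_S S')\circ(f\times_S S')$ and cancel. Without this factorization argument the two-out-of-three property does not follow from "base change commutes with colimits."

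A secondary, fixable imprecision: you define $\mathcal W'$ by requiring base changes along every map to a $W$-local object \emph{from an arbitrary object} to be $W$-equivalences, and then claim the hypothesis "says exactly that $\mathcal W'$ contains $W$." It does not: the hypothesis only controls base changes along maps $S'\to S$ with \emph{both} objects local. You should use the paper's class (base changes along maps between local objects only), which is all that \Cref{lem:char LCL} requires. The finite-products part of your argument is correct and agrees with the paper's.
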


Before addressing the proof of \Cref{prp:crit for LCL}, we will need the following \namecref{lem:LC strongly sat}:
\begin{lem} \label{lem:LC strongly sat}
	Let $\mathcal C$ be a category with universal colimits,\footnote{In fact, we only need that colimits are preserved by base changes along maps between $W$-local objects.}
	and let $W$ be a strongly saturated class of morphisms in $\mathcal C$ such that the inclusion of $W$-local objects $W^{-1} \mathcal C \subseteq \mathcal C$ admits a left adjoint $L$.

	Write $W^\circ$ for the collection of maps $f : X \to Y$ in $\mathcal C$ such that for any map $S' \to S$ between $W$-local objects, and map $Y \to S$, we have that $f \times_S S' \in W$. Then $W^\circ$ is a strongly saturated class contained in $W$.
	\begin{proof}
		First note that $W^\circ \subseteq W$, since if $f : X \to Y$ is in $W^\circ$, then the base change of $f$ along $\id_{L(Y)}$ is in $W$.

		We show that $W^\circ$ satisfies \cite[Definition 5.5.4.5]{htt}:
		\begin{enumerate}

			\item Suppose that $f' : X' \to Y'$ is a cobase change of a map $f \in W^\circ$. Let $T \to S$ be a map between $W$-local objects, and let $Y' \to S$ be a map. Since $\mathcal C$ has universal colimits, we have that $f' \times_S T$ is a cobase change of $f \times_S T$. Since $f \in W^\circ$, we have that $f \times_S T \in W$, so $f' \times_S T \in W$ since $W$ is strongly saturated.

			\item To show that $W^\circ$ is stable under small colimits in $\Fun(\Delta^1, \mathcal C)$, we must show that if $f : X \to Y$ is a transformation between small diagrams $X,Y : K \to \mathcal C$, and for all $a \in K$, $f(a) \in W^\circ$, then $\varinjlim f \in W^\circ$. Indeed, for any map $S' \to S$ between $W$-local objects, if $\varinjlim Y \to S$ is any map, then since $\mathcal C$ has universal colimits, we have that $\varinjlim f \times_S S'$ is equivalent to $\varinjlim (f \times_S S')$, but for all $a \in K$, since $f(a) \in W^\circ$, it follows that $f(a) \times_S S' \in W$, so since $W$ is strongly saturated, it follows that $\varinjlim f \times_S S' \in W$.

			\item Let
				\[
					X \xrightarrow{f} Y \xrightarrow{g} Z
				\]
				be maps in $\mathcal C$. We must show that if 2 out of the 3 maps $f,g, g \circ f$ are in $W^\circ$, then so is the third.

				Let $S' \to S$ be a map between $W$-local objects. If $Z \to S$ is any map then $(g \circ f) \times_S S'$ is a composite of $f \times_S S'$ with $g \times_S S'$. This immediately show that if $f,g$ or $f, g \circ f$ are in $W^\circ$, then the third map is also in $W^\circ$, since $W$ is strongly saturated.

				Now, suppose that $g$ and $g \circ f$ are in $W^\circ$. We have already observed that then $g \in W$, so we have a commutative square
				\[
					\begin{tikzcd}
						Y \ar[d] \ar[r, "g"] & Z \ar[d] \\
						L(Y) \ar[r, "L(g)"'] & L(Z)
					\end{tikzcd}
				,\]
				where the bottom map is an equivalence, so $Y \to L(Y)$ factors through $g$ as
				\[
					Y \xrightarrow{g} Z \to L(Z) \xrightarrow{L(g)^{-1}} L(Y)
				.\]

				If $Y \to S$ is any map, then since $S \in W^{-1} \mathcal C$, it extends through $Y \to L(Y)$, and therefore through $g$. Thus, as before, we find that the $f \times_S S' \in W$ since $(g \times_S S') \circ (f \times_S S') \simeq (g \circ f) \times_S S'$, and $g \times_S S'$ and $(g \circ f) \times_S S'$ are in $W$, which is strongly saturated.

		\end{enumerate}
		
	\end{proof}
\end{lem}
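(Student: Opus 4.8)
The plan is first to dispatch the containment $W^\circ \subseteq W$, then to verify the three closure conditions of a strongly saturated class (\cite[Definition 5.5.4.5]{htt}) one at a time, in each case reducing to the fact that $W$ itself is strongly saturated together with the hypothesis that colimits in $\mathcal C$ are universal.

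For the containment: given $f : X \to Y$ in $W^\circ$, apply the defining property with $S = S' = L(Y)$, the map $S' \to S$ the identity (a map between $W$-local objects), and $Y \to S$ the localization unit. The base change $f \times_S S'$ is then canonically identified with $f$ itself, so $f \in W$. Next, for closure under cobase change, if $f' : X' \to Y'$ is a pushout of $f \in W^\circ$, then for any map $S' \to S$ between $W$-local objects and any $Y' \to S$, universality of colimits identifies $f' \times_S S'$ with a pushout of $f \times_S S'$; since $f \times_S S' \in W$ and $W$ is strongly saturated, $f' \times_S S' \in W$, hence $f' \in W^\circ$. Closure under small colimits in $\Fun(\Delta^1, \mathcal C)$ is analogous: if $f = \varinjlim_{a \in K} f(a)$ with each $f(a) \in W^\circ$, then for $S' \to S$ between $W$-local objects and a map $\varinjlim Y \to S$, universality of colimits gives $f \times_S S' \simeq \varinjlim_a (f(a) \times_S S')$, a colimit of maps in $W$, hence in $W$.

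The substantive point is the $2$-out-of-$3$ condition for $W^\circ$. Given $X \xrightarrow{f} Y \xrightarrow{g} Z$, and a map $S' \to S$ between $W$-local objects, one has $(g \circ f) \times_S S' \simeq (g \times_S S') \circ (f \times_S S')$ whenever the relevant maps to $S$ are given; so the cases where $\{f, g\}$ or $\{f, g \circ f\}$ lie in $W^\circ$ follow immediately from $2$-out-of-$3$ for the strongly saturated class $W$, once one arranges the composability of the base-changed maps. The remaining case -- $g, g\circ f \in W^\circ$, proving $f \in W^\circ$ -- is the main obstacle. Here I would first use the already-established containment to get $g \in W$, so $L(g)$ is an equivalence, and then observe that the localization unit $Y \to L(Y)$ factors through $g$, via
\[
	Y \xrightarrow{g} Z \to L(Z) \xrightarrow{L(g)^{-1}} L(Y).
\]
Consequently any map $Y \to S$ with $S$ being $W$-local extends through $g$; fixing such a factorization and pulling back along $S' \to S$, the identity $(g \circ f) \times_S S' \simeq (g \times_S S') \circ (f \times_S S')$ exhibits $f \times_S S'$ as a map for which it and $g \times_S S'$ compose to something in $W$, with $g \times_S S' \in W$, so $f \times_S S' \in W$ by $2$-out-of-$3$ for $W$. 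As this holds for every such $S' \to S$ and $Y \to S$, we conclude $f \in W^\circ$, completing the verification.
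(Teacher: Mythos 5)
Your proposal is correct and follows essentially the same route as the paper's proof: the containment via base change along $\id_{L(Y)}$, cobase change and colimit closure via universality of colimits, and the key factorization of the localization unit $Y \to L(Y)$ through $g$ (using that $L(g)$ is an equivalence) to handle the hard case of the $2$-out-of-$3$ condition. No gaps.
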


\begin{proof}[Proof of \Cref{prp:crit for LCL}]
	It follows from \cite[Proposition 5.5.4.15]{htt} that the left adjoint $L$ exists, that $W^{-1} \mathcal C$ is presentable, and that the collection $\bar W$ of $W$-equivalences, \ie maps $f$ such that $L(f)$ is an equivalence, is the smallest strongly saturated class containing $W$. Furthermore, \cite[Proposition 5.5.4.2]{htt} shows that $L$ is accessible.

	\Cref{lem:LC strongly sat} shows that if $\bar W^\circ$ is the collection of maps $f : X \to Y$ such that for any map $S' \to S$ in $W^{-1} \mathcal C$, and map $Y \to S'$, $L(f \times_S S')$ is an equivalence, then $\bar W^\circ$ is a strongly saturated class contained in $\bar W$. By our assumptions on $W$, we have that $W \subseteq \bar W^\circ$, so $\bar W \subseteq \bar W^\circ$. Thus, $\bar W = \bar W^\circ$, so $\bar W$ is stable under base change along maps between local objects, whence $L$ is locally Cartesian.

	The fact that $W^{-1} \mathcal C$ has universal colimits then follows from \cite[Proposition 1.4]{LCC} and the fact that $\mathcal C$ has universal colimits.

	For the statement about finite products, note that \Cref{lem:crit for LCL to be Cart mon} says that $L$ preserves finite products if and only if for all $X \in \mathcal C$, $X \times -$ preserves $W$-equivalences. Clearly this implies that $X \times -$ sends maps in $W$ to $W$-equivalences. For the converse, note that if $X \times -$ sends maps in $W$ to $W$-equivalences, then since the collection of $W$-equivalences is a strongly saturated class, \cite[Remark 5.5.4.10]{htt} says that $X \times -$ sends the strongly saturated class generated by $W$ to $W$-equivalences, but this class is precisely the collection of $W$-equivalences.
\end{proof}

\subsection{Monoidal structures}

The main goal of this section is to prove the following result:
\begin{thm} \label{thm:monoidal objectwise localization}
	Let $\mathcal O$ be an operad, $F : K \to \Alg_{\mathcal O} \PrL$ be a functor, and for each $p \in K$ and $X \in \mathcal O$ let $W_p(X)$ be a class of morphisms in $F(p)(X)$ that contains all identities and such that the strongly saturated class generated by $W_p(X)$ is of small generation. Suppose that
	\begin{itemize}

		\item for $\phi : p \to q$ in $K$, $X \in \mathcal O$, and $f \in W_p(X)$, we have that $F(\phi)(X)(f)$ is in the strongly saturated class generated by $W_q(X)$, and

		\item for any $f \in \Mul_{\mathcal O}(\{X_i\}_{i = 1}^n, Y)$, and $\{g_i \in W_p(X_i)\}_{i = 1}^n$, we have that $\otimes_f \{g_i\}_{i = 1}^n$ is in the strongly saturated class generated by $W_p(Y)$.
			
	\end{itemize}
	Then there is a map $F \to W^{-1} F$ in $\Fun(K, \Alg_{\mathcal O} \PrL)$ such that for each $p \in K$ and $X \in \mathcal O$, the morphism $(F \to W^{-1} F)(p)(X)$ is an accessible localization functor along the collection $W_p(X)$ of morphisms in $F(p)(X)$.

	Furthermore, for any other map $F \to G$, the space of extensions of this map through $F \to W^{-1} F$ is nonempty if and only if for each $p \in K$, and $X \in \mathcal O$, the functor $(F \to G)(p)(X)$ sends maps in $W_p(X)$ to equivalences in $G(p)(X)$, in which case this space is contractible.
\end{thm}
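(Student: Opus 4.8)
The plan is to reduce everything to the two inputs cited in the excerpt's surrounding text: the ``objectwise'' mechanism of \cite[Proposition A.1, Proposition A.11]{objwise-mono} for building maps of diagrams in $\Alg_{\mathcal O}(\PrL)$ out of pointwise data, and the basic theory of accessible localizations from \cite[\S5.5.4]{htt}. So the first step is to fix, for each $p \in K$ and $X \in \mathcal O$, the accessible localization $L_{p,X} : F(p)(X) \to W_p(X)^{-1} F(p)(X)$; since the strongly saturated class $\bar W_p(X)$ generated by $W_p(X)$ is of small generation, \cite[Proposition 5.5.4.15, Proposition 5.5.4.2]{htt} give us a presentable localization with accessible reflector. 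The key point to check is that these localizations assemble compatibly with the $\mathcal O$-monoidal structures: the hypothesis that $\otimes_f\{g_i\}$ lies in $\bar W_p(Y)$ whenever the $g_i$ lie in $W_p(X_i)$ is exactly what is needed (via \cite[Proposition 2.2.1.9]{ha} or the compatibility criterion for localizations of (co)operads in \cite[\S2.2.1]{ha}) to equip $W_p(X)^{-1} F(p)(X)$ with an $\mathcal O$-algebra structure in $\PrL$ and make $L_p : F(p) \to W^{-1} F(p)$ a map in $\Alg_{\mathcal O}(\PrL)$; here one uses \cite[Remark 5.5.4.10]{htt} to pass from $W_p(X)$ to the full strongly saturated class it generates.

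The second step is to promote this pointwise-in-$K$ data to an actual morphism $F \to W^{-1} F$ in $\Fun(K, \Alg_{\mathcal O}(\PrL))$. This is where the functoriality hypothesis on the $\phi : p \to q$ enters: because $F(\phi)(X)$ sends $W_p(X)$ into $\bar W_q(X)$, the functor $F(\phi)$ descends to a functor $W^{-1}F(p) \to W^{-1}F(q)$ compatibly with the $L$'s, so the $L_p$ are the components of a natural transformation. To make this precise at the $\infty$-categorical level I would invoke \cite[Proposition A.1]{objwise-mono}, applied in the arrow category of $\Alg_{\mathcal O}(\PrL)$: each $L_p$ is, by \cite[Proposition 4.8.2.6 / the discussion of accessible localizations]{ha}, a reflective localization, which the cited machinery recognizes as a suitable class of ``objectwise'' maps, and the hypotheses guarantee the required compatibility along morphisms of $K$. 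This yields $F \to W^{-1} F$ with the stated pointwise description $(F \to W^{-1}F)(p)(X) = L_{p,X}$.

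The third step is the universal property. Given any $F \to G$ in $\Fun(K, \Alg_{\mathcal O}(\PrL))$, we want the space of factorizations through $F \to W^{-1}F$ to be contractible when each $(F\to G)(p)(X)$ inverts $W_p(X)$, and empty otherwise. The ``only if'' direction is immediate: if a factorization exists then $(F \to G)(p)(X)$ factors through $L_{p,X}$, hence inverts $W_p(X)$. For the ``if'' direction, the pointwise statement is the standard universal property of accessible localizations of presentable $\mathcal O$-algebras: a morphism out of $F(p)(X)$ in $\Alg_{\mathcal O}(\PrL)$ inverting $W_p(X)$ extends uniquely (up to contractible choice) over $L_{p,X}$ --- this is again \cite[Proposition 5.5.4.20]{htt} in the monoidal setting, or directly \cite[Proposition 2.2.1.9]{ha}. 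To upgrade this to diagrams over $K$, I would again appeal to \cite[Proposition A.11]{objwise-mono}, which says precisely that for a map built objectwise as in \cite[Proposition A.1]{objwise-mono}, the space of extensions of an arbitrary map of diagrams is the (contractible, when nonempty) space computed objectwise; the hypothesis that each $(F\to G)(p)(X)$ inverts $W_p(X)$ is exactly the objectwise nonemptiness condition.

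The main obstacle I anticipate is not any single hard theorem but bookkeeping: carefully verifying that the two monoidal compatibility hypotheses (the one on $\otimes_f$ and the one on $F(\phi)$) are strong enough to run the localization-of-operad-algebras machinery and the \cite{objwise-mono} machinery simultaneously --- i.e.\ that ``localize each $F(p)(X)$ along $W_p(X)$'' really does define an object of $\Fun(K,\Alg_{\mathcal O}(\PrL))$ and not merely a levelwise collection of localizations. Once one has packaged the localizations as a coreflective (on the opposite side) subcategory of $\Fun(K,\Alg_{\mathcal O}(\PrL))$ in the sense needed by \cite[Proposition A.1]{objwise-mono}, the rest is formal; so I would spend most of the write-up making that packaging precise, and keep the verification of the universal property short by citing \cite[Proposition A.11]{objwise-mono}.
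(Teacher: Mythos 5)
Your proposal follows essentially the same route as the paper's proof: first produce the $\mathcal O$-monoidal localization $F(p) \to W_p^{-1}F(p)$ pointwise in $K$ via \cite[Proposition 2.2.1.9]{ha} together with the small-generation input from \cite[\S 5.5.4]{htt} (this is the paper's \Cref{prp:existence of monoidal localizations}), then assemble these into a map in $\Fun(K, \Alg_{\mathcal O}\PrL)$ using \cite[Proposition A.1]{objwise-mono}, and deduce the universal property from \cite[Proposition A.11]{objwise-mono}. The only presentational difference is that the paper makes explicit that \cite[Proposition A.1]{objwise-mono} is applied in $(\Alg_{\mathcal O}\PrL)^\op$, where each localization is a monomorphism by \Cref{prp:monoidal localization}; your ``suitable class of objectwise maps'' is exactly this condition, so there is no gap.
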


\begin{rmk} \label{rmk:coCart fib of operads}
	Recall that if $\mathcal O$ is an operad, then a coCartesian fibration $\mathcal C^\otimes \to \mathcal O^\otimes$ is a coCartesian fibration of operads (\cite[Definition 2.1.2.13]{ha}) if the following equivalent conditions are satisfied:
	\begin{itemize}

		\item The composite $\mathcal C^\otimes \to \mathcal O^\otimes \to N(\Fin_*)$ is an operad

		\item For any object
			\[
				T \simeq T_1 \oplus \dotsb \oplus T_n \in \mathcal O^\otimes_{\langle n \rangle}
			,\]
			the inert morphisms $T \to T_1, \dotsc, T_n$ induce an equivalence
			\[
				\mathcal C^\otimes_T \to \mathcal C^\otimes_{T_1} \times \dotsb \times \mathcal C^\otimes_{T_n}
			.\]
			
	\end{itemize}
\end{rmk}

\begin{prp} \label{prp:monoidal localization}
	Let $\mathcal O$ be an operad, and $L : \mathcal C \to \mathcal D$ a map in $\Mon_{\mathcal O}(\Cat)$. Then $L^\otimes : \mathcal C^\otimes \to \mathcal D^\otimes$ admits a fully faithful right adjoint relative $\mathcal O^\otimes$ (\cite[Definition 7.3.2.2]{ha}) if and only if for every $X \in \mathcal O$, the functor $L(X) : \mathcal C(X) \to \mathcal D(X)$ admits a fully faithful right adjoint.

	Furthermore, in this case, for any $\mathcal E \in \Mon_{\mathcal O}(\Cat)$, we have that\footnote{See \cite[Definition 2.1.3.7]{ha} for notation.}
	\[
		\Fun_{\mathcal O}^\otimes(L, \mathcal E) : \Fun_{\mathcal O}^\otimes(\mathcal D, \mathcal E) \to \Fun_{\mathcal O}^\otimes(\mathcal C, \mathcal E)
	\]
	is fully faithful with essential image given by those $F : \mathcal C \to \mathcal E$ such that for all $X \in \mathcal O$ and $g \in \mathcal C(X)$, if $L(X)(g)$ is invertible, so is $F(X)(g)$. 
	\begin{proof}
		Note that if $f \in \Mul_{\mathcal O}(\{X_1, \dotsc, X_n\}, Y)$ (see \cite[Definition 2.1.1.16]{ha} for notation), and $\{g_i \in \mathcal C(X_i)\}_{i = 1}^n$ such that $L(X_i)(g_i)$ is invertible for each $i$, we have that
		\[
			L(Y)(\otimes_f \{g_i\}_{i = 1}^n) \simeq \otimes_f \{L(X_i)(g_i)\}_{i = 1}^n
		\]
		is invertible. It follows that if $L(X)$ admits a fully faithful right adjoint for every $X \in \mathcal O$, \cite[Proposition 2.2.1.9 (1)]{ha} says $L^\otimes$ admits a fully faithful right adjoint relative $\mathcal O^\otimes$.

		Conversely, if $L^\otimes$ admits a fully faithful right adjoint relative $\mathcal O^\otimes$, then \cite[Proposition 7.3.2.5]{ha} says that for each $X \in \mathcal O \subseteq \mathcal O^\otimes$, the functor $L(X)$ admits a right adjoint given by a base change of the right adjoint of $L^\otimes$. Since the right adjoint of $L^\otimes$ is fully faithful, so is the right adjoint of $L(X)$.

		Now, assume the equivalent conditions hold. Since $L^\otimes$ is a localization functor, \cite[Proposition 5.2.7.12]{htt} says that
		\[
			\Fun_{\mathcal O^\otimes}(L^\otimes, \mathcal E^\otimes)
		\]
		is fully faithful with essential image given by those $F : \mathcal C^\otimes \to \mathcal E^\otimes$ such that for $g \in \mathcal C^\otimes$ if $L^\otimes(g)$ is invertible, so is $F(g)$.

		Recall from \cite[Definition 2.1.3.7]{ha} that for $\mathcal O$-monoidal categories $\mathcal E_0^\otimes, \mathcal E_1^\otimes$, $\Fun_{\mathcal O}^\otimes(\mathcal E_0, \mathcal E_1)$ is the full subcategory of $\Fun_{\mathcal O^\otimes}(\mathcal E_0^\otimes, \mathcal E_1^\otimes)$ of those maps preserving inert morphisms and coCartesian morphisms. Since $L^\otimes$ is $\mathcal O$-monoidal, the above fully faithful functor restricts to a fully faithful functor
		\[
			\Fun_{\mathcal O}^\otimes(\mathcal D, \mathcal E) \to \Fun_{\mathcal O}^\otimes(\mathcal C, \mathcal E)
		.\]
		Now, if $g \in \mathcal C^{\otimes}$ lies over $\{X_1, \dotsc, X_n\}$ in $\mathcal O^\otimes$, so that $g = g_1 \oplus \dotsb \oplus g_n$, we have
		\[
			L^\otimes(g) \simeq L_{X_1}(g_1) \oplus \dotsb \oplus L_{X_n}(g_n)
		,\]
		and this is invertible if $L_{X_i}(g_i)$ is invertible for each $i$. Using a similar description for $F \in \Fun_{\mathcal O}^\otimes(\mathcal C, \mathcal E)$, we find that $F$ is in the essential image if and only if for any $X \in \mathcal O$ and $g \in \mathcal C(X)$, if $L_X(g)$ is invertible, then $F(X)(g)$ is invertible.
	\end{proof}
\end{prp}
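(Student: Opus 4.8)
The plan is to deduce the ``if and only if'' from the general theory of relative adjoints for $\mathcal O$-monoidal functors in \cite{ha}, and then obtain the second assertion by restricting the universal property of localizations from \cite{htt} to the appropriate subcategories of $\mathcal O$-monoidal functors.

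For the forward implication, suppose each $L(X) : \mathcal C(X) \to \mathcal D(X)$ has a fully faithful right adjoint. I would apply \cite[Proposition 2.2.1.9(1)]{ha}, whose hypothesis is that tensor products of ``$L$-local equivalences'' --- morphisms $g$ in some $\mathcal C(X)$ that $L(X)$ inverts --- are again $L$-local equivalences: concretely, for $f \in \Mul_{\mathcal O}(\{X_1,\dotsc,X_n\},Y)$ and $\{g_i\}_{i=1}^n$ with each $L(X_i)(g_i)$ invertible, the morphism $\otimes_f\{g_i\}$ in $\mathcal C(Y)$ satisfies $L(Y)(\otimes_f\{g_i\}) \simeq \otimes_f\{L(X_i)(g_i)\}$, a tensor product of equivalences, hence an equivalence. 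The cited proposition then produces a right adjoint to $L^\otimes$ relative to $\mathcal O^\otimes$ whose restriction to each fibre over $X \in \mathcal O$ is the given right adjoint of $L(X)$; since each of those is fully faithful and, by \Cref{rmk:coCart fib of operads}, a map over $\mathcal O^\otimes$ is fully faithful as soon as its restrictions to the fibres over objects of $\mathcal O$ are, the relative right adjoint is fully faithful. Conversely, if $L^\otimes$ admits a fully faithful right adjoint relative to $\mathcal O^\otimes$, then \cite[Proposition 7.3.2.5]{ha} identifies the right adjoint of $L(X)$ (for $X \in \mathcal O$) with the restriction of this global right adjoint to the fibre over $X$; a fully faithful functor restricts to a fully faithful functor on fibres, so each $L(X)$ has a fully faithful right adjoint.

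For the second statement, I would assume these equivalent conditions, so that $L^\otimes : \mathcal C^\otimes \to \mathcal D^\otimes$ is a localization functor in the sense of \cite[\S5.2.7]{htt}. By \cite[Proposition 5.2.7.12]{htt}, precomposition with $L^\otimes$ gives a fully faithful functor
\[
	\Fun_{\mathcal O^\otimes}(\mathcal D^\otimes,\mathcal E^\otimes) \to \Fun_{\mathcal O^\otimes}(\mathcal C^\otimes,\mathcal E^\otimes)
\]
whose essential image consists of the functors inverting every $L^\otimes$-local equivalence. Since $L^\otimes$ is $\mathcal O$-monoidal --- so it preserves inert and coCartesian morphisms over $\mathcal O^\otimes$ --- this restricts to a fully faithful functor $\Fun_{\mathcal O}^\otimes(L,\mathcal E)$ between the full subcategories of \cite[Definition 2.1.3.7]{ha}, and a functor in $\Fun_{\mathcal O^\otimes}(\mathcal D^\otimes,\mathcal E^\otimes)$ lies in $\Fun_{\mathcal O}^\otimes(\mathcal D,\mathcal E)$ iff its image in $\Fun_{\mathcal O^\otimes}(\mathcal C^\otimes,\mathcal E^\otimes)$ lies in $\Fun_{\mathcal O}^\otimes(\mathcal C,\mathcal E)$. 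It then remains to rewrite ``$F$ inverts $L^\otimes$-local equivalences'' fibrewise: a morphism $g$ in $\mathcal C^\otimes$ over $\{X_1,\dotsc,X_n\} \in \mathcal O^\otimes$ decomposes, via the product decomposition of \Cref{rmk:coCart fib of operads}, as $g = g_1 \oplus \dotsb \oplus g_n$ with $g_i$ a morphism in $\mathcal C(X_i)$, and both $L^\otimes(g)$ and $F(g)$ decompose compatibly, so $L^\otimes(g)$ is invertible iff each $L(X_i)(g_i)$ is, and likewise for $F$. Hence $F$ inverts all $L^\otimes$-local equivalences precisely when, for every $X \in \mathcal O$ and every $g \in \mathcal C(X)$ with $L(X)(g)$ invertible, $F(X)(g)$ is invertible.

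The main obstacle I expect is not conceptual but bookkeeping in the operadic formalism: one must check carefully, using \Cref{rmk:coCart fib of operads}, that ``fibrewise over objects of $\mathcal O$'' data (full faithfulness of adjoints, invertibility of morphisms, preservation of coCartesian edges) genuinely determines and is determined by the corresponding global data over $\mathcal O^\otimes$, and that the compatibility hypothesis of \cite[Proposition 2.2.1.9(1)]{ha} is exactly what the $\mathcal O$-monoidality of $L$ supplies.
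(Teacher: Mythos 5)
Your proposal is correct and follows essentially the same route as the paper's proof: the forward direction via the compatibility hypothesis of \cite[Proposition 2.2.1.9(1)]{ha}, the converse via \cite[Proposition 7.3.2.5]{ha}, and the universal property via \cite[Proposition 5.2.7.12]{htt} restricted to $\Fun_{\mathcal O}^\otimes$ together with the fibrewise decomposition $g = g_1 \oplus \dotsb \oplus g_n$. The only difference is that you spell out slightly more explicitly why full faithfulness of the relative right adjoint can be checked fibrewise, which is a harmless elaboration.
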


\begin{defn} \label{defn:monoidal localization}
	Let $\mathcal O$ be an operad, and $L : \mathcal C \to \mathcal D$ a morphism in $\Mon_{\mathcal O}(\Cat)$. Say $L$ is an $\mathcal O$-monoidal localization if it satisfies the equivalent conditions of Proposition \ref{prp:monoidal localization}.

	Given a family $W = \{W_X\}_{X \in \mathcal O}$ such that for each $X \in \mathcal O$, $W_X$ is a collection of morphisms in $\mathcal C(X)$, say $L$ is an $\mathcal O$-monoidal localization along $W$ if for each $X \in \mathcal O$, the functor $L(X)$ is a localization along $W_X$, that is, it is a localization functor and the class of morphisms inverted by $L(X)$ is the strongly saturated class of morphisms generated by $W_X$ (see \cite[Remark 5.5.4.10]{htt}).
\end{defn}


The following definition follows \cite[Definition 3.1.1.18]{ha}:
\begin{defn} \label{defn:colimit-compatibility for monoidal categories}
	Let $\mathcal O$ be an operad, and $K$ a simplicial set. Say an $\mathcal O$-monoidal category $\mathcal C$ is compatible with $K$-indexed colimits if for any $f \in \Mul_{\mathcal O}(\{X_1, \dotsc, X_n\}, Y)$, the functor
	\[
		\otimes_f : \mathcal C(X_1) \times \dotsb \times \mathcal C(X_n) \to \mathcal C(Y)
	\]
	of \cite[Remark 2.1.2.16]{ha} preserves $K$-indexed colimits in each index. This means that for any
	\[
		\bar \sigma : K^\triangleright \to \mathcal C(X_1) \times \dotsb \times \mathcal C(X_n)
	\]
	such that $\bar \sigma$ is colimiting in one index and constant in all others, the composite $\otimes_f \circ \bar \sigma$ is colimiting.
\end{defn}

See \cite[Definition 3.4.4.1]{ha}.
\begin{rmk} \label{rmk:PrL algebras}
	Recall that $\PrL$ has the symmetric monoidal structure of \cite[Proposition 4.8.1.15]{ha}. The inclusion $\PrL \to \widehat{\Cat}$ is lax symmetric monoidal (where $\widehat{\Cat}$ has the Cartesian monoidal structure), that is, it gives a map of operads $\PrL^\otimes \to \widehat{\Cat}^\otimes$. This induces a forgetful functor
	\[
		\Alg_{\mathcal O}(\PrL) \to \Alg_{\mathcal O}(\widehat{\Cat}) \xrightarrow{\simeq} \Mon_{\mathcal O}(\widehat{\Cat})
	,\]
	where the last equivalence is given by \cite[Proposition 2.4.2.5]{ha}. This forgetful functor is an inclusion corresponding to the subcategory of $\Mon_{\mathcal O}(\widehat{\Cat})$ whose objects are the $\mathcal O$-monoidal categories $\mathcal C : \mathcal O^\otimes \to \widehat{\Cat}$ such that for each $X \in \mathcal O$, $\mathcal C(X)$ is presentable, and $\mathcal C$ is compatible with $K$-indexed colimits as in Definition \ref{defn:colimit-compatibility for monoidal categories}, and whose morphisms are the $\mathcal C \to \mathcal D$ such that for each $X \in \mathcal O$, the functor $\mathcal C(X) \to \mathcal D(X)$ has a right adjoint.
\end{rmk}

\begin{lem} \label{lem:colimit-compatible monoidal localization}
	Let $\mathcal O$ be an operad, and let $L : \mathcal C \to \mathcal D$ be an $\mathcal O$-monoidal localization (Definition \ref{defn:monoidal localization}) of $\mathcal O$-monoidal categories. Let $K$ be a simplicial set, and suppose that $\mathcal C$ is compatible with $K$-indexed colimits as an $\mathcal O$-monoidal category (Definition \ref{defn:colimit-compatibility for monoidal categories}). Then $\mathcal D$ is also compatible with $K$-indexed colimits as an $\mathcal O$-monoidal category.
	\begin{proof}
		Let $f \in \Mul_{\mathcal O}(\{X_1, \dotsc, X_n\}, Y)$, and consider the following commutative diagram
		\[
			\begin{tikzcd}
				\mathcal C_{X_1} \times \dotsb \times \mathcal C_{X_n} \ar[d] \ar[r] & \mathcal C_Y \ar[d] \\
				\mathcal D_{X_1} \times \dotsb \times \mathcal D_{X_n} \ar[r] & \mathcal D_Y
			\end{tikzcd}
		,\]
		where the vertical arrows are induced by $L$, and are therefore colimit-preserving, and the horizontal arrows are $\otimes_f$.

		Given
		\[
			\bar \tau : K^\triangleright \to \mathcal D_{X_1} \times \dotsb \times \mathcal D_{X_n}
		\]
		which is colimiting in one index $j$ and constant in all others, since $L^\otimes$ admits a fully faithful right adjoint, we have that this actually factors through the left vertical arrow of the above square by a 
		\[
			\bar \sigma : K^\triangleright \to \mathcal C_{X_1} \times \dotsb \times \mathcal C_{X_n}
		\]
		which is colimiting in the same index $j$, and constant in all others. This is because we can consider a colimiting extension $\bar \sigma_j : K^\triangleright \to \mathcal C_{X_j}$ of
		\[
			K \to K^\triangleright \xrightarrow{\bar \tau_j} \mathcal D_{X_j} \to \mathcal C_{X_j}
		;\]
		the resulting composite $K^\triangleright \to \mathcal C_{X_j} \to \mathcal D_{X_j}$ coincides with $\bar \tau_j$ since they are both colimiting cones of the same functor $K \to D_{X_j}$.

		Given $\bar \sigma$, we find that $\otimes_f \circ \bar \tau$ is colimiting since it is equivalent to $L_Y \circ \otimes_f \circ \bar \sigma$, which is colimiting since $\otimes_f \circ \bar \sigma$ is colimiting by hypothesis.
	\end{proof}
\end{lem}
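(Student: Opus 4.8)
The plan is to use the fact that an $\mathcal O$-monoidal localization is, coordinatewise, an ordinary Bousfield localization, and then transport the relevant colimiting cones between $\mathcal D$ and $\mathcal C$. First I would record the two structural facts needed. By \Cref{defn:monoidal localization} and \Cref{prp:monoidal localization}, for every colour $X \in \mathcal O$ the functor $L(X) \colon \mathcal C(X) \to \mathcal D(X)$ is a localization admitting a fully faithful right adjoint $R(X)$; in particular $L(X) \circ R(X) \simeq \id_{\mathcal D(X)}$, and $L(X)$, being a left adjoint, preserves all colimits. Second, since $L$ is a morphism of $\mathcal O$-monoidal categories, for every $f \in \Mul_{\mathcal O}(\{X_1, \dotsc, X_n\}, Y)$ the square
\[
\begin{tikzcd}
\mathcal C(X_1) \times \dotsb \times \mathcal C(X_n) \ar[d] \ar[r, "\otimes_f"] & \mathcal C(Y) \ar[d] \\
\mathcal D(X_1) \times \dotsb \times \mathcal D(X_n) \ar[r, "\otimes_f"'] & \mathcal D(Y)
\end{tikzcd}
\]
commutes, where the vertical arrows are $L(X_1) \times \dotsb \times L(X_n)$ and $L(Y)$ and the horizontal arrows are the tensor operations of \cite[Remark 2.1.2.16]{ha}.

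Next, I would fix $f \in \Mul_{\mathcal O}(\{X_1, \dotsc, X_n\}, Y)$, an index $j$, and a cone $\bar\tau = (\bar\tau_i)_i \colon K^\triangleright \to \mathcal D(X_1) \times \dotsb \times \mathcal D(X_n)$ that is colimiting in the $j$-th coordinate and constant in the others, with constant value $d_i \in \mathcal D(X_i)$ for $i \neq j$; the goal is to show $\otimes_f \circ \bar\tau$ is colimiting. I would build a lift $\bar\sigma = (\bar\sigma_i)_i \colon K^\triangleright \to \mathcal C(X_1) \times \dotsb \times \mathcal C(X_n)$ by taking $\bar\sigma_i$ constant at $R(X_i)(d_i)$ for $i \neq j$, and taking $\bar\sigma_j \colon K^\triangleright \to \mathcal C(X_j)$ to be a colimiting extension of $R(X_j) \circ (\bar\tau_j|_K)$. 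Then $L(X_i) \circ \bar\sigma_i \simeq \bar\tau_i$ for every $i$: for $i \neq j$ this is $L(X_i) R(X_i)(d_i) \simeq d_i$, and for $i = j$ the cone $L(X_j) \circ \bar\sigma_j$ is colimiting (as $L(X_j)$ preserves colimits) over $L(X_j) R(X_j)(\bar\tau_j|_K) \simeq \bar\tau_j|_K$, hence agrees with $\bar\tau_j$ by uniqueness of colimits. So $(L(X_1) \times \dotsb \times L(X_n)) \circ \bar\sigma \simeq \bar\tau$, and $\bar\sigma$ is colimiting in coordinate $j$ and constant in the others.

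To finish, I would invoke the hypothesis that $\mathcal C$ is compatible with $K$-indexed colimits (\Cref{defn:colimit-compatibility for monoidal categories}) to conclude that $\otimes_f \circ \bar\sigma \colon K^\triangleright \to \mathcal C(Y)$ is colimiting; applying the colimit-preserving functor $L(Y)$ and then the commuting square above, $\otimes_f \circ \bar\tau \simeq L(Y) \circ \otimes_f \circ \bar\sigma$ is colimiting as well. Since $f$, $j$, and $\bar\tau$ were arbitrary, $\mathcal D$ is compatible with $K$-indexed colimits. The only step that is not pure $\infty$-operadic bookkeeping is the identification $L(X_j) \circ \bar\sigma_j \simeq \bar\tau_j$ — that the lifted colimiting cone recovers the original cone under the localization — which rests on $L(X_j) R(X_j) \simeq \id$ together with uniqueness of colimits; I expect this to be the main point requiring care.
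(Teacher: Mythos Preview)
Your proposal is correct and follows essentially the same route as the paper's proof: lift $\bar\tau$ through the fully faithful right adjoints to a cone $\bar\sigma$ in $\mathcal C$ that is colimiting in the $j$-th slot and constant elsewhere, apply compatibility of $\mathcal C$ with $K$-indexed colimits to $\otimes_f \circ \bar\sigma$, and push back down via the colimit-preserving $L(Y)$ and the commuting square. Your version is in fact slightly more explicit about the constant components (taking $R(X_i)(d_i)$), and the step you flag as needing care---that $L(X_j)\circ\bar\sigma_j \simeq \bar\tau_j$ by uniqueness of colimits---is exactly the point the paper spells out as well.
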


The following reformulation of \cite[Proposition 2.2.1.9]{ha} is useful for producing monoidal localizations.
\begin{prp} \label{prp:existence of monoidal localizations}
	Let $\mathcal O$ be an operad, and $\mathcal C \in \Alg_{\mathcal O}(\PrL)$. Suppose that for each $X \in \mathcal O$, we have a collection of morphisms $W_X$ in $\mathcal C_X$ that contains all identity maps and such that the strongly saturated class generated by $W_X$ is of small generation. Suppose that for any $f \in \Mul_{\mathcal O}(\{X_1, \dotsc, X_n\}, Y)$,\footnote{See \cite[Definition 2.1.1.16]{ha} for the definition of $\Mul$.} and $\{g_i \in \mathcal W_{X_i}\}_{i = 1}^n$, we have that $\otimes_f \{g_i\}_{i = 1}^n$\footnote{See \cite[Remark 2.1.2.16]{ha} for notation.} is in the strongly saturated class generated by $W_Y$. Then there is an $L : \mathcal C \to W^{-1} \mathcal C$ in $\Alg_{\mathcal O}(\PrL)$ which is an $\mathcal O$-monoidal localization along $\{W_X\}_{X \in \mathcal O}$ (Definition \ref{defn:monoidal localization}).
	\begin{proof}
		For each $X \in \mathcal O$, write $\bar W_X$ for the strongly saturated class generated by $W_X$. Since this is of small generation, \cite[Proposition 5.5.4.15]{htt} says that there is a localization functor $L_X : \mathcal C_X \to W_X^{-1} \mathcal C_X$, where a morphism $f \in \mathcal C_X$ is in $\bar W_X$ if and only if $L_X(f)$ is invertible.

		Using Remark \ref{rmk:PrL algebras}, we have that $\mathcal C$ corresponds to an $\mathcal O$-monoidal category compatible with all small colimits. For any $f \in \Mul_{\mathcal O}(\{X_1, \dotsc, X_n\}, Y)$, since
		\[
			\otimes_f : \mathcal C_{X_1} \times \dotsb \times \mathcal C_{X_n} \to \mathcal C_Y
		\]
		preserves small colimits in each variable, and sends $W_{X_1} \times \dotsb \times W_{X_n}$ to $\bar W_Y$, it follows from \cite[Remark 5.5.4.10]{htt} that $\otimes_f$ sends $\bar W_{X_1} \times \dotsb \times \bar W_{X_n}$ to $\bar W_Y$: if $g_i : A_i \to B_i \in W_{X_i}$, we have that
		\[
			\otimes_f \{g_i\}_{i = 1}^n \simeq \otimes_f(g_1, A_2, \dotsc, A_n) \circ \dotsb \circ \otimes_f(B_1, \dotsc, B_{n - 1}, g_n)
		,\]
		and for each $i$, $\id_{A_i}$ and $\id_{B_i}$ are in $W_{X_i}$, so this is a composite of maps in the strongly saturated class generated by $W_Y$, so it is in this strongly saturated class.

		Thus, the family of localization $\{L_X\}_{X \in \mathcal O}$ is compatible with the $\mathcal O$-monoidal structure on $\mathcal C$ as in \cite[Definition 2.2.1.6]{ha}, so \cite[Proposition 2.2.1.9]{ha} says that there is a morphism $L^\otimes : \mathcal C^\otimes \to W^{-1} \mathcal C^{\otimes}$ of $\mathcal O$-monoidal categories which admits a fully faithful right adjoint relative $\mathcal O^\otimes$, and that the fibre over $X \in \mathcal O \subseteq \mathcal O^\otimes$ is the localization $L_X : \mathcal C_X \to W_X^{-1} \mathcal C_X$.

		This shows that $\mathcal C \to W^{-1} \mathcal C$ is an $\mathcal O$-monoidal localization in $\Mon_{\mathcal O}(\widehat{\Cat})$, and it only remains to show that this actually lifts to a morphism in $\Alg_{\mathcal O}(\PrL)$. Indeed, as in Remark \ref{rmk:PrL algebras}, this amounts to showing that for each $X \in \mathcal O$, $(W^{-1} \mathcal C)(X)$ is presentable, $(\mathcal C \to W^{-1} \mathcal C)(X)$ is colimit preserving, and that $W^{-1} \mathcal C$ is compatible with all small colimits. First two properties follow from the fact that $(\mathcal C \to W^{-1} \mathcal C)(X)$ is the accessible localization functor $\mathcal C(X) \to W_X^{-1} \mathcal C(X)$ provided by \cite[Proposition 5.5.4.15]{htt}, and compatibility of $W^{-1} \mathcal C$ follows from Lemma \ref{lem:colimit-compatible monoidal localization} since $\mathcal C \to W^{-1} \mathcal C$ is a monoidal localization and $\mathcal C$ is compatible with all small colimits.
	\end{proof}
\end{prp}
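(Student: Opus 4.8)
The plan is to reduce to the single-object case handled by \Cref{prp:existence of monoidal localizations} and then assemble the resulting localizations coherently over the indexing simplicial set $K$.

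First I would apply \Cref{prp:existence of monoidal localizations} objectwise: for each $p \in K$ the family $\{W_p(X)\}_{X \in \mathcal O}$ satisfies the hypotheses of that proposition — it contains all identities, each $W_p(X)$ generates a strongly saturated class of small generation, and the $\otimes_f$-compatibility is exactly the second bullet of our hypotheses — so it produces a morphism $L_p \colon F(p) \to W_p^{-1} F(p)$ in $\Alg_{\mathcal O}(\PrL)$ which is an $\mathcal O$-monoidal localization along $\{W_p(X)\}_X$. By \Cref{prp:monoidal localization}, $L_p^\otimes$ admits a fully faithful right adjoint relative $\mathcal O^\otimes$, and for any $\mathcal E \in \Alg_{\mathcal O}(\PrL)$ the functor $\Fun_{\mathcal O}^\otimes(L_p, \mathcal E)$ is fully faithful with essential image the morphisms $F(p) \to \mathcal E$ that invert the maps $L_p$ inverts. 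Since $L_p(X)$ is a localization along $W_p(X)$ and the class of maps inverted by a colimit-preserving functor between presentable categories is strongly saturated (\cite[Remark 5.5.4.10]{htt}), this essential image consists precisely of those morphisms $\alpha$ with $\alpha(X)$ inverting $W_p(X)$ for every $X \in \mathcal O$; in particular any such $\alpha$ extends along $L_p$ by an essentially unique morphism in $\Alg_{\mathcal O}(\PrL)$. That $W_p^{-1}F(p)$ really lies in $\Alg_{\mathcal O}(\PrL)$ (presentable, compatible with all small colimits, transition functors colimit-preserving) is part of the conclusion of \Cref{prp:existence of monoidal localizations}, using \Cref{rmk:PrL algebras} and \Cref{lem:colimit-compatible monoidal localization}.

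Next I would descend the transition functors and assemble. For an edge $\phi \colon p \to q$ of $K$, the composite $L_q \circ F(\phi) \colon F(p) \to W_q^{-1}F(q)$ sends each $f \in W_p(X)$ to an equivalence: by the first bullet of our hypotheses $F(\phi)(X)(f)$ lies in the strongly saturated class generated by $W_q(X)$, and $L_q(X)$ inverts that class. Hence by the universal property just recalled, $L_q \circ F(\phi)$ factors through $L_p$ by an essentially unique morphism $W_p^{-1}F(p) \to W_q^{-1}F(q)$ in $\Alg_{\mathcal O}(\PrL)$ fitting into a commuting square with $L_p$, $L_q$, and $F(\phi)$. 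This says exactly that the pair consisting of $F$ together with the objectwise reflective localizations $\{L_p\}_{p \in K}$ satisfies the hypotheses of the objectwise-localization results \cite[Proposition A.1]{objwise-mono} and \cite[Proposition A.11]{objwise-mono} (whose proofs apply verbatim for an arbitrary operad $\mathcal O$ in place of $\mathbb E_\infty$), which then furnish a functor $W^{-1}F \colon K \to \Alg_{\mathcal O}(\PrL)$ and a natural transformation $L \colon F \to W^{-1}F$ with $L(p) \simeq L_p$, so that $L(p)(X) \colon F(p)(X) \to W_p^{-1}F(p)(X)$ is the accessible localization along $W_p(X)$, as required.

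For the universal property, given $G \in \Fun(K, \Alg_{\mathcal O}(\PrL))$ and $\alpha \colon F \to G$, an extension of $\alpha$ along $L$ is by \cite[Proposition A.11]{objwise-mono} the same as a compatible system of extensions of $\alpha(p) \colon F(p) \to G(p)$ along $L_p$; by \Cref{prp:monoidal localization} such an extension exists and is unique up to contractible choice exactly when $\alpha(p)$ inverts $W_p(X)$ for every $X$, i.e. when $(F \to G)(p)(X)$ sends $W_p(X)$ to equivalences in $G(p)(X)$. Thus the space of extensions is nonempty precisely when this holds for all $p \in K$, and in that case it is contractible. The main obstacle is precisely the coherent assembly in the middle step: the localizations $L_p$ genuinely depend on $p$, so $W^{-1}F$ is \emph{not} the postcomposition of $F$ with a single localization functor on $\Alg_{\mathcal O}(\PrL)$, and one must check that the objectwise reflective localizations splice together — which is exactly what the machinery of \cite{objwise-mono} is designed for, the remaining inputs (each $W_p^{-1}F(p)$ lying in $\Alg_{\mathcal O}(\PrL)$, and $F$'s transition functors descending) being supplied by \Cref{prp:existence of monoidal localizations}, \Cref{lem:colimit-compatible monoidal localization}, and \Cref{prp:monoidal localization}.
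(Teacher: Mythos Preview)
You are proving the wrong statement. The proposition at hand concerns a \emph{single} $\mathcal C \in \Alg_{\mathcal O}(\PrL)$ with a family $\{W_X\}_{X \in \mathcal O}$ of morphism classes, and asks you to construct the $\mathcal O$-monoidal localization $L : \mathcal C \to W^{-1}\mathcal C$. There is no indexing simplicial set $K$ in the statement. What you have written is (essentially correctly) a proof of \Cref{thm:monoidal objectwise localization}, which is the statement about a diagram $F : K \to \Alg_{\mathcal O}(\PrL)$; but that theorem \emph{uses} \Cref{prp:existence of monoidal localizations} as its key input, exactly as you do when you write ``apply \Cref{prp:existence of monoidal localizations} objectwise''. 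So your argument is circular relative to the task: you invoke as a black box the very proposition you are meant to establish.

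The content that is actually required here is what the paper does: for each $X \in \mathcal O$ build the accessible localization $L_X : \mathcal C_X \to W_X^{-1}\mathcal C_X$ via \cite[Proposition 5.5.4.15]{htt}; verify that the family $\{L_X\}$ is compatible with the $\mathcal O$-monoidal structure in the sense of \cite[Definition 2.2.1.6]{ha}, which amounts to showing $\otimes_f$ sends $\bar W_{X_1} \times \dotsb \times \bar W_{X_n}$ into $\bar W_Y$ (this uses colimit-preservation of $\otimes_f$ in each variable, the decomposition of $\otimes_f\{g_i\}$ as a composite of maps where only one slot varies, and the assumption that $W_X$ contains identities); then apply \cite[Proposition 2.2.1.9]{ha} to get the $\mathcal O$-monoidal localization $L^\otimes$; and finally check via \Cref{rmk:PrL algebras} and \Cref{lem:colimit-compatible monoidal localization} that the result lives in $\Alg_{\mathcal O}(\PrL)$ rather than just $\Mon_{\mathcal O}(\widehat{\Cat})$. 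None of these steps appear in your proposal.
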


Finally, we come to the proof of our main result:
\begin{proof}[Proof of \Cref{thm:monoidal objectwise localization}]
	By Proposition \ref{prp:existence of monoidal localizations}, we have that for each $p \in K$, there is an $\mathcal O$-monoidal localization $F(p) \to W_p^{-1} F(p)$ in $\Alg_{\mathcal O} \PrL$ of $F(p)$ along $W_p = \{W_p(X)\}_{X \in \mathcal O}$.

	Recall that Proposition \ref{prp:monoidal localization} says that for all $p$, the map in $(\Alg_{\mathcal O} \PrL)^\op$ corresponding to $F(p) \to W_p^{-1} F(p)$ is a monomorphism, and that for each map $p \to q$ in $K$, the composite
	\[
		F(p) \to F(q) \to W_q^{-1} F(q)
	\]
	extends through $F(p) \to W_p^{-1} F_p$, so \cite[Proposition A.1]{objwise-mono} says that there is a map $F \to W^{-1} F$ that corresponds to an objectwise monomorphism in $\Fun(K^\op, (\Alg_{\mathcal O} \PrL)^\op) \simeq \Fun(K, \Alg_{\mathcal O} \PrL)^\op$, and that satisfies that $(F \to W^{-1} F)(p)$ is $F_p \to W_p^{-1} F(p)$ for each $p \in K$.

	We obtain the statement about extending through $F \to W^{\-1} F$ by \cite[Proposition A.11]{objwise-mono} and \Cref{prp:monoidal localization}.
\end{proof}

\section{Adjointability}

\begin{defn} \label{defn:Beck Chevalley}
	Given a square of categories
	\[
		\begin{tikzcd}
			\mathcal C \ar[d, "F"'] \ar[r, "\phi_*"] & \mathcal C' \ar[d, "F'"] \\
			\mathcal D \ar[r, "\psi_*"'] & \mathcal D'
		\end{tikzcd}
	\]
	along with left adjoints $\phi^* \dashv \phi_*$ and $\psi^* \dashv \psi_*$, and an equivalence $\sigma : F' \phi_* \simeq \psi_* F$, define the \emph{left base change transformation} of $\sigma$ to be the composite
	\[
		\psi^* F' \to \psi^* F' \phi_* \phi^* \simeq \psi^* \psi_* F \phi^*  \to F \phi^*
	,\]
	where the first map comes from the unit of $\phi^* \dashv \phi^*$, the second from $\sigma$, and the last from the counit of $\psi^* \dashv \psi_*$.

	Dually, if $\phi_*, \psi_*$ have right adjoints $\phi^!, \psi^!$, define the \emph{right base change transformation} of $\sigma$ to be the composite
	\[
		F \phi^! \to \psi^! \psi_* F \phi^! \simeq \psi^! F \phi_* \phi^! \to \psi^! F
	.\]

	%
	%
\end{defn}

\begin{lem} \label{lem:LAd PrL pres colim}
	Let $S$ be a simplicial set. Then $\Fun^\LAd(S, \PrL)$ has all small colimits, and the inclusion $\Fun^\LAd(S, \PrL) \to \Fun(S, \PrL)$ preserves small colimits.
	\begin{proof}
		Recall (as in \cite[Remark 4.7.4.14]{ha}) that for a square whose columns admit right adjoints and whose rows admit left adjoints, the square is left adjointable if and only if the transposed square is right adjointable. In fact, if all the maps admit right adjoints, we find that the square is left adjointable if and only the corresponding square of right adjoints is left adjointable. Thus, the equivalence $\Fun(S, \PrL)^\op \simeq \Fun(S^\op, \PrR)$ restricts to an equivalence $\Fun^\LAd(S, \PrL)^\op \simeq \Fun^\LAd(S^\op, \PrR)$, since a square in $\PrL$ is right adjointable if and only if the opposite square in $\PrR$ is left adjointable.
		
		Now, \cite[Corollary 4.7.4.18]{ha} shows that $\Fun^\LAd(S^\op, \widehat{\Cat})$ has small limits and the inclusion into $\Fun(S^\op, \widehat{\Cat})$ preserves small limits. Therefore, by \cite[Theorem 5.5.3.18]{htt}, we find that $\Fun^\LAd(S^\op, \PrR)$ has small limits, and the inclusion into $\Fun(S^\op, \PrR)$ preserves small limits. It follows that $\Fun^\LAd(S, \PrL) \simeq \Fun^\LAd(S^\op, \PrR)^\op$ admits small colimits, and the inclusion $\Fun^\LAd(S, \PrL) \to \Fun(S, \PrL)$ preserves small colimits, since
		\[
			(\Fun^\LAd(S^\op, \PrR) \to \Fun(S^\op, \PrR))^\op
		\]
		does.
	\end{proof}
\end{lem}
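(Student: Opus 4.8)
The plan is to deduce the statement from the known behavior of \emph{limits} of presentable categories by dualizing twice — once to replace left adjoints by right adjoints, and once to reverse the indexing diagram. The first, and I expect the only genuinely delicate, step is to set up an equivalence $\Fun^\LAd(S, \PrL)^\op \simeq \Fun^\LAd(S^\op, \PrR)$ as a restriction of the standard equivalence $\Fun(S, \PrL)^\op \simeq \Fun(S^\op, \PrR)$, which sends a diagram of left adjoints to the diagram of its pointwise right adjoints. One must check that a morphism in $\Fun^\LAd(S, \PrL)$ — a natural transformation all of whose naturality squares are left-adjointable — is carried to a morphism in $\Fun^\LAd(S^\op, \PrR)$ and conversely. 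This follows by combining two bookkeeping facts: by \cite[Remark 4.7.4.14]{ha}, a commuting square in which every edge admits both adjoints is left-adjointable if and only if its square of right adjoints is left-adjointable; and a square of left adjoints in $\PrL$ is right-adjointable precisely when the opposite square in $\PrR$ is left-adjointable. The care needed is entirely in tracking which arrows become the ``vertical'' ones and which adjunction direction is being used once the indexing category is reversed, and this is the place where it is easiest to slip.

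With that equivalence in hand, I would invoke \cite[Corollary 4.7.4.18]{ha}, which shows that $\Fun^\LAd(S^\op, \widehat{\Cat})$ admits all small limits and that the inclusion into $\Fun(S^\op, \widehat{\Cat})$ preserves them, and then upgrade from $\widehat{\Cat}$ to $\PrR$ using \cite[Theorem 5.5.3.18]{htt} together with \cite[Proposition 5.5.3.13]{htt}: limits in $\PrR$ agree with limits computed in $\widehat{\Cat}$, and the structure maps of such a limit have the adjointability properties needed to stay inside $\Fun^\LAd$. This yields that $\Fun^\LAd(S^\op, \PrR)$ has all small limits and that $\Fun^\LAd(S^\op, \PrR) \to \Fun(S^\op, \PrR)$ preserves them.

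Finally I would pass to opposite categories. From the first step, $\Fun^\LAd(S, \PrL) \simeq \Fun^\LAd(S^\op, \PrR)^\op$, so $\Fun^\LAd(S, \PrL)$ admits all small colimits; and since, under the same equivalences, the inclusion $\Fun^\LAd(S, \PrL) \to \Fun(S, \PrL)$ is the opposite of the limit-preserving inclusion $\Fun^\LAd(S^\op, \PrR) \to \Fun(S^\op, \PrR)$, it preserves small colimits. The only remaining point is to check that the two dualization equivalences are compatible enough for the square relating these two inclusions to commute, which is formal.
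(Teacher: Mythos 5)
Your proposal is correct and follows essentially the same route as the paper: the same double dualization via $\Fun^\LAd(S,\PrL)^\op \simeq \Fun^\LAd(S^\op,\PrR)$ using \cite[Remark 4.7.4.14]{ha}, then \cite[Corollary 4.7.4.18]{ha} together with \cite[Theorem 5.5.3.18]{htt} to get limits in $\Fun^\LAd(S^\op,\PrR)$, and finally passing to opposites. No substantive differences.
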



\subsection{Transformations and Adjointability}

Fix a category $\mathcal C$.

The following definition will give us a convenient language with which to discuss adjointability.
\begin{defn} \label{defn:compat}
	Given a transformation $\phi : F \to G$ of functors $F,G : \mathcal C \to \Cat$, say that $\phi$ is left (right) adjointable at a map $X \to Y$ in $\mathcal C$ if the square
	\[
		\begin{tikzcd}
			F(X) \ar[d] \ar[r] & F(Y) \ar[d] \\
			F'(X) \ar[r] & F'(Y)
		\end{tikzcd}
	\]
	is left (right) adjointable.

	We will simply say that $\phi$ is left (right) adjointable if it is left (right) adjointable at all maps in $\mathcal C$.
\end{defn}

In this section, we will establish results that may be seen as sorts of ``locality'' properties of adjointability.

First we establish a version of \cite[Corollary 4.7.4.18]{ha} that will be convenient in certain situations.
\begin{lem} \label{lem:adjointability limits}
	Let $K$ be a simplicial set, and let $G : K \to \Fun(\mathcal C, \Cat)$ be a functor. Let $f$ be a map in $\mathcal C$, and suppose that for each $p \to q$ in $K$, the transformation $G(p) \to G(q)$ is left (right) adjointable at $f$. Then for any $p$, $\varprojlim G \to G(p)$ is left (right) adjointable at $f$, and a transformation $F \to \varprojlim G$ is left (right) adjointable at $f$ if and only if for every $p \in K$, the composite $F \to G(p)$ is left (right) adjointable at $f$.
	\begin{proof}
		We may assume $\mathcal C = \Delta^1$, and that $f$ is the 1-cell, so we assumed that $G$ is a functor $K \to \Fun^\LAd(\mathcal C, \Cat)$ ($\Fun^\RAd(\mathcal C, \Cat)$). Therefore, \cite[Corollary 4.7.4.18(1) and (2)]{ha} show the ``if'' direction, and that in general, for any $p$, $\varprojlim G \to G(p)$ is left (right) adjointable at $f$, so \cite[Lemma F.6(2) and (3)]{TwAmb} shows the ``only if'' direction.
	\end{proof}
\end{lem}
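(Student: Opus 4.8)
The statement to prove is \Cref{lem:adjointability limits}. The strategy is to reduce to the case $\mathcal C = \Delta^1$ with $f$ the nondegenerate edge, and then invoke the cited results \cite[Corollary 4.7.4.18]{ha} together with the locality lemmas \cite[Lemma F.6]{TwAmb}. The reduction is immediate: adjointability of a transformation ``at $f$'' is a condition only on the square obtained by evaluating at the source and target of $f$, together with the functors along $f$ on each side; so restricting along the functor $\Delta^1 \to \mathcal C$ classifying $f$ loses no information, and a transformation of $\Cat$-valued functors on $\Delta^1$ is the same data as a square.

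\textbf{Key steps.} First I would set $\mathcal C = \Delta^1$ and record that the hypothesis ``$G(p) \to G(q)$ is left (right) adjointable at $f$ for every edge $p \to q$ of $K$'' says precisely that $G$ factors through the subcategory $\Fun^\LAd(\Delta^1, \Cat) \subseteq \Fun(\Delta^1, \Cat)$ (respectively $\Fun^\RAd(\Delta^1, \Cat)$) — i.e. each functor $G(p) : \Delta^1 \to \Cat$ has the property that its image $1$-morphism admits a left (right) adjoint in $\Cat$, and the transition maps are compatible with these adjoints. (Here I would make the small observation that because we are now working with genuine adjoints in $\Cat$, ``left adjointable at $f$'' for the transition maps is exactly the condition defining $\Fun^\LAd$.) Second, I would apply \cite[Corollary 4.7.4.18(1)]{ha}, which computes the limit $\varprojlim G$ inside $\Fun^\LAd(\Delta^1, \Cat)$ so that the limit-cone legs $\varprojlim G \to G(p)$ are themselves left (right) adjointable — this gives the intermediate claim about $\varprojlim G \to G(p)$, and part (2) of the same Corollary gives the ``if'' direction: a cone $F \to \varprojlim G$ all of whose composites $F \to G(p)$ are left (right) adjointable at $f$ is itself left (right) adjointable at $f$. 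Third, for the ``only if'' direction, I would compose: if $F \to \varprojlim G$ is left (right) adjointable at $f$ and $\varprojlim G \to G(p)$ is left (right) adjointable at $f$, then the composite $F \to G(p)$ is, by the closure of left- (right-) adjointable squares under horizontal (here, ``2-out-of-3 for adjointability along a composite of vertical arrows'') pasting; this is exactly \cite[Lemma F.6(2) and (3)]{TwAmb}.

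\textbf{Main obstacle.} There is no serious obstacle; the content is entirely bookkeeping, making sure that the two parametrized notions of adjointability — the ``at $f$'' notion of \Cref{defn:compat} and the $\Fun^\LAd$/$\Fun^\RAd$ formalism of \cite{ha} — are matched up correctly when $\mathcal C$ is collapsed to $\Delta^1$. The one point that requires a half-sentence of care is the dual case: left adjointability of the square for $\phi : F \to G$ at $f$, when all the relevant functors admit both adjoints, is equivalent to right adjointability of the transposed square, so the $\PrR$/$\Cat^{\mathrm{RAd}}$ variant of \cite[Corollary 4.7.4.18]{ha} applies verbatim after transposing. Everything else follows by quoting the two cited results in the order above.
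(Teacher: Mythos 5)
Your proposal is correct and follows essentially the same route as the paper: reduce to $\mathcal C = \Delta^1$ so the hypothesis says $G$ lands in $\Fun^\LAd(\Delta^1,\Cat)$ (resp.\ $\Fun^\RAd$), then quote \cite[Corollary 4.7.4.18(1) and (2)]{ha} for the cone legs and the ``if'' direction, and \cite[Lemma F.6(2) and (3)]{TwAmb} for the ``only if'' direction by pasting. No gaps.
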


\begin{lem} \label{lem:adjointability vert composites}
	Let $\lambda$ be an ordinal, and let $G : (\lambda^\op)^\triangleleft \cong (\lambda + 1)^\op \to \Fun(\mathcal C, \Cat)$ be a diagram such that for every limit ordinal $\alpha \in \lambda$, $G$ restricts to a limiting diagram on $(\alpha^\op)^\triangleleft \cong (\alpha + 1)^\op$. Let $f$ be a map in $\mathcal C$, and suppose that for each $\alpha \in \lambda$, the transformation $G(\alpha + 1) \to G(\alpha)$ is left (right) adjointable at $f$. Then for all $\alpha \leq \beta \leq \lambda$, the transformation $G(\beta) \to G(\alpha)$ is left (right) adjointable at $f$, and if $\lambda' \leq \lambda$ is a limit ordinal, a transformation $F \to G(\lambda')$ is left (right) adjointable at $f$ if and only if for all $\alpha \in \lambda'$, the transformation $F \to G(\alpha)$ is left (right) adjointable at $f$.
	\begin{proof}
		We proceed by transfinite induction on $\lambda$. Indeed, when $\lambda$ is empty, this is tautological, and when $\lambda = \lambda' + 1$, and we know the result for $\lambda'$, this follows from \cite[Lemma F.6(3) or (2)]{TwAmb}, so it only remains to check the case where $\lambda$ is a limit ordinal, and we know the result for all $\lambda' \in \lambda$.

		Since $\lambda$ is a limit ordinal, we know that for all $\lambda' \in \lambda$, we have that $\lambda' + 1 \in \lambda$. In particular, by our inductive hypothesis, we know that for all $\alpha \leq \beta < \lambda$, the map $G(\beta) \to G(\alpha)$ is left (right) adjointable at $f$, and if $\lambda' \in \lambda$ is a limit ordinal, a transformation $F \to G(\lambda')$ is left (right) adjointable at $f$ if and only if for all $\alpha \in \lambda'$, the transformation $F \to G(\alpha)$ is left (right) adjointable at $f$.

		Thus, it only remains to show that for all $\alpha \in \lambda$, the map $G(\lambda) \to G(\alpha)$ is left (right) adjointable at $f$, and that a transformation $F \to G(\lambda)$ is left (right) adjointable at $f$ if and only if for all $\alpha \in \lambda$, the transformation $F \to G(\alpha)$ is left (right) adjointable at $f$, but this follows immediately from \Cref{lem:adjointability limits} since $G$ is a limiting diagram.
	\end{proof}
\end{lem}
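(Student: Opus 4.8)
The final statement to prove is \Cref{lem:adjointability vert composites}, concerning left/right adjointability of transfinite vertical composites of transformations, assuming limiting behavior on limit ordinals.

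The plan is to prove this by transfinite induction on the ordinal $\lambda$, exactly as the hypotheses suggest. First I would set up the reduction: since adjointability at a fixed map $f$ is what is at stake, we may replace $\mathcal C$ by $\Delta^1$ and take $f$ to be the nondegenerate $1$-cell, so that $G$ becomes a diagram $(\lambda+1)^{\op} \to \Fun(\Delta^1, \Cat)$ and each hypothesis ``$G(\alpha+1) \to G(\alpha)$ is left (right) adjointable at $f$'' becomes the statement that the corresponding square is left (right) adjointable. The base case $\lambda = \emptyset$ (or $\lambda = 0$) is vacuous. For the successor case $\lambda = \lambda' + 1$: the composite $G(\lambda) \to G(\lambda')$ together with, by the inductive hypothesis, the adjointable transformations $G(\beta)\to G(\alpha)$ for $\alpha \le \beta \le \lambda'$, lets us invoke \cite[Lemma F.6(3) and (2)]{TwAmb} to conclude adjointability of the horizontal composites $G(\lambda) \to G(\alpha)$ and the ``only if'' criterion for $F \to G(\lambda)$.

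The genuinely new content is the limit case, so the bulk of the argument goes there. Suppose $\lambda$ is a limit ordinal and the result holds for every $\lambda' \in \lambda$. Since $\lambda$ is a limit ordinal, $\lambda' \in \lambda$ implies $\lambda'+1 \in \lambda$, so the inductive hypothesis applied to the various $\lambda' + 1 \in \lambda$ already gives: for all $\alpha \le \beta < \lambda$ the transformation $G(\beta) \to G(\alpha)$ is left (right) adjointable at $f$, and for limit $\lambda' \in \lambda$ the ``only if'' criterion holds for $F \to G(\lambda')$. It therefore only remains to handle the top: that $G(\lambda) \to G(\alpha)$ is left (right) adjointable at $f$ for each $\alpha \in \lambda$, and the ``only if'' criterion for transformations $F \to G(\lambda)$. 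But this is exactly the situation of \Cref{lem:adjointability limits}: because $G$ restricts to a limiting diagram on $(\lambda^{\op})^\triangleleft \cong (\lambda+1)^{\op}$ (this is part of the hypothesis, as $\lambda \le \lambda$ is a limit ordinal), we have $G(\lambda) \simeq \varprojlim_{\alpha \in \lambda} G(\alpha)$, and each transition transformation $G(\beta) \to G(\alpha)$ for $\alpha \le \beta < \lambda$ has been shown to be left (right) adjointable at $f$; \Cref{lem:adjointability limits} then yields both that $\varprojlim G \to G(\alpha)$ is adjointable at $f$ and the ``if and only if'' criterion for maps into the limit. This completes the induction.

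I expect the only real subtlety — rather than an obstacle — to be bookkeeping with the indexing conventions: matching $(\lambda^{\op})^\triangleleft$ with $(\lambda+1)^{\op}$, keeping straight that the diagram $G$ is contravariant in the ordinal so that ``transition maps'' point from larger to smaller ordinals, and verifying that the hypothesis ``$G$ restricts to a limiting diagram on each $(\alpha+1)^{\op}$ for $\alpha \le \lambda$ a limit ordinal'' is inherited by all the sub-diagrams used at earlier stages of the induction. Everything else reduces to the already-proved \Cref{lem:adjointability limits} and the cited lemmas from \cite{ha} and \cite{TwAmb}, so there is no hard analytic or categorical content beyond assembling these pieces correctly along the transfinite induction.
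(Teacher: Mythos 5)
Your proof is correct and follows essentially the same route as the paper's: transfinite induction on $\lambda$, with the successor case handled by \cite[Lemma F.6(2) and (3)]{TwAmb} and the limit case reduced to \Cref{lem:adjointability limits} via the hypothesis that $G$ is limiting on $(\lambda+1)^\op$. The only difference is the (harmless) preliminary reduction to $\mathcal C = \Delta^1$, which the paper performs in the proof of \Cref{lem:adjointability limits} rather than here.
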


\begin{cor} \label{cor:adjointability horiz composites}
	Let $\phi : D \to D'$ be a transformation of presheaves $D,D' : \mathcal C^\op \to \widehat{\Cat}$, let $\lambda$ be an ordinal, and let $X : \lambda + 1 \to \mathcal C$ be a diagram such that for any limit ordinal $\lambda' \leq \lambda$, both $D$ and $D'$ restrict to limiting diagrams on $(\lambda' + 1)^\op$, and for any $\alpha \leq \lambda$, $\phi X(\alpha)$ has a right (left) adjoint. Then $\phi$ is left (right) adjointable at $X(\alpha) \to X(\beta)$ for all $\alpha \leq \beta \leq \lambda + 1$ if and only if for all $\alpha \in \lambda$, $\phi$ is left (right) adjointable at $X(\alpha) \to X(\alpha + 1)$, in which case for any limit ordinal $\lambda' \leq \lambda$, $\phi$ is left (right) adjointable at a map $X(\lambda') \to Y$ if $E(Y) \to E X(\lambda')$ has a left (right) adjoint for $E \in \{D,D'\}$, $\phi(Y)$ has a right (left) adjoint, and for all $\alpha \in \lambda'$, $\phi$ is left (right) adjointable at $X(\alpha) \to Y$.
	\begin{proof}
		The ``only if'' direction is clear by taking $\beta = \alpha + 1$. For the converse, we may use $\phi$ to define functor $\Xi : (\lambda + 1)^\op \to \Fun(\Delta^1, \widehat{\Cat})$, such that for each map $\alpha \to \beta$ in $\lambda + 1$, the map $\Xi(\beta) \to \Xi(\alpha)$ is the commutative square
		\[
			\begin{tikzcd}
				DX(\beta) \ar[d] \ar[r] & D'X(\beta) \ar[d] \\
				DX(\alpha) \ar[r] & D'X(\alpha)
			\end{tikzcd}
		.\]
		If $\beta = \alpha + 1$, then since $\phi$ is left (right) adjointable at $X(\alpha) \to X(\alpha + 1)$, the transpose of this square is left (right) adjointable, so since we have assumed that the horizontal arrows have right (left) adjoints, it follows that $\Xi(\alpha + 1) \to \Xi(\alpha)$ is right (left) adjointable.

		Note that since $D,D'$ restrict to limiting diagrams on $\lambda' + 1$ for all limit ordinals $\lambda' \leq \lambda$, it follows that $\Xi$ restricts to a limiting diagram on $\lambda' + 1$ for all limit ordinals $\lambda' \leq \lambda$. Thus, \Cref{lem:adjointability vert composites} shows that $\Xi$ is right (left) adjointable at all maps in the image of $X$, and that if $\lambda' \leq \lambda$ is a limit ordinal, a transformation $F \to \Xi(\lambda')$ is right (left) adjointable if and only if for all $\alpha \in \lambda'$, the transformation $F \to \Xi(\alpha)$ is right (left) adjointable.

		Thus, we may conclude by taking transposes again.
	\end{proof}
\end{cor}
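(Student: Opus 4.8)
\textbf{Proof proposal for \Cref{cor:adjointability horiz composites}.}

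The plan is to encode the whole transformation $\phi$ relative to the diagram $X$ as a single functor valued in commutative squares, and then feed this into \Cref{lem:adjointability vert composites}. First I would use the data of $\phi$ to construct a functor $\Xi \colon (\lambda+1)^\op \to \Fun(\Delta^1, \widehat{\Cat})$: for each arrow $\alpha \to \beta$ in $\lambda+1$ the map $\Xi(\beta) \to \Xi(\alpha)$ is the commutative naturality square
\[
	\begin{tikzcd}
		D X(\beta) \ar[d] \ar[r] & D' X(\beta) \ar[d] \\
		D X(\alpha) \ar[r] & D' X(\alpha)
	\end{tikzcd}
\]
obtained from $\phi$ and $X$. (Strictly, one should note that $\Xi$ is obtained by restricting the composite of $X^\op$ with the natural transformation $D \to D'$, viewed as an object of $\Fun(\mathcal C^\op, \Fun(\Delta^1, \widehat{\Cat}))$, so there is no coherence issue in producing it.) The key translation is the observation recalled in the proof of \Cref{lem:LAd PrL pres colim}: when all four functors in a commutative square admit adjoints on the relevant side, the square is left-adjointable if and only if its transpose is right-adjointable, and more precisely the square is right-adjointable iff the square of those adjoints is left-adjointable. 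Here the horizontal arrows of $\Xi(\alpha)$ are the functors $\phi X(\alpha)$, which by hypothesis admit right (resp.\ left) adjoints, and the vertical arrows $D X(\beta) \to D X(\alpha)$ and $D' X(\beta) \to D' X(\alpha)$ admit the relevant adjoints precisely when $\phi$ is stated to be adjointable at those maps or when the cofinality/limit hypotheses supply the missing adjoints.

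Next I would check the hypotheses of \Cref{lem:adjointability vert composites} for $\Xi$. The successor condition: for $\beta = \alpha+1$, the assumption that $\phi$ is left (right) adjointable at $X(\alpha)\to X(\alpha+1)$ says exactly that the transpose of $\Xi(\alpha+1)\to\Xi(\alpha)$ is left (right) adjointable; combined with the fact that the horizontal arrows of these squares have right (left) adjoints, the square-of-adjoints criterion gives that $\Xi(\alpha+1)\to\Xi(\alpha)$ is right (left) adjointable in $\Fun(\Delta^1,\widehat{\Cat})$. The limit condition: since $D$ and $D'$ both restrict to limiting diagrams on $(\lambda'+1)^\op$ for every limit ordinal $\lambda'\le\lambda$, and limits in $\Fun(\Delta^1,\widehat\Cat)$ are computed objectwise, $\Xi$ restricts to a limiting diagram on $(\lambda'+1)^\op$ as well. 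Then \Cref{lem:adjointability vert composites} yields that $\Xi(\beta)\to\Xi(\alpha)$ is right (left) adjointable for all $\alpha\le\beta\le\lambda$, and that for a limit ordinal $\lambda'\le\lambda$ a transformation $F\to\Xi(\lambda')$ is right (left) adjointable iff each $F\to\Xi(\alpha)$ is, for $\alpha\in\lambda'$. Transposing these conclusions back — again using that the relevant horizontal adjoints exist, which is where the hypotheses $E(Y)\to EX(\lambda')$ has an adjoint for $E\in\{D,D'\}$ and $\phi(Y)$ has an adjoint are used in the final clause — gives exactly the left (right) adjointability statements for $\phi$ claimed in the corollary. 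The ``only if'' direction is immediate by specializing $\beta=\alpha+1$.

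The main obstacle I anticipate is bookkeeping rather than conceptual: one must be careful that at each transposition step all four functors of the square genuinely have adjoints on the side being transposed, so that the ``left-adjointable iff transpose right-adjointable'' equivalence and its refinement apply. The successor step is clean because $\phi X(\alpha)$ is assumed to have the relevant adjoint and the vertical maps' adjointability is hypothesized inductively via $\Xi(\alpha+1)\to\Xi(\alpha)$; but in the last clause, the maps $X(\alpha)\to Y$ and $X(\lambda')\to Y$ are not among the $\alpha\to\beta$ of $(\lambda+1)^\op$, so one extends $\Xi$ by one more square over the arrow $\lambda'\to\infty$ (i.e.\ works over $(\lambda'+1)^\triangleright$ or adjoins $Y$), and there the hypotheses on $E(Y)\to EX(\lambda')$ and $\phi(Y)$ are exactly what is needed to transpose. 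Everything else is a direct invocation of \Cref{lem:adjointability vert composites}, \Cref{lem:adjointability limits}, and \cite[Lemma F.6]{TwAmb}.
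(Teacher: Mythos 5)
Your proposal is correct and follows essentially the same route as the paper's proof: construct the same functor $\Xi \colon (\lambda+1)^\op \to \Fun(\Delta^1,\widehat{\Cat})$ from the naturality squares of $\phi$, use the transposition equivalence between left- and right-adjointability to verify the successor-step hypothesis, observe that $\Xi$ inherits the limiting-diagram condition objectwise, invoke \Cref{lem:adjointability vert composites}, and transpose back. Your extra remark about handling the final clause via a transformation $F \to \Xi(\lambda')$ (encoding the square over $Y$) matches how the paper's proof uses the second conclusion of that lemma.
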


\begin{lem} \label{lem:vert cons adj}
	Let $K$ be a simplicial set, and let $G : K^\triangleleft \to \Fun(\mathcal C, \Cat)$ be a functor. Let $f : X \to Y$ be a map in $\mathcal C$, and suppose that for each $p \to q$ in $K^\triangleleft$, the transformation $G(p) \to G(q)$ is right (left) adjointable at $f$, and that $G(-\infty)(X) \to \varprojlim_{p \in K} G(p)(X)$ is conservative. Then a transformation $F \to G(\infty)$ is right (left) adjointable at $f$ if and only if for every $p \in K$, the composite $F \to G(p)$ is right (left) adjointable at $f$.
	\begin{proof}
		We have a commutative diagram
		\[
			\begin{tikzcd}
				F(X) \ar[d] \ar[r] & F(Y) \ar[d] \\
				G(-\infty)(X) \ar[d] \ar[r] & G(-\infty)(Y) \ar[d] \\
				\varprojlim_{p \in K} G(p)(X) \ar[r] & \varprojlim_{p \in K} G(p)(Y)
			\end{tikzcd}
		.\]
		We know that the bottom square is right (left) adjointable by \Cref{lem:adjointability limits}, and $F \to G(p)$ is right (left) adjointable at $f$ for all $p \in K$ if and only if the outer square is right (left) adjointable. By \cite[Lemma F.6(2)]{TwAmb} (\cite[Lemma F.6(3)]{TwAmb}), we have that this is equivalent to the condition that the right (left) base change transformation of the top square is sent to an equivalence by $G(-\infty)(X) \to G(p)(X)$ for all $p \in K$. Since $G(-\infty)(X) \to \varprojlim_{p \in K} G(p)(X)$ is conservative, the functors $\{G(-\infty)(X) \to G(p)(X)\}_{p \in K}$ are jointly conservative, so this is equivalent to the condition that the top square is right (left) adjointable.
	\end{proof}
\end{lem}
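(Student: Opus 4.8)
\textbf{Proof proposal for \Cref{lem:vert cons adj}.}

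The plan is to exhibit the square attached to $F\to G(\infty)$ at $f$ as the top square of a vertical stack of squares whose lower two squares are already known to be adjointable (in the relevant one-sided sense), and then to transport adjointability across a conservative functor. Concretely, since $G(\infty)$ is the value at the cone point of $G:K^\triangleleft\to\Fun(\mathcal C,\Cat)$, precomposing the given transformation with the cone inclusions gives, for each $p\in K^\triangleleft$, a composite $F\to G(\infty)\to G(p)$; in particular taking $p=-\infty$ and then the further projection $G(-\infty)\to\varprojlim_{p\in K}G(p)$ produces the three-storey commutative diagram displayed in the statement. I would first record that the bottom square, namely the square attached at $f$ to the transformation $G(-\infty)\to\varprojlim_{p\in K}G(p)$, is right (resp. left) adjointable: this is exactly \Cref{lem:adjointability limits} applied to the diagram $p\mapsto G(p)$ over $K$, using the hypothesis that each $G(p)\to G(q)$ is right (resp. left) adjointable at $f$, so that the limit projection inherits adjointability at $f$.

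Next I would identify the middle square with the square attached at $f$ to the transformation $G(-\infty)\to G(\infty)$ composed with the projection to $\varprojlim_{p\in K}G(p)$; but more usefully, the outer rectangle formed by the middle and bottom squares together is the square attached at $f$ to the composite $F\to G(-\infty)\to\varprojlim_{p\in K}G(p)$, and this composite equals the ``jointly compared'' transformation $F\to\varprojlim_{p\in K}G(p)$ factoring through all the $F\to G(p)$. So the statement ``$F\to G(p)$ is right (left) adjointable at $f$ for all $p\in K$'' is, by \Cref{lem:adjointability limits} again (the ``if and only if'' clause applied over $K$), equivalent to ``$F\to\varprojlim_{p\in K}G(p)$ is right (left) adjointable at $f$'', i.e. to the outer rectangle of the bottom two storeys being right (left) adjointable. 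Combining this with adjointability of the bottom square, the pasting lemma for right (left) adjointable squares --- \cite[Lemma F.6(2)]{TwAmb} in the right-adjointable case, \cite[Lemma F.6(3)]{TwAmb} in the left-adjointable case --- says exactly that ``$F\to G(p)$ adjointable at $f$ for all $p$'' is equivalent to the condition that the right (left) base change transformation of the \emph{top} square ($F\to G(-\infty)$ at $f$) becomes an equivalence after applying each functor $G(-\infty)(X)\to G(p)(X)$.

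Finally I would invoke the conservativity hypothesis: the functor $G(-\infty)(X)\to\varprojlim_{p\in K}G(p)(X)$ is conservative, hence the family $\{G(-\infty)(X)\to G(p)(X)\}_{p\in K}$ is jointly conservative, so a morphism in $G(-\infty)(X)$ is an equivalence if and only if it maps to an equivalence in each $G(p)(X)$. Applying this to the right (left) base change transformation of the top square, the condition that it becomes an equivalence after each $G(-\infty)(X)\to G(p)(X)$ is equivalent to it being an equivalence already, i.e. to the top square $F(X)\to F(Y)$ over $G(-\infty)$ being right (left) adjointable --- but that square is precisely the square attached at $f$ to $F\to G(\infty)\to G(-\infty)$, which, since $G(-\infty)$ is just one component, gives the desired statement about $F\to G(\infty)$. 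The only mild subtlety --- and the step I would be most careful about --- is making sure the base change transformations involved are genuinely natural in $K$, so that "apply each $G(-\infty)(X)\to G(p)(X)$" makes sense coherently; this is handled by working throughout with the functor $\Xi:K^\triangleleft\to\Fun(\Delta^1,\widehat{\Cat})$ of mates (as in the proof of \Cref{cor:adjointability horiz composites}) rather than with individual squares, after which everything reduces to the cited pasting and limit lemmas. No new machinery is needed.
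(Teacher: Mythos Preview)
Your proposal is correct and follows essentially the same argument as the paper: set up the three-row diagram, use \Cref{lem:adjointability limits} for the bottom square and for the equivalence ``outer rectangle adjointable $\Leftrightarrow$ all $F\to G(p)$ adjointable'', apply the pasting lemma \cite[Lemma F.6(2)/(3)]{TwAmb}, and finish with the joint conservativity of $\{G(-\infty)(X)\to G(p)(X)\}_{p\in K}$. Your references to a ``middle square'' and your attempt to disambiguate $G(\infty)$ from $G(-\infty)$ are just artifacts of a typo in the statement (the cone point of $K^\triangleleft$ is $-\infty$, so $G(\infty)$ should read $G(-\infty)$); once that is fixed there are only two squares and your last paragraph collapses to a tautology.
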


\begin{prp} \label{prp:Pr adj lims}
	Assume $\mathcal C$ is small, and let $\phi : F \to G$ be a transformation of functors $F,G : \Delta^1 \star \mathcal C \to \widehat{\Cat}$, where $\phi(c)$ is a left (right) adjoint functor between presentable categories for all $c \in \{0\} \star \mathcal C$, and $\phi$ is left (right) adjointable at all maps in $\mathcal C$.
	\begin{enumerate}

		\item If $F, G$ restrict to limiting diagrams on $\{1\} \star \mathcal C$, then $\phi$ is left (right) adjointable at all maps $1 \to c$, and it is left (right) adjointable at $0 \to 1$ if and only if it is left (right) adjointable at all maps $0 \to c$ for $c \in \mathcal C$.

		\item Suppose that $H(1) \to \varprojlim H|_{\mathcal C}$ is conservative for $H \in \{F,G\}$, that $\phi(1)$ is a left (right) adjoint functor between presentable categories, and that $\phi$ is left (right) adjointable at all maps $1 \to c$. Then $\phi$ is left (right) adjointable at $0 \to 1$ if and only if it is left (right) adjointable at all maps $0 \to c$ for $c \in \mathcal C$.

	\end{enumerate}
	\begin{proof}
		We may use $\phi$ to define functor $\Xi : \Delta^1 \star \mathcal C \to \Fun(\Delta^1, \widehat{\Cat})$, such that for each map $c \to d$ in $\Delta^1 \star \mathcal C$, the map $\Xi(c) \to \Xi(d)$ is the commutative square
		\[
			\begin{tikzcd}
				F(c) \ar[d] \ar[r] & G(c) \ar[d] \\
				F(d) \ar[r] & G(d)
			\end{tikzcd}
		.\]
		In fact, since $\phi$ is left (right) adjointable at all maps in $\mathcal C$, the transpose of this square is left (right) adjointable for $c,d \in \mathcal C$, so since the horizontal arrows have right (left) adjoints, this square is right (left) adjointable, \ie, $\Xi(c) \to \Xi(d)$ is right (left) adjointable.

		\begin{enumerate}

			\item \Cref{lem:adjointability limits} says that $\Xi(1) \to \Xi(c)$ is right (left) adjointable for all $c \in \mathcal C$, and that $\Xi(0) \to \Xi(1)$ is right (left) adjointable if and only if $\Xi(0) \to \Xi(c)$ is right (left) adjointable for all $c \in \mathcal C$.

				Since $F,G$ send any map in $\mathcal C$ to a functor that has a left (right) adjoint, we have that $F|_{\mathcal C}$ and $G|_{\mathcal C}$ land in $\PrR$ ($\PrL$), so by
				\cite[Proposition 5.5.3.13 and 5.5.3.18]{htt}, we have that for $H \in \{F,G\}$ and $c \in \mathcal C$, the functor $H(1) \to H(c)$ has a left (right) adjoint, and that $H(0) \to H(1)$ has a left (right) adjoint if and only if $H(0) \to H(c)$ has a left (right) adjoint for all $c \in \mathcal C$, thus we conclude by taking transposes.

			\item The map $\Xi(1)(0) \to \varprojlim_{c \in \mathcal C} \Xi(c)(0)$ is $F(1) \to \varprojlim_{c \in \mathcal C} F(c)$, which is conservative, so \Cref{lem:vert cons adj} says that $\Xi(0) \to \Xi(1)$ is right (left) adjointable if and only if for all $c \in \mathcal C$, the composite $\Xi(0) \to \Xi(c)$ is right (left) adjointable.

				Now, since $F,G$ send every map in $\mathcal C$ to a functor that has a left (right) adjoint, we have that $F|_{\mathcal C}$ and $G|_{\mathcal C}$ land in $\PrR$ ($\PrL$), so by
				\cite[Proposition 5.5.3.13 and 5.5.3.18]{htt}, we have that for $H \in \{F,G\}$ and $c \in \mathcal C$,
				the functor $\varprojlim H|_{\mathcal C} \to H(c)$ has a left (right) adjoint, and
				by \Cref{lem:2/3 for adjoints}, we have the functor $H(0) \to H(1)$ has a left (right) adjoint if and only if for all $c \in \mathcal C$, the functor $H(0) \to H(c)$ has a left (right) adjoint.

				Thus, we may conclude by taking transposes again.

		\end{enumerate}
	\end{proof}
\end{prp}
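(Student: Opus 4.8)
\textbf{Proof proposal for Proposition~\ref{prp:Pr adj lims}.} The plan is to reduce both statements to the already-established ``locality of adjointability'' lemmas (\Cref{lem:adjointability limits} and \Cref{lem:vert cons adj}) by first repackaging the natural transformation $\phi$ as a diagram of squares. Concretely, I would define a functor $\Xi : \Delta^1 \star \mathcal C \to \Fun(\Delta^1, \widehat{\Cat})$ sending each morphism $c \to d$ of $\Delta^1 \star \mathcal C$ to the naturality square of $\phi$ over $c \to d$; this is a routine unwinding of the data of $\phi$. The key observation is that for $c, d \in \mathcal C$, the square $\Xi(c) \to \Xi(d)$ has its \emph{horizontal} arrows admitting adjoints (because $F,G$ send maps of $\mathcal C$ to adjoint functors, as their restrictions to $\mathcal C$ land in $\PrR$ or $\PrL$) and, since $\phi$ is assumed left (right) adjointable on all of $\mathcal C$, the transpose of the square is left (right) adjointable — hence by the standard transpose-duality (\cite[Remark 4.7.4.14]{ha}) the square $\Xi(c) \to \Xi(d)$ is \emph{right (left) adjointable} in the appropriate variance. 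So $\Xi$, restricted to $\mathcal C$, is a diagram in $\Fun^{\RAd}$ (resp. $\Fun^{\LAd}$), which is exactly the hypothesis needed to invoke the locality lemmas.

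For part~(1): $F, G$ restrict to limiting diagrams on $\{1\} \star \mathcal C$, and since $\phi$ is objectwise an adjoint functor of presentable categories on $\{0\}\star\mathcal C$, so is the square-valued diagram $\Xi$ limiting there in the relevant sense; then \Cref{lem:adjointability limits} applied to $\Xi|_{\{1\}\star\mathcal C}$ gives that $\Xi(1) \to \Xi(c)$ is right (left) adjointable for all $c$ — i.e.\ $\phi$ is left (right) adjointable at $1 \to c$ — and that $\Xi(0) \to \Xi(1)$ is right (left) adjointable iff $\Xi(0)\to\Xi(c)$ is for all $c \in \mathcal C$. The remaining point is to translate ``$\Xi(0)\to\Xi(1)$ is right (left) adjointable'' back into ``$\phi$ is left (right) adjointable at $0 \to 1$''; this requires knowing the horizontal arrows $F(0)\to F(1)$ and $G(0)\to G(1)$ admit adjoints, which follows from \cite[Proposition 5.5.3.13 and 5.5.3.18]{htt} since the limit over $\mathcal C$ of a $\PrR$-valued (resp.\ $\PrL$-valued) diagram computes in $\PrR$ (resp.\ $\PrL$) and $F(0)\to F(1)$ factors appropriately through that limit. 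One then concludes by taking transposes.

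For part~(2): here we no longer assume $F,G$ restrict to limiting diagrams, but instead that $H(1) \to \varprojlim H|_{\mathcal C}$ is conservative for $H \in \{F,G\}$, and that $\phi$ is already left (right) adjointable at all $1 \to c$. The point $\Xi(1)(0) \to \varprojlim_{c} \Xi(c)(0)$ is precisely $F(1) \to \varprojlim_c F(c)$, which is conservative by hypothesis, so \Cref{lem:vert cons adj} applies to the cone $\Xi|_{\{1\}\star\mathcal C}$ and yields: $\Xi(0)\to\Xi(1)$ is right (left) adjointable iff $\Xi(0)\to\Xi(c)$ is for all $c \in \mathcal C$. As in part~(1), one checks that the relevant horizontal arrows admit adjoints — using \cite[Proposition 5.5.3.13 and 5.5.3.18]{htt} for $\varprojlim H|_{\mathcal C} \to H(c)$, and \Cref{lem:2/3 for adjoints} to deduce $H(0) \to H(1)$ has an adjoint iff each $H(0) \to H(c)$ does — and then concludes by taking transposes again.

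The routine book-keeping (constructing $\Xi$, tracking the variance of ``left vs.\ right adjointable'' through each transpose) is straightforward but the step I expect to need the most care is ensuring at each stage that the horizontal functors in the squares genuinely admit the adjoints needed to pass between ``the square is adjointable'' and ``the transposed square is adjointable''; this is where the presentability hypotheses and the $\PrL/\PrR$ closure-under-limits results of \cite{htt} are essential, and it is easy to conflate the two variances if one is not careful about which functors are being inverted.
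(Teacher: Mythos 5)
Your proposal is correct and follows essentially the same route as the paper's proof: the same auxiliary functor $\Xi$ packaging the naturality squares, the same transpose-duality reduction, and the same invocations of \Cref{lem:adjointability limits}, \Cref{lem:vert cons adj}, \cite[Propositions 5.5.3.13 and 5.5.3.18]{htt}, and \Cref{lem:2/3 for adjoints} to supply the adjoints of the horizontal arrows before transposing back. The one point to tighten is in part (1), where the identification $\Xi(1) \simeq \varprojlim \Xi|_{\mathcal C}$ comes from $F,G$ being limiting on $\{1\}\star\mathcal C$ (limits in $\Fun(\Delta^1,\widehat{\Cat})$ being computed pointwise), not from the objectwise adjointness of $\phi$.
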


\begin{lem} \label{lem:horiz cons adj}
	Let $\phi : F \to G$ be a transformation in $\Fun(K, \Cat)$, and let $f : X \to Y$, and $\{f_i : X_i \to X\}$ be maps in $\mathcal C$.

	Suppose that for each $i$, $\phi$ is right (left) adjointable at $f_i$ and $f \circ f_i$. If the right (left) adjoints of $\{G(X) \to G(X_i)\}_i$ are jointly conservative, then $\phi$ is right (left) adjointable at $f$.
	\begin{proof}
		For each $i$, we can consider the commutative diagram
		\[
			\begin{tikzcd}
				F(X_i) \ar[d] \ar[r] & F(X) \ar[d] \ar[r] & F(Y) \ar[d] \\
				G(X_i) \ar[r] & G(X) \ar[r] & G(Y)
			\end{tikzcd}
		.\]
		We know that the outer rectangle and the left square are right (left) adjointable, so by \cite[Lemma F.6(4)]{TwAmb} (\cite[Lemma F.6(1)]{TwAmb}), we have that the right (left) base change transformation of the left square is sent to an equivalence by the right (left) adjoint of $G(X_i) \to G(X)$ for each $i$.
	\end{proof}
\end{lem}

\begin{lem} \label{lem:ff adj crit}
	Let $G : \Delta^1 \star K \to \Cat$ be a diagram such that $G(0 \to 1)$ has a fully faithful left (right) adjoint. For any transformation $\phi : F \to G$, if
	\begin{enumerate}

		\item $F(0 \to 1)$ has a left (right) adjoint,

		\item the functors $\{F(1 \to a)\}_{a \in K}$ are jointly conservative,

		\item $\phi$ is left (right) adjointable at $0 \to a$ for all $a \in K$, and

		\item $\phi : F(a) \to G(a)$ has a right (left) adjoint $\psi$ for all $a \in \{0\} \star K$,

	\end{enumerate}
	then $\phi$ is left (right) adjointable at $0 \to 1$, and at $1 \to a$ for all $a \in K$ such that $F(1 \to a)$ has a left (right) adjoint.
	\begin{proof}
		First note that since $G(0 \to 1)$ has a fully faithful left (right) adjoint, and $G(0 \to a)$ has a left (right) adjoint for all $a \in K$, we may use \cite[Proposition 5.5.4.2]{htt} to show that the left (right) adjoint of $G(0 \to a)$ lands in the essential image of the left (right) adjoint of $G(0 \to 1)$. It then follows that $G(1 \to a)$ admits a left (right) adjoint.

		Furthermore, using that $G(0 \to 1)$ has a fully faithful left (right) adjoint again, by \cite[Lemma F.6(1) or (4)]{TwAmb}, it suffices to show that $\phi$ is left (right) adjointable at $0 \to 1$.

		We need to show that the square
		\[
			\begin{tikzcd}
				F(0) \ar[d] \ar[r, "\alpha^*"] & F(1) \ar[d] \\
				G(0) \ar[r, "\beta^*"] & G(1)
			\end{tikzcd}
		\]
		is left (right) adjointable. If we write $\alpha_!, \beta_!$ for the left (right) adjoints of $\alpha^*, \beta^*$, this means that we need to show the composite
		\begin{gather*}
			\beta_! \phi \to \beta_! \phi \alpha^* \alpha_! \simeq \beta_! \beta^* \phi \alpha_! \to \phi \alpha_! \\
			(\phi \alpha_! \to \beta_! \beta^* \phi \alpha_! \simeq \beta_! \phi \alpha^* \alpha_! \to \beta_! \phi)
		\end{gather*}
		is an equivalence. Since $\beta_!$ is a fully faithful left (right) adjoint of $\beta^*$, we know that the first (last) map is an equivalence, so it suffices to show that the last (first) map is an equivalence. Once again using that $\beta_!$ is a fully faithful left (right) adjoint of $\beta^*$, for this it suffices to show that for any $A \in F(1)$, the object $\phi \alpha_! A$ is in the essential image of $\beta_!$, and by \cite[Proposition 5.5.4.2(1)]{htt}, this is equivalent to showing that for all maps $w$ in $G(0)$ if $\beta^*(w)$ is an equivalence, then $G(0)(\phi \alpha_! A, w)$ ($G(0)(w, \phi \alpha_! A)$) is an equivalence.

		Indeed, this map is equivalent to $F(1)(A, \alpha^* \psi w)$ ($F(1)(\alpha^* \psi w, A)$), so we can reduce to showing that $\alpha^* \psi w$ is an equivalence. Since the functors $\{F(1 \to a)\}_{a \in K}$ are jointly conservative, it suffices to show that for each $a \in K$, the functor $F(1 \to a)$ sends $\alpha^* \psi w$ to an equivalence, but since $F(1 \to a) \circ \alpha^* \simeq F(0 \to a)$, and $\phi$ is left (right) adjointable at $0 \to a$, this is the same as $\psi G(0 \to a) w$, which is as equivalence since $G(0 \to a) \simeq G(1 \to a) \beta^*$, and $\beta^* w$ is an equivalence.
	\end{proof}
\end{lem}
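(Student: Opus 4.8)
The plan is to reduce everything to left-adjointability at the single edge $0\to 1$, and then prove that by a parametrized version of the localization argument of \cite[Proposition 5.5.4.2]{htt}, exploiting that the left (resp.\ right) adjoint $\beta_!$ of $\beta^*:=G(0\to 1)$ is fully faithful. I will write the argument in the ``left'' case; the ``right'' case is formally dual. First I would dispatch the reduction. Since $\beta_!$ is fully faithful and $G(0\to a)=G(1\to a)\circ\beta^*$ admits a left adjoint (this is implicit in hypothesis~(3), which presupposes left adjoints of both $F(0\to a)$ and $G(0\to a)$), \cite[Proposition 5.5.4.2]{htt} shows that the left adjoint of $G(0\to a)$ factors through the essential image of $\beta_!$, so $G(1\to a)$ itself admits a left adjoint. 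Now for any $a\in K$ with $F(1\to a)$ having a left adjoint, the $\phi$-square at $1\to a$ sits to the right of the $\phi$-square at $0\to 1$, and the two paste to the $\phi$-square at $0\to a$ (using $F(1\to a)\circ F(0\to 1)=F(0\to a)$ and likewise for $G$), which is left adjointable by~(3); since all horizontal functors involved admit left adjoints, the cancellation law for left-adjointable squares \cite[Lemma F.6]{TwAmb} deduces left-adjointability of the $1\to a$ square from that of the $0\to 1$ square. So it suffices to treat $0\to 1$.

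For $0\to 1$, write $\alpha^*:=F(0\to 1)$ with left adjoint $\alpha_!$ (hypothesis~(1)), $\beta^*:=G(0\to 1)$ with fully faithful left adjoint $\beta_!$, and $\phi_0,\phi_1$ for the components of $\phi$ at $0,1$, with right adjoints $\psi_0,\psi_1$ (hypothesis~(4)). Left-adjointability of the square amounts to the Beck--Chevalley transformation $\beta_!\phi_1\to\phi_0\alpha_!$ being an equivalence. This transformation factors through the counit $\beta_!\beta^*\phi_0\alpha_!\to\phi_0\alpha_!$, and because $\beta_!$ is fully faithful the remaining constituent of the transformation is formal; hence it is enough to show that $\phi_0\alpha_! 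A$ lies in the essential image of $\beta_!$ for every $A\in F(1)$. By (the dual of) \cite[Proposition 5.5.4.2]{htt} this essential image consists exactly of the objects $Z\in G(0)$ for which $G(0)(Z,w)$ is an equivalence whenever $\beta^* w$ is an equivalence.

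Next I would translate the condition $\phi_0\alpha_! A\in\mathrm{ess.im}(\beta_!)$ down to something checkable via hypothesis~(2). Using the adjunctions $\phi_0\dashv\psi_0$ and $\alpha_!\dashv\alpha^*$ one gets, naturally in everything, $G(0)(\phi_0\alpha_! A,w)\simeq F(1)(A,\alpha^*\psi_0 w)$, so the desired membership is equivalent to: $\alpha^*\psi_0 w$ is an equivalence for every $w$ with $\beta^* w$ an equivalence. Since $\{F(1\to a)\}_{a\in K}$ is jointly conservative (hypothesis~(2)), it suffices that $F(1\to a)(\alpha^*\psi_0 w)=F(0\to a)(\psi_0 w)$ be an equivalence for each $a$. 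Left-adjointability of $\phi$ at $0\to a$ (hypothesis~(3)), together with the existence of the right adjoints $\psi_0,\psi_a$, yields a natural equivalence $F(0\to a)\circ\psi_0\simeq\psi_a\circ G(0\to a)$ by the mate calculus; therefore $F(0\to a)(\psi_0 w)\simeq\psi_a(G(0\to a)w)$, and since $G(0\to a)=G(1\to a)\circ\beta^*$ and $\beta^* w$ is an equivalence, $G(0\to a)w$ and hence $\psi_a(G(0\to a)w)$ are equivalences, completing the chase.

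The step I expect to be the main obstacle is the mate-calculus bookkeeping in the $0\to 1$ argument: pinning down precisely which half of the Beck--Chevalley transformation is rendered invertible ``for free'' by the full faithfulness of $\beta_!$ (so that the genuine content is exactly the essential-image claim), and correctly converting the hypothesis ``$\phi$ is left adjointable at $0\to a$'' --- a statement about the left adjoints of the \emph{horizontal} functors --- into the identity $F(0\to a)\circ\psi_0\simeq\psi_a\circ G(0\to a)$ relating the right adjoints of the \emph{vertical} functors. Once the relevant instances of \cite[Lemma F.6]{TwAmb} are lined up, the rest is a routine diagram chase.
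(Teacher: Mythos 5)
Your proof is correct and follows essentially the same route as the paper's: reduce to the edge $0\to 1$ by cancellation for pasted adjointable squares, characterize the essential image of $\beta_!$ via (the dual of) \cite[Proposition 5.5.4.2]{htt}, transfer the local-object test across the adjunctions $\phi_0\dashv\psi_0$ and $\alpha_!\dashv\alpha^*$, and finish with joint conservativity together with the mate identity $F(0\to a)\circ\psi_0\simeq\psi_a\circ G(0\to a)$. The only point worth flagging is that the cancellation step in your reduction really uses the conservativity of $\beta_!$ (a consequence of its full faithfulness), not merely the existence of the various left adjoints --- exactly the hypothesis the paper invokes when citing \cite[Lemma F.6(1) or (4)]{TwAmb}.
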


%


For the remainder of this section, we fix a collection $\mathcal K$ of simplicial sets. Recall from \cite[Definition 4.8.1.1]{ha} that $\Cat(\mathcal K)$ is the subcategory of $\Cat$ given by categories that admit colimits indexed by elements of $\mathcal K$, and where the morphisms are the functors that preserve such colimits.

\begin{lang} \label{lang:I-colims}
	The term ``$\mathcal K$-colimits'' refers to ``colimits indexed by elements of $\mathcal K$''.
\end{lang}

\begin{rmk} \label{rmk:lims of cats compat w colims}
	By \kerodoncite{06B6}, we have that $\Cat(\mathcal K)$ has small limits, and the evident inclusion $\Cat(\mathcal K) \to \Cat$ preserves small limits.
\end{rmk}

The following result is useful for checking the hypotheses of some of the remaining results of this section -- see \eg{} \Cref{cor:locality of adjointability}.
\begin{lem} \label{lem:lim gen under colim}
	Let $K \in \mathcal K$, and let $\mathcal C : K^\triangleleft \to \Cat(\mathcal K)$ be a diagram such that for each $p \in K$, the functor $F_p : \mathcal C(-\infty) \to \mathcal C(p)$ admits a left adjoint $L_p$. Then the functor $F : \mathcal C(-\infty) \to \varprojlim \mathcal C|_K$ admits a left adjoint $L$, and the image of $L$ generates $\mathcal C(-\infty)$ under $\mathcal K$-colimits if and only if the images of the functors $\{L_p\}_{p \in K}$ generate $\mathcal C(-\infty)$ under $\mathcal K$-colimits.
	\begin{proof}
		Since $K \in \mathcal K$, \cite[Lemma D.4.7]{HM6FF} shows that $F$ admits a left adjoint $L$, and that this left adjoint sends any object to an object that is a $K$-indexed colimit of objects in the images of $\{L_p\}_{p \in K}$. Since $K \in \mathcal K$, this means that the subcategory of $\mathcal C(-\infty)$ generated by the image of $L$ under $\mathcal K$-colimits is the same as the subcategory generated by the images of $\{L_p\}_{p \in K}$.
	\end{proof}
\end{lem}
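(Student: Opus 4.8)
The plan is to reduce the entire statement to \cite[Lemma D.4.7]{HM6FF}. First I would unwind what $F$ is: the diagram $\mathcal C\colon K^\triangleleft\to\Cat(\mathcal K)$ restricts to $\mathcal C|_K\colon K\to\Cat(\mathcal K)$, and since $-\infty$ is initial in $K^\triangleleft$ the tautological identification $\mathcal C(-\infty)\simeq\varprojlim_{K^\triangleleft}\mathcal C$ makes $F$ the canonical comparison functor $\mathcal C(-\infty)\to\varprojlim_K\mathcal C|_K$, whose composite with the projection $\pi_p$ to $\mathcal C(p)$ is $F_p=\mathcal C(-\infty\to p)$. By hypothesis each $F_p$ has the left adjoint $L_p$, and since $K\in\mathcal K$ and $\mathcal C(-\infty)\in\Cat(\mathcal K)$ the hypotheses of \cite[Lemma D.4.7]{HM6FF} are satisfied; that lemma supplies the left adjoint $L$ of $F$, together with the computation that for each object $X$ of $\varprojlim_K\mathcal C|_K$ the object $L(X)$ is a $K$-indexed colimit of the objects $L_p(\pi_p X)$, each of which lies in the essential image of $L_p$. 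This colimit exists in $\mathcal C(-\infty)$ precisely because $K\in\mathcal K$.

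Next, writing $\mathcal D$ (resp.\ $\mathcal D'$) for the smallest full subcategory of $\mathcal C(-\infty)$ closed under $\mathcal K$-colimits and containing the essential image of $L$ (resp.\ the union of the essential images of the $L_p$), the colimit formula exhibits every object of the essential image of $L$ as a $K$-indexed — hence, as $K\in\mathcal K$, a $\mathcal K$- — colimit of objects of $\mathcal D'$. Thus the essential image of $L$ lies in $\mathcal D'$, so $\mathcal D\subseteq\mathcal D'$. In particular, if the essential image of $L$ generates $\mathcal C(-\infty)$ under $\mathcal K$-colimits, i.e.\ $\mathcal D=\mathcal C(-\infty)$, then $\mathcal D'=\mathcal C(-\infty)$; this is one of the two implications.

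For the converse I would argue through conservativity. For a $\mathcal K$-colimit-preserving functor between $\Cat(\mathcal K)$-categories that admits a left adjoint, the condition that the essential image of the left adjoint generate the source under $\mathcal K$-colimits is equivalent to conservativity of the functor itself (standard when the categories are presentable, which is the case in all the applications of this lemma). Applying this to $F$ and to each $F_p$, the asserted equivalence becomes: $F$ is conservative if and only if $\{F_p\}_{p\in K}$ is jointly conservative — and this is immediate, since a morphism of $\varprojlim_K\mathcal C|_K$ is an equivalence if and only if its image under each $\pi_p$ is, while $\pi_p\circ F=F_p$. The main obstacle is precisely this opening sentence of the present paragraph: one must pin down the generation-versus-conservativity criterion at the level of $\Cat(\mathcal K)$ rather than just $\PrL$. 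An alternative that avoids it is to push the colimit-formula bookkeeping of the previous paragraph in the other direction, showing directly from \cite[Lemma D.4.7]{HM6FF} that each $L_p(A)$ lies in the $\mathcal K$-colimit closure of the essential image of $L$ (so that $\mathcal D'\subseteq\mathcal D$ as well), which is the genuinely delicate point and should be handled by keeping careful track that every indexing shape that occurs remains in $\mathcal K$.
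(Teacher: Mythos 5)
Your first two steps (existence of $L$ via \cite[Lemma D.4.7]{HM6FF}, the colimit formula $L(X)\simeq\varinjlim_{p}L_p(\pi_pX)$, and the resulting inclusion $\mathcal D\subseteq\mathcal D'$) coincide with the paper's argument and give the ``only if'' direction. The ``if'' direction, however, is left with a genuine gap, and your primary route for it does not work. The equivalence ``the essential image of the left adjoint generates the source under $\mathcal K$-colimits $\iff$ the right adjoint is conservative'' is not available here: the lemma is stated for $\Cat(\mathcal K)$, with no presentability hypothesis, and even for presentable categories the correct statement (the paper's \Cref{lem:radj cons iff ladj gens}) concerns generation under \emph{all} small colimits, not under $\mathcal K$-colimits for a fixed collection $\mathcal K$. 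Since generation under $\mathcal K$-colimits is in general strictly stronger than conservativity of the right adjoint, reducing the iff to joint conservativity of the $\pi_p$ would prove a different (weaker) statement; the whole point of the lemma is to compare two generation-under-$\mathcal K$-colimits conditions, which conservativity cannot see.

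Your alternative route is the correct one, but you explicitly do not carry it out, and the ``delicate bookkeeping'' you anticipate is not actually how the reverse inclusion is obtained. The missing ingredient is that the cited \cite[Lemma D.4.7]{HM6FF} also supplies left adjoints $\lambda_p$ to the projections $\pi_p:\varprojlim\mathcal C|_K\to\mathcal C(p)$. Since $F_p\simeq\pi_p\circ F$, uniqueness of left adjoints gives $L_p\simeq L\circ\lambda_p$, so the essential image of each $L_p$ is contained in the essential image of $L$ itself --- no $\mathcal K$-colimit closure is needed for this containment. Together with your inclusion $\mathcal D\subseteq\mathcal D'$ this yields $\mathcal D=\mathcal D'$, which is exactly what the paper's proof asserts and what gives both directions of the iff.
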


\begin{lem} \label{lem:vert surj adj}
	Let $\phi : F \to G$ and $\{\phi_i : F_i \to F\}_i$ be transformations in $\Fun(\mathcal C, \Cat(\mathcal K))$. Let $f : X \to Y$ be a map in $\mathcal C$ such that $\phi$ and $\phi \circ \phi_i$ are right (left) adjointable at $f$, and such that the images of $\{F_i(Y) \to F(Y)\}_i$ generate $F(Y)$ under $\mathcal K$-colimits. Suppose that for all $i$, $\phi_i$ and $\phi \circ \phi_i$ are adjointable at $f$, and that the right (left) adjoints of $F(f)$ and $G(f)$ preserve $\mathcal K$-colimits. Then $\phi$ is right (left) adjointable at $f$.
	\begin{proof}
		For each $i$, we can consider the commutative diagram
		\[
			\begin{tikzcd}
				F_i(X) \ar[d] \ar[r] & F_i(Y) \ar[d] \\
				F(X) \ar[d] \ar[r] & F(Y) \ar[d] \\
				G(X) \ar[r] & G(Y)
			\end{tikzcd}
		.\]
		We know that the outer rectangle and the top square are right (left) adjointable, so by \cite[Lemma F.6(2)]{TwAmb} (\cite[Lemma F.6(3)]{TwAmb}), we have that the right (left) base change transformation of the bottom square is an equivalence when evaluated at objects in the image of $F_i(Y) \to F(Y)$. Since the vertical arrows preserve $\mathcal K$-colimits, and the right (left) adjoints of the horizontal arrows in the bottom square do too, it follows that the right (left) base change transformation of the bottom square preserves $\mathcal K$-colimits, so since the images of the functors $\{F_i(Y) \to F(Y)\}_i$ generate $F(Y)$ under $\mathcal K$-colimits, it follows that the bottom square is right (left) adjointable. 
	\end{proof}
\end{lem}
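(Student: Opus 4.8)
\textbf{Proof plan for \Cref{lem:vert surj adj}.}

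The plan is to reduce the statement to an application of the "horizontal composition" lemma \cite[Lemma F.6(2) or (3)]{TwAmb}, exactly as in the two closely parallel results \Cref{lem:horiz cons adj} and \Cref{cor:adjointability horiz composites}, the only new feature being that "jointly conservative right adjoints" is replaced by "images generate under $\mathcal K$-colimits". First I would fix an index $i$ and contemplate the commutative rectangle built from $\phi_i$ on top of $\phi$ evaluated at $f : X \to Y$, that is
\[
	\begin{tikzcd}
		F_i(X) \ar[d] \ar[r] & F_i(Y) \ar[d] \\
		F(X) \ar[d] \ar[r] & F(Y) \ar[d] \\
		G(X) \ar[r] & G(Y)
	\end{tikzcd}
\]
as written. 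By hypothesis the outer rectangle (which computes $\phi \circ \phi_i$ at $f$) and the top square (which computes $\phi_i$ at $f$) are right (respectively left) adjointable, so \cite[Lemma F.6(2)]{TwAmb} in the right-adjointable case, or \cite[Lemma F.6(3)]{TwAmb} in the left-adjointable case, tells us that the right (left) base change transformation of the \emph{bottom} square becomes an equivalence after applying the right (left) adjoint of the top-left vertical arrow $F_i(Y) \to F(Y)$ --- equivalently, it is an equivalence when restricted to the essential image of $F_i(Y) \to F(Y)$.

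The remaining point is to upgrade this "pointwise on the images" statement to a genuine equivalence of natural transformations. For this I would observe that the right (left) base change transformation of the bottom square is a natural transformation between two functors $G(Y) \to F(X)$ (in the left-adjointable case, between functors in the opposite direction), both of which preserve $\mathcal K$-colimits: the source and target are composites of the vertical arrows $F(X) \to G(X)$, $F(Y) \to G(Y)$ (which preserve $\mathcal K$-colimits since they are morphisms in $\Cat(\mathcal K)$) with the left adjoints $F(f)$, $G(f)$ and with the right (left) adjoints of $F(f)$, $G(f)$, the latter preserving $\mathcal K$-colimits by hypothesis. Since the class of objects on which a natural transformation between $\mathcal K$-colimit-preserving functors is an equivalence is closed under $\mathcal K$-colimits, and since the union of the essential images of the $F_i(Y) \to F(Y)$ generates $F(Y)$ under $\mathcal K$-colimits, the transformation is an equivalence on all of $F(Y)$ (respectively $G(Y)$). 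Hence the bottom square is right (left) adjointable, which is exactly the assertion that $\phi$ is right (left) adjointable at $f$.

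The only mild obstacle is bookkeeping about which functors are being asserted to preserve $\mathcal K$-colimits: one must be slightly careful that the base change transformation really is a transformation of $\mathcal K$-colimit-preserving functors, using that morphisms in $\Cat(\mathcal K)$ preserve $\mathcal K$-colimits together with the explicit hypothesis on the right (left) adjoints of $F(f)$ and $G(f)$. Everything else is a direct transcription of the argument in \Cref{lem:horiz cons adj}. I would write the proof for the right-adjointable case and indicate the left-adjointable case by the parenthetical dualization already used throughout this section.
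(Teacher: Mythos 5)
Your proposal is correct and follows essentially the same route as the paper's proof: the same three-row diagram, the same appeal to \cite[Lemma F.6(2)]{TwAmb} (resp.\ (3)) to get the base change transformation to be an equivalence on the images of the $F_i(Y) \to F(Y)$, and the same upgrade to all of $F(Y)$ using that the transformation is between $\mathcal K$-colimit-preserving functors and the images generate under $\mathcal K$-colimits. No gaps.
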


\begin{lem} \label{lem:horiz surj adj}
	Let $\phi : F \to G$ be a transformation in $\Fun(\mathcal C, \Cat(\mathcal K))$, and let $g : X \to Y$, and $\{g_i : Y \to Y_i\}$ be maps in $\mathcal C$.

	Suppose that for each $i$, $\phi$ is right (left) adjointable at $g_i$ and $g_i \circ g$. If the images of the right (left) adjoints of $\{F(Y) \to F(Y_i)\}_i$ generate $F(Y)$ under $\mathcal K$-colimits, and the right (left) adjoints of $F(g), G(g)$ preserve $\mathcal K$-colimits, then $\phi$ is right (left) adjointable at $g$.
	\begin{proof}
		For each $i$, we can consider the commutative diagram
		\[
			\begin{tikzcd}
				F(X) \ar[d] \ar[r] & F(Y) \ar[d] \ar[r] & F(Y_i) \ar[d] \\
				G(X) \ar[r] & G(Y) \ar[r] & G(Y_i)
			\end{tikzcd}
		.\]
		We know that the outer rectangle and the right square are right (left) adjointable, so by \cite[Lemma F.6(4)]{TwAmb} (\cite[Lemma F.6(1)]{TwAmb}), we have that the right (left) base change transformation of the left square is an equivalence when evaluated at objects in the image of the right (left) adjoint of $G(Y) \to G(Y_i)$. Since the right (left) adjoints of the horizontal arrows in the left square preserves $\mathcal K$-colimits, the right (left) base change transformation of the left square preserves colimits indexed by the elements of $\mathcal K$. Thus, we may conclude since the images of the right (left) adjoints of the functors $G(Y) \to G(Y_i)$ generate $G(Y)$ under $\mathcal K$-colimits.
	\end{proof}
\end{lem}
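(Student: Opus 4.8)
The final statement to prove is \Cref{lem:horiz surj adj}, a ``locality on the target'' criterion for adjointability of a transformation $\phi : F \to G$ in $\Fun(\mathcal C, \Cat(\mathcal K))$: if $\phi$ is right (left) adjointable at each $g_i : Y \to Y_i$ and at each composite $g_i \circ g : X \to Y_i$, and if the (co)unit-level hypotheses on preservation and generation by $\mathcal K$-colimits hold, then $\phi$ is right (left) adjointable at $g : X \to Y$. The plan is to mimic the structure of the proof of \Cref{lem:vert surj adj} verbatim, but with the ``vertical/horizontal'' roles of the diagrams interchanged, exactly as \Cref{lem:horiz cons adj} does relative to \Cref{lem:vert cons adj}.

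First I would reduce to the statement that a single square is right (left) adjointable. For each index $i$, form the commutative rectangle
\[
	\begin{tikzcd}
		F(X) \ar[d] \ar[r] & F(Y) \ar[d] \ar[r] & F(Y_i) \ar[d] \\
		G(X) \ar[r] & G(Y) \ar[r] & G(Y_i)
	\end{tikzcd}
\]
whose outer rectangle is the square at $g_i \circ g$ and whose right-hand square is the square at $g_i$; both are right (left) adjointable by hypothesis, so the pasting lemma for Beck--Chevalley transformations --- \cite[Lemma F.6(4)]{TwAmb} in the ``right'' case and \cite[Lemma F.6(1)]{TwAmb} in the ``left'' case --- shows that the right (left) base change transformation of the left-hand square becomes an equivalence after applying the right (left) adjoint of $G(Y) \to G(Y_i)$, \ie{} evaluated on objects in the image of that adjoint. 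The goal is then to upgrade this ``pointwise on the image of each adjoint'' statement to ``the left-hand square is right (left) adjointable'', \ie{} its base change transformation is an equivalence on all of $G(Y)$.

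The key step is the colimit-generation argument. The base change transformation of the left-hand square is a natural transformation between functors $G(Y) \to F(X)$ (or $\to G(X)$), built from the right (left) adjoints of $F(g)$ and $G(g)$, which preserve $\mathcal K$-colimits by hypothesis; hence the base change transformation itself preserves $\mathcal K$-colimits. Since the images of the right (left) adjoints of $\{G(Y) \to G(Y_i)\}_i$ generate $G(Y)$ under $\mathcal K$-colimits, and we have just seen the transformation is an equivalence on each such image, it is an equivalence on the subcategory generated by all these images under $\mathcal K$-colimits, which is all of $G(Y)$. Therefore the left-hand square is right (left) adjointable, \ie{} $\phi$ is right (left) adjointable at $g$, as desired.

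I do not expect any genuine obstacle: this is a routine ``locality'' argument entirely parallel to \Cref{lem:vert surj adj} and \Cref{lem:horiz cons adj}, and all the inputs (the pasting lemma from \cite{TwAmb}, preservation of $\mathcal K$-colimits, generation under $\mathcal K$-colimits) are assumed in the hypotheses. The one point that requires a little care is the bookkeeping of which adjoints are being applied where --- in particular that the generating family to invoke is the \emph{images of the right (left) adjoints} of $G(Y) \to G(Y_i)$ rather than of $F(Y) \to F(Y_i)$ (contrast with \Cref{lem:vert surj adj}, where it is the images of $F_i(Y) \to F(Y)$ in the \emph{source} that matter). Since $\phi$ is adjointable at each $g_i$, the images of the adjoints of $\{F(Y) \to F(Y_i)\}_i$ and of $\{G(Y) \to G(Y_i)\}_i$ correspond under the adjoints of $\phi(Y)$, so in practice it does not matter which family one writes down, but one should state it consistently with the hypothesis as given.
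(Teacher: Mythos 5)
Your proof is correct and is essentially identical to the paper's: the same pasted rectangle, the same appeal to the pasting lemma \cite[Lemma F.6(4) or F.6(1)]{TwAmb}, and the same $\mathcal K$-colimit preservation-plus-generation argument to upgrade the equivalence from the images of the adjoints to all of the relevant category. Your closing remark about whether the generating family should live in $F(Y)$ or $G(Y)$ is well taken --- the base change transformation of the left square is a transformation of functors out of $F(Y)$, so the hypothesis as stated (generation of $F(Y)$) is the one actually used, and the paper's own proof writes $G$ where it should write $F$.
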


\begin{cor} \label{cor:locality of adjointability}
	Let $\phi : F \to G$ be a transformation in $\Fun(\mathcal C, \Cat(\mathcal K))$, let $K \in \mathcal K$, let $Y : K^\triangleleft \to \mathcal C$ be a diagram, and let $f : X \to Y(\infty)$ be a map.

	Suppose that for each $i \in K$, $\phi$ is left adjointable at $Y(-\infty) \xrightarrow{g_i} Y(i)$ and $g_i \circ f$. If the image of the left adjoint of $F(Y(-\infty)) \to \varprojlim_{i \in K} F(Y(i))$ generates $F(Y)$ under $\mathcal K$-colimits, then $\phi$ is left adjointable at $g$.
	\begin{proof}
		By \Cref{lem:lim gen under colim}, we have that the images of the left adjoints of the maps $\{F(Y) \to F(Y(i))\}_{i \in K}$ generate $F(Y)$ under $\mathcal K$-colimits, so this follows from \Cref{lem:horiz surj adj}. 
	\end{proof}
\end{cor}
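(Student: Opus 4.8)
\textbf{Proof proposal for \Cref{cor:locality of adjointability}.}

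The plan is to reduce the statement to \Cref{lem:horiz surj adj}, whose hypotheses concern a \emph{family} of maps $\{g_i : Y \to Y_i\}$ out of the \emph{target} $Y$ of the map $g$ at which we want adjointability, together with a generation condition on the images of the left adjoints of $\{F(Y) \to F(Y_i)\}_i$. In our situation the roles look slightly different: we have a diagram $Y : K^\triangleleft \to \mathcal C$, the map $g$ is some $f : X \to Y(\infty)$, and the maps $g_i : Y(-\infty) \to Y(i)$ emanate from the cone point $-\infty$, not from $Y(\infty)$. So the first thing I would do is check that the hypotheses as stated are exactly what is needed to feed into \Cref{lem:horiz surj adj} once we identify $Y(-\infty)$ with the ``$Y$'' of that lemma; in other words, the relevant target object is $Y(-\infty)$, the family is $\{g_i : Y(-\infty) \to Y(i)\}_{i \in K}$, and $f$ plays the role of the map ``$g : X \to Y$'' (after noting that $f$ should have codomain $Y(-\infty)$, or that the composites $g_i \circ f$ make sense). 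Granting this bookkeeping, the only hypothesis of \Cref{lem:horiz surj adj} not given to us verbatim is that the images of the left adjoints of $\{F(Y(-\infty)) \to F(Y(i))\}_{i \in K}$ generate $F(Y(-\infty))$ under $\mathcal K$-colimits.

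The key step is therefore to supply exactly this generation statement, and that is precisely what \Cref{lem:lim gen under colim} gives. Applying \Cref{lem:lim gen under colim} to the diagram $F \circ Y|_{K^\triangleleft} : K^\triangleleft \to \Cat(\mathcal K)$ (using that $\phi$ being left adjointable at each $g_i$ in particular forces $F(g_i)$ to admit a left adjoint $L_i$, since left adjointability of a square presupposes the horizontal maps have left adjoints, and that $K \in \mathcal K$): the functor $F(Y(-\infty)) \to \varprojlim_{i \in K} F(Y(i))$ admits a left adjoint $L$, and the image of $L$ generates $F(Y(-\infty))$ under $\mathcal K$-colimits \emph{if and only if} the images of the $\{L_i\}_{i \in K}$ do. Since our hypothesis is exactly that the image of the left adjoint of $F(Y(-\infty)) \to \varprojlim_{i \in K} F(Y(i))$ generates $F(Y(-\infty))$ under $\mathcal K$-colimits, \Cref{lem:lim gen under colim} converts this into the generation statement for the family $\{L_i\}_i$, which is what \Cref{lem:horiz surj adj} wants. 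I would also note in passing that the remaining technical hypothesis of \Cref{lem:horiz surj adj} — that the left adjoints of $F(f)$ and $G(f)$ preserve $\mathcal K$-colimits — is automatic in $\Cat(\mathcal K)$, since a left adjoint of a morphism in $\Cat(\mathcal K)$ need not itself be a morphism of $\Cat(\mathcal K)$ in general, so this is the one place where I would double-check the setup: either the intended reading is that the ambient functors land in $\Cat(\mathcal K)$ so that the relevant left adjoints automatically preserve $\mathcal K$-colimits by \cite[Lemma F.6]{TwAmb}-style arguments, or else we need $F(f), G(f)$ to be morphisms whose left adjoints are $\mathcal K$-cocontinuous as an additional (harmless, in the applications) input.

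So the proof is short: first invoke \Cref{lem:lim gen under colim} with the diagram $F \circ Y|_{K^\triangleleft}$ to translate the hypothesis about the left adjoint of $F(Y(-\infty)) \to \varprojlim_{i \in K} F(Y(i))$ into the statement that the images of the left adjoints of $\{F(Y(-\infty)) \to F(Y(i))\}_{i \in K}$ generate $F(Y(-\infty))$ under $\mathcal K$-colimits; then apply \Cref{lem:horiz surj adj} to the transformation $\phi$, the map $f$, and the family $\{g_i : Y(-\infty) \to Y(i)\}_{i \in K}$, using the given left-adjointability of $\phi$ at each $g_i$ and each $g_i \circ f$, to conclude that $\phi$ is left adjointable at $f$. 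The main (and only real) obstacle I anticipate is the indexing/role-matching between the cone-point conventions in the statement and the ``family out of the target'' conventions of \Cref{lem:horiz surj adj} and \Cref{lem:lim gen under colim}, i.e.\ making sure the object called ``$Y$'' in those lemmas is consistently $Y(-\infty)$ here and that $f$ really does have codomain $Y(-\infty)$; once that is pinned down the argument is a direct two-lemma composition with no further computation.
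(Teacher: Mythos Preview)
Your proposal is correct and matches the paper's proof exactly: invoke \Cref{lem:lim gen under colim} to convert the single generation hypothesis on the limit into the family generation hypothesis, then apply \Cref{lem:horiz surj adj}. Your bookkeeping concerns are well-founded---the statement as written has several typos (it should be $Y(-\infty)$ rather than $Y(\infty)$, $F(Y)$ means $F(Y(-\infty))$, and the undefined ``$g$'' in the conclusion is presumably $f$), and the hypothesis from \Cref{lem:horiz surj adj} that the left adjoints of $F(f), G(f)$ preserve $\mathcal K$-colimits is indeed silently assumed; the paper's own proof does not address this either.
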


\subsection{Linear Adjoints and the Projection Formula}

\begin{defn} \label{defn:proj form}
	%
	%
	%
	%

	Let $\mathcal A \in \CAlg(\Cat)$. Say a morphism $f^* : \mathcal D \to \mathcal C$ in $\Mod_{\mathcal A}(\Cat)$ has a \emph{$\mathcal A$-linear left (right) adjoint} if the following equivalent conditions hold:
	\begin{enumerate}

		\item For any $a \in \mathcal A$, the square
			\[
				\begin{tikzcd}
					\mathcal D \ar[d, "- \otimes a"'] \ar[r, "f^*"] & \mathcal C \ar[d, "- \otimes f^* a"] \\
					\mathcal D \ar[r, "f^*"'] & \mathcal C
				\end{tikzcd}
			\]
			is left (right) adjointable.

		\item The square
			\[
				\begin{tikzcd}
					\mathcal D \times \mathcal A \ar[d, "\otimes"'] \ar[r, "f^* \times \mathcal A"] & \mathcal C \times \mathcal A \ar[d, "\otimes"] \\
					\mathcal D \ar[r, "f^*"'] & \mathcal C
				\end{tikzcd}
			\]
			is left (right) adjointable.

		\item The functor $f^*$ has a left adjoint $f_\sharp$ (\resp{} a right adjoint $f_*$), and for any $X \in \mathcal C$ and $a \in \mathcal A$, the composite
			\[
				f_\sharp(X \otimes a) \to f_\sharp(f^* f_\sharp X \otimes a) \simeq f_\sharp f^* (f_\sharp X \otimes a) \to f_\sharp X \otimes a
			\]
			(\resp{} the composite
			\[
				f_* X \otimes a \to f_* f^*(f_* X \otimes a) \simeq f_*(f^* f_* X \otimes a) \to f_*(X \otimes a)
			\])
			is an equivalence.

	\end{enumerate}
	When $f^* : \mathcal D \to \mathcal C$ is a symmetric monoidal functor, we simply say $f^*$ has a linear left (right) adjoint if it has a $\mathcal D$-linear left (right) adjoint, or that the left (right) adjoint satisfies the (dual) projection formula.

	If $\mathcal C, \mathcal D$ are also monoidal closed, and $f^*$ admits a left adjoint, then $f^*$ has a linear left adjoint if and only if for any $X,Y \in \mathcal D$, the map
	\begin{multline}
		f^* \underline\Hom_{\mathcal D}(X,Y) \to \underline\Hom_{\mathcal C}(f^* X, f^* \underline\Hom_{\mathcal D}(X, Y) \otimes f^* X) \\
		\simeq \underline\Hom_{\mathcal C}(f^* X, f^*(\underline\Hom_{\mathcal D}(X,Y) \otimes X)) \to \underline\Hom_{\mathcal C}(f^* X, f^* Y)
	\end{multline}
	is invertible.
\end{defn}

\begin{rmk} \label{rmk:linear adjoints}
	Let $\mathcal A \in \CAlg(\Cat)$, and let $f^* : \mathcal M \to \mathcal N$ be a map in $\Mod_{\mathcal A}(\Cat)$ that has a right adjoint $f_* : \mathcal N \to \mathcal M$. If $f_*$ is an $\mathcal A$-linear right adjoint, then the right base change transformation of
	\[
		\begin{tikzcd}
			\mathcal M \times \mathcal A \ar[d, "\otimes"'] \ar[r, "f^* \times \mathcal A"] & \mathcal N \times \mathcal A \ar[d, "\otimes"] \\
			\mathcal M \ar[r, "f^*"'] & \mathcal N
		\end{tikzcd}
	\]
	defines a commutative square
	\[
		\begin{tikzcd}
			\mathcal N \times \mathcal A \ar[d, "\otimes"'] \ar[r, "f_* \times \mathcal A"] & \mathcal M \times \mathcal A \ar[d, "\otimes"] \\
			\mathcal N \ar[r, "f_*"'] & \mathcal M
		\end{tikzcd}
	\]
	which gives $f_*$ the structure of a map in $\Mod_{\mathcal A}(\Cat)$.

	Conversely, given the existence of a commutative square of the latter form, if it is left adjointable, and the left base change transformation recovers the first commutative square, then the left base change transformation of the first commutative square is invertible, so that $f_*$ is a linear right adjoint of $f^*$. 
\end{rmk}

\begin{lem} \label{lem:proj form implies ff equiv triv fund class}
	Let $f^* : \mathcal D \to \mathcal C$ be a monoidal functor with a linear left adjoint $f_\sharp$. Then $f^*$ is fully faithful if and only if $f_\sharp(1) \simeq 1$.
	\begin{proof}
		Suppose that $f_\sharp(1) \simeq 1$. Then since $f^*$ has a linear left adjoint, we have that
		\[
			f_\sharp f^* \simeq - \otimes f_\sharp(1) \simeq - \otimes 1 \simeq \id
		,\]
		so $f_\sharp f^* \simeq \id$. Thus, by \cite[Lemma 3.3.1]{CARMELI2021107763}, it follows that $f^*$ is fully faithful.

		Conversely, if $f^*$ is fully faithful, then the counit of $f_\sharp \dashv f^*$ is an equivalence, so
		\[
			f_\sharp 1 \simeq f_\sharp f^* 1 \to 1
		\]
		is an equivalence.
	\end{proof}
\end{lem}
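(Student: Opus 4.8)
The final statement to prove is \Cref{lem:proj form implies ff equiv triv fund class}: a monoidal functor $f^* : \mathcal D \to \mathcal C$ with a linear left adjoint $f_\sharp$ is fully faithful if and only if $f_\sharp(1) \simeq 1$.

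\medskip

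The plan is to prove the two directions separately, using the projection formula to reduce fully-faithfulness to the statement $f_\sharp f^* \simeq \id$.

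For the forward direction (triviality of the fundamental class implies fully faithful), first I would invoke the projection formula for $f_\sharp$. Since $f^*$ is monoidal, the unit $1 \in \mathcal C$ is $f^*(1)$ up to equivalence, so for any $X \in \mathcal D$ the projection formula gives $f_\sharp(X \otimes 1) \simeq f_\sharp X \otimes $ something; more precisely, the linear-left-adjoint condition (\Cref{defn:proj form}) says $f_\sharp(M \otimes f^* N) \simeq f_\sharp M \otimes N$ naturally. Applying this with $M = 1 \in \mathcal C$ and $N = X \in \mathcal D$, and using that $f^* X \simeq 1 \otimes f^* X$, we obtain $f_\sharp f^* X \simeq f_\sharp(1 \otimes f^* X) \simeq f_\sharp(1) \otimes X \simeq 1 \otimes X \simeq X$, naturally in $X$; that is, $f_\sharp f^* \simeq \id_{\mathcal D}$. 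The next step is to upgrade "$f_\sharp f^* \simeq \id$ as an abstract equivalence of functors" to "the counit $f_\sharp f^* \to \id$ is an equivalence," which is exactly the content of \cite[Lemma 3.3.1]{CARMELI2021107763}; once the counit of the adjunction $f_\sharp \dashv f^*$ is an equivalence, $f^*$ is fully faithful by the standard characterization of fully faithful right adjoints.

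For the reverse direction, if $f^*$ is fully faithful then the counit $\varepsilon : f_\sharp f^* \to \id_{\mathcal D}$ is an equivalence (again the standard fact for left adjoints whose right adjoint is fully faithful, or dually). Evaluating at the monoidal unit $1 \in \mathcal D$ and using $f^*(1) \simeq 1 \in \mathcal C$, we get $f_\sharp(1) \simeq f_\sharp f^*(1) \xrightarrow{\ \varepsilon_1\ } 1$, an equivalence, so $f_\sharp(1) \simeq 1$.

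\medskip

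The only mildly delicate point—and the step I would flag as the main obstacle—is being careful that the equivalence $f_\sharp f^* \simeq \id$ produced from the projection formula is genuinely realized by the adjunction counit, rather than merely being an abstract equivalence of endofunctors; this is why one needs to cite \cite[Lemma 3.3.1]{CARMELI2021107763} rather than concluding directly. Everything else is formal manipulation with monoidal units and the definitions in \Cref{defn:proj form}.
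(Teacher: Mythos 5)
Your proof is correct and follows essentially the same route as the paper: both directions use the projection formula to identify $f_\sharp f^*$ with $-\otimes f_\sharp(1)$, invoke \cite[Lemma 3.3.1]{CARMELI2021107763} to pass from the abstract equivalence $f_\sharp f^* \simeq \id$ to fully-faithfulness, and evaluate the counit at the unit for the converse. You have also correctly identified the one delicate point (why the citation is needed rather than concluding directly), which is exactly the role it plays in the paper's argument.
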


\begin{thm} \label{thm:Mod LAd}
	Let $\mathcal A \in \CAlg(\PrL)$, and let $\Mod^\LAd_{\mathcal A}(\PrL)$ be the wide subcategory of $\Mod_{\mathcal A}(\PrL)$ where the morphisms are the functors admitting $\mathcal A$-linear left adjoints.
	\begin{enumerate}

		\item\label{itm:proj form/transf comp}
			Let $\lambda$ be an ordinal, and let $F : (\lambda + 1)^\op \to \Mod_{\mathcal A} \PrL$ be a diagram such that for every limit ordinal $\lambda' \leq \lambda$, $F$ restricts to a limiting diagram on $(\lambda' + 1)^\op \cong ((\lambda')^\op)^\triangleleft$. Suppose that for every $\alpha \in \lambda$, $F(\alpha) \to F(\alpha + 1)$ has a linear left adjoint. Then $F$ lands in $\Mod_{\mathcal A}^\LAd \PrL$, and for any limit ordinal $\lambda' \leq \lambda$, and $\mathcal M \in \Mod_{\mathcal A} \PrL$, a map $F(\lambda') \to \mathcal M$ has a linear left adjoint if and only if for all $\alpha \in \lambda'$, the map $F(\alpha) \to \mathcal M$ has a linear left adjoint.

		\item\label{itm:proj form/source-local}
			Given an $F : K^\triangleleft \to \Mod^\LAd_{\mathcal A}(\PrL)$, if
			\[
				F(\infty) \to \varprojlim_{p \in K} F(p)
			\]
			is conservative, then for any $\phi : \mathcal C \to F(\infty)$ in $\Mod_{\mathcal A}(\PrL)$, we have that $\phi$ has an $\mathcal A$-linear left adjoint if and only if for all $p \in K$, the functor $\phi_p : \mathcal C \to F(p)$ has an $\mathcal A$-linear left adjoint.

		\item\label{itm:proj form/limits}
			The category $\Mod^\LAd_{\mathcal A}(\PrL)$ admits small limits, and the forgetful functor $\Mod_{\mathcal A}^\LAd(\PrL) \to \widehat{\Cat}$ preserves small limits.

	\end{enumerate}
	\begin{proof}\hfill
		\begin{enumerate}

			\item This follows immediately from \cite[Proposition 5.5.3.13]{htt} and \Cref{cor:adjointability horiz composites}, where $\phi$ is $\otimes : F \times \mathcal A \to F$.

			\item For any $a \in \mathcal A$, $\otimes a$ defines an endomorphism of the forgetful functor $\Mod_{\mathcal A} \PrL \to \widehat{\Cat}$, which is left adjointable at every map in the image of $F$ because $F$ lands in $\Mod^\LAd_{\mathcal A}(\PrL)$, and furthermore, for any $\mathcal C \in \Mod_{\mathcal A} \PrL$, the functor $\otimes a : \mathcal C \to \mathcal C$ admits a right adjoint. Thus, we conclude by \Cref{prp:Pr adj lims}.

			\item Note that by \cite[Corollary 3.4.3.6]{ha} and \cite[Proposition 5.5.3.13]{htt}, we have that $\Mod_{\mathcal A} \PrL$ admits small limits, and the forgetful functor $\Mod_{\mathcal A} \PrL \to \widehat{\Cat}$ preserves small limits, so we just need to show that if $F : K \to \Mod_{\mathcal A}(\PrL)$ is a small diagram that lands in $\Mod^\LAd_{\mathcal A}(\PrL) \subseteq \Mod_{\mathcal A}(\PrL)$, then a map
				\[
					\mathcal C \to \varprojlim F
				\]
				admits an $\mathcal A$-linear left adjoint if and only if $\mathcal C \to F(p)$ admits an $\mathcal A$-linear left adjoint for all $p \in K$, but this follows immediately from \Cref{prp:Pr adj lims} applied to the transformations induced by $a \otimes -$ for each $a \in \mathcal A$.

		\end{enumerate}
	\end{proof}
\end{thm}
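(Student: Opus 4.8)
The final statement is \Cref{thm:Mod LAd}, which has three parts all concerning the wide subcategory $\Mod^\LAd_{\mathcal A}(\PrL)$ of $\Mod_{\mathcal A}(\PrL)$ spanned by the functors admitting $\mathcal A$-linear left adjoints.

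The plan is to reduce every part to the adjointability statements proved earlier, namely \Cref{prp:Pr adj lims} and \Cref{cor:adjointability horiz composites}, combined with the standard fact (\cite[Corollary 3.4.3.6]{ha} and \cite[Proposition 5.5.3.13]{htt}) that $\Mod_{\mathcal A}(\PrL)$ admits small limits and that these are created by the forgetful functor to $\widehat{\Cat}$. The uniform mechanism is this: a functor $f^\ast : \mathcal D \to \mathcal C$ in $\Mod_{\mathcal A}(\PrL)$ has an $\mathcal A$-linear left adjoint precisely when, for every $a \in \mathcal A$, the naturality square of $f^\ast$ against the endofunctors $-\otimes a$ and $-\otimes f^\ast a$ is left adjointable (\Cref{defn:proj form}(1)). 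Equivalently, writing $\otimes : F \times \mathcal A \to F$ for the action, a morphism of diagrams is ``left adjointable at $f$'' in the sense of \Cref{defn:compat} if and only if $f^\ast$ has an $\mathcal A$-linear left adjoint. So the whole theorem is a translation of general adjointability-of-limits results into the module-category language.

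\textbf{Part \ref{itm:proj form/transf comp}.} First I would note that $F : (\lambda+1)^\op \to \Mod_{\mathcal A}(\PrL)$ restricts to limiting diagrams on the relevant subdiagrams as a diagram in $\PrL$ (hence in $\widehat{\Cat}$) by \cite[Proposition 5.5.3.13]{htt}, since the forgetful functor $\Mod_{\mathcal A}(\PrL) \to \PrL$ preserves limits. Then, for each fixed $a \in \mathcal A$, the action $-\otimes a$ gives a transformation $F \times \mathcal A \to F$, to which I apply \Cref{cor:adjointability horiz composites} with the ordinal $\lambda$: the hypothesis that each $F(\alpha) \to F(\alpha+1)$ has a linear left adjoint says exactly that this transformation is left adjointable at $F(\alpha \to \alpha+1)$, so the conclusion gives left adjointability at $F(\emptyset) \to F(\lambda)$ and, more importantly, the ``if and only if'' for $F(\lambda') \to \mathcal M$ at limit ordinals. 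Taking this over all $a \in \mathcal A$ and translating back via \Cref{defn:proj form}(1) gives that $F$ lands in $\Mod^\LAd_{\mathcal A}(\PrL)$ and the stated criterion.

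\textbf{Parts \ref{itm:proj form/source-local} and \ref{itm:proj form/limits}.} For \ref{itm:proj form/source-local}, for each $a \in \mathcal A$ the endofunctor $\otimes a$ of the forgetful functor $\Mod_{\mathcal A}(\PrL) \to \widehat{\Cat}$ is left adjointable at every edge of $F$ (since $F$ lands in $\Mod^\LAd_{\mathcal A}(\PrL)$), and $\otimes a : \mathcal C \to \mathcal C$ admits a right adjoint on each module category as $\mathcal C$ is presentable symmetric monoidal; with the conservativity hypothesis this is exactly the setup of \Cref{prp:Pr adj lims}(2), whose conclusion, read off for all $a$, says $\phi$ has an $\mathcal A$-linear left adjoint iff each $\phi_p$ does. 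For \ref{itm:proj form/limits}, I would first invoke \cite[Corollary 3.4.3.6]{ha} and \cite[Proposition 5.5.3.13]{htt} so that $\Mod_{\mathcal A}(\PrL)$ has small limits computed in $\widehat{\Cat}$; the only remaining point is that for a small diagram $F : K \to \Mod^\LAd_{\mathcal A}(\PrL)$ a map $\mathcal C \to \varprojlim F$ has an $\mathcal A$-linear left adjoint iff each $\mathcal C \to F(p)$ does, which is again \Cref{prp:Pr adj lims} (the limit case, part (1)) applied to the transformations $a \otimes -$. The main subtlety — though it is a bookkeeping subtlety rather than a genuine obstacle — is making sure the ``$\mathcal A$-linear'' qualifier on the adjoints is genuinely captured by the collection of left-adjointability conditions indexed by $a \in \mathcal A$, i.e. that the left base change transformations assemble into the linearity data; this is exactly the content of \Cref{rmk:linear adjoints}, which I would cite to close the translation.
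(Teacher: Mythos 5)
Your proposal is correct and follows essentially the same route as the paper's own proof: each part is reduced to \Cref{cor:adjointability horiz composites} (part 1) or \Cref{prp:Pr adj lims} (parts 2 and 3) applied to the action transformations $a \otimes -$ for $a \in \mathcal A$, with the limit-creation facts from \cite[Corollary 3.4.3.6]{ha} and \cite[Proposition 5.5.3.13]{htt}. Your extra remark about verifying that the $\mathcal A$-linearity of the adjoint is captured by the family of adjointability conditions is a reasonable elaboration of what the paper leaves implicit.
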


\begin{lem} \label{lem:tensor invertibility is local}
	Let $F : \mathcal C \to \mathcal D$ be a map in $\CAlg(\PrL)$ that has a linear left adjoint, and assume $F$ is a conservative functor. For any object $X \in \mathcal C$, if $F(X)$ is $\otimes$-invertible, then $X$ is $\otimes$-invertible.

	This also holds if $F$ is a map in $\CAlg(\widehat{\Cat})$ as long as $\mathcal C$ and $\mathcal D$ have closed monoidal structures.
	\begin{proof}
		Write $\underline{\mathcal C} : \mathcal C^\op \times \mathcal C \to \mathcal C$ and $\underline{\mathcal D} : \mathcal D^\op \times \mathcal D \to \mathcal D$ for the internal hom functors associated to the closed\footnote{Recall the monoidal structure is automatically closed if $\mathcal C, \mathcal D \in \CAlg(\PrL)$.} monoidal structures of $\mathcal C$ and $\mathcal D$ respectively.

		If $F(X)$ is $\otimes$-invertible, then
		\[
			\mathcal D(-, F(X)^{\otimes -1}) \simeq \mathcal D(- \otimes F(X), 1) \simeq \mathcal D(-, \underline{\mathcal D}(F(X), 1))
		,\]
		so the map $F(X) \otimes \underline{\mathcal D}(F(X), 1) \to 1$ is invertible.

		Since $F$ has a linear left adjoint, it sends $X \otimes \underline{\mathcal C}(X,1) \to 1$ to the invertible map $F(X) \otimes \underline{\mathcal D}(F(X), 1) \to 1$, so since $F$ is conservative, the former map is invertible, which shows that $X$ is $\otimes$-invertible.
	\end{proof}
\end{lem}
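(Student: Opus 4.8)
\textbf{Proof strategy for \Cref{lem:tensor invertibility is local}.} The plan is to exploit the fact that $\otimes$-invertibility of an object $Y$ of a closed monoidal category is detected by a single map, namely the evaluation $Y \otimes \underline{\Hom}(Y,1) \to 1$: this map is an equivalence if and only if $Y$ is $\otimes$-invertible, because an inverse to $Y$ is forced to be $\underline{\Hom}(Y,1)$ (one sees this by the chain of natural equivalences $\Map(-, \underline{\Hom}(Y,1)) \simeq \Map(- \otimes Y, 1)$, so that if $Y$ is invertible with inverse $Y'$, then $\underline{\Hom}(Y,1) \simeq Y'$, and conversely if the evaluation map is an equivalence then $\underline{\Hom}(Y,1)$ serves as an inverse).

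First I would set up notation for the internal homs $\underline{\mathcal C}$ and $\underline{\mathcal D}$, noting that both $\mathcal C$ and $\mathcal D$ are closed (automatic in $\CAlg(\PrL)$, assumed in the $\widehat{\Cat}$ case). Then, assuming $F(X)$ is $\otimes$-invertible, I would apply the above observation in $\mathcal D$ to conclude that the evaluation map $F(X) \otimes \underline{\mathcal D}(F(X),1) \to 1$ is an equivalence. The key step is then to identify this with the image under $F$ of the corresponding evaluation map in $\mathcal C$: since $F$ is symmetric monoidal and has a $\mathcal C$-linear left adjoint, \Cref{defn:proj form} (in its closed-monoidal reformulation) gives that $F$ commutes with internal hom in the relevant sense, so $F(\underline{\mathcal C}(X,1)) \simeq \underline{\mathcal D}(F(X),1)$ and $F$ carries the evaluation map $X \otimes \underline{\mathcal C}(X,1) \to 1$ to $F(X) \otimes \underline{\mathcal D}(F(X),1) \to 1$, compatibly with the monoidal structure on $F$.

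Finally, since $F$ is conservative and sends the evaluation map $X \otimes \underline{\mathcal C}(X,1) \to 1$ to an equivalence, that map is itself an equivalence in $\mathcal C$, and hence $X$ is $\otimes$-invertible by the observation recalled at the start. The main obstacle I anticipate is making precise the claim that a symmetric monoidal functor with a linear left adjoint commutes with the formation of internal homs into the unit, and that it does so compatibly with the evaluation maps --- but this is exactly the content of the last reformulation in \Cref{defn:proj form} applied with $Y = 1$, so it reduces to a diagram chase using the projection formula and the hom-tensor adjunctions in $\mathcal C$ and $\mathcal D$. Everything else is formal.
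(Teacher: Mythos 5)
Your proposal is correct and follows essentially the same route as the paper: detect invertibility via the evaluation map $Y \otimes \underline{\Hom}(Y,1) \to 1$, use the closed-monoidal reformulation of the linear left adjoint condition in \Cref{defn:proj form} to identify $F$ of the evaluation map in $\mathcal C$ with the evaluation map for $F(X)$ in $\mathcal D$, and conclude by conservativity.
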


\subsection{Base Change}

\begin{defn} \label{defn:base change}
	Say a presheaf $D : \mathcal C^\op \to \Cat$ satisfies left (right) base change for a map $f : X \to Y$ against a map $q : Y' \to Y$ if the pullback $X \times_Y Y'$ exists, and the square
	\[
		\begin{tikzcd}
			D(Y) \ar[d, "q^*"'] \ar[r, "f^*"] & D(X) \ar[d] \\
			D(Y') \ar[r] & D(X')
		\end{tikzcd}
	\]
	is left (right) adjointable.

	Say $D$ has left (right) base change for $f$ if it has left (right) base change for $f$ against all maps.
\end{defn} 

\begin{prp} \label{prp:transf comp base change}
	Let $D : \mathcal C^\op \to \widehat{\Cat}$ be a presheaf on a category $\mathcal C$, let $\lambda$ be an ordinal, and let $f : X \to Y$ be a Cartesian transformation of diagrams $X,Y : \lambda + 1 \to \mathcal C$ such that for any limit ordinal $\lambda' \in \lambda$, $DX^\op$ and $DY^\op$ restrict to limiting diagrams on $(\lambda' + 1)^\op \cong ((\lambda')^\op)^\triangleleft$.
	\begin{enumerate}

		\item Suppose that for each $\alpha \in \lambda$, $D$ has left (right) base change for $f(\alpha + 1)$ against $Y(\alpha) \to Y(\alpha + 1)$. Then for any $\alpha \leq \beta \leq \lambda$, $D$ has left (right) base change for $f(\beta)$ against $Y(\alpha) \to Y(\beta)$.

		\item Suppose that for each $\alpha \in \lambda$, $D$ has left (right) base change for $Y(\alpha) \to Y(\alpha + 1)$ against $f(\alpha + 1)$, and that for all $\alpha \leq \lambda$, $f(\alpha)^*$ has a right (left) adjoint. Then for any $\alpha \leq \beta \leq \lambda$, $D$ has left (right) base change for $Y(\alpha) \to Y(\beta)$ against $f(\beta)$.

	\end{enumerate}
	\begin{proof}\hfill
		\begin{enumerate}

			\item This follows from \Cref{lem:adjointability vert composites}.

			\item This follows from \Cref{cor:adjointability horiz composites}.

		\end{enumerate}
		
	\end{proof}
\end{prp}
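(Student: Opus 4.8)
The plan is to recast both parts as adjointability assertions about diagrams of categories and then reduce them, respectively, to \Cref{lem:adjointability vert composites} and \Cref{cor:adjointability horiz composites}. The one structural input beyond those lemmas is that $f$ is a \emph{Cartesian} transformation: this makes the relevant fibred products exist and forces $X(\beta)\times_{Y(\beta)}Y(\alpha)\simeq X(\alpha)$ and $Y(\alpha)\times_{Y(\beta)}X(\beta)\simeq X(\alpha)$ for $\alpha\le\beta$, so the Beck--Chevalley squares in \Cref{defn:base change} literally have the shape the cited lemmas expect. Both the left-base-change and the right-base-change versions follow from the corresponding parenthetical variants of those lemmas, so I will only describe the ``left'' case.

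For part~(1), for each $\gamma\le\lambda$ regard $f(\gamma)^*\colon D(Y(\gamma))\to D(X(\gamma))$ as a functor $\Delta^1\to\widehat{\Cat}$, and let these assemble --- using the restriction maps along $Y(\alpha\to\beta)$ and $X(\alpha\to\beta)$ --- into a single functor $G\colon(\lambda^\op)^{\triangleleft}\cong(\lambda+1)^\op\to\Fun(\Delta^1,\widehat{\Cat})$. For $\alpha\le\beta$ the transition $G(\beta)\to G(\alpha)$ is precisely the square
\[
	\begin{tikzcd}
		D(Y(\beta)) \ar[d, "Y(\alpha\to\beta)^*"'] \ar[r, "f(\beta)^*"] & D(X(\beta)) \ar[d, "X(\alpha\to\beta)^*"] \\
		D(Y(\alpha)) \ar[r, "f(\alpha)^*"'] & D(X(\alpha))
	\end{tikzcd}
,\]
and ``$D$ has left base change for $f(\beta)$ against $Y(\alpha)\to Y(\beta)$'' is exactly the statement that this square is left adjointable, that is, that $G(\beta)\to G(\alpha)$ is left adjointable at the nondegenerate edge of $\Delta^1$. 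The hypothesis of part~(1) supplies this when $\beta=\alpha+1$, and, since limits in $\Fun(\Delta^1,\widehat{\Cat})$ are computed pointwise, the assumption that $DX^\op$ and $DY^\op$ restrict to limiting diagrams on $(\lambda'+1)^\op$ for limit ordinals $\lambda'$ says exactly that $G$ restricts to a limiting diagram there. Thus \Cref{lem:adjointability vert composites}, applied with its ambient category taken to be $\Delta^1$ and $f$ its nondegenerate edge, gives that $G(\beta)\to G(\alpha)$ is left adjointable at that edge for all $\alpha\le\beta\le\lambda$, which is the claim.

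For part~(2), consider instead the transformation $\phi\colon DY^\op\to DX^\op$ of presheaves on $\lambda+1$ induced by $f$, whose component at $\gamma$ is $f(\gamma)^*$. Unwinding \Cref{defn:base change}, and using $Y(\alpha)\times_{Y(\beta)}X(\beta)\simeq X(\alpha)$, the statement that $\phi$ is left adjointable at the arrow $\alpha\to\beta$ of $\lambda+1$ is exactly the statement that $D$ has left base change for $Y(\alpha)\to Y(\beta)$ against $f(\beta)$. Now apply \Cref{cor:adjointability horiz composites} with its ambient category taken to be $\lambda+1$, its diagram taken to be $\id_{\lambda+1}$, and its two presheaves taken to be $DY^\op$ and $DX^\op$: its hypothesis that ``$\phi X(\alpha)$ has a right adjoint'' becomes the part~(2) hypothesis that $f(\alpha)^*$ has a right adjoint; its hypothesis that $\phi$ is left adjointable at $X(\alpha)\to X(\alpha+1)$ becomes the successor-step hypothesis of part~(2); and its limiting-diagram condition is again the hypothesis on $DX^\op$ and $DY^\op$. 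The corollary then gives left adjointability of $\phi$ at every arrow $\alpha\to\beta$, which is part~(2).

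There is no hard argument here; the work is bookkeeping, and the main thing to get right is the packaging: one must track that ``left base change for $g$ against $q$'' is left-adjointability of the square with $q^*$ down the left edge, that the Cartesian hypothesis places the fibred product in the correct corner, and that the ``limiting diagram'' hypotheses of the cited lemmas actually hold --- harmless, since the relevant limits (in $\Fun(\Delta^1,\widehat{\Cat})$ for part~(1), and in $\Fun((\lambda+1)^\op,\widehat{\Cat})$ for part~(2)) are pointwise. A minor point is the size mismatch between the $\widehat{\Cat}$-valued $D$ of this statement and the $\Cat$-valued formulation of \Cref{lem:adjointability vert composites}; it is handled as elsewhere in the paper, the underlying inputs from \cite[Corollary 4.7.4.18]{ha} being insensitive to it.
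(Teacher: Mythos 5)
Your proposal is correct and matches the paper's proof, which simply cites \Cref{lem:adjointability vert composites} for part (1) and \Cref{cor:adjointability horiz composites} for part (2); you have supplied exactly the bookkeeping (the diagram $G$ valued in $\Fun(\Delta^1,\widehat{\Cat})$, the transformation $Df^\op \colon DY^\op \to DX^\op$, and the identification of the pullback with $X(\alpha)$ via Cartesianness) that the paper leaves implicit. The observations about the pointwise nature of the relevant limits and the harmless $\Cat$/$\widehat{\Cat}$ mismatch are accurate.
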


\begin{prp} \label{prp:source locality of base change}
	Let $D : \mathcal C^\op \to \widehat{\Cat}$ be a presheaf on a category $\mathcal C$, and let $f : X \to Y$ be a Cartesian transformation of small diagrams $X,Y : K \star \Delta^1 \to \mathcal C$.

	\begin{enumerate}

		\item Suppose that $D X^\op$ and $D Y^\op$ restrict to limiting diagrams on $(K \star \{0\})^\op$, and for every map $a \to b$ in $K$, $D$ has left (right) base change for $f(b)$ against $Y(a \to b)$. Then $D$ has left (right) base change for $f(0)$ against $Y(a \to 0)$ for all $a \in K$, and $D$ has left (right) base change for $f(1)$ against $Y(0) \to Y(1)$ if and only if it has left (right) base change for $f(1)$ against $Y(a \to 1)$ for all $a \in K$.

		\item Suppose that for every $a \in K \star \{1\}$, $f(a)^*$ is a left (right) adjoint functor between presentable categories, and suppose that for any map $a \to b$ in $K$, $D$ has left (right) base change for $Y(a \to b)$ against $f(b)$.
			\begin{enumerate}

				\item Suppose that $D X^\op$ and $D Y^\op$ restrict to limiting diagrams on $(K \star \{0\})^\op$. Then $D$ has left (right) base change for $Y(0) \to Y(1)$ against $f(1)$ if and only if it has left (right) base change for $Y(a) \to Y(1)$ against $f(1)$ for all $a \in K$.

				\item Suppose that $D Z(0) \to \varprojlim_{a \in K} D Z(a)$ is conservative for $Z \in \{X,Y\}$, that $f(0)^*$ is a left (right) adjoint functor between presentable categories, and $D$ has left (right) base change for $Y(a) \to Y(0)$ against $f(0)$ for all $a \in K$. Then $D$ has left (right) base change for $Y(0) \to Y(1)$ if and only if it has left (right) base change for $Y(a) \to Y(1)$ against $f(1)$ for all $a \in K$.

			\end{enumerate}
		
	\end{enumerate}
	\begin{proof}\hfill
		\begin{enumerate}

			\item Follows immediately from \Cref{lem:adjointability limits}.
		
			\item Follows immediately from \Cref{prp:Pr adj lims}.

		\end{enumerate}
	\end{proof}
\end{prp}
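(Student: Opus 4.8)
The final statement to prove is \Cref{prp:source locality of base change}, which has two parts, each reducing to an adjointability fact established earlier in the Adjointability section.

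The plan is to observe that base change for a presheaf $D$ against a Cartesian transformation $f$ is exactly a statement about left (right) adjointability of certain squares of the form $D X^\op \to D Y^\op$, so that both parts follow by translating into the language of \Cref{defn:compat} and then quoting \Cref{lem:adjointability limits} and \Cref{prp:Pr adj lims}. First I would set up the translation carefully: for the Cartesian transformation $f : X \to Y : K \star \Delta^1 \to \mathcal C$, write $\phi : DY^\op \to DX^\op$ for the induced transformation of functors on $K \star \Delta^1$ (so $\phi(a)$ is $f(a)^* : D(Y(a)) \to D(X(a))$). By the definition of left (right) base change (\Cref{defn:base change}), $D$ has left (right) base change for $f(b)$ against $Y(a \to b)$ precisely when $\phi$ is left (right) adjointable at the map $Y(a) \to Y(b)$ in the sense of \Cref{defn:compat} — with the caveat that one needs the relevant fibre products to exist, which is automatic because $f$ is Cartesian: the square
\[
	\begin{tikzcd}
		X(b) \ar[d] \ar[r] & Y(b) \ar[d] \\
		X(a) \ar[r] & Y(a)
	\end{tikzcd}
\]
is Cartesian, exhibiting $X(b) \simeq X(a) \times_{Y(a)} Y(b)$. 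With this dictionary in place, part (1) is the statement that $\phi$, restricted to the diagram indexed by $K \star \{0\} \star \{1\} = K \star \Delta^1$, satisfies: if $\phi$ is left (right) adjointable at all arrows $Y(a) \to Y(b)$ with $a \to b$ in $K$, and $DX^\op, DY^\op$ restrict to limiting diagrams on $(K\star\{0\})^\op$, then $\phi$ is left (right) adjointable at $Y(a) \to Y(0)$ and at $Y(0) \to Y(1)$ (the latter iff at all $Y(a) \to Y(1)$). This is exactly \Cref{lem:adjointability limits} applied with the indexing category $K^\triangleleft$ having cone point sent to the object indexed by $0$, after restricting attention to the two-object category $\{0 \to 1\}$ (i.e. taking $\mathcal C = \Delta^1$ in that lemma, with $f$ the $1$-cell); one views the restriction of $\phi$ over $K$ as a diagram $K \to \Fun(\Delta^1, \widehat{\Cat})$ whose transitions are the relevant adjointable squares, and the limiting hypothesis on $DX^\op, DY^\op$ gives that $\phi$ over $\{0\}$ is the limit of $\phi$ over $K$.

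For part (2) the translation is dual: now the roles of $f$ and $Y(a \to b)$ are swapped, so we record base change for $Y(a \to b)$ against $f(b)$. Here I would instead consider the transformation $\psi$ of functors on $(K\star\{0\}\star\{1\})^\op = \Delta^1 \star \mathcal C^\op$-type shape given by sending $a \mapsto D(X(a))$ or $D(Y(a))$ as appropriate — more precisely, form the transformation on $\Delta^1 \star K^\op$ whose value at the cone-part is $\phi(1)$ resp.\ $\phi(0)$, with the $\mathcal C$-part being the diagram $\{D(Y(a))\}$ (resp.\ $\{D(X(a))\}$) over $K^\op$, and the transformation being $\phi$ itself componentwise. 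The hypothesis that $f(a)^*$ is a left (right) adjoint functor between presentable categories for $a \in K \star \{1\}$ is exactly the hypothesis of \Cref{prp:Pr adj lims} that $\phi(c)$ is a left (right) adjoint functor between presentable categories for all $c \in \{0\} \star \mathcal C$ (after the appropriate dualization $\PrL \leftrightarrow \PrR$, since base change squares transpose). Then the two sub-cases (2a) and (2b) of \Cref{prp:source locality of base change} correspond exactly to the two itemized conclusions of \Cref{prp:Pr adj lims}: (2a) uses the limiting hypothesis on $DX^\op, DY^\op$ over $(K\star\{0\})^\op$, matching the first bullet of \Cref{prp:Pr adj lims}; (2b) replaces that by the conservativity of $DZ(0) \to \varprojlim_{a\in K} DZ(a)$ and the additional hypothesis that $f(0)^*$ is a left (right) adjoint between presentable categories and $D$ has base change for $Y(a)\to Y(0)$ against $f(0)$, matching the second bullet. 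In each case I would spell out which square plays the role of the square in \Cref{prp:Pr adj lims} and check that "left adjointable at all maps in $\mathcal C$" unwinds to "$D$ has left base change for $Y(a\to b)$ against $f(b)$ for all $a \to b$ in $K$", using again that $f$ Cartesian guarantees the needed pullbacks.

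The main obstacle I anticipate is purely bookkeeping: getting the variance conventions and the $\PrL$/$\PrR$ (left/right adjointable) dualizations to line up so that the hypotheses of \Cref{lem:adjointability limits} and \Cref{prp:Pr adj lims} are literally matched — in particular being careful that "left base change for $f$ against $q$" is adjointability of the square with $f^*$ horizontal and $q^*$ vertical, and that when we pass to the transpose to apply the $\PrR$-version of the limit lemmas, "left" becomes "right" consistently. There is no genuine mathematical difficulty beyond this translation; once the dictionary between base-change statements and the adjointability statements of \Cref{defn:compat} is fixed, both parts are immediate citations. I would therefore keep the written proof short: state the dictionary as a preliminary paragraph (noting that Cartesianness of $f$ supplies all pullbacks), then dispatch (1) via \Cref{lem:adjointability limits} and (2a), (2b) via the two clauses of \Cref{prp:Pr adj lims}, exactly as the one-line proof in the excerpt indicates.
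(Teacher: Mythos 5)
Your proposal is correct and follows exactly the paper's route: the paper's proof consists precisely of the citations "(1) follows from \Cref{lem:adjointability limits}" and "(2) follows from \Cref{prp:Pr adj lims}", and your dictionary between base-change statements and adjointability of the squares $f(\cdot)^*$ (with Cartesianness of $f$ supplying the required pullbacks) is the translation those citations implicitly assume. The only difference is that you spell out the bookkeeping that the paper leaves to the reader.
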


\begin{rmk} \label{rmk:Cech nerve}
	Let $\mathcal C$ be a category, and let $X : \Delta_+^\op \to \mathcal C$ be an augmented simplicial object. Recall from the discussion following \cite[Proposition 6.1.2.11]{htt} that $X$ is said to be a \v{C}ech nerve if it satisfies the equivalent conditions of this result. In fact, the following are equivalent:
	\begin{enumerate}

		\item $X$ is a \v{C}ech nerve.

		\item For all $n \geq 0$, the square
			\[
				\begin{tikzcd}
					X_{n + 1} \ar[d] \ar[r, "d_0"] & X_n \ar[d] \\
					X_0 \ar[r] & X_{-1}
				\end{tikzcd}
			\]
			is Cartesian.

		\item For any map $[m] \to [n]$ in $\Delta_+$, and $k \geq 0$ the square
			\begin{equation} \label{eqn:Cech nerve square}
				\begin{tikzcd}
					X_{n + k} \ar[d] \ar[r, "(d_0)^{\circ k}"] & X_n \ar[d] \\
					X_{m + k} \ar[r, "(d_0)^{\circ k}"'] & X_m
				\end{tikzcd}
			\end{equation}
			is Cartesian.

	\end{enumerate}
	\begin{proof}
		The equivalence of the last two conditions follows easily by observing that $[-1]$ is an initial object of $\Delta_+$, and pasting Cartesian squares (see \cite[Lemma 4.4.2.1]{htt}).

		If $X$ is a \v{C}ech nerve, then by \cite[Proposition 6.1.2.11]{htt}, the underlying simplicial object of $X$ is a groupoid, and the square \eqref{eqn:Cech nerve square} is Cartesian for $m,n = 0$ and $k = 1$. Since $X$ is a groupoid, \cite[Proposition 6.1.2.6(4'')]{htt} says that the top square in
		\[
			\begin{tikzcd}
				X_{n + 1} \ar[d, "d_2 \circ \dotsb \circ d_{n + 1}"'] \ar[r, "d_0"] & X_n \ar[d, "d_1 \circ \dotsb \circ d_n"] \\
				X_1 \ar[d, "d_1"'] \ar[r, "d_0"] & X_0 \ar[d] \\
				X_0 \ar[r] & X_{-1}
			\end{tikzcd}
		\]
		is Cartesian for $n \geq 0$, so since the bottom square is also Cartesian, the outer rectangle is Cartesian, as required.

		Conversely, if the square \eqref{eqn:Cech nerve square} is always Cartesian, then the bottom square in the above rectangle is Cartesian (by setting $m = n = 0$ and $k = 1$). By \cite[Proposition 6.1.2.11]{htt}, it suffices to show that the underlying simplicial object of $X$ is a groupoid, but by \cite[Proposition 6.1.2.6(4'')]{htt}, this is equivalent to showing that the square
		\[
			\begin{tikzcd}
				X_{k + n} \ar[d] \ar[r] & X_n \ar[d] \\
				X_k \ar[r] & X_0
			\end{tikzcd}
		\]
		is Cartesian for all $m,n \geq 0$, but this follows easily from the fact that \eqref{eqn:Cech nerve square} is Cartesian for $m = 0$.
	\end{proof}
\end{rmk}

\begin{prp} \label{prp:nerve adj}
	Let $\mathcal C$ be a category, and let $\{X_i\}_{i \in I}$ be a family of objects of $\mathcal C$ such that for all $n \geq 0$ and indices $\bar i = (i_1, \dotsc, i_n) \in I^n$, the product
	\[
		X_{\bar i} \coloneqq \prod_{k = 1}^n X_{i_k}
	\]
	exists.

	For any presheaf $D : \mathcal C^\op \to \widehat{\Cat}$ such that for all $n \geq 0$ and $\bar i \in I^n$,
	\begin{enumerate}

		\item $D(X_{\bar i})$ admits $I$-indexed coproducts.

		\item For any $j \in I$, the functors
			\[
				D(X_j \times X_{\bar i} \to X_{\bar i}) \quad\text{and}\quad
				D(X_{\bar i} \times (X_j \to X_j \times X_j))
			\]
			preserve $I$-indexed coproducts.

		\item For any $j,j' \in I$, $D$ has left base change for
			\begin{align*}
				X_j \times X_{\bar i} \to X_{\bar i} &\text{ against } X_{j'} \times X_{\bar i} \to X_{\bar i} \\
				(X_j \times X_{j'} \times X_{j'} \to X_{j'} \times X_{j'}) \times X_{\bar i} &\text{ against } (X_{j'} \to X_{j'} \times X_{j'}) \times X_{\bar i}
			\end{align*}

	\end{enumerate}
	
	Let $\mathcal U \in \tau_{\leq -1} \Psh(\mathcal C)$ be the sieve generated by the family $\{X_i\}_{i \in I}$. Then for any $m,n \geq 0$, and indices $\bar i \in I^{m + 1}$, and $\bar j \in I^{n + 1}$, the square
	\begin{equation} \label{eqn:nerve ladj lim}
		\begin{tikzcd}
			D(\mathcal U) \ar[d] \ar[r] & D(X_{\bar i}) \ar[d] \\
			D(X_{\bar j}) \ar[r] & D(X_{\bar i} \times X_{\bar j})
		\end{tikzcd}
	\end{equation}
	is left adjointable.

	For each $i \in I$, write $G_i : D(\pt) \to D(X_i)$ for the functor induced by $X_i \to \pt$, and assume that $D(\pt)$ preserves geometric realizations of simplicial objects that are $G_i$-split for all $i \in I$, and that $G_i$ preserves these geometric realizations for all $i \in I$.

	Then $D$ has descent along $\mathcal U$ if and only if the functors $\{G_i\}_{i \in I}$ are jointly conservative. In fact, if we view $D$ as a limit-preserving presheaf on $\Psh(\mathcal C)$, then we have the following:
	\begin{enumerate}

		\item $D(\pt) \to D(\mathcal U)$ has a fully faithful left adjoint.

		\item Let $\mathcal C'$ be a full subcategory of $\mathcal C$ that contains all objects of the form $X_{\bar i}$ for $\bar i \in I^n$ and $n \geq 0$. Then $\mathcal U \in \Psh(\mathcal C') \subseteq \Psh(\mathcal C)$. Let $\phi : D' \to D|_{\mathcal C'}$ be a transformation such that $D'(\pt)$ admits geometric realizations and $|I|$-small coproducts, $\phi : D'(\pt) \to D(\pt)$ and $\phi : D'(X_i) \to D(X_i)$ admit right adjoints for all $i \in I$, and $\phi$ is left adjointable at $X_{\bar i} \to \pt$ for all $\bar i \in I^n$ and $n \geq 0$. Then for all $n \geq 1$ and $\bar i \in I^n$, we have that the left square in
			\[
				\begin{tikzcd}
					D'(\pt) \ar[d] \ar[r] & D'(\mathcal U) \ar[d] \ar[r] & D'(X_{\bar i}) \ar[d] \\
					D(\pt) \ar[r] & D(\mathcal U) \ar[r] & D(X_{\bar i})
				\end{tikzcd}
			\]
			is left adjointable, and the right square is left adjointable if the top right horizontal arrow admits a left adjoint.\footnote{We obtain the middle vertical arrow by viewing $\phi$ as a transformation of limit-preserving presheaves on $\Psh(\mathcal C') \subseteq \Psh(\mathcal C)$.}

	\end{enumerate}
	\begin{proof}
		By \cite[Proposition 6.2.3.4]{htt}, we have that $\mathcal U$ is the geometric realization of the underlying simplicial object of the \v{C}ech nerve of $\coprod_{i \in I} \yo(X_i)$. The \v{C}ech nerve is the augmented simplicial object $\tilde X : \Delta_+^\op \to \Psh(\mathcal C)$ such that for each $n \geq 0$, the face map $d_0 : \tilde X_n \to \tilde X_{n - 1}$ is equivalent to
		\[
			\coprod_{i_0, \dotsc, i_n \in I} \yo(X_{(i_0, \dotsc, i_n)}) \simeq \coprod_{i_0, \dotsc, i_n \in I} \yo(X_{i_0}) \times \yo(X_{(i_1, \dotsc, i_n)}) \to \coprod_{i_1, \dotsc, i_n \in I} \yo(X_{(i_1, \dotsc, i_n)})
		,\]
		which is equivalent to
		\[
			\coprod_{\bar i \in I^n} \coprod_{j \in I} \yo(X_j \times X_{\bar i} \to X_{\bar i})
		.\]
		In fact, by permuting factors we find that all face maps are equivalent to $d_0$.

		Similarly, every degeneracy map $s_k : \tilde X_{n - 1} \to \tilde X_n$ is equivalent to the composite
		\[
			\coprod_{i_0, \dotsc, i_{n - 1} \in I} X_{(i_0, \dotsc, i_{n - 1})} \to \coprod_{\substack{i_0, \dotsc, i_n \in I \\ i_k = i_{k + 1}}} X_{(i_0, \dotsc, i_n)} \hookrightarrow \coprod_{i_0, \dotsc, i_n \in I} X_{(i_0, \dotsc, i_n)}
		,\]
		in which the second map is the inclusion of disjoint summands, and the first map is equivalent to
		\[
			\coprod_{\bar i \in I^{n - 1}} \coprod_{j \in I} \yo(X_{\bar i} \times (X_j \xrightarrow{\text{diagonal}} X_j \times X_j))
		.\]

		We will show that for every map $[m] \to [n]$ in $\Delta_+$, the square
		\begin{equation} \label{eqn:nerve ladj}
			\begin{tikzcd}
				D(\tilde X_m) \ar[d] \ar[r] & D(\tilde X_{m + 1}) \ar[d] \\
				D(\tilde X_n) \ar[r] & D(\tilde X_{n + 1})
			\end{tikzcd}
		\end{equation}
		is left adjointable.

		Recall from \Cref{rmk:Cech nerve} that for any map $[m] \to [n]$ in $\Delta_+$, since $\tilde X$ is a \v{C}ech nerve, the square
		\[
			\begin{tikzcd}
				\tilde X_{n + 1} \ar[d] \ar[r, "d_0"] & \tilde X_n \ar[d] \\
				\tilde X_{m + 1} \ar[r, "d_0"'] & \tilde X_m
			\end{tikzcd}
		\]
		is Cartesian. By \cite[Lemma F.6(3)]{TwAmb}, since every map in $\Delta_+$ is a composite of codegeneracy maps and coface maps, to show that the square \eqref{eqn:nerve ladj} is left adjointable for all morphsims $[m] \to [n]$ in $\Delta_+$, it suffices to show that $D$ has left base change for $d_0$ against $d_k$ and $s_k$.

		\begin{description}

			\item[$d_0$ against $d_k$]
				We have an equivalence of Cartesian squares
				\[
					\begin{tikzcd}
						\tilde X_{n + 1} \ar[d, "d_{k + 1}"'] \ar[r, "d_0"] & \tilde X_n \ar[d, "d_k"] \\
						\tilde X_n \ar[r, "d_0"'] & \tilde X_{n - 1}
					\end{tikzcd}
					\simeq \coprod_{\bar i \in I^n} \left(
						\begin{tikzcd}
							\displaystyle\coprod_{j,j' \in I} \yo(X_j \times X_{j'} \times X_{\bar i}) \ar[d] \ar[r] & \displaystyle\coprod_{j' \in I} \yo(X_{j'} \times X_{\bar i}) \ar[d] \\
							\displaystyle\coprod_{j \in I} \yo(X_j \times X_{\bar i}) \ar[r] & \yo(X_{\bar i})
					\end{tikzcd} \right)
				.\]
				Since $D$ is limit-preserving on $\Psh(\mathcal C)^\op$, and by \cite[Corollary 4.7.4.18(2)]{ha} and \Cref{lem:adjointability limits}, to show that $D$ has left base change for $d_0$ against $d_0$, it suffices to show that for each $\bar i \in I^n$ and $j' \in I$, the square
				\[
					\begin{tikzcd}
						D(X_{\bar i}) \ar[d] \ar[r] & \prod_{j \in I} D(X_j \times X_{\bar i}) \ar[d] \\
						D(X_{j'} \times X_{\bar i}) \ar[r] & \prod_{j \in I} D(X_j \times X_{j'} \times X_{\bar i})
					\end{tikzcd}
				\]
				is left adjointable.

				We have assumed that for any $j \in I$, $D$ has left base change for $X_j \times X_{\bar i} \to X_{\bar i}$ against $X_{j'} \times X_{\bar i} \to X_{\bar i}$. In particular, $D(X_j \times X_{\bar i} \to X_{\bar I})$ and $D(X_j \times X_{j'} \times X_{\bar i} \to X_{j'} \times X_{\bar i})$ have left adjoints. It follows from \cite[Lemma D.4.7(ii)]{HM6FF} that the horizontal arrows in the above squares have left adjoints that can be expressed in terms of $I$-indexed coproducts of the left adjoints of the two aforementioned functors. We have also assumed that the vertical arrows preserve $I$-indexed coproducts, so the fact that this square is left adjointable follows from the fact that $D$ has the left base change properties we just recalled.

			\item[$d_0$ against $s_k$]
				Note that
				\[
					\begin{tikzcd}
						\tilde X_n \ar[d, "s_{k + 1}"'] \ar[r, "d_0"] & \tilde X_{n - 1} \ar[d, "s_k"] \\
						\tilde X_{n + 1} \ar[r, "d_0"'] & \tilde X_n
					\end{tikzcd}
				\]
				is equivalent to the same square where $k = 0$, and this is equivalent to the coproduct over $\bar i \in I^{n - 1}$ and $j' \in I$ of the following composite of Cartesian squares:
				\[
					\begin{tikzcd}
							\displaystyle\coprod_{j \in I} \yo(X_j \times X_{j'} \times X_{\bar i}) \ar[d] \ar[r] & \yo(X_{j'} \times X_{\bar i}) \ar[d] \\
							\displaystyle\coprod_{j \in I} \yo(X_j \times X_{j'} \times X_{j'} \times X_{\bar i}) \ar[d] \ar[r] & \yo(X_{j'} \times X_{j'} \times X_{\bar i}) \ar[d] \\
							\displaystyle \coprod_{j,j'' \in I} \yo(X_j \times X_{j'} \times X_{j''} \times X_{\bar i}) \ar[r] & \displaystyle\coprod_{j'' \in I} \yo(X_{j'} \times X_{j''} \times X_{\bar i})
					\end{tikzcd}
				.\]
				As before, using \cite[Corollary 4.7.4.18(2)]{ha} and \cite[Lemma D.4.7]{HM6FF}, the fact that $D$ sends the vertical arrows of the top square to functors that preserve $I$-indexed coproducts, and the fact that $D$ has left base change for $(X_j \times X_{j'} \times X_{j'} \to X_{j'} \times X_{j'}) \times X_{\bar i}$ against $(X_{j'} \to X_{j'} \times X_{j'}) \times X_{\bar i}$ for all $j,j' \in I$ and $\bar i \in I^{n - 1}$, it follows that $D$ sends the top square to a left adjointable square.

				Note that since the vertical arrows of the bottom square are inclusions of disjoint summands, and $D$ is a limit-preserving presheaf on $\Psh(\mathcal C)$, it follows that $D$ sends this square to a left adjointable square since it sends the horizontal arrows to functors that admit left adjoints. Thus, we conclude \cite[Lemma F.6(3)]{TwAmb} that $D$ sends this composite of Cartesian squares to a left adjointable square.

				Finally, since $D$ is limit-preserving on $\Psh(\mathcal C)^\op$, it follows from \cite[Corollary 4.7.4.18(2)]{ha} that $D$ sends the coproduct over $\bar i \in I^{n - 1}$ and $j' \in I$ of these Cartesian squares to a left adjointable square.

		\end{description}
		This concludes the proof that the square \eqref{eqn:nerve ladj} is left adjointable for all maps $[m] \to [n]$ in $\Delta_+$.

		To see that the square \eqref{eqn:nerve ladj lim} is left adjointable, consider the following diagram:
		\[
			\begin{tikzcd}
				D(\mathcal U) \ar[d] \ar[r] & D(\tilde X_0) \ar[d] \ar[r] & \cdots \ar[r] & D(\tilde X_\ell) \ar[dd] \ar[r] & \cdots \ar[r] & D(\tilde X_m) \ar[dd] \\
				D(\tilde X_0) \ar[d] \ar[r] & D(\tilde X_1) \ar[d] \\
				D(\tilde X_n) \ar[r] & D(\tilde X_{n + 1}) \ar[r] & \cdots \ar[r] & D(\tilde X_{\ell + n + 1}) \ar[r] & \cdots \ar[r] & D(\tilde X_{m + n + 1})
			\end{tikzcd}
		.\]
		Note that since $\mathcal U \simeq \realise{\tilde X_\bullet}$, the top left square is left adjointable by \cite[Theorem 4.7.5.2]{htt}, and the bottom left square is left adjointable since \eqref{eqn:nerve ladj} is left adjointable for $m = 0$, so by \cite[Lemma F.6(3)]{TwAmb}, the outer left rectangle is left adjointable. Using the left adjointability of \eqref{eqn:nerve ladj} again, the right rectangles are left adjointable by \cite[Lemma F.6(1)]{TwAmb}, so the outermost rectangle is also left adjointable by the same result.

		Write $G$ for the functor
		\[
			D(\tilde X_{-1}) \simeq D(\pt) \to \prod_{i \in I} D(X_i) \simeq D(\tilde X_0)
		.\]
		Note that a simplicial object of $D(\pt)$ is $G$-split if and only if it is $G_i$-split for all $i \in I$, and, if it exists, its geometric realization is preserved by $G$ if and only if it is preserved by $G_i$ for all $i$. Thus, under our additional hypotheses, we find that $D(\pt)$ admits geometric realizations of $G$-split simplicial objects, and $G$ preserves these realizations. Thus, we may apply \cite[Corollary 4.7.5.3]{ha} to see that $D(\pt) \to D(\mathcal U)$ admits a fully faithful left adjoint.

		For the last statement, first note that $\mathcal U$ is in $\Psh(\mathcal C')$, seen as a full subcategory of $\Psh(\mathcal C)$ with colimit-preserving inclusion induced by the inclusion $\mathcal C' \to \mathcal C$, since $\Psh(\mathcal C') \subseteq \Psh(\mathcal C)$ is closed under colimits, and $\mathcal U$ is a colimit of objects that we know are in $\Psh(\mathcal C')$. It follows that the functors $\{D'(\mathcal U) \to D'(X_i)\}_{i \in I}$ are jointly conservative, and $D'(\pt) \to D'(\mathcal U)$ admits a left adjoint by \cite[Lemma D.4.7(ii)]{HM6FF} since $D'(\pt)$ admits geometric realizations and $|I|$-small coproducts. Thus, we may conclude by \Cref{lem:ff adj crit}.
	\end{proof}
\end{prp}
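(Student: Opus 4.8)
The plan is to present $\mathcal U$ as a geometric realization and to bootstrap every assertion from the levelwise base-change hypotheses. By \cite[Proposition 6.2.3.4]{htt} the sieve $\mathcal U$ generated by $\{X_i\}_{i\in I}$ is the geometric realization of the underlying simplicial object of the \v{C}ech nerve $\tilde X:\Delta_+^\op\to\Psh(\mathcal C)$ of $\coprod_{i\in I}\yo(X_i)$, whose $n$-th term is $\coprod_{i_0,\dots,i_n}\yo(X_{(i_0,\dots,i_n)})$. Viewing $D$ as a limit-preserving presheaf on $\Psh(\mathcal C)$, all of the claimed adjointabilities reduce to adjointability of the squares \eqref{eqn:nerve ladj} relating consecutive levels $\tilde X_m,\tilde X_n$ of this nerve.

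First I would write out the simplicial structure maps of $\tilde X$ explicitly: after permuting factors every face map is equivalent to $d_0$, which is a coproduct over $\bar i\in I^n$ of the maps $\coprod_{j\in I}\yo(X_j\times X_{\bar i}\to X_{\bar i})$, and every degeneracy is, again up to reindexing, built from diagonals $X_j\to X_j\times X_j$ combined with $X_{\bar i}$. Because $\tilde X$ is a \v{C}ech nerve, for every $[m]\to[n]$ in $\Delta_+$ the square relating $\tilde X_{m+1},\tilde X_m,\tilde X_n,\tilde X_{n+1}$ through $d_0$ is Cartesian (\Cref{rmk:Cech nerve}), and since every morphism of $\Delta_+$ factors into cofaces and codegeneracies, \cite[Lemma F.6(3)]{TwAmb} reduces left adjointability of \eqref{eqn:nerve ladj} to $D$ having left base change for $d_0$ against each $d_k$ and each $s_k$. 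Each of these I would check by identifying the relevant Cartesian square with a coproduct over $\bar i$ of the explicit squares in hypothesis~(3), using that $D$ is limit-preserving on $\Psh(\mathcal C)^\op$, \cite[Corollary 4.7.4.18(2)]{ha}, \Cref{lem:adjointability limits}, the description of left adjoints of $I$-indexed-coproduct-shaped functors from \cite[Lemma D.4.7]{HM6FF}, and the assumption that $D$ sends the relevant maps to functors preserving $I$-indexed coproducts. The $s_k$ case additionally requires splitting the square as a composite of two Cartesian squares, the lower of which consists of inclusions of disjoint summands and is handled purely formally.

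Granting levelwise adjointability, I would deduce left adjointability of \eqref{eqn:nerve ladj lim} by pasting the grid of $D(\tilde X_\bullet)$-squares against the column $D(\mathcal U)\to D(\tilde X_0)\to\cdots$, invoking \cite[Theorem 4.7.5.2]{htt} for the $\mathcal U$-square since $\mathcal U\simeq\realise{\tilde X_\bullet}$, together with repeated applications of \cite[Lemma F.6(1),(3)]{TwAmb}. For the fully faithful left adjoint of $D(\pt)\to D(\mathcal U)$, the point is that writing $G$ for $D(\pt)\to\prod_{i}D(X_i)\simeq D(\tilde X_0)$, a simplicial object of $D(\pt)$ is $G$-split exactly when it is $G_i$-split for all $i$, and its realization is preserved by $G$ exactly when preserved by each $G_i$; the extra hypotheses then allow one to invoke \cite[Corollary 4.7.5.3]{ha}. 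Joint conservativity of $\{G_i\}_{i\in I}$ is precisely the condition for the counit of this adjunction to be an equivalence, \ie{} for descent along $\mathcal U$.

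Finally, for the statement about $\phi:D'\to D|_{\mathcal C'}$, the key observation is that $\mathcal U$, being a colimit of objects already in $\Psh(\mathcal C')$, lies in $\Psh(\mathcal C')\subseteq\Psh(\mathcal C)$, so the functors $\{D'(\mathcal U)\to D'(X_i)\}_{i\in I}$ are jointly conservative and $D'(\pt)\to D'(\mathcal U)$ admits a left adjoint by \cite[Lemma D.4.7(ii)]{HM6FF}; the two asserted left adjointabilities then follow from \Cref{lem:ff adj crit}, the right square once its top-right arrow is assumed to admit a left adjoint. I expect the main obstacle to be the combinatorial bookkeeping in the levelwise step: correctly matching each \v{C}ech-nerve face or degeneracy square with the coproduct of the hypothesis~(3) squares, tracking which tensor factor of $X_{\bar i}$ is acted on, treating the $d_0$-against-$s_k$ case as the two-step composite through $X_{j'}\times X_{j'}\times X_{\bar i}$, and chasing the coproduct-preservation assumptions through \cite[Lemma D.4.7]{HM6FF} so that the left adjoints of the horizontal legs are genuinely computed coproduct-wise; everything downstream is a formal pasting argument.
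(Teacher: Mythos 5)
Your proposal follows essentially the same route as the paper's proof: the same reduction of \eqref{eqn:nerve ladj} to left base change for $d_0$ against $d_k$ and $s_k$ via the \v{C}ech-nerve Cartesian squares and \cite[Lemma F.6]{TwAmb}, the same identification of those squares with coproducts of the hypothesis-(3) squares handled by \cite[Lemma D.4.7]{HM6FF} and \cite[Corollary 4.7.4.18(2)]{ha}, the same pasting grid for \eqref{eqn:nerve ladj lim}, the same $G$-split/$G_i$-split observation feeding \cite[Corollary 4.7.5.3]{ha}, and the same conclusion of the relative statement via \Cref{lem:ff adj crit}. No gaps; this matches the paper's argument step for step.
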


\section{Stabilization} \label{S:stabilization}

Given a commutative ring $R$, an $R$-module $M$, and an element $x \in R$, one can construct the localization $M[x^{-1}]$ as the colimit
\[
	M \xrightarrow{x} M \xrightarrow{x} \cdots
,\]
which can be thought of as an ``$x$-stabilization of $M$''.

Sometimes this construction of $M[x^{-1}]$ can allow us to prove nice behaviours of the functor $\Mod_R \to \Mod_{R[x^{-1}]}$.

In this section, we will be interested in the case that $R$ is a symmetric monoidal presentable category $\mathcal A$, and $\Mod_R$ is replaced by $\Mod_{\mathcal A}(\PrL)$, and we will establish in \Cref{prp:stabilization} some nice $2$-categorical behaviour of the functor on $\Mod_{\mathcal A}(\PrL)$ that ``freely inverts'' the action of some set $A$ of elements in $\mathcal A$.

Before continuing our discussion of this matter, we record the following result that relates the usual notion of stability for categories to the notions considered in this section.
\begin{lem} \label{lem:monoidal crit for stability}
	Let $\mathcal C$ be a symmetric monoidal category that has a zero object, and such that the monoidal product $\otimes : \mathcal C \times \mathcal C \to \mathcal C$ preserves finite colimits in each parameter. Then the following are equivalent, where $\Sigma : \mathcal C \to \mathcal C$ denotes the suspension functor:\footnote{See \cite[\S1.1.2]{ha}.}
	\begin{enumerate}

		\item There is some $M \in \mathcal C$ such that $\Sigma M$ is $\otimes$-invertible.

		\item The object $\Sigma 1$ is $\otimes$-invertible.

		\item $\mathcal C$ is a stable category.

	\end{enumerate}
	\begin{proof}
		Note that since $\otimes$ preserves empty colimits in each variable, if $0$ is a zero object of $\mathcal C$, we have that $0 \otimes X \simeq 0$ for all $X \in \mathcal C$, and since $\otimes$ preserves pushouts in each variable, we have that $\Sigma 1 \otimes - \simeq \Sigma(1 \otimes -) \simeq \Sigma$, where $1 \in \mathcal C$ is the monoidal unit, so \cite[Proposition 1.4.2.11]{ha} says that $\mathcal C$ is stable if and only if $\Sigma 1$ is $\otimes$-invertible.

		Finally, note that for any $M \in \mathcal C$, $\Sigma M \simeq \Sigma 1 \otimes M$, so if $\Sigma M$ is $\otimes$-invertible, then so is $\Sigma 1$.
	\end{proof}
\end{lem}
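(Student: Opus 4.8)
The plan is to prove \Cref{lem:monoidal crit for stability} by showing the chain of implications $(3) \Rightarrow (2) \Rightarrow (1) \Rightarrow (3)$, with the two extreme implications being essentially trivial and the substance lying in $(1) \Rightarrow (3)$.

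First, the preliminary observations: since $\otimes$ preserves finite colimits in each variable, it preserves the empty colimit, so for a zero object $0$ we have $0 \otimes X \simeq 0$ for all $X$, and since $\otimes$ preserves pushouts in each variable and the suspension $\Sigma X$ is the pushout of $0 \gets X \to 0$, we obtain a natural equivalence $\Sigma 1 \otimes - \simeq \Sigma(-)$, where $1$ is the monoidal unit. This is the key structural fact that turns statements about suspension into statements about tensoring with the single object $\Sigma 1$.

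Next, $(3) \Rightarrow (2)$: if $\mathcal C$ is stable, then $\Sigma$ is an equivalence, so $\Sigma 1 \otimes - \simeq \Sigma$ is an equivalence, which (by the standard fact that an object acting invertibly via tensoring is $\otimes$-invertible, or directly by applying the equivalence to $\Sigma^{-1} 1$) shows $\Sigma 1$ is $\otimes$-invertible. Then $(2) \Rightarrow (1)$ is immediate by taking $M = 1$. Finally $(1) \Rightarrow (3)$: given $M$ with $\Sigma M$ $\otimes$-invertible, note $\Sigma M \simeq \Sigma 1 \otimes M$, so $\Sigma 1$ is a tensor-factor of an invertible object, hence itself $\otimes$-invertible (if $\Sigma 1 \otimes M$ has inverse $N$, then $M \otimes N$ is an inverse of $\Sigma 1$). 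Now invoke \cite[Proposition 1.4.2.11]{ha}, which says that a presentable (or suitably cocomplete, pointed) symmetric monoidal category whose unit is $\otimes$-invertible under suspension — more precisely, for which $\Sigma 1$ is $\otimes$-invertible — is stable; combined with $\Sigma 1 \otimes - \simeq \Sigma$ this gives that $\Sigma$ is an equivalence on $\mathcal C$, hence $\mathcal C$ is stable.

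I do not expect a serious obstacle here; the only point requiring a little care is checking that the hypotheses of \cite[Proposition 1.4.2.11]{ha} are met by our $\mathcal C$ — in particular that ``$\Sigma 1$ is invertible'' is exactly the input that result needs, and that the finite-colimit-preservation hypothesis on $\otimes$ is what licenses the identification $\Sigma 1 \otimes - \simeq \Sigma$. If \cite[Proposition 1.4.2.11]{ha} is stated only for the unit object rather than an arbitrary invertible suspension, then the reduction $\Sigma M \simeq \Sigma 1 \otimes M$ performed in the $(1) \Rightarrow (3)$ step is precisely what bridges the gap, so the argument still goes through cleanly.
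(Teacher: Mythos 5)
Your proof is correct and follows essentially the same route as the paper's: the identification $\Sigma 1 \otimes - \simeq \Sigma$ (via preservation of finite colimits), the appeal to \cite[Proposition 1.4.2.11]{ha} to equate stability with invertibility of $\Sigma 1$, and the factorization $\Sigma M \simeq \Sigma 1 \otimes M$ to reduce (1) to (2). The only difference is organizational (you arrange the equivalences as a cycle of implications rather than as two biconditional observations), which changes nothing of substance.
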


First we will need to recall the following general fact from \cite{ha}:
\begin{lem} \label{lem:ext scal}
	Let $\mathcal A \to \mathcal B$ be a map in $\CAlg(\PrL)$. Then there is a symmetric monoidal functor $\Mod_{\mathcal A}(\PrL) \to \Mod_{\mathcal B}(\PrL)$ that is a left adjoint of the canonical map $\Mod_{\mathcal B}(\PrL) \to \Mod_{\mathcal A}(\PrL)$ given by \cite[Corollary 3.4.3.3]{ha}.
	\begin{proof}
		Follows immediately from \cite[Theorem 4.5.3.1]{ha}.
	\end{proof}
\end{lem}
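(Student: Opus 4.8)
The final statement to prove is Lemma~\ref{lem:ext scal}, which asserts that for a map $\mathcal A \to \mathcal B$ in $\CAlg(\PrL)$ there is a symmetric monoidal functor $\Mod_{\mathcal A}(\PrL) \to \Mod_{\mathcal B}(\PrL)$ left adjoint to the restriction-of-scalars functor $\Mod_{\mathcal B}(\PrL) \to \Mod_{\mathcal A}(\PrL)$.

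The plan is to reduce this entirely to \cite[Theorem 4.5.3.1]{ha}, which is the general ``base change for modules over $\mathbb E_\infty$-rings in a presentably symmetric monoidal $\infty$-category'' statement. First I would recall the setup: $\PrL$ is itself a presentably symmetric monoidal $\infty$-category (by \cite[Proposition 4.8.1.15]{ha}), so one may form $\CAlg(\PrL)$ and, for any $\mathcal A \in \CAlg(\PrL)$, the category $\Mod_{\mathcal A}(\PrL)$ of $\mathcal A$-modules internal to $\PrL$. A map $\mathcal A \to \mathcal B$ in $\CAlg(\PrL)$ is exactly a morphism of commutative algebra objects, so it is the kind of input that \cite[Theorem 4.5.3.1]{ha} accepts. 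That theorem produces a relative tensor product functor $\mathcal B \otimes_{\mathcal A} (-) : \Mod_{\mathcal A}(\PrL) \to \Mod_{\mathcal B}(\PrL)$, shows it is symmetric monoidal, and shows it is left adjoint to the evident forgetful (restriction of scalars) functor $\Mod_{\mathcal B}(\PrL) \to \Mod_{\mathcal A}(\PrL)$; the latter functor is the ``canonical map'' referenced in the statement, which the excerpt attributes to \cite[Corollary 3.4.3.3]{ha}. So the proof is essentially a citation, with a sentence identifying the hypotheses of \cite[Theorem 4.5.3.1]{ha} with the present situation.

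Concretely, the steps in order would be: (1) note $\PrL \in \CAlg(\widehat{\mathrm{Cat}})$ is presentably symmetric monoidal, so the machinery of \cite[\S4.5]{ha} applies with the ambient category taken to be $\PrL$ (working at the appropriate universe level so that $\Mod_{\mathcal A}(\PrL)$ is a large category); (2) observe that the restriction-of-scalars functor $\Mod_{\mathcal B}(\PrL) \to \Mod_{\mathcal A}(\PrL)$ induced by $\mathcal A \to \mathcal B$ is the one provided by \cite[Corollary 3.4.3.3]{ha}, hence coincides with the right adjoint appearing in \cite[Theorem 4.5.3.1]{ha}; (3) invoke \cite[Theorem 4.5.3.1]{ha} to get the left adjoint and its symmetric monoidal structure. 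That is all that is needed.

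The only genuine subtlety — and the one point I would take care over — is the size issue: $\Mod_{\mathcal A}(\PrL)$ is a very large category, so one must be careful that \cite[Theorem 4.5.3.1]{ha} is being applied in a universe in which $\PrL$ is ``presentable.'' This is handled in the usual way by enlarging the universe, exactly as in \cite[\S4.8.1]{ha}, and since the statement only asserts the existence of the adjoint functor and not any smallness of it, no further care is required. Thus the main (and essentially only) obstacle is bookkeeping about universes rather than any real mathematical content; the rest is a direct appeal to Lurie's base-change theorem.
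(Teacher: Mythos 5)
Your proof is correct and is essentially identical to the paper's, which also just invokes \cite[Theorem 4.5.3.1]{ha}; your additional remarks on identifying the restriction functor with \cite[Corollary 3.4.3.3]{ha} and on the universe enlargement are exactly the bookkeeping the paper leaves implicit.
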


Following \cite[\S6.1]{sixopsequiv} and \cite[\S2.1]{robalo}, we have that for any symmetric monoidal presentable category $\mathcal A \in \CAlg(\PrL)$, and small collection $A$ of objects in $\mathcal A$, there is a map $\Sigma_A^\infty : \mathcal A \to \mathcal A[A^{\otimes -1}]$ in $\CAlg(\PrL)$ which is the initial map that sends every element of $A$ to a $\otimes$-invertible object.

Using the ideas of \cite[\S6.1]{sixopsequiv} and \cite[Proposition 2.9]{robalo}, we find that the natural commutative square
\begin{equation} \label{eqn:tensor loc square}
	\begin{tikzcd}
		\CAlg(\PrL)_{\mathcal A[A^{\otimes -1}]/} \ar[d] \ar[r] & \CAlg(\PrL)_{\mathcal A/} \ar[d] \\
		\Mod_{\mathcal A[A^{\otimes -1}]}(\PrL) \ar[r] & \Mod_{\mathcal A}(\PrL)
	\end{tikzcd}
\end{equation}
has fully faithful rows, and in fact it is Cartesian, and the essential image of the bottom arrow is the full subcategory of $\mathcal A$-modules on which the elements of $A$ act by automorphisms.

By \Cref{lem:compatible localizations} and the discussion preceding \cite[Proposition 2.9]{robalo}, this square is actually left adjointable.

We establish the following notation:
\begin{nota}
	Given $\mathcal A \in \CAlg(\PrL)$, and $A$ a small collection of objects in $\mathcal A$, we denote $-[A^{\otimes -1}]$ the left adjoint of either horizontal inclusion in \eqref{eqn:tensor loc square}, and we denote the unit of the adjunction by $\Sigma_A^\infty : \id \to -[A^{\otimes -1}]$.
\end{nota}

Now we come to the main result of this section.

\begin{prp} \label{prp:stabilization}
	Let $\mathcal A \in \CAlg(\PrL)$ be symmetric monoidal presentable category, and let $A$ be a small collection of symmetric (\Cref{defn:symmetric}) objects in $\mathcal A$.

	\begin{enumerate}

		\item\label{itm:stabilization/generation}
			For any $\mathcal M \in \Mod_{\mathcal A}(\PrL)$, the functor $\Sigma_A^\infty : \mathcal M \to \mathcal M[A^{\otimes -1}]$ satisfies that the category $\mathcal M[A^{\otimes -1}]$ is generated under filtered colimits by objects of the form $\alpha^{\otimes -1} \otimes \Sigma_A^\infty M$, for $M \in \mathcal M$, and $\alpha$ a tensor product of objects in $A$.

		\item\label{itm:stabilization/linear}
			For any map $\mathcal M \to \mathcal N$ in $\Mod_{\mathcal A}(\PrL)$ that admits an $\mathcal A$-linear left adjoint, the induced functor $\mathcal M[A^{\otimes -1}] \to \mathcal N[A^{\otimes -1}]$ has an $\mathcal A[A^{\otimes -1}]$-linear left adjoint, the square of categories
			\begin{equation} \label{eqn:stabilization/linear}
				\begin{tikzcd}
					\mathcal M \ar[d] \ar[r] & \mathcal N \ar[d] \\
					\mathcal M[A^{\otimes -1}] \ar[r] & \mathcal N[A^{\otimes -1}]
				\end{tikzcd}
			\end{equation}
			is left adjointable, and the commutative square
			\[
				\begin{tikzcd}
					\mathcal N \ar[d] \ar[r] & \mathcal M \ar[d] \\
					\mathcal N[A^{\otimes -1}] \ar[r] & \mathcal M[A^{\otimes -1}]
				\end{tikzcd}
			\]
			given by its left base change transformation is the morphism $f_\sharp \to f_\sharp[A^{\otimes -1}]$ in $\Fun(\Delta^1, \Mod_{\mathcal A}(\PrL))$, where $f_\sharp$ is the left adjoint of $\mathcal M \to \mathcal N$.

		
		\item\label{itm:stabilization/adjointable}
			For any square
			\[
				\begin{tikzcd}
					\mathcal N \ar[d] \ar[r] & \mathcal N' \ar[d] \\
					\mathcal M \ar[r] & \mathcal M'
				\end{tikzcd}
			\]
			in $\Mod_{\mathcal A}(\PrL)$, if the underlying square of categories is left adjointable, then the induced square
			\[
				\begin{tikzcd}
					\mathcal N[A^{\otimes -1}] \ar[d] \ar[r] & \mathcal N'[A^{\otimes -1}] \ar[d] \\
					\mathcal M[A^{\otimes -1}] \ar[r] & \mathcal M'[A^{\otimes -1}]
				\end{tikzcd}
			\]
			is also left adjointable.

		\item For any functor $F : K \to \Mod_{\mathcal A}(\PrL)$, there is a transformation $F \to F[A^{\otimes -1}]$ such that for each $p \in K$, $(F \to F[A^{\otimes - 1}])(p)$ is $\Sigma_A^\infty : F(p) \to F[A^{\otimes -1}]$.
			\begin{enumerate}

				\item\label{itm:stabilization/univ} For any transformation $F \to G$ in $\Fun(K, \Mod_{\mathcal A}(\PrL))$, the space of extensions of this transformation along $F \to F[A^{\otimes -1}]$ is empty, unless every object of $A$ acts invertibly on $G(p)$ for all $p$, in which case it is contractible.

				\item\label{itm:stabilization/limits}
					If for every $p \to p'$ in $K$, and $a \in A$, the square
					\[
						\begin{tikzcd}
							F(p) \ar[d, "a \otimes -"'] \ar[r] & F(p') \ar[d, "a \otimes -"] \\
							F(p) \ar[r] & F(p')
						\end{tikzcd}
					\]
					is left adjointable, then the map
					\[
						(\varprojlim F)[A^{\otimes -1}] \to \varprojlim (F[A^{\otimes -1}])
					\]
					is an equivalence in $\Mod_{\mathcal A}(\PrL)$.

			\end{enumerate}

	\end{enumerate}
	\begin{proof}
		First we note that by \cite[Corollary 2.22]{robalo}, and \cite[\S6.1]{sixopsequiv}, there is a filtered category $L(A)$, and a diagram
		\[
			\theta_A : L(A) \to \Fun(\Mod_{\mathcal A}(\PrL), \Mod_{\mathcal A}(\PrL))
		\]
		such that for each map $x \to y$ in $L(A)$, the map $\theta_A(x \to y)$ is the transformation
		\[
			\id \xrightarrow{\alpha \otimes -} \id
		\]
		where $\alpha$ is some tensor product of elements of $A$, and there is an initial object $0$ of $L(A)$ such that the unit $\Sigma_A^\infty : \id \to -[A^{\otimes -1}]$ is the map
		\[
			\theta_A(0) \to \varinjlim \theta_A
		.\]

		Note also that by \cite[Corollaries 3.4.3.6 and 3.4.4.6]{ha}, the forgetful functor $\Mod_{\mathcal A}(\PrL) \to \PrL$ preserves and detects small limits and small colimits.

		Now we proceed to the proofs of the individual statements:
		\begin{enumerate}

			\item The following argument is based on the proof of \cite[Proposition 6.4 (2)]{sixopsequiv}.

				Let $p : \tilde{\mathcal M_A} \to L(A)$ be the presentable fibration corresponding to the functor $\theta_A(-)(M) : L(A) \to \PrL$. By \cite[Corollary 3.3.3.2]{htt}, we have that the limit of the corresponding functor $L(A)^\op \to \PrR$ is equivalent to the category of Cartesian sections of $p$, so $\mathcal M[A^{\otimes -1}]$ is equivalent to the category of coCartesian sections of $p$.

				Now, for each $x \in L(A)$, if $0 \in L(A)$ is the initial object, then the functor $\mathcal M \to \mathcal M$ corresponding to $0 \to x$ is given by $\alpha \otimes -$, for some tensor product $\alpha$ of elements of $A$. If we write $\Sigma^{\infty - x}_A : \mathcal M \to \mathcal M[A^{\otimes -1}]$ for the morphism in $\PrL$ corresponding to the inclusion into the colimit for $x \in L(A)$, we find that $\psi(x)^*$ sends $M \in \mathcal M$ to $\alpha^{\otimes -1} \otimes \Sigma^\infty_A M$.

				Thus, by \cite[Lemma 6.3.3.6]{htt}, we have that any object of $\mathcal M[A^{\otimes -1}]$ is an $L(A)$-indexed colimit of objects of the form $\alpha^{\otimes -1} \otimes \Sigma^\infty_A M$, for $\alpha$ a tensor product of elements of $A$, and $M \in \mathcal M$. Thus, we conclude since $L(A)$ is filtered.

			\item Let $f : \mathcal M \to \mathcal N$ be a map in $\Mod_{\mathcal A}(\PrL)$ that admits an $\mathcal A$-linear left adjoint. Then the induced map $f[A^{\otimes -1}] : \mathcal M[A^{\otimes -1}] \to \mathcal N[A^{\otimes -1}]$ is an $L(A)$-indexed colimit in $\Fun(\Delta^1, \PrL)$, where for each $x \to y$ in $L(A)$, the corresponding map in $\Fun(\Delta^1, \PrL)$ is of the form
				\[
					\begin{tikzcd}
						\mathcal M \ar[d, "a \otimes -"'] \ar[r, "f"] & \mathcal N \ar[d, "a \otimes -"] \\
						\mathcal M \ar[r, "f"'] & \mathcal N
					\end{tikzcd}
				.\]
				Since $f$ has an $\mathcal A$-linear left adjoint, these squares are left adjointable, so we conclude by \Cref{lem:LAd PrL pres colim} that the square \eqref{eqn:stabilization/linear} is left adjointable.

				Similarly, for any $a_0 \in \mathcal A$, $a_0 \otimes -$ defines a transformation in $\Fun^\LAd(\Delta^1, \PrL)$ from $f$ to itself, and in fact since $\mathcal A$ is \emph{symmetric} monoidal, it defines a transformation on the level of $L(A)$-indexed diagrams, so the transformation from $f[A^{\otimes -1}]$ to itself given by $a_0 \otimes -$ is the $L(A)$-indexed colimit of a morphism in $\Fun^\LAd(\Delta^1, \PrL)$, so by \Cref{lem:LAd PrL pres colim}, it is also in $\Fun^\LAd(\Delta^1, \PrL)$, that is,
				\[
					\begin{tikzcd}
						\mathcal M[A^{\otimes -1}] \ar[d, "a_0 \otimes -"'] \ar[r] & \mathcal N[A^{\otimes -1}] \ar[d, "a_0 \otimes -"] \\
						\mathcal M[A^{\otimes -1}] \ar[r] & \mathcal N[A^{\otimes -1}]
					\end{tikzcd}
				\]
				is left adjointable for all $a_0 \in \mathcal A$, which shows that $f[A^{\otimes -1}]$ has an $A$-linear left adjoint.

				Finally, we have that the left base change transformation of \eqref{eqn:stabilization/linear} gives a morphism $f_\sharp \to g$, where $g$ is a left adjoint of $f[A^{\otimes -1}]$. Since $g$ is an object of $\Fun(\Delta^1, \Mod_{\mathcal A[A^{\otimes -1}]}(\PrL))$, there is an extension of the map $f_\sharp \to g$ along $f_\sharp \to f_\sharp[A^{\otimes -1}]$ to a map $f_\sharp[A_{\otimes -1}] \to g$, but this map must be an equivalence, since it is an equivalence on the domain and codomain by the universal property of $\Sigma_A^\infty : \id \to (-)[A^{\otimes -1}]$.



			\item Note that the left adjointable square
				\[
					\begin{tikzcd}
						\mathcal N \ar[d] \ar[r, "g"] & \mathcal N' \ar[d] \\
						\mathcal M \ar[r, "f"'] & \mathcal M'
					\end{tikzcd}
				\]
				is a morphism $g \to f$ in $\Fun^\LAd(\Delta^1, \PrL)$, and 
				\[
					\begin{tikzcd}
						\mathcal N[A^{\otimes -1}] \ar[d] \ar[r, "g\lbrack A^{\otimes -1}\rbrack"] & \mathcal N'[A^{\otimes -1}] \ar[d] \\
						\mathcal M[A^{\otimes -1}] \ar[r, "f\lbrack A^{\otimes -1}\rbrack"'] & \mathcal M'[A^{\otimes -1}]
					\end{tikzcd}
				\]
				is a morphism $g[A^{\otimes -1}] \to f[A^{\otimes -1}]$ in $\Fun(\Delta^1, \PrL)$ that is an $L(A)$-indexed colimit of morphisms $g \to f$. This means that it is an $L(A)$-indexed colimit of morphisms in $\Fun^\LAd(\Delta^1, \PrL)$. By Lemma \ref{lem:LAd PrL pres colim}, it follows that the colimit of these morphisms is also a morphism in $\Fun^\LAd(\Delta^1, \PrL)$, as desired.

			\item The transformation $F \to F[A^{\otimes -1}]$ is given by composing $F$ by the unit transformation $\Sigma_A^\infty : \id \to -[A^{\otimes -1}]$.
				\begin{enumerate}

					\item Follows immediately from \cite[Proposition A.11]{objwise-mono}.

					\item We may consider the composite
						\[
							L(A) \xrightarrow{\theta_A} \Fun(\Mod_{\mathcal A}(\PrL), \Mod_{\mathcal A}(\PrL)) \xrightarrow{- \circ F} \Fun(K, \Mod_{\mathcal A}(\PrL))
						,\]
						which corresponds to a functor $\chi : L(A) \times K \to \Mod_{\mathcal A}(\PrL)$, such that for $x \to y$ in $L(A)$, and $p \to p'$ in $K$, the square
						\[
							\begin{tikzcd}
								\chi(x,p) \ar[d] \ar[r] & \chi(x,p') \ar[d] \\
								\chi(y,p) \ar[r] & \chi(y,p')
							\end{tikzcd}
						\]
						is
						\[
							\begin{tikzcd}
								F(p) \ar[d, "a \otimes -"'] \ar[r, "F(p \to p')"] & F(p') \ar[d, "a \otimes -"] \\
								F(p) \ar[r, "F(p \to p')"'] & F(p')
							\end{tikzcd}
						\]
						for $a$ some tensor product of elements of $A$. It follows from \cite[Lemma F.6 (3)]{TwAmb} that this square is left adjointable, since it is a composite of squares that we have assumed are left adjointable, so its transpose is right adjointable. Thus, \cite[Proposition 4.7.4.19]{ha} shows that the map
						\[
							\varinjlim_{x \in L(A)} \varprojlim_{p \in K} \chi(x,p) \to \varprojlim_{p \in K} \varinjlim_{x \in L(A)} \chi(x,p)
						\]
						is an equivalence, but this is precisely the map
						\[
							(\varprojlim F)[A^{\otimes -1}] \to \varprojlim (F[A^{\otimes -1}])
						.\]

				\end{enumerate}

		\end{enumerate}

	\end{proof}
\end{prp}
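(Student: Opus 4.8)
The plan is to run everything through the explicit filtered-colimit model of $(-)[A^{\otimes -1}]$ furnished by \cite[Corollary 2.22]{robalo} together with the discussion of \cite[\S6.1]{sixopsequiv}: there is a filtered category $L(A)$ with an initial object $0$, and a diagram
\[
	\theta_A : L(A) \to \Fun(\Mod_{\mathcal A}(\PrL), \Mod_{\mathcal A}(\PrL))
\]
whose value at every object is the identity, whose value at an edge $x \to y$ is the natural transformation $\id \xrightarrow{\alpha \otimes -} \id$ for some tensor product $\alpha$ of objects of $A$, and for which $\Sigma_A^\infty : \id \to (-)[A^{\otimes -1}]$ is the canonical map $\theta_A(0) \to \varinjlim \theta_A$. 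This is exactly where the hypothesis that $A$ is symmetric is used --- it is what makes this filtered colimit compute the universal $\otimes$-inversion --- so it will be invoked only via the cited results. I will also use throughout that the forgetful functor $\Mod_{\mathcal A}(\PrL) \to \PrL$ creates small limits and colimits by \cite[Corollaries 3.4.3.6 and 3.4.4.6]{ha}, so every colimit below may be computed in $\PrL$, together with \Cref{lem:LAd PrL pres colim}, which says that $\Fun^\LAd(S, \PrL) \hookrightarrow \Fun(S, \PrL)$ creates small colimits for any simplicial set $S$.

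For (1), fix $\mathcal M$; then $\mathcal M[A^{\otimes -1}] \simeq \varinjlim_{L(A)} \theta_A(-)(\mathcal M)$, a filtered colimit in $\PrL$ of copies of $\mathcal M$ along the functors $\alpha \otimes -$. I would present this as the category of coCartesian sections of the presentable fibration classified by $\theta_A(-)(\mathcal M)$, using \cite[Corollary 3.3.3.2]{htt}; since the transition functor from $0$ to a vertex $x$ is $\alpha \otimes -$ for an appropriate $\alpha$, the right adjoint of the $x$-th coprojection sends $M \in \mathcal M$ to $\alpha^{\otimes -1} \otimes \Sigma_A^\infty M$, and \cite[Lemma 6.3.3.6]{htt} then exhibits every object of $\mathcal M[A^{\otimes -1}]$ as an $L(A)$-indexed --- hence, $L(A)$ being filtered, a filtered --- colimit of objects of this form. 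Taking $\mathcal M = \mathcal A$ this also records a set of filtered-colimit generators for $\mathcal A[A^{\otimes -1}]$, which I reuse below.

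For (2) and (3) the common device is that $(-)[A^{\otimes -1}]$ sends a morphism, or a morphism of morphisms, of $\mathcal A$-modules to an $L(A)$-indexed colimit, in the appropriate functor category over $\PrL$, of copies of that datum, with transition maps given by the $\mathcal A$-action functors $\alpha \otimes -$. In (2), given $f : \mathcal M \to \mathcal N$ with $\mathcal A$-linear left adjoint $f_\sharp$, each copy is the square with $f$ as both horizontal edges and $\alpha \otimes -$ as both vertical edges; this square is left adjointable precisely because $f_\sharp$ is $\mathcal A$-linear, \ie{} the projection-formula map $f_\sharp(\alpha \otimes -) \to \alpha \otimes f_\sharp(-)$ is an equivalence (\Cref{defn:proj form}), so \Cref{lem:LAd PrL pres colim} shows the colimit square \eqref{eqn:stabilization/linear} is left adjointable. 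Tensoring the whole $L(A)$-diagram by a fixed $a_0 \in \mathcal A$ --- legitimate since $\mathcal A$ is symmetric monoidal, so $a_0 \otimes -$ commutes with each $\alpha \otimes -$ --- is a morphism of $\Fun^\LAd(\Delta^1, \PrL)$-valued diagrams, again by $\mathcal A$-linearity of $f_\sharp$, so its colimit, \ie{} the square expressing the action of $a_0 \otimes -$ on $f[A^{\otimes -1}]$, is left adjointable; since $\mathcal A \to \mathcal A[A^{\otimes -1}]$ is a localization and, by (1), $\mathcal A[A^{\otimes -1}]$ is generated under filtered colimits by the $\Sigma_A^\infty(a)$ and their $\otimes$-inverses, this upgrades to genuine $\mathcal A[A^{\otimes -1}]$-linearity. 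Finally, the left base change transformation of \eqref{eqn:stabilization/linear} is a morphism $f_\sharp \to g$ in $\Fun(\Delta^1, \Mod_{\mathcal A}(\PrL))$ with $g$ the left adjoint of $f[A^{\otimes -1}]$; as $g$ already lies over $\mathcal A[A^{\otimes -1}]$, the universal property of $\Sigma_A^\infty$ (via \cite[Proposition A.11]{objwise-mono}) extends it to a map $f_\sharp[A^{\otimes -1}] \to g$, which is an equivalence because it is one on source and target, identifying $g$ with $f_\sharp[A^{\otimes -1}]$. Part (3) is the same colimit argument with no new input: a left adjointable square of $\mathcal A$-modules, regarded as a morphism in $\Fun^\LAd(\Delta^1, \PrL)$, is carried by $(-)[A^{\otimes -1}]$ to the $L(A)$-colimit of copies of itself, each still a morphism of $\Fun^\LAd(\Delta^1, \PrL)$, so \Cref{lem:LAd PrL pres colim} again gives left adjointability. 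For (4), the transformation $F \to F[A^{\otimes -1}]$ is $F$ post-composed with the unit $\Sigma_A^\infty$, so (4a) follows at once from \cite[Proposition A.11]{objwise-mono}; for (4b) I would set $\chi : L(A) \times K \to \Mod_{\mathcal A}(\PrL)$, $\chi(x, p) = \theta_A(x)(F(p))$, observe that for an edge $x \to y$ in $L(A)$ and $p \to p'$ in $K$ the corresponding commuting square of $\chi$ is the square with $F(p \to p')$ as its horizontal edges and $\alpha \otimes -$ as its vertical edges, hence left adjointable by hypothesis (using \cite[Lemma F.6(3)]{TwAmb} to reduce composites of the generators $a \in A$), and then apply \cite[Proposition 4.7.4.19]{ha} to commute $\varinjlim_{L(A)}$ past $\varprojlim_{K}$, which is precisely the asserted equivalence $(\varprojlim F)[A^{\otimes -1}] \simeq \varprojlim(F[A^{\otimes -1}])$.

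The step I expect to be the genuine obstacle is the linearity bookkeeping in (2): checking that the induced left adjoint is linear over $\mathcal A[A^{\otimes -1}]$ and not merely over $\mathcal A$, and identifying it with $f_\sharp[A^{\otimes -1}]$, all while keeping straight in which direction each square is being asserted left adjointable. Once the Robalo model is in hand, the remaining content is formal, driven entirely by \Cref{lem:LAd PrL pres colim} and the interchange result \cite[Proposition 4.7.4.19]{ha}.
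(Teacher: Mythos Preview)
Your proposal is correct and follows essentially the same approach as the paper's proof: both set up the Robalo filtered model $\theta_A : L(A) \to \Fun(\Mod_{\mathcal A}(\PrL), \Mod_{\mathcal A}(\PrL))$, prove (1) via the presentable fibration and \cite[Lemma 6.3.3.6]{htt}, prove (2) and (3) by recognizing the relevant squares as $L(A)$-indexed colimits in $\Fun^\LAd(\Delta^1, \PrL)$ and invoking \Cref{lem:LAd PrL pres colim}, and prove (4b) via \cite[Proposition 4.7.4.19]{ha} applied to the bifunctor $\chi$. The only minor difference is that you are slightly more explicit about why $\mathcal A$-linearity of the left adjoint upgrades to $\mathcal A[A^{\otimes -1}]$-linearity (via the generators from (1)), which the paper handles more tersely.
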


\section{Miscellaneous Categorical Results}

In this section we record some unsorted abstract results about categories.

\begin{lem} \label{lem:radj cons iff ladj gens}
	Let $\{f_i : \mathcal C \to \mathcal D_i\}_i$ be a small collection of right adjoint functors of presentable categories, and for each $i$, let $f_i^*$ be a left adjoint of $f_i$. Then the family $\{f_i\}_i$ is jointly conservative if and only if the union of the images of the functors $\{f_i^*\}_i$ generate $\mathcal C$ under colimits.
	\begin{proof}
		For each $i$, since $\mathcal D_i$ is presentable, we have a small set $S_i$ of objects of $\mathcal D_i$ that generate $\mathcal D_i$ under small colimits. Since $f_i^*$ preserves small colimits for all $i$, we have that the union of the images of $\{f_i^*\}$ generates $\mathcal C$ under colimits if and only if
		\[
			S \coloneqq \bigcup_i f_i^*(S_i)
		\]
		generates $\mathcal C$ under small colimits. Note that $S$ is a small union of small collections, so $S$ is small. Therefore, \cite[Corollary 2.5]{MonTow} says that $S$ generates $\mathcal C$ under colimits if and only if the functors $\{\mathcal C(f_i^*(Y), -)\}_{i, Y \in S_i}$ are jointly conservative, and since $f_i^* \dashv f_i$, this is equivalent to the functors $\{\mathcal D_i(Y, f_i(-))\}_{i, Y \in S_i}$ being jointly conservative, or equivalently, that the composite
		\[
			\mathcal C \to \prod_i \mathcal D_i \xrightarrow{\prod_i (\mathcal D_i(Y,-))_{Y \in S_i}} \prod_{i, Y \in S_i} \spaces
		\]
		is conservative.

		By \cite[Corollary 2.5]{MonTow}, we know already that the second functor is conservative, so the first functor is conservative if and only if the composite is conservative, as desired.
	\end{proof}
\end{lem}

\begin{lem} \label{lem:cofinal ordinal diagram}
	Let $\lambda$ be an ordinal seen as a poset category, and let $J : \lambda \to \mathcal C$ be a functor that sends every object of $\lambda$ to a $(-1)$-truncated object. Let $\mathcal U$ be the full subcategory of $\mathcal C$ consisting of objects that admit a map to an object of the form $J(a)$ for some $a \in \lambda$. Then $J$ factors through a cofinal functor $\lambda \to \mathcal U$.
	\begin{proof}
		Note that $J$ is fully faithful, so since its essential image is contained in $\mathcal U$, it factors through the inclusion $\mathcal U \subseteq \mathcal C$ by a full faithful functor $\lambda \to \mathcal U$.

		By \cite[Theorem 4.1.3.1]{htt}, this functor is cofinal if and only if for each $X \in \mathcal C$, if $X$ admits a map to $J(a)$ for some $a \in \lambda$, then the category
		\[
			\lambda \times_{\mathcal U} \mathcal U_{X/} \simeq \lambda \times_{\mathcal C} \mathcal C_{X/}
		\]
		is weakly contractible. This category is the full subcategory of $\lambda$ of those $a \in \lambda$ such that there exists a map $X \to J(a)$. We have assumed that this is nonempty, so since $\lambda$ is well-ordered, this category has an initial object, whence it is weakly contractible.
	\end{proof}
\end{lem}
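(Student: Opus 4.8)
\textbf{Proof proposal for \Cref{lem:cofinal ordinal diagram}.}

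The plan is to reduce the statement to the cofinality criterion of \cite[Theorem 4.1.3.1]{htt} and then exploit the well-ordering of $\lambda$. First I would observe that since each $J(a)$ is $(-1)$-truncated, the functor $J$ is fully faithful: the mapping space $\mathcal C(J(a), J(b))$ is either empty or contractible, and it is nonempty precisely when $a \leq b$ in $\lambda$, so $\mathcal C(J(a),J(b)) \simeq \lambda(a,b)$. Hence $J$ factors as $\lambda \to \mathcal U \hookrightarrow \mathcal C$, where the first functor is still fully faithful (its essential image lands in $\mathcal U$ by the very definition of $\mathcal U$). Note also that $\mathcal U \hookrightarrow \mathcal C$ is the inclusion of a full subcategory, so for any $X \in \mathcal U$ the slice $\mathcal U_{X/}$ coincides with $\mathcal C_{X/}$, and therefore $\lambda \times_{\mathcal U} \mathcal U_{X/} \simeq \lambda \times_{\mathcal C} \mathcal C_{X/}$.

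Next I would apply \cite[Theorem 4.1.3.1]{htt}: the functor $\lambda \to \mathcal U$ is cofinal if and only if for every $X \in \mathcal U$, the category $\lambda \times_{\mathcal U} \mathcal U_{X/}$ is weakly contractible. I would identify this fibre product concretely: an object is a pair consisting of $a \in \lambda$ together with a morphism $X \to J(a)$ in $\mathcal C$; since $\mathcal C(X, J(a))$ is $(-1)$-truncated, such a morphism is unique when it exists, so the fibre product is equivalent to the full subcategory $\lambda_X \subseteq \lambda$ of those $a$ for which $\mathcal C(X, J(a))$ is nonempty. Because $X \in \mathcal U$, by definition $X$ admits a map to some $J(a_0)$, so $\lambda_X$ is nonempty. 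Moreover $\lambda_X$ is an up-set in $\lambda$: if $a \in \lambda_X$ and $a \leq b$, then composing $X \to J(a)$ with $J(a \to b)$ gives $X \to J(b)$, so $b \in \lambda_X$. A nonempty up-set in a well-ordered poset has a least element $a_1$, which is therefore an initial object of $\lambda_X$; a category with an initial object is weakly contractible. This establishes the criterion and hence cofinality.

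I do not expect any serious obstacle here: the only point requiring a little care is the verification that the fibre product $\lambda \times_{\mathcal U} \mathcal U_{X/}$ really is equivalent, as a category, to the honest poset $\lambda_X$ (rather than merely having the same objects) — this uses that $\mathcal C(X, J(a))$ is $(-1)$-truncated for \emph{all} $a$, so that not only the objects but also the mapping spaces of the fibre product collapse to those of a full subposet of $\lambda$. Once that identification is in place, the well-ordering of $\lambda$ does all the remaining work. One could even streamline the argument by invoking \Cref{lem:radj cons iff ladj gens}-style reasoning is unnecessary; the direct appeal to \cite[Theorem 4.1.3.1]{htt} combined with ``nonempty up-set in a well-order has a minimum'' is the cleanest route.
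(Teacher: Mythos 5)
Your proof is correct and follows essentially the same route as the paper's: factor $J$ through $\mathcal U$, apply \cite[Theorem 4.1.3.1]{htt}, use $(-1)$-truncatedness to identify the comma category $\lambda \times_{\mathcal U} \mathcal U_{X/}$ with a nonempty full subposet of $\lambda$, and conclude from the well-ordering that it has an initial object and is therefore weakly contractible. The extra observation that this subposet is an up-set is harmless but unnecessary, since any nonempty subset of a well-order already has a least element.
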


\begin{prp} \label{prp:descent by open covers}
	Let $\mathcal C$ be a category with a collection of monomorphisms $\mathcal I$ that is stable under base change, composition, contains all equivalences, and such that for any family of maps $\{U_i \to X\}_i$ in $\mathcal I$, there is a map $U \to X$ in $\mathcal I$ that is initial among maps to $X$ through which $U_i \to X$ factors for each $i$. We call $U \to X$ the \emph{union} of the family $\{U_i \to X\}_i$.

	Suppose $\mathcal C$ is equipped with a Grothendieck topology such that for any $X \in \mathcal C$, and sieve $\mathcal U$ of $X$, $\mathcal U$ is covering if and only if it contains a small family of maps $\{U_i \to X\}_i$ in $\mathcal I$ whose union $U \to X$ is an equivalence.

	Assume $\mathcal C$ has a strict initial object $\emptyset$.\footnote{An initial object $\emptyset$ of a category $\mathcal C$ is said to be a \emph{strict initial} object if every map in $\mathcal C$ to $\emptyset$ is an equivalence.}
	Then for any presheaf $P : \mathcal C^\op \to \spaces$, $P$ is a sheaf if and only if
	\begin{enumerate}

		\item $P(\emptyset) \simeq \pt$.

		\item For any maps $U,V \to X$ that generate a covering sieve, the square
			\[
				\begin{tikzcd}
					P(X) \ar[d] \ar[r] & P(U) \ar[d] \\
					P(V) \ar[r] & P(U \times_X V)
				\end{tikzcd}
			\]
			is Cartesian.

		\item For any ordinal $\lambda$, and diagram $J : \lambda \to \mathcal C_{/X}$ that sends every morphism to a morphism in $\mathcal I$, if $\{J(\alpha) \to X\}_{\alpha \in \lambda}$ is a covering sieve, then 
			\[
				P(X) \to \varprojlim_{\alpha \in \lambda^\op} P(J(\alpha))
			\]
			is an equivalence.

	\end{enumerate}
	\begin{proof}
		For any $J$ as in the last condition, Lemma \ref{lem:cofinal ordinal diagram} says that $J$ factors through the sieve $\mathcal U \subseteq \mathcal C_X$ generated by $\{J(\alpha) \to X\}_{\alpha \in \lambda}$ by a cofinal map $\lambda \to \mathcal U$, so if $P$ is a sheaf, then it satisfies the last condition.

		Note that for any object $X \in \mathcal C$, the union of the empty family of maps to $X$ is an initial object of $\mathcal C_{/X}$, so it is the map $\emptyset \to X$. Thus, the empty sieve covers $X$ if and only if $\emptyset \to X$ is invertible.

		Since every map in $\mathcal I$ is a monomorphism, and $\mathcal I$ is stable under base change and contains all equivalences, \cite[Theorem 2.2.7]{locspalg} says that the first two conditions are equivalent to the condition that $P$ has descent along covering sieves that are generated by \emph{finite} families of maps in $\mathcal I$.

		Thus, it only remains to show that if $P$ satisfies the third condition and has descent along covering sieves generated by finite families of maps in $\mathcal I$, then $P$ is a sheaf.

		We will prove by transfinite induction that for any ordinal $\lambda$, $P$ has descent for covering sieves generated by $\lambda$-small families of maps in $\mathcal I$. We have already assumed that this holds when $\lambda$ is finite.

		Assume $\lambda$ is a limit ordinal, and that $P$ satisfies descent for covering sieves generated by $\kappa$-small families of maps in $\mathcal I$ for $\kappa \in \lambda$. Then if $\{V_\alpha \to X\}_{\alpha \in \lambda}$ is a family of maps in $\mathcal I$ that generates a covering sieve, for each $\alpha \in \lambda$, define $J(\alpha) \to X$ to be the union of $\{V_\beta \to X\}_{\beta \in \alpha}$. 

		By our assumptions about unions and the Grothendieck topology, this defines a functor $J : \lambda \to \mathcal C_{/X}$, and for every $\alpha \in \lambda$, the $(\alpha + 1)$-small family $\{V_\beta \to J(\alpha)\}_{\beta \in \alpha}$ generates a covering sieve. Since $\lambda$ is a limit ordinal, $\alpha + 1 < \lambda$, so $P$ has descent along this sieve.

		By our cofinality argument from before, we also know that $P$ has descent along the sieve generated by $\{J(\alpha) \to X\}_{\alpha \in \lambda}$, so since it also has descent along the sieves generated by $\{V_\beta \to J(\alpha)\}_{\beta \in \alpha}$ for all $\alpha \in \lambda$, we get that $P$ has descent along the sieve generated by $\{V_\alpha \to X\}_{\alpha \in \lambda}$.

		Finally we consider the case of successor ordinals: given any ordinal $\lambda$, we will show that $P$ has descent along covering sieves generated by $(\lambda + 1)$-small families of maps in $\mathcal I$ assuming that it has descent along covering sieves generated by $\lambda$-small families of maps.

		Let $\{V_\alpha \to X\}_{\alpha \in \lambda + 1}$ be a family of maps in $\mathcal I$ that generate a covering sieve of $X$, and let $V \to X$ be the map in $\mathcal I$ that is the union of $\{V_\alpha \to X\}_{\alpha \in \lambda}$. The sieve generated by $\{V, V_\lambda \to X\}$ is a covering sieve since it contains the covering sieve generated by $\{V_\alpha \to X\}_{\alpha \in \lambda + 1}$, so $P$ has descent along the sieve generated by $\{V, V_\lambda \to X\}$ since it is generated by a finite family of maps in $\lambda$. We know that $P$ has descent along the sieve generated by $\{V_\alpha \to V\}_{\alpha \in \lambda}$ since it is a covering sieve generated by a $\lambda$-small family of maps in $\mathcal I$, so it also has descent along the refinement $\{V_\alpha \to X\}_{\alpha \in \lambda + 1}$, as desired.

		Thus, since every covering sieve of an object $X \in \mathcal C$ contains a covering sieve generated by a small family of maps in $\mathcal I$, it follows that $P$ is a sheaf.
	\end{proof}
	
\end{prp}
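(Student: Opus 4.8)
The plan is to prove both implications, the ``only if'' direction being essentially formal and the ``if'' direction carrying the weight through a transfinite induction on the cardinality of a generating family. Throughout, I view $P$ as a limit‑preserving presheaf on $\Psh(\mathcal C)$, so that ``$P$ has descent along a covering sieve $\mathcal U\subseteq\yo(X)$'' means $P(X)\simeq\varprojlim_{\mathcal U^\op}P$, where the limit is over $\mathcal U$ regarded as a full subcategory of $\mathcal C_{/X}$.

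For the ``only if'' direction, suppose $P$ is a sheaf. Since $\emptyset$ is strict initial, the initial object of $\mathcal C_{/\emptyset}$ is $\id_\emptyset$, which is an equivalence and (being the union of the empty family) lies in $\mathcal I$; hence the empty sieve covers $\emptyset$ and the sheaf condition forces $P(\emptyset)\simeq\pt$, which is (1). For (2), if $U,V\to X$ generate a covering sieve $\mathcal U$ then $P(X)\xrightarrow{\sim}\varprojlim_{\mathcal U^\op}P$; because $U$ and $V$ are monomorphisms, the \v{C}ech-type limit over $\mathcal U$ degenerates and $\varprojlim_{\mathcal U^\op}P$ is computed by the pullback $P(U)\times_{P(U\times_XV)}P(V)$ (this is exactly the content of \cite[Theorem 2.2.7]{locspalg} for two‑element covers by monomorphisms), giving the Cartesian square. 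For (3), given $J:\lambda\to\mathcal C_{/X}$ a diagram of $\mathcal I$‑subobjects of $X$ whose objects generate a covering sieve $\mathcal U$, apply \Cref{lem:cofinal ordinal diagram} to $\mathcal C_{/X}$ (whose $(-1)$‑truncated objects are precisely the $\mathcal I$‑subobjects): $J$ factors through a cofinal functor $\lambda\to\mathcal U$, hence $J^\op:\lambda^\op\to\mathcal U^\op$ is initial and $P(X)\simeq\varprojlim_{\mathcal U^\op}P\simeq\varprojlim_{\lambda^\op}P\circ J$, which is (3).

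For the ``if'' direction, first use that every $f\in\mathcal I$ is a monomorphism and that $\mathcal I$ is stable under base change and contains all equivalences to invoke \cite[Theorem 2.2.7]{locspalg}: conditions (1) and (2) together are equivalent to $P$ having descent along every covering sieve generated by a \emph{finite} family of maps in $\mathcal I$. It remains to show, assuming (3) and finite $\mathcal I$‑descent, that $P$ has descent along every covering sieve generated by a small $\mathcal I$‑family $\{V_\alpha\to X\}_{\alpha\in\kappa}$; since every covering sieve contains one of this form, this gives that $P$ is a sheaf. I proceed by transfinite induction on $\kappa$, the finite case being the hypothesis. If $\kappa$ is a limit ordinal, set $J(\alpha):=\bigcup_{\beta<\alpha}V_\beta\to X$, which lies in $\mathcal I$ and defines a diagram $J:\kappa\to\mathcal C_{/X}$ of $\mathcal I$‑subobjects; for each $\alpha<\kappa$ the family $\{V_\beta\to J(\alpha)\}_{\beta<\alpha}$ (each map being a base change of $V_\beta\to X$, hence in $\mathcal I$) generates a covering sieve of $J(\alpha)$ indexed by an ordinal $<\kappa$, along which $P$ has descent by induction, while $\langle J(\alpha)\to X\rangle=\langle V_\alpha\to X\rangle$ as sieves, and (3) together with the cofinality argument above give $P(X)\simeq\varprojlim_\alpha P(J(\alpha))$; the transitivity of descent along covering sieves then upgrades descent along $\langle J(\alpha)\to X\rangle$ plus descent along the $\langle V_\beta\to J(\alpha)\rangle$ to descent along $\langle V_\alpha\to X\rangle$. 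If $\kappa=\lambda+1$, put $V:=\bigcup_{\alpha<\lambda}V_\alpha\to X$ in $\mathcal I$; the finite family $\{V,V_\lambda\to X\}$ generates a covering sieve (it contains $\langle V_\alpha:\alpha\le\lambda\rangle$), so $P$ has descent along it, and $\{V_\alpha\to V\}_{\alpha<\lambda}$ generates a covering sieve of $V$ by a $\lambda$‑indexed $\mathcal I$‑family (with union $\id_V$), along which $P$ has descent by induction; transitivity of descent again yields descent along the refinement $\langle V_\alpha\to X\rangle_{\alpha\le\lambda}$.

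The main obstacle I anticipate is the bookkeeping in the inductive step: formulating precisely the ``transitivity of descent'' used to pass from descent along a cover, together with descent along (sub)covers of its members, to descent along the combined refinement; and — closely related — the compatibility of transfinite unions in $\mathcal I$ with base change, namely that $\bigcup_\alpha(V_\alpha\times_X Z)\to Z$ remains an equivalence (equivalently, that a pullback of an $\mathcal I$‑cover with union an equivalence again contains one generated by an $\mathcal I$‑family of the same index length), which is what keeps all intermediate covers inside the class to which the inductive hypothesis applies. Both rest only on $\mathcal I$ being a base‑change‑stable class of monomorphisms closed under unions together with the axioms of a Grothendieck topology, but this is the point where the argument must be written with care; everything else is a routine assembly of the lemmas cited above.
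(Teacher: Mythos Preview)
Your proposal is correct and follows essentially the same approach as the paper: both directions are handled identically, using \Cref{lem:cofinal ordinal diagram} for condition (3), invoking \cite[Theorem 2.2.7]{locspalg} to reduce conditions (1)--(2) to descent along finitely generated $\mathcal I$-covers, and then running the same transfinite induction with the same limit and successor steps. The bookkeeping concerns you raise at the end (transitivity of descent, compatibility of unions with passage to $V$) are present but unaddressed in the paper's proof as well, and are indeed routine consequences of the Grothendieck topology axioms and the fact that $\mathcal I$ consists of monomorphisms.
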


\begin{lem} \label{lem:locality in psh}
	Given a small category $\mathcal C$, a collection of morphisms $S$ in $\Psh(\mathcal C)$ that is stable under pullback is local (in the sense of \cite[Definition 6.1.3.8]{htt}) if and only if a map $P \to Q$ is in $S$ exactly when for all $\yo(X) \to Q$, the base change $P \times_Q \yo(X) \to \yo(X)$ is in $S$.

	%
	%
	%
	\begin{proof}
		The ``only if'' direction is immediate from stability of $S$ under base change, and \cite[Lemma 6.1.3.5(3)]{htt}.

		For the converse, we use \cite[Lemma 6.1.3.5(3)]{htt} again, noting that the Cartesianness is automatic since $\Psh(\mathcal C)$ is a topos. Thus, we need to show that if $\bar \alpha : \bar p \to \bar q$ is a Cartesian transformation of colimit diagrams $\bar p, \bar q : K^\triangleright \to \Psh(\mathcal C)$, and for all $x \in K$, $\bar \alpha(x)$ is in $S$, then $\bar \alpha(\infty)$ is in $S$. Indeed, for any $C \in \mathcal C$, we have that (by \cite[Proposition 5.1.6.8]{htt})
		\[
			\Psh(\mathcal C)(\yo(C), \bar q(\infty)) \simeq \varinjlim_{x \in K} \Psh(\mathcal C)(\yo(C), \bar q(x))
		,\]
		so any $\yo(C) \to \bar q(\infty)$ is equivalent to a composite $\yo(C) \to \bar q(x) \to \bar q(\infty)$ for some $x \in K$. Thus, by Cartesianness of $\bar \alpha$, the base change $\bar p(\infty) \times_{\bar q(\infty)} \yo(C) \to \yo(C)$ is equivalent to the base change $\bar p(x) \times_{\bar q(x)} \yo(C) \to \yo(C)$ of $\bar \alpha(x)$, which is in $S$ since $\bar \alpha(x)$ is in $S$. This shows that for every $\yo(C) \to \bar q(\infty)$, the base change $\bar p(\infty) \times_{\bar q(\infty)} \yo(C) \to \yo(C)$ of $\bar \alpha(\infty)$ is in $S$, so $\bar \alpha(\infty)$ is in $S$ as required. 

	\end{proof}
\end{lem}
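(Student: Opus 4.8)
The plan is to apply the characterization of local classes in \cite[Lemma 6.1.3.5(3)]{htt}, which in the presheaf topos $\Psh(\mathcal C)$ simplifies because every pullback square already exists and is preserved by colimits. So the first step is to note that the ``only if'' direction is immediate: if $S$ is local, then by definition it is stable under base change, and \cite[Lemma 6.1.3.5(3)]{htt} (item specifying that membership in $S$ can be tested on slices over representables, which generate) gives exactly the stated fibrewise criterion. The substance is the converse.

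For the converse, suppose $S$ is stable under base change and satisfies the fibrewise membership criterion; I would verify the hypotheses of \cite[Lemma 6.1.3.5(3)]{htt}. The Cartesianness condition there is automatic since $\Psh(\mathcal C)$ is an $\infty$-topos, so what remains is: given a Cartesian transformation $\bar\alpha : \bar p \to \bar q$ of colimit diagrams $\bar p, \bar q : K^\triangleright \to \Psh(\mathcal C)$ with $\bar\alpha(x) \in S$ for all $x \in K$, show $\bar\alpha(\infty) \in S$. By the fibrewise criterion it suffices to show that for every map $\yo(C) \to \bar q(\infty)$ the base change $\bar p(\infty) \times_{\bar q(\infty)} \yo(C) \to \yo(C)$ lies in $S$. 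Here I would use that $\yo(C)$ is a compact (indeed atomic) object: by \cite[Proposition 5.1.6.8]{htt}, mapping out of $\yo(C)$ commutes with the colimit defining $\bar q(\infty)$, so $\Psh(\mathcal C)(\yo(C), \bar q(\infty)) \simeq \varinjlim_{x \in K} \Psh(\mathcal C)(\yo(C), \bar q(x))$, whence any $\yo(C) \to \bar q(\infty)$ factors as $\yo(C) \to \bar q(x) \to \bar q(\infty)$ for some $x \in K$.

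Given such a factorization, I would invoke Cartesianness of $\bar\alpha$: the square with corners $\bar p(x), \bar q(x), \bar p(\infty), \bar q(\infty)$ is Cartesian, so pasting it with the square defining $\bar p(x) \times_{\bar q(x)} \yo(C)$ exhibits $\bar p(\infty) \times_{\bar q(\infty)} \yo(C) \to \yo(C)$ as equivalent to the base change $\bar p(x) \times_{\bar q(x)} \yo(C) \to \yo(C)$ of $\bar\alpha(x)$. Since $\bar\alpha(x) \in S$ and $S$ is stable under base change, this map is in $S$. As $\yo(C) \to \bar q(\infty)$ was arbitrary, the fibrewise criterion gives $\bar\alpha(\infty) \in S$, completing the verification of \cite[Lemma 6.1.3.5(3)]{htt} and hence showing $S$ is local. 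The main (minor) obstacle is simply being careful with the pasting-of-pullbacks bookkeeping and invoking the compactness of representables correctly; there is no real analytic difficulty, as the topos-theoretic infrastructure does all the work.
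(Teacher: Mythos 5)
Your proposal is correct and follows essentially the same route as the paper: the ``only if'' direction from base-change stability plus \cite[Lemma 6.1.3.5(3)]{htt}, and for the converse the factorization of any $\yo(C) \to \bar q(\infty)$ through some $\bar q(x)$ via \cite[Proposition 5.1.6.8]{htt}, followed by the pasting of Cartesian squares. No gaps.
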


\begin{rmk} \label{rmk:mult limit}
	Let $\mathcal A \in \CAlg(\Cat)$, and let $F : K^\triangleleft \to \Mod_{\mathcal A}(\Cat)$ be a limiting diagram. Then for any $a \in \mathcal A$, the map $F(-\infty) \xrightarrow{a \otimes -} F(-\infty)$ is the limit 
	\[
		\varprojlim_{p \in K} (F(p) \xrightarrow{a \otimes -} F(p))
	.\]
	To make sense of this statement, we note that $a \otimes -$ defines an endomorphism of the identity functor of $\Mod_{\mathcal A}(\Cat)$, so it induces an endomorphism of $F$, which induces a diagram $K^\triangleleft \to \Fun(\Delta^1, \Mod_{\mathcal A}(\Cat))$, and this diagram is limiting since it is after evaluating on the points of $\Delta^1$.
\end{rmk}

\begin{lem} \label{lem:2/3 for adjoints}
	Let $\mathcal D : K^\triangleleft \to \PrR$ ($\PrL$) be a small diagram of presentable categories such that $\mathcal D(-\infty) \to \varprojlim D|_K$ is conservative. For any presentable category $\mathcal C$ and functor $F : \mathcal C \to \mathcal D(-\infty)$, we have that $F$ admits a left (right) adjoint if and only if $\mathcal C \to \mathcal D(p)$ admits a left (right) adjoint for all $p \in K$.
	\begin{proof}
		The argument for right adjoints is analogous to the argument for left adjoints, so we only present the argument for left adjoints.

		By \cite[Proposition 5.5.3.18]{htt}, we have that for all $p \in K$, the functor $\mathcal D(-\infty) \to \varprojlim \mathcal D|_K$ admits a left adjoint, and that $\mathcal C \to \mathcal D(p)$ admits a left adjoint for all $p \in K$ if and only if $\mathcal C \to \varprojlim D|_K$ admits a left adjoint.

		Since $\mathcal D(-\infty) \to \varprojlim \mathcal D|_K$ and $\mathcal C \to \varprojlim \mathcal D|_K$ have left adjoints, by \cite[Corollary 5.5.2.9]{htt}, they are accessible and preserve small limits. In particular, there is a regular cardinal $\kappa$ such that these two functors preserve small limits and $\kappa$-filtered colimits. Since $\mathcal D(-\infty) \to \varprojlim \mathcal D|_K$ is conservative, it follows that $F$ also preserves small limits and $\kappa$-filtered colimits, so by \cite[Corllary 5.5.2.9]{htt}, it has a left adjoint.
	\end{proof}
\end{lem}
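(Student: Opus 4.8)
The plan is to reduce the statement to the adjoint functor theorem for presentable categories, \cite[Corollary 5.5.2.9]{htt}, together with the description of limits in $\PrR$ given by \cite[Propositions 5.5.3.13 and 5.5.3.18]{htt}. I would prove the assertion about left adjoints, namely the case where $\mathcal D$ lands in $\PrR$; the case of right adjoints is formally dual, with $\PrR$ replaced by $\PrL$ and the limit-preservation clause of \cite[Corollary 5.5.2.9]{htt} replaced by the colimit-preservation clause. Write $\mathcal D_\infty$ for $\mathcal D(-\infty)$ and $\varprojlim \mathcal D|_K$ for the limit over the non-cone part of $K^\triangleleft$, which is again presentable, and let $\pi\colon \mathcal D_\infty \to \varprojlim \mathcal D|_K$ be the canonical comparison functor, conservative by hypothesis.

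The forward implication is immediate: if $F$ admits a left adjoint, then for each $p \in K$ the composite of $F$ with the leg $\mathcal D_\infty \to \mathcal D(p)$ is a composite of functors admitting left adjoints, since that leg is a morphism in $\PrR$.

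For the converse, suppose $\mathcal C \to \mathcal D(p)$ admits a left adjoint for every $p \in K$. By (the dual of) \cite[Proposition 5.5.3.18]{htt}, the comparison $\pi$ admits a left adjoint, and a functor from a presentable category into $\varprojlim \mathcal D|_K$ admits a left adjoint precisely when each of its composites with the projections $\varprojlim \mathcal D|_K \to \mathcal D(p)$ does. Applying this to the functor $\mathcal C \to \varprojlim \mathcal D|_K$ induced by $F$ — whose composite with the $p$-th projection is $\mathcal C \to \mathcal D(p)$ — shows that $\mathcal C \to \varprojlim \mathcal D|_K$ admits a left adjoint; and this functor is precisely $\pi \circ F$. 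Now I would peel $\pi$ off. By \cite[Corollary 5.5.2.9]{htt}, both $\pi$ and $\pi \circ F$, having left adjoints, are accessible and preserve small limits, so we may pick a single regular cardinal $\kappa$ for which both preserve $\kappa$-filtered colimits. Since $\pi$ is conservative and preserves small limits and $\kappa$-filtered colimits, it detects the preservation of such (co)limits: the comparison maps $\varinjlim F(X) \to F(\varinjlim X)$ over $\kappa$-filtered diagrams and $F(\varprojlim X) \to \varprojlim F(X)$ over small diagrams become equivalences after applying $\pi$, hence are equivalences. Thus $F$ preserves small limits and $\kappa$-filtered colimits; in particular $F$ is accessible, so \cite[Corollary 5.5.2.9]{htt} supplies a left adjoint of $F$, completing the proof.

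The only step needing care is the last paragraph: one picks $\kappa$ uniformly (harmless, as $K$ is small) and invokes the elementary fact that a conservative functor $\pi$ preserving a class of (co)limits detects, for arbitrary $F$, the preservation of those (co)limits by $F$ from their preservation by $\pi\circ F$. I do not expect any real obstacle; the content is essentially bookkeeping around the presentable adjoint functor theorem.
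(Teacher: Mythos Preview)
Your proof is correct and follows essentially the same approach as the paper's: reduce to \cite[Proposition 5.5.3.18]{htt} to see that $\pi\circ F$ has a left adjoint, then use conservativity of $\pi$ together with \cite[Corollary 5.5.2.9]{htt} to peel off $\pi$. Your write-up is slightly more explicit about the forward implication and about why conservativity detects preservation of the relevant (co)limits, but the argument is the same.
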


\phantomsection
\bibliography{refs}
\bibliographystyle{amsalpha}

\end{document}